\def\rr{{\mathbb R}}
\def\rn{{{\rr}^n}}
\def\zz{{\mathbb Z}}
\def\cc{{\mathbb C}}
\def\nn{{\mathbb N}}
\def\ca{{\mathcal A}}
\def\cb{{\mathcal B}}
\def\cd{{\mathcal D}}
\def\ce{{\mathcal E}}
\def\cl{{\mathcal L}}
\def\cm{{\mathcal M}}
\def\cn{{\mathcal N}}
\def\cq{{\mathcal Q}}
\def\cs{{\mathcal S}}
\def\cw{{\mathcal W}}
\def\mi{{\mathrm I}}
\def\fz{\infty}
\def\az{\alpha}
\def\bz{\beta}
\def\dz{\delta}
\def\bdz{\Delta}
\def\ez{\epsilon}
\def\gz{{\gamma}}
\def\kz{\kappa}
\def\lz{\lambda}
\def\oz{{\omega}}
\def\boz{{\Omega}}
\def\tz{\theta}
\def\sz{\sigma}
\def\vz{\varphi}
\def\lf{\left}
\def\r{\right}
\def\la{\langle}
\def\ra{\rangle}
\def\hs{\hspace{0.25cm}}
\def\ls{\lesssim}
\def\gs{\gtrsim}
\def\pa{\partial}
\def\noz{\nonumber}
\def\wz{\widetilde}
\def\wh{\widehat}
\def\st{\subset}
\def\dis{\displaystyle}
\def\supp{\mathop\mathrm{\,supp\,}}
\def\loc{{\mathop\mathrm{\,loc\,}}}
\def\lp{{L^p(\rn)}}
\newcommand{\ala}{{{A}_{\cl,q,a}^{w,\tau}(\rn)}}
\newcommand{\alw}{{{A}_{\cl,q}^{w,\tau}(\rn)}}
\newcommand{\fla}{{{F}_{\cl,q,a}^{w,\tau}(\rn)}}
\newcommand{\ela}{{{\ce}_{\cl,q,a}^{w,\tau}(\rn)}}
\newcommand{\N}{{\mathbb N}}
\newcommand{\R}{{\mathbb R}}
\newcommand{\C}{{\mathbb C}}
\newcommand{\Z}{{\mathbb Z}}
\def\qn{|\!|\!|}
\newtheorem{theorem}{\sc Theorem}[section]
\newtheorem{lemma}[theorem]{\sc Lemma}
\newtheorem{corollary}[theorem]{\sc Corollary}
\newtheorem{proposition}[theorem]{\sc Proposition}
\theoremstyle{definition}
\newtheorem{remark}[theorem]{\sc Remark}
\newtheorem{definition}[theorem]{\sc Definition}
\newtheorem{assumption}[theorem]{\sc Assumption}
\newtheorem{example}[theorem]{\sc Example}
\numberwithin{equation}{section}
\begin{document}

\arraycolsep=1pt

\title{\bf\Large A new framework for generalized
Besov-type and Triebel-Lizorkin-type spaces
\footnotetext{\hspace{-0.35cm} 2010 {\it
Mathematics Subject Classification}. Primary 46E35;
Secondary 42B35, 42B25, 42B15, 42C40.
\endgraf {\it Key words and phrases}.
Besov space, Triebel-Lizorkin space, atom, molecule,
difference, oscillation, wavelet, embedding, multiplier, pseudo-differential operator.
\endgraf This work is partially supported by the GCOE program
of Kyoto University and 2010 Joint Research Project Between
China Scholarship Council and German Academic Exchange
Service (PPP) (Grant No. LiuJinOu [2010]6066). Yoshihiro Sawano is supported
by Grant-in-Aid for Young Scientists (B) (Grant No. 21740104) of
Japan Society for the Promotion of Science.
Dachun Yang is supported by the National
Natural Science Foundation (Grant No. 11171027) of China
and Program for Changjiang Scholars and Innovative
Research Team in University of China. Wen
Yuan is supported by the National
Natural Science Foundation (Grant No. 11101038) of China}}
\author{Yiyu Liang, Yoshihiro Sawano, Tino Ullrich,
Dachun Yang\,\footnote{Corresponding author}\ \ and Wen Yuan}
\date{}
\maketitle

\begin{center}
\begin{minipage}{13.5cm}\small
{{\bf Abstract.}
In this paper, the authors propose a new framework
under which a theory of generalized
Besov-type and Triebel-Lizorkin-type function spaces is developed.
Many function spaces appearing in harmonic analysis
fall under the scope of this new framework.
Among others, the boundedness of the Hardy-Littlewood
maximal operator or the related vector-valued maximal function
on any of these function spaces is not required to construct these generalized
scales of smoothness spaces.
Instead of this, a key idea used in this framework is an application
of the Peetre maximal function. This idea originates from recent findings in
the abstract coorbit space theory obtained by Holger Rauhut and Tino Ullrich.
Under this new setting, the authors establish the
boundedness of pseudo-differential operators based on atomic and molecular
characterizations
and also the boundedness of the Fourier multipliers.
The characterizations of these function
spaces by means of differences and oscillations are also established.
As further applications of this new framework,
the authors reexamine and polish some
existing results for many different scales of function spaces.
}
\end{minipage}
\end{center}

\newpage

\tableofcontents

\newpage

\section{Introduction}\label{s1}

Different types of smoothness spaces play an important role in
harmonic analysis, partial differential
equations as well as in approximation theory.
For example, Sobolev spaces are widely used
for the theory of elliptic partial differential equations.
However, there are several partial differential equations
on which the scale of Sobolev spaces is no longer sufficient.
A proper generalization is given by the classical
Besov and Triebel-Lizorkin function spaces.
In recent years, it turned out to be necessary to generalize even further
and replace the fundamental space $L^p(\rr^n)$ by something more general,
like a Lebesgue space with variable exponents (\cite{d1, DHR})
or, more generally, an Orlicz space.
Another direction is pursued via replacing the space $L^p(\rr^n)$ by
the Morrey space ${\mathcal M}^p_u(\rr^n)$; see \cite{MO,An1,An2}, or generalizations thereof
\cite{ky,st07,syy,tx,x1,x2,yy1,yy2,ysy-textbook}. Therefore,
the theory of function spaces has become more and more complicated
due to their definitions. Moreover, results on atomic or molecular decompositions
were often developed from scratch again and again for different scales.

A nice approach to unify
the theory was performed by Hedberg and Netrusov in \cite{hn07}.
They developed an axiomatic approach
to function spaces of Besov-type and Triebel-Lizorkin-type, in which the
underlying function space is a quasi-normed space $E$ of sequences of
Lebesgue measurable functions on $\rn$, satisfying some additional
assumptions. The key property assumed in this approach
is that the space $E$ satisfies a vector-valued
maximal inequality of Fefferman-Stein type, namely,
for some $r\in(0,\fz)$ and $\lz\in [0,\fz)$, there exists a
positive constant $C$ such that,
for all $\{f_i\}_{i=0}^\fz\subset E$,
\begin{eqnarray*}
\|\{ M_{r,\lz} f_i\}_{i=0}^\fz\|_E\le C\|\{ f_i\}_{i=0}^\fz\|_E
\end{eqnarray*}
(see \cite[Definition 1.1.1(b)]{hn07}), where
\begin{eqnarray*}
M_{r,\lz}f(x):=\sup_{R>0}
\left\{\frac{1}{R^n}\int_{|y|<R}|f(x+y)|^r(1+|y|)^{-r\lambda}\,dy
\right\}^{1/r}\quad \mbox{for all}\quad x\in\rn.
\end{eqnarray*}
Related to \cite{hn07},
Ho \cite{ho10} also developed a theory of function spaces on $\rn$
under the additional assumption that the Hardy-Littlewood maximal operator $M$
is bounded on the corresponding fundamental function space.

Another direction towards a unified
treatment of all generalizations has been developed
by Rauhut and Ullrich \cite{ru11} based
on the generalized abstract coorbit space theory. 
The coorbit space theory was originally developed by Feichtinger and Gr{\"o}chenig
\cite{FeGr86, Gr88, Gr91} with the aim of providing a unified approach
for describing
function spaces and their atomic decompositions. The classical theory uses locally compact groups
together with integrable group representations as key ingredients. Based on the idea to measure smoothness
via decay properties of an abstract wavelet transform one can particularly recover homogeneous Besov-Lizorkin-Triebel spaces
as coorbits of Peetre spaces $\mathcal{P}^s_{p,q,a}(\rn)$. The latter fact  was observed recently by Ullrich in \cite{u10}.
In the next step Fornasier and Rauhut
\cite{fora05} observed that a locally compact group structure is not needed
at all to develop a coorbit space theory. While the theory in \cite{fora05}
essentially applies only to coorbit spaces with respect to weighted Lebesgue
spaces, Rauhut and Ullrich \cite{ru11}  extended
this abstract theory in order to treat a wider variety of coorbit spaces. The main motivation was
to cover inhomogeneous Besov-Lizorkin-Triebel spaces and generalizations
thereof. Indeed, the Besov-Lizorkin-Triebel type spaces appear as
coorbits of Peetre type spaces $\mathcal{P}^{w}_{p,\mathcal{L},a}(\rn)$; see \cite{ru11}.

All the aforementioned theories are either not complete or in some situations too restrictive.
Indeed, the boundedness of
maximal operators of Hardy-Littlewood type or the related
vector-valued maximal functions is always required and,
moreover, the Plancherel-Polya-Nikolskij inequality
(see Lemma \ref{l1.1} below)
and the Fefferman-Stein vector-valued inequality
had been a key tool in order to develop a theory of function spaces
of Besov and Triebel-Lizorkin type.

Despite the fact that the generalized coorbit space theory \cite{ru11}
so far only works for Banach spaces we mainly borrow techniques from there and
combine them with recent ideas from the theory of Besov-type and
Triebel-Lizorkin-type spaces (see \cite{st07,syy,tx, yy1,yy2,yy3,ysy-textbook})
to build up our theory for quasi-normed spaces in the present paper.
In order to be applicable also in microlocal analysis,
we even introduce these spaces directly
in the weighted versions.
The key idea, used in this new framework, is some delicate application of the
sequence of the Peetre maximal functions
\begin{equation}\label{1.3}
(\vz_j^*f)_a(x):=
\begin{cases}
\dis\sup_{y\in\rn}\frac{|\Phi\ast f(x+y)|}{(1+|y|)^a},&j=0;\\
\dis\sup_{y\in\rn}\frac{|\vz_j\ast f(x+y)|}{(1+2^j|y|)^a},&j \in {\mathbb N},
\end{cases}
\end{equation}
for all $f\in S'(\rn)$, where $\Phi$ and $\vz$ are,
respectively, as in \eqref{1.1} and \eqref{1.2} below,
and $\vz_j(\cdot)=2^{jn}\vz(2^j\cdot)$ for all $j\in\nn$.
Instead of the pure convolution $\vz_j\ast f$
involved in the definitions of the classical
Besov and Triebel-Lizorkin spaces,
we make use of the Peetre maximal function $(\vz_j^*f)_a$ already
in the definitions of the spaces considered in the present paper. The second main
feature, what concerns generality, is the fundamental space $\mathcal{L}(\rr^n)$
involved in the definition (instead of $L^p(\R^n)$). This space is given in
Section \ref{s2} via a list of fundamental assumptions $(\mathcal{L}1)$ through
$(\mathcal{L}6)$. The key assumption is $(\mathcal{L}6)$,
which originates in \cite{ru11} (see (\ref{eq:L6}) below). The most
important advantage of the Peetre maximal function in this framework is the fact
that
$(\vz_j^*f)_a$ can be pointwise controlled by
a linear combination of some other Peetre maximal functions $(\psi_k^*f)_a$,
whereas in the classical setting, $\vz_j\ast f$ can only be dominated by a
linear combination of the Hardy-Littlewood maximal function $M(|\psi_k\ast f|)$
of $\psi_k\ast f$ (see \eqref{hlmf} below). This simple fact illustrates
quite well that the boundedness of $M$ on $\mathcal{L}(\R^n)$ is not required
in the present setting. It represents the key advantage of our theory since, according
to Example \ref{e1.1} and Section \ref{s10}, we are now able to deal with a
greater variety of spaces. However, we
do not define abstract coorbit spaces here.
Compared with the results in \cite{ru11},
the approach in the present paper admits the following additional features:
\begin{itemize}
 \item Extension of the decomposition results to quasi-normed spaces
(Section \ref{s4});
 \item Sharpening the conditions on admissible atoms, molecules, and wavelets
(Section \ref{s4});
 \item Intrinsic characterization for the respective spaces on domains
(Section \ref{s4.5});
 \item Boundedness of pseudo-differential operators acting between two spaces
(Section \ref{s5});
 \item Direct Characterizations via differences and oscillations
(Section \ref{s7}).
\end{itemize}
\noindent Let us describe the organization of the present paper.
In Section \ref{s2},
we describe the {\it new} setting we propose, which consists of a list of
assumptions $(\cl1)$ through $(\cl6)$ on the fundamental space
$\cl(\rn)$. Also several important consequences and further inequalities are
provided.

In Section \ref{s3}, based on $\cl(\rn)$,
we introduce two sorts of generalized Besov-type and Triebel-Lizorkin-type
spaces, respectively (see Definition \ref{d3.1} below). We justify
these definitions by proving some properties, such as completeness
(without assuming $\mathcal{L}(\R^n)$ being complete!), the Schwartz space
$\mathcal{S}(\R^n)$ being contained, and the embedding into the distributions
$\mathcal{S}'(\R^n)$. An analogous statement holds true with the classical
2-microlocal space $B^w_{1,1,a}(\R^n)$ as test functions and its dual, the
space $B^{1/w}_{\infty,\infty,a}(\R^n)$, as distributions, which is an
important observation for the characterization with wavelets in Section
\ref{s4}. Therefore, the latter spaces which have been studied intensively by
Kempka \cite{k10-1, k10-2}, appear naturally in our context.

In Section \ref{s4}, we establish atomic and molecular
decomposition characterizations (see Theorem \ref{t4.1} below),
which are further used in Section \ref{s5}
to obtain the boundedness of some pseudo-differential operators
from the H\"ormander class $S^0_{1,\mu}(\rn)$, with $\mu\in [0,1)$ (see
Theorems \ref{t5.1} and \ref{t5.2} below). In addition, characterizations using
biorthogonal wavelet bases are given (see Theorem \ref{t4.3} below).
Appropriate wavelets (analysis
and synthesis) must be sufficiently smooth, fast decaying and provide enough
vanishing moments. The precise conditions on these three issues are provided in
Subsection 4.4 and allow for the selection of particular
biorthogonal wavelet bases according to the well-known construction by Cohen,
Daubechies and Feauveau \cite{codafe92}. Characterizations via orthogonal wavelets
are contained in this setting.

Section \ref{s4.5} considers pointwise multipliers and the restriction of
our function spaces to Lipschitz domains $\Omega$ and provides
characterizations from inside the domain (avoiding extensions).

Section \ref{s5} considers Fourier multipliers and
pseudo-differential operators,
which supports that our new framework works.

In Section
\ref{s6},
we obtain a sufficient condition
for which the function spaces consist of continuous functions (see Theorem
\ref{t6.1} below).
This is a preparatory step for Section \ref{s7}, where we deal with
differences and oscillations. Another issue of Section \ref{s6} is a further
interesting application of the atomic decomposition result from Theorem
\ref{t4.1}. Under certain conditions on the involved scalar parameters (by
still using a general fundamental space $\mathcal{L}(\R^n)$), our spaces
degenerate to the well-known classical 2-microlocal Besov spaces
$B^w_{\infty,\infty}(\R)$.

In Section \ref{s7},
we obtain a direct characterization in terms of differences
and oscillations of these generalized Besov-type and
Triebel-Lizorkin-type spaces (see Theorems \ref{t7.1} and \ref{t7.2} below).
Also, under some mild condition,
$\cl(\rn)$ is shown to fall under our new framework
(see Theorem \ref{t8.30} below).

The Peetre maximal construction in the present paper makes it necessary to deal
with a further parameter $a\in(0,\fz)$ in the definition of the function spaces.
However, this new parameter $a$ does not seem to play a significant role
in generic setting, although we do have an example showing that the space may
depend upon $a$ (see Example \ref{e3.2}).
We present several sufficient conditions in Section \ref{s8}
which allow to remove the parameter $a$ from the function spaces
(see Assumption 8.1 below).

Homogeneous counterparts of the above are
available and we describe them in Section \ref{s9}.
Finally, in Section \ref{s10} we present some well-known function
spaces as examples
of our abstract results and compare them
with earlier contributions. We reexamine and polish some
existing theories for these known function spaces.

Next we clarify some conventions on the notation and review some basic
definitions. In what follows, as usual, we use $\cs(\rn)$ to denote
the \emph{classical topological vector space of all Schwartz
functions on $\rn$} and $\cs'(\rn)$ its \emph{topological
dual space} endowed with weak-$\ast$ topology.
For any $\varphi \in \cs(\rn)$, we use
$\widehat{\vz}$ to denote its \emph{Fourier transform}, namely, for all
$\xi\in\rn$, $\widehat{\vz}(\xi):=\int_\rn e^{-i\xi
x}\vz(x)\,dx$. We denote \emph{dyadic dilations} of a given function $\varphi \in
\mathcal{S}(\R^n)$ by $\varphi_j(x):= 2^{jn}\varphi(2^jx)$ for all
$j\in\mathbb{Z}$ and $x\in\rn$. Throughout the whole paper we permanently
use a system $(\Phi,\varphi)$ of Schwartz functions satisfying
\begin{equation}\label{1.1}
\supp\widehat{\Phi}\subset \{\xi\in\rn:\ |\xi|\le 2\}\ \mathrm{and}\
|\widehat{\Phi}(\xi)|\ge C>0\ \mathrm{if}\ |\xi|\le 5/3
\end{equation}
and
\begin{equation}\label{1.2}
\supp\widehat{\vz}\subset \{\xi\in\rn:\,1/2\le|\xi|\le 2\}\ \mathrm{and}\
|\widehat{\vz}(\xi)|\ge C>0\ \mathrm{if}\ 3/5\le|\xi|\le 5/3.
\end{equation}
\noindent The \emph{space} $L^1_\loc(\rn)$ denotes
the set of all locally integrable functions on $\rn$, the \emph{space}
$L^\eta_\loc(\rn)$ for any $\eta\in(0,\fz)$ the set of all
measurable functions on $\rn$ such that
$|f|^\eta\in L^1_\loc(\rn)$, and the \emph{space $L^\fz_\loc(\rn)$} the set of
all
locally essentially bounded functions on $\rn$.
We also let $M$ denote the \emph{Hardy-Littlewood maximal operator}
defined by setting, for all $f\in L^1_\loc(\rn)$,
\begin{equation}\label{hlmf}
Mf(x)=
M(f)(x):=\sup_{r>0}\frac{1}{r^n}\int_{|z-x|<r}|f(z)|\,dz \quad\mbox{for all}\quad x\in\rn.
\end{equation}
One of the main tools in the classical theory of function spaces
is the boundedness of
the Hardy-Littlewood maximal operator $M$ on a space of functions, say
$L^p(\R^n)$ or its vector-valued extension
$L^p(\ell^q)$, in connection with the
Plancherel-Polya-Nikolskij inequality connecting the Peetre maximal function
and the Hardy-Littlewood maximal operator.

\begin{lemma}[{\rm \cite[p.\,16]{t83}}]
\label{l1.1}
Let $\eta\in(0,1]$, $R\in(0,\fz)$ and
$f \in {\mathcal S}'(\R^n)$ be such that
$\supp \widehat{f}\subset Q(0,R):=\{x\in\rn:\,
|x|<R\}$. Then there exists a positive constant $c_\eta$ such that,
for all $x\in\rn,$
\[
\sup_{y \in \R^n}
\frac{|f(x-y)|}{(1+R|y|)^{n/\eta}}
\le c_\eta
\left[M(|f|^\eta)(x)\right]^{\frac{1}{\eta}}.
\]
\end{lemma}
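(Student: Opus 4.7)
The result is the standard Plancherel--Polya--Nikol'skij inequality, so my plan is to reproduce Triebel's argument (since it is cited from \cite{t83}).

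The first step is to reduce to the case $R=1$. Setting $g(x):=f(x/R)$, one has $\widehat{g}(\xi)=R^{-n}\widehat{f}(\xi/R)$, hence $\supp\widehat{g}\subset\{\xi:|\xi|<1\}$. A change of variables in the definition of $M$ together with the substitution $y\mapsto y/R$ in the Peetre-type supremum converts the general $R$ statement into the $R=1$ statement, up to constants depending only on $n$ and $\eta$. So it suffices to prove: for $g\in\cs'(\rn)$ with $\supp\widehat{g}\subset\{\xi:|\xi|<1\}$,
\[
\sup_{y\in\rn}\frac{|g(x-y)|}{(1+|y|)^{n/\eta}}\le c_\eta\,\bigl[M(|g|^\eta)(x)\bigr]^{1/\eta}\quad\text{for all }x\in\rn.
\]

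The heart of the argument is a local mean-value estimate: if $\supp\widehat{g}\subset\{\xi:|\xi|<1\}$, then for every $\eta\in(0,1]$ there exists $c=c(\eta,n)>0$ such that
\[
|g(z)|^\eta\le c\int_{|u-z|<1}|g(u)|^\eta\,du\qquad(z\in\rn).
\]
By the Paley--Wiener theorem, $g$ is (the boundary value of) an entire function of exponential type $1$ on $\cc^n$ that is polynomially bounded on $\rn$. Hence $\log|g|$ is plurisubharmonic on $\cc^n$, and consequently $|g|^\eta=\exp(\eta\log|g|)$ is subharmonic on $\rn$ for every $\eta\in(0,\fz)$. The sub-mean-value inequality applied on $B(z,1)$ then yields the displayed estimate. (The delicate point is verifying that the pointwise value $g(z)$ really agrees with the entire extension, and that polynomial growth suffices to make the mean-value property meaningful; this is exactly where the Paley--Wiener representation is used, and it is the step I regard as the main technical obstacle of a self-contained write-up.)

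Once that local estimate is in hand, the rest is routine. For any $y\in\rn$, since $B(x-y,1)\subset B(x,1+|y|)$,
\[
|g(x-y)|^\eta\le c\int_{|u-(x-y)|<1}|g(u)|^\eta\,du
\le c\int_{|u-x|<1+|y|}|g(u)|^\eta\,du.
\]
Multiplying and dividing by $(1+|y|)^n$ and using the definition of $M$,
\[
|g(x-y)|^\eta\le c\,(1+|y|)^n\,\frac{1}{(1+|y|)^n}\int_{|u-x|<1+|y|}|g(u)|^\eta\,du
\le c'(1+|y|)^n\,M(|g|^\eta)(x).
\]
Taking the $\eta$-th root, dividing by $(1+|y|)^{n/\eta}$, and taking the supremum over $y\in\rn$ gives the desired inequality with $R=1$, from which the general $R$ version follows by the scaling in the first step.
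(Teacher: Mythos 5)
The paper does not prove this lemma; it is quoted from Triebel's book, so there is no in-text argument to compare against. Your overall architecture (rescale to $R=1$, prove a local estimate $|g(z)|^\eta\le c\int_{|u-z|<1}|g(u)|^\eta\,du$ for band-limited $g$, then enlarge the ball to $B(x,1+|y|)$ and invoke the definition of $M$) is the right one, and the final assembly and the scaling reduction are correct (up to a harmless slip in the Fourier transform formula: for $g(x)=f(x/R)$ one has $\widehat g(\xi)=R^n\widehat f(R\xi)$, which still gives $\supp\widehat g\subset\{|\xi|<1\}$).

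The genuine gap is your justification of the local estimate. You claim that because $\log|g|$ is plurisubharmonic on $\C^n$, the function $|g|^\eta$ is subharmonic \emph{on $\R^n$}, and you then apply the sub-mean-value inequality over the real ball $B(z,1)$. This is false: the restriction of a (pluri)subharmonic function on $\C^n\cong\R^{2n}$ to the real subspace $\R^n$ is not subharmonic on $\R^n$. A concrete counterexample is $g(t)=\tfrac{\sin t}{t}$ on $\R$ (band-limited, $\widehat g=\pi\chi_{[-1,1]}$): subharmonicity on $\R^1$ means convexity, but $|g(t)|^{1/2}\sim c|t-\pi|^{1/2}$ near the zero $t=\pi$ is concave on each side. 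The local inequality you want is nevertheless true, but it must be obtained differently. The complex-analytic route applies the sub-mean-value property of $|g|^\eta$ over a ball in $\R^{2n}$ and then requires a nontrivial additional argument (using the exponential-type bounds off the real axis) to dominate the integral over the imaginary directions by an integral over a real ball; this is precisely the step your write-up elides. The standard real-variable route avoids this entirely: pick $\psi\in\cs(\rn)$ with $\widehat\psi\equiv1$ on $B(0,1)$, so $g=\psi*g$, write $|g(z)|\le\int|\psi(z-u)|\,|g(u)|^{1-\eta}|g(u)|^\eta\,du$, bound $|g(u)|^{1-\eta}$ by a power of the Peetre maximal function $\sup_u|g(u)|(1+|x-u|)^{-n/\eta}$, and divide it out after an a priori finiteness argument (which uses that $g$ has polynomial growth). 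Either of these repairs is needed before the remainder of your argument, which is fine, can be run.
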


\noindent The following examples show situations when the boundedness of $M$ can be
achieved and when we can not expect it.

\begin{example}\label{e1.1}
i) Let $p\in (1,\fz)$.
It is known that the Hardy-Littlewood operator $M$
is not bounded on the weighted Lebesgue space $L^p(w)$
unless $w \in A_p(\rn)$,
where $A_p(\rn)$ is the class of \emph{Muckenhoupt weights}
(see, for example, \cite{gr85,st93} for their definitions
and properties) such that
\[
A_p(w):=\sup_{Q \in \cq}
\left[\frac{1}{|Q|}\int_Q w(x)\,dx\right]
\left[\frac{1}{|Q|}\int_Q [w(x)]^{-1/(p-1)}\,dx\right]^{p-1}<\infty.
\]
Also observe that there exists a positive constant $C_{p,q}$ such that
\[
\left\{
\int_{{\mathbb R}^n}
\left(\sum_{j=1}^\infty [Mf_j(x)]^q\right)^{q/p}w(x)\,dx
\right\}^{1/p}
\le C_{p,q}
\left\{
\int_{{\mathbb R}^n}
\left[\sum_{j=1}^\infty |f_j(x)|^q\right]^{q/p}w(x)\,dx
\right\}^{1/p}
\]
holds true for any $q\in (1, \infty]$ if and only if $w \in A_p(\rn)$. There do exist
doubling weights
which do not belong to the \emph{Muckenhoupt class}
$A_{\infty}(\rn)$ {\rm (see \cite{fm74})}.

ii)
There exists a function space
such that even the operator $M_{r,\lz}$ is difficult to control.
For example, if ${\mathcal L}(\rn):=L^{1+\chi_{{\mathbb R}^n_+}}(\rn)$,
which is the set of all measurable functions $f$ on $\rn$ such
that
\[
\|f\|_{L^{1+\chi_{{\mathbb R}^n_+}}}
:=\inf\left\{
\lambda>0\,:\,
\int_{{\mathbb R}^n_+}
\left[\frac{|f(x)|}{\lambda}\right]^2\,dx
+\int_{{\mathbb R}^n \setminus {\mathbb R}^n_+}
\frac{|f(x)|}{\lambda}\,dx
\le 1
\right\}<\fz,
\]
where ${\mathbb R}^n_+:=\{x=(x_1,\cdots,x_n)\in\rn:\ x_n\in(0,\fz)\}$,
then it is somehow well known that the maximal operator $M_{r,\lambda}$
is not bounded on $L^{1+\chi_{{\mathbb R}^n_+}}(\rn)$
(see Lemma \ref{l10.2} below).
\end{example}

Throughout the whole paper, we denote by $C$ a \emph{positive constant}
which is independent of
the main parameters, but it may vary from line to line, while
$C(\az, \bz, \cdots)$ denotes a \emph{positive constant} depending on the
parameters $\az$, $\bz$, $\cdots$. The \emph{symbols} $A\ls B$
and $A\ls_{\az,\bz,\cdots}B$ mean, respectively, that
$A\le CB$ and $A\le C(\az,\bz,\cdots)B$. If $A\ls B$ and $B\ls A$, then we write $A\sim B$. If $E$
is a subset of $\rn$, we denote by $\chi_E$ its \emph{characteristic function}.
In what follows, for all $a,\,b\in\rr$, let $a\vee b:= \max\{a,\,b\}$
and $a\wedge b:= \min\{a,\,b\}$.
Also, we let $\Z_+ :=\{0,1,2\cdots\}$.
The \emph{notation} $\lfloor x \rfloor$, for any $x\in\rr$,
means the maximal integer not larger than $x$.
The following is our convention for dyadic cubes:
For $j\in\zz$ and $k\in\zz^n$,
denote by $Q_{jk}$ the \textit{dyadic cube} $2^{-j}([0,1)^n+k)$.
Let $\mathcal{Q}(\rn):= \{Q_{jk}:\ j\in\zz,\ k\in\zz^n\}$,
$$\mathcal{Q}_j(\rn):=\{Q\in \mathcal{Q}(\rn):\ \ell(Q)=2^{-j}\}.$$
For any $Q\in\cq(\rn)$, we let $j_Q$ be $-\log_2 \ell(Q)$,
$\ell(Q)$ its \textit{side length}, $x_Q$ its \textit{lower
left-corner $2^{-j}k$} and $c_Q$ its \textit{center}.
When the dyadic cube $Q$ appears as an index, such as
$\sum_{Q\in\mathcal{Q}(\rn)}$ and
$\{\cdot\}_{Q\in\mathcal{Q}(\rn)}$,
it is understood that $Q$ runs over all dyadic cubes in $\rn$.
For any cube $Q$ and $\kappa\in (0,\fz)$, we denote by $\kappa Q$
the \emph{cube with the same center as $Q$ but $\kappa$ times the sidelength
of $Q$}. Also, we write
\begin{equation}\label{1.4}
\|\vec{\alpha}\|_1:=\sum_{j=1}^n\alpha^j
\end{equation}
for a multiindex $\vec{\alpha}:=(\alpha^1,\alpha^2,\cdots,\alpha^n)
\in\zz_+^n$.
For $\sigma:=(\sigma_1,\cdots,\sigma_n)\in\mathbb{Z}^n_+,$
$\partial^{\sigma}:=(\frac{\partial}{\partial x_1})^{\sigma_1}
\cdots(\frac{\partial}{\partial x_n})^{\sigma_n}.$

\section{Fundamental settings and inequalities}
\label{s2}

\subsection{Basic assumptions}
\label{s2.1}

First of all,
we assume that $\cl(\rn)$ is a quasi-normed space of functions on $\rn$.
Following \cite[p.\,3]{bs}, we denote by $M_0(\rn)$ the
\emph{topological vector space} of all measurable complex-valued
almost everywhere finite functions modulo null functions
(namely, any two functions coinciding almost everywhere
is identified), topologized by
$$\rho_E(f):=\int_E\min\{1,|f(x)|\}\,dx,$$
where $E$ is any subset of $\rn$ with finite Lebesgue measure.
It is easy to show that this topology of $M_0(\rn)$ is
equivalent to the topology of convergence in measure
on sets of finite measure, which makes $M_0(\rn)$
to become a metrizable topological vector space
(see \cite[p.\,30]{bs}).

First, we consider a mapping
$\|\cdot\|_{\cl(\rn)}:M_0(\rn) \to [0,\infty]$
satisfying the following fundamental conditions:
\begin{enumerate}
\item[$(\cl1)$]
An element $f \in M_0(\rn)$ satisfies
$\|f\|_{\cl(\rn)}=0$ if and only if $f=0$.
(Positivity)
\item[$(\cl2)$]
Let $f \in M_0(\rn)$ and $\alpha \in \C$.
Then
$\|\alpha f\|_{\cl(\rn)}=|\alpha|\|f\|_{\cl(\rn)}$.
(Homogeneity)
\item[$(\cl3)$]
The norm $\|\cdot\|_{\cl(\rn)}$ satisfies
the $\theta$-triangle inequality.
That is, there exists a positive constant
$\theta=\theta(\cl(\rn))\in (0,1]$ such that
\[
\|f+g\|_{\cl(\rn)}^\theta
\le
\|f\|_{\cl(\rn)}^\theta
+
\|g\|_{\cl(\rn)}^\theta
\]
for all $f,g \in M_0(\rn)$.
(The $\theta$-triangle inequality)

\item[$(\cl4)$]
If a pair $(f,g) \in M_0(\rn) \times M_0(\rn)$ satisfies $|g| \le |f|$,
then $\|g\|_{\cl(\rn)} \le \|f\|_{\cl(\rn)}$.
(The lattice property)

\item[$(\cl5)$]
Suppose that $\{f_j\}_{j=1}^\infty$ is a sequence of functions
satisfying
$$
\sup_{j \in \N}\|f_j\|_{\cl(\rn)}<\infty, \quad
0 \le f_1 \le f_2 \le \ldots \le f_j \le \ldots.
$$
Then the limit
$f:=\lim_{j \to \infty}f_j$
belongs to $\cl(\rn)$ and
$\|f\|_{\cl(\rn)}\le\sup_{j \in \N}\|f_j\|_{\cl(\rn)}$
holds true. (The Fatou property)
\end{enumerate}
Given the mapping $\|\cdot\|_{\cl(\rn)}$ satisfying
$(\cl1)$ through $(\cl5)$,
the \emph{space $\cl(\rn)$} is defined by
\[
\cl(\rn) :=\{f \in M_0(\rn)\,:\,\|f\|_{\cl(\rn)}<\infty\}.
\]

\begin{remark}\label{r2.x1}
We point out that the assumptions $(\cl1)$, $(\cl2)$ and $(\cl3)$
can be replaced by the following assumption:

$\cl(\rn)$ is a quasi-normed linear space of functions.
Indeed, if  $(\cl(\rn), \|\cdot\|_{\cl(\rn)})$ is a quasi-normed
linear space of function,
by the Aoki-Rolewicz theorem (see \cite{a42,r57}),
there exists an equivalent quasi-norm $\qn\cdot\qn$ and $\wz\tz\in(0,1]$ such that,
for all $f,g\in\cl(\rn)$,
\begin{equation}\label{2.x1}
\|\cdot\|_{\cl(\rn)}\sim\qn\cdot\qn,
\end{equation}
\begin{equation*}
\qn f+g\qn^{\wz\tz}\le\qn f\qn^{\wz\tz}+\qn g\qn^{\wz\tz}.
\end{equation*}
Thus, $(\cl(\rn), \qn\cdot\qn)$ satisfies $(\cl1)$, $(\cl2)$ and $(\cl3)$.
Since all results are invariant on equivalent quasi-norms,
by \eqref{2.x1}, we know that all results are still true for the quasi-norm
$\|\cdot\|_{\cl(\rn)}$.
\end{remark}

Motivated by \cite{ru11,u10},
we also assume that $\cl(\rn)$ enjoys the following property.

\begin{enumerate}
\item[$(\cl6)$]
The $(1+|\cdot|)^{-N_0}$ belongs to $\cl(\rn)$
for some $N_0\in (0,\fz)$
and the estimate
\begin{equation}\label{eq:L6}
\|\chi_{Q_{jk}}\|_{\cl(\rn)}=
\|\chi_{2^{-j}k+2^{-j}[0,1)^n}\|_{\cl(\rn)}
\gtrsim 2^{-j\gamma}(1+|k|)^{-\delta},
\quad j \in \Z_+, \, k \in \Z^n
\end{equation}
holds true for some $\gamma,\delta\in[0,\fz)$,
where the implicit positive constant is independent of $j$ and $k$.
(The non-degenerate condition)
\end{enumerate}

We point out that $(\cl6)$ is a key assumption, which
makes our definitions of quasi-normed spaces
a little different from that in \cite{bs}. This condition has been used
by Rauhut and Ullrich \cite[Definition 4.4]{ru11} in order to define coorbits
of Peetre type spaces in a reasonable way. Indeed, in \cite{bs},
it is necessary to assume that $\chi_E \in \cl(\rn)$
if $E$ is a measurable set of finite measure.

Moreover, from $(\cl4)$ and $(\cl5)$, we deduce the following
Fatou property of $\cl(\rn)$.

\begin{proposition}\label{p2.1}
If $\cl(\rn)$ satisfies $(\cl4)$ and $(\cl5)$, then, for all
sequences $\{f_m\}_{m\in\nn}$ of non-negative functions of $\cl(\rn)$,
$$\lf\|\liminf_{m\to\fz}f_m\r\|_{\cl(\rn)}\le \liminf_{m\to\fz}\|f_m\|_{\cl(\rn)}.$$
\end{proposition}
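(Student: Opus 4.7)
The plan is to run the usual measure-theoretic Fatou argument, using $(\cl4)$ in place of monotonicity of the integral and $(\cl5)$ in place of the monotone convergence theorem.

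First, I would set $g_k := \inf_{m \ge k} f_m$ for each $k \in \nn$. Since $\{f_m\}_{m\in\nn} \subset \cl(\rn) \subset M_0(\rn)$, each $g_k$ is a non-negative measurable function that is almost everywhere finite (because $0 \le g_k \le f_k$ and $f_k$ is a.e.\ finite). By construction, $0 \le g_1 \le g_2 \le \cdots$, and
$$\liminf_{m\to\fz} f_m = \lim_{k\to\fz} g_k = \sup_{k\in\nn} g_k$$
pointwise almost everywhere.

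Next, I would use $(\cl4)$ twice. On one hand, $0 \le g_k \le f_k \in \cl(\rn)$ yields $g_k \in \cl(\rn)$, so $\{g_k\}$ is a legitimate sequence in $\cl(\rn)$. On the other hand, for every $m \ge k$ we have $g_k \le f_m$, and hence by the lattice property
$$\|g_k\|_{\cl(\rn)} \le \|f_m\|_{\cl(\rn)} \quad \text{for all } m \ge k.$$
Taking the infimum over $m \ge k$ gives $\|g_k\|_{\cl(\rn)} \le \inf_{m \ge k} \|f_m\|_{\cl(\rn)}$, and then the supremum over $k$ gives
$$\sup_{k \in \nn} \|g_k\|_{\cl(\rn)} \le \sup_{k \in \nn}\inf_{m \ge k} \|f_m\|_{\cl(\rn)} = \liminf_{m\to\fz} \|f_m\|_{\cl(\rn)}.$$

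If the right-hand side is infinite, the desired inequality is trivial. Otherwise, $\sup_{k\in\nn} \|g_k\|_{\cl(\rn)} < \fz$, and $(\cl5)$ applies to the monotone sequence $\{g_k\}_{k\in\nn}$: the pointwise limit $\liminf_{m\to\fz} f_m = \lim_{k\to\fz} g_k$ lies in $\cl(\rn)$ and satisfies
$$\Big\|\liminf_{m\to\fz} f_m\Big\|_{\cl(\rn)} \le \sup_{k\in\nn} \|g_k\|_{\cl(\rn)} \le \liminf_{m\to\fz} \|f_m\|_{\cl(\rn)},$$
which is what we wanted.

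There is no real obstacle here: the only points requiring care are the two places where $(\cl4)$ is invoked (to ensure each $g_k \in \cl(\rn)$ before $(\cl5)$ can be applied, and to bound $\|g_k\|_{\cl(\rn)}$ from above by $\|f_m\|_{\cl(\rn)}$), and the observation that the inequality is content-free when the right-hand liminf is $+\fz$ so that the finiteness hypothesis of $(\cl5)$ is automatic in the non-trivial case.
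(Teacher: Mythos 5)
Your proposal is correct and follows essentially the same argument as the paper: both define $g_k:=\inf_{m\ge k}f_m$, bound $\sup_k\|g_k\|_{\cl(\rn)}$ by $\liminf_{m\to\fz}\|f_m\|_{\cl(\rn)}$ via $(\cl4)$, and then apply the Fatou property $(\cl5)$ to the monotone sequence $\{g_k\}_{k\in\nn}$. The only difference is that you spell out a few routine details (measurability, membership of $g_k$ in $\cl(\rn)$, the trivial infinite case) that the paper leaves implicit.
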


\begin{proof}
Without loss of generality, we may assume that $\liminf_{m\to\fz}\|f_m\|_{\cl(\rn)}<\fz.$
Recall that $\liminf_{m\to\fz}f_m=\sup_{m\in\nn}\inf_{k\ge m}\{f_k\}$. For all
$m\in\nn$, let $g_m:=\inf_{k\ge m}\{f_k\}$. Then $\{g_m\}_{m\in\nn}$
is a sequence of nonnegative functions satisfying that
$g_1\le g_2 \le \cdots \le g_m\le\cdots.$
Moreover, by $(\cl4)$, we conclude that
$$\sup_{m\in\nn}\|g_m\|_{\cl(\rn)}
\le\liminf_{m\to\fz}\|f_m\|_{\cl(\rn)}<\fz.$$
Then, from this and $(\cl5)$, we further deduce that
$\liminf_{m\to\fz}f_m=\sup_{m\in\nn}\{g_m\}\in \cl(\rn)$ and
$$\lf\|\liminf_{m\to\fz}f_m\r\|_{\cl(\rn)}
\le\sup_{m\in\nn}\|g_m\|_{\cl(\rn)}\le \liminf_{m\to\fz}\|f_m\|_{\cl(\rn)},$$
which completes the proof of Proposition \ref{p2.1}.
\end{proof}

We also remark that the completeness of $\cl(\rn)$
is not necessary.
It is of interest to have completeness automatically,
as Proposition \ref{p3.1} below shows.

Let us additionally recall the following class
$\mathcal{W}^{\alpha_3}_{\alpha_1,\alpha_2}$
of weights which was used recently in \cite{ru11}.
This class of weights has been introduced for the
definition of $2$-microlocal Besov-Triebel-Lizorkin spaces;
see \cite{k10-1,k10-2}. As in Example \ref{e1.1}(ii), let
$$\rr^{n+1}_+:=\{(x,x_{n+1}):\ x\in\rn,\ x_{n+1}\in(0,\fz)\}.$$ We also
let
$\R^{n+1}_{\Z_+}:=\{(x,t) \in \R^{n+1}_+:\ -\log_2 t \in \Z_+\}$.

\begin{definition}\label{d2.1}
Let $\alpha_1, \alpha_2, \alpha_3 \in [0,\infty)$.
The \emph{class}
${\mathcal W}_{\alpha_1,\alpha_2}^{\alpha_3}$
of weights is defined
as the set of all measurable functions
$w:\rr^{n+1}_{\zz_+} \to (0,\infty)$
satisfying the following conditions{\rm:}

(W1) There exists a positive constant $C$ such that, for all $x \in \rn$
and $j,\, \nu \in \Z_+$
with $j \ge \nu$,
\begin{equation}
\label{2.1}
C^{-1}2^{-(j-\nu)\alpha_1}
w(x,2^{-\nu})
\le
w(x,2^{-j})
\le
C2^{-(\nu-j)\alpha_2}
w(x,2^{-\nu}).
\end{equation}

(W2) There exists a positive constant $C$ such that, for all $x,\,y \in \rn$
and $j \in \Z_+$,
\begin{equation}\label{2.2}
w(x,2^{-j})
\le Cw(y,2^{-j})
\left(1+2^{j}|x-y|\right)^{\alpha_3}.
\end{equation}
\end{definition}

Given a weight $w$ and $j \in \zz_+$,
we often write
\begin{equation}\label{2.3}
w_j(x):=
w(x,2^{-j})
\quad (x \in \rn, \, j \in \Z_+),
\end{equation}
which is a convention used
until the end of Section \ref{s8}.
With the convention $(\ref{2.3})$,
$(\ref{2.1})$ and $(\ref{2.2})$ can be read as
\[
C^{-1}2^{-(j-\nu)\az_1}w_\nu(x)
\le
w_j(x)
\le
C2^{-(\nu-j)\az_2}w_\nu(x)
\]
and
\[
w_j(x) \le C w_j(y)(1+2^j|x-y|)^{\az_3},
\]
respectively.
In what follows, for all $a\in\rr$,
$a_+:=\max(a,0)$.

\begin{example}\label{e2.1}
(i)
The most familiar case,
the classical Besov spaces $B^s_{p,q}(\rn)$ and Triebel-Lizorkin spaces
$F^s_{p,q}(\rn)$,
are realized by letting
$
w_j\equiv 2^{js}
$
with $j \in \Z_+$ and $s \in \R$.

(ii)
In general
when $w_j(x)$ with $j\in\zz_+$ and $x\in\rn$ is independent of $x$,
then we see that $\alpha_3=0$.
For example, when $w_j(x) \equiv 2^{js}$ for some $s \in \R$ and all $x\in\rn$.
Then
$w_j \in \cw_{\max(0,-s),\max(0,s)}^0$.

(iii)
Let $w \in \cw_{\az_1,\az_2}^{\az_3}$ and $s \in \R$.
Then the weight given by
\[
\wz w_j(x)
:=2^{js}w_j(x)
\quad (x \in \rn, \, j \in \Z_+)
\]
belongs to the class $\cw_{(\az_1-s)_+,(\az_2+s)_+}^{\az_3}$.
\end{example}

In the present paper,
we consider six underlying function spaces,
two of which are special cases
of other four spaces.
At first glance the definitions
of
$\ell^q(\cl^w_\tau(\rn,\zz_+))$
and
$\ell^q(\cn\cl^w_\tau(\rn,\zz_+))$
might be identical.
However, in \cite{syy},
we showed that they are different in general cases.
In the present paper,
we generalize this fact
in Theorem \ref{t8.6}.

\begin{definition}\label{d2.2}
Let $q\in(0,\fz]$ and $\tau\in[0,\fz)$.
Suppose
$w \in {\mathcal W}^{\alpha_3}_{\alpha_1,\alpha_2}$
with $\alpha_1, \alpha_2, \alpha_3 \in [0,\infty)$.
Let $w_j$ for $j\in\zz_+$ be as in \eqref{2.3}.

{\rm (i)}
The \emph{space}
$\cl^w(\ell^q(\rn,\zz_+))$ is defined to be the
set of all sequences $G:=\{g_j\}_{j\in\zz_+}$ of
measurable functions on $\rn$ such that
\begin{equation}\label{2.4}
\|G\|_{\cl^w(\ell^q(\rn,\zz_+))}
:=
\left\|\lf(\sum_{j=0}^\fz
|w_jg_j|^q\r)^{1/q}\right\|_{\cl(\rn)}<\fz.
\end{equation}
In analogy,
the \emph{space}
$\cl^w(\ell^q(\rn,E))$
is defined for a subset $E \subset \zz$.

{\rm (ii)}
The \emph{space}
$\ell^q(\cl^w(\rn,\zz_+))$ is defined to be the set of all
sequences $G:=\{g_j\}_{j\in\zz_+}$ of measurable
functions on $\rn$ such that
\begin{equation}\label{2.5}
\|G\|_{\ell^q(\cl^w(\rn,\zz_+))}
:=
\lf\{\sum_{j=0}^\fz \|w_jg_j\|_{\cl(\rn)}^q\r\}^{1/q}<\fz.
\end{equation}
In analogy,
the \emph{space}
$\ell^q(\cl^w(\rn,E))$
is defined for a subset $E \subset \zz$.

{\rm (iii)}
The \emph{space}
$\cl^w_\tau(\ell^q(\rn,\zz_+))$
is defined to be the set of all
sequences $G:=\{g_j\}_{j\in\zz_+}$ of measurable
functions on $\rn$ such that
\begin{equation}\label{2.6}
\|G\|_{\cl^w_\tau(\ell^q(\rn,\zz_+))}
:= \sup_{P\in\mathcal{Q}(\rn)}\frac1{|P|^\tau}
\|\{\chi_P w_jg_j\}_{j=j_P \vee 0}^\infty\|_{\cl^w(\ell^q(\rn,\zz_+
\cap[j_P,\infty)))}
<\fz.
\end{equation}

{\rm (iv)}
The \emph{space}
$\ce\cl^w_\tau(\ell^q(\rn,\zz_+))$ is defined to be the
set of all sequences $G:=\{g_j\}_{j\in\zz_+}$ of
measurable functions on $\rn$ such that
\begin{equation}\label{2.9}
\|G\|_{\ce\cl^w_\tau(\ell^q(\rn,\zz_+))}
:= \sup_{P\in\mathcal{Q}(\rn)}\frac1{|P|^\tau}
\|\{\chi_P w_jg_j\}_{j=0}^\infty\|_{\cl^w(\ell^q(\rn,\zz_+))}<\fz.
\end{equation}

{\rm (v)}
The \emph{space}
$\ell^q(\cl^w_\tau(\rn,\zz_+))$ is defined to be the
set of all sequences $G:=\{g_j\}_{j\in\zz_+}$ of
measurable functions on $\rn$ such that
\begin{equation}\label{2.8}
\|G\|_{\ell^q(\cl^w_\tau(\rn,\zz_+))}
:= \sup_{P\in\mathcal{Q}(\rn)}\frac1{|P|^\tau}
\|\{\chi_P w_jg_j\}_{j=j_P \vee 0}^\infty\|_{\ell^q(\cl^w(\rn,\zz_+
\cap[j_P,\infty)))}<\fz.
\end{equation}

{\rm (vi)}
The \emph{space}
$\ell^q(\cn\cl^w_\tau(\rn,\zz_+))$
is defined to be the set of all
sequences $G:=\{g_j\}_{j\in\zz_+}$ of measurable
functions on $\rn$ such that
\begin{equation}\label{2.7}
\|G\|_{\ell^q(\cn\cl^w_\tau(\rn,\zz_+))}
:=
\lf\{\sum_{j=0}^\fz
\sup_{P\in\mathcal{Q}(\rn)}
\left(\frac{\|\chi_P w_jg_j\|_{\cl(\rn)}}{|P|^{\tau}}\right)^q\r\}^{1/q}<\fz.
\end{equation}

When $q=\infty$, a natural modification is made
in {\rm \eqref{2.4}} through {\rm \eqref{2.7}}.
\end{definition}

We also introduce the homogeneous counterparts of these spaces in Section \ref{s9}.
One of the reasons why we are led to introduce
${\mathcal W}^{\az_3}_{\az_1,\az_2}$
is the necessity of describing the smoothness
by using our new weighted function spaces
more precisely
than by using the classical Besov-Triebel-Lizorkin spaces.
For example, in \cite{yo10},
Yoneda considered the following norm. In what follows, ${\mathcal P}(\rn)$
denotes the \emph{set of all polynomials on $\rn$}.

\begin{example}[{\rm \cite{yo10}}]\label{e2.2}
The \emph{space }$\dot{B}_{\infty\infty}^{-1,\sqrt{\cdot}}(\rn)$
denotes the set of all $f \in {\mathcal S}'(\rn)/{\mathcal P}(\rn)$
for which the norm
\begin{equation*}
\|f\|_{\dot{B}_{\infty\infty}^{-1,\sqrt{\cdot}}(\rn)}
:=\sup_{j \in {\mathbb Z}}
2^{-j}\sqrt{|j|+1}
\|\varphi_j*f\|_{L^\infty(\rn)}<\infty.
\end{equation*}
\end{example}

If $\tau=0$, $a\in (0,\fz)$ and $w_j(x):=2^{-j}\sqrt{|j|+1}$
for all $x\in\rn$ and $j\in\zz$, then it can be shown that the space
$\dot{B}_{\infty\infty}^{-1,\sqrt{\cdot}}(\rn)$ and
the space $\dot B^{w,\tau}_{L^\fz,\fz,a}(\rn)$,
introduced in Definition \ref{d9.4}
below, coincide with equivalent norms. This can be proved
by an argument similar to that used in the proof
of \cite[Theorem 2.9]{u10} and we omit the details.
An inhomogeneous variant of this result is also true.
Moreover, we refer to Subsection \ref{s10.8}
for another example of non-trivial weights $w$.
This is a special case of generalized smoothness.
The weight $w$ also plays a role of variable smoothness.

In the present paper,
the spaces
$\ell^q(\cl^w_\tau(\rn,\zz_+))$,
$\ell^q(\cn\cl^w_\tau(\rn,\zz_+))$,
$\cl^w_\tau(\ell^q(\rn,\zz_+))$
and
$\ce\cl^w_\tau(\ell^q(\rn,\zz_+))$
play the central role,
while $\ell^q(\cl^w(\rn,\zz_+))$ and
$\cl^w(\ell^q(\rn,\zz_+))$
are auxiliary spaces.

By the monotonicity of $\ell^q$, we immediately obtain
the following useful conclusions. We omit the details.

\begin{lemma}\label{l2.1}
Let $0<q_1 \le q_2\le\infty$
and $\alpha_1, \alpha_2, \alpha_3, \tau \in [0,\infty)$
and
$w \in {\mathcal W}^{\alpha_3}_{\alpha_1,\alpha_2}$.
Then
\begin{eqnarray*}
&&\ell^{q_1}(\cl^w(\rn,\zz_+))
\hookrightarrow
\ell^{q_2}(\cl^w(\rn,\zz_+)),\\
&&\cl^w(\ell^{q_1}(\rn,\zz_+))
\hookrightarrow
\cl^w(\ell^{q_2}(\rn,\zz_+)),\\
&&\ell^{q_1}(\cl^w_\tau(\rn,\zz_+))
\hookrightarrow
\ell^{q_2}(\cl^w_\tau(\rn,\zz_+)),\\
&&\ell^{q_1}(\cn\cl^w_\tau(\rn,\zz_+))
\hookrightarrow
\ell^{q_2}(\cn\cl^w_\tau(\rn,\zz_+)),\\
&&\cl^w_\tau(\ell^{q_1}(\rn,\zz_+))
\hookrightarrow
\cl^w_\tau(\ell^{q_2}(\rn,\zz_+))
\end{eqnarray*}
and
\begin{eqnarray*}
\ce\cl^w_\tau(\ell^{q_1}(\rn,\zz_+))
\hookrightarrow
\ce\cl^w_\tau(\ell^{q_2}(\rn,\zz_+))
\end{eqnarray*}
in the sense of continuous embeddings.
\end{lemma}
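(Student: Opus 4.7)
The plan is to reduce every embedding to the scalar $\ell^{q}$ monotonicity
\[
\left(\sum_{j} |a_j|^{q_2}\right)^{1/q_2}
\le \left(\sum_{j} |a_j|^{q_1}\right)^{1/q_1},
\qquad 0<q_1\le q_2\le\infty,
\]
applied either to sequences of numbers or to sequences of non-negative functions. The parameters $\alpha_1,\alpha_2,\alpha_3,\tau$ and the weight $w\in\cw^{\az_3}_{\az_1,\az_2}$ play no role in the argument: the weights $w_j$ are simply absorbed into the components $g_j$ throughout, since none of the norms in Definition \ref{d2.2} involves any comparison between different indices $j$ of the weight.

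First, I would handle the two purely ``sequence-outside'' embeddings together. For $\ell^{q_1}(\cl^w(\rn,\zz_+))\hookrightarrow\ell^{q_2}(\cl^w(\rn,\zz_+))$ I apply the scalar $\ell^{q}$ monotonicity to the numerical sequence $\{\|w_j g_j\|_{\cl(\rn)}\}_{j\in\zz_+}$. The embeddings
$\ell^{q_1}(\cl^w_\tau(\rn,\zz_+))\hookrightarrow\ell^{q_2}(\cl^w_\tau(\rn,\zz_+))$
and
$\ell^{q_1}(\cn\cl^w_\tau(\rn,\zz_+))\hookrightarrow\ell^{q_2}(\cn\cl^w_\tau(\rn,\zz_+))$
follow in the same way: apply the scalar monotonicity to $\{\|\chi_P w_j g_j\|_{\cl(\rn)}\}_{j\ge j_P\vee 0}$ pointwise in $P$, then take the supremum over $P\in\cq(\rn)$ (divided by $|P|^\tau$); the supremum is order-preserving and commutes with the embedding.

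Next, I would treat the ``sequence-inside'' embeddings. For $\cl^w(\ell^{q_1}(\rn,\zz_+))\hookrightarrow\cl^w(\ell^{q_2}(\rn,\zz_+))$ I apply the scalar $\ell^{q}$ monotonicity pointwise to the non-negative function-valued sequence $\{|w_j(x) g_j(x)|\}_{j\in\zz_+}$, obtaining
\[
\left(\sum_{j=0}^\infty |w_j(x) g_j(x)|^{q_2}\right)^{1/q_2}
\le \left(\sum_{j=0}^\infty |w_j(x) g_j(x)|^{q_1}\right)^{1/q_1}
\]
for almost every $x\in\rn$, and then invoke the lattice property $(\cl4)$ to pass to the $\cl(\rn)$-norm. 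The embeddings involving $\cl^w_\tau(\ell^{q}(\rn,\zz_+))$ and $\ce\cl^w_\tau(\ell^{q}(\rn,\zz_+))$ are obtained by the same pointwise estimate, now with an extra factor $\chi_P$ and then a supremum over $P\in\cq(\rn)$ (taken, in the former case, only over indices $j\ge j_P\vee 0$).

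I do not anticipate any real obstacle here; the only care one must take is that in the spaces $\ell^{q}(\cl^w_\tau(\rn,\zz_+))$ and $\cl^w_\tau(\ell^{q}(\rn,\zz_+))$ the inner index set depends on $P$ through $j_P\vee 0$, but since the scalar $\ell^q$ monotonicity holds on any index set, this causes no difficulty. In each case the implicit constant is $1$, so the embeddings are actually norm-non-increasing. The case $q=\infty$ is handled identically once the $\sup$ replaces $(\sum\cdot)^{1/q}$ in the definitions, as indicated at the end of Definition \ref{d2.2}.
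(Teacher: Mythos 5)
Your proof is correct and is exactly the argument the paper intends: the paper simply states that the lemma follows from the monotonicity of $\ell^q$ and omits the details, which you have filled in faithfully (scalar monotonicity for the outer-$\ell^q$ spaces, pointwise monotonicity plus the lattice property $(\cl4)$ for the inner-$\ell^q$ spaces, then the supremum over $P$).
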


\subsection{Inequalities}

Let us suppose that we are given a quasi-normed space
${\mathcal L}(\rn)$ satisfying $(\cl1)$ through $(\cl6)$.
The following lemma is immediately deduced
from $(\cl4)$ and $(\cl5)$. We omit the details.

\begin{lemma}\label{l2.2}
Let $q\in (0,\infty]$ and $w$ be as in Definition \ref{d2.2}.
If $\cl(\rn)$ is a quasi-normed space,
then

{\rm (i)}
the quasi-norms
$\|\cdot\|_{\ell^q(\cl^w_0(\rn,\zz_+))}$,
$\|\cdot\|_{\ell^q(\cn\cl^w_0(\rn,\zz_+))}$
and
$\|\cdot\|_{\ell^q(\cl^w(\rn,\zz_+))}$
are mutually equivalent;

{\rm (ii)}
the quasi-norms
$\|\cdot\|_{\cl^w_0(\ell^q(\rn,\zz_+))}$,
$\|\cdot\|_{\ce\cl^w_0(\ell^q(\rn,\zz_+))}$
and
$\|\cdot\|_{\cl^w(\ell^q(\rn,\zz_+))}$
are mutually equivalent.
\end{lemma}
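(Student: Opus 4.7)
The plan rests on two tools: the lattice property $(\cl4)$ gives the easy half of each equivalence (localizing by $\chi_P$, or replacing the starting index $0$ of a sum over $j\in\zz_+$ by $j_P\vee 0$, only decreases the norm), while the combination of the $\theta$-triangle inequality $(\cl3)$ with the monotone-convergence axiom $(\cl5)$ will let me recover the global quantity from its restrictions to large dyadic cubes. The enabling geometric fact is the identity
\[
[-2^m,2^m)^n=\bigcup_{k\in\{-1,0\}^n}Q_{-m,k},\qquad m\in\nn,
\]
in which each $Q_{-m,k}$ is a dyadic cube of side $2^m$ with $j_{Q_{-m,k}}=-m\le 0$, so that $j_{Q_{-m,k}}\vee 0=0$. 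Hence on any such cube the truncated sum $\sum_{j\ge j_P\vee 0}$ agrees with the full sum $\sum_{j\ge 0}$, which is what will allow me to pass from the $_0$-type norms to the global ones.

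For part (ii), set $F:=\lf(\sum_{j\ge 0}|w_jg_j|^q\r)^{1/q}$. The chain
\[
\|G\|_{\cl^w_0(\ell^q(\rn,\zz_+))}\le\|G\|_{\ce\cl^w_0(\ell^q(\rn,\zz_+))}\le\|G\|_{\cl^w(\ell^q(\rn,\zz_+))}=\|F\|_{\cl(\rn)}
\]
is immediate from $(\cl4)$, used via $j_P\vee 0\ge 0$ and via $\chi_P F\le F$. For the reverse, put $F_m:=\chi_{[-2^m,2^m)^n}F$ so that $F_m\nearrow F$; axiom $(\cl5)$ gives $\|F\|_{\cl(\rn)}=\sup_{m\in\nn}\|F_m\|_{\cl(\rn)}$, while $(\cl3)$ applied to the decomposition above produces
\[
\|F_m\|_{\cl(\rn)}^\theta\le\sum_{k\in\{-1,0\}^n}\|\chi_{Q_{-m,k}}F\|_{\cl(\rn)}^\theta\le 2^n\|G\|_{\cl^w_0(\ell^q(\rn,\zz_+))}^\theta,
\]
because each summand $\|\chi_{Q_{-m,k}}F\|_{\cl(\rn)}$ is, by the initial observation, precisely the $P=Q_{-m,k}$ term of the supremum defining $\|G\|_{\cl^w_0(\ell^q(\rn,\zz_+))}$.

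For part (i), the equivalence of $\|G\|_{\ell^q(\cn\cl^w_0(\rn,\zz_+))}$ with $\|G\|_{\ell^q(\cl^w(\rn,\zz_+))}$ follows at once by applying the scheme of part (ii) to the single non-negative function $w_jg_j$, for each fixed $j$, and then taking the outer $\ell^q$-sum. To prove $\|G\|_{\ell^q(\cl^w(\rn,\zz_+))}\ls\|G\|_{\ell^q(\cl^w_0(\rn,\zz_+))}$, I apply $(\cl3)$ fibre-wise to get $\|w_jg_j\chi_{[-2^m,2^m)^n}\|_{\cl(\rn)}^\theta\le\sum_k\|w_jg_j\chi_{Q_{-m,k}}\|_{\cl(\rn)}^\theta$, raise to the $q/\theta$-th power using the elementary bound $\lf(\sum_{l=1}^N a_l\r)^\alpha\le C(N,\alpha)\sum_l a_l^\alpha$ (valid for finitely many non-negative terms and any $\alpha\in(0,\fz)$), sum in $j$, swap the finite $k$-sum with the $j$-sum, and finally send $m\to\fz$ using monotone convergence on the $j$-sum together with $(\cl5)$ applied to $\|w_jg_j\chi_{[-2^m,2^m)^n}\|_{\cl(\rn)}\nearrow\|w_jg_j\|_{\cl(\rn)}$. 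The main obstacle is precisely this last bookkeeping, where the outer $\ell^q$ has to be combined cleanly with the inner $\theta$-triangle inequality in both regimes $q\ge\theta$ and $q<\theta$; because only $2^n$ cubes appear, all constants stay finite and independent of $m$ and $j$. The case $q=\fz$ is handled in parallel with $\sum_j$ replaced by $\sup_j$ throughout.
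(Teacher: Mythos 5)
Your argument is correct, and it fills in exactly the details the paper omits (the paper states only that the lemma "is immediately deduced from $(\cl4)$ and $(\cl5)$"): the lattice property gives the trivial inequalities, and the exhaustion of $\rn$ by the $2^n$ dyadic cubes $Q_{-m,k}$, $k\in\{-1,0\}^n$, for which $j_P\vee 0=0$, combined with $(\cl5)$, recovers the global norm. Your observation that $(\cl3)$ is also needed (to reassemble the $2^n$ quadrant cubes, both in the $\cl$-quasi-norm and, via the elementary power inequality, in the outer $\ell^q$-sum for part (i)) is accurate and is a harmless implicit use of the quasi-norm structure that the paper's attribution glosses over.
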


Based on Lemma \ref{l2.2}, in what follows, we identify the spaces
appearing, respectively, in (i) and (ii) of Lemma \ref{l2.2}.

The following fundamental estimates
(\ref{2.14})-(\ref{2.17}) follow from the H\"older inequality
and the condition (W1) and (W2).
However, we need to keep in mind
that the condition \eqref{2.13} below is used
throughout the present paper.

\begin{lemma}\label{l2.3}
Let $D_1, D_2, \alpha_1, \alpha_2, \alpha_3, \tau \in [0,\infty)$
and $q\in(0,\fz]$ be fixed parameters satisfying that
\begin{equation}\label{2.13}
D_1\in(\alpha_1,\fz), \quad
D_2\in(n\tau+\alpha_2,\fz).
\end{equation}
Suppose that $\{g_\nu\}_{\nu\in\zz_+}$ is a given
family of measurable functions on $\rn$ and $w\in\cw_{\az_1,\az_2}^{\az_3}$.
For all $j\in\zz_+$ and $x\in\rn$, let
$$G_j(x):=\sum_{\nu=0}^{j}
2^{-(j-\nu)D_2}g_\nu(x)+
\sum_{\nu=j+1}^\fz
2^{-(\nu-j)D_1}g_\nu(x).$$
If $\cl(\rn)$ satisfies $(\cl1)$ through $(\cl4)$,
then the following estimates, with implicit positive constants
independent of $\{g_\nu\}_{\nu\in\zz_+}$,
hold true:
\begin{gather}
\label{2.14}
\|\{G_j\}_{j\in\zz_+}\|_{\ell^q(\cl^w_\tau(\rn,\zz_+))}
\ls
\|\{g_\nu\}_{\nu\in\zz_+}\|_{\ell^q(\cl^w_\tau(\rn,\zz_+))},\\
\label{2.15}
\|\{G_j\}_{j\in\zz_+}\|_{\ell^q(\cn\cl^w_\tau(\rn,\zz_+))}
\ls
\|\{g_\nu\}_{\nu\in\zz_+}\|_{\ell^q(\cn\cl^w_\tau(\rn,\zz_+))},\\
\label{2.16}
\|\{G_j\}_{j\in\zz_+}\|_{\cl^w_\tau(\ell^q(\rn,\zz_+))}
\ls
\|\{g_\nu\}_{\nu\in\zz_+}\|_{\cl^w_\tau(\ell^q(\rn,\zz_+))}
\end{gather}
and
\begin{gather}
\label{2.17}
\|\{G_j\}_{j\in\zz_+}\|_{\ce\cl^w_\tau(\ell^q(\rn,\zz_+))}
\ls
\|\{g_\nu\}_{\nu\in\zz_+}\|_{\ce\cl^w_\tau(\ell^q(\rn,\zz_+))}.
\end{gather}
\end{lemma}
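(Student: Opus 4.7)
The plan is to handle all four estimates \eqref{2.14}--\eqref{2.17} along the same lines, since they share the same structural features. First I would split
\[
G_j=G_j^{\rm low}+G_j^{\rm high},\quad G_j^{\rm low}:=\sum_{\nu=0}^{j}2^{-(j-\nu)D_2}g_\nu,\quad G_j^{\rm high}:=\sum_{\nu=j+1}^{\infty}2^{-(\nu-j)D_1}g_\nu,
\]
and treat the two pieces separately; by the $\theta$-triangle inequality $(\cl3)$ and the monotonicity of every quasi-norm under consideration with respect to $|G_j|$ via $(\cl4)$, it suffices to prove each of \eqref{2.14}--\eqref{2.17} with $G_j$ replaced successively by $G_j^{\rm low}$ and by $G_j^{\rm high}$.

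The weight assumptions (W1) and (W2) give the pointwise conversions $w_j\lesssim 2^{(j-\nu)\alpha_2}w_\nu$ for $\nu\le j$ and $w_j\lesssim 2^{(\nu-j)\alpha_1}w_\nu$ for $\nu>j$. Plugged into the defining sums of $G_j^{\rm low}$ and $G_j^{\rm high}$ these produce the geometric factors $2^{-(j-\nu)(D_2-\alpha_2)}$ and $2^{-(\nu-j)(D_1-\alpha_1)}$, both of which decay by $D_2>\alpha_2$ (a consequence of $D_2>n\tau+\alpha_2$) and $D_1>\alpha_1$. For the outer-$\ell^q$ scales \eqref{2.14} and \eqref{2.15}, I would apply $(\cl3)$ together with $(\cl4)$ to obtain, for every cube $P$,
\[
\|\chi_P w_j G_j^{\rm low}\|_{\cl(\rn)}^\theta\lesssim\sum_{\nu=0}^{j}2^{-(j-\nu)(D_2-\alpha_2)\theta}\|\chi_P w_\nu g_\nu\|_{\cl(\rn)}^\theta,
\]
and a symmetric inequality for $G_j^{\rm high}$, then close via a standard discrete Young-type inequality in the index pair $(j,\nu)$ (distinguishing $q\ge\theta$ from $q<\theta$ in the usual way). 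For the inner-$\ell^q$ scales \eqref{2.16} and \eqref{2.17}, I would instead do the decay estimate pointwise, apply a H\"older-type inequality in the $\ell^q$-sum over $\nu$, and only then invoke the lattice property $(\cl4)$ to take the $\cl(\rn)$-norm.

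The delicate point is the Morrey-type estimates \eqref{2.14} and \eqref{2.16}, where the supremum runs only over $j\ge j_P\vee 0$ but $G_j^{\rm low}$ still contains terms with $\nu<j_P$ that are not directly comparable to the target quasi-norm. For such $\nu$ I would cover $P$ by the unique dyadic cube $P_\nu$ of side length $2^{-\nu}$ containing $P$, so that $\chi_P\le\chi_{P_\nu}$ and $|P_\nu|=2^{(j_P-\nu)n}|P|$; this costs a factor $2^{(j_P-\nu)n\tau}$ when the normalization is recast as $|P|^{-\tau}\cdot(\cdot)=2^{(j_P-\nu)n\tau}|P_\nu|^{-\tau}\cdot(\cdot)$. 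After substituting, the combined exponent in the inner sum becomes
\[
-(j-\nu)(D_2-\alpha_2)+(j_P-\nu)n\tau=-(j-j_P)(D_2-\alpha_2)-(j_P-\nu)(D_2-\alpha_2-n\tau),
\]
and is therefore negative in both $j-j_P$ and $j_P-\nu$ precisely thanks to \eqref{2.13}. Routine geometric summation then delivers \eqref{2.14} and \eqref{2.16}.

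The main obstacle I anticipate is exactly the bookkeeping in this $\tau>0$/low-$\nu$ regime: one must simultaneously keep track of the scale shift $P\mapsto P_\nu$, the weight conversion coming from (W1), and the decay factor $2^{-(j-\nu)D_2}$, and then verify that the composite exponent in $(j-j_P,j_P-\nu)$ is strictly negative. This is the unique place where the sharp assumption $D_2>n\tau+\alpha_2$ (rather than the weaker $D_2>\alpha_2$ which suffices for the $\tau=0$ and $G_j^{\rm high}$ arguments) is actually used, and it is what pins down the admissible range of $D_2$ in terms of the Morrey parameter $\tau$.
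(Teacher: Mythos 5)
Your proposal is correct and follows essentially the same route as the paper's proof: the same low/high splitting, the same (W1) weight conversions producing the factors $2^{-(j-\nu)(D_2-\alpha_2)}$ and $2^{-(\nu-j)(D_1-\alpha_1)}$, the same case distinction in $q$ via the $\theta$-triangle inequality versus H\"older, and the same key step of enlarging $P$ to a cube of side length $2^{-\nu}$ for the terms with $\nu<j_P$, which is exactly where $D_2>n\tau+\alpha_2$ is consumed. The only cosmetic difference is that you pass to the dyadic ancestor $P_\nu$ whereas the paper uses the concentric expansion $2^{(j_P\vee 0)-\nu}P$; both give the same exponent bookkeeping.
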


\begin{proof}
Let us prove \eqref{2.16}.
The proofs of \eqref{2.14}, \eqref{2.15} and \eqref{2.17} are similar
and we omit the details.
Let us write
\begin{align*}
{\rm I}(P):=&
\frac1{|P|^\tau}
\lf\|\chi_P \lf[\sum_{j=j_P\vee0}^\fz
\lf|\sum_{\nu=0}^j w_j2^{(\nu-j)D_2}g_\nu\r|^q
\r]^{1/q}\r\|_{\cl(\rn)}\\
&+\frac1{|P|^\tau}
\lf\|\chi_P \lf[\sum_{j=j_P\vee0}^\fz
\lf|\sum_{\nu=j+1}^\fz w_j2^{(j-\nu)D_1}g_\nu\r|^q
\r]^{1/q}\r\|_{\cl(\rn)},
\end{align*}
where $P$ is a dyadic cube chosen arbitrarily.
If $j,\nu \in \Z_+$ and $\nu \ge j$,
then by \eqref{2.1}, we know that, for all $x\in\rn$,
\begin{equation}\label{2.18}
w_j(x) \ls 2^{-\alpha_1(j-\nu)}w_\nu(x).
\end{equation}
If $j,\nu \in \zz_+$ and $j \ge \nu$,
then by \eqref{2.1}, we see that,
for all $x\in\rn$,
\begin{equation}\label{2.19}
w_j(x) \ls 2^{\alpha_2(j-\nu)}w_\nu(x).
\end{equation}
If we combine \eqref{2.18} and \eqref{2.19},
then we conclude that, for all $x\in\rn$ and
$j,\nu \in \Z_+$,
\begin{equation}\label{2.20}
w_j(x) \ls
\begin{cases}
2^{-\alpha_1(j-\nu)}w_\nu(x), &\nu \ge j;\\
2^{\alpha_2(j-\nu)}w_\nu(x), &\nu \le j.
\end{cases}
\end{equation}
We need to show that
\begin{equation*}
{\rm I}(P)
\lesssim
\|\{g_\nu\}_{\nu\in\zz_+}\|_{\cl^w_\tau(\ell^q(\rn,\zz_+))}
\end{equation*}
with the implicit positive constant independent of $P$
and $\{g_\nu\}_{\nu\in \zz_+}$
in view of the definitions of $\{G_j\}_{j\in\zz_+}$ and
$\|\{G_j\}_{j\in\zz_+}\|_{\cl^w_\tau(\ell^q(\rn,\zz_+))}$.

Let us suppose $q\in(0,1]$ for the moment.
Then we deduce, from \eqref{2.20} and $(\cl4)$, that
\begin{align}\label{2.21}
{\rm I}(P)
\ls&\frac1{|P|^\tau}
\lf\|\chi_P \lf[\sum_{j=j_P\vee0}^\fz
\sum_{\nu=0}^j 2^{-(j-\nu)(D_2-\alpha_2) q}
\lf|w_\nu g_\nu\r|^q
\r]^{1/q}\r\|_{\cl(\rn)}\noz\\
&+\frac1{|P|^\tau}
\lf\|\chi_P \lf[\sum_{j=j_P\vee0}^\fz
\sum_{\nu=j+1}^\infty 2^{-(\nu-j)(D_1-\alpha_1) q}
\lf|w_\nu g_\nu\r|^q
\r]^{1/q}\r\|_{\cl(\rn)}
\end{align}
by the inequality that, for all $r\in(0,1]$ and
$\{a_j\}_j\st \cc$,
\begin{equation}\label{2.22}
\lf(\sum_{j}|a_j|\r)^r\le\sum_{j}|a_j|^r.
\end{equation}
In \eqref{2.21},
we change the order of the summations in its right-hand side to obtain
\begin{align}
{\rm I}(P)
\ls&\frac1{|P|^\tau}
\lf\|\chi_P \lf[\sum_{\nu=0}^\fz\sum_{j=\nu\vee j_P\vee0}^\fz
2^{-(j-\nu)(D_2-\alpha_2) q}
\lf|w_\nu g_\nu\r|^q
\r]^{1/q}\r\|_{\cl(\rn)}\noz\\
\noz
&+\frac1{|P|^\tau}
\lf\|\chi_P \lf[\sum_{\nu=j_P\vee0}^\infty\sum_{j=j_P\vee0}^\nu
2^{-(\nu-j)(D_1-\alpha_1) q}
\lf|w_\nu g_\nu\r|^q
\r]^{1/q}\r\|_{\cl(\rn)}.
\end{align}
Now we decompose the summations
with respect to $\nu$ according to
$\nu \ge j_P \vee 0$ or $\nu< j_P \vee 0$.
Since $D_2\in(\alpha_2+n\tau,\fz)$,
we can choose $\ez\in(0,\fz)$
such that $D_2\in(\alpha_2+n\tau+\ez,\fz)$.
From this, $D_1\in(\alpha_1,\fz)$,
the H\"older inequality, $(\cl2)$ and $(\cl4)$,
it follows that
\begin{eqnarray}\label{eq:120223-1}
{\rm I}(P)&&\lesssim
\|\{g_\nu\}_{\nu\in\zz_+}\|_{\cl^w_\tau(\ell^q(\rn,\zz_+))}\notag\\
&&\hs+\frac1{|P|^\tau}
\lf\|\chi_P \lf[\sum_{\nu=0}^{j_P \vee 0}\sum_{j=j_P\vee0}^\fz
2^{-(j-\nu)(D_2- \alpha_2) q}
\lf|w_\nu g_\nu\r|^q
\r]^{\frac 1q}\r\|_{\cl(\rn)}\notag\\
&&\ls\|\{g_\nu\}_{\nu\in\zz_+}\|_{\cl^w_\tau(\ell^q(\rn,\zz_+))}\notag\\
&&\hs+\frac{2^{-(j_P\vee0)(D_2-\az_2-\ez)}}{|P|^\tau}
\lf\|\chi_P \sum_{\nu=0}^{j_P \vee 0}
2^{\nu(D_2-\alpha_2-\ez)}
\lf|w_\nu g_\nu\r|
\r\|_{\cl(\rn)}.
\end{eqnarray}
We write $2^{j_P \vee 0-\nu}P$
for the $2^{j_P \vee 0-\nu}$ times expansion of $P$
as our conventions at the end of Section \ref{s1}.
If we use the assumption $(\cl3)$, we see that
\begin{eqnarray*}
{\rm I}(P)&&\lesssim
\|\{g_\nu\}_{\nu\in\zz_+}\|_{\cl^w_\tau(\ell^q(\rn,\zz_+))}\notag\\
&&\hs+\frac{2^{-(j_P\vee0)(D_2-\az_2-\ez)}}{|P|^\tau}
\left\{\sum_{\nu=0}^{j_P \vee 0}
\lf\|2^{\nu(D_2-\alpha_2-\ez)}
\chi_P w_\nu g_\nu
\r\|_{\cl(\rn)}^{\theta}\right\}^{1/\theta}\notag\\
&&\lesssim
\|\{g_\nu\}_{\nu\in\zz_+}\|_{\cl^w_\tau(\ell^q(\rn,\zz_+))}\notag\\
&&\hspace{0.2cm}+2^{-(j_P\vee0)(D_2-\az_2-\ez)}\left\{\sum_{\nu=0}^{j_P \vee 0}
\left[
\frac{2^{\nu(D_2-\alpha_2-n\tau-\ez)+n\tau(j_P\vee 0)}}
{|2^{(j_P \vee 0)-\nu}P|^\tau}\lf\|\chi_{2^{(j_P \vee 0)-\nu}P}
w_\nu g_\nu\r\|_{\cl(\rn)}\right]^\theta\right\}^{1/\theta}\notag\\
\notag
&&\lesssim
\|\{g_\nu\}_{\nu\in\zz_+}\|_{\cl^w_\tau(\ell^q(\rn,\zz_+))}.
\end{eqnarray*}
Since the dyadic cube $P$ is arbitrary,
by taking the supremum of all $P$,
the proof of the case that $q \in(0, 1]$ is now complete.

When $q \in (1,\fz]$,
choose $\kappa\in(0,\fz)$ such that $\kappa+\az_1<D_1$ and
$\kappa+n\tau+\alpha_2<D_2$.
Then, by virtue of the H\"{o}lder inequality, we are led to
\begin{eqnarray}
{\rm I}(P)
&&\le\frac1{|P|^\tau}\lf\{
\lf\|\chi_P \lf[\sum_{j=j_P\vee0}^\fz
\sum_{\nu=0}^j 2^{-(j-\nu)(D_2-\kappa-\alpha_2) q}
\lf|w_\nu g_\nu\r|^q
\r]^{1/q}\r\|_{\cl(\rn)}\r.\noz\\
&&\hs\lf.+\lf\|\chi_P \lf[\sum_{j=j_P\vee0}^\fz
\sum_{\nu=j+1}^\fz 2^{-(\nu-j)(D_2-\kappa-\alpha_2) q}
\lf|w_\nu g_\nu\r|^q
\r]^{1/q}\r\|_{\cl(\rn)}\r\},\noz
\end{eqnarray}
where the only difference from \eqref{2.21}
lies in the point that $D_1$ and $D_2$ are, respectively,
replaced by $D_1-\kappa$ and $D_2-\kappa$.
With $D_1$ and $D_2$, respectively, replaced by $D_1-\kappa$ and $D_2-\kappa$,
the same argument as above works.
This finishes the proof of Lemma \ref{l2.3}.
\end{proof}

The following lemma is frequently used
in the present paper, which previously appeared in \cite[Lemmas B.1 and B.2]{fj90}, \cite[p.\,466]{gr08},
\cite[Lemmas 1.2.8 and 1.2.9]{hn07}, \cite[Lemma 1]{r99}
 or \cite[Lemma A.3]{u10}. In the last reference
the result is stated in terms of the continuous wavelet transform.
Denote by $\omega_n$ the \emph{volume of the unit ball in $\rn$}
and by $C^L(\R^n)$ the \emph{space of all functions having continuous
derivatives up to order $L$}.

\begin{lemma}\label{l2.4}
Let $j,\nu\in\zz_+$, $M,N\in(0,\fz)$,
and $L\in\mathbb{N}\cup\{0\}$ satisfy $\nu\geq j$
and $N>M+L+n$.
Suppose that $\phi_j\in C^L(\R^n)$ satisfies that,
for all $\|{\vec{\alpha}}\|_1=L$,
\begin{equation*}
\lf|\partial^{\vec{\alpha}}\phi_j(x)\r|
\leq A_{\vec{\alpha}}\frac{2^{j(n+L)}}{(1+2^{j}|x-x_j|)^M},
\end{equation*}
where $A_{\vec{\az}}$ is a positive constant independent of $j$, $x$ and $x_j$.
Furthermore, suppose that another function
$\phi_\nu$ is a measurable function satisfying that, for all $\|\vec{\beta}\|_1\leq L-1$,
\begin{equation*}
\int_{\R^n}\phi_\nu(y) y^{\vec{\beta}}\,dy=0\
\text{and, for all}\ x\in\rn,\
|\phi_\nu(x)|\leq B\frac{2^{\nu n}}{(1+2^\nu|x-x_\nu|)^N},
\end{equation*}
where the former condition is supposed to be vacuous in the case when $L=0$. Then
\begin{equation*}
\left|\int_{\R^n}\phi_j(x)\phi_\nu(x)dx\right|
\leq
\left(\sum_{\|{\vec{\alpha}}\|_1=L}
\frac{A_{\vec{\alpha}}}{{\vec{\alpha}}!}\right)\frac{N-M-L}{N-M-L-n}B\omega_n
\,2^{j n-(\nu-j)L}(1+2^j|x_j-x_\nu|)^{-M}.
\end{equation*}
\end{lemma}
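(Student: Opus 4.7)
The approach is the classical ``vanishing moments versus smoothness'' trick, originating in Frazier--Jawerth almost diagonal estimates. First, I would use the vanishing moment hypothesis on $\phi_\nu$ to subtract the Taylor polynomial of $\phi_j$ of order $L-1$ centered at $x_\nu$: writing $P_{x_\nu,L-1}\phi_j$ for this polynomial, I have
\[
\int_{\rn}\phi_j(x)\phi_\nu(x)\,dx
=\int_{\rn}\bigl[\phi_j(x)-P_{x_\nu,L-1}\phi_j(x)\bigr]\phi_\nu(x)\,dx.
\]
(When $L=0$ this reduction is vacuous and the polynomial is taken to be zero, matching the hypothesis.) Then by Taylor's formula with integral remainder, the bracket equals
\[
L\sum_{\|\vec{\alpha}\|_1=L}\frac{(x-x_\nu)^{\vec{\alpha}}}{\vec{\alpha}!}\int_0^1(1-t)^{L-1}\,
\partial^{\vec{\alpha}}\phi_j\bigl(x_\nu+t(x-x_\nu)\bigr)\,dt.
\]

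Next I would insert the derivative bound on $\partial^{\vec{\alpha}}\phi_j$ and the pointwise bound on $\phi_\nu$, obtaining that the left-hand side is dominated by
\[
\sum_{\|\vec{\alpha}\|_1=L}\frac{A_{\vec{\alpha}}}{\vec{\alpha}!}\,B\,2^{j(n+L)}2^{\nu n}
\int_0^1 L(1-t)^{L-1}\!\int_{\rn}\frac{|x-x_\nu|^L}{(1+2^\nu|x-x_\nu|)^N}\cdot\frac{dx}{(1+2^j|x_\nu+t(x-x_\nu)-x_j|)^M}\,dt.
\]
The factor $|x-x_\nu|^L/(1+2^\nu|x-x_\nu|)^N$ carries a gain of $2^{-\nu L}$, which together with the $2^{jL}$ from $\partial^{\vec{\alpha}}\phi_j$ produces the advertised factor $2^{-(\nu-j)L}$ (this is exactly why vanishing moments of order $L-1$ buy an improvement of $2^{-(\nu-j)L}$).

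The remaining (and main technical) step is to produce the factor $(1+2^j|x_j-x_\nu|)^{-M}$. Here I would split the integration region according to whether $|x-x_\nu|\le \tfrac12|x_j-x_\nu|$ or not. In the first region, the triangle inequality gives $|x_\nu+t(x-x_\nu)-x_j|\ge\tfrac12|x_j-x_\nu|$ uniformly in $t\in[0,1]$, so the $(1+2^j|\cdot|)^{-M}$ factor pulls out as $(1+2^j|x_j-x_\nu|)^{-M}$ and the remaining $x$-integral, after the substitution $y=2^\nu(x-x_\nu)$, contributes $2^{-\nu n}2^{-\nu L}\omega_n\cdot\tfrac{N-M-L}{N-M-L-n}$ via the hypothesis $N>M+L+n$. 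In the complementary region, $|x-x_\nu|\gtrsim |x_j-x_\nu|$, so the polynomial decay $(1+2^\nu|x-x_\nu|)^{-N}$ already dominates $(1+2^j|x_j-x_\nu|)^{-M}$ (using $\nu\ge j$ and $N>M+L+n$), and I bound the $(1+2^j|\cdot|)^{-M}$ factor trivially by $1$. Collecting both contributions and using $\nu\ge j$ to combine the prefactors $2^{jn}$ and $2^{(j-\nu)L}$, I obtain the claimed bound with constant
$\bigl(\sum_{\|\vec{\alpha}\|_1=L}A_{\vec{\alpha}}/\vec{\alpha}!\bigr)\,\tfrac{N-M-L}{N-M-L-n}\,B\,\omega_n$.

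The main obstacle is the bookkeeping in the second step: controlling the Taylor remainder requires estimating $\partial^{\vec{\alpha}}\phi_j$ at an auxiliary point $x_\nu+t(x-x_\nu)$ rather than at $x_\nu$, and the dichotomy argument above is needed to keep the answer in terms of $|x_j-x_\nu|$ only. The hypothesis $N>M+L+n$ is used precisely to ensure convergence in both regions after extracting the polynomial factor $|x-x_\nu|^L$.
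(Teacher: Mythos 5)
The paper does not actually prove Lemma \ref{l2.4}; it quotes it from the literature (Frazier--Jawerth, Rychkov, Ullrich, etc.), so there is no in-paper argument to compare against. Your plan follows the standard route used in those references: kill the degree-$(L-1)$ Taylor polynomial of $\phi_j$ at $x_\nu$ using the vanishing moments of $\phi_\nu$, expand the remainder in integral form, and read off the gain $2^{-(\nu-j)L}$ from the factor $|x-x_\nu|^L(1+2^\nu|x-x_\nu|)^{-N}$ together with $2^{jL}$ from the $L$-th derivatives. All of that is correct, including the role of $N>M+L+n$ for convergence after extracting $|x-x_\nu|^L$.

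The one place where your argument deviates from the standard one is the last step, and it costs you the stated constant. The references handle the shifted argument $x_\nu+t(x-x_\nu)$ with the Peetre inequality
\[
\bigl(1+2^j|x_\nu+t(x-x_\nu)-x_j|\bigr)^{-M}
\le
\bigl(1+2^j|x-x_\nu|\bigr)^{M}\bigl(1+2^j|x_j-x_\nu|\bigr)^{-M},
\]
valid uniformly in $t\in[0,1]$; since $j\le\nu$, the factor $(1+2^j|x-x_\nu|)^{M}$ is absorbed into $(1+2^\nu|x-x_\nu|)^{-N}$, and the substitution $y=2^\nu(x-x_\nu)$ then yields exactly $\omega_n\frac{N-M-L}{N-M-L-n}$ in one stroke. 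Your dichotomy $|x-x_\nu|\lessgtr\frac12|x_j-x_\nu|$ also produces the correct powers of $2$ and the correct decay in $|x_j-x_\nu|$, but in each region you only get $1+2^j|x_\nu+t(x-x_\nu)-x_j|\ge1+2^{j-1}|x_j-x_\nu|$ (resp.\ $1+2^\nu|x-x_\nu|\ge1+2^{j-1}|x_j-x_\nu|$), which costs a factor $2^{M}$, and the two regions are added; so the constant you end with is not the one claimed in the lemma. This is harmless for every application in the paper (only the structure $2^{jn-(\nu-j)L}(1+2^j|x_j-x_\nu|)^{-M}$ is ever used), but as written your final sentence overclaims: you do not recover the advertised constant $\bigl(\sum A_{\vec\alpha}/\vec\alpha!\bigr)\frac{N-M-L}{N-M-L-n}B\omega_n$. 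If you want the lemma verbatim, replace the dichotomy by the Peetre inequality.
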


\section{Besov-type and Triebel-Lizorkin-type spaces}\label{s3}

\subsection{Definitions}\label{s3.1}

Through the spaces in Definition \ref{d2.2},
we introduce the following Besov-type and Triebel-Lizorkin-type spaces on $\rn$.

\begin{definition}\label{d3.1}
Let $a\in (0,\fz)$,
$\alpha_1, \alpha_2, \alpha_3, \tau\in[0,\fz)$, $q\in(0,\,\fz]$
and $w \in \cw_{\alpha_1,\alpha_2}^{\alpha_3}$.
Assume that $\Phi,\,\vz\in\cs(\rn)$ satisfy, respectively,
\eqref{1.1} and \eqref{1.2} and that $\cl(\rn)$ is a quasi-normed
space satisfying $(\cl1)$ through $(\cl4)$.
For any $f \in \cs'(\rn)$, let $\{(\vz_j^\ast f)_a\}_{j\in\zz_+}$
be as in \eqref{1.3}.

{\rm (i)} The {\it inhomogeneous generalized Besov-type space
$B^{w,\tau}_{\cl,q,a}(\rn)$}
is defined to be the set of
all $f\in\cs'(\rn)$ such that
\begin{equation*}
\|f\|_{B^{w,\tau}_{\cl,q,a}(\rn)}:=
\lf\|\lf\{(\vz_j^\ast f)_a\r\}_{j\in\zz_+}\r\|_{\ell^q(\cl_\tau^w(\rn,\zz_+))}<\fz.
\end{equation*}

{\rm (ii)} The {\it inhomogeneous generalized Besov-Morrey space
$\cn^{w,\tau}_{\cl,q,a}(\rn)$}
is defined to be the set of
all $f\in\cs'(\rn)$ such that
\begin{equation*}
\|f\|_{\cn^{w,\tau}_{\cl,q,a}(\rn)}:=
\lf\|\lf\{(\vz_j^\ast f)_a\r\}_{j\in\zz_+}\r\|_{\ell^q(\cn\cl_\tau^w(\rn,\zz_+))}<\fz.
\end{equation*}

{\rm (iii)} The {\it inhomogeneous generalized Triebel-Lizorkin-type space
$F^{w,\tau}_{\cl,q,a}(\rn)$}
is defined to be the set of all $f\in\cs'(\rn)$ such that
\begin{equation*}
\|f\|_{F^{w,\tau}_{\cl,q,a}(\rn)}:=
\lf\|\lf\{(\vz_j^\ast f)_a\r\}_{j\in\zz_+}\r\|_{\cl_\tau^w(\ell^q(\rn,\zz_+))}<\fz.
\end{equation*}%

{\rm (iv)} The {\it inhomogeneous generalized Triebel-Lizorkin-Morrey space
$\ce^{w,\tau}_{\cl,q,a}(\rn)$}
is defined to be the set of all $f\in\cs'(\rn)$ such that
\begin{equation*}
\|f\|_{\ce^{w,\tau}_{\cl,q,a}(\rn)}:=
\lf\|\lf\{(\vz_j^\ast f)_a\r\}_{j\in\zz_+}\r\|_{\ce\cl_\tau^w(\ell^q(\rn,\zz_+))}<\fz.
\end{equation*}
\end{definition}

The \emph{space $A^{w,\tau}_{\cl,q,a}(\rn)$}
stands for either one of $B^{w,\tau}_{\cl,q,a}(\rn)$,
$\cn^{w,\tau}_{\cl,q,a}(\rn)$, $F^{w,\tau}_{\cl,q,a}(\rn)$
or $\ce^{w,\tau}_{\cl,q,a}(\rn)$.
When ${\mathcal L}(\rn)=L^p(\rn)$ and $w_j(x):= 2^{js}$
for $x \in \rn$ and $j \in \Z_+$,
write
\begin{equation}\label{3.5}
A^{s,\tau}_{p,q,a}(\rn):= A^{w,\tau}_{{\mathcal L},q,a}(\rn).
\end{equation}

In what follows, if $\tau=0$, we omit $\tau$ in the notation of the spaces
introduced by Definition \ref{d3.1}.

\begin{remark}\label{r2.5}
Let us review what parameters function spaces
carry with.

i) The function space $\cl(\rn)$ is equipped with
$\theta, N_0, \gamma, \delta$ satisfying
\begin{equation}\label{2.10}
\theta \in (0, 1], \quad
N_0\in(0,\infty), \quad
\gamma \in [0,\infty), \quad
\delta \in [0,\infty).
\end{equation}

ii) The class $\cw_{\az_1,\az_2}^{\az_3}$ of weights is equipped with
$\alpha_1, \alpha_2, \alpha_3$ satisfying
\begin{equation}\label{2.11}
\alpha_1, \alpha_2, \alpha_3 \in [0,\infty).
\end{equation}

iii) In general function spaces
$A_{\cl,q,a}^{w,\tau}(\rn)$, the indices $\tau, q$ and $a$ satisfy
\begin{equation}\label{2.12}
\tau \in [0,\infty), \, q\in (0,\infty], \, a\in (N_0+\alpha_3,\infty),
\end{equation}
where in {\rm \eqref{3.35}} below we need to assume $a\in (N_0+\alpha_3,\infty)$
in order to guarantee that $\cs(\rn)$ is contained in the function spaces.
\end{remark}

In the following, we content ourselves with considering
the case when ${\mathcal L}(\rn)=L^p(\rn)$
as an example,
which still enables us to see why we introduce
these function spaces in this way.
Further examples are given in Section \ref{s10}.
\begin{example}\label{e3.1}
Let $q \in (0,\infty]$, $s \in \R$ and $\tau \in [0,\infty)$.
In {\rm \cite{yy1,yy2}}, the
\emph{Besov-type space} $B_{p,q}^{s,\tau}(\rn)$ with $p\in
(0,\infty]$ and the \emph{Triebel-Lizorkin-type space}
$F_{p,q}^{s,\tau}(\rn)$ with $p\in (0,\infty)$ were,
respectively, defined to be the set of all $f\in\cs'(\rn)$
such that
$$\| f \|_{B_{p,q}^{s,\tau}(\rn)}
:= \sup_{P \in {\mathcal Q}(\rn)} \frac{1}{|P|^\tau} \left\{
\sum_{j=j_P \vee 0}^\infty \left[\int_P|2^{js}\vz_j*f(x)|^p\,dx
\right]^{\frac{q}{p}} \right\}^{\frac{1}{q}}<\infty$$ and
$$\| f \|_{F_{p,q}^{s,\tau}(\rn)}
:= \sup_{P \in {\mathcal Q}(\rn)} \frac{1}{|P|^\tau} \left\{
\int_P \left[\sum_{j=j_P \vee 0}^\infty |2^{js}\vz_j*f(x)|^q
\right]^{\frac{p}{q}} \,dx \right\}^{\frac{1}{p}}<\infty$$ with the
usual modifications made when $p=\infty$ or $q=\infty$.
Here $\vz_0$ is understood as $\Phi$.
Then, we have shown
in {\rm \cite{LSUYY1}}
that $B^{s,\tau}_{p,q,a}(\rn)$ coincides with $B^{s,\tau}_{p,q}(\rn)$
as long as $a\in(\frac{n}{p},\fz)$.
Likewise $F^{s,\tau}_{p,q,a}(\rn)$ coincides with $F^{s,\tau}_{p,q}(\rn)$
as long as $a\in(\frac{n}{\min(p,q)},\fz)$.
Notice that $B^{s,0}_{p,q,a}(\rn)$ and $F^{s,0}_{p,q,a}(\rn)$
are isomorphic to $B^s_{p,q}(\rn)$ and $F^s_{p,q}(\rn)$
respectively
by virtue of the Plancherel-Polya-Nikolskij inequality
{\rm (}Lemma {\rm \ref{l1.1})} and the Fefferman-Stein vector-valued inequality
{\rm (}see {\rm \cite{fs,gr85,gr08,st93})}.
This fact is generalized to our current setting.
The atomic decomposition of these spaces can be found in {\rm \cite{syy,ysy}}.
Needless to say, in this setting, $\cl(\rn)=\lp$ satisfies $(\cl1)$ through $(\cl6)$.
\end{example}

Observe that the function spaces
$B^{w,\tau}_{\cl,q,a}(\rn)$,
$F^{w,\tau}_{\cl,q,a}(\rn)$,
$\cn^{w,\tau}_{\cl,q,a}(\rn)$ and
$\ce^{w,\tau}_{\cl,q,a}(\rn)$
depend upon $a\in(0,\fz)$, as the following example
shows.

\begin{example}\label{e3.2}
Let $m\in\nn$, $b\in(0,\fz)$, $f_m(t):=[\frac{2\sin(2^{-2mb}t)}{t}]^m$
for all $t \in {\mathbb R}$, and $\cl(\R)=L^p(\R)$ with $p\in (0,\fz]$.
If $\tau$, $a$, $q$ and $w$ are as in Definition \ref{d3.1}
with $w(x,1)$ independent of $x\in\rr$, then $f_m \in
B^{w,\tau}_{\cl,q,a}(\R)
\cup
F^{w,\tau}_{\cl,q,a}(\R)
\cup
\cn^{w,\tau}_{\cl,q,a}(\R)
\cup
\ce^{w,\tau}_{\cl,q,a}(\R)$
if and only if
\[
p\min(a,m)>1,
\]
and, in this case,
we have $f_m \in
B^{w,\tau}_{\cl,q,a}(\R)
\cap
F^{w,\tau}_{\cl,q,a}(\R)
\cap
\cn^{w,\tau}_{\cl,q,a}(\R)
\cap
\ce^{w,\tau}_{\cl,q,a}(\R).
$
To see this,
notice that, for all $t\in\rr$,
$$\wh{\chi}_{[-2^{-mb},2^{-mb}]}(t)
=\int_{-2^{-mb}}^{2^{-mb}}\cos (xt)\,dx=\frac{2\sin(2^{-mb}t)}t,$$
which implies that
$$\wh{f}_m:=
\overbrace{\chi_{[-2^{-mb},2^{-mb}]}*\cdots*
\chi_{[-2^{-mb},2^{-mb}]}}^{m\ {\rm times}}$$
and that $\supp \wh f_m\st[-m2^{-mb},m2^{-mb}]$.
Choose $b\in(0,\fz)$ large enough such that
$$[-m2^{-mb},m2^{-mb}]\st[-1/2,1/2].$$
Let $\Phi, \vz\in\cs(\rr)$ satisfy \eqref{1.1} and \eqref{1.2},
and
assume additionally that
$$\chi_{B(0,1)}\le\wh\Phi\le\chi_{B(0,2)}\ {\rm and}\
\supp \wh\vz\st\lf\{\xi\in\rr:\ \frac12\le|\xi|\le2\r\}.$$
Then, by the size of the frequency support,
we see that $\Phi\ast f_m=f_m$
and that $\varphi_j\ast f_m=0$ for all $j\in\nn$.
Therefore, for all $x\in\rr$,
\[
(\Phi^*f_m)_a(x)=
\sup_{z\in\rr}\frac{|2\sin(2^{-mb}(x+z))|^m}{|x+z|^m(1+|z|)^a}
 \sim_m (1+|x|)^{\max(-a,-m)} \
{\rm and}\ (\varphi_j^*f_m)_a(x)=0,
\]
which implies the claim.
Here, ``$\sim_m$'' denotes the implicit positive
equivalent constants depending on $m$.
\end{example}

For the time being,
we are oriented to justifying Definition \ref{d3.1}.
That is, we show that the spaces $A^{s,\tau}_{p,q,a}(\rn)$
are independent of the choices of $\Phi$ and $\vz$
by proving the following Theorem \ref{t3.1},
which covers the local means as well.
Notice that a special case $A^{s,\tau}_{p,q,a}(\rn)$
of these results was dealt with in \cite{yy3,ysy-textbook}.

\begin{theorem}\label{t3.1}
Let $a, \alpha_1, \alpha_2, \alpha_3, \tau, q$, $w$
and $\cl(\rn)$ be as in Definition \ref{d3.1}.
Let $L \in \Z_+$ be such that
\begin{equation}\label{3.6}
L+1>\alpha_1 \vee (a+n\tau+\alpha_2).
\end{equation}
Assume that $\Psi $,
$\psi \in {\mathcal S}(\rn)$
satisfies that, for all $\alpha$ with $\|\alpha\|_1 \le L$ and
some $\varepsilon\in(0,\fz)$,
\begin{equation}\label{3.7}
\widehat{\Psi}(\xi)\ne 0 \mbox{ if }
|\xi|<2\varepsilon, \
\partial^{\alpha}\widehat{\psi}(0) = 0, \
 and\ \widehat{\psi}(\xi)\ne 0 \mbox{ if }\
\frac{\varepsilon}{2}<|\xi|<2\varepsilon.
\end{equation}
Let $\psi_j(\cdot):= 2^{jn}\psi(2^j\cdot)$
for all $j \in {\mathbb N}$ and $\{(\psi_j^\ast f)_a\}_{j\in\zz_+}$ be as
in \eqref{1.3} with $\Phi$ and $\varphi$ replaced, respectively,
by $\Psi$ and $\psi$.
Then
\begin{gather}
\label{3.8}
\|f\|_{B^{w,\tau}_{\cl,q,a}(\rn)}
\sim
\lf\|\lf\{(\psi_j^\ast f)_a\r\}_{j\in\zz_+}\r\|_{\ell^q(\cl_\tau^w(\rn,\zz_+))},\\
\label{3.9}
\|f\|_{\cn^{w,\tau}_{\cl,q,a}(\rn)}
\sim
\lf\|\lf\{(\psi_j^\ast f)_a\r\}_{j\in\zz_+}\r\|_{\ell^q(\cn\cl_\tau^w(\rn,\zz_+))},\\
\label{3.10}
\|f\|_{F^{w,\tau}_{\cl,q,a}(\rn)}
\sim
\lf\|\lf\{(\psi_j^\ast f)_a\r\}_{j\in\zz_+}\r\|_{\cl_\tau^w(\ell^q(\rn,\zz_+))}
\end{gather}
and
\begin{gather}\label{3.11}
\|f\|_{\ce^{w,\tau}_{\cl,q,a}(\rn)}
\sim
\lf\|\lf\{(\psi_j^\ast f)_a\r\}_{j\in\zz_+}\r\|_{\ce\cl_\tau^w(\ell^q(\rn,\zz_+))}
\end{gather}
with equivalent positive constants independent of $f$.
\end{theorem}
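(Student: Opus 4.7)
My plan is to prove each equivalence by first establishing a pointwise control of the form
\begin{equation*}
(\psi_j^*f)_a(x) \lesssim \sum_{\nu=0}^\infty 2^{-|j-\nu|\sigma}(\vz_\nu^*f)_a(x)
\end{equation*}
for every $j\in\zz_+$ and $x\in\rn$, where $\sigma\in(0,\fz)$ may be chosen as large as we like at the cost of taking $L$ large in \eqref{3.6}, together with its counterpart obtained by interchanging $(\Phi,\vz)$ and $(\Psi,\psi)$. Once such pointwise inequalities are available, the four norm equivalences \eqref{3.8}--\eqref{3.11} follow at once by applying Lemma \ref{l2.3} with $D_1 = D_2 = \sigma$ in each of the four sequence spaces $\ell^q(\cl^w_\tau(\rn,\zz_+))$, $\ell^q(\cn\cl^w_\tau(\rn,\zz_+))$, $\cl^w_\tau(\ell^q(\rn,\zz_+))$ and $\ce\cl^w_\tau(\ell^q(\rn,\zz_+))$; the hypothesis \eqref{3.6} is tailored precisely so that \eqref{2.13} is satisfied.

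To produce the pointwise inequality I would exploit a Calder\'on-type reproducing formula built from $(\Psi,\psi)$. Thanks to \eqref{3.7}, one can choose auxiliary $\widetilde{\Psi},\widetilde{\psi}\in\cs(\rn)$ with analogous support and moment structure and satisfying
\begin{equation*}
\widehat{\widetilde{\Psi}}(\xi)\widehat{\Psi}(\xi) + \sum_{k=1}^\infty \widehat{\widetilde{\psi}}(2^{-k}\xi)\widehat{\psi}(2^{-k}\xi) = 1, \qquad \xi\in\rn,
\end{equation*}
so that $f = \widetilde{\Psi}\ast\Psi\ast f + \sum_{k\ge 1}\widetilde{\psi}_k\ast\psi_k\ast f$ in $\cs'(\rn)$. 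Convolving with $\vz_j$ (or with $\Phi$ when $j=0$) and adopting the convention $\psi_0:=\Psi$, $\widetilde{\psi}_0:=\widetilde{\Psi}$, one obtains $\vz_j\ast f = \sum_{\nu=0}^\infty (\vz_j\ast\widetilde{\psi}_\nu)\ast(\psi_\nu\ast f)$. I would then use Lemma \ref{l2.4} to bound $\vz_j\ast\widetilde{\psi}_\nu$: when $\nu>j$, the $L+1$ vanishing moments of $\widehat{\psi}$ yield a gain $2^{-(\nu-j)(L+1)}$; when $\nu<j$, the annular support of $\widehat{\vz}$ from \eqref{1.2} provides arbitrarily many vanishing moments and therefore a gain $2^{-(j-\nu)K}$ for any $K\in(0,\fz)$. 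In both cases the spatial profile decays like $(1+2^{\min(j,\nu)}|y|)^{-M}$ for any $M$. Using the trivial bound $|\psi_\nu\ast f(x+z-y)|\le(1+2^\nu|z-y|)^a(\psi_\nu^*f)_a(x)$, integrating against the kernel, and taking the supremum over $z$ with weight $(1+2^j|z|)^{-a}$ leads, for $M$ chosen large compared with $a$, to the claimed pointwise inequality with $\sigma=L+1-a$ on the high-frequency side and $\sigma=K-a$ on the low-frequency side. The reverse pointwise bound is obtained by running the same scheme with the roles of the two pairs exchanged, which is legitimate because $(\Phi,\vz)$ itself satisfies the abstract conditions \eqref{3.7}; indeed $\widehat{\vz}$ has infinitely many vanishing moments by \eqref{1.2}.

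The main obstacle I expect is the correct handling of the low-frequency endpoints $j=0$ and $\nu=0$: because $\widehat{\Psi}$ need not vanish at the origin, Lemma \ref{l2.4} cannot be invoked symmetrically for these boundary terms. They must be estimated directly from the Schwartz decay of $\vz_j\ast\widetilde{\Psi}$, using the annular support condition \eqref{1.2} to extract the required decay in $j$ and a routine case split to absorb the $j=0$ block; this affects neither the parameter ranges dictated by \eqref{3.6} nor the application of Lemma \ref{l2.3}. A secondary and less delicate point is the convergence of the Calder\'on series in $\cs'(\rn)$ when $f$ lies in one of the spaces $A^{w,\tau}_{\cl,q,a}(\rn)$; this follows once the pointwise estimate above is in hand, together with the lower bound $a>N_0+\alpha_3$ built into Remark \ref{r2.5} that ensures the embedding of these spaces into $\cs'(\rn)$.
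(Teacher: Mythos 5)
Your proposal is correct and follows essentially the same route as the paper: a Calder\'on-type reproducing formula in the sense of Rychkov, the almost-orthogonality estimate of Lemma \ref{l2.4} to obtain a pointwise domination of one family of Peetre maximal functions by the other (with the reverse direction by exchanging the roles of the two systems, which is legitimate since $(\Phi,\varphi)$ itself satisfies \eqref{3.7}), and then Lemma \ref{l2.3} to pass to the four quasi-norms. The only cosmetic difference is that you expand $\varphi_j\ast f$ against a reproducing formula built from $(\Psi,\psi)$, whereas the paper expands $\psi_j\ast f$ against one built from $(\Phi,\varphi)$; this is immaterial.
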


\begin{proof}
To show Theorem \ref{t3.1}, we only need to prove that,
for all $f\in\cs'(\rn)$ and $x\in\rn$,
\begin{equation}\label{3.12}
(\Psi^*f)_a(x)\lesssim
(\Phi^*f)_a(x)
+
\sum_{\nu=1}^\infty 2^{-\nu(L+1-a)}\vz_\nu^*f(x)
\end{equation}
and that
\begin{equation}\label{3.13}
(\psi_j^*f)_a(x) \lesssim
2^{-j(L+1-a)}(\Phi^*f)_a(x)
+
\sum_{\nu=1}^\infty 2^{-|\nu-j|(L+1)+a[(j-\nu)\vee 0]}
(\vz_\nu^*f)_a(x).
\end{equation}
Once we prove \eqref{3.12} and \eqref{3.13},
then we are in the position of applying Lemma \ref{l2.3}
to conclude \eqref{3.8} through \eqref{3.11}.

We now establish \eqref{3.13}.
The proof of \eqref{3.12} is easier and we omit the details.
For a non-negative integer $L$ as in \eqref{3.6},
by \cite[Theorem 1.6]{r01},
we know that there exist
$\Psi^\dagger, \, \psi^\dagger \in {\mathcal S}(\rn)$
such that, for all $\beta$ with $|\beta| \le L$,
\begin{equation}\label{3.14}
\int_{\rn}\psi^\dagger(x)x^\beta\,dx=0
\end{equation}
and that
\begin{equation}\label{3.15}
\Psi^\dagger*\Phi+\sum_{\nu=1}^\infty \psi_\nu^\dagger*\vz_\nu
=\delta_0
\end{equation}
in $\cs'(\rn)$,
where $\psi^\dagger_\nu(\cdot):= 2^{\nu n}\psi^\dagger(2^\nu \cdot)$
for $\nu \in {\mathbb N}$ and $\delta_0$ is the \emph{dirac
distribution at origin}.
We decompose $\psi_j$ along \eqref{3.15} into
\begin{equation*}
\psi_j
=
\psi_j*\Psi^\dagger*\Phi
+\sum_{\nu=1}^\infty
\psi_j*\psi_\nu^\dagger*\vz_\nu.
\end{equation*}
From \eqref{3.7} and \eqref{3.14},
together with Lemma \ref{l2.4},
we infer that, for all $j\in\zz_+$ and $y\in\rn$,
\begin{equation}\label{3.17}
|\psi_j*\Psi^\dagger(y)|
\lesssim
\frac{2^{-j(L+1)}}{(1+|y|)^{n+1+a}}
\quad{\rm and \quad}
|\psi_j*\psi_\nu^\dagger(y)|
\lesssim
\frac{2^{n (j\wedge \nu)-|j-\nu|(L+1)}}{(1+2^{ j\wedge \nu}|y|)^{n+1+a}}.
\end{equation}
By (\ref{3.17}), we further see that, for all $j\in\zz_+$ and $x\in\rn$,
\begin{eqnarray*}
&&\sup_{z \in {\mathbb R}^n}
\frac{|\psi_j*f(x+z)|}{(1+2^j|z|)^a}\\
&&\hs\lesssim
2^{-j(L+1-a)}(\Phi^*f)_a(x)
+
\sum_{\nu=1}^\infty
2^{-|j-\nu|(L+1)}(\vz^*_\nu f)_a(x)
\int_{\rn}
\frac{2^{n (j\wedge \nu)}(1+2^\nu|y|)^a}{(1+2^{ j\wedge \nu}|y|)^{n+1+a}}\,dy\\
&&\hs\lesssim
2^{-j(L+1-a)}(\Phi^*f)_a(x)
+
\sum_{\nu=1}^\infty
2^{-|j-\nu|(L+1)+a[(j-\nu)\vee 0]}(\vz^*_\nu f)_a(x)
\int_{\rn}
\frac{2^{n (j\wedge \nu)}\,dy}{(1+2^{ j\wedge \nu}|y|)^{n+1}}\\
&&\hs\sim
2^{-j(L+1-a)}(\Phi^*f)_a(x)
+\sum_{\nu=1}^\infty
2^{-|j-\nu|(L+1)+a[(j-\nu)\vee 0]}(\vz^*_\nu f)_a(x),
\end{eqnarray*}
which completes the proof of \eqref{3.13} and hence Theorem \ref{t3.1}.
\end{proof}

Notice that the moment condition on $\Psi$ in Theorem \ref{t3.1}
is not necessary due to \eqref{3.7}.
Moreover, in view of the calculation presented in the proof of
Theorem \ref{t3.1},
we also have the following assertion.

\begin{corollary}\label{c3.1}
Under the notation of Theorem {\rm \ref{t3.1}},
for some $N\in\nn$ and all $x\in\rn$, let
\[
{\mathfrak M}f(x,2^{-j})
:=
\begin{cases}
\dis\sup_\psi|\psi_{j}*f(x)|,
& \, j \in \N;\\
\dis\sup_\Psi|\Psi*f(x)|,
& \, j=0,
\end{cases}
\]
where the supremum is taken over all $\psi$ and $\Psi$
in $\cs(\rn)$
satisfying
\[
\sum_{|\alpha| \le N}
\sup_{x \in \rn}
(1+|x|)^N |\partial^\alpha \psi(x)|
+
\sum_{|\alpha| \le N}
\sup_{x \in \rn}
(1+|x|)^N |\partial^\alpha \Psi(x)|
\le 1
\]
as well as {\rm \eqref{3.7}}.
Then, if $N$ is large enough, for all $f \in \cs'(\rn)$,
\begin{eqnarray*}
&&\|f\|_{B^{w,\tau}_{\cl,q,a}(\rn)}
\sim
\lf\|\lf\{{\mathfrak M} f(\cdot, 2^{-j})\r\}_{j\in\zz_+}\r\|_{\ell^q(\cl_\tau^w(\rn,\zz_+))},\\
&&\|f\|_{\cn^{w,\tau}_{\cl,q,a}(\rn)}
\sim
\lf\|\lf\{{\mathfrak M} f(\cdot, 2^{-j})\r\}_{j\in\zz_+}\r\|_{\ell^q(\cn\cl_\tau^w(\rn,\zz_+))},\\
&&\|f\|_{F^{w,\tau}_{\cl,q,a}(\rn)}
\sim
\lf\|\lf\{{\mathfrak M} f(\cdot, 2^{-j})\r\}_{j\in\zz_+}\r\|_{\cl_\tau^w(\ell^q(\rn,\zz_+))}
\end{eqnarray*}
and
\begin{eqnarray*}
&&\|f\|_{\ce^{w,\tau}_{\cl,q,a}(\rn)}
\sim
\lf\|\lf\{{\mathfrak M} f(\cdot, 2^{-j})\r\}_{j\in\zz_+}\r\|_{\ce\cl_\tau^w(\ell^q(\rn,\zz_+))}
\end{eqnarray*}
with implicit positive constants independent of $f$.
\end{corollary}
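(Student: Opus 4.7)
The plan is to run the argument of Theorem \ref{t3.1} once more while tracking the dependence of every implicit constant on the test functions, and then to close the reverse inequality via an elementary translation observation.

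For $\|\{\mathfrak{M}f(\cdot,2^{-j})\}_{j\in\zz_+}\|\lesssim\|f\|_{A^{w,\tau}_{\cl,q,a}(\rn)}$ I would look inside the derivation of \eqref{3.13}. The constants there come from Lemma \ref{l2.4} applied to $\psi_j*\Psi^\dagger$ and $\psi_j*\psi_\nu^\dagger$, and they involve only a finite number of weighted sup-norms of $\psi$ and $\Psi$. If $N$ is chosen at least as large as the largest derivative order and the largest polynomial weight that appear in those constants, then the normalization $\sum_{|\alpha|\le N}\sup_x(1+|x|)^N|\partial^\alpha\psi(x)|\le 1$ bounds each of them by a single universal constant. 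Thus \eqref{3.12} and \eqref{3.13} hold uniformly in admissible $\psi$ and $\Psi$. Since $|\psi_j*f(x)|\le(\psi_j^*f)_a(x)$, passing to the supremum over admissible $(\psi,\Psi)$ in those pointwise estimates yields
\[
\mathfrak{M}f(x,2^{-j})\lesssim 2^{-j(L+1-a)}(\Phi^*f)_a(x)+\sum_{\nu=1}^{\fz}2^{-|\nu-j|(L+1)+a[(j-\nu)\vee 0]}(\vz_\nu^*f)_a(x),
\]
after which Lemma \ref{l2.3} provides the four desired upper estimates simultaneously.

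For the reverse inequality I would fix one admissible pair $(\tilde\Psi,\tilde\psi)$ satisfying \eqref{3.7} and use Theorem \ref{t3.1} to reduce the task to the pointwise bound $(\tilde\psi_j^*f)_a(x)\lesssim\mathfrak{M}f(x,2^{-j})$. For each $y\in\rn$ the translate $\tilde\psi(\cdot+2^jy)$ again satisfies \eqref{3.7}, since translation only multiplies its Fourier transform by a unimodular factor, and its $N$-weighted seminorm is at most $(1+2^j|y|)^NC_{\tilde\psi,N}$, where $C_{\tilde\psi,N}$ depends only on $\tilde\psi$. A short change of variables gives $[\tilde\psi(\cdot+2^jy)]_j*f(x)=\tilde\psi_j*f(x+y)$, so the rescaled translate is admissible in the supremum defining $\mathfrak{M}f$ and one obtains
\[
|\tilde\psi_j*f(x+y)|\le C_{\tilde\psi,N}(1+2^j|y|)^N\,\mathfrak{M}f(x,2^{-j}).
\]
Dividing by $(1+2^j|y|)^a$ and taking the supremum over $y$ produces the desired pointwise inequality, and the four norm equivalences follow.

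The main obstacle is the opposite monotonicity of the two directions in the parameter $N$: the upper-bound argument forces $N$ to exceed a threshold $N_1$ depending on $L$, $n$ and $a$, whereas the translation trick only absorbs the factor $(1+2^j|y|)^{N-a}$ into the Peetre weight when $N\le a$. Because the Peetre scale $a$ is constrained from below only by $N_0+\alpha_3$, one may enlarge $a$ so that $a\ge N_1$ and then choose $N$ inside the nonempty interval $[N_1,a]$. The clause ``$N$ is large enough'' in the statement is thus to be read jointly with a matching choice of $a$, and with this calibration the four equivalences of Corollary \ref{c3.1} hold for every $f\in\cs'(\rn)$.
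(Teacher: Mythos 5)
Your first direction is exactly what the paper intends (the paper offers no proof beyond the remark that the corollary follows ``in view of the calculation presented in the proof of Theorem \ref{t3.1}''): the constants in \eqref{3.12} and \eqref{3.13} enter only through Lemma \ref{l2.4} applied to $\psi_j*\Psi^\dagger$ and $\psi_j*\psi_\nu^\dagger$, hence through finitely many weighted seminorms of $\psi$ and $\Psi$, and the normalization makes them uniform; Lemma \ref{l2.3} then gives the four upper estimates. Your translation device for the reverse direction is also the natural one, and you are right that it preserves \eqref{3.7} and yields $(\tilde\psi_j^*f)_a(x)\lesssim \mathfrak{M}f(x,2^{-j})$ only when $N\le a$.

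The gap is in your final calibration, which is circular. The threshold $N_1$ needed for the uniform upper bound necessarily exceeds $a$: to run the proof of \eqref{3.13} you must dominate $|\psi_j*\psi_\nu^\dagger*\vz_\nu*f(x)|$ by $(\vz_\nu^*f)_a(x)\int_{\rn}|\psi_j*\psi_\nu^\dagger(y)|(1+2^\nu|y|)^a\,dy$, and for this integral to converge and produce the admissible factor $2^{a[(j-\nu)\vee 0]}$, the kernel bound \eqref{3.17} must carry decay of order $M>n+a$; Lemma \ref{l2.4} then forces the varying function $\psi$ to have decay (and hence seminorm index) of order $N>M+L+1+n>2n+1+a+L$. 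Thus $N_1(a)>a$ for every $a$, the interval $[N_1(a),a]$ is empty, and enlarging $a$ cannot help because $N_1$ grows with $a$ at the same rate. Moreover, even if a calibration existed, replacing $a$ by a larger value changes the space $A^{w,\tau}_{\cl,q,a}(\rn)$ itself (Example \ref{e3.2} shows the spaces depend on $a$), so you would be proving a different statement. What your argument actually establishes is $\|\{\mathfrak{M}f(\cdot,2^{-j})\}_{j}\|\lesssim\|f\|_{A^{w,\tau}_{\cl,q,a}(\rn)}$ together with $\|f\|_{A^{w,\tau}_{\cl,q,N}(\rn)}\lesssim\|\{\mathfrak{M}f(\cdot,2^{-j})\}_{j}\|$, and these close up only if $A^{w,\tau}_{\cl,q,N}(\rn)=A^{w,\tau}_{\cl,q,a}(\rn)$, which requires the extra hypotheses of Theorem \ref{t8.1} (namely $(\cl7)$ or $(\cl8)$) that are not assumed here. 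To repair the lower bound without such hypotheses you would need either to build the Peetre weight $(1+2^j|\cdot|)^{-a}$ into the definition of the grand maximal function or to find an argument for $(\vz_j^*f)_a\lesssim\mathfrak{M}f(\cdot,2^{-j})$ that does not pay the factor $(1+2^j|y|)^{N}$ for translating the kernel.
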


Another corollary is the characterization of these spaces
via local means.
Recall that
$\Delta:=\sum_{j=1}^n\frac{\partial^2}{\partial x_j^2}$
denotes the Laplacian.

\begin{corollary}\label{c3.2}
Let $a, \alpha_1, \alpha_2, \alpha_3, \tau, q$, $w$
and $\cl(\rn)$ be as in Definition \ref{d3.1}.
Assume that $\Psi \in C^\infty_{\rm c}({\mathbb R}^n)$
satisfies $\chi_{B(0,1)} \le \Psi \le \chi_{B(0,2)}$.
Assume, in addition, that
$\psi=\Delta^{\ell_0+1}\Psi$
for some $\ell_0 \in \N$ such that
\begin{equation*}
2\ell_0+1>\alpha_1 \vee (a+n\tau+\alpha_2).
\end{equation*}%
Let $\psi_j(\cdot):= 2^{jn}\psi(2^j\cdot)$
for all $j \in {\mathbb N}$ and $\{(\psi_j^\ast f)_a\}_{j\in\zz_+}$ be as
in \eqref{1.3} with $\Phi$ and $\varphi$ replaced, respectively,
by $\Psi$ and $\psi$.
Then, for all $f \in \cs'(\rn)$,
\begin{eqnarray*}
&&\|f\|_{B^{w,\tau}_{\cl,q,a}(\rn)}
\sim
\lf\|\lf\{(\psi_j^\ast f)_a\r\}_{j\in\zz_+}\r\|_{\ell^q(\cl_\tau^w(\rn,\zz_+))},\\
&&\|f\|_{\cn^{w,\tau}_{\cl,q,a}(\rn)}
\sim
\lf\|\lf\{(\psi_j^\ast f)_a\r\}_{j\in\zz_+}\r\|_{\ell^q(\cn\cl_\tau^w(\rn,\zz_+))},\\
&&\|f\|_{F^{w,\tau}_{\cl,q,a}(\rn)}
\sim
\lf\|\lf\{(\psi_j^\ast f)_a\r\}_{j\in\zz_+}
\r\|_{\cl_\tau^w(\ell^q(\rn,\zz_+))}
\end{eqnarray*}
and
\begin{eqnarray*}
&&\|f\|_{\ce^{w,\tau}_{\cl,q,a}(\rn)}
\sim
\lf\|\lf\{(\psi_j^\ast f)_a\r\}_{j\in\zz_+}\r\|_{\ce\cl_\tau^w(\ell^q(\rn,\zz_+))}
\end{eqnarray*}
with equivalent positive constants independent of $f$.
\end{corollary}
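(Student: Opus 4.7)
The plan is to reduce this statement to Theorem \ref{t3.1} by verifying that the specific pair $(\Psi,\psi)$ with $\psi=\Delta^{\ell_0+1}\Psi$ satisfies the hypotheses \eqref{3.6} and \eqref{3.7} of that theorem. Observe that, although $\Psi$ here is compactly supported rather than the Schwartz function in Theorem \ref{t3.1}, the identity $\Psi\in C^\infty_c(\rn)\subset\cs(\rn)$ and the same inclusion for $\psi=\Delta^{\ell_0+1}\Psi$ resolve this without difficulty. Once the hypotheses are checked, the four equivalences follow verbatim from \eqref{3.8}--\eqref{3.11}.

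First I would set $L:=2\ell_0+1$ so that $L+1=2\ell_0+2>2\ell_0+1>\alpha_1\vee (a+n\tau+\alpha_2)$ by the assumption on $\ell_0$; this verifies \eqref{3.6}. Next, I would exploit the factorisation on the Fourier side: since
\[
\widehat{\psi}(\xi)=(-1)^{\ell_0+1}|\xi|^{2(\ell_0+1)}\widehat{\Psi}(\xi)\quad\text{for all}\quad\xi\in\rn,
\]
the polynomial factor $|\xi|^{2(\ell_0+1)}$ vanishes to order $2\ell_0+2$ at the origin, so by the Leibniz rule every derivative $\partial^\alpha\widehat{\psi}$ of order $\|\alpha\|_1\le 2\ell_0+1=L$ vanishes at $\xi=0$. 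This gives the moment condition in \eqref{3.7}.

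It remains to check the non-vanishing conditions in \eqref{3.7}. Since $\Psi\ge\chi_{B(0,1)}$, one has $\widehat{\Psi}(0)=\int_{\rn}\Psi(x)\,dx\ge|B(0,1)|>0$; continuity of $\widehat{\Psi}$ then yields some $\varepsilon\in(0,\fz)$ such that $\widehat{\Psi}(\xi)\ne 0$ whenever $|\xi|<2\varepsilon$. By the displayed factorisation, $\widehat{\psi}(\xi)\ne 0$ on the same punctured ball, and in particular on the annulus $\{\varepsilon/2<|\xi|<2\varepsilon\}$. Hence both conditions on the supports of $\widehat{\Psi}$ and $\widehat{\psi}$ in \eqref{3.7} hold with this $\varepsilon$.

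With these verifications in hand, Theorem \ref{t3.1} applies directly and yields the four equivalences. There is no real obstacle beyond bookkeeping, the only point that requires care being the correct counting of the order of vanishing of $\widehat{\psi}$ at the origin to align $L$ with the condition \eqref{3.6}; this is precisely why the hypothesis has the asymmetric form $2\ell_0+1>\alpha_1\vee(a+n\tau+\alpha_2)$ rather than $2(\ell_0+1)>\cdots$.
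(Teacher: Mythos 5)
Your proposal is correct and follows exactly the route the paper intends: Corollary \ref{c3.2} is stated as an immediate consequence of Theorem \ref{t3.1}, and your verification of \eqref{3.6} and \eqref{3.7} for the pair $(\Psi,\Delta^{\ell_0+1}\Psi)$ — the order-$2\ell_0+2$ vanishing of $\widehat{\psi}(\xi)=(-1)^{\ell_0+1}|\xi|^{2(\ell_0+1)}\widehat{\Psi}(\xi)$ at the origin and the non-vanishing of $\widehat{\Psi}$ near $0$ from $\widehat{\Psi}(0)\ge|B(0,1)|>0$ — is precisely the missing bookkeeping. (The only cosmetic remark: taking $L=2\ell_0$ already matches the hypothesis $L+1=2\ell_0+1>\alpha_1\vee(a+n\tau+\alpha_2)$ exactly, so your closing comment about the asymmetry is not quite the right explanation, but your choice $L=2\ell_0+1$ also works.)
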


\subsection{Fundamental properties}

With the fundamental theorem
on our function spaces stated and proven as above,
we now take up some inclusion relations.
The following lemma is immediately deduced from
Lemma \ref{l2.1} and Definition \ref{d3.1}.

\begin{lemma}\label{l3.1}
Let $\alpha_1, \alpha_2, \alpha_3, \tau \in [0,\infty)$,
$q, q_1,q_2\in(0,\,\fz]$, $q_1\le q_2$
and $w\in{\mathcal W}^{\alpha_3}_{\alpha_1,\alpha_2}$.
Let $\cl(\rn)$ be a quasi-normed space satisfying
$(\cl1)$ through $(\cl4)$. Then
\begin{eqnarray*}%
B^{w,\tau}_{\cl,q_1,a}(\rn)\hookrightarrow B^{w,\tau}_{\cl,q_2,a}(\rn),\\
\cn^{w,\tau}_{\cl,q_1,a}(\rn)\hookrightarrow \cn^{w,\tau}_{\cl,q_2,a}(\rn),\\
F^{w,\tau}_{\cl,q_1,a}(\rn)\hookrightarrow F^{w,\tau}_{\cl,q_2,a}(\rn),\\
\ce^{w,\tau}_{\cl,q_1,a}(\rn)\hookrightarrow \ce^{w,\tau}_{\cl,q_2,a}(\rn)
\end{eqnarray*}
and
\begin{align}
\label{3.22}
B^{w,\tau}_{\cl,q,a}(\rn), \,
\cn^{w,\tau}_{\cl,q,a}(\rn), \,
F^{w,\tau}_{\cl,q,a}(\rn), \,
\ce^{w,\tau}_{\cl,q,a}(\rn)
\hookrightarrow \cn^{w,\tau}_{\cl,\infty,a}(\rn)
\end{align}
in the sense of continuous embedding.
\end{lemma}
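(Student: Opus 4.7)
The plan is to translate each displayed embedding to the level of sequence spaces by means of Definition \ref{d3.1}: for every $f\in\cs'(\rn)$, the four function-space quasi-norms of $f$ equal the quasi-norms of the single sequence $\{(\vz_j^*f)_a\}_{j\in\zz_+}$ in the respective sequence spaces $\ell^q(\cl_\tau^w(\rn,\zz_+))$, $\ell^q(\cn\cl_\tau^w(\rn,\zz_+))$, $\cl_\tau^w(\ell^q(\rn,\zz_+))$, $\ce\cl_\tau^w(\ell^q(\rn,\zz_+))$. The claimed embeddings then reduce to comparisons among these four sequence-space quasi-norms together with a single application of Lemma \ref{l2.1}.

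The monotonicity block $q_1\le q_2$ (the first four embeddings) is immediate: apply the respective monotonicity from Lemma \ref{l2.1} to the Peetre-maximal-function sequence $\{(\vz_j^*f)_a\}_{j\in\zz_+}$. For \eqref{3.22}, the $\cn$-case is the $q_2=\infty$ instance of this monotonicity. For the $\ce$-case, I would combine the lattice property $(\cl4)$ with the trivial inequality $|a_{j_0}|\le(\sum_{j\ge 0}|a_j|^q)^{1/q}$ for nonnegative sequences, which yields, for every $(j_0,P_0)$,
\[
\lf\|\chi_{P_0} w_{j_0}(\vz_{j_0}^*f)_a\r\|_{\cl(\rn)}\le \lf\|\chi_{P_0}\lf(\sum_{j\ge 0}\lf|w_j(\vz_j^*f)_a\r|^q\r)^{1/q}\r\|_{\cl(\rn)};
\]
dividing by $|P_0|^\tau$ and taking the supremum over $(j_0,P_0)$ gives $\|f\|_{\cn^{w,\tau}_{\cl,\infty,a}(\rn)}\ls\|f\|_{\ce^{w,\tau}_{\cl,q,a}(\rn)}$.

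The hard part is the $B$- and $F$-cases, where the defining sums in Definition \ref{d2.2}(iii) and (v) only run over $j\ge j_P$. For a pair $(j_0,P_0)$ with $j_0\ge j_{P_0}\vee 0$, the same lattice-property argument as in the $\ce$-case applies directly at the cube $P_0$. The complementary case $j_0<j_{P_0}$ is the main obstacle: the index $j_0$ is then absent from the $B$/$F$-sum at $P_0$, and I would handle it by passing to the dyadic ancestor $\widetilde{P_0}$ of $P_0$ at generation $j_0$, for which $j_{\widetilde{P_0}}=j_0$ makes $j_0$ an admissible index. One then exploits the quasi-constancy of the Peetre maximal function encoded in the inequality $(\vz_{j_0}^*f)_a(x+y)\le(1+2^{j_0}|y|)^a(\vz_{j_0}^*f)_a(x)$, which at scale $2^{-j_0}$ says that $(\vz_{j_0}^*f)_a$ is bi-Lipschitz-equivalent to a constant on $\widetilde{P_0}$; this replacement converts the transfer from $P_0$ to $\widetilde{P_0}$ into a comparison of $\|\chi_{P_0}\|_{\cl(\rn)}/\|\chi_{\widetilde{P_0}}\|_{\cl(\rn)}$ against the volume ratio $(|P_0|/|\widetilde{P_0}|)^\tau$, which is the technically delicate step and where I expect the interaction with $(\cl6)$ to enter.
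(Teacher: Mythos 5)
Your reduction to sequence spaces, the four $q_1\le q_2$ embeddings via Lemma \ref{l2.1}, and the $\cn$- and $\ce$-cases of \eqref{3.22} are correct and coincide with what the paper does (its entire proof is the remark that the lemma follows from Lemma \ref{l2.1} and Definition \ref{d3.1}; your $\ce$-computation is essentially the one written out in Remark \ref{r3.1}(ii), where, note, the target is $B^{w,\tau}_{\cl,\infty,a}(\rn)$ rather than $\cn^{w,\tau}_{\cl,\infty,a}(\rn)$).

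The $B$- and $F$-cases of \eqref{3.22} are where the real issue lies, and you have located it exactly: the pairs $(j_0,P_0)$ with $j_0<j_{P_0}$ do not occur in the $B$- and $F$-quasi-norms. But the step you leave open cannot be closed under the stated hypotheses. Your passage to the ancestor $\wz{P_0}$ requires $\|\chi_{P_0}\|_{\cl(\rn)}\ls(|P_0|/|\wz{P_0}|)^{\tau}\|\chi_{\wz{P_0}}\|_{\cl(\rn)}$, which already fails for $\cl(\rn)=L^p(\rn)$ once $\tau>1/p$; and in that regime the embedding itself fails. Indeed, if $\vz_{j_0}*f\not\equiv 0$, then $(\vz_{j_0}^*f)_a$ is everywhere positive and, by the inequality $(\vz_{j_0}^*f)_a(x)\le(1+2^{j_0}|x-y|)^a(\vz_{j_0}^*f)_a(y)$, locally bounded below by positive constants, so $\sup_{P\in\cq(\rn)}|P|^{-\tau}\|\chi_P w_{j_0}(\vz_{j_0}^*f)_a\|_{L^p(\rn)}=\fz$ upon letting $|P|\to 0$; hence $\cn^{w,\tau}_{\cl,\infty,a}(\rn)$ contains no nonzero Schwartz function, whereas $B^{w,\tau}_{\cl,q,a}(\rn)\supset\cs(\rn)$ by Theorem \ref{t3.4}. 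What Lemma \ref{l2.1} does give immediately is $A^{w,\tau}_{\cl,q,a}(\rn)\hookrightarrow B^{w,\tau}_{\cl,\infty,a}(\rn)$, i.e.\ the version with the supremum over $j$ restricted to $j\ge j_P\vee 0$, which is the statement the paper actually verifies and uses later (the identification $\cn^{w,\tau}_{\cl,\infty,a}(\rn)=B^{w,\tau}_{\cl,\infty,a}(\rn)$ asserted in Theorem \ref{t8.6}(ii) suffers from the same defect). So you should either prove the weaker $B^{w,\tau}_{\cl,\infty,a}(\rn)$-target, or impose an extra hypothesis of the form $\sup_{P'\subset P,\,P'\in\cq(\rn)}|P'|^{-\tau}\|\chi_{P'}\|_{\cl(\rn)}\ls |P|^{-\tau}\|\chi_{P}\|_{\cl(\rn)}$ (valid, e.g., for Morrey-type $\cl(\rn)$ with $\tau$ at most the exponent $\wz\tau$ of Theorem \ref{t6.2}), under which your ancestor-cube argument, combined with (W2) and the quasi-constancy of the Peetre maximal function at scale $2^{-j_0}$, does go through.
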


\begin{remark}\label{r3.1}
(i) It is well known
that $F^s_{p,q}(\rn) \hookrightarrow B^s_{p,\max(p,q)}(\rn)
\hookrightarrow B^s_{p,\infty}(\rn)$ (see, for example, \cite{t83}).
However, as an example in {\rm \cite{s08}} shows,
with $q \in (0,\infty]$ fixed,
{\rm \eqref{3.22}} is optimal
in the sense that the continuous embedding
$F^{w,\tau}_{\cl,q,a}(\rn)\hookrightarrow \cn^{w,\tau}_{\cl,r,a}(\rn)$
holds true for all admissible $a,w,\tau$ and $\cl(\rn)$
if and only if $r=\infty$.

(ii) From the definitions of the spaces $A_{\cl,q,a}^{w,\tau}(\rn)$,
we deduce that
$$A_{\cl,q,a}^{w,\tau}(\rn)\hookrightarrow B_{\cl,\fz,a}^{w,\tau}(\rn).$$

Indeed,
for example,
the proof of
$\ce_{\cl,q,a}^{w,\tau}(\rn)\hookrightarrow B_{\cl,\fz,a}^{w,\tau}(\rn)$
is as follows:
\begin{eqnarray*}
\|f\|_{\ce^{w,\tau}_{\cl,q,a}(\rn)}
&&=\sup_{P\in\mathcal{Q}(\rn)}\frac1{|P|^\tau}
\|\{\chi_{[j_P,\infty)}(j)\chi_P w_j(\vz_j^\ast f)_a\}_{j=0}^\infty\|_{\cl^w(\ell^q(\rn,\zz_+))}\\
&&\ge\sup_{P\in\mathcal{Q}(\rn)}
\sup_{j\ge j_P}\frac1{|P|^\tau}
\|\chi_P w_j(\vz_j^\ast f)_a\|_{\cl(\rn)}
=\|f\|_{B_{\cl,\fz,a}^{w,\tau}(\rn)}.
\end{eqnarray*}
\end{remark}

Now we are going to discuss the lifting property
of the function spaces,
which also justifies our new framework
of function spaces. Recall that, for all $f\in\cs'(\rn)$
and $\xi\in\rn$, we let $((1-\Delta)^{s/2}f)\ \wh{ }\ (\xi)
:=(1+|\xi|^2)^{s/2}\wh{f}(\xi)$ for all $\xi\in\rn$.

\begin{theorem}\label{t3.2}
Let $a,\, \alpha_1,\, \alpha_2, \,\alpha_3, \,\tau,\, q$, $w$
and $\cl(\rn)$ be as in Definition \ref{d3.1}
and $s\in\rr$.
For all $x \in \rn$ and $j \in \Z_+$, let
\[
w^{(s)}(x,2^{-j}) :=2^{-js}w_j(x).
\]
Then the lift operator $(1-\Delta)^{s/2}$
is bounded from $A^{w,\tau}_{\cl,q,a}(\rn)$
to $A^{w^{(s)},\tau}_{\cl,q,a}(\rn)$.
\end{theorem}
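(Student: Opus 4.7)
The strategy is to reduce the boundedness of $(1-\Delta)^{s/2}$ to the Peetre-maximal-function characterization by exploiting that $\widehat{\vz}_j$ is supported in the dyadic annulus $\{2^{j-1}\le|\xi|\le 2^{j+1}\}$, on which $(1+|\xi|^2)^{s/2}\sim 2^{js}$. Concretely, I would pick $\chi\in C_c^\infty(\rn)$ with $\chi\equiv 1$ on $\{1/4\le|\xi|\le 4\}$ and $\supp\chi\subset\{1/8\le|\xi|\le 8\}$, and, for each $j\in\nn$, define $\eta^{(j)}\in\cs(\rn)$ via
\[
\widehat{\eta^{(j)}}(\xi):=(2^{-2j}+|\xi|^2)^{s/2}\chi(\xi)\widehat{\vz}(\xi);
\]
in parallel, let $\Psi^{(s)}\in\cs(\rn)$ be given by $\widehat{\Psi^{(s)}}(\xi):=(1+|\xi|^2)^{s/2}\widehat{\Phi}(\xi)$. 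A direct check shows that $\Psi^{(s)}$ satisfies \eqref{1.1}, while $\{\eta^{(j)}\}_{j\in\nn}$ satisfies the hypotheses \eqref{3.7} of Theorem \ref{t3.1} uniformly in $j$, since $(2^{-2j}+|\xi|^2)^{s/2}$ and all of its derivatives are bounded on $\supp\chi$ independently of $j\in\nn$ for every $s\in\rr$. By construction one has the identities
\[
\Phi\ast(1-\Delta)^{s/2}f=\Psi^{(s)}\ast f,\qquad \vz_j\ast(1-\Delta)^{s/2}f=2^{js}(\eta^{(j)})_j\ast f\quad(j\in\nn),
\]
which, in view of the definition \eqref{1.3}, translate into
\[
w_j^{(s)}(x)\bigl(\vz_j^\ast(1-\Delta)^{s/2}f\bigr)_a(x)=w_j(x)\bigl((\eta^{(j)})_j^\ast f\bigr)_a(x).
\]

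Next I would rerun the proof of Theorem \ref{t3.1} with the fixed kernel $\psi$ replaced by the family $\{\eta^{(j)}\}_{j\in\nn}$. Inspection of that argument shows that the constants appearing in the pointwise estimates \eqref{3.12}--\eqref{3.13} depend only on a finite list of Schwartz seminorms of $\psi$ and on the fixed annulus on which $\widehat{\psi}$ is non-degenerate. Therefore, for $L\in\zz_+$ satisfying \eqref{3.6}, one obtains, with constants independent of $j$ and $f$,
\[
\bigl((\eta^{(j)})_j^\ast f\bigr)_a(x)\ls 2^{-j(L+1-a)}(\Phi^\ast f)_a(x)+\sum_{\nu=1}^\infty 2^{-|j-\nu|(L+1)+a[(j-\nu)\vee 0]}(\vz_\nu^\ast f)_a(x),
\]
together with a parallel (and easier) estimate for $\Psi^{(s)}\ast f$. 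Multiplying through by $w_j(x)$ and noting that the factor $2^{a[(j-\nu)\vee 0]}$ is absorbed into the exponential decay, the resulting bound is precisely of the form handled by Lemma \ref{l2.3} with the original weight $w\in\cw_{\alpha_1,\alpha_2}^{\alpha_3}$: indeed \eqref{3.6} yields $D_1:=L+1>\alpha_1$ and $D_2:=L+1-a>n\tau+\alpha_2$, which is the decay hypothesis \eqref{2.13}. Consequently each of the four sequence norms of $\{w_j^{(s)}(\vz_j^\ast(1-\Delta)^{s/2}f)_a\}_{j\in\zz_+}$ is dominated by the corresponding sequence norm of $\{w_\nu(\vz_\nu^\ast f)_a\}_{\nu\in\zz_+}$, which by Definition \ref{d3.1} gives the desired estimate $\|(1-\Delta)^{s/2}f\|_{A^{w^{(s)},\tau}_{\cl,q,a}(\rn)}\ls\|f\|_{\ala}$ simultaneously for $A\in\{B,\cn,F,\ce\}$.

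The main obstacle is the uniform-in-$j$ control of constants. One has to re-examine the proof of Theorem \ref{t3.1} and verify that its constants depend only on finitely many Schwartz seminorms of the kernel and on a fixed annulus of frequency non-degeneracy (rather than on the particular $\psi$ in an uncontrolled way), and then confirm the required uniform bounds for $\eta^{(j)}$. The latter reduces to the observation that, on the compact annulus $\supp\chi$, the multiplier $\xi\mapsto(2^{-2j}+|\xi|^2)^{s/2}$ is uniformly smooth in $j$ for every $s\in\rr$, so that the Leibniz rule controls every derivative of $\widehat{\eta^{(j)}}$ uniformly in $j\in\nn$, hence every Schwartz seminorm of $\eta^{(j)}$.
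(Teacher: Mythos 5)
Your proof is correct, and the underlying observation --- that on the $j$-th dyadic annulus the multiplier $(1+|\xi|^2)^{s/2}$ equals $2^{js}$ times a symbol whose derivatives are bounded uniformly in $j$ --- is exactly the one the paper exploits. The packaging, however, is genuinely different. The paper (Lemma \ref{l3.2}) takes $(\Phi,\vz)$ forming a partition of unity, writes $\vz_j*(1-\Delta)^{s/2}f=[(1-\Delta)^{s/2}(\vz_{j-1}+\vz_j+\vz_{j+1})]*\vz_j*f$ for $j\ge 2$, and proves the kernel bound \eqref{3.26} by differentiating the symbol and inverting the Fourier transform; this yields the purely \emph{diagonal} pointwise estimate $(\vz_j^*((1-\Delta)^{s/2}f))_a\ls 2^{js}(\vz_j^*f)_a$ (plus the finitely many low-frequency terms \eqref{3.23}--\eqref{3.24}), after which Theorem \ref{t3.2} follows at once upon multiplying by $w^{(s)}_j$ and using (W1), with no recourse to the summation machinery of Lemma \ref{l2.3}. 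You instead absorb the multiplier into a $j$-dependent family of generators $\eta^{(j)}$ with uniformly bounded Schwartz seminorms, uniformly non-degenerate on the relevant annulus and with all moments vanishing, and rerun the proof of Theorem \ref{t3.1}; this produces the full off-diagonal sum and then invokes Lemma \ref{l2.3} with $D_1=L+1$, $D_2=L+1-a$. Your route costs a little more --- the uniformity-in-$j$ check on the constants in \eqref{3.12}--\eqref{3.13}, which you correctly single out and which does go through because Lemma \ref{l2.4} only sees finitely many seminorms of the kernels --- but it buys independence from the partition-of-unity normalization and exhibits the lifting property as a corollary of generator-independence; the paper's route is shorter because the reproducing identity keeps the estimate on the diagonal.
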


For the proof of Theorem \ref{t3.2},
the following lemma is important.
Once we prove this lemma,
Theorem \ref{t3.2} is obtained
by virtue of Lemma \ref{l3.2} and (W1).

\begin{lemma}\label{l3.2}
Let $a\in(0,\fz)$, $s\in\rr$ and $\Phi$, $\varphi \in \cs(\rn)$ be such that
\[
\supp \widehat{\Phi} \subset \{\xi \in \rn\,:\,|\xi| \le 2\},
\
\supp\widehat{\varphi} \subset \{\xi \in \rn\,:\,1/2 \le |\xi| \le 2\}
{\rm\  and}\
\widehat{\Phi}+\sum_{j=1}^\infty \widehat{\varphi_j} \equiv 1,
\]
where
$\varphi_j(\cdot):= 2^{jn}\varphi(2^j\cdot)$
for each $j \in \nn$.
Then, there exists a positive constant $C$ such that,
for all  $f\in\cs'(\rn)$ and $x\in\rn$,
\begin{eqnarray}\label{3.23}
&&(\Phi^*((1-\Delta)^{s/2}f))_a(x)
\le C
\lf[(\Phi^*f)_a(x)+(\varphi_1^*f)_a(x)\r],\\
\label{3.24}
&&(\varphi_1^*((1-\Delta)^{s/2}f))_a(x)
\le C
\lf[(\Phi^*f)_a(x)+(\varphi_1^*f)_a(x)+(\varphi_2^*f)_a(x)\r],
\end{eqnarray}
and
\begin{eqnarray}
\label{3.25}
&&(\varphi_j^*((1-\Delta)^{s/2}f))_a(x)
\le C
2^{js}(\varphi_j^*f)_a(x)
\end{eqnarray}
for all $j \ge 2$.
\end{lemma}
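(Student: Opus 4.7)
The plan is to reduce all three estimates to a single pattern: rewrite $\vz_j\ast(1-\Delta)^{s/2}f$ as a convolution of $\vz_j\ast f$ (or, in the low-frequency cases, of $\Phi\ast f$ and $\vz_k\ast f$ for the at most three neighbouring indices $k$) against a Schwartz kernel whose Fourier transform carries the Bessel symbol $(1+|\xi|^2)^{s/2}$, cut off to a small enlargement of $\supp\wh{\vz_j}$. Once such a convolution representation is in place, I would use the elementary inequality
\[
(1+2^j|y-z|)^a\le(1+2^j|y|)^a(1+2^j|z|)^a
\]
to pass from the pointwise bound on $\vz_j\ast(1-\Delta)^{s/2}f(x+y)$ to the Peetre maximal function $(\vz_j^\ast f)_a(x)$ (or its low-frequency analogues), thereby reducing everything to an estimate of the weighted $L^1$-norm $\int_{\rn}|\eta_j(z)|(1+2^j|z|)^a\,dz$ of the kernel $\eta_j$.

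For \eqref{3.25}, I fix a cut-off $\chi\in C^\infty_{\rm c}(\rn)$ equal to $1$ on the support $\{1/2\le|\zeta|\le 2\}$ of $\wh\vz$ and supported in the annulus $\{1/4\le|\zeta|\le 4\}$, and define $\eta_j$ via $\wh{\eta_j}(\xi):=\chi(2^{-j}\xi)(1+|\xi|^2)^{s/2}$. Since $\chi(2^{-j}\cdot)\equiv 1$ on $\supp\wh{\vz_j}$, the identity $\vz_j\ast(1-\Delta)^{s/2}f=\eta_j\ast(\vz_j\ast f)$ holds in $\cs'(\rn)$. The key step is rescaling by $\zeta:=2^{-j}\xi$: writing $\eta_j(z)=2^{jn+js}\wz\eta_j(2^jz)$ with
\[
\wh{\wz\eta_j}(\zeta):=2^{-js}\chi(\zeta)(1+2^{2j}|\zeta|^2)^{s/2},
\]
I would verify that $\wz\eta_j$ lies in $\cs(\rn)$ with all seminorms uniform in $j$. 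This is because $\supp\chi$ is bounded away from the origin, so $1+2^{2j}|\zeta|^2\sim 2^{2j}$ there, and each $\zeta$-derivative of the symbol produces factors of the form $2^{2j}\zeta/(1+2^{2j}|\zeta|^2)$, which are of order one. A change of variable then delivers $\int_{\rn}|\eta_j(z)|(1+2^j|z|)^a\,dz\ls 2^{js}$, and combining this with the Peetre estimate above yields \eqref{3.25}.

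For \eqref{3.23} and \eqref{3.24} I would exploit the partition-of-unity identity $\wh\Phi+\sum_{k\ge 1}\wh{\vz_k}\equiv 1$ together with support information: on $\supp\wh\Phi\subset\{|\xi|\le 2\}$ one has $\wh\Phi+\wh{\vz_1}\equiv 1$, and on $\supp\wh{\vz_1}\subset\{1\le|\xi|\le 4\}$ one has $\wh\Phi+\wh{\vz_1}+\wh{\vz_2}\equiv 1$. Defining $K_0$ and $K_1$ by $\wh{K_0}(\xi):=\wh\Phi(\xi)(1+|\xi|^2)^{s/2}$ and $\wh{K_1}(\xi):=\wh{\vz_1}(\xi)(1+|\xi|^2)^{s/2}$, both belong to $\cs(\rn)$ since their Fourier transforms are compactly supported smooth functions; in particular $\int_{\rn}|K_i(z)|(1+|z|)^a\,dz<\fz$. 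Multiplying the Bessel symbol by the two partitions of unity yields decompositions $\Phi\ast(1-\Delta)^{s/2}f=K_0\ast\Phi\ast f+K_0\ast\vz_1\ast f$ and an analogous three-term identity for $\vz_1\ast(1-\Delta)^{s/2}f$, after which the same Peetre-maximal argument applied to each summand produces \eqref{3.23} and \eqref{3.24}; the fact that the frequencies are of fixed order ($j=0,1$) lets any $s$-dependent factors be absorbed into the constant.

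The main obstacle is the $j$-uniform estimate $\int_{\rn}|\eta_j(z)|(1+2^j|z|)^a\,dz\le C\cdot 2^{js}$ in the high-frequency case. This is precisely what affords the clean bound $\le C\cdot 2^{js}(\vz_j^\ast f)_a(x)$ in \eqref{3.25} without having to include the neighbouring Peetre maxima $(\vz_{j\pm 1}^\ast f)_a(x)$. The difficulty is technical---establishing uniform control on every derivative of the rescaled symbol $2^{-js}\chi(\zeta)(1+2^{2j}|\zeta|^2)^{s/2}$ on a fixed compact annulus away from the origin---but it is the step that makes the whole scheme function, and without it one would only obtain a weaker version of \eqref{3.25} involving three Peetre maxima at neighbouring dyadic levels.
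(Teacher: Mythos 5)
Your proposal is correct and follows essentially the same route as the paper: the paper also reduces \eqref{3.25} to a kernel estimate for the Bessel symbol localized to the annulus of $\supp\widehat{\varphi_j}$ (it uses $\widehat{\varphi_{j-1}}+\widehat{\varphi_j}+\widehat{\varphi_{j+1}}$ as the cutoff where you use an abstract $\chi(2^{-j}\cdot)$, a cosmetic difference), proves the same uniform derivative bounds on the rescaled symbol to get the kernel decay $2^{j(s+n)}(1+2^j|y|)^{-a-n-1}$, and concludes with the same Peetre inequality; it omits \eqref{3.23} and \eqref{3.24} as "simpler," which your partition-of-unity argument supplies correctly.
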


\begin{proof}
The proofs of \eqref{3.23} and \eqref{3.24} being simpler,
let us prove \eqref{3.25}.
In view of the size of supports,
we see that, for all $j\ge 2$ and $x\in\rn$,
\begin{eqnarray*}
&&(\varphi_j^*((1-\Delta)^{s/2}f))_a(x)\\
&&\quad=
\sup_{z \in \rn}
\frac{|\varphi_j*[(1-\Delta)^{s/2}f](x+z)|}{(1+2^j|z|)^a}\\
&&\quad=
\sup_{z \in \rn}
\frac{|(1-\Delta)^{s/2}(\varphi_{j-1}+\varphi_j+\varphi_{j+1})
*\varphi_j*f(x+z)|}{(1+2^j|z|)^a}\\
&&\quad=
\sup_{z \in \rn}\frac{1}{(1+2^j|z|)^a}
\left|\int_{\rn}
(1-\Delta)^{s/2}(\varphi_{j-1}+\varphi_j+\varphi_{j+1})(y)
\varphi_j*f(x+z-y)dy\right|.
\end{eqnarray*}
Now let us show that, for all $j\ge 2$ and $y\in\rn$,
\begin{equation}\label{3.26}
|(1-\Delta)^{s/2}(\varphi_{j-1}+\varphi_j+\varphi_{j+1})(y)|
\lesssim
\frac{2^{j(s+n)}}{(1+2^j|y|)^{a+n+1}}.
\end{equation}
Once we prove \eqref{3.26},
by inserting \eqref{3.26} to the above equality
we conclude the proof of \eqref{3.25}.

To this end,
we observe that, for all $j\ge 2$ and $y\in\rn$,
$$
(1-\Delta)^{s/2}
\left(\sum_{l=-1}^1\varphi_{j+l}\right)(y)
=
\left((1+|\xi|^2)^{s/2}
[\widehat{\varphi}(2^{-j+1}\xi)
+\widehat{\varphi}(2^{-j}\xi)
+\widehat{\varphi}(2^{-j-1}\xi)
]\right
)^\vee(y).
$$
Since, for all multiindices $\vec{\alpha}$, $j\ge 2$ and $\xi\in\rn$,
a pointwise estimate
$$
\lf|\partial^{\vec{\alpha}}\lf((1+|\xi|^2)^{s/2}
\lf[\widehat{\varphi}(2^{-j+1}\xi)
+\widehat{\varphi}(2^{-j}\xi)
+\widehat{\varphi}(2^{-j-1}\xi)\r]\r)\r|
\lesssim
2^{(s-\|\vec{\alpha}\|_1)j}(1+2^{-j}|\xi|)^{-n-1}
$$
holds true,
\eqref{3.26} follows from the definition of the Fourier transform,
which completes the proof of Lemma \ref{l3.2}.
\end{proof}

The next Theorem \ref{t3.3} is mainly a consequence
of the assumptions $(\cl1)$ through $(\cl4)$ and $(\cl6)$.
To show it,
we need to introduce a new class of weights,
which are used later again.

\begin{definition}\label{d3.3}
Let $\alpha_1, \alpha_2, \alpha_3 \in [0,\infty)$.
The \emph{class
$\star-{\mathcal W}_{\alpha_1,\alpha_2}^{\alpha_3}$
of weights} is defined
as the set of all measurable functions
$w:\R^n_{\Z_+} \to (0,\infty)$
satisfying (W$1^\star$) and (W2),
where (W2) is defined as in Definition {\rm \ref{d2.1}}
and

(W$1^\star$)
there exists a positive constant $C$ such that,
for all $x \in \rn$ and $j, \nu \in \Z_+$ with $j \ge \nu$,
$C^{-1}2^{(j-\nu)\alpha_1}
w(x,2^{-\nu})
\le
w(x,2^{-j})
\le
C2^{-(\nu-j)\alpha_2}
w(x,2^{-\nu})$.
\end{definition}

It is easy to see that $\star-\cw_{\az_1,\az_2}^{\az_3}\subsetneqq\cw_{\az_1,\az_2}^{\az_3}$.

\begin{example}\label{e3.3}
If
$s \in [0,\infty)$ and
$w_j(x):=2^{js}$
for all $x \in \rn$ and $j \in \Z_+$,
then it is easy to see $w \in \star-\cw_{s,s}^{0}$.
\end{example}

With the terminology for the proof is fixed,
we state and prove the following theorem.

\begin{theorem}\label{t3.3}
Let $a, \alpha_1, \alpha_2, \alpha_3, \tau$ and $q$
be as in Definition \ref{d3.1}.
If $w\in\star-\cw_{\az_1,\az_2}^{\az_3}$ and
$\cl(\rn)$ satisfies $(\cl1)$ through $(\cl4)$
and $(\cl6)$, then $A^{w,\tau}_{\cl,q,a}(\rn)
\hookrightarrow \cs'(\rn)$ in the sense of continuous embedding.
\end{theorem}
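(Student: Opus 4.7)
\emph{Proof plan.} By Lemma \ref{l3.1} and Remark \ref{r3.1}(ii), all four function spaces $A^{w,\tau}_{\cl,q,a}(\rn)$ embed continuously into $B^{w,\tau}_{\cl,\fz,a}(\rn)$, so it suffices to prove that $B^{w,\tau}_{\cl,\fz,a}(\rn)\hookrightarrow \cs'(\rn)$. Since by Definition \ref{d3.1} every element $f$ of this space already lies in $\cs'(\rn)$, the only task is to produce a fixed Schwartz seminorm $p_N$ and a constant $C$ such that, for all $f\in B^{w,\tau}_{\cl,\fz,a}(\rn)$ and $\phi\in\cs(\rn)$,
$$|\la f,\phi\ra|\le C\,p_N(\phi)\,\|f\|_{B^{w,\tau}_{\cl,\fz,a}(\rn)}.$$

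The heart of the argument, and the step I expect to be the main obstacle, is converting the quasi-norm of $f$ into a pointwise bound
$$|\vz_j\ast f(x)|\le (\vz_j^\ast f)_a(x)\ls 2^{j(a+\az_3-\az_1)}(1+|x|)^{\az_3+\dz}\,\|f\|_{B^{w,\tau}_{\cl,\fz,a}(\rn)}\quad(j\in\zz_+,\,x\in\rn),$$
with the convention $\vz_0:=\Phi$. To derive it I fix $k\in\zz^n$ and $x\in Q_{0,k}$, and exploit the ``almost constant'' behaviour of the Peetre maximal function on $Q_{0,k}$: for every $y\in Q_{0,k}$, the inequality $(\vz_j^\ast f)_a(x)\le (1+2^j|x-y|)^a(\vz_j^\ast f)_a(y)\ls 2^{ja}(\vz_j^\ast f)_a(y)$ holds, while (W2) yields $w_j(x)\ls 2^{j\az_3}w_j(y)$. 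Combining these with $(\cl4)$ and the lower bound $\|\chi_{Q_{0,k}}\|_{\cl(\rn)}\gs (1+|k|)^{-\dz}$ from $(\cl6)$, together with $|Q_{0,k}|^\tau=1$, gives
$$2^{-j(a+\az_3)}w_j(x)(\vz_j^\ast f)_a(x)(1+|k|)^{-\dz}\ls \|\chi_{Q_{0,k}}w_j(\vz_j^\ast f)_a\|_{\cl(\rn)}\le \|f\|_{B^{w,\tau}_{\cl,\fz,a}(\rn)}.$$
Here the strengthened hypothesis $w\in\star{-}\cw_{\az_1,\az_2}^{\az_3}$ is crucial: applying $(\text{W}1^\star)$ with $\nu=0$ and using (W2) yields $w_j(x)\gs 2^{j\az_1}(1+|x|)^{-\az_3}$, and inserting this lower bound, along with $1+|k|\ls 1+|x|$, produces the claimed pointwise estimate with the negative power of $2^j$ when $\az_1$ is large. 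Under the weaker hypothesis $w\in\cw_{\az_1,\az_2}^{\az_3}$ only the opposite, useless, inequality $w_j\gs 2^{-j\az_1}w_0$ is available, which is why the assumption $w\in\star{-}\cw_{\az_1,\az_2}^{\az_3}$ enters the statement.

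To conclude, I would invoke the reproducing formula $f=\Phi^{\dagger}\ast\Phi\ast f+\sum_{j=1}^\fz\vz^{\dagger}_j\ast\vz_j\ast f$ in $\cs'(\rn)$ from \cite[Theorem 1.6]{r01} (already used in the proof of Theorem \ref{t3.1}), where $\Phi^{\dagger},\vz^{\dagger}\in\cs(\rn)$ and $\vz^{\dagger}$ can be chosen to have any prescribed number $L$ of vanishing moments and $\wh{\vz^{\dagger}}$ supported away from the origin. Using the adjoint relation $\la g\ast h,\phi\ra=\la g,\wz h\ast\phi\ra$ with $\wz h(x):=h(-x)$, one obtains
$$\la f,\phi\ra=\la\Phi\ast f,\wz{\Phi^{\dagger}}\ast\phi\ra+\sum_{j=1}^\fz\la\vz_j\ast f,\wz{\vz^{\dagger}_j}\ast\phi\ra.$$
A Taylor expansion argument (alternatively Lemma \ref{l2.4}) exploiting the vanishing moments of $\vz^{\dagger}$ and the Schwartz decay of $\phi$ supplies, for any preassigned $M,M'\in(0,\fz)$, the estimate $|\wz{\vz^{\dagger}_j}\ast\phi(x)|\ls 2^{-jM}(1+|x|)^{-M'}p_N(\phi)$ provided $L$ and $N$ are taken large enough; a simpler version handles the $\Phi^{\dagger}$ term. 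Choosing $M>a+\az_3-\az_1$ and $M'>n+\az_3+\dz$, the pointwise bound derived above renders each integral finite and makes the geometric series in $j$ summable, producing the required seminorm estimate and hence the continuous embedding.
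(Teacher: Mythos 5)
Your proof is correct, but it takes a genuinely different route from the paper's. The paper pairs $\vz_j\ast f$ directly with the test function $\zeta$ and extracts the decay in $j$ solely from the weight: using (W$1^\star$) it passes from $w_j$ down to $w_0$ at the cost of $2^{-j\az_1}$, arrives at $\int_{\rn}|\zeta\,\vz_j\ast f|\ls 2^{-j(\az_1-N+n-\gz+n\tau)}p(\zeta)\|f\|$, and therefore must first assume $\az_1$ large enough for this exponent to be positive; that restriction is then removed by conjugating with the lift $(1-\bdz)^{s/2}$ via Theorem \ref{t3.2}. You instead establish the pointwise bound $(\vz_j^\ast f)_a(x)\ls 2^{j(a+\az_3-\az_1)}(1+|x|)^{\az_3+\dz}\|f\|$ --- essentially the content of Proposition \ref{p3.3} --- accept that it may grow in $j$, and absorb the growth into the arbitrarily many vanishing moments of the dual system $(\Phi^{\dagger},\vz^{\dagger})$ from \cite[Theorem 1.6]{r01}, pairing $\vz_j\ast f$ with $\wz{\vz^{\dagger}_j}\ast\phi$ rather than with $\phi$ itself. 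This makes your argument one-shot: no parameter restriction and no lifting step. One remark: your aside that the hypothesis $w\in\star\text{-}\cw_{\az_1,\az_2}^{\az_3}$ is ``crucial'' overstates its role in your own argument. Under the weaker hypothesis $w\in\cw_{\az_1,\az_2}^{\az_3}$, (W1) still yields $w_j(x)\gs 2^{-j\az_1}(1+|x|)^{-\az_3}$, which merely changes your exponent to $a+\az_3+\az_1$; since $L$ may be taken arbitrarily large, the moment cancellation still wins, so your method actually proves the embedding for the full class $\cw_{\az_1,\az_2}^{\az_3}$. It is the paper's method that genuinely needs (W$1^\star$), because the lift with $s<0$ improves the (W$1^\star$)-exponent but would worsen the (W1)-exponent. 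This is not a gap --- only the explanatory comment is slightly misleading.
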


\begin{proof}
Let $\Phi, \vz\in\cs(\rn)$ be as in Lemma \ref{l3.2}. Then
\begin{equation}\label{3.27}
\widehat{\Phi}+\sum_{j=1}^\infty \widehat{\vz_j} \equiv 1.
\end{equation}
We first assume that (W$1^\star$) holds true with
\begin{equation}\label{3.28}
\alpha_1-N+n-\gamma+n\tau>0 \ {\rm and}\
N>\delta+n
\end{equation}
for some $N\in(0,\fz)$.

For any $f\in A^{w,\tau}_{\cl,q,a}(\rn)$, by the definition,
we see that, for all $Q\in\cq(\rn)$ with $j_Q \in\nn$,
\begin{eqnarray*}
\frac1{|Q|^{\tau}}
\lf\|\chi_Q \cdot w(\cdot,2^{-j_Q})
(\varphi_{j_Q}^*f)_a
\r\|_{\cl(\rn)}
\ls\|f\|_{A^{w,\tau}_{\cl,q,a}(\rn)}.
\end{eqnarray*}
Consequently, from (W$1^\star$), we deduce that
\begin{equation}\label{3.29}
\lf\|\chi_Q \cdot w(\cdot,1)
(\varphi_{j_Q}^*f)_a
\r\|_{\cl(\rn)}
\lesssim 2^{-j_Q(\alpha_1+n\tau)}\|f\|_{A^{w,\tau}_{\cl,q,a}(\rn)}.
\end{equation}
Now let $\zeta \in \cs(\rn)$ be an arbitrary test function
and define
\begin{equation*}
p(\zeta):=\sup_{x \in \rn}(1+|x|)^{\alpha_3+N}\zeta(x).
\end{equation*}%
Then from \eqref{3.29} and the partition $\{Q_{jk}\}_{k \in \Z^n}$ of $\rn$,
we infer that
\[
\int_{\rn}|\zeta(x)\varphi_{j}*f(x)|\,dx
\ls p(\zeta)
\sum_{k \in \Z^n}
(1+|2^{-j}k|)^{-N-\az_3}
\int_{Q_{jk}}|\varphi_{j}*f(x)|\,dx.
\]
If we use the condition (W2) twice and the fact that $j \in [0,\infty)$,
then we have
\begin{eqnarray*}
&&
\int_{\rn}|\zeta(x)\varphi_{j}*f(x)|\,dx\\
&&\quad\lesssim
p(\zeta)
\sum_{k \in \Z^n}
(1+|2^{-j}k|)^{-N}\inf_{y \in Q_{jk}} w(y,1)
\int_{Q_{jk}}|\varphi_{j}*f(x)|\,dx\\
&&\quad\lesssim
p(\zeta)
\sum_{k \in \Z^n}
2^{jN}(1+|k|)^{-N}
|Q_{jk}|\inf_{y \in Q_{jk}}\lf\{w(y,1)
(\varphi_{j}^*f)_a(y)\r\}.
\end{eqnarray*}
Now we use \eqref{3.29} and the assumption $(\cl6)$ to conclude
\begin{eqnarray}\label{3.31}
&&\int_{\rn}|\zeta(x)\varphi_{j}*f(x)|\,dx\noz\\
&&\hs\lesssim
p(\zeta)
\sum_{k \in \Z^n}
2^{j(N-n+\gamma)}(1+|k|)^{-N+\delta}
\lf\|\chi_{Q_{jk}} w(\cdot,1)
(\varphi_{j}^*f)_a
\r\|_{\cl(\rn)}\noz\\
&&\hs\lesssim
p(\zeta)
\sum_{k \in \Z^n}
2^{-j(\alpha_1-N+n-\gamma+n\tau)}(1+|k|)^{-N+\delta}
\|f\|_{A^{w,\tau}_{\cl,q,a}(\rn)}\noz\\
&&\hs\sim
2^{-j(\alpha_1-N+n-\gamma+n\tau)}
p(\zeta)
\|f\|_{A^{w,\tau}_{\cl,q,a}(\rn)}.
\end{eqnarray}

By replacing $\vz_0$ with $\Phi$ in the above
argument, we see that
\begin{equation}\label{3.32}
\int_{\rn}|\zeta(x)\Phi*f(x)|\,dx
\ls p(\zeta)
\|f\|_{A^{w,\tau}_{\cl,q,a}(\rn)}.
\end{equation}
Combining \eqref{3.27},  \eqref{3.31} and \eqref{3.32},
we then conclude that, for all $\zeta \in \cs(\rn)$,
\begin{equation}\label{3.330}
|\langle f,\zeta\rangle|
\le
|\langle \Phi*f,\zeta\rangle|
+
\sum_{j=1}^\infty
|\langle \vz_j*f,\zeta\rangle|
\lesssim
p(\zeta)\|f\|_{A^{w,\tau}_{\cl,q,a}(\rn)},
\end{equation}
which implies that $f\in\cs'(\rn)$ and hence $A^{w,\tau}_{\cl,q,a}(\rn)
\hookrightarrow \cs'(\rn)$ in the sense of continuous embedding.

We still need to remove the restriction \eqref{3.28}.
Indeed, for any $\az_1\in[0,\fz)$ and $f\in A_{\cl,q,a}^{w,\tau}(\rn)$,
choose $s\in(-\fz,0)$
small enough such that $\az_1-s>\gz+\dz-n\tau$.
By Theorem \ref{t3.2}, we have
$(1-\bdz)^{s/2}f\in A_{\cl,q,a}^{w^{(s)},\tau}(\rn)$.
Then, defining a seminorm $\rho$ by $\rho(\zeta):=p((1-\bdz)^{s/2}\zeta)$
for all $\zeta\in\cs(\rn)$, by \eqref{3.330},
we have
\begin{eqnarray*}
|\la f, \zeta\ra|&&=|\la(1-\bdz)^{s/2}f,(1-\bdz)^{-s/2}\zeta\ra|\\
&&\ls\rho((1-\bdz)^{-s/2}\zeta)\|(1-\bdz)^{s/2}f\|_{A_{\cl,q,a}^{w^{(s)},\tau}(\rn)}
\ls p(\zeta)\|f\|_{A_{\cl,q,a}^{w,\tau}(\rn)},
\end{eqnarray*}
which completes the proof of Theorem \ref{t3.3}.
\end{proof}

\begin{remark}\label{r3.2}
In the course of the proof of Theorem \ref{t3.3},
the inequality
\begin{equation*}
\int_{\kappa Q_{jk}}|\varphi_{j}*f(x)|\,dx
\ls
\kappa^M2^{-j(\alpha_1+n+n\tau-\gamma)}(1+|k|)^{\delta}
\|f\|_{A^{w,\tau}_{\cl,q,a}(\rn)}
\end{equation*}%
is proved.
Here $\kappa \ge 1$, $M$ and the implicit positive constant are independent of
$j,k$ and $\kappa$.
\end{remark}

It follows from Theorem \ref{t3.3}
that we have the following conclusions, whose proof is similar
to that of \cite[pp.\,48-49,\ Theorem 2.3.3]{t83}. For the sake of convenience, we
give some details here.

\begin{proposition}\label{p3.1}
Let $a, \alpha_1, \alpha_2, \alpha_3, \tau$ and $q$
be as in Definition \ref{d3.1}.
If $w\in\star-\cw_{\az_1,\az_2}^{\az_3}$ and
$\cl(\rn)$ satisfies $(\cl1)$ through
$(\cl6)$, then the spaces $B^{w,\tau}_{\cl,q,a}(\rn)$,
$\cn^{w,\tau}_{\cl,q,a}(\rn)$, $F^{w,\tau}_{\cl,q,a}(\rn)$ and
$\ce^{w,\tau}_{\cl,q,a}(\rn)$ are complete.
\end{proposition}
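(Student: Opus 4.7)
The plan is to mimic the classical completeness argument for Besov-Triebel-Lizorkin spaces (as in \cite[Theorem 2.3.3]{t83}), where the key replacements of the $L^p$-Fatou lemma are the Fatou property $(\cl5)$ of the underlying space $\cl(\rn)$, combined with elementary Fatou-type inequalities for $\ell^q$-sums and for suprema over dyadic cubes. The whole scheme is driven by Theorem \ref{t3.3}, which gives the continuous embedding $A^{w,\tau}_{\cl,q,a}(\rn)\hookrightarrow\cs'(\rn)$, so that a Cauchy sequence in our space is also Cauchy in $\cs'(\rn)$ and thus has a $\cs'(\rn)$-limit. I would carry out the argument only for $A=F$ (using $\cl^w_\tau(\ell^q(\rn,\zz_+))$); the remaining three cases are handled verbatim, with the outer operations reordered.

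First, take a Cauchy sequence $\{f_m\}_{m\in\nn}\subset F^{w,\tau}_{\cl,q,a}(\rn)$. By Theorem \ref{t3.3} it is Cauchy in $\cs'(\rn)$, hence converges to some $f\in\cs'(\rn)$. Since $\vz_j(y-\cdot)\in\cs(\rn)$ for every fixed $y\in\rn$, the $\cs'(\rn)$-convergence yields the pointwise convergence $\vz_j*f_m(z)\to\vz_j*f(z)$ for every $z\in\rn$ and every $j\in\zz_+$ (with $\vz_0:=\Phi$). Applied inside the Peetre maximal function, this gives, for every $x\in\rn$,
\begin{equation*}
(\vz_j^*f)_a(x)
=\sup_{y\in\rn}\frac{|\vz_j*f(x+y)|}{(1+2^j|y|)^a}
\le\liminf_{m\to\fz}(\vz_j^*f_m)_a(x),
\end{equation*}
because $\sup_y\lim_m\le\sup_y\liminf_m\le\liminf_m\sup_y$ for nonnegative quantities.

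Next, I would transport this liminf inequality through the outer norms. Fix a dyadic cube $P\in\cq(\rn)$. By the liminf inequality above together with Fatou's lemma for $\ell^q$-sums, for every $x\in\rn$,
\begin{equation*}
\lf(\sum_{j=j_P\vee 0}^{\fz}\bigl[\chi_P(x)w_j(x)(\vz_j^*f)_a(x)\bigr]^q\r)^{1/q}
\le\liminf_{m\to\fz}\lf(\sum_{j=j_P\vee 0}^{\fz}\bigl[\chi_P(x)w_j(x)(\vz_j^*f_m)_a(x)\bigr]^q\r)^{1/q},
\end{equation*}
with the usual modification when $q=\fz$. Applying Proposition \ref{p2.1} (the Fatou property of $\cl(\rn)$) to the right-hand side, dividing by $|P|^\tau$ and taking the supremum over $P\in\cq(\rn)$, I obtain
\begin{equation*}
\|f\|_{F^{w,\tau}_{\cl,q,a}(\rn)}\le\liminf_{m\to\fz}\|f_m\|_{F^{w,\tau}_{\cl,q,a}(\rn)}<\fz,
\end{equation*}
where finiteness comes from the fact that Cauchy sequences are bounded. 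This shows $f\in F^{w,\tau}_{\cl,q,a}(\rn)$. Finally, repeating the exact same chain with $f-f_m$ in place of $f$ (and exploiting that $f_k-f_m\to f-f_m$ in $\cs'(\rn)$ as $k\to\fz$ for each fixed $m$), I get
\begin{equation*}
\|f-f_m\|_{F^{w,\tau}_{\cl,q,a}(\rn)}\le\liminf_{k\to\fz}\|f_k-f_m\|_{F^{w,\tau}_{\cl,q,a}(\rn)},
\end{equation*}
and the right-hand side tends to $0$ as $m\to\fz$ by the Cauchy property. Hence $f_m\to f$ in $F^{w,\tau}_{\cl,q,a}(\rn)$.

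The only delicate point, and the one I would check carefully, is the interchange of $\liminf_m$ with the outer operations in each of the four scales: for $F$ and $\ce$ the inner $\ell^q$-Fatou feeds into $\cl$-Fatou via $(\cl5)$, while for $B$ and $\cn$ the $\cl$-Fatou is applied first, and then Fatou for the $\ell^q$-sum (respectively for the supremum over $P$) is used. All these are straightforward once the pointwise liminf inequality for $(\vz_j^*f)_a$ is established, so no new ingredient beyond Theorem \ref{t3.3} and Proposition \ref{p2.1} is needed; in particular we never need $\cl(\rn)$ itself to be complete.
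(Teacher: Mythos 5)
Your proposal is correct and follows essentially the same route as the paper: embedding into $\cs'(\rn)$ via Theorem \ref{t3.3}, pointwise $\liminf$ control of the Peetre maximal functions, and then the Fatou properties of $\ell^q$ and of $\cl(\rn)$ (Proposition \ref{p2.1}) to pass the $\liminf$ through the norm. The only cosmetic difference is that the paper applies the chain directly to $f_l-f=\lim_m(f_l-f_m)$ and concludes $\limsup_l\|f_l-f\|\le\limsup_l\liminf_m\|f_l-f_m\|=0$ in one stroke, whereas you first verify $f\in F^{w,\tau}_{\cl,q,a}(\rn)$ and then run the argument a second time for $f-f_m$; both are fine.
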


\begin{proof}
By similarity, we only give the proof for the space $F^{w,\tau}_{\cl,q,a}(\rn)$.
Let $\{f_l\}_{l\in\nn}$ be a Cauchy sequence in $F^{w,\tau}_{\cl,q,a}(\rn)$.
Then from Theorem \ref{t3.3}, we infer that $\{f_l\}_{l\in\nn}$ is also
a Cauchy sequence in $\cs'(\rn)$. By the completeness of $\cs'(\rn)$,
there exists an $f\in\cs'(\rn)$ such that, for all Schwartz
functions $\vz$,
$\vz\ast f_l\to \vz\ast f$ pointwise as $l\to\fz$ and
hence
$$\vz\ast (f_l-f)=\lim_{m\to\fz}\vz\ast (f_l-f_m)$$
pointwise. Therefore, for all $j\in\zz_+$ and $x\in\rn$,
$$\sup_{z\in\rn}\frac{|\vz_j\ast (f_l-f)(x+z)|}{(1+2^j|z|)
^a}\le \liminf_{m\to\fz}
\sup_{z\in\rn}\frac{|\vz_j\ast (f_l-f_m)(x+z)|}{(1+2^j|z|)
^a},$$
which, together with $(\cl4)$, the Fatou property of $\cl(\rn)$
in Proposition \ref{p2.1}, and the Fatou property of $\ell^q$, implies that
$$\limsup_{l\to\fz}\|f_l-f\|_{F^{w,\tau}_{\cl,q,a}(\rn)}\le \limsup_{l\to\fz}
\left(\liminf_{m\to\fz} \|f_l-f_m\|_{F^{w,\tau}_{\cl,q,a}(\rn)}\right)=0.$$
Thus, $f=\lim_{m\to\fz}f_m$ in
$F^{w,\tau}_{\cl,q,a}(\rn)$, which shows that $F^{w,\tau}_{\cl,q,a}(\rn)$
is complete.
This finishes the proof of Proposition \ref{p3.1}.
\end{proof}

Assuming $(\cl6)$, we can prove
that $\cs(\rn)$ is embedded into ${A^{w,\tau}_{\cl,q,a}(\rn)}.$

\begin{theorem}\label{t3.4}
Let $a, \alpha_1, \alpha_2, \alpha_3, \tau, q$ and $w$
be as in Definition \ref{d3.1}.
Then if $\cl(\rn)$ satisfies $(\cl1)$ through $(\cl6)$ and
\begin{equation}\label{3.35}
a\in (N_0+\alpha_3,\infty),
\end{equation}
then
$\cs(\rn) \hookrightarrow A^{w,\tau}_{\cl,q,a}(\rn)$
in the sense of continuous embedding.
\end{theorem}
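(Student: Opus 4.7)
The plan is to prove a uniform pointwise decay estimate for the Peetre maximal functions $(\vz_j^*\zeta)_a$ and $(\Phi^*\zeta)_a$ on Schwartz inputs $\zeta$, and then to feed that estimate into each of the four norms in Definition \ref{d3.1}, using the weight growth bounds from (W1), (W2) and the non-degeneracy assumption $(\cl6)$. Continuity of the embedding will then be automatic because all implicit constants can be taken to depend only on finitely many Schwartz seminorms of $\zeta$.

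The first main step is the pointwise estimate
$$(\vz_j^*\zeta)_a(x)\ls_K 2^{-jK}(1+|x|)^{-a},\qquad j\in\nn,\ x\in\rn,$$
valid for any prescribed $K\in\nn$, with implicit constant controlled by finitely many Schwartz seminorms of $\zeta$. I would get it from the fact that $\wh{\vz}$ vanishes in a neighbourhood of the origin, which forces $\int x^\alpha\vz(x)\,dx=0$ for every multi-index $\alpha$. A Taylor-remainder argument of order $K$ then yields the refined convolution bound $|\vz_j*\zeta(x+y)|\ls_K 2^{-jK}(1+|x+y|)^{-L}$ for any $L\in\nn$, after which the Peetre inequality $(1+|x+y|)^{-L}\le(1+|x|)^{-L}(1+|y|)^L$ combined with $(1+2^j|y|)\ge(1+|y|)$ for $j\ge 0$ and the choice $L=a$ gives the claim. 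The low-frequency analogue $(\Phi^*\zeta)_a(x)\ls(1+|x|)^{-a}$ follows directly from $\Phi*\zeta\in\cs(\rn)$.

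Next, I will combine this with the pointwise weight bound $w_j(x)\ls 2^{j(\az_2+\az_3)}(1+|x|)^{\az_3}$, which follows from (W1) applied with $\nu=0$ and (W2) applied with reference point $0$. Using the crucial hypothesis $a\in(N_0+\az_3,\fz)$ in the form $(1+|x|)^{\az_3-a}\le(1+|x|)^{-N_0}$, I obtain
$$w_j(x)(\vz_j^*\zeta)_a(x)\ls_K 2^{j(\az_2+\az_3-K)}(1+|x|)^{-N_0},$$
and then $(\cl4)$ together with $(\cl6)$ promote this to the $\cl(\rn)$-bound $\|w_j(\vz_j^*\zeta)_a\|_{\cl(\rn)}\ls_K 2^{-jK''}$ with $K''$ as large as desired, by choosing $K$ sufficiently large.

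The last step is to handle the normalising factor $|P|^{-\tau}=2^{j_P n\tau}$ and the supremum over dyadic cubes $P$ in each of the four norms. For $B^{w,\tau}_{\cl,q,a}(\rn)$ and $\cn^{w,\tau}_{\cl,q,a}(\rn)$ the remaining sum $\sum_{j\ge j_P\vee 0}2^{-jK''q}$ is geometric and its leading term beats $2^{j_P n\tau q}$ once $K''>n\tau$; for $j_P<0$ one even has $|P|^{-\tau}\le 1$. For $F^{w,\tau}_{\cl,q,a}(\rn)$ and $\ce^{w,\tau}_{\cl,q,a}(\rn)$ the $\ell^q$-sum is first carried out pointwise to give $(\sum_{j=0}^\fz[w_j(x)(\vz_j^*\zeta)_a(x)]^q)^{1/q}\ls(1+|x|)^{-N_0}$, and only then is $(\cl6)$ invoked. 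The main technical obstacle is a uniform choice of the Taylor order $K$: it must dominate $\az_2+\az_3$ to kill the weight growth, dominate $n\tau+\az_2+\az_3$ to absorb the Morrey factor, and leave room for a summable $\ell^q$-tail; but since $K$ in the Taylor argument is free, a single large enough choice works for all four spaces simultaneously and yields continuous dependence on $\zeta$ through finitely many Schwartz seminorms.
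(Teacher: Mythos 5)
Your proposal is correct in substance but takes a genuinely different route from the paper at the key step. The paper extracts no decay in $j$: it only proves the uniform bound $(\vz_j^*f)_a(x)\lesssim(1+|x|)^{-a}\sup_{y\in\rn}(1+|y|)^{a+n+1}|f(y)|$, which after multiplication by $w_j$ gives $\|w_j(\vz_j^*f)_a\|_{\cl(\rn)}\lesssim 2^{j\alpha_2}$ --- a bound that \emph{grows} in $j$. To repair this, the paper first proves the embedding for the modified weight $w^*_j:=2^{-j(\alpha_2+n\tau+\ez)}w_j$ (where the growth and the Morrey factor are killed by the definition of the weight) and then transfers the result back to $w$ via the lifting theorem (Theorem \ref{t3.2}), writing $f=(1-\Delta)^{-s/2}[(1-\Delta)^{s/2}f]$. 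You instead exploit the vanishing moments of $\vz$ (forced by $\supp\widehat{\vz}$ avoiding the origin) to obtain the stronger bound $(\vz_j^*\zeta)_a(x)\lesssim_K 2^{-jK}(1+|x|)^{-a}$; this is essentially an application of the paper's Lemma \ref{l2.4} and makes the summation over $j\ge j_P\vee 0$ together with the factor $|P|^{-\tau}=2^{j_Pn\tau}$ directly tractable, so the detour through $w^*$ and the lifting theorem becomes unnecessary. Your route is more self-contained (it does not rely on Theorem \ref{t3.2} or Lemma \ref{l3.2}) at the modest cost of the Taylor-remainder computation.

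One caveat, which you share with the paper's own proof: for the scales $\cn^{w,\tau}_{\cl,q,a}(\rn)$ and $\ce^{w,\tau}_{\cl,q,a}(\rn)$ the $j$-sum starts at $0$ and the supremum runs over \emph{all} dyadic cubes $P$, so for small $P$ the factor $|P|^{-\tau}$ is not compensated by anything. Your final bound reduces to $\sup_{P}|P|^{-\tau}\|\chi_P(1+|\cdot|)^{-N_0}\|_{\cl(\rn)}$, which need not be finite when $\tau>0$ because $(\cl1)$ through $(\cl6)$ provide only a \emph{lower} bound on $\|\chi_P\|_{\cl(\rn)}$ (for instance $\cl(\rn)=L^p(\rn)$ with $\tau>1/p$ already fails). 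The paper's estimate \eqref{3.36} runs into exactly the same obstruction for these two scales, so this is a defect shared by both arguments (and arguably by the statement itself for $\cn$ and $\ce$ with $\tau$ large) rather than something particular to your proof; for $B^{w,\tau}_{\cl,q,a}(\rn)$ and $F^{w,\tau}_{\cl,q,a}(\rn)$, where the summation is restricted to $j\ge j_P\vee 0$, your argument is complete as written.
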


\begin{proof}
Let $f \in \cs(\rn)$.
Then, for all $x\in\rn$ and $j\in\nn$,
we have
\[
\sup_{z \in \rn}\frac{|\vz_j*f(x+z)|}{(1+2^j|z|)^a} \lesssim
\frac{1}{(1+|x|)^a}\sup_{y \in \rn}(1+|y|)^{a+n+1}|f(y)|.
\]
In view of (W2), $(\cl6)$ and \eqref{3.35},
we have $(1+|\cdot|)^{-a}w(\cdot,1) \in \cl(\rn)$.
Consequently
\begin{equation}\label{3.36}
\left\|
w_j
\sup_{z \in \rn}\frac{|\vz_j*f(\cdot+z)|}{(1+2^j|z|)^a}
\right\|_{\cl(\rn)}
\lesssim
2^{j\alpha_2}\sup_{y \in \rn}(1+|y|)^{a+n+1}|f(y)|.
\end{equation}
Let $\ez$ be a positive constant. Set
$w^*_j(x):= 2^{-j(\alpha_2+n\tau+\ez)}w_j(x)$
for all $x \in \R^n$ and $j \in \Z_+$.
The estimate \eqref{3.36} and its counterpart for $j=0$ show that
$\cs(\rn) \hookrightarrow A^{w^*,\tau}_{\cl,q,a}(\rn)$
and hence Theorem \ref{t3.2} shows that
$\cs(\rn) \hookrightarrow A^{w,\tau}_{\cl,q,a}(\rn)$,
which completes the proof of Theorem \ref{t3.4}.
\end{proof}

Motivated by Theorem \ref{t3.4},
we  postulate \eqref{3.35} on the parameter $a$ here and below.

In analogy with Theorem \ref{t3.2},
we have the following result of boundedness of pseudo-differential
operators of H\"ormander-Mikhlin type.

\begin{proposition}\label{p3.2}
Let $a, \alpha_1, \alpha_2, \alpha_3, \tau, q, w$ and $\cl(\rn)$
be as in Definition \ref{d3.1}.
Assume that $m \in C^\infty_{\rm c}(\rn)$
satisfies that, for all multiindices $\vec{\alpha}$,
$$
M_{\vec{\alpha}}:=\sup_{\xi \in \rn}
(1+|\xi|)^{|\vec{\alpha}|}|\partial^{\vec{\alpha}} m(\xi)|<\infty.
$$
Define $I_m f:=(m \hat{f})^\vee$.
Then the operator $I_m$ is bounded
on $A^{w,\tau}_{\cl,q,a}({\mathbb R}^n)$
and there exists $K \in \N$
such that the operator norm is bounded by a positive constant multiple
of $\sum_{\|\vec{\alpha}\|_1 \le K}M_{\vec{\alpha}}$.
\end{proposition}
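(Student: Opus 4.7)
The plan is to adapt the method of Lemma \ref{l3.2} to general Fourier multipliers. For each $j\in\nn$, I would choose a smooth cutoff $\eta_j\in C^\infty_{\rm c}(\rn)$ with $\supp\eta_j$ contained in the dyadic annulus $\{\xi\in\rn:\,2^{j-2}\le|\xi|\le 2^{j+2}\}$ and with $\eta_j\equiv 1$ on $\supp\widehat{\vz_j}$; for $j=0$, I would take $\eta_0$ compactly supported in $\{|\xi|\le 4\}$ and equal to $1$ on $\{|\xi|\le 2\}$, chosen in a scale-invariant way as $\eta_j(\cdot)=\eta(2^{-j}\cdot)$ for $j\in\nn$. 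Since $\widehat{\vz_j\ast I_mf}=m\widehat{\vz_j\ast f}=m\eta_j\widehat{\vz_j\ast f}$, this yields the convolution identity
\[
\vz_j\ast I_mf=(m\eta_j)^\vee\ast\vz_j\ast f,\qquad j\in\zz_+,
\]
with a corresponding identity for $\Phi$ at $j=0$.

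The core technical step is a pointwise kernel estimate: for every prescribed $N\in\nn$, there exists $K=K(N,n)\in\nn$ such that, for all $j\in\zz_+$ and $y\in\rn$,
\[
|(m\eta_j)^\vee(y)|\ls \bigg(\sum_{\|\vec\alpha\|_1\le K}M_{\vec\alpha}\bigg)\frac{2^{jn}}{(1+2^j|y|)^N}.
\]
This is derived by integrating by parts $N$ times in the Fourier integral defining $(m\eta_j)^\vee$, applying the Leibniz rule to $\partial^{\vec\alpha}(m\eta_j)$, and using the Mikhlin estimates on $m$ together with the scale-invariance of $\eta_j$; the main obstacle is bookkeeping the scale-invariance so that the implicit constants do not blow up with $j$, but this is handled by the specific choice $\eta_j(\cdot)=\eta(2^{-j}\cdot)$ above.

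Once the kernel bound is available, I would choose $N>a+n$ and apply the elementary inequality
\[
(1+2^j|z|)^a\le (1+2^j|z-y|)^a(1+2^j|y|)^a,\qquad y,z\in\rn,
\]
to deduce, by a routine argument similar to the end of the proof of Lemma \ref{l3.2}, the uniform pointwise domination
\[
(\vz_j^*(I_mf))_a(x)\ls \bigg(\sum_{\|\vec\alpha\|_1\le K}M_{\vec\alpha}\bigg)(\vz_j^*f)_a(x),\qquad x\in\rn,\ j\in\zz_+,
\]
with the case $j=0$ handled in exactly the same way using $\Phi$ and $\eta_0$. Since this estimate is pointwise and independent of $j$, substituting it into the defining quasi-norm of any of the four spaces $\bla$, $\nla$, $\fla$, $\ela$ and invoking $(\cl2)$ and $(\cl4)$ immediately yields the boundedness of $I_m$ with the claimed control of the operator norm by $\sum_{\|\vec\alpha\|_1\le K}M_{\vec\alpha}$, where $K$ is determined by $n$ and the admissible range of $a$.
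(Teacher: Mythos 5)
Your proposal is correct and follows essentially the same route as the paper: the paper's proof reduces to the kernel estimate \eqref{3.26} with $(1-\Delta)^{s/2}$ replaced by $I_m$, i.e.\ a bound of the form $|(m\,\widehat{\varphi_{j-1}}+m\,\widehat{\varphi_j}+m\,\widehat{\varphi_{j+1}})^\vee(y)|\lesssim 2^{jn}(1+2^j|y|)^{-a-n-1}$ obtained from the Mikhlin-type symbol estimates, followed by the same Peetre-inequality argument as at the end of Lemma \ref{l3.2}. Your explicit dilated cutoff $\eta_j=\eta(2^{-j}\cdot)$ plays exactly the role of $\widehat{\varphi_{j-1}}+\widehat{\varphi_j}+\widehat{\varphi_{j+1}}$ in the paper, and your bookkeeping of $\sum_{\|\vec\alpha\|_1\le K}M_{\vec\alpha}$ through the integration by parts is the intended way to obtain the stated operator-norm bound.
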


\begin{proof}
Going through an argument similar
to the proof of Lemma \ref{l3.2},
we are led to \eqref{3.26}
with $s=0$ and $(1-\Delta)^{s/2}$ replaced
by $I_m$.
Except this change, the same argument as therein works.
We omit the details. This finishes the proof of Proposition
\ref{p3.2}.
\end{proof}

In Chapter \ref{s4.5} below, we will give some further results of
pseudo-differential operators.

To conclude this section,
we investigate an embedding of Sobolev type.
\begin{proposition}\label{p3.3}
Let $a, \alpha_1, \alpha_2, \alpha_3, \tau, q, w$ and $\cl(\rn)$
be as in Definition \ref{d3.1}.
Define
\begin{equation}\label{wjstar}
w_j^*(x)
:=
2^{j(\tau-\gamma)}(1+|x|)^{\delta}w_j(x)
\end{equation}
for all $x \in \rn$ and $j \in \Z_+$.
Then
$A^{w,\tau}_{\cl,q,a}({\mathbb R}^n)$
is embedded into
$B^{w^*}_{\infty,\infty,a}({\mathbb R}^n)$.
\end{proposition}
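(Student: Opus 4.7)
The plan is as follows. By Remark \ref{r3.1}(ii), $A^{w,\tau}_{\cl,q,a}(\rn)\hookrightarrow B^{w,\tau}_{\cl,\fz,a}(\rn)$ for every admissible choice of the letter $A$, so it suffices to establish the embedding $B^{w,\tau}_{\cl,\fz,a}(\rn) \hookrightarrow B^{w^*}_{\fz,\fz,a}(\rn)$. In view of Definition \ref{d3.1}(i), this amounts to the uniform pointwise estimate $w^*_j(x)(\vz_j^*f)_a(x)\ls\|f\|_{B^{w,\tau}_{\cl,\fz,a}(\rn)}$ for all $(x,j)\in\rn\times\zz_+$ and all $f$ in the source space.

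Fix such $x$ and $j$, and let $P:=Q_{jk}$ be the (unique) dyadic cube of side $2^{-j}$ containing $x$. The heart of the argument is a pointwise-to-norm transfer based on two comparability facts on $P$. First, the Peetre-type inequality
\[
(\vz_j^*f)_a(x)\le(1+2^j|x-y|)^a(\vz_j^*f)_a(y),
\]
which I derive from $1+2^j|v+x-y|\le(1+2^j|v|)(1+2^j|x-y|)$ combined with the defining supremum \eqref{1.3}, yields $(\vz_j^*f)_a(y)\gtrsim(\vz_j^*f)_a(x)$ for all $y\in P$ since $|x-y|\le\sqrt{n}\,2^{-j}$. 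Second, the condition (W2) in Definition \ref{d2.1} gives $w_j(y)\gtrsim w_j(x)$ on $P$. Combining these two lower bounds with the lattice property $(\cl4)$ produces
\[
\|\chi_P w_j(\vz_j^*f)_a\|_{\cl(\rn)}\gtrsim w_j(x)(\vz_j^*f)_a(x)\,\|\chi_P\|_{\cl(\rn)}.
\]

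Next, the non-degeneracy condition $(\cl6)$ bounds $\|\chi_P\|_{\cl(\rn)}$ from below by a constant multiple of $2^{-j\gz}(1+|k|)^{-\dz}$. Since $x\in Q_{jk}$ gives $|k|\le 2^j|x|+\sqrt{n}$, one has $(1+|k|)\ls 2^j(1+|x|)$, and hence $(1+|k|)^{-\dz}\gtrsim 2^{-j\dz}(1+|x|)^{-\dz}$. Dividing the previous display by $|P|^\tau=2^{-jn\tau}$ and noting that $j_P=j\ge 0$, so that $P$ is admissible in the supremum defining $\|f\|_{B^{w,\tau}_{\cl,\fz,a}(\rn)}$, the various powers of $2^j$ and $(1+|x|)$ collect into precisely the bound on $w^*_j(x)(\vz_j^*f)_a(x)$ demanded by the definition of $w^*$.

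The only real subtlety is the opening pointwise-to-norm step: one must simultaneously invoke the Peetre-type variation of $(\vz_j^*f)_a$ on the scale $2^{-j}$, the (W2)-variation of $w_j$ on the same scale, and $(\cl6)$ applied to exactly the right dyadic cube, so that the resulting $2^j$-factors and $(1+|x|)^{-\dz}$-factor assemble correctly into $(w^*_j(x))^{-1}$. The rest is routine bookkeeping, and, consistently with the paper's central philosophy, no maximal inequality of Hardy-Littlewood or Fefferman-Stein type is required at any stage.
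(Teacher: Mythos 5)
Your argument is the paper's own proof, step for step: the Peetre-type inequality together with (W2) makes $w_j(\vz_j^*f)_a$ essentially constant on the dyadic cube $P=Q_{jk}\ni x$ of generation $j$, the lattice property $(\cl4)$ then gives $w_j(x)(\vz_j^*f)_a(x)\,\|\chi_P\|_{\cl(\rn)}\lesssim\|\chi_P w_j(\vz_j^*f)_a\|_{\cl(\rn)}\le|P|^\tau\|f\|_{A^{w,\tau}_{\cl,q,a}(\rn)}$, and $(\cl6)$ finishes, exactly as in the paper. The one point you assert rather than compute --- that the powers of $2^j$ and $(1+|x|)$ ``collect into precisely'' $w_j^*$ --- is also the one point the paper leaves implicit (``which implies the desired result''), and carrying it out with $|P|^\tau=2^{-jn\tau}$ and $(1+|k|)^{\delta}\lesssim 2^{j\delta}(1+|x|)^{\delta}$ actually yields the weight $2^{j(n\tau-\gamma-\delta)}(1+|x|)^{-\delta}w_j(x)$ rather than the $2^{j(\tau-\gamma)}(1+|x|)^{\delta}w_j(x)$ of \eqref{wjstar}; this discrepancy is inherited from the paper's statement and is not a flaw of your method.
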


Observe that if $w \in \cw^{\az_3}_{\az_1,\az_2}$,
then
$w^* \in \cw^{\az_3+\delta}_{(\az_1+\gamma-\tau)_+,(\az_2+\tau-\gamma)_+}$
and hence
\begin{equation*}
(w^*)^{-1} \in
\cw^{\az_3+\delta}_{(\az_2+\tau-\gamma)_+,(\az_1+\gamma-\tau)_+}.
\end{equation*}%

\begin{proof}[Proof of Proposition \ref{p3.3}]
Let $P \in \cq_j(\rn)$ be fixed for $j\in\zz_+$.
Then we see that, for all $x,z \in P$,
\[
\frac{|\vz_j*f(x+y)|}{(1+2^j|y|)^a} \lesssim
\frac{|\vz_j*f(z+(y+x-z))|}{(1+2^j|y+x-z|)^a},
\]
where, when $j=0$, $\vz_0$ is replaced by $\Phi$.
Consequently,
by (W2),
we conclude that, for all $x\in P$,
\begin{eqnarray*}
w_j(x)(\vz_j^*f)_a(x)
&&=\sup_{u \in P}\sup_{y \in \rn}
w_j(u)\frac{|\vz_j*f(u+y)|}{(1+2^j|y|)^a}\\
&&\lesssim
\inf_{z \in P}\sup_{u \in P}\sup_{y \in \rn}
w_j(u)\frac{|\vz_j*f(z+(y+u-z))|}{(1+2^j|y+u-z|)^a}\\
&&\lesssim
\inf_{z \in P}\sup_{u \in P}\sup_{w \in \rn}
w_j(u)\frac{|\vz_j*f(z+w)|}{(1+2^j|w|)^a}\\
&&\lesssim
\inf_{z \in P}\sup_{y \in \rn}
w(z,2^{-j})\frac{|\vz_j*f(z+y)|}{(1+2^j|y|)^a}
\lesssim
\inf_{z \in P}w(z,2^{-j})(\vz_j^*f)_a(z).
\end{eqnarray*}
Thus,
\[
\sup_{x \in P}w_j(x)(\vz_j^*f)_a(x)
\lesssim
\frac{1}{\|\chi_P\|_{\cl(\rn)}}
\|\chi_Pw_j\vz_j^**f \|_{\cl(\rn)}
\le
\frac{|P|^\tau}{\|\chi_P\|_{\cl(\rn)}}
\|f\|_{A^{w,\tau}_{\cl,q,a}(\rn)},
\]
which implies the desired result
and hence completes the proof of Proposition \ref{p3.3}.
\end{proof}

It is also of essential importance to provide a duality result of the following type,
when we consider the wavelet decomposition in
Section \ref{s4}.

In what follows, for $p,q\in(0,\fz]$, $w\in\cw_{\az_1,\az_2}^{\az_3}$ with
$\az_1, \alpha_2, \alpha_3 \in [0,\infty)$,
$w_j$ for $j\in\zz_+$ as in \eqref{2.3},
the \emph{space} $B^{w}_{p,q}(\rn)$
is defined to be the set of all
$f\in \cs'(\rn)$ such that
$$\|f\|_{B^{w}_{p,q}(\rn)}
:=
\|\{w_j\vz_j*f\}_{j \in \Z_+}\|_{\ell^q(L^p(\rn,\Z_+))}<\fz,
$$
where $\Phi, \vz\in\cs(\rn)$, satisfy \eqref{1.1} and \eqref{1.2},
$\vz_0:=\Phi$ and $\vz_j(\cdot):=2^{jn}\vz(2^j\cdot)$ for all $j\in\nn$.

\begin{proposition}\label{p3.4}
Let $\alpha_1, \alpha_2, \alpha_3 \in [0,\infty)$ and $w \in
\mathcal{W}^{\alpha_3}_{\alpha_1,\alpha_2}$.
Assume, in addition, that
there exist $\Phi, \vz\in\cs(\rn)$, satisfying \eqref{1.1} and \eqref{1.2},
such that
\[
\Phi*\Phi+\sum_{j=1}^\infty \vz_j*\vz_j=\delta \quad in \quad \cs'(\rn).
\]
Any $g \in B^w_{\infty,\infty}(\rn)$
defines a continuous functional, $L_g$,
on $B^{w^{-1}}_{1,1}(\rn)$ such that
\[
L_g:\ f \in B^{w^{-1}}_{1,1}(\rn)
\mapsto
\langle \Phi*g,\Phi*f \rangle
+
\sum_{j=1}^\infty \langle \vz_j*g,\vz_j*f \rangle
\in \C.
\]
\end{proposition}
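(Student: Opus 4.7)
The strategy is a straightforward double application of H\"older's inequality: the $L^\infty$--$L^1$ duality at each dyadic level, followed by the $\ell^\infty$--$\ell^1$ duality in the summation in $j$. The insertion of the weight $w_j(x)\cdot w_j(x)^{-1}=1$ inside each integrand is what couples the two norms together.

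First, I would verify well-definedness. Since $g\in B^w_{\infty,\infty}(\rn)\subset\cs'(\rn)$ and $f\in B^{w^{-1}}_{1,1}(\rn)\subset\cs'(\rn)$, each $\vz_j*g$ and $\vz_j*f$ (with $\vz_0$ understood as $\Phi$) is a smooth, polynomially bounded function on $\rn$. By the very definition of the two norms, $w_j(\vz_j*g)\in L^\infty(\rn)$ and $w_j^{-1}(\vz_j*f)\in L^1(\rn)$, hence the integral $\int_{\rn}(\vz_j*g)(x)(\vz_j*f)(x)\,dx$ is absolutely convergent for every $j\in\Z_+$.

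Next I would estimate, for each $j\in\Z_+$,
\[
|\langle \vz_j*g,\vz_j*f\rangle|
\le
\int_{\rn}|w_j(x)(\vz_j*g)(x)|\,|w_j(x)^{-1}(\vz_j*f)(x)|\,dx
\le
\|w_j(\vz_j*g)\|_{L^\infty(\rn)}\,\|w_j^{-1}(\vz_j*f)\|_{L^1(\rn)}.
\]
Taking the supremum on the first factor and summing the second factor in $j$, one obtains
\[
\sum_{j=0}^{\infty}|\langle \vz_j*g,\vz_j*f\rangle|
\le
\Bigl(\sup_{j\in\Z_+}\|w_j(\vz_j*g)\|_{L^\infty(\rn)}\Bigr)
\sum_{j=0}^{\infty}\|w_j^{-1}(\vz_j*f)\|_{L^1(\rn)}
=\|g\|_{B^w_{\infty,\infty}(\rn)}\|f\|_{B^{w^{-1}}_{1,1}(\rn)}.
\]
This simultaneously shows that the series defining $L_g(f)$ converges absolutely, that $L_g$ is a well-defined linear functional on $B^{w^{-1}}_{1,1}(\rn)$, and that it is continuous with operator norm at most $\|g\|_{B^w_{\infty,\infty}(\rn)}$.

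Linearity in $f$ is clear from the bilinearity of each pairing $\langle\vz_j*g,\vz_j*f\rangle$ in the $f$ slot and the absolute convergence of the series, which justifies the interchange of the sum with scalar multiplication and addition. The reconstruction identity $\Phi*\Phi+\sum_{j=1}^{\infty}\vz_j*\vz_j=\delta$ is not used in the estimate itself; it is only there to motivate that $L_g$ is the natural extension of the $L^2$ inner product to this dual pairing. The only potential obstacle is the bookkeeping around $j=0$, but this is handled uniformly by the convention $\vz_0=\Phi$, so that $\Phi*g$ and $\Phi*f$ fit into exactly the same estimate as the $j\ge 1$ terms.
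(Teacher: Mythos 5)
Your proposal is correct and is essentially the paper's own proof: the paper simply asserts the key inequality $|\langle \Phi*g,\Phi*f\rangle|+\sum_{j=1}^\infty|\langle\vz_j*g,\vz_j*f\rangle|\lesssim\|g\|_{B^w_{\infty,\infty}(\rn)}\|f\|_{B^{w^{-1}}_{1,1}(\rn)}$ as "straightforward," and your insertion of $w_j\cdot w_j^{-1}$ followed by the $L^\infty$--$L^1$ and $\ell^\infty$--$\ell^1$ H\"older steps is exactly the intended argument. Your remark that the reconstruction identity plays no role in the estimate itself is also accurate.
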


\begin{proof}
The proof is straightforward.
Indeed, for all $g \in B^w_{\infty,\infty}(\rn)$
and $f \in B^{w^{-1}}_{1,1}(\rn)$,
we have
\begin{align*}
|\langle \Phi*g,\Phi*f \rangle|
+
\sum_{j=1}^\infty
|\langle \vz_j*g,\vz_j*f \rangle|
\lesssim
\|g\|_{B^{w}_{\infty,\infty}(\rn)}
\|f\|_{B^{w^{-1}}_{1,1}(\rn)},
\end{align*}
which completes the proof of Proposition \ref{p3.4}.
\end{proof}

We remark that the spaces $B^w_{p,q}(\rn)$
were intensively studied by Kempka \cite{k10-1}
and it was proved in \cite[p. 134]{k10-1} that
they are independent of the choices of $\Phi$ and $\vz$.

\section{Atomic decompositions and wavelets}\label{s4}

Now we place ourselves once again
in the setting of a quasi-normed space $\cl(\rn)$
satisfying only $(\cl1)$ through $(\cl6)$;
recall that we do not need to use
the Hardy-Littlewood maximal operator.

In what follows, for a function $F$ on
$\rr_{\zz_+}^{n+1}:=\rn\times \{2^{-j}:\ j\in\zz_+\}$,
we define
$$
\|F\|_{L_{\cl,q,a}^{w,\tau}(\rr_{\zz_+}^{n+1})}:
=\lf\|\lf\{\sup_{y\in\rn} \frac{|F(y,2^{-j})|}{(1+2^j|\cdot-y|)^a}
\r\}_{j\in\zz_+}\r\|_{\ell^q(\cl_\tau^w(\rn,\zz_+))},
$$
$$
\|F\|_{\cn_{\cl,q,a}^{w,\tau}(\rr_{\zz_+}^{n+1})}:
=\lf\|\lf\{\sup_{y\in\rn} \frac{|F(y,2^{-j})|}{(1+2^j|\cdot-y|)^a}
\r\}_{j\in\zz_+}\r\|_{\ell^q(\cn\cl_\tau^w(\rn,\zz_+))}
$$
$$
\|F\|_{F_{\cl,q,a}^{w,\tau}(\rr_{\zz_+}^{n+1})}:
=\lf\|\lf\{\sup_{y\in\rn} \frac{|F(y,2^{-j})|}{(1+2^j|\cdot-y|)^a}
\r\}_{j\in\zz_+}\r\|_{\cl_\tau^w(\ell^q(\rn,\zz_+))}
$$
and
$$
\|F\|_{\ce_{\cl,q,a}^{w,\tau}(\rr_{\zz_+}^{n+1})}:
=\lf\|\lf\{\sup_{y\in\rn} \frac{|F(y,2^{-j})|}{(1+2^j|\cdot-y|)^a}
\r\}_{j\in\zz_+}\r\|_{\ce\cl_\tau^w(\ell^q(\rn,\zz_+))}.
$$

\subsection{Atoms and molecules}

Now we are going to consider the atomic decompositions,
where we use \eqref{1.4} to denote
the length of multi-indices.

\begin{definition}\label{d4.1}
Let $K \in \Z_+$ and $L \in \Z_+ \cup\{-1\}$.

{\rm (i)}
Let $Q \in \cq(\rn)$.
A \emph{$(K,L)$-atom {\rm(}for $A^{s,\tau}_{\cl,q,a}(\rn)${\rm)}
supported near $Q$} is a $C^K(\rn)$-function ${\mathfrak A}$ satisfying
\begin{eqnarray*}
\mbox{\rm (support condition)}&\quad&
\supp({\mathfrak A}) \subset 3Q,\\
\mbox{\rm (size condition)}&&
\|\partial^{\vec{\alpha}} {\mathfrak A}\|_{L^\infty}
\le |Q|^{-\|{\vec{\alpha}}\|_1/n},\\
\mbox{\rm (moment condition if $\ell(Q)<1$)}&&
\int_{\rn}x^{\vec{\beta}} {\mathfrak A}(x)\,dx=0
\end{eqnarray*}
for all multiindices ${\vec{\alpha}}$ and ${\vec{\beta}}$
satisfying
$\|{\vec{\alpha}}\|_1 \le K$ and $\|{\vec{\beta}}\|_1 \le L$.
Here the moment condition with $L=-1$
is understood as vacant condition.

{\rm (ii)}
A set $\{{\mathfrak A}_{jk}\}_{j \in \zz_+, \, k \in \Z^n}$
of $C^K(\rn)$-functions is called a \emph{collection of
$(K,L)$-atoms {\rm(}for $A^{s,\tau}_{\cl,q,a}(\rn)${\rm)}} if
each ${\mathfrak A}_{jk}$ is a $(K,L)$-atom
supported near $Q_{jk}$.
\end{definition}

\begin{definition}\label{d4.2}
Let $K \in \zz_+$, $L \in \Z_+ \cup\{-1\}$ and $N \in \R$ satisfy
\begin{equation*}
N> L+n.
\end{equation*}%

{\rm (i)}
Let $Q \in \cq(\rn)$.
A \emph{$(K,L)$-molecule {\rm(}for $A^{s,\tau}_{\cl,q,a}(\rn)${\rm)}
associated with a cube $Q$}
is a $C^K(\rn)$-function ${\mathfrak M}$ satisfying
\begin{gather*}
\mbox{\rm (the decay condition)}\quad
|\partial^{\vec{\alpha}} {\mathfrak M}(x)| \le
\left(1+\frac{|x-c_Q|}{\ell(Q)}\right)^{-N}\
{\rm for\ all\ } x\in\rn,\\
\mbox{\rm (the moment condition if $\ell(Q)<1$)}\quad
\int_{\rn}y^{\vec{\beta}} {\mathfrak M}(y)\,dy=0
\end{gather*}
for all multiindices ${\vec{\alpha}}$ and ${\vec{\beta}}$
satisfying
$\|{\vec{\alpha}}\|_1 \le K$ and $\|{\vec{\beta}}\|_1 \le L$.
Here $c_Q$ and $\ell(Q)$ denote, respectively, the center
and the side length of $Q$, and the moment condition with $L=-1$
is understood as vacant condition.

{\rm (ii)}
A set $\{{\mathfrak M}_{jk}\}_{j \in \zz_+, \, k \in \Z^n}$
of $C^K(\rn)$-functions is called a \emph{collection of
$(K,L)$-molecules {\rm(}for $A^{s,\tau}_{\cl,q,a}(\rn)${\rm)}} if
each ${\mathfrak M}_{jk}$ is a $(K,L)$-molecule
associated with $Q_{jk}$.
\end{definition}

\begin{definition}\label{d4.3}
Let $\alpha_1, \alpha_2, \alpha_3, \tau \in [0,\infty)$, $a\in (N_0+\alpha_3,\infty)$
and $q\in(0,\,\fz]$,
where $N_0$ is from $(\cl6)$.
Suppose that $w\in{\mathcal W}^{\alpha_3}_{\alpha_1,\alpha_2}$.
Let $\lambda:=\{\lambda_{jk}\}_{j \in \zz_+, \, k \in \Z^n}$
be a doubly indexed complex sequence.
For $(x,2^{-j}) \in \R^n_{\Z_+}$, let
\begin{equation*}
\Lambda(x,2^{-j})
:=
\sum_{k \in \Z^n}\lambda_{jk}\chi_{Q_{jk}}(x).
\end{equation*}%

{\rm (i)}
The {\it inhomogeneous  sequence space
$b^{w,\tau}_{\cl,q,a}(\rn)$}
is defined to be the set of
all $\lambda$ such that
$
\|\lambda\|_{b^{w,\tau}_{\cl,q,a}(\rn)}:=
\|\Lambda\|_{L_{\cl,q,a}^{w,\tau}(\rr_{\zz_+}^{n+1})}<\fz.
$

{\rm (ii)}
The {\it inhomogeneous  sequence space
$n^{w,\tau}_{\cl,q,a}(\rn)$}
is defined to be the set of
all $\lambda$ such that
$
\|\lambda\|_{n^{w,\tau}_{\cl,q,a}(\rn)}:=
\|\Lambda\|_{\cn_{\cl,q,a}^{w,\tau}(\rr_{\zz_+}^{n+1})}<\fz.
$

{\rm (iii)}
The {\it inhomogeneous sequence space
$f^{w,\tau}_{\cl,q,a}(\rn)$}
is defined to be the set of all $\lambda$ such that
$
\|\lambda\|_{f^{w,\tau}_{\cl,q,a}(\rn)}:=
\|\Lambda\|_{F_{\cl,q,a}^{w,\tau}(\rr_{\zz_+}^{n+1})}<\fz.
$

{\rm (iv)}
The {\it inhomogeneous sequence space
$e^{w,\tau}_{\cl,q,a}(\rn)$}
is defined to be the set of
all $\lambda$ such that
$
\|\lambda\|_{e^{w,\tau}_{\cl,q,a}(\rn)}:=
\|\Lambda\|_{\ce_{\cl,q,a}^{w,\tau}(\rr_{\zz_+}^{n+1})}<\fz.
$

When $\tau=0$, then $\tau$ is omitted from the above notation.
\end{definition}

In the present paper we take up many types of atomic decompositions.
To formulate them, it may be of use
to present the following definition.

\begin{definition}\label{d4.4}
Let $X$ be a function space embedded into $\cs'(\rn)$
and ${\mathcal X}$ a quasi-normed space of sequences.
The pair $(X,{\mathcal X})$ is called to \emph{admit the atomic decomposition},
if it satisfies the following two conditions:

{\rm (i)}
{\rm (Analysis condition)}
For any $f \in\! X$,
there exist a collection of atoms,
$\{{\mathfrak A}_{jk}\}_{j \in \zz_+, \, k \in \Z^n}$,
and a complex sequence
$\{\lambda_{jk}\}_{j \in \zz_+, \, k \in \Z^n}$
such that
$$f=\sum_{j=0}^\infty
\sum_{k \in \Z^n}\lambda_{jk}{\mathfrak A}_{jk}$$
in ${\mathcal S}'(\rn)$
and that
$\|\{\lambda_{jk}\}_{j \in \zz_+, \, k \in \Z^n}\|_{{\mathcal X}}
\lesssim
\|f\|_X$
with the implicit positive constant independent of $f$.

{\rm (ii)}
{\rm (Synthesis condition)}
Given a collection of atoms,
$\{{\mathfrak A}_{jk}\}_{j \in \zz_+, \, k \in \Z^n}$,
and a complex sequence
$\{\lambda_{jk}\}_{j \in \zz_+, \, k \in \Z^n}$
satisfying
$
\|\{\lambda_{jk}\}_{j \in \zz_+, \, k \in \Z^n}\|_{{\mathcal X}}
<\infty
$,
then
$f := \sum_{j=0}^\infty
\sum_{k \in \Z^n}\lambda_{jk}{\mathfrak A}_{jk}$
converges in $\cs'(\rn)$
and satisfies that
$\|f\|_X
\lesssim
\|\{\lambda_{jk}\}_{j \in \zz_+, \, k \in \Z^n}\|_{{\mathcal X}}
$ with the implicit positive constant independent of
$\{\lambda_{jk}\}_{j \in \zz_+, \, k \in \Z^n}$.

In analogy,
a pair $(X,{\mathcal X})$ is said to admit the \emph{molecular decomposition}
or the \emph{wavelet decomposition},
where the definition of wavelets appears in
Subsection {\rm \ref{s4.4}} below.
\end{definition}

In this section, we aim to prove the following conclusion.

\begin{theorem}\label{t4.1}
Let $K \in \Z_+$, $L \in \Z_+$,
$\alpha_1, \alpha_2, \alpha_3, \tau \in [0,\infty)$
and $q\in(0,\,\fz]$.
Suppose that $w\in{\mathcal W}^{\alpha_3}_{\alpha_1,\alpha_2}$
and that $\eqref{3.35}$ holds true, namely, $a\in (N_0+\alpha_3,\infty)$.
Let $\delta$ be as in $(\cl6)$.
Assume, in addition, that
\begin{equation}\label{4.3}
L>\alpha_3+\delta+n-1+\gamma-n\tau+\alpha_1,
\end{equation}
\begin{equation}\label{4.4}
N>L+\az_3+\dz+2n
\end{equation}
and that
\begin{equation}\label{4.5}
K+1>\alpha_2+n\tau, \,
L+1>\alpha_1.
\end{equation}
Then the pair
$(A^{w,\tau}_{\cl,q,a}(\rn),a^{w,\tau}_{\cl,q,a}(\rn))$
admits the atomic / molecular decompositions.
\end{theorem}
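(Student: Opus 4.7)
The plan is to prove the two halves of Definition \ref{d4.4} separately and then deduce the atomic case from the molecular one, since every $(K,L)$-atom is, up to a harmless positive constant, a $(K,L)$-molecule with arbitrarily large decay exponent $N$.

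For the \emph{synthesis} direction, suppose $\lambda=\{\lambda_{\nu k}\}$ lies in $a^{w,\tau}_{\cl,q,a}(\rn)$ and $\{\mathfrak{M}_{\nu k}\}$ is a family of $(K,L)$-molecules. Set $f:=\sum_{\nu,k}\lambda_{\nu k}\mathfrak{M}_{\nu k}$. First I would argue that the series converges in $\cs'(\rn)$ by testing against $\zeta\in\cs(\rn)$ and bounding $|\langle \mathfrak{M}_{\nu k},\zeta\rangle|$ via Lemma \ref{l2.4} combined with assumption $(\cl6)$ as in the proof of Theorem \ref{t3.3}. Next, using the system $(\Phi,\varphi)$, I would estimate $|\varphi_j\ast \mathfrak{M}_{\nu k}(x)|$ by applying Lemma \ref{l2.4} with roles determined by $\nu\gtrless j$: when $\nu\ge j$ we use the moment condition of $\mathfrak{M}_{\nu k}$ to get the factor $2^{-(\nu-j)(L+1)}$, and when $\nu<j$ we use the moment condition of $\varphi_j$ (i.e.\ the vanishing moments from \eqref{1.2}) to obtain $2^{-(j-\nu)(K+1)}$. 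Integrating over $y$ (after inserting the factor $(1+2^j|y|)^{-a}$ coming from the Peetre sup) yields, for each $j$,
\begin{equation*}
w_j(x)(\varphi_j^\ast f)_a(x)
\lesssim \sum_{\nu=0}^j 2^{-(j-\nu)(K+1-\alpha_2)} g_\nu(x) + \sum_{\nu=j+1}^\infty 2^{-(\nu-j)(L+1-\alpha_1)} g_\nu(x),
\end{equation*}
where $g_\nu(x):=w_\nu(x)\sum_{k}|\lambda_{\nu k}|(1+2^\nu|x-2^{-\nu}k|)^{-(N-\alpha_3)}\chi$-type bump, which, after an elementary pointwise comparison, is dominated by the Peetre-type maximal function of $\sum_k \lambda_{\nu k}\chi_{Q_{\nu k}}$. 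Under \eqref{4.3}--\eqref{4.5} the exponents $K+1-\alpha_2$ and $L+1-\alpha_1$ satisfy the hypotheses of Lemma \ref{l2.3} (with $D_1:=L+1-\alpha_1$, $D_2:=K+1-\alpha_2$), and applying the appropriate estimate \eqref{2.14}--\eqref{2.17} according to whether we are in the $B$, $\cn$, $F$ or $\ce$ case yields $\|f\|_{A^{w,\tau}_{\cl,q,a}(\rn)}\lesssim\|\lambda\|_{a^{w,\tau}_{\cl,q,a}(\rn)}$.

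For the \emph{analysis} direction, I would fix auxiliary functions $\Psi^\dagger,\psi^\dagger\in\cs(\rn)$ (as produced in \eqref{3.15}) with $\psi^\dagger$ having all moments up to $L$ vanishing and with $\Psi^\dagger\ast\Phi+\sum_{\nu\ge 1}\psi_\nu^\dagger\ast\varphi_\nu=\delta_0$, and then write
\begin{equation*}
f=\sum_{j=0}^{\infty}\sum_{k\in\Z^n}\lambda_{jk}\mathfrak{A}_{jk},\qquad
\lambda_{jk}:=c_{j,k}\,(\varphi_j^\ast f)_a(x_{Q_{jk}}),\qquad
\mathfrak{A}_{jk}:=\frac{1}{\lambda_{jk}}\int_{Q_{jk}}\psi_j^\dagger(\cdot-y)\varphi_j\ast f(y)\,dy,
\end{equation*}
with $\varphi_0=\Phi$, $\psi_0^\dagger=\Psi^\dagger$ and $c_{j,k}$ a normalizing constant chosen so that $\mathfrak{A}_{jk}$ meets the size and moment conditions of Definition \ref{d4.1}. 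The support condition $\supp \mathfrak{A}_{jk}\subset 3Q_{jk}$ will hold after truncating $\psi^\dagger$ against a suitable smooth cutoff (losing some moments must be compensated by the choice of $L$ in \eqref{4.3}); the moment condition on $\mathfrak{A}_{jk}$ is inherited from $\psi^\dagger$, and the size estimate $\|\partial^{\vec\alpha}\mathfrak{A}_{jk}\|_{L^\infty}\le |Q_{jk}|^{-\|\vec\alpha\|_1/n}$ is standard. The pointwise bound $|\lambda_{jk}|\chi_{Q_{jk}}(x)\lesssim (\varphi_j^\ast f)_a(x)$ then gives the coefficient estimate via the defining norms of $a^{w,\tau}_{\cl,q,a}(\rn)$.

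The main obstacle will be the synthesis step, specifically the simultaneous verification that the two-sided geometric decay in $|j-\nu|$ produced by Lemma \ref{l2.4} survives once the extra factor $(1+2^{j\wedge\nu}|y|)^a$ from the Peetre sup is absorbed; this is precisely where the gap $N-L$ in \eqref{4.4} and the relation $a>N_0+\alpha_3$ combine with $(\cl6)$ to make the integrals $\int (1+2^{j\wedge\nu}|y|)^{a-N+\alpha_3}\,dy$ finite. A secondary delicate point is the convergence of the atomic/molecular series in $\cs'(\rn)$, which requires the pairing estimates of the proof of Theorem \ref{t3.3} together with \eqref{4.3}, ensuring enough vanishing moments to offset the growth $(1+|k|)^\delta 2^{j(\gamma-\alpha_1)}$ coming from $(\cl6)$ and (W1). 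Once these are in place, the rest is bookkeeping dictated by whether the outer norm lives in $\ell^q(\cl^w_\tau)$, $\ell^q(\cn\cl^w_\tau)$, $\cl^w_\tau(\ell^q)$ or $\ce\cl^w_\tau(\ell^q)$, and in each case Lemma \ref{l2.3} supplies the final step.
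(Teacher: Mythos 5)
Your synthesis half is essentially the paper's argument: convergence in $\cs'(\rn)$ via Lemma \ref{l2.4} and $(\cl6)$ (after reducing to $\lambda\in b^{w,\tau}_{\cl,\infty,a}(\rn)$ through the embedding of Remark \ref{r3.1}), then the two-sided pointwise estimate with factors $2^{-(\nu-j)(L+1)}$ and $2^{-(j-\nu)(K+1)}$ dominated by the Peetre maximal function of $\sum_k\lambda_{\nu k}\chi_{Q_{\nu k}}$, and finally Lemma \ref{l2.3} under \eqref{4.5}. The identification of where \eqref{4.4} and $(\cl6)$ enter is also correct.

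The analysis half, however, has a genuine gap. You build the atoms from the Rychkov pair $(\Psi^\dagger,\psi^\dagger)$ of \eqref{3.15}, which are Schwartz functions but \emph{not} compactly supported, and then propose to restore the support condition $\supp\mathfrak{A}_{jk}\subset 3Q_{jk}$ by ``truncating $\psi^\dagger$ against a suitable smooth cutoff.'' This cannot be repaired by the choice of $L$ in \eqref{4.3}: truncation destroys the exact identity $\Psi^\dagger*\Phi+\sum_\nu\psi_\nu^\dagger*\varphi_\nu=\delta_0$, so the sum $\sum_{j,k}\lambda_{jk}\mathfrak{A}_{jk}$ no longer reproduces $f$ but only $f$ plus an uncontrolled error; and it destroys the \emph{exact} vanishing of $\int x^{\vec\beta}\psi^\dagger(x)\,dx$ for $\|\vec\beta\|_1\le L$, whereas Definition \ref{d4.1} requires the atoms' moments to vanish identically, not approximately, and a larger hypothesis on $L$ does not recover a lost moment condition. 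The paper avoids both problems at once by invoking a \emph{compactly supported} local reproducing formula: one chooses $\Psi,\psi\in C^\infty_{\rm c}(\rn)$ supported in the unit cube, with $\psi$ having exact vanishing moments up to order $L$, satisfying $\Psi*\Psi+\sum_{j\ge1}\psi_j*\psi_j=\delta_0$ in $\cs'(\rn)$. With the symmetric kernel $\psi_j*\psi_j$, one copy of $\psi_j$ gives $\mathfrak{A}_{jk}(x)=\lambda_{jk}^{-1}\int_{Q_{jk}}\psi_j(x-y)\psi_j*f(y)\,dy$ its support in $Q_{jk}+\supp\psi_j\subset 3Q_{jk}$, its derivative bounds and its exact moments, while the other copy supplies the coefficients $\lambda_{jk}=2^{jn}\int_{Q_{jk}}|\psi_j*f(y)|\,dy$, which are then controlled by the Peetre maximal function exactly as you intend. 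Your proof goes through once you replace the truncated Schwartz pair by this compactly supported pair.
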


\subsection{Proof of Theorem \ref{t4.1}}

The proof of Theorem \ref{t4.1} is made up
of several lemmas.
Our primary concern for the proof of Theorem \ref{t4.1}
is the following question:
\begin{quote}
Do the summations
$\sum_{j=0}^\infty
\sum_{k \in {\mathbb Z}^n}\lambda_{jk}{\mathfrak A}_{jk}$
and
$\sum_{j=0}^\infty
\sum_{k \in {\mathbb Z}^n}\lambda_{jk}{\mathfrak M}_{jk}$
converge in $\cs'(\rn)$?
\end{quote}

Recall again that we are assuming only $(\cl1)$ through $(\cl6)$.

\begin{lemma}\label{l4.1}
Let $\alpha_1, \alpha_2, \alpha_3 \in [0,\infty)$
and $w \in {\mathcal W}^{\alpha_3}_{\alpha_1,\alpha_2}$.
Assume, in addition, that the parameters
$K \in \Z_+$,
$L \in \Z_+$
and
$N \in (0,\infty)$ in Definition {\rm \ref{d4.2}}
satisfy \eqref{4.3}, \eqref{4.4} and \eqref{4.5}.
Assume that $\lambda:=\{\lambda_{jk}\}_{j \in \zz_+, \, k \in {\mathbb Z}^n}
\in b^{w,\tau}_{\cl,\infty,a}(\rn)$
and $\{{\mathfrak M}_{jk}\}_{j \in \Z_+, \, k \in {\mathbb Z}^n}$
is a family of ${\rm (}K,L{\rm )}$-molecules.
Then the series
\begin{equation}\label{4.6}
f=\sum_{j=0}^\infty
\sum_{k \in {\mathbb Z}^n}\lambda_{jk}{\mathfrak M}_{jk}
\end{equation}
converges in ${\mathcal S}'(\rn)$.
\end{lemma}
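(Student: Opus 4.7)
The plan is to show that for every test function $\zeta \in \cs(\rn)$, the double series $\sum_{j,k} \lambda_{jk} \langle \mathfrak{M}_{jk},\zeta\rangle$ converges absolutely, with bound controlled by a Schwartz seminorm of $\zeta$ times $\|\lambda\|_{b^{w,\tau}_{\cl,\infty,a}(\rn)}$. This directly realizes $f$ as an element of $\cs'(\rn)$. The argument splits into two independent estimates: a pointwise bound for the coefficients $|\lambda_{jk}|$ coming from the sequence-space norm, and a pairing bound $|\langle\mathfrak{M}_{jk},\zeta\rangle|$ coming from the molecular properties.

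For the coefficient bound, I would first observe that $\Lambda(y,2^{-j})=\lambda_{jk}$ on $Q_{jk}$, so $\sup_{y\in\rn}|\Lambda(y,2^{-j})|/(1+2^j|x-y|)^a \gtrsim |\lambda_{jk}|$ whenever $x\in Q_{jk}$. Plugging the dyadic cube $P=Q_{jk}$ into the $\ell^\infty(\cl^w_\tau)$ definition (so $j_P=j$) and using (W2) to replace $w_j(x)$ by $w_j(c_{jk})$ on $Q_{jk}$, I obtain
\[
|\lambda_{jk}|\,w_j(c_{jk})\,\|\chi_{Q_{jk}}\|_{\cl(\rn)}
\lesssim |Q_{jk}|^{\tau}\,\|\lambda\|_{b^{w,\tau}_{\cl,\infty,a}(\rn)}.
\]
Then $(\cl6)$ gives $\|\chi_{Q_{jk}}\|_{\cl(\rn)}\gtrsim 2^{-j\gamma}(1+|k|)^{-\delta}$, while (W1) and (W2) applied to $w_0$ yield $w_j(c_{jk})\gtrsim 2^{-j\alpha_1}(1+|k|)^{-\alpha_3}$. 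Combining, I reach the crucial estimate
\[
|\lambda_{jk}|\lesssim 2^{j(\alpha_1+\gamma-n\tau)}(1+|k|)^{\alpha_3+\delta}\,\|\lambda\|_{b^{w,\tau}_{\cl,\infty,a}(\rn)}.
\]

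For the pairing bound, when $j\ge 1$ the molecule $\mathfrak{M}_{jk}$ has vanishing moments up to order $L$ and decay exponent $N$, while $\zeta$ is Schwartz. I would apply Lemma \ref{l2.4} with the Lemma's scale indices $(0,j)$, treating $\zeta$ as the smooth factor (Schwartz seminorms supplying $A_{\vec\alpha}$) and $\mathfrak{M}_{jk}$ as the oscillating factor (with $B=2^{-jn}$, center $c_{jk}$, and $L_{\text{Lem}}=L+1$), which is admissible because \eqref{4.4} forces $N>M+L+1+n$ for $M$ as large as needed. The outcome is
\[
|\langle \mathfrak{M}_{jk},\zeta\rangle|\lesssim p(\zeta)\,2^{-j(L+1+n)}(1+|c_{jk}|)^{-M}.
\]
For $j=0$ the moment condition is absent, and the pure decay of $\mathfrak{M}_{0k}$ against Schwartz $\zeta$ gives $|\langle\mathfrak{M}_{0k},\zeta\rangle|\lesssim p(\zeta)(1+|k|)^{-M}$ for any $M$.

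Multiplying the two estimates and summing over $k$, splitting according to $|k|\le 2^j$ and $|k|>2^j$ (where $|c_{jk}|\sim 2^{-j}|k|$), yields $\sum_k(1+|k|)^{\alpha_3+\delta}(1+|c_{jk}|)^{-M}\lesssim 2^{j(n+\alpha_3+\delta)}$ provided $M>n+\alpha_3+\delta$. Thus the $j$-th block of the double sum is bounded by $p(\zeta)\|\lambda\|$ times $2^{j(\alpha_1+\gamma-n\tau-L-1+\alpha_3+\delta)}$, which is summable in $j$ precisely by hypothesis \eqref{4.3}. The remaining conditions \eqref{4.5} play no role here but are needed in the other direction of the atomic/molecular decomposition. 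The main technical obstacle will be tracking the $(1+|k|)^{\alpha_3+\delta}$ growth through the coefficient bound and matching it against the decay $(1+|c_{jk}|)^{-M}$ for different ranges of $|k|$ relative to $2^j$; this is where the distinction between $|k|$ and $|2^{-j}k|$ costs exactly an extra $2^{jn}$, explaining the apparent slack in \eqref{4.3}. Once absolute convergence is established uniformly over $\zeta$ in a seminorm, the limit $f\in\cs'(\rn)$ is automatic.
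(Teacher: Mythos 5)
Your argument is correct and is essentially the paper's own proof: the same coefficient estimate $|\lambda_{jk}|\lesssim 2^{j(\alpha_1+\gamma-n\tau)}(1+|k|)^{\alpha_3+\delta}\|\lambda\|_{b^{w,\tau}_{\cl,\infty,a}(\rn)}$ obtained from $(\cl6)$, (W1), (W2) and the choice $P=Q_{jk}$, the same pairing bound via Lemma \ref{l2.4} with the test function as the smooth factor and the molecule as the oscillating factor, and the same summation in $k$ then $j$ using \eqref{4.3} and \eqref{4.4} (the paper fixes an auxiliary $M\in(\alpha_3+\delta+n,\infty)$ satisfying \eqref{4.7} rather than splitting the $k$-sum at $|k|\sim 2^j$, but this is cosmetic). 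The only caveat is the off-by-one in the number of moments fed into Lemma \ref{l2.4}: insisting on all $L$ vanishing moments requires $N>M+L+1+n$, which \eqref{4.4} only barely fails to guarantee alongside $M>\alpha_3+\delta+n$; using moments up to order $L-1$ (as the paper's condition $N>L+M+n$ in \eqref{4.7} implicitly does) removes the issue and still yields summability under \eqref{4.3}.
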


\begin{proof}
Let $\varphi \in \cs(\rn)$.
Recall again that $\gamma$ and $\delta$ are constants appearing
in the assumption $(\cl6)$.
By \eqref{4.3} and \eqref{4.4}, we can choose $M\in(\az_3+\dz+n,\fz)$
such that
\begin{equation}\label{4.7}
L+1-\gamma-\alpha_1-M+n\tau>0
\quad {\rm and\quad } N>L+M+n.
\end{equation}
It follows, from the definition of molecules and Lemma \ref{l2.4},
that
$$
\left|\int_{\rn}
{\mathfrak M}_{jk}(x)\varphi(x)\,dx\right|
\lesssim
2^{-j(L+1)}(1+2^{-j}|k|)^{-M}.
$$
By the assumption $(\cl6)$, we conclude that
\begin{equation}\label{4.8}
\left|\int_{\rn}
{\mathfrak M}_{jk}(x)\varphi(x)\,dx\right|
\lesssim
2^{-j(L+1-\gamma)}(1+2^{-j}|k|)^{-M}(1+|k|)^{\delta}
\|\chi_{Q_{jk}}\|_{\cl(\rn)}.
\end{equation}
From the condition (W1),
we deduce that,
for all $j\in\zz_+$ and $x\in\rn$,
$2^{-j\alpha_1}w(x,1) \lesssim w_j(x)$
and, from (W2), that, for all $x\in\rn$,
$w(0,1) \lesssim w(x,1)(1+|x|)^{\alpha_3}$.
Combining them, we conclude that,
for all $j\in\zz_+$ and $x\in\rn$,
\begin{equation}\label{eq:120223-3}
w(0,1) \lesssim (1+|x|)^{\alpha_3} 2^{j\alpha_1}w_j(x).
\end{equation}
Consequently, we have
\begin{equation}\label{4.9}
1 \lesssim (1+|k|)^{\alpha_3} 2^{j\alpha_1}w_j(x)
\end{equation}
for all $x \in Q_{jk}$ with $j\in\zz_+$ and $k\in\zz^n$.
By \eqref{4.8} and \eqref{4.9}, we further see that,
for all $j\in\zz_+$ and $k\in\zz^n$,
\begin{equation}\label{eq:120223-4}
\left|\lambda_{jk}\int_{\rn}
{\mathfrak M}_{jk}(x)\varphi(x)\,dx\right|
\lesssim
2^{-j(L+1-\gamma-\alpha_1-M+n\tau)}
(1+|k|)^{-M+\alpha_3+\delta}
\|\lambda\|_{b^{w,\tau}_{\cl,\infty,a}(\rn)}.
\end{equation}
So by \eqref{4.7},
this inequality is summable over $j \in \Z_+$
and $k \in \Z^n$,
which completes the proof of Lemma \ref{l4.1}.
\end{proof}

In view of Lemma \ref{l3.1},
Lemma \ref{l4.1} is sufficient
to ensure that, for any $f\in A_{\cl, q,a}^{w,\tau}(\rn)$,
the convergence
in \eqref{4.6} takes place in ${\mathcal S}'(\rn)$.
Indeed, in view of Remark \ref{r3.1},
without loss of generality,
we may assume that $f \in B^{w,\tau}_{\cl,\infty,a}(\R^n)$.
Then, by Lemma \ref{l4.1},
we see that the convergence
in \eqref{4.6} takes place in ${\mathcal S}'(\rn)$.

Next, we consider the synthesis part of Theorem \ref{t4.1}.

\begin{lemma}\label{l4.2}
Let $s\in (0,\fz), \, \alpha_1, \alpha_2, \alpha_3, \tau \in [0,\infty)$,
$a\in (N_0+\alpha_3,\infty)$ and $q\in(0,\,\fz]$.
Suppose that $w\in{\mathcal W}^{\alpha_3}_{\alpha_1,\alpha_2}$.
Assume, in addition, that
$K \in \Z_+$ and $L \in \Z_+ $ satisfy
\eqref{4.3}, \eqref{4.4} and \eqref{4.5}.
Let $\lambda:=\{\lambda_{jk}\}_{j \in \zz_+, \, k \in {\mathbb Z}^n}
\in a^{w,\tau}_{\cl,q,a}(\rn)$
and ${\mathfrak M}:=\{{\mathfrak M}_{jk}\}_{j \in \Z_+, \,
k \in {\mathbb Z}^n}$
be a collection of ${\rm (}K,L{\rm )}$-molecules.
Then the series
\[
f=\sum_{j=0}^\infty
\sum_{k \in \Z^n}\lambda_{jk}{\mathfrak M}_{jk}
\]
converges in ${\mathcal S}'(\rn)$
and defines an element in $A^{w,\tau}_{{\cl},q,a}(\rn)$.
Furthermore,
\[
\|f\|_{A^{w,\tau}_{{\cl},q,a}(\rn)}
\lesssim
\|\lambda\|_{a^{w,\tau}_{{\cl},q,a}(\rn)},
\]
with the implicit positive constant independent of $f$.
\end{lemma}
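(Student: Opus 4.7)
\medskip

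\noindent\textbf{Proof plan for Lemma \ref{l4.2}.}
The statement splits into two parts: (a) convergence of the series in $\cs'(\rn)$, and (b) the norm estimate in $A^{w,\tau}_{\cl,q,a}(\rn)$. The plan is to dispose of (a) immediately by invoking earlier results and then to reduce (b), through pointwise decay estimates on $\vz_j*\mathfrak{M}_{\nu k}$ and a rearrangement of indices, to the convolution-type inequality of Lemma \ref{l2.3}.

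For (a): observe that each of the four sequence spaces $a^{w,\tau}_{\cl,q,a}(\rn)$ embeds continuously into $b^{w,\tau}_{\cl,\infty,a}(\rn)$, by the monotonicity contained in Lemma \ref{l2.1} together with the containment of the discrete $\ell^q$-norms in $\ell^\infty$ (in exact parallel with Remark \ref{r3.1}(ii) at the level of function spaces). Hence $\lambda\in b^{w,\tau}_{\cl,\infty,a}(\rn)$, and Lemma \ref{l4.1} applies to give convergence of $\sum_{\nu,k}\lambda_{\nu k}\mathfrak{M}_{\nu k}$ in $\cs'(\rn)$.

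For (b): fix a Schwartz system $(\Phi,\vz)$ as in \eqref{1.1}, \eqref{1.2}; since $\widehat\vz$ vanishes on a neighborhood of the origin, $\vz$ has vanishing moments of all orders. Apply Lemma \ref{l2.4} twice, with the roles of the two functions interchanged according to the relative size of $j$ and $\nu$: when $\nu\ge j$ use the $L$ vanishing moments of the molecule $\mathfrak{M}_{\nu k}$ against the smoothness of $\vz_j$ (replacing $\Phi$ when $j=0$); when $\nu\le j$ use the $C^K$-smoothness of the molecule against the infinite vanishing moments of $\vz_j$. This yields
\begin{equation*}
|\vz_j*\mathfrak{M}_{\nu k}(x)|
\lesssim
\begin{cases}
2^{-(\nu-j)(L+1)}(1+2^j|x-x_{\nu k}|)^{-M},& \nu\ge j,\\
2^{-(j-\nu)(K+1)}(1+2^\nu|x-x_{\nu k}|)^{-M},& \nu\le j,
\end{cases}
\end{equation*}
for a parameter $M$ that can be chosen freely in the range $(a+n,\,N-L-n)$, which is non-empty thanks to \eqref{4.4}. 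The standard submultiplicative inequality
$(1+2^{j\wedge\nu}|x+z-x_{\nu k}|)^{-M}\le(1+2^{j\wedge\nu}|x-x_{\nu k}|)^{-M}(1+2^{j\wedge\nu}|z|)^M$
together with $M\le a$ absorbs the factor $(1+2^j|z|)^{-a}$ in the Peetre supremum, producing
\begin{equation*}
(\vz_j^*f)_a(x)
\lesssim
\sum_{\nu\ge j}2^{-(\nu-j)(L+1)}\sum_k|\lambda_{\nu k}|(1+2^j|x-x_{\nu k}|)^{-M}
+\sum_{\nu<j}2^{-(j-\nu)(K+1)}\sum_k|\lambda_{\nu k}|(1+2^\nu|x-x_{\nu k}|)^{-M}.
\end{equation*}

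The next step is to recognize the inner sums as (pointwise) controlled by Peetre-type maximal functions of the step function $\Lambda(y,2^{-\nu}):=\sum_k\lambda_{\nu k}\chi_{Q_{\nu k}}(y)$. Since cubes $\{Q_{\nu k}\}_{k\in\zz^n}$ partition $\rn$ and $M>a+n$, a routine comparison gives $\sum_k|\lambda_{\nu k}|(1+2^\nu|x-x_{\nu k}|)^{-M}\lesssim \sup_{y\in\rn}\frac{|\Lambda(y,2^{-\nu})|}{(1+2^\nu|x-y|)^a}$; for the first sum use $2^\nu\ge 2^j$ to replace $2^j$ by $2^\nu$ at the cost of the harmless factor $2^{(\nu-j)a}$. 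The result is a pointwise bound of the form
\begin{equation*}
(\vz_j^*f)_a(x)
\lesssim
\sum_{\nu\ge j}2^{-(\nu-j)(L+1-a)}G_\nu(x)
+\sum_{\nu<j}2^{-(j-\nu)(K+1)}G_\nu(x),
\end{equation*}
where $G_\nu(x):=\sup_y|\Lambda(y,2^{-\nu})|/(1+2^\nu|x-y|)^a$ is precisely the quantity whose norm in each of $\ell^q(\cl^w_\tau)$, $\ell^q(\cn\cl^w_\tau)$, $\cl^w_\tau(\ell^q)$, $\ce\cl^w_\tau(\ell^q)$ equals $\|\lambda\|_{a^{w,\tau}_{\cl,q,a}(\rn)}$ by Definition \ref{d4.3}. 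The proof then concludes by invoking Lemma \ref{l2.3} with $D_1=L+1-a$ and $D_2=K+1$, whose admissibility $D_1>\alpha_1$ and $D_2>n\tau+\alpha_2$ is exactly what \eqref{4.5} supplies (together with the margin left by the choice $M<N-L-n$ for the Peetre estimate).

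\medskip

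The main obstacle is the bookkeeping of parameters: one must simultaneously have $M\le a$ in the Peetre absorption, $M>a+n$ in the comparison with the Peetre-type maximum of $\Lambda$, $M<N-L-n$ in the pointwise molecular estimate, and the resulting exponents $L+1-a$ and $K+1$ large enough with respect to $\alpha_1$, $\alpha_2$, $n\tau$ for Lemma \ref{l2.3} to apply. Conditions \eqref{4.3}, \eqref{4.4}, \eqref{4.5} are designed precisely to create a non-empty window; verifying this window, and separately justifying the passage from a pointwise bound to a norm bound in the four cases (which is uniform due to Lemma \ref{l2.3}), is where the bulk of the technical work lies. Once this is done, the estimate $\|f\|_{A^{w,\tau}_{\cl,q,a}(\rn)}\lesssim\|\lambda\|_{a^{w,\tau}_{\cl,q,a}(\rn)}$ is immediate.
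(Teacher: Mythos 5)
Your overall strategy coincides with the paper's: convergence in $\cs'(\rn)$ via the embedding of $a^{w,\tau}_{\cl,q,a}(\rn)$ into $b^{w,\tau}_{\cl,\infty,a}(\rn)$ and Lemma \ref{l4.1}; then pointwise estimates on $\vz_j*{\mathfrak M}_{\nu k}$ from Lemma \ref{l2.4} (moments of the molecule when $\nu\ge j$, smoothness when $\nu\le j$), comparison with the Peetre-type maximal functions $G_\nu$ of $\Lambda$, and finally Lemma \ref{l2.3}. However, the parameter bookkeeping that you defer to the last paragraph as ``the bulk of the technical work'' is exactly where your version of the argument breaks, and the constraints you list are already inconsistent as stated.

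Concretely: (i) you require both $M\le a$ (to absorb the Peetre denominator) and $M>a+n$ (to sum over $k$ when comparing with $G_\nu$); no $M$ satisfies both. (ii) Even granting your pointwise bound, you arrive at $D_1=L+1-a$ and assert that $D_1>\alpha_1$ ``is exactly what \eqref{4.5} supplies''; but \eqref{4.5} only gives $L+1>\alpha_1$, and since $a\in(N_0+\alpha_3,\infty)$ carries no upper bound, $L+1-a>\alpha_1$ can fail badly. (iii) The window $(a+n,\,N-L-n)$ for $M$ is claimed nonempty ``thanks to \eqref{4.4}'', but \eqref{4.4} reads $N>L+\alpha_3+\delta+2n$ and does not dominate $L+a+2n$. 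The paper's proof is arranged precisely so that no dyadic factor $2^{|j-\nu|a}$ is ever created: the sequence-space Peetre weight is kept at the molecule's own scale, i.e.\ one bounds $|\lambda_{\nu m}|$ by $(1+2^\nu|w|)^{a}$ times the supremum defining $G_\nu$, with $x+w\in Q_{\nu m}$ chosen so that the comparison cost is paid entirely into the spatial decay exponent $M$ of the kernel estimate (this is the room that \eqref{4.4} provides). The resulting coefficients in front of $G_\nu$ are then pure powers $2^{-(j-\nu)(\cdot+1)}$ and $2^{-(\nu-j)(\cdot+1)}$ with no $a$ in the exponents, so that Lemma \ref{l2.3} applies with $D_1$ and $D_2$ exactly as \eqref{4.5} requires. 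To repair your write-up you must redo the absorption step along these lines rather than trading the mismatch between the weights $(1+2^j|z|)^{-a}$ and $(1+2^\nu|w|)^{-a}$ for the factor $2^{(\nu-j)a}$.
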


\begin{remark}\label{r4.1}
One of the differences from the classical theory of molecules
is that there is no need to distinguish
Besov-type spaces and Triebel-Lizorkin-type spaces.
Set
$  \sigma_p:=\max\{0,n/p-n\}$.
For example,
recall that in {\rm \cite[Theorem 13.8]{t97}}
we need to assume
\[
L\ge \max(-1,\lfloor \sigma_p-s\rfloor)
\mbox{ or }
L\ge \max(-1,\lfloor \max(\sigma_p,\sigma_q)-s\rfloor)
\]
according as we consider Besov spaces
or Triebel-Lizorkin spaces.
However, our approach does not require
such a distinction.
This seems due to the fact that
we are using the Peetre maximal operator.
\end{remark}

\begin{proof}[Proof of Lemma \ref{l4.2}]
The convergence of $f$ in ${\mathcal S}'(\rn)$
is a consequence of Lemma \ref{l4.1}.

Let us prove $\|f\|_{A^{w,\tau}_{{\cl},q,a}(\rn)}
\lesssim
\|\lambda\|_{a^{w,\tau}_{{\cl},q,a}(\rn)}$.
To this end,
we fix $z \in \rn$ and $j,l \in \Z_+$.
Let us abbreviate
$\sum_{k \in \Z^n}\lambda_{lk}{\mathfrak M}_{lk}$
to $f_l$.
Then we have
\begin{eqnarray*}
\sup_{z \in \R^n}\frac{|\varphi_j*f_l(x+z)|}{(1+2^j|z|)^a}
\lesssim
\begin{cases}
\dis
\sup_{z \in \R^n}
\left\{
\sum_{k \in \Z^n}
\frac{2^{ln-(j-l)(L+1)}|\lambda_{lk}|}
{(1+2^l|z|)^a(1+2^{l}|x+z-2^{-l}k|)^M}
\right\},&j \ge l;\\
\dis
\sup_{z \in \R^n}
\left\{
\sum_{k \in \Z^n}
\frac{2^{jn-(l-j)(K+1)}|\lambda_{lk}|}
{(1+2^j|z|)^a(1+2^{j}|x+z-2^{-l}k|)^M}
\right\},&j<l
\end{cases}
\end{eqnarray*}
by Lemma \ref{l2.4}, where $M$ is as in \eqref{4.7}.
Consequently,
by virtue of the inequalities
$1+2^j|z| \le 1+2^{\max(j,l)}|z|$
for all $z\in\rn$ and $j,l\in\zz_+$,
we have
\begin{eqnarray*}
&&
\sup_{z \in \R^n}\frac{|\varphi_j*f_k(x+z)|}{(1+2^j|z|)^a}\\
&&\quad\lesssim
\begin{cases}
\dis\sup_{z,w \in \R^n}
\left\{
\sum_{m \in \Z^n}\sum_{k \in \Z^n}
\frac{2^{ln-(j-l)(L+1)}(1+2^l|w|)^{-a}
|\lambda_{lm}|\chi_{Q_{lm}}(x+w)}
{(1+2^{l}|x+z-2^{-l}k|)^M}
\right\},& j \ge l;\\
\dis\sup_{z,w \in \R^n}
\left\{
\sum_{m \in \Z^n}
\sum_{k \in \Z^n}
\frac{2^{jn+(j-l)(K+1)}(1+2^l|w|)^{-a}
|\lambda_{lm}|\chi_{Q_{lm}}(x+w)}
{(1+2^{j}|x+z-2^{-l}k|)^M}
\right\},& j<l.
\end{cases}
\end{eqnarray*}
By
$$\sum_{k\in\zz^n}\frac{2^{ln}}{(1+2^{l}|x+z-2^{-l}k|)^M}
+\sum_{k\in\zz^n}\frac{2^{jn}}{(1+2^{j}|x+z-2^{-l}k|)^M}
\ls\int_\rn\frac1{(1+|y|)^M}\,dy$$
and $M\in(\az_3+\dz+n,\fz)$, we conclude that
\begin{eqnarray}
\label{eq:120223-2}
\sup_{z \in \R^n}\frac{|\varphi_j*f_l(x+z)|}{(1+2^j|z|)^a}
\lesssim
\begin{cases}
\dis2^{-(j-l)(L+1)}
\sum_{m \in \Z^n}
\left[
\sup_{w \in \rn}
\frac{|\lambda_{lm}|\chi_{Q_{lm}}(x+w)}{(1+2^l|w|)^a}
\right],& j \ge l;\\
\dis2^{(j-l)(K+1)}
\sum_{m \in \Z^n}
\left[\sup_{w \in \rn}
\frac{|\lambda_{lm}|\chi_{Q_{lm}}(x+w)}{(1+2^l|w|)^a}
\right],& j<l.
\end{cases}
\end{eqnarray}
If we use \eqref{4.5} and
Lemma \ref{l2.3},
then we obtain the desired result,
which completes the proof of Lemma \ref{l4.2}.
\end{proof}

With these preparations in mind,
let us prove Theorem \ref{t4.1}.
We investigate the case
of $F_{\cl,q,a}^{w,\tau}(\rn)$,
other cases being similar.

\begin{proof}[Proof of Theorem \ref{t4.1} {\rm (Analysis part)}]
Let $L \in \Z_+$ satisfying \eqref{4.3} be fixed.
Let us choose $\Psi,\psi \in C^\infty_{\rm c}(\rn)$
such that
\begin{equation}\label{4.10}
\supp\Psi,\ \supp \psi \subset
\{x=(x_1,x_2,\cdots,x_n)\,:\
\max(|x_1|,|x_2|,\cdots,|x_n|) \le 1\}
\end{equation}
and that
\begin{equation}\label{4.11}
\int_{\rn}\psi(x)x^{\vec{\beta}} \,dx=0
\end{equation}
for all multiindices ${\vec{\beta}}$
with $\|{\vec{\beta}}\|_1 \le L$
and that
$\Psi*\Psi+\sum_{j=1}^\infty \psi_j*\psi_j=\delta_0$
in $\cs'(\rn)$,
where $\psi_j := 2^{jn}\psi(2^j \cdot)$
for all $j \in {\mathbb N}$.
Then, for all $f\in F_{\cl,q,a}^{w,\tau}(\rn)$,
\begin{equation}\label{4.12}
f=\Psi*\Psi*f+\sum_{j=1}^\infty \psi_j*\psi_j*f
\end{equation}
in $\cs'(\rn)$.
With this in mind,  let us set, for all $j\in\nn$ and $k\in\zz^n$,
\begin{equation}\label{4.13}
\lambda_{0k}:=
\int_{Q_{0k}}|\Psi*f(y)|\,dy, \quad
\lambda_{jk}:=
2^{jn}\int_{Q_{jk}}|\psi_j*f(y)|\,dy
\end{equation}
and, for all $x\in\rn$,
\begin{equation}\label{4.14}
{\mathfrak A}_{0k}(x)
:=
\frac{1}{\lambda_{0k}}\int_{Q_{0k}}\Psi(x-y)\Psi*f(y)\,dy,
\
{\mathfrak A}_{jk}(x)
:=
\frac{1}{\lambda_{jk}}\int_{Q_{jk}}\psi_j(x-y)\psi_j*f(y)\,dy.
\end{equation}
Here in the definition \eqref{4.14} of ${\mathfrak A}_{jk}$
for $j \in \zz_+$ and $k \in \Z^n$,
if $\lambda_{jk}=0$, then accordingly we redefine
${\mathfrak A}_{jk} := 0$.

Observe that
$
f := \sum_{j=0}^\infty \sum_{k \in \Z^n}\lambda_{jk}{\mathfrak A}_{jk}
$
in $\cs'(\rn)$
by virtue of \eqref{4.12} and \eqref{4.14}.
Let us prove that ${\mathfrak A}_{jk}$, given by \eqref{4.14},
is an atom supported near $Q_{jk}$
modulo a multiplicative constant
and that $\lambda:=\{\lambda_{jk}\}_{j \in \N, \, k \in \Z^n}$,
given by \eqref{4.13}, satisfies that
\begin{equation}\label{4.15}
\|\lambda\|_{f^{w,\tau}_{\cl,q,a}(\rn)} \lesssim
\|f\|_{F^{w,\tau}_{\cl,q,a}(\rn)}.
\end{equation}

Observe that, when $x+z \in Q_{jk}$,
then by the Peetre inequality we have
\begin{eqnarray*}
\frac{2^{jn}}{(1+2^j|z|)^a}\int_{Q_{jk}}|\psi_j*f(y)|\,dy
&&=
\frac{2^{jn}}{(1+2^j|z|)^a}\int_{x+z-Q_{jk}}|\psi_j*f(x+z-y)|\,dy\\
&&\lesssim
\int_{x+z-Q_{jk}}
\frac{2^{jn}}{(1+2^j|z|)^a(1+2^j|y|)^a}|\psi_j*f(x+z-y)|\,dy\\
&&\le
\int_{x+z-Q_{jk}}\frac{2^{jn}}{(1+2^j|z-y|)^a}|\psi_j*f(x+z-y)|\,dy\\
&&\lesssim
\sup_{w \in \rn}
\frac{|\psi_j*f(x-w)|}{(1+2^j|w|)^a}.
\end{eqnarray*}
Consequently, we see that
\begin{equation}\label{4.16}
\sup_{w \in Q_{jk}}
\left\{
\frac{2^{jn}}{(1+2^j|x-w|)^a}\int_{Q_{jk}}|\psi_j*f(y)|\,dy
\right\}
\lesssim
\sup_{z \in \rn}
\frac{|\psi_j*f(x-z)|}{(1+2^j|z|)^a}.
\end{equation}
In view of the fact that
$\{Q_{jk}\}_{k \in \Z^n}$
is a disjoint family for each fixed $j \in \zz_+$,
\eqref{4.16} reads as
\begin{equation}\label{4.17}
\sup_{z \in \rn}\frac{1}{(1+2^j|z|)^a}
\left|\sum_{k \in \Z^n}\lambda_{jk}\chi_{Q_{jk}}(x+z)\right|
\lesssim
\sup_{z \in \rn}
\frac{|\psi_j*f(x-z)|}{(1+2^j|z|)^a}.
\end{equation}
In particular, when $j=0$, we see that
\begin{equation}\label{4.18}
\sup_{z \in \rn}\frac{1}{(1+|z|)^a}
\left|\sum_{k \in \Z^n}\lambda_{0k}\chi_{Q_{0k}}(x+z)\right|
\lesssim
\sup_{z \in \rn}
\frac{|\Psi*f(x-z)|}{(1+|z|)^a}.
\end{equation}
Consequently, from \eqref{4.17} and \eqref{4.18},
we deduce the estimate \eqref{4.15}.

Meanwhile, via \eqref{4.10}, a direct calculation about the size of supports
yields
\begin{equation}\label{4.19}
{\rm supp}({\mathfrak A}_{jk})
\subset
Q_{jk}+{\rm supp}(\psi_j)
\subset
3Q_{jk}
\end{equation}
and that
there exists a positive constant $C_{\vec{\alpha}}$ such that
\begin{equation}\label{4.20}
|\partial^{\vec{\alpha}} {\mathfrak A}_{jk}(x)|
=\frac{2^{j(\|{\vec{\alpha}}\|_1+n)}}{\lambda_{jk}}
\left|\int_{Q_{jk}}\partial^{\vec{\alpha}}\psi(2^j(x-y))\psi_j*f(y)\,dy\right|
\le{}C_{{\vec{\alpha}}} 2^{j\|{\vec{\alpha}}\|_1}
\end{equation}
for all multiindices ${\vec{\alpha}}$ as long as $\lambda_{jk} \ne 0$.

Keeping \eqref{4.19} and \eqref{4.20}
in mind, let us show that each ${\mathfrak A}_{jk}$ is an atom
modulo a positive multiplicative constant
$\sum_{\|{\vec{\alpha}}\|_1 \le K}C_{\vec{\alpha}}$.
The support condition follows from \eqref{4.19}.
The size condition follows from \eqref{4.20}.
Finally, the moment condition follows from \eqref{4.11},
which completes the proof of Theorem \ref{t4.1}.
\end{proof}

\subsection{The regular case}

Motivated by Remark \ref{r4.1},
we are now going to consider
the regular case of Theorem \ref{t4.1}.
That is, we are going to discuss the possibility
of the case when $L=-1$ of Theorem \ref{t4.1}.
This is achieved by polishing
a crude estimate \eqref{2.18}.
Our result is the following.

\begin{theorem}\label{t4.2}
Let $K \in \N \cup\{0\}$,
$L=-1$,
$\alpha_1, \alpha_2, \alpha_3, \tau \in [0,\infty)$ and $q\in(0,\,\fz]$.
Suppose that $w\in\star-{\mathcal W}^{\alpha_3}_{\alpha_1,\alpha_2}$.
Assume, in addition, that {\rm \eqref{3.35}} and
$(\ref{4.4})$ hold true,
and that
\begin{equation}\label{4.21}
0>\alpha_3+\delta+n+\gamma-n\tau-\alpha_1
\end{equation}
and
\begin{equation}\label{4.22}
\alpha_1>n\tau, \, K+1>\alpha_2+n\tau.
\end{equation}
Then the pair
$(A^{w,\tau}_{\cl,q,a}(\rn),a^{w,\tau}_{\cl,q,a}(\rn))$
admits the atomic / molecular decompositions.
\end{theorem}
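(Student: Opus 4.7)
The proof mirrors the two-part structure of Theorem \ref{t4.1}. For the analysis direction no separate argument is needed: in the construction \eqref{4.13}--\eqref{4.14} performed in the proof of Theorem \ref{t4.1}, the Schwartz function $\psi$ with $\widehat{\psi}$ supported in an annulus has all moments vanishing, so the produced atoms satisfy the $(K,-1)$-atom definition \emph{a fortiori}. The coefficient bound coming from the pointwise inequalities \eqref{4.16}--\eqref{4.18} uses neither moment assumptions nor \eqref{4.3} of Theorem \ref{t4.1}, so it transfers verbatim to the setting of Theorem \ref{t4.2}.

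For the synthesis direction we adapt Lemmas \ref{l4.1} and \ref{l4.2}. The new ingredient is supplied by the $\star$-class: condition (W$1^{\star}$) sharpens the inequality \eqref{4.9} to $2^{j\alpha_1}\lesssim (1+|k|)^{\alpha_3}w_j(x)$ uniformly on $Q_{jk}$. Inserted into the argument of Lemma \ref{l4.1} this produces the strengthened pointwise coefficient estimate
\begin{equation*}
|\lambda_{jk}|\lesssim 2^{-j(n\tau+\alpha_1-\gamma)}(1+|2^{-j}k|)^{\alpha_3}(1+|k|)^{\delta}\,\|\lambda\|_{b^{w,\tau}_{\cl,\infty,a}(\rn)}.
\end{equation*}
Combined with the crude bound $|\int_{\rn}\mathfrak{M}_{jk}\varphi\,dx|\lesssim 2^{-jn}(1+|2^{-j}k|)^{-M}p_{M}(\varphi)$, valid without any moment condition provided $M<N$ with $N$ taken large by \eqref{4.4}, the condition \eqref{4.21} supplies the decay in $j$ that makes $\sum_{j,k}\lambda_{jk}\mathfrak{M}_{jk}$ converge in $\cs'(\rn)$, proving the analogue of Lemma \ref{l4.1} in the regular case.

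For the norm estimate $\|f\|_{A^{w,\tau}_{\cl,q,a}(\rn)}\lesssim \|\lambda\|_{a^{w,\tau}_{\cl,q,a}(\rn)}$ we revisit the splitting \eqref{eq:120223-2} of Lemma \ref{l4.2}. The $j<l$ piece is smoothness-driven and unchanged: the factor $2^{-(l-j)(K+1)}$ combined with \eqref{4.22} feeds into Lemma \ref{l2.3} exactly as before, noting that the consistency of $\star-\cw^{\alpha_3}_{\alpha_1,\alpha_2}$ forces $\alpha_1\le\alpha_2$, so that $K+1>\alpha_2+n\tau$ automatically yields $K+1>\alpha_1$. The $j\ge l$ piece, which ordinarily relies on the moment-based decay $2^{-(j-l)(L+1)}$, supplies no decay when $L=-1$, and Lemma \ref{l2.3} fails because $D_2=L+1=0$ does not exceed $n\tau+\alpha_2$. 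We instead insert the strengthened coefficient bound above into the trivial molecule estimate; the factor $2^{-l(\alpha_1+n\tau-\gamma)}$ thus obtained supplies the missing decay in $l$, and under \eqref{4.22} ($\alpha_1>n\tau$) together with \eqref{4.21} the resulting sum over $l\le j$ becomes absolutely convergent in a Morrey-compatible fashion, so that a minor adaptation of Lemma \ref{l2.3} --- with $\alpha_1$ playing the role previously reserved for $L+1$ --- closes the estimate. The main obstacle is precisely this last step: replacing the missing moment-based cancellation by the weight-growth information carried by $\star-\cw^{\alpha_3}_{\alpha_1,\alpha_2}$ requires both \eqref{4.21} and \eqref{4.22} simultaneously, and careful bookkeeping of the exponents $\alpha_1,\alpha_2,\alpha_3,\gamma,\delta,n\tau$ is essential so that the resulting estimate still fits within the framework of Lemma \ref{l2.3}.
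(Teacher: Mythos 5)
Your analysis direction and your proof of convergence in $\cs'(\rn)$ are essentially the paper's: the atoms built in the proof of Theorem \ref{t4.1} are $(K,-1)$-atoms a fortiori, the coefficient estimate there uses neither moments nor the weight, and replacing (W1) by (W$1^\star$) reverses the sign of $\alpha_1$ in \eqref{4.9}, after which \eqref{4.21} gives the absolute summability required for the analogue of Lemma \ref{l4.1}.

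The gap is in the synthesis norm estimate $\|f\|_{A^{w,\tau}_{\cl,q,a}(\rn)}\lesssim\|\lambda\|_{a^{w,\tau}_{\cl,q,a}(\rn)}$. First, the piece that loses all decay when $L=-1$ consists of the \emph{fine-scale} molecules $l>j$: there $\varphi_j$ is the smooth factor and the decay $2^{-(l-j)(L+1)}$ is produced by the vanishing moments of ${\mathfrak M}_{lk}$, which are now absent. (The display \eqref{eq:120223-2} has $K+1$ and $L+1$ interchanged; the intended assignment is visible from the application of Lemma \ref{l2.3} with $D_1=L+1$, $D_2=K+1$.) In the notation of Lemma \ref{l2.3} this is the $D_1$-term, and what fails is $D_1=0>\alpha_1$, not $D_2>n\tau+\alpha_2$. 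Second, and decisively, your remedy of inserting the absolute coefficient bound $|\lambda_{lm}|\lesssim 2^{-l(n\tau+\alpha_1-\gamma)}(1+|2^{-l}m|)^{\alpha_3}(1+|m|)^{\delta}\|\lambda\|_{b^{w,\tau}_{\cl,\infty,a}(\rn)}$ cannot close the estimate: it majorizes the problematic piece pointwise by a quantity of the form $2^{-l(\cdots)}(1+|x|)^{\alpha_3+\delta}\|\lambda\|_{b^{w,\tau}_{\cl,\infty,a}(\rn)}$, and to return to the target norm one would need $\sup_{P}|P|^{-\tau}\|\chi_P w_j(1+|\cdot|)^{\alpha_3+\delta}\|_{\cl(\rn)}<\infty$, which is not a consequence of $(\cl1)$--$(\cl6)$ (for instance $\cl(\rn)=L^\infty(\rn)$ with $w_j(x)=2^{js}(1+|x|)^{\alpha_3}$, $\alpha_3>0$, defeats it); moreover it would at best control this piece by the $q=\infty$ sequence norm rather than by $\|\lambda\|_{a^{w,\tau}_{\cl,q,a}(\rn)}$. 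The mechanism that actually works keeps the structure $w_l g_l$ intact and applies (W$1^\star$) \emph{inside} the norm: for $l\ge j$ one has $w_j\lesssim 2^{-\alpha_1(l-j)}w_l$, so the weight transfer itself supplies the geometric factor $2^{-\alpha_1(l-j)}$ that the moments would have supplied. This is precisely Lemma \ref{l4.3}, which relaxes the hypothesis $D_1>\alpha_1$ of Lemma \ref{l2.3} to $D_1>-\alpha_1$ and is then applied with $D_1=0$ and $D_2=K+1$ under \eqref{4.22}. Your closing phrase about ``$\alpha_1$ playing the role previously reserved for $L+1$'' is the right intuition, but the coefficient-bound route does not implement it.
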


To prove Theorem \ref{t4.2}
we need to modify Lemma \ref{l2.3}.

\begin{lemma}\label{l4.3}
Let $D_1, D_2$,
$\alpha_1, \alpha_2, \alpha_3, \tau \in [0,\infty)$
and $q\in(0,\fz]$ be parameters satisfying
\begin{equation*}
D_1+\alpha_1>0 \quad and \quad
D_2-\alpha_2>n \tau.
\end{equation*}%
Let $\{g_\nu\}_{\nu\in\zz_+}$
be a family of measurable functions on $\rn$
and $w \in\star-{\mathcal W}^{\alpha_3}_{\alpha_1,\alpha_2}$.
For all $j\in\zz_+$ and $x\in\rn$, let
$$G_j(x):=
\sum_{\nu=j+1}^\fz
2^{-(\nu-j)D_1}g_\nu(x)
+
\sum_{\nu=0}^{j}
2^{-(j-\nu)D_2}g_\nu(x).$$
Then \eqref{2.14} through \eqref{2.17}
hold true.
\end{lemma}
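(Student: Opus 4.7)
The plan is to mimic the proof of Lemma \ref{l2.3} verbatim, changing only the pointwise comparison of weights at different scales on the ``high-frequency'' side. The key structural observation is that, in the proof of Lemma \ref{l2.3}, the restriction $D_1>\alpha_1$ came solely from the estimate \eqref{2.18}, namely $w_j(x)\lesssim 2^{\alpha_1(\nu-j)}w_\nu(x)$ for $\nu\ge j$, while the restriction $D_2-\alpha_2>n\tau$ came from the low-frequency estimate \eqref{2.19}, which used only (W1) with $j\ge\nu$ and is identical under (W$1^\star$). Since Definition \ref{d3.3} strengthens (W1) precisely on the branch $j\ge\nu$, the analogue of \eqref{2.19} is unaffected, and the permissible range of $D_2$ is preserved.

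First I would record the improved weight estimate: rewriting (W$1^\star$) with the roles of $j$ and $\nu$ exchanged, one obtains, for all $x\in\rn$ and $\nu\ge j\ge 0$,
\begin{equation*}
w_j(x)\lesssim 2^{-\alpha_1(\nu-j)}w_\nu(x),
\end{equation*}
which replaces \eqref{2.18} by a factor that \emph{decays} rather than grows in $\nu-j$. Combined with the unchanged \eqref{2.19} for $j\ge\nu$, this yields the replacement of \eqref{2.20}
\begin{equation*}
w_j(x)\lesssim
\begin{cases}
2^{-\alpha_1(\nu-j)}w_\nu(x), & \nu\ge j,\\
2^{\alpha_2(j-\nu)}w_\nu(x), & \nu\le j.
\end{cases}
\end{equation*}
Inserting this into the high-frequency sum $\sum_{\nu\ge j+1}2^{-(\nu-j)D_1}g_\nu$ produces the combined factor $2^{-(\nu-j)(D_1+\alpha_1)}$, which is summable in $\nu$ precisely under the assumption $D_1+\alpha_1>0$. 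The low-frequency sum is handled by the identical bookkeeping as in Lemma \ref{l2.3}, the telescoping argument culminating in \eqref{eq:120223-1} and using $(\mathcal{L}3)$ together with the expansion $2^{j_P\vee 0-\nu}P$ of the reference cube.

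After that, everything proceeds as before: for $q\in(0,1]$ one applies \eqref{2.22} term-by-term, and for $q\in(1,\infty]$ one chooses $\kappa\in(0,\infty)$ such that $D_1+\alpha_1>\kappa$ and $D_2-\alpha_2-n\tau>\kappa$ (both possible under our hypotheses) and invokes H\"older's inequality in $\ell^q$ to absorb the extra factor $2^{-\kappa|\nu-j|}$. The four conclusions \eqref{2.14}--\eqref{2.17} then follow exactly as in the original proof, because the ambient function-space quasi-norms $\ell^q(\cl^w_\tau)$, $\ell^q(\cn\cl^w_\tau)$, $\cl^w_\tau(\ell^q)$ and $\ce\cl^w_\tau(\ell^q)$ enter only through $(\cl1)$--$(\cl4)$ and the $\theta$-triangle inequality, none of which interacts with the weight class. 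The main thing to be careful about is to verify that negative values of $D_1$ (which are now allowed) do not sneak into the low-frequency estimate through any cross-term; since the split into $\nu\le j$ and $\nu\ge j+1$ is clean and independent, this issue does not arise, and the proof concludes without further obstacle.
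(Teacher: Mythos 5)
Your proposal is correct and follows essentially the same route as the paper: the only substantive change from Lemma \ref{l2.3} is replacing \eqref{2.18} by the improved estimate $w_j\lesssim 2^{-\alpha_1(\nu-j)}w_\nu$ for $\nu\ge j$ coming from (W$1^\star$), which turns the high-frequency exponent into $D_1+\alpha_1$, while the low-frequency branch and all the subsequent bookkeeping (the reduction to \eqref{eq:120223-1}, the use of $(\cl3)$ with the expanded cubes, and the $\kappa$-shift via H\"older for $q\in(1,\infty]$) are carried over unchanged. This matches the paper's proof of Lemma \ref{l4.3}.
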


\begin{proof}
The proof is based upon
the modification of \eqref{2.20}.

If, in Definition \ref{d3.3}, we let $t:=2^{-\nu}$ and $s:=2^{-j}$
for $j,\nu \in \Z_+$ with $\nu \ge j$,
then we have
\begin{equation}\label{4.24}
w_j(x) \ls 2^{\alpha_1(j-\nu)}w_\nu(x).
\end{equation}
If, in Definition \ref{d3.3}, we let $t=2^{-j}$ and $s=2^{-\nu}$
for $j,\nu \in \N$ with $j \ge \nu$,
then we have
\begin{equation}\label{4.25}
w_j(x) \ls 2^{\alpha_2(j-\nu)}w_\nu(x).
\end{equation}
If we combine \eqref{4.24} and \eqref{4.25},
then we see that
\begin{equation}\label{4.26}
w_j(x) \ls
\begin{cases}
2^{\alpha_1(j-\nu)}w_\nu(x),&\nu \ge j;\\
2^{\alpha_2(j-\nu)}w_\nu(x),&\nu \le j
\end{cases}
\end{equation}
for all $j,\nu \in \Z_+$.
Let us write
\begin{align*}
{\rm I}(P):=&
\frac1{|P|^\tau}
\lf\|\chi_P \lf[\sum_{j=j_P\vee0}^\fz
\lf|\sum_{\nu=0}^j w_j2^{(\nu-j)D_2}g_\nu\r|^q
\r]^{1/q}\r\|_{\cl(\rn)}\\
&+\frac1{|P|^\tau}
\lf\|\chi_P \lf[\sum_{j=j_P\vee0}^\fz
\lf|\sum_{\nu=j+1}^\fz w_j2^{(j-\nu)D_1}g_\nu\r|^q
\r]^{1/q}\r\|_{\cl(\rn)},
\end{align*}
where $P$ is a dyadic cube chosen arbitrarily.

Let us suppose $q\in(0,1]$,
since when $q\in(1,\fz]$, an argument similar to Lemma \ref{l2.3} works.
Then we deduce, from \eqref{4.26} and $(\cl4)$, that
\begin{align*}
{\rm I}(P)
\ls&\frac1{|P|^\tau}
\lf\|\chi_P \lf[\sum_{j=j_P\vee0}^\fz
\sum_{\nu=0}^j 2^{-(j-\nu)(D_2-\alpha_2) q}
\lf|w_\nu g_\nu\r|^q
\r]^{1/q}\r\|_{\cl(\rn)}\noz\\
&+\frac1{|P|^\tau}
\lf\|\chi_P \lf[\sum_{j=j_P\vee0}^\fz
\sum_{\nu=j+1}^\infty 2^{-(\nu-j)(D_1+\alpha_1) q}
\lf|w_\nu g_\nu\r|^q
\r]^{1/q}\r\|_{\cl(\rn)}
\end{align*}
by virtue of (W1) and (\ref{2.22}).
We change the order of summations in the right-hand side
of the above inequality to obtain
\begin{align}
{\rm I}(P)
\ls&\frac1{|P|^\tau}
\lf\|\chi_P \lf[\sum_{\nu=0}^\fz\sum_{j=\nu\vee j_P\vee0}^\fz
2^{-(j-\nu)(D_2-\alpha_2) q}
\lf|w_\nu g_\nu\r|^q
\r]^{1/q}\r\|_{\cl(\rn)}\noz\\
\noz
&+\frac1{|P|^\tau}
\lf\|\chi_P \lf[\sum_{\nu=j_P\vee0}^\infty\sum_{j=j_P\vee0}^\nu
2^{-(\nu-j)(D_1+\alpha_1) q}
\lf|w_\nu g_\nu\r|^q
\r]^{1/q}\r\|_{\cl(\rn)}.
\end{align}
Now we decompose the summand
with respect to $\nu$ according to
$j \ge j_P \vee 0$ or $j< j_P \vee 0$.
Since $D_2\in(\alpha_2+n\tau,\fz)$,
we can choose $\ez\in(0,\fz)$
such that $D_2\in(\alpha_2+n\tau+\ez,\fz)$.
From this, $D_1\in(-\alpha_1,\fz)$,
the H\"older inequality, $(\cl2)$ and $(\cl4)$,
it follows that
\begin{eqnarray*}
{\rm I}(P)&&\lesssim
\|\{g_\nu\}_{\nu\in\zz_+}\|_{\cl^w_\tau(\ell^q(\rn,\zz_+))}\\
&&\hs+\frac1{|P|^\tau}
\lf\|\chi_P \lf[\sum_{\nu=0}^{j_P \vee 0}\sum_{j=j_P\vee0}^\fz
2^{-(j-\nu)(D_2- \alpha_2) q}
\lf|w_\nu g_\nu\r|^q
\r]^{\frac 1q}\r\|_{\cl(\rn)}\\
&&\ls
\|\{g_\nu\}_{\nu\in\zz_+}\|_{\cl^w_\tau(\ell^q(\rn,\zz_+))}
+\frac{2^{-(j_P\vee0)(D_2-\az_2-\ez)}}{|P|^\tau}
\lf\|\chi_P \sum_{\nu=0}^{j_P \vee 0}
2^{\nu(D_2-\alpha_2-\ez)}
\lf|w_\nu g_\nu\r|
\r\|_{\cl(\rn)},
\end{eqnarray*}
which is just (\ref{eq:120223-1}).
Therefore, we can go through the argument same as the proof of Lemma \ref{l2.3},
which completes the proof of Lemma \ref{l4.3}.
\end{proof}

\begin{proof}[Proof of Theorem \ref{t4.2}]
The proof of this theorem
is based upon reexamining that of Theorem \ref{t4.1}.
Recall that the proof of Theorem \ref{t4.1}
is made up of three parts:
Lemma \ref{l4.1}, Lemma \ref{l4.2} and the analysis condition.
Let us start with modifying Lemma \ref{l4.1}.
By \eqref{4.21}, we choose $M \in (\alpha_3+\delta+n,\infty)$ so that
\begin{equation}\label{eq:120223-5}
-\gamma+\alpha_1-M+n\tau>0
\quad {\rm and\quad } N>L+2n+\az_3+\delta.
\end{equation}
Assuming that
$w\in\star-{\mathcal W}^{\alpha_3}_{\alpha_1,\alpha_2}$,
we see that $\alpha_1$ in the proof of Theorem \ref{t4.1}
and the related statements
can be all replaced with $-\alpha_1$.
More precisely,
(\ref{eq:120223-3}) undergoes the following change:
\[
w(0,1) \lesssim (1+|x|)^{\alpha_3} 2^{-j\alpha_1}w_j(x)
\mbox{ for all } x\in\rn \mbox{ and } j\in\zz_+.
\]
Assuming $L=-1$,
we can replace (\ref{eq:120223-4}) with the following estimate:
for all $j\in\zz_+$ and $k\in\zz^n$,
\[
\left|\lambda_{jk}\int_{\rn}
{\mathfrak M}_{jk}(x)\varphi(x)\,dx\right|
\lesssim
2^{-j(-\gamma+\alpha_1-M+n\tau)}
(1+|k|)^{-M+\alpha_3+\delta}
\|\lambda\|_{b^{w,\tau}_{\cl,\infty,a}(\rn)}.
\]
Since we are assuming $(\ref{eq:120223-5})$,
we have a counterpart for Lemma \ref{l4.1},
that is, the series
$f=\sum_{j=0}^\infty
\sum_{k \in {\mathbb Z}^n}\lambda_{jk}{\mathfrak M}_{jk}
$
converges in ${\mathcal S}'(\rn)$.

Next, we reconsider Lemma \ref{l4.2}.
Lemma \ref{l4.2} remains unchanged except
that we substitute $L=-1$.
Thus, the concluding estimate (\ref{eq:120223-2})
undergoes the following change:
\[
\sup_{z \in \R^n}\frac{|\varphi_j*f_l(x+z)|}{(1+2^j|z|)^a}
\lesssim
\begin{cases}
\dis
\sum_{m \in \Z^n}
\left[
\sup_{w \in \rn}
\frac{|\lambda_{lm}|\chi_{Q_{lm}}(x+w)}{(1+2^l|w|)^a}
\right],& j \ge l;\\
\dis2^{(j-l)(K+1)}
\sum_{m \in \Z^n}
\left[\sup_{w \in \rn}
\frac{|\lambda_{lm}|\chi_{Q_{lm}}(x+w)}{(1+2^l|w|)^a}
\right],& j<l.
\end{cases}
\]
Assuming (\ref{4.22}),
we can use Lemma \ref{l4.3}
with $D_1=0$ and $D_2=K+1$.

Finally, the analysis part of the proof of Theorem \ref{t4.1}
remains unchanged in the proof of Theorem \ref{t4.2}.
Indeed, we did not use the condition for weights
or the moment condition here.

Therefore, with these modifications,
the proof of Theorem \ref{t4.2} is complete.
\end{proof}

\subsection{Biorthogonal wavelet decompositions}
\label{s4.4}
We use biorthogonal wavelet bases on $\R$, namely, a system satisfying
\begin{eqnarray*}
&&\langle \psi^0(\cdot- k), \widetilde{\psi}^0(\cdot -m) \rangle_{L^2(\R)}
=
\delta_{k,m} \quad (k,m \in \Z),\\
&&\langle 2^{jn/2}\psi^1(2^j\cdot-k),
2^{\nu n/2}\widetilde{\psi}^1(2^{\nu} \cdot -m)\rangle_{L^2(\R)}
=
\delta_{(j,k),(\nu,m)} \quad (j,k,\nu,m \in \Z)
\end{eqnarray*}
of scaling functions
$(\psi^0,\widetilde{\psi}^0)$
and associated wavelets
$(\psi^1,\widetilde{\psi}^1)$.
Notice that the latter includes that,
for all $f\in L^2(\rn)$,
\begin{align*}
f
&=
\sum_{j,k\in {\mathbb Z}}
2^{jn}
\langle f, \psi^1(2^j \cdot -k)\rangle_{L^2(\R)} \widetilde{\psi}^1(2^j \cdot -k)\\
&=
\sum_{j,k\in {\mathbb Z}}
2^{jn}
\langle f, \widetilde{\psi}^1(2^j \cdot -k) \rangle_{L^2(\R)}\psi^1(2^j \cdot -k)\\
&=
\sum_{k \in {\mathbb Z}}
\langle f, \psi^0(\cdot -k)\rangle_{L^2(\R)} \widetilde{\psi}^0(\cdot -k)
+
\sum_{(j,k)\in {\mathbb Z}_+ \times {\mathbb Z}}
2^{jn}
\langle f, \psi^1(2^j \cdot -k)\rangle_{L^2(\R)} \widetilde{\psi}^1(2^j \cdot -k)\\
&=
\sum_{k \in {\mathbb Z}}
\langle f, \widetilde{\psi}^0(\cdot -k) \rangle_{L^2(\R)}\psi^0(\cdot -k)
+
\sum_{(j,k)\in {\mathbb Z}_+ \times {\mathbb Z}}
2^{jn}
\langle f, \widetilde{\psi}^1(2^j \cdot -k) \rangle_{L^2(\R)}\psi^1(2^j \cdot -k)
\end{align*}
holds true in $L^2({\mathbb R})$.
We construct a basis in $L^2(\rn)$ by using the well-known tensor product
procedure. Set $E := \{0, 1\}^n \setminus \{(0,\ldots, 0)\}$.
We need to consider the tensor products
$$
\Psi^{\bf c} := \otimes_{j=1}^n \psi^{c_j}\quad\mbox{and}\quad
\widetilde{\Psi}^{\bf c} := \otimes_{j=1}^n \widetilde{\psi}^{c_j}
$$
for ${\bf c}:=(c_1,\cdots, c_n) \in \{0,1\}^n$.
The following result is well known for orthonormal wavelets;
see, for example, \cite{codafe92} and \cite[Section 5.1]{wo97}.
However, it is straightforward to prove it for biorthogonal wavelets.
Moreover, it can be arranged so that the functions
$\psi^0, \psi^1, \widetilde{\psi}^0, \widetilde{\psi}^1$
have compact supports.

\begin{lemma}\label{l4.4}
Suppose that a biorthogonal system $\{\Psi^{\bf c},\widetilde\Psi^{\bf c}\}_{{\bf c}\in E}$
is given as above.
Then for every $f \in L^2(\rn)$,
\begin{eqnarray*}
&&f=
\sum\limits_{{\bf c}\in \{0,1\}^n}\sum_{k\in \Z^n}
\langle f,\widetilde{\Psi}^{\bf c}(\cdot -k)\rangle_{L^2(\rn)}
\Psi^{\bf c}(\cdot -k)\\
&&\hs\hs
+
\sum_{{\bf c}\in E}
\sum_{j = 0}^{\infty}
\sum_{k\in \Z^n}
\langle f,2^{jn/2}\widetilde{\Psi}^{\bf c}(2^j \cdot -k)\rangle_{L^2(\rn)}
2^{jn/2}\Psi^{\bf c}(2^j \cdot -k)
\end{eqnarray*}
with convergence in $L^2({\mathbb R}^n)$.
\end{lemma}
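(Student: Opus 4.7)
The plan is to derive the $n$-dimensional expansion from the given 1D biorthogonal system by exploiting the Hilbert-space tensor-product identification $L^2(\rn) \simeq L^2(\R)^{\hat\otimes n}$.

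First, I would extract from the four given 1D identities the underlying multiresolution structure. Setting $V_j := \overline{\mathrm{span}}\{\psi^0(2^j\cdot-k):k\in\Z\}$ and $W_j := \overline{\mathrm{span}}\{\psi^1(2^j\cdot-k):k\in\Z\}$ (together with duals $\widetilde V_j, \widetilde W_j$), the biorthogonality relations combine with the hypothesized expansions to yield the biorthogonal direct sum $L^2(\R) = V_0 \oplus \bigoplus_{j\ge 0} W_j$ and the refinement $V_{j+1} = V_j \oplus W_j$. The cross-vanishing $\langle \psi^0(\cdot-k), \widetilde\psi^1(\cdot-m)\rangle = 0$ for all $k,m\in\Z$, consistent with the biorthogonal projection $V_1 \to V_0$ along $W_0$, is extracted by inserting $f=\psi^0(\cdot-m)$ into the 1D wavelet expansion.

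Next I would apply the 1D decomposition in each of the $n$ variables, beginning with simple tensors $f = f_1\otimes\cdots\otimes f_n$. Distributing over the binary splitting $V_0 \oplus \bigoplus_{j\ge 0} W_j$ in each factor produces a sum indexed by $n$-tuples recording, for each coordinate, either the scaling-function contribution or a wavelet contribution $W_{j_i}$. Regrouping summands by their common dilation scale and encoding the per-factor choice by a vector ${\bf c}\in\{0,1\}^n$ (with $c_i=0$ for scaling, $c_i=1$ for wavelet), the resulting expansion matches the two sums in the lemma: the coarse-scale part rearranges, via $V_1=V_0\oplus W_0$, into a sum over $\{\Psi^{\bf c}(\cdot-k)\}_{{\bf c},k}$ spanning $V_1^{\otimes n}$; the finer-scale contributions, indexed by $(j,{\bf c},k)$ with ${\bf c}\in E$, pick up the normalization $2^{jn/2}$ from the product of $n$ one-dimensional $2^{j/2}$ factors. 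The $n$-dimensional biorthogonality $\langle \Psi^{\bf c}(2^j\cdot-k), \widetilde\Psi^{{\bf c}'}(2^{j'}\cdot-m)\rangle = \delta_{(j,k,{\bf c}),(j',m,{\bf c}')}$ follows from Fubini and the 1D biorthogonality.

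Finally, I would extend from simple tensors to general $f\in L^2(\rn)$ by density. The coefficient functionals $f\mapsto \langle f, \widetilde\Psi^{\bf c}(2^j\cdot-k)\rangle$ are uniformly bounded in $f$, and the partial sums of the expansion depend $L^2$-continuously on $f$ by Bessel-type estimates inherited from the Riesz-basis property of the 1D system. Since finite linear combinations of simple tensors are $L^2$-dense in $L^2(\rn)$, the identity propagates to all of $L^2(\rn)$, with unconditional $L^2$ convergence of the double sum. The main obstacle I anticipate is the combinatorial bookkeeping in the regrouping step: ensuring that the multi-index expansion from the iterated tensor product decomposition collapses precisely into the two sums of the statement, with the correct dilations $2^j\cdot-k$, correct ranges of ${\bf c}$, and the normalization $2^{jn/2}$; once this accounting is settled, the rest of the argument is the standard tensor-product extension of a biorthogonal MRA.
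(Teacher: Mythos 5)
Your overall strategy (reduce to the one-dimensional biorthogonal expansion, tensorize via $L^2(\rn)\simeq L^2(\R)^{\otimes n}$, then extend by density) is the intended one; the paper offers no proof of this lemma beyond the citation to Cohen--Daubechies--Feauveau and to Wojtaszczyk, so the whole content of a proof is exactly the tensorization step. That step, as you describe it, does not work. If you expand each factor $f_i$ of a simple tensor by the full one-dimensional decomposition $L^2(\R)=V_0\oplus\bigoplus_{j\ge0}W_j$ and then distribute, the resulting summands are products $\psi^{c_1}(2^{j_1}x_1-k_1)\cdots\psi^{c_n}(2^{j_n}x_n-k_n)$ with \emph{independent} scales $j_1,\dots,j_n$ in the different coordinates. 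This is the hyperbolic (pure tensor-product) wavelet system; its elements with unequal $j_i$ are simply not among the functions $2^{jn/2}\Psi^{\bf c}(2^j\cdot-k)$ appearing in the lemma, and no ``regrouping by common dilation scale'' converts, say, an element of $W_0\otimes W_5$ into finitely many isotropic wavelets at one scale. The two systems span the same space, but passing between the expansions requires re-expanding every low-scale factor inside $V_j$ via the refinement relation, i.e.\ a genuine (infinite) change of basis, not a rearrangement of terms. So the ``combinatorial bookkeeping'' you flag as the main obstacle is not bookkeeping at all; it is the point where the argument currently fails.

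The standard repair is to telescope the tensor powers of the partial-sum projections rather than distribute the full expansions. Let $P_j$ denote the biorthogonal projection onto $V_j$ (so $P_jf=\sum_k\langle f,\widetilde\psi^0(2^j\cdot-k)\rangle\,2^{j}\psi^0(2^j\cdot-k)$ up to normalization) and $Q_j:=P_{j+1}-P_j$ the detail projection, which by the hypothesized 1D identities is synthesis against $\{\psi^1(2^j\cdot-k)\}_k$ of the coefficients against $\{\widetilde\psi^1(2^j\cdot-k)\}_k$. Then on $L^2(\rn)$,
\begin{equation*}
P_J{}^{\otimes n}
=P_0{}^{\otimes n}+\sum_{j=0}^{J-1}\left[(P_j+Q_j)^{\otimes n}-P_j{}^{\otimes n}\right]
=P_0{}^{\otimes n}+\sum_{j=0}^{J-1}\sum_{{\bf c}\in E}R^j_{c_1}\otimes\cdots\otimes R^j_{c_n},
\end{equation*}
where $R^j_0:=P_j$ and $R^j_1:=Q_j$, and each operator $R^j_{c_1}\otimes\cdots\otimes R^j_{c_n}$ is precisely $f\mapsto\sum_k\langle f,2^{jn/2}\widetilde\Psi^{\bf c}(2^j\cdot-k)\rangle\,2^{jn/2}\Psi^{\bf c}(2^j\cdot-k)$. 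Letting $J\to\infty$, with $\sup_J\|P_J\|<\infty$ (uniform boundedness, since $P_Jg\to g$ for every $g\in L^2(\R)$) to pass from simple tensors to all of $L^2(\rn)$, gives the lemma; your closing density argument is fine once this is in place. One further caution: read literally, the displayed identity counts the level-zero detail terms ${\bf c}\in E$ twice (once in the first sum over all of $\{0,1\}^n$ and once at $j=0$ in the second sum); the telescoping produces the consistent version in which either the first sum is restricted to ${\bf c}=(0,\dots,0)$ or the second sum starts at $j=1$, so you should not expect your computation to reproduce the two sums exactly as printed.
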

Notice that the above lemma covers the theory of wavelets
(see, for example, \cite{dau,hw,m,wo97} for the elementary facts)
in that this reduces to a theory of wavelets
when $\psi^0=\widetilde{\psi}^0$ and $\psi^1=\widetilde{\psi}^1$.
In
what follows we state conditions on the smoothness, the decay, and
the number of vanishing moments for the wavelets $\psi^1, \widetilde{\psi}^1$ and
the respective scaling functions $\psi^0, \widetilde{\psi}^0$ in order to make them
suitable for our function spaces.

Recall first that
$\az_1,\az_2,\az_3,\delta,\gamma,\tau$
are given in Definition \ref{d3.1}.
Suppose that the integers $K,L,N$ satisfy
\begin{equation}\label{4.103}
L>\alpha_3+\delta+n-1+\gamma-n\tau+\alpha_1,
\end{equation}
\begin{equation}\label{4.104}
N>L+\az_3+\dz+2n
\end{equation}
and
\begin{equation}\label{4.105}
K+1>\alpha_2+n\tau, \,
L+1>\alpha_1.
\end{equation}
Assume that the $C^K(\R)$-functions
$\psi^0,\psi^1$ satisfy that, for all $\alpha \in \Z_+$ with $\alpha \le K$,
\begin{equation}\label{4.106}
|\partial^{\alpha}\psi^0(t)|
+
|\partial^{\alpha}\psi^1(t)|
\lesssim
(1+|t|)^{-N}, \quad\quad t \in {\mathbb R},
\end{equation}
and that
\begin{equation}\label{4.107}
\int_{\rn} t^{\bz}\psi^1(t)\,dt
=0
\end{equation}
for all $\bz \in \Z_+$ with $\bz \le L$. Similarly, the integers
$\widetilde{K}, \widetilde{L}, \widetilde{N}$ are supposed to satisfy
\begin{equation}\label{4.108}
\widetilde{L}>\alpha_3+2\delta+n-1+\gamma+\max(n/2,(\az_2-\gamma)_+),
\end{equation}
\begin{equation}\label{4.109}
\widetilde{N}>\widetilde{L}+\az_3+2\dz+2n
\end{equation}
and
\begin{equation}\label{4.110}
\widetilde{K}+1>\az_1+\gamma, \,
\widetilde{L}+1>\max(n/2,(\az_2-\gamma)_+).
\end{equation}

Let now the $C^{\widetilde{K}}(\R)$-functions
$\widetilde{\psi}^0$ and $\widetilde{\psi}^1$
satisfy that, for all $\alpha \in \Z_+$ with $\alpha \le \widetilde{K}$,
\begin{equation}\label{4.111}
|\partial^{\alpha}\widetilde{\psi}^0(t)|+
|\partial^{\alpha}\widetilde{\psi}^1(t)|
\lesssim
(1+|t|)^{-\widetilde{N}},
\quad\quad t \in {\mathbb R}
\end{equation}
and that, for all $\bz \in \Z_+$ with $\bz \le \widetilde{L}$,
\begin{equation}\label{4.112}
\int_{\mathbb R} t^{\bz}\widetilde{\psi}^1(t)\,dt
=0.
\end{equation}

Assume, in addition, that
\begin{equation}\label{4.1130}
\widetilde{K}+1 \ge
\widetilde{L}>2a+n\tau, \,
\widetilde{N}>a+n.
\end{equation}

Observe that (\ref{4.106}) and (\ref{4.107})
correspond to the decay condition and the moment condition
of $\psi^0$ and $\psi^1$
in Definition \ref{d4.2}, respectively. Let us now define
the weight sequence
\begin{equation}\label{Wj}
  W_j(x)
:=
[w^*_j(x)]^{-1} \wedge 2^{jn/2}
\in
\cw^{\az_3+\delta}_{\max(n/2,(\az_2+\tau-\gamma)_+),(\az_1+\gamma-\tau)_+}\,,
\end{equation}
where $x\in\rn$ and $w_j^{\ast}$ is defined as in \eqref{wjstar}.

If $a\in(n+\alpha_3,\fz)$,
using Proposition \ref{p8.1} below,
which can be proved independently,
together with the translation invariance
of $L^\infty(\rn)$ and $L^1(\rn)$,
we have
\begin{equation}\label{equivalence}
\|f\|_{B^\rho_{\infty,\infty,a}(\rn)}
\sim
\sup_{j \in \Z_+}\|\rho_j (\varphi_j*f)\|_{L^\infty(\rn)}, \,
\|f\|_{B^\rho_{1,1,a}(\rn)}
\sim
\sum_{j=0}^\infty \|\rho_j (\varphi_j*f)\|_{L^1(\rn)}
\end{equation}
for all $f \in \cs'(\rn)$ and $\rho \in \cw_{\az_1,\az_2}^{\az_3}$.
See also \cite[Theorem 3.6]{LSUYY1} for a similar conclusion,
where the case when $\rho$ is independent of $j$
is treated.
Thus, if we assume that
\begin{equation}\label{4.11111}
a>n+\alpha_3+\delta,
\end{equation}
we then see that
$$
\|f\|_{B^{W^{-1}}_{\infty,\infty,a}(\rn)}
\sim
\sup_{j \in \Z_+}\|W_j{}^{-1} (\varphi_j*f)\|_{L^\infty(\rn)}, \,
\|f\|_{B^W_{1,1,a}(\rn)}
\sim
\sum_{j=0}^\infty \|W_j (\varphi_j*f)\|_{L^1(\rn)}.
$$
Observe that
(\ref{4.108}), \eqref{4.109} and (\ref{4.110})
guarantee that $B_{1,1,a}^{W}(\rn)$ has the atomic/molecular
characterizations; see Theorem \ref{t4.1} and the assumptions
\eqref{4.103}, \eqref{4.104} and \eqref{4.105}.
Indeed, in $A^{w,\tau}_{\cl,q,a}(\rn)$,
we need to choose
\[
A=B, \,
\cl(\rn)=L^1(\rn), \,
q=1, \,
w=W,
\tau=0,
\]
and hence,
we have to replace
$(\alpha_1,\alpha_2,\alpha_3)$
with
\[
(\max(n/2,(\az_2-\gamma)_+),\ \az_1+\gamma,\az_3+\delta)
\]
and $N_0$ should be bigger than $n$.
Therefore, \eqref{4.103}, \eqref{4.104} and \eqref{4.105}
become (\ref{4.108}), \eqref{4.109} and (\ref{4.110}), respectively.

In view of Propositions \ref{p3.3} and \ref{p3.4}, we
define,
for every ${\bf c}\in \{0,1\}^n$,
a sequence $\{\lambda^{\bf c}_{j,k}\}_{j\in \Z_+,k\in \Z^n}$
by
\begin{equation}\label{sequ}
 \lambda^{\bf c}_{j,k}:=\lambda^{\bf c}_{j,k}(f):= \langle f,2^{jn/2}
\widetilde{\Psi}^{\bf c}(2^j \cdot
-k)\rangle,\quad  j\in \Z_+,\ k\in \Z^n\,,
\end{equation}
for a fixed $f\in B^{W^{-1}}_{\infty,\infty}(\R^n)$.
In particular, when ${\bf c} = 0$, we let $\lambda^{\bf c}_{j,k}=0$ whenever $j\in\nn$.

It should be noticed
that $K$ and $\widetilde{K}$ can differ
as was the case with \cite{ru11}.

As can be seen from the textbook \cite{codafe92},
the existences of $\psi^0,\psi^1,\widetilde{\psi}^0,\widetilde{\psi}^1$
are guaranteed.
Indeed, we just construct
$\psi^0,\psi^1$
which are sufficiently smooth.
Accordingly,
we obtain
$\widetilde{\psi}^0,\widetilde{\psi}^1$
which are almost as smooth as $\psi^0,\psi^1$.
Finally,
we obtain $\{\Phi^{\bf c},\widetilde{\Phi}^{\bf c}\}_{{\bf c} \in E}$.

\begin{theorem}\label{t4.3} Let $\alpha_1, \alpha_2, \alpha_3,
\tau \in [0,\infty)$ and $q\in(0,\,\fz]$. Suppose that
$\mathcal{L}(\R^n)$ satisfies
$(\mathcal{L}1)$ through $(\mathcal{L}6)$, $w\in{\mathcal
W}^{\alpha_3}_{\alpha_1,\alpha_2}$ and $a\in
(N_0+\alpha_3,\infty)$, where $N_0$ is as in $(\mathcal{L}6)$. Choose
scaling functions $(\psi^0,\widetilde{\psi}^0) \in C^K(\R) \times
C^{\widetilde{K}}(\R)$ and associated wavelets $(\psi^1, \widetilde{\psi}^1)
\in C^{K}(\R) \times C^{\widetilde{K}}(\R)$ satisfying \eqref{4.106}, \eqref{4.107}
\eqref{4.111}, \eqref{4.112}, where $L,\widetilde{L}, N,\widetilde{N},K,\widetilde{K}
 \in \Z_+$ are chosen according to \eqref{4.103},
\eqref{4.104}, \eqref{4.105}, \eqref{4.108},
\eqref{4.109}, \eqref{4.110}, \eqref{4.1130} and \eqref{4.11111}. For
every $f \in B^{W^{-1}}_{\infty,\infty}(\R^n)$ and every ${\bf c} \in
\{0,1\}^n$, the sequences $\{\lambda^{\bf c}_{j,k}\}_{j\in \Z_+,k\in \Z^n}$ in \eqref{sequ} are well defined.

{\rm (i)} The sequences $\{\lambda^{\bf c}_{j,k}\}_{j\in
\Z_+,k\in \Z^n}$ belong to $a^{w,\tau}_{\mathcal{L},q,a}(\rn)$ for all ${\bf c} \in
\{0,1\}^n$ if and only if $f\in
A^{w,\tau}_{\mathcal{L},q,a}(\rn)$. Indeed, for all $f\in
B^{W^{-1}}_{\infty,\infty}(\rn)$, the following holds true:
\begin{eqnarray*}
&&\sum\limits_{{\bf c} \in \{0,1\}^n}
\|\{\delta_{j,0}\langle f,\widetilde{\Psi}^{\bf c}(\cdot -k)\rangle
\}_{j \in \Z_+, \,k \in \Z^n}\|_{a^{w,\tau}_{\cl,q,a}(\rn)}\\
&&\quad+
\sum_{{\bf c} \in E}
\|\{
\langle f,2^{jn/2}
\widetilde{\Psi}^{\bf c}(2^j \cdot -k)\rangle
\}_{j \in \Z_+, \, k \in \Z^n}\|_{a^{w,\tau}_{\cl,q,a}(\rn)}
\sim \|f\|_{A^{w,\tau}_{\cl,q,a}(\rn)}\,,
\end{eqnarray*}
where ``$\infty$'' is admitted in both sides.

{\rm (ii)}\,
If $f \in A^{w,\tau}_{\cl,q,a}(\rn)$, then
\begin{equation}\label{exp}
f(\cdot) =
\sum\limits_{{\bf c} \in \{0,1\}^n}\sum_{k\in \Z^n}
\lambda^{\bf c}_{0,k}
\Psi^{\bf c}(\cdot -k)
+
\sum_{{\bf c}\in E}
\sum_{j=0}^{\infty}
\sum_{k\in \Z^n}
\lambda^{\bf c}_{j,k}
2^{jn/2}\Psi^{\bf c}(2^j \cdot -k)
\end{equation}
in $\cs'(\rn)$. The equality \eqref{exp} holds true in
$A^{w,\tau}_{\cl,q,a}(\rn)$ if and only if the finite sequences are dense in
$a^{w,\tau}_{\cl,q,a}(\rn)$.
\end{theorem}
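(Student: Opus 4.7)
The plan is to reduce Theorem \ref{t4.3} to the molecular decomposition result of Theorem \ref{t4.1}, combined with direct estimates on the wavelet coefficients via the Peetre maximal function. Throughout, the duality pairing \eqref{sequ} is understood via Propositions \ref{p3.3} and \ref{p3.4}: since $A^{w,\tau}_{\cl,q,a}(\rn)\hookrightarrow B^{w^*}_{\infty,\infty,a}(\rn)\hookrightarrow B^{W^{-1}}_{\infty,\infty}(\rn)$ (using $W_j^{-1}\ge w_j^*$ and \eqref{4.11111} together with \eqref{equivalence}), and since the analysis wavelets $\widetilde{\Psi}^{\bf c}$ are sufficiently smooth and decaying to lie in $B^W_{1,1}(\rn)$, the coefficients $\lambda^{\bf c}_{j,k}$ are well defined.

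For the synthesis direction of (i), I will verify that, up to a universal multiplicative constant, the dilated and translated synthesis wavelets $2^{jn}\Psi^{\bf c}(2^j\cdot-k)$ are $(K,L)$-molecules associated with $Q_{jk}$ in the sense of Definition \ref{d4.2}. The support/decay condition \eqref{4.106} gives the required $(1+|x-c_{Q_{jk}}|/\ell(Q_{jk}))^{-N}$ bound on derivatives up to order $K$, with $K$, $L$, $N$ satisfying \eqref{4.103}--\eqref{4.105}; the vanishing moments \eqref{4.107} (for ${\bf c}\in E$, that is $j\ge 1$) give the moment condition. Writing $\mu^{\bf c}_{j,k}:=2^{-jn/2}\lambda^{\bf c}_{j,k}$ so that the series in \eqref{exp} becomes a molecular sum $\sum_{j,k}\mu^{\bf c}_{j,k}\,2^{jn}\Psi^{\bf c}(2^j\cdot-k)$, the bookkeeping of the $2^{jn/2}$ factor matches precisely the definition of the sequence space $a^{w,\tau}_{\cl,q,a}(\rn)$, and Theorem \ref{t4.1} (together with Lemma \ref{l4.1} for $\cs'$-convergence) yields
\[
\Bigl\|\sum_{{\bf c},j,k}\lambda^{\bf c}_{j,k}2^{jn/2}\Psi^{\bf c}(2^j\cdot-k)\Bigr\|_{A^{w,\tau}_{\cl,q,a}(\rn)}\ls \sum_{\bf c}\|\{\lambda^{\bf c}_{j,k}\}\|_{a^{w,\tau}_{\cl,q,a}(\rn)}.
\]

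For the analysis direction, I will estimate $\lambda^{\bf c}_{j,k}$ pointwise by a Peetre-type quantity. The strategy is to expand $f$ using the Calder\'on reproducing formula $f=\Phi*\Phi*f+\sum_{\nu\ge 1}\varphi_\nu*\varphi_\nu*f$ in $\cs'(\rn)$ and use Lemma \ref{l2.4} to bound $|\langle\varphi_\nu*\varphi_\nu*f,\,2^{jn/2}\widetilde{\Psi}^{\bf c}(2^j\cdot-k)\rangle|$ by products of the form $2^{-|\nu-j|(\widetilde{L}+1)}$ (or $2^{-|\nu-j|(\widetilde{K}+1)}$) against $(\varphi_\nu^*f)_a$, using \eqref{4.111}, \eqref{4.112} and \eqref{4.1130}. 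After summing geometric series via Lemma \ref{l2.3}, one obtains
\[
|\lambda^{\bf c}_{j,k}|\ls \inf_{x\in Q_{jk}}\Bigl[(\varphi_j^*f)_a(x)+\text{lower-order tail in }j\Bigr],
\]
so that $\sum_{k}|\lambda^{\bf c}_{j,k}|\chi_{Q_{jk}}$ is pointwise dominated (up to a convolution-type sum) by $(\varphi_j^*f)_a$, which yields $\sum_{\bf c}\|\{\lambda^{\bf c}_{j,k}\}\|_{a^{w,\tau}_{\cl,q,a}(\rn)}\ls \|f\|_{A^{w,\tau}_{\cl,q,a}(\rn)}$.

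The main obstacle is part (ii), namely showing that the formal reconstruction \eqref{exp} actually converges in $\cs'(\rn)$ to $f$ itself (not merely to some element of $A^{w,\tau}_{\cl,q,a}(\rn)$). My plan is a duality argument: test against an arbitrary $\zeta\in\cs(\rn)$. Using Theorem \ref{t3.4} and the embedding $\cs(\rn)\hookrightarrow B^W_{1,1}(\rn)$ (verifiable from \eqref{equivalence} and the growth of $W$), apply the $L^2$ biorthogonal expansion of Lemma \ref{l4.4} to $\zeta$. The resulting wavelet series for $\zeta$ converges in $B^W_{1,1}(\rn)$, because this space admits atomic/molecular decomposition with parameters $(\widetilde K,\widetilde L,\widetilde N)$ satisfying \eqref{4.108}--\eqref{4.110} by the choice of $W$ in \eqref{Wj}, so the finite partial sums are dense. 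Then the pairing $\langle f,\zeta\rangle$ (interpreted through Proposition \ref{p3.4}) can be rearranged by absolute convergence into $\sum_{{\bf c},j,k}\lambda^{\bf c}_{j,k}\langle 2^{jn/2}\Psi^{\bf c}(2^j\cdot-k),\zeta\rangle$, yielding \eqref{exp} in $\cs'(\rn)$. The final assertion about convergence in $A^{w,\tau}_{\cl,q,a}(\rn)$ then follows from a standard truncation/density argument in the sequence space $a^{w,\tau}_{\cl,q,a}(\rn)$, using the synthesis bound established in the first step.
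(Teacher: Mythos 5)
Your proposal is correct and follows essentially the same route as the paper: the synthesis direction treats the (suitably normalized) wavelets as molecules and invokes Lemma \ref{l4.2}/Theorem \ref{t4.1}; the analysis direction bounds the coefficients through the Calder\'on reproducing formula, Lemma \ref{l2.4} and the Peetre maximal function, then sums with Lemma \ref{l2.3}; and the identification of the limit in (ii) is done by the same duality argument against $B^W_{1,1}(\rn)$ via Lemma \ref{l4.4} and Proposition \ref{p3.4}. The only caveat is your bookkeeping of the $2^{jn/2}$ factors against the $L^\infty$-normalized molecules of Definition \ref{d4.2}, which should be checked carefully, though the paper itself is equally terse on this point.
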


\begin{proof}
First, we show that if $f \in A^{w,\tau}_{\cl,q,a}(\rn)$,
then \eqref{exp} holds true in $\cs'(\rn)$.
By (\ref{equivalence}) and (\ref{Wj}),
together with Proposition \ref{p3.3},
the space $A^{w,\tau}_{\cl,q,a}(\rn)$
can be embedded into
$B^{W^{-1}}_{\infty,\infty}(\rn)$
which coincides with
$B^{W^{-1}}_{\infty,\infty,a}(\rn)$, when $a$ satisfies \eqref{4.11111}.
Fixing ${\bf c} \in \{0,1\}^n$ and letting
$\{\lambda^{\bf c}_{j,k}\}_{j\in \Z_+,k\in \Z^n}$ be as
in \eqref{sequ}, we define
\begin{equation}\label{exp2}
 f^{\bf c}(\cdot):= \sum_{k\in \Z^n}
\lambda^{\bf c}_{0,k}
\Psi^{\bf c}(\cdot -k)
+
\sum_{j=1}^{\infty}
\sum_{k\in \Z^n}
\lambda^{\bf c}_{j,k}
2^{jn/2}\Psi^{\bf c}(2^j \cdot -k)\,.
\end{equation}
Noticing that $\Psi^{\bf c}(2^j\cdot-k)$ is a molecule module
a multiplicative constant, by Lemma \ref{l4.2},
we know that $f^{\bf c}\in A^{w,\tau}_{\mathcal{L},q,a}(\rn)$ and
\begin{eqnarray*}
\|f^{\bf c}\|_{A^{w,\tau}_{\cl,q,a}(\rn)}
&&\ls\|\{\delta_{j,0}\lz^{\bf c}_{0,k}
\}_{j \in \Z_+, \,k \in \Z^n}\|_{a^{w,\tau}_{\cl,q,a}(\rn)}+
\|\{
\lz^{\bf c}_{j,k}
\}_{j \in \Z_+, \, k \in \Z^n}\|_{a^{w,\tau}_{\cl,q,a}(\rn)}\\
&&\sim\|\{\delta_{j,0}\langle f^{\bf c},\widetilde{\Psi}^{\bf c}(\cdot -k)\rangle
\}_{j \in \Z_+, \,k \in \Z^n}\|_{a^{w,\tau}_{\cl,q,a}(\rn)}\\
&&\hs+
\|\{
\langle f^{\bf c},2^{jn/2}
\widetilde{\Psi}^{\bf c}(2^j \cdot -k)\rangle
\}_{j \in \Z_+, \, k \in \Z^n}\|_{a^{w,\tau}_{\cl,q,a}(\rn)}
\,.
\end{eqnarray*}
Then we further see that $f^{\bf c} \in
B^{W^{-1}}_{\infty,\infty}(\rn)$.

We now show
that $f = \sum_{{\bf c} \in \{0,1\}^n} f^{\bf c} $.
Indeed,
for any
\begin{equation}\label{4.113}
F \in B^W_{1,1}({\mathbb R}^n)\
(\hookrightarrow
B^{n/2}_{1,1}({\mathbb R}^n)
\hookrightarrow
L^2({\mathbb R}^n)),
\end{equation}
if letting
$
\lambda^{\bf c}_{0,k}(F)
=
\langle F,\Psi^{\bf c}(\cdot-k) \rangle$
for all $k\in\zz^n$, and
$\lambda^{\bf c}_{j,k}(F)
=2^{jn/2}
\langle F,\Psi^{\bf c}(2^j\cdot-k) \rangle$
for all $j\in\zz_+$ and $k\in\zz^n$,
then by Theorem \ref{t4.1}, we conclude that
\begin{eqnarray}\label{eq:120209-1}
\sum_{{\bf c} \in E}
\|\{
\lambda^{\bf c}_{j,k}(F)
\}_{j \in \Z_+, \, k \in \Z^n}\|_{b^{W}_{1,1}(\rn)}
\lesssim \|F\|_{B^{W}_{1,1}(\rn)}.
\end{eqnarray}
From Lemma \ref{l4.4} and \eqref{4.113}, we deduce that the identity
\begin{equation}\label{exp3}
F(\cdot) =
\sum\limits_{{\bf c} \in \{0,1\}^n}\sum_{k\in \Z^n}
\lambda^{\bf c}_{0,k}(F)
\widetilde{\Psi}^{\bf c}(\cdot -k)
+
\sum_{{\bf c}\in E}
\sum_{j=1}^{\infty}
\sum_{k\in \Z^n}
\lambda^{\bf c}_{j,k}(F)
2^{jn/2}\widetilde{\Psi}^{\bf c}(2^j \cdot -k)
\end{equation}
holds true in $L^2(\R^n)$; moreover,
by virtue of (\ref{eq:120209-1}),
we also see that
(\ref{exp3})
holds true in the space $B^W_{1,1}(\rn)$.

Let $g := \sum_{{\bf c} \in
 \{0,1\}^n} f^{\bf c}$.
Then we see that $g \in
B^{W^{-1}}_{\infty,\infty}(\rn)$.
By Propositions \ref{p3.4}, together with \eqref{exp2} and \eqref{exp3}, we see
that $g(F) = f(F)$ for all $F\in B^W_{1,1}(\rn)$, which gives $g = f$ immediately.
Thus, \eqref{exp} holds true in $\cs'(\rn)$.

Thus, by Lemma \ref{l4.2} again, we obtain the ``$\gs$'' relation in (i).
Once we prove the ``$\ls$'' relation in (i),
then we immediately obtain the second conclusion in (ii),
that is, \eqref{exp} holds true in $A_{\cl, q,a}^{w,\tau}(\rn)$
if and only if the finite sequences are dense in $a_{\cl, q,a}^{w,\tau}(\rn)$.

So it remains to prove the ``$\lesssim$'' relation in (i) which
concludes the proof.
Returning to the definition of
the coupling
$\langle f,2^{jn/2}
\widetilde{\Psi}^{\bf c}(2^j \cdot -k)\rangle$
(see Proposition \ref{p3.4}),
we have
$$
\langle f,2^{jn/2}
\widetilde{\Psi}^{\bf c}(2^j \cdot -k)\rangle
=
2^{jn/2} \langle \Phi*f,\Phi*
\widetilde{\Psi}^{\bf c}(2^j \cdot -k)\rangle
+
\sum_{\ell=1}^\infty
2^{jn/2}\langle \varphi_{\ell}*f,\varphi_{\ell}*
\widetilde{\Psi}^{\bf c}(2^j \cdot -k) \rangle.
$$
In view of Lemma \ref{l2.4},
we see that, for all $j,\ \ell\in\zz_+$, $k\in\zz^n$ and $x\in\rn$,
\begin{eqnarray*}
|2^{jn}\varphi_{\ell}*\widetilde{\Psi}^{\bf c}(2^j x -k)|
\ls
2^{\min(j,\ell)n-|\ell-j|\widetilde{L}}
(1+2^{\min(j,\ell)}|x-2^{-j}k|)^{-\widetilde{N}}
\end{eqnarray*}
and hence, if $\widetilde{N}>a+n$ (see (\ref{4.1130})), by
the fact that $
2^l \le 2^{\min(j,l)+|j-l|}$,
we derive
\begin{eqnarray*}
&&2^{jn}|\langle \varphi_{\ell}*f,\varphi_{\ell}*
\widetilde{\Psi}^{\bf c}(2^j \cdot -k) \rangle|\\
&&\hs\ls 2^{\min(j,\ell)n-|\ell-j|\widetilde{L}}
\int_{\rn}\frac{|\varphi_{\ell}*f(x)|}
{(1+2^{\min(j,\ell)}|x-2^{-j}k|)^{\widetilde{N}}}\,dx\\
&&\hs\ls 2^{\min(j,\ell)n-|\ell-j|\widetilde{L}}
\sup_{y \in \rn}\frac{|\varphi_{\ell}*f(y)|}{(1+2^\ell|y-2^{-j}k|)^a}
\int_{\rn}\frac{(1+2^\ell|x-2^{-j}k|)^a}
{(1+2^{\min(j,\ell)}|x-2^{-j}k|)^{\widetilde{N}}}\,dx\\
&&\hs\ls 2^{-|\ell-j|(\widetilde{L}-a)}
\sup_{y \in \rn}\frac{|\varphi_{\ell}*f(y)|}{(1+2^\ell|y-2^{-j}k|)^a}
\end{eqnarray*}
with the implicit positive constant independent of $j,\ \ell,\ k$ and $f$.
A similar estimate holds true for $2^{jn/2} \langle \Phi*f,\Phi*
\widetilde{\Psi}^{\bf c}(2^j \cdot -k)\rangle$.
Consequently, by $(1+|y|)(1+|z|) \le(1+|y+z|)$ for all $y,\ z\in\rn$,
we see that, for all $x\in\rn$,
\begin{eqnarray*}
&&\sum_{k \in \Z^n}
\sum_{\ell=1}^\infty
2^{jn}|\langle \varphi_{\ell}*f,\varphi_{\ell}*
\widetilde{\Psi}^{\bf c}(2^j \cdot -k) \rangle|
\chi_{Q_{jk}}(x)\\
&&\hs\ls\sum_{k \in \Z^n}
\sum_{\ell=1}^\infty 2^{-|\ell-j|(\widetilde{L}-a)}
\sup_{y \in \rn}\frac{|\varphi_{\ell}*f(y)|}{(1+2^\ell|y-2^{-j}k|)^a}
\chi_{Q_{jk}}(x)\\
&&\hs\ls\sum_{k \in \Z^n}
\sum_{\ell=1}^\infty 2^{-|\ell-j|(\widetilde{L}-2a)}
\sup_{y \in \rn}\frac{|\varphi_{\ell}*f(y)|}{(1+2^\ell|y-x|)^a}
\chi_{Q_{jk}}(x)\\
&&\hs\ls
\sum_{\ell=1}^\infty 2^{-|\ell-j|(\widetilde{L}-2a)}
\sup_{y \in \rn}\frac{|\varphi_{\ell}*f(y)|}{(1+2^\ell|y-x|)^a},
\end{eqnarray*}
which, together with Lemma \ref{l2.3},
implies the ``$\lesssim$''-inequality in (i).
This finishes the proof of Theorem \ref{t4.3}.
\end{proof}

\begin{remark}\label{r4.13}
(i) As is the case with \cite{ru11},
bi-orthogonal systems in Theorem \ref{t4.3} can be replaced
by frames.

(ii) The wavelet characterizations for some special cases of the function spaces
in Theorem \ref{t4.3} are known; see, for example,
\cite{i11,is09,it,wo97}.
\end{remark}

\section{Pointwise multipliers and function spaces on domains}
\label{s4.5}

\subsection{Pointwise multipliers}

Let us recall that
$\cb^m(\rn):=
\cap_{\|\az\|_{1} \le m}
\left\{f \in C^m(\rn)\,:\
\partial^{\alpha}f \in L^\infty(\rn)\right\}$ for all $m\in\zz_+$.
As an application of the atomic decomposition
in the regular case,
we can establish the following result.

\begin{theorem}\label{t4.4}
Let $\alpha_1, \alpha_2, \alpha_3, \tau \in [0,\infty)$ and $q\in(0,\,\fz]$.
Suppose that $w\in\star-{\mathcal W}^{\alpha_3}_{\alpha_1,\alpha_2}$.
Assume, in addition, that {\rm \eqref{3.35}} holds true.
Then there exists $m_0 \in {\mathbb N}$
such that,
for all $m \in \cb^{m_0}(\rn)$,
the mapping $f \in \cs(\rn) \mapsto m f \in \cb^{m_0}(\rn)$
extends naturally to $A^{w,\tau}_{\cl,q,a}(\rn)$ so that it satisfies that
\begin{gather*}
\|m f\|_{B^{w,\tau}_{\cl,q,a}(\rn)}
\lesssim_m
\|f\|_{B^{w,\tau}_{\cl,q,a}(\rn)}
\quad (f \in B^{w,\tau}_{\cl,q,a}(\rn)),\\
\|m f\|_{F^{w,\tau}_{\cl,q,a}(\rn)}
\lesssim_m
\|f\|_{F^{w,\tau}_{\cl,q,a}(\rn)}
\quad (f \in F^{w,\tau}_{\cl,q,a}(\rn)),\\
\|m f\|_{\cn^{w,\tau}_{\cl,q,a}(\rn)}
\lesssim_m
\|f\|_{\cn^{w,\tau}_{\cl,q,a}(\rn)}
\quad (f \in \cn^{w,\tau}_{\cl,q,a}(\rn))
\end{gather*}
and
\begin{gather*}
\|m f\|_{\ce^{w,\tau}_{\cl,q,a}(\rn)}
\lesssim_m
\|f\|_{\ce^{w,\tau}_{\cl,q,a}(\rn)}
\quad (f \in \ce^{w,\tau}_{\cl,q,a}(\rn)).
\end{gather*}
\end{theorem}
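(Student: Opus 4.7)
The plan is to exploit the regular-case atomic decomposition of Theorem \ref{t4.2}, where the absence of a moment condition ($L=-1$) makes multiplication by a smooth function preserve the atomic structure. Note that, for $f\in\cs'(\rn)$ and $m$ a bounded smooth function, the product $mf$ is defined a priori by duality via $\langle mf,\varphi\rangle:=\langle f,m\varphi\rangle$ for all $\varphi\in\cs(\rn)$, so what is at stake is only the quantitative norm estimate. I will treat $B^{w,\tau}_{\cl,q,a}(\rn)$ as a model case, the remaining three being parallel.

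First I would fix an integer $K\in\nn$ large enough so that the assumptions $K+1>\alpha_2+n\tau$ in \eqref{4.22} are satisfied, and declare $m_0:=K$. Given $f\in B^{w,\tau}_{\cl,q,a}(\rn)$, the analysis part of Theorem \ref{t4.2} produces a family of $(K,-1)$-atoms $\{\mathfrak{A}_{jk}\}_{j\in\zz_+,k\in\zz^n}$ and a sequence $\lambda=\{\lambda_{jk}\}$ with
\[
f=\sum_{j\in\zz_+}\sum_{k\in\zz^n}\lambda_{jk}\mathfrak{A}_{jk}\quad\text{in }\cs'(\rn),\qquad
\|\lambda\|_{b^{w,\tau}_{\cl,q,a}(\rn)}\lesssim\|f\|_{B^{w,\tau}_{\cl,q,a}(\rn)}.
\]
For each $m\in\cb^{m_0}(\rn)$, the product $m\mathfrak{A}_{jk}$ again has support in $3Q_{jk}$, and the Leibniz rule gives
\[
\lf\|\partial^{\vec{\alpha}}(m\mathfrak{A}_{jk})\r\|_{L^\infty(\rn)}
\le C_K\lf(\sum_{\|\vec{\beta}\|_1\le m_0}\|\partial^{\vec\beta}m\|_{L^\infty(\rn)}\r)|Q_{jk}|^{-\|\vec\alpha\|_1/n}
\]
for all $\|\vec\alpha\|_1\le K$; the moment condition is vacuous for $L=-1$. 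Hence $m\mathfrak{A}_{jk}$ is (up to a multiplicative constant $C_K\|m\|_{\cb^{m_0}(\rn)}$) a $(K,-1)$-atom.

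Since multiplication by $m$ is continuous on $\cs'(\rn)$, the identity $mf=\sum_{j,k}\lambda_{jk}(m\mathfrak{A}_{jk})$ holds in $\cs'(\rn)$. Applying the synthesis part of Theorem \ref{t4.2} to this expansion yields
\[
\|mf\|_{B^{w,\tau}_{\cl,q,a}(\rn)}
\lesssim C_K\|m\|_{\cb^{m_0}(\rn)}\|\lambda\|_{b^{w,\tau}_{\cl,q,a}(\rn)}
\lesssim\|m\|_{\cb^{m_0}(\rn)}\|f\|_{B^{w,\tau}_{\cl,q,a}(\rn)},
\]
which is the desired estimate. The same argument, with the relevant sequence space and the corresponding synthesis statements from Theorem \ref{t4.2}, handles $F^{w,\tau}_{\cl,q,a}(\rn)$, $\cn^{w,\tau}_{\cl,q,a}(\rn)$ and $\ce^{w,\tau}_{\cl,q,a}(\rn)$.

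The main obstacle I foresee is the structural compatibility with Theorem \ref{t4.2}: its hypotheses include not only $w\in\star-\cw^{\alpha_3}_{\alpha_1,\alpha_2}$ (granted here), but also the size requirements \eqref{4.21} and \eqref{4.22}, which involve $\alpha_1>n\tau$ and $\alpha_1>\alpha_3+\delta+n+\gamma-n\tau$. When these fail, one cannot apply Theorem \ref{t4.2} directly; the remedy is to lift by $(1-\Delta)^{s/2}$ via Theorem \ref{t3.2} to a shifted weight $w^{(s)}$ for $s$ large enough that the conditions hold, execute the multiplier argument there, and transfer back. This transfer requires controlling the commutator $[(1-\Delta)^{s/2},m]$ by elementary pseudo-differential calculus, which is what ultimately determines how large $m_0$ must be taken; Proposition \ref{p3.2} provides the needed machinery. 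Everything else in the proof is routine in view of the Leibniz estimate and the atomic/molecular machinery already developed.
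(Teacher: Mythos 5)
Your base-case argument (use the regular decomposition of Theorem \ref{t4.2}, observe that multiplying a $(K,-1)$-atom by $m$ yields another $(K,-1)$-atom via the Leibniz rule, then resynthesize) is exactly the mechanism the paper uses when the weight already satisfies \eqref{4.21} and \eqref{4.22}; the paper phrases it with the wavelet expansion of Theorem \ref{t4.3} and molecular synthesis, but that is a cosmetic difference. Two points, however, are genuine problems. First, your a priori definition $\langle mf,\varphi\rangle:=\langle f,m\varphi\rangle$ does not make sense: for $m\in\cb^{m_0}(\rn)$ with only finitely many bounded derivatives, $m\varphi$ is not a Schwartz function, so the product is \emph{not} defined by duality on all of $\cs'(\rn)$. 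The product must be \emph{defined} through the decomposition (as the paper does via the canonical wavelet coefficients), and one then has to worry about consistency of that definition; this is precisely why the theorem speaks of a ``natural extension'' of $f\mapsto mf$ from $\cs(\rn)$.

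The more serious gap is your reduction of the general case to the regular case. You propose to lift by $(1-\Delta)^{s/2}$ and to ``control the commutator $[(1-\Delta)^{s/2},m]$ by elementary pseudo-differential calculus,'' citing Proposition \ref{p3.2}. That proposition treats only $x$-independent Fourier multipliers and cannot handle this commutator, whose symbol depends on $x$, has order $s-1$ rather than $0$, and is only finitely differentiable in $x$ since $m\in\cb^{m_0}(\rn)$; the pseudo-differential results of Section \ref{s5} (which require $C^\infty$ symbols in $S^0_{1,\mu}(\rn)$ and logically come later) do not apply either. The paper avoids commutators with nonlocal operators altogether: it argues by induction on the integer $m_0(w)$ measuring how far $w$ is from satisfying \eqref{4.21}--\eqref{4.22}, using the exact first-order identity $f=(1-\Delta)^{-1}f-\sum_{j=1}^n\partial_j{}^2(1-\Delta)^{-1}f$ and the product rule, so that the only ``commutator'' terms are $(\partial_j m)\,\partial_j\lf((1-\Delta)^{-1}f\r)$, which are handled by the induction hypothesis together with Theorem \ref{t3.2} and Proposition \ref{p3.2} applied to honest Fourier multipliers such as $\partial_j(1-\Delta)^{-1}$. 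Without this (or an equivalent device), your transfer back from the lifted space is not justified.
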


\begin{proof}
Due to similarity,
we only deal with the case for $B^{w,\tau}_{\cl,q,a}(\rn)$.

Let $\az_1,\az_2$ and $\az_3$
fulfill (\ref{4.21}) and (\ref{4.22}).
We show the desired conclusion
by induction.
Let $m_0(w)$ be the smallest number such that
$w^* \in {\mathcal W}^{\az_3}_{\az_1\az_2}$,
where $w^*_\nu(x):=2^{m_0(w)\nu}w_\nu(x)$ for all $\nu\in\zz_+$
and $x\in\rn$.
If $m_0(w)$ can be taken $0$,
then we use Theorem \ref{t4.3} to find
that it suffices to define
\[
(m f)(\cdot):=
\sum\limits_{{\bf c} \in \{0,1\}^n}\sum_{k\in \Z^n}
\lambda^{\bf c}_{0,k}
m(\cdot)\Psi^{\bf c}(\cdot -k)
+
\sum_{{\bf c}\in E}
\sum_{j=0}^{\infty}
\sum_{k\in \Z^n}
\lambda^{\bf c}_{j,k}
m(\cdot)2^{jn/2}\Psi^{\bf c}(2^j \cdot -k),
\]
which, together with Theorem \ref{t4.2} and the fact
that $m(\cdot)2^{jn/2}\Psi^{\bf c}(2^j \cdot -k)$
is a molecule modulo a multiplicative constant, implies the desired
conclusion in this case.
Assume now that our theorem is true
for the class of weights $m_0(w) \in \{0,1,\cdots, N\}$, where $N\in\zz_+$.
For $m_0(w)=N+1$,
let us write
$f=(1-\Delta)^{-1}f-\sum_{j=1}^n \partial_j{}^2(1-\Delta)^{-1}f.$
Then we have
\begin{align*}
m f
&=
m (1-\Delta)^{-1}f-\sum_{j=1}^n
m \partial_j{}^2(1-\Delta)^{-1}f\\
&=
m (1-\Delta)^{-1}f
-\sum_{j=1}^n \partial_j\lf(m \partial_j(1-\Delta)^{-1}f\r)
+\sum_{j=1}^n (\partial_j m) \partial_j\lf((1-\Delta)^{-1}f\r).
\end{align*}
Notice that
$(1-\Delta)^{-1}f$
and
$\partial_j((1-\Delta)^{-1}f)$
belong to the space $B^{w^{**},\tau}_{\cl,q,a}(\rn)$,
where we write $w^{**}_\nu(x):=2^{\nu}w_\nu(x)$ for all $\nu\in\zz_+$
and $x\in\rn$. Notice that $m_0(w^{**})=m_0(w)-1$.
Consequently, by the induction assumption,
we have
\begin{eqnarray*}
\|m (1-\Delta)^{-1}f\|_{B^{w,\tau}_{\cl,q,a}(\rn)}
&&\le
\|m (1-\Delta)^{-1}f\|_{B^{w^{**},\tau}_{\cl,q,a}(\rn)}\\
&&\lesssim_m
\|(1-\Delta)^{-1}f\|_{B^{w^{**},\tau}_{\cl,q,a}(\rn)}
\lesssim_m
\|f\|_{B^{w,\tau}_{\cl,q,a}(\rn)}.
\end{eqnarray*}
Analogously, by Proposition \ref{p3.2} and Theorem \ref{t3.2}, we have
\begin{eqnarray*}
\|\partial_j\lf(m \partial_j(1-\Delta)^{-1}f\r)\|_{B^{w,\tau}_{\cl,q,a}(\rn)}
&&\lesssim
\|m \partial_j(1-\Delta)^{-1}f\|_{B^{w^{**},\tau}_{\cl,q,a}(\rn)}\\
&&\lesssim_m
\|\partial_j(1-\Delta)^{-1}f\|_{B^{w^{**},\tau}_{\cl,q,a}(\rn)}
\lesssim_m
\|f\|_{B^{w,\tau}_{\cl,q,a}(\rn)}
\end{eqnarray*}
and
\begin{eqnarray*}
\|(\partial_j m) \partial_j\lf((1-\Delta)^{-1}f\r)\|_{B^{w,\tau}_{\cl,q,a}(\rn)}
&&\le
\|(\partial_j m) \partial_j\lf((1-\Delta)^{-1}f\r)\|_{B^{w^{**},\tau}_{\cl,q,a}(\rn)}\\
&&\lesssim_m
\|\partial_j\lf((1-\Delta)^{-1}f\r)\|_{B^{w,\tau}_{\cl,q,a}(\rn)}
\lesssim_m\|f\|_{B^{w,\tau}_{\cl,q,a}(\rn)},
\end{eqnarray*}
which completes the proof of Theorem \ref{t4.4}.
\end{proof}

\subsection{Function spaces on domains}

In what follows, let  $\Omega$ be an open set of $\rn$,
$\cd(\boz)$ denote the {\it space of all
infinitely differentiable functions with compact support in $\boz$}
endowed with the inductive topology, and $\cd'(\boz)$ its {\it
topological dual} with the weak-$\ast$ topology which is called the
{\it space of distributions on $\boz$}.

Now we are oriented to defining
the spaces on $\Omega$.
Recall that a natural mapping
\[
f \in \cs'(\rn) \mapsto f|\Omega \in \cd'(\Omega)
\]
is well defined.

\begin{definition}\label{d4.5}
Let $s\in\rr$, $a\in (0,\fz)$,
$\alpha_1, \alpha_2, \alpha_3, \tau\in[0,\fz)$ and $q\in(0,\,\fz]$.
Let $w \in \cw_{\alpha_1,\alpha_2}^{\alpha_3}$.

{\rm (i)}
The \emph{space} $B^{w,\tau}_{\cl,q,a}(\Omega)$
is defined to be the set of all $f \in \cd'(\Omega)$
such that $f=g|\Omega$ for some $g \in B^{w,\tau}_{\cl,q,a}(\rn)$.
The \emph{norm} is given by
\begin{equation*}
\|f\|_{B^{w,\tau}_{\cl,q,a}(\Omega)}
:=
\inf\{\|g\|_{B^{w,\tau}_{\cl,q,a}(\rn)}\,:\,
g \in B^{w,\tau}_{\cl,q,a}(\rn), \, f=g|\Omega\}.
\end{equation*}%

{\rm (ii)}
The \emph{space} $F^{w,\tau}_{\cl,q,a}(\Omega)$
is defined to be the set of all $f \in \cd'(\Omega)$
such that $f=g|\Omega$ for some $g \in F^{w,\tau}_{\cl,q,a}(\rn)$.
The \emph{norm} is given by
\begin{equation*}
\|f\|_{F^{w,\tau}_{\cl,q,a}(\Omega)}
:=
\inf\{\|g\|_{F^{w,\tau}_{\cl,q,a}(\rn)}\,:\,
g \in F^{w,\tau}_{\cl,q,a}(\rn), \, f=g|\Omega\}.
\end{equation*}%

{\rm (iii)}
The \emph{space} $\cn^{w,\tau}_{\cl,q,a}(\Omega)$
is defined to be the set of all $f \in \cd'(\Omega)$
such that $f=g|\Omega$ for some $g \in \cn^{w,\tau}_{\cl,q,a}(\rn)$.
The \emph{norm} is given by
\begin{equation*}
\|f\|_{\cn^{w,\tau}_{\cl,q,a}(\Omega)}
:=
\inf\{\|g\|_{\cn^{w,\tau}_{\cl,q,a}(\rn)}\,:\,
g \in \cn^{w,\tau}_{\cl,q,a}(\rn), \, f=g|\Omega\}.
\end{equation*}%

{\rm (iv)}
The \emph{space} $\ce^{w,\tau}_{\cl,q,a}(\Omega)$
is defined to be the set of all $f \in \cd'(\Omega)$
such that $f=g|\Omega$ for some $g \in \ce^{w,\tau}_{\cl,q,a}(\rn)$.
The \emph{norm} is given by
\begin{equation*}
\|f\|_{\ce^{w,\tau}_{\cl,q,a}(\Omega)}
:=
\inf\{\|g\|_{\ce^{w,\tau}_{\cl,q,a}(\rn)}\,:\,
g \in \ce^{w,\tau}_{\cl,q,a}(\rn), \, f=g|\Omega\}.
\end{equation*}%
\end{definition}

A routine argument shows that
$B^{w,\tau}_{\cl,q,a}(\Omega)$,
$F^{w,\tau}_{\cl,q,a}(\Omega)$,
$\ce^{w,\tau}_{\cl,q,a}(\Omega)$ and
$\cn^{w,\tau}_{\cl,q,a}(\Omega)$
are all quasi-Banach spaces.

Here we are interested in bounded Lipschitz domains.
Let $\kz:\rr^{n-1} \to \R$ be a Lipschitz function.
Then define
$$\Omega_{\kz,+}:=
\{(x',x_{n})\in \R^{n}\,:\,x_{n}>\kz(x')\}$$
and
$$\Omega_{\kz,-}:=
\{(x',x_{n})\in \R^{n}\,:\,x_{n}<\kz(x')\}.$$
Let $\sigma \in S_{n}$ be a \emph{permutation}.
Then define
\begin{align*}
\Omega_{\kz,\pm;\sigma}&:=
\{(x',x_{n})\in \R^{n}\,:\,\sigma(x',x_{n})
\in \Omega_{\kz,\pm}\}.
\end{align*}
By a \emph{Lipschitz domain},
we mean an open set of the form
\[
\bigcup_{j=1}^J \sigma_j(\Omega_{f_j,+})
\cap
\bigcup_{i=1}^I \tau_i(\Omega_{g_i,-}),
\]
where
the functions $f_1,f_2,\cdots,f_J$ and $g_1,g_2,\cdots,g_I$
are all Lipschitz functions
and
the mappings $\sigma_1,\sigma_2,\cdots,\sigma_J$
and $\tau_1,\tau_2,\cdots,\tau_K$
belong to $S_{n}$.
With Theorem \ref{t4.4},
and a partition of unity,
without loss of generality, we may assume that
$\Omega:=\Omega_{\kz,\pm}$ for some \emph{Lipschitz function $\kz:\rn \to \R$}.
Furthermore, by symmetry,
we only need to deal with the case when $\Omega:=\Omega_{\kz,+}$.

To specify, we let $L$ be the \emph{positive Lipschitz constant} of $\kz$, namely,
the smallest number $L$ such that
for all $x',\ y'\in\rr^{n-1}$,
$|\kz(x')-\kz(y')| \le L|x'-y'|$.
Also, we let $K$ be the \emph{cone} given by
$$K:=\{(x',x_{n}) \in \R^{n}\,:\,L|x'|>-x_{n}\}.$$

We choose $\Psi \in \cd(\rn)$ so that
${\rm supp}\Psi \subset K$ and $\int_\rn \Psi(x)\,dx \ne 0$. Let
$$\Phi(x):=\Psi(x)-\Psi_{-1}(x)=\Psi(x)-2^{-n}\Psi(2^{-1}x)$$
for all $x\in\rn$.
Let $L \gg 1$ and choose
$\eta,\ \psi \in C^\infty_{\rm c}(K)$
so that
$\varphi:=\eta-\eta_{-1}$ satisfies
the moment condition of order $L$ and that
$
\psi*\Psi+\sum_{j=1}^\infty \varphi^j*\Phi^j=\delta
$
in $\cs'(\rn)$.
Define ${\mathcal M}^\Omega_{2^{-j},a}f(x)$, for all $j\in\zz_+$,
$f \in \cd'(\Omega)$ and $x\in\rn$, by
\begin{align*}
{\mathcal M}^\Omega_{2^{-j},a}f(x)
:=
\begin{cases}
\displaystyle
\sup_{y \in \Omega}\dfrac{|\Psi*f(y)|}{(1+|x-y|)^a},
&j=0;\\
\displaystyle
\sup_{y \in \Omega}\dfrac {|\Phi^j*f(y)|}{(1+2^j|x-y|)^a},
&j \in\nn
\end{cases}\\
=
\begin{cases}
\displaystyle
\sup_{y \in \Omega}
\dfrac{|\langle f,\Psi(y-\cdot) \rangle|}{(1+|x-y|)^a},
&j=0;\\
\displaystyle
\sup_{y \in \Omega}
\dfrac{|\langle f,\Phi^j(y-\cdot)\rangle|}{(1+2^j|x-y|)^a},
&j \in\nn.
\end{cases}
\end{align*}
Observe that this definition makes sense.
More precisely, the couplings
$\langle f,\Psi(y-\cdot) \rangle$
and
$\langle f,\Phi^j(y-\cdot) \rangle$
are well defined, because
$\Psi(y-\cdot)$
and
$\Phi^j(y-\cdot)$
have compact support and, moreover, are supported on $\Omega$
as the following calculation shows:
\[
{\rm supp}(\Psi(y-\cdot)), \,
{\rm supp}(\Phi^j(y-\cdot))
\subset
y-K
\subset
\{y+z\,:\,|z_n|>K|z'|\}
\subset
\Omega.
\]
Here we used the fact that $\Omega$
takes the form of $\Omega:=\Omega_{\kappa,+}$
to obtain the last inclusion.

In what follows, the mapping $(x',x_{n}) \mapsto (x',2\kz(x')-x_{n})
=:(y',y_n)$ is called to induce an \emph{isomorphism} of $\cl(\rn)$
with equivalent norms, if $f\in\cl(\rn)$ if and only if
$g_f(y',y_n):=f(x',2\kz(x')-x_{n})\in\cl(\rn)$
and, moreover, $\|f\|_{\cl(\rn)}\sim\|g_f\|_{\cl(\rn)}$.

Now we aim here to prove the following theorem.

\begin{theorem}
\label{t4.5}
Let $\Omega:=\Omega_{\kz,+}$ be as above
and assume that the reflection
$$
\iota:(x',x_{n}) \mapsto (x',2\kz(x')-x_{n})
$$
induces an isomorphism of $\cl(\rn)$ with equivalent norms.
Then

{\rm (i)} $f \in B^{w,\tau}_{\cl,q,a}(\Omega)$ if and only if
$f\in D'(\Omega)$ and
\begin{align*}
\lf\|\lf\{\chi_\Omega {\mathcal M}^\Omega_{2^{-j},a}f\r\}_{j\in\zz_+}
\r\|_{\ell^q(\cl^w_\tau(\rn,\zz_+))}<\fz;
\end{align*}
and there exists a positive constant $C$,
independent of $f$, such that
\begin{align}
\label{4.31}
C^{-1}\| f \|_{B^{w,\tau}_{\cl,q,a}(\Omega)}\le
\lf\|\lf\{\chi_\Omega {\mathcal M}^\Omega_{2^{-j},a}f\r\}_{j\in\zz_+}
\r\|_{\ell^q(\cl^w_\tau(\rn,\zz_+))}\le C\| f \|_{B^{w,\tau}_{\cl,q,a}(\Omega)};
\end{align}

{\rm (ii)}
$f \in F^{w,\tau}_{\cl,q,a}(\Omega)$ if and only if
$f\in D'(\Omega)$ and
\begin{align*}
\lf\|\lf\{\chi_\Omega {\mathcal M}^\Omega_{2^{-j},a}f\r\}_{j\in\zz_+}
\r\|_{\cl^w_\tau(\ell^q(\rn,\zz_+))}<\fz
\end{align*}
and there exists a positive constant $C$,
independent of $f$, such that
\begin{align*}
C^{-1}\| f \|_{F^{w,\tau}_{\cl,q,a}(\Omega)}
\le
\lf\|\lf\{\chi_\Omega {\mathcal M}^\Omega_{2^{-j},a}f\r\}_{j\in\zz_+}
\r\|_{\cl^w_\tau(\ell^q(\rn,\zz_+))}\le C^{-1}\| f \|_{F^{w,\tau}_{\cl,q,a}(\Omega)};
\end{align*}

{\rm (iii)} $f \in \cn^{w,\tau}_{\cl,q,a}(\Omega)$ if and only if
$f\in D'(\Omega)$ and
\begin{align*}
\lf\|\lf\{\chi_\Omega {\mathcal M}^\Omega_{2^{-j},a}f\r\}_{j\in\zz_+}
\r\|_{\ell^q(\cn\cl^w_\tau(\rn,\zz_+))}<\fz
\end{align*}
and there exists a positive constant $C$,
independent of $f$, such that
\begin{align*}
C^{-1}\| f \|_{\cn^{w,\tau}_{\cl,q,a}(\Omega)}
\le
\lf\|\lf\{\chi_\Omega {\mathcal M}^\Omega_{2^{-j},a}f\r\}_{j\in\zz_+}
\r\|_{\ell^q(\cn\cl^w_\tau(\rn,\zz_+))}\le
C\| f \|_{\cn^{w,\tau}_{\cl,q,a}(\Omega)};
\end{align*}

{\rm (iv)}  $f \in \ce^{w,\tau}_{\cl,q,a}(\Omega)$ if and only if
$f\in D'(\Omega)$ and
\begin{align*}
\lf\|\lf\{\chi_\Omega {\mathcal M}^\Omega_{2^{-j},a}f\r\}_{j\in\zz_+}
\r\|_{\ce\cl^w_\tau(\ell^q(\rn,\zz_+))}<\fz
\end{align*}
and there exists a positive constant $C$,
independent of $f$, such that
\begin{align*}
C^{-1}\| f \|_{\ce^{w,\tau}_{\cl,q,a}(\Omega)}\le
\lf\|\lf\{\chi_\Omega {\mathcal M}^\Omega_{2^{-j},a}f\r\}_{j\in\zz_+}
\r\|_{\ce\cl^w_\tau(\ell^q(\rn,\zz_+))}\le
C\| f \|_{\ce^{w,\tau}_{\cl,q,a}(\Omega)}.
\end{align*}
\end{theorem}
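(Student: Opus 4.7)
The plan is to treat all four spaces together, since the argument depends only on the structure of the Peetre-type quantity $\cm^\Omega_{2^{-j},a}f$; the four outer norms ($\ell^q(\cl^w_\tau)$, $\ell^q(\cn\cl^w_\tau)$, $\cl^w_\tau(\ell^q)$, $\ce\cl^w_\tau(\ell^q)$) play an identical role in the estimates and can be handled uniformly. For each of (i)--(iv) I prove the two inequalities separately: the ``analysis'' (right-hand) bound uses any admissible extension of $f$, while the ``synthesis'' (left-hand) bound requires constructing an explicit extension of $f$ through the one-sided Calder\'on reproducing formula of this subsection.

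For the analysis bound, fix any $g\in A^{w,\tau}_{\cl,q,a}(\rn)$ with $f=g|\Omega$. For every $y\in\Omega$ the supports of $\Psi(y-\cdot)$ and $\Phi^j(y-\cdot)$ lie in $y-K\subset\Omega$, so the dualities reduce to convolutions: $\langle f,\Psi(y-\cdot)\rangle=\Psi*g(y)$ and $\langle f,\Phi^j(y-\cdot)\rangle=\Phi^j*g(y)$. Consequently, for all $x\in\rn$ and $j\in\nn$,
\[
\chi_\Omega(x)\cm^\Omega_{2^{-j},a}f(x)\le \sup_{y\in\rn}\frac{|\Phi^j*g(y)|}{(1+2^j|x-y|)^a},
\]
and similarly with $\Phi^j$ replaced by $\Psi$ when $j=0$. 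Upon choosing $\Psi$ so that $\widehat{\Psi}(0)\ne 0$ and taking $L$ large, the pair $(\Psi,\Phi)$ fulfills the hypotheses \eqref{3.7} of Theorem \ref{t3.1}, whose application yields
\[
\bigl\|\{\chi_\Omega\cm^\Omega_{2^{-j},a}f\}_{j\in\zz_+}\bigr\|_{X}\ls \|g\|_{A^{w,\tau}_{\cl,q,a}(\rn)}
\]
in each of the four outer norms $X$. Infimizing over all admissible $g$ gives the right-hand inequalities in (i)--(iv).

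For the synthesis bound I build an explicit extension of $f$. Set $F_0(y):=\chi_\Omega(y)\langle f,\Psi(y-\cdot)\rangle$ and, for $j\in\nn$, $F_j(y):=\chi_\Omega(y)\langle f,\Phi^j(y-\cdot)\rangle$; both are well defined by the support inclusion. Put $g:=\psi*F_0+\sum_{j=1}^\infty \varphi^j*F_j$. Since $\psi$ and $\varphi^j$ are supported in $K$ while $y-K\subset\Omega$ for $y\in\Omega$, the cut-off $\chi_\Omega$ hidden in $F_j$ is inert for $x\in\Omega$, and the reproducing identity $\psi*\Psi+\sum_{j\ge1}\varphi^j*\Phi^j=\delta$ forces $g|_\Omega=f$. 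It therefore suffices to verify $\|g\|_{A^{w,\tau}_{\cl,q,a}(\rn)}\ls \bigl\|\{\chi_\Omega\cm^\Omega_{2^{-j},a}f\}_{j\in\zz_+}\bigr\|_{X}$. To do this I compute the Peetre maximal function $(\vz_\nu^\ast g)_a$ for a standard kernel $\vz_\nu$ as in \eqref{1.2}: by almost-orthogonality (Lemma \ref{l2.4} applied to $\vz_\nu$ against $\psi$ and $\varphi^j$, both of which have high smoothness, fast decay, and many vanishing moments thanks to the choice $L\gg 1$),
\[
|(\vz_\nu*\varphi^j)(z)|\ls 2^{-|\nu-j|L'}\frac{2^{n\min(\nu,j)}}{(1+2^{\min(\nu,j)}|z|)^{N'}},
\]
with $L',N'$ as large as needed. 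Combined with the trivial bound $|F_j(y)|\le\chi_\Omega(y)\cm^\Omega_{2^{-j},a}f(y)$, this allows a pointwise estimate of $(\vz_\nu^\ast g)_a(x)$ by a double sum in $j$ of Peetre-type averages of $\chi_\Omega\cm^\Omega_{2^{-j},a}f$. Lemma \ref{l2.3} collapses the summation in $j$ into the desired outer norm, in the spirit of the synthesis step of Lemma \ref{l4.2}.

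The principal technical obstacle is the control of $(\vz_\nu^\ast g)_a(x)$ at points $x\in\rn\setminus\Omega$, where no intrinsic quantity of $f$ is available. Here the hypothesis that the reflection $\iota:(x',x_n)\mapsto(x',2\kappa(x')-x_n)$ induces an isomorphism of $\cl(\rn)$ enters crucially: the cone structure of $\supp\psi$ and $\supp\varphi^j$ together with the fast decay of $\vz_\nu*\varphi^j$ yields a pointwise domination of $(\vz_\nu^\ast g)_a(x)$ by the analogous Peetre-type quantity evaluated at the reflected point $\iota(x)\in\Omega$, and the norm equivalence $\|h\circ\iota\|_{\cl(\rn)}\sim\|h\|_{\cl(\rn)}$ transfers the ensuing estimate on $\rn\setminus\Omega$ back to one on $\Omega$. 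With this transfer in hand, the remaining summations in $\nu$ and $j$ are absorbed by Lemma \ref{l2.3}, completing the proof of the left-hand inequality in each of (i)--(iv) and hence of Theorem \ref{t4.5}.
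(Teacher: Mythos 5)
Your proposal is correct and follows essentially the same route as the paper: the upper bound via the local-means characterization (Corollary \ref{c3.1}), and the lower bound by building an extension through the one-sided, cone-supported reproducing formula $\psi*\Psi+\sum_j\varphi^j*\Phi^j=\delta$, estimating it with almost-orthogonality and Lemma \ref{l2.3}, and finally transferring the norm from $\rn\setminus\Omega$ to $\Omega$ via the pointwise domination $\cm^\Omega_{2^{-j},a}f(x)\ls\cm^\Omega_{2^{-j},a}f(\iota(x))$ together with the assumed isomorphism induced by $\iota$ on $\cl(\rn)$. The only cosmetic difference is that you assemble the extension directly from the intrinsic data $\chi_\Omega\langle f,\Phi^j(y-\cdot)\rangle$, whereas the paper starts from a near-optimal extension $G$ and observes that the relevant convolutions on $\Omega$ depend only on $f$; the mechanism is identical.
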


\begin{proof}
By similarity, we only give the proof of (i).
In any case the second inequality of \eqref{4.31}
follows from Corollary \ref{c3.1}.
Let us prove the first inequality of \eqref{4.31}.
Let $f \in B^{w,\tau}_{\cl,q,a}(\Omega)$.
Choose $G \in B^{w,\tau}_{\cl,q,a}(\rn)$
so that
\[
G_\Omega=f, \quad
\|f\|_{B^{w,\tau}_{\cl,q,a}(\Omega)}
\le
\|G\|_{B^{w,\tau}_{\cl,q,a}(\rn)}
\le
2
\|f\|_{B^{w,\tau}_{\cl,q,a}(\Omega)}.
\]
Define $F$ by
\begin{equation*}
F:=\psi*\Psi*G+
\sum_{j=1}^\infty \varphi^j*\Phi^j*G.
\end{equation*}

It is easy to see that $F|\Omega=f$ and $F \in \cs'(\rn)$,
since
$\psi*\Psi+\sum_{j=1}^\infty \varphi^j*\Phi^j=\delta$ in $\cs'(\rn)$.
Then
$\| f \|_{B^{w,\tau}_{\cl,q,a}(\Omega)}\ls\| F \|_{B^{w,\tau}_{\cl,q,a}(\R^n)}$.
To show the first inequality of \eqref{4.31},
it suffices to show that
$$\| F \|_{B^{w,\tau}_{\cl,q,a}(\R^n)}\ls\lf\|\lf\{\chi_\Omega {\mathcal M}^\Omega_{2^{-j},a}f\r\}_{j\in\zz_+}
\r\|_{\ell^q(\cl^w_\tau(\rn,\zz_+))}.$$
Since
$$\| F \|_{B^{w,\tau}_{\cl,q,a}(\R^n)}\ls \lf\|\lf\{{\mathcal M}^\Omega_{2^{-j},a}f\r\}_{j\in\zz_+}
\r\|_{\ell^q(\cl^w_\tau(\rn,\zz_+))},$$
we only need to prove that
$$ \lf\|\lf\{{\mathcal M}^\Omega_{2^{-j},a}f\r\}_{j\in\zz_+}
\r\|_{\ell^q(\cl^w_\tau(\rn,\zz_+))}\ls\lf\|\lf\{\chi_{\Omega}
{\mathcal M}^\Omega_{2^{-j},a}f\r\}_{j\in\zz_+}
\r\|_{\ell^q(\cl^w_\tau(\rn,\zz_+))}.$$
To see this, notice that if
$(x',x_n) \in \Omega$ and
$(y',y_n) \in \Omega$, since $\kz$ is a Lipschitz mapping,
we then conclude that
\begin{align*}
\lefteqn{
|x'-y'|^2+|y_n+x_n-2\kz(x')|^2
}\\
&\hs\sim
|x'-y'|^2
+
|y_n-\kz(y')+x_n-\kz(x')|^2\\
&\hs\sim
|x'-y'|^2+
|y_n-\kz(y')-x_n+\kz(x')|^2
+
|\kz(y')-\kz(x')|^2\\
&\hs\gtrsim
|x'-y'|^2+|y_n-x_n|^2\sim|x-y|^2.
\end{align*}
From this, together with the isomorphism property with equivalent norms
of the transform $(x',x_n) \in \R^n \setminus \Omega
\mapsto (x',2\kz(x')-x_n) \in \Omega$,
we deduce that
$$
\lf\|\lf\{\chi_{\rn\setminus\Omega}
{\mathcal M}^\Omega_{2^{-j},a}f\r\}_{j\in\zz_+}
\r\|_{\ell^q(\cl^w_\tau(\rn,\zz_+))}
\ls\lf\|\lf\{\chi_{\Omega}
{\mathcal M}^\Omega_{2^{-j},a}f\r\}_{j\in\zz_+}
\r\|_{\ell^q(\cl^w_\tau(\rn,\zz_+))},
$$
which further implies
the first inequality of \eqref{4.31}.
This finishes the proof of Theorem \ref{t4.5}.
\end{proof}

To conclude Section \ref{s4.5},
we present two examples concerning Theorem \ref{t4.5}.
\begin{example}\label{e4.1}
It is absolutely necessary to assume that
$(x',x_{n}) \mapsto (x',2\kz(x')-x_{n})$
induces an isomorphism of $\cl(\rn)$ with equivalent norms.
Here is a counterexample which shows this.

Let $n=1$, $\cl(\R):=L^1((1+t\chi_{(0,\infty)}(t))^{-N}\,dt)$
and $w_j(x):=1$ for all $x\in\rr$ and $j\in\zz_+$.
Consider the space $B^{0,0}_{\cl,\infty,2}((0,\infty))$,
whose notation is based on the convention \eqref{3.5}.
A passage to the higher dimensional case is readily done.
In this case the isomorphism is
$t \in \R \mapsto -t \in \R$.
Consider the corresponding maximal operators that for all
$f\in\cd'(0,\fz)$ and $t\in\rr$,
\[
{\mathcal M}^{(0,\infty)}_{1,2}f(t)
=
\sup_{s\in(0,\fz)}
\frac{|\psi*f(s)|}{(1+|t-s|)^2}
\]
and, for $j \in\nn$,
\[
{\mathcal M}^{(0,\infty)}_{2^{-j},2}f(t)
=
\sup_{s\in(0,\fz)}
\frac{|\vz_j*f(s)|}{(1+2^j|t-s|)^2},
\]
where $\psi$ and $\vz$ belong to $C^\infty_c((-2,-1))$
satisfying $\vz=\Delta^L \psi$,
and $\vz_j(t)=2^{j}\vz(2^jt)$ for all $t\in\rr$.
Let $f_0 \in C^\infty_{\rm c}((2,5))$
be a function such that
$\chi_{(3,4)} \le f_0 \le \chi_{(2,5)}$.
Set $f_a(t):=f_0(t-a)$ for all $t\in\rr$ and some $a \gg 1$.
Then, for all $t\in\rr$, we have
\[
{\mathcal M}^{(0,\infty)}_{1,2}f_a(t)
\sim
\frac{1}{(1+|t-a|)^2} \ {\rm and}\
{\mathcal M}^{(0,\infty)}_{2^{-j},2}f_a(t)
\sim
\frac{2^{-2jL}}{(1+|t-a|)^2}.
\]
Consequently, we see that
\[
\left\|\lf\{
\chi_{(0,\fz)}
{\mathcal M}^{(0,\fz)}_{2^{-j},2}f_a\r\}_{j\in\zz^+}
\right\|_{\ell^\fz (\cl(\rr,\zz_+))} \sim
\int_0^\infty \frac{1}{(1+t)^N(1+|t-a|)^2}\,dt.
\]
Let $\rho:\R \to \R$ be a smooth function
such that $\chi_{(8/5,\infty)} \le \rho \le \chi_{(3/2,\infty)}$.
If $f \in B^{0,0}_{\cl,\infty,2}(\rn)$ such that
$f|(0,\infty)=f_a$,
then
$
\|f\|_{B^{0,0}_{\cl,\infty,2}(\rn)}=\|\rho f\|_{B^{0,0}_{\cl,\infty,2}(\rn)}
\lesssim \|f\|_{B^{0,0}_{\cl,\infty,2}(\rn)}
$
by Theorem \ref{t4.4}.
Consequently we have
\begin{equation}\label{4.36}
\|f_a\|_{B^{0,0}_{\cl,\infty,2}(\Omega)}
\sim
\|f_0\|_{B^{0,0}_{\cl,\infty,2}(\rn)}
\sim
\frac{1}{a}.
\end{equation}
Meanwhile,
\begin{eqnarray*}
&&\left\|\lf\{
\chi_{(0,\fz)}
{\mathcal M}^{(0,\fz)}_{2^{-j},2}f_a\r\}_{j\in\zz^+}
\right\|_{\ell^\fz (\cl(\rr,\zz_+))}\\
&&\hs\sim
\int_0^\infty \frac{dt}{(1+t)^N(1+|t-a|)^2}\\
&&\hs\sim
\int_0^{a/2}\frac{dt}{(1+t)^N(1+|t-a|)^2}
+\int_{a/2}^\infty \frac{dt}{(1+t)^N(1+|t-a|)^2}\\
&&\hs\ls
\int_0^{a/2}\frac{dt}{(1+t)^N(1+|a|)^2}
+\int_{a/2}^\infty \frac{dt}{(1+a)^N(1+|t-a|)^2}
\ls \frac{1}{a^2}.
\end{eqnarray*}
In view of the above calculation and (\ref{4.36}),
we see that the conclusion \eqref{4.31} of Theorem \ref{t4.5}
fails unless we assume that
$(x',x_{n+1}) \mapsto (x',2\kz(x')-x_{n+1})$
induces an isomorphism of $\cl(\rn)$.
\end{example}

\begin{example}\label{e5.11}
As examples satisfying the assumption
of Theorem \ref{t4.5},
we can list weak-$L^p$ spaces,
Orlicz spaces and Morrey spaces.
For the detailed discussion of Orlicz spaces and Morrey spaces,
see Section \ref{s9}.
Here we content ourselves with giving the definition
of the norm and checking the assumption
of Theorem \ref{t4.5} for Orlicz spaces and Morrey spaces.

i)
By a \emph{Young function}
we mean a convex homeomorphism
$\Phi:[0,\infty) \to [0,\infty)$.

Given a Young function $\Phi$,
we define the \emph{Orlicz space }$L^{\Phi}(\rn)$
as the set of all measurable functions $f:\rn \to \C$
such that
$$\|f\|_{L^\Phi(\rn)}
:=\inf\left\{\lambda\in(0,\fz)\,:\
\int_{\rn}\Phi\!\left(\frac{|f(x)|}{\lambda}\right)\,dx\le1\right\}
<\infty.$$
Indeed, to check the assumption
of Theorem \ref{t4.5} for weak-$L^p$ spaces
and Orlicz spaces,
we just have to pay attention to the fact
that the Jacobian of the involution $\iota$ is $1$
and hence we can use the formula on the change of variables.

ii)
The \emph{Morrey norm} $\|\cdot\|_{{\mathcal M}^p_u(\rn)}$ is given by
\[
\| f \|_{{\mathcal M}^p_u(\rn)}
:= \sup_{x \in {\mathbb R}^n, \ r\in(0,\fz)}r^{\frac{n}{p}-\frac{n}{u}}
\left[\int_{B(x,r)}|f(y)|^u\,dy\right]^\frac{1}{u},
\]
where $B(x,r)$ denotes a ball centered at $x$ of radius $r\in(0,\fz)$
and $f$ is a measurable function.
Unlike Orlicz spaces,
for Morrey spaces,
we need more observation.
Since $\iota \circ \iota={\rm id}_{\rn}$,
we have only to prove that $\iota$ induces
a bounded mapping on Morrey spaces.
This can be showed as follows:
Let $B(x,r)$ be a ball.
Observe that $|x-y|<r$ implies
$|\iota(x)-\iota(y)|< Dr$,
since $\iota(x)=(x',2\kappa(x')-x_n)$
is a Lipschitz mapping with Lipschitz constant, say, $D$.
Therefore,
$\iota(B(x,r)) \subset B(\iota(x),Dr)$.
Hence we have
\begin{align*}
r^{\frac{n}{p}-\frac{n}{u}}
\left[\int_{B(x,r)}|f(\iota(y))|^u\,dy\right]^\frac{1}{u}
&=
r^{\frac{n}{p}-\frac{n}{u}}
\left[\int_{\iota(B(x,r))}|f(y)|^u\,dy\right]^\frac{1}{u}\\
&\le
r^{\frac{n}{p}-\frac{n}{u}}
\left[\int_{B(\iota(x),Dr)}|f(y)|^u\,dy\right]^\frac{1}{u}\\
&\le
D^{\frac{n}{u}-\frac{n}{p}}
\|f\|_{{\mathcal M}^p_u(\rn)},
\end{align*}
which implies  that $\iota$ induces a bounded mapping
on the Morrey space ${\mathcal M}^p_u(\rn)$
with norm less than or equal to $D^{\frac{n}{u}-\frac{n}{p}}$.
As a result, we see
that Morrey spaces satisfy the assumption
of Theorem \ref{t4.5}.
\end{example}

\section{Boundedness of operators}\label{s5}

Here,
as we announced in Section \ref{s1},
we discuss the boundedness of pseudo-differential operators.

\subsection{Boundedness of Fourier multipliers}
\label{s5.1}

We now refine Proposition \ref{p3.2}.
Throughout Section \ref{s5.1},
we
use a system $(\Phi,\varphi)$ of Schwartz functions satisfying
(\ref{1.1}) and (\ref{1.2}).

For $\ell\in\mathbb{N}$ and $\alpha\in\rr$,
$m\in C^{\ell}(\rr^n\backslash\{0\})$ is assumed to satisfy that,
for all $|\sigma|\leq \ell$,
\begin{equation}\label{fm1}
\sup_{R\in(1,\infty)}\left[R^{-n+2\alpha+2|\sigma|}\int_{R\leq|\xi|<2R}|
\partial_{\xi}^{\sigma}m(\xi)|^2\,d\xi\right]\leq A_{\sigma,1}<\infty
\end{equation}
and
\begin{equation}\label{fm2}
\int_{|\xi|<1}|
\partial_{\xi}^{\sigma}m(\xi)|^2\,d\xi\leq A_{\sigma,2}<\infty.
\end{equation}
The \emph{Fourier multiplier} $T_m$ is defined by setting,
for all $f\in\mathcal{S}(\rr^n)$,
$\widehat{(T_mf)}:=m\,\widehat{f}$.

\begin{lemma}\label{fm-l3.1}
Let $m$ be as in \eqref{fm1} and \eqref{fm2} and $K$ its inverse
Fourier transform. Then $K\in\cs'(\rn).$
\end{lemma}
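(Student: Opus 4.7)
The plan is to show that $m$ itself defines a tempered distribution on $\rn$ via integration against Schwartz functions, and then conclude the claim by invoking the fact that the Fourier transform (and hence its inverse) is a homeomorphism on $\cs'(\rn)$.

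First, I would decompose $\rn$ into the unit ball $B(0,1)$ together with the dyadic annuli $A_k:=\{\xi\in\rn:\ 2^k\le|\xi|<2^{k+1}\}$ for $k\in\zz_+$, and for any test function $\varphi\in\cs(\rn)$ write
\[
|\langle m,\varphi\rangle|
\le
\int_{B(0,1)}|m(\xi)\varphi(\xi)|\,d\xi
+\sum_{k=0}^\infty\int_{A_k}|m(\xi)\varphi(\xi)|\,d\xi.
\]
By the Cauchy-Schwarz inequality and \eqref{fm2} (with $\sigma=0$), the first term is bounded by $A_{0,2}^{1/2}\,|B(0,1)|^{1/2}\,\|\varphi\|_{L^\infty(\rn)}$, which is controlled by a single Schwartz seminorm of $\varphi$.

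For each annular piece I would apply Cauchy-Schwarz again, together with \eqref{fm1} (with $\sigma=0$ and $R=2^k$), to obtain
\[
\int_{A_k}|m(\xi)\varphi(\xi)|\,d\xi
\le\|m\|_{L^2(A_k)}\,\|\varphi\|_{L^2(A_k)}
\lesssim A_{0,1}^{1/2}\,2^{k(n/2-\alpha)}\,\|\varphi\|_{L^2(A_k)}.
\]
Since $\varphi\in\cs(\rn)$, for any $N\in\nn$ there exists a Schwartz seminorm $p_N(\varphi)$ such that $|\varphi(\xi)|\le p_N(\varphi)(1+|\xi|)^{-N}$, which yields $\|\varphi\|_{L^2(A_k)}\lesssim p_N(\varphi)\,2^{k(n/2-N)}$. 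Choosing $N$ large enough so that $N>n-\alpha$ (which only requires $N$ to exceed a constant depending on $n$ and $\alpha$), the series $\sum_{k=0}^\infty 2^{k(n-\alpha-N)}$ converges. Consequently, there exists $N\in\nn$ and $C\in(0,\fz)$, independent of $\varphi$, such that
\[
|\langle m,\varphi\rangle|\le C\,p_N(\varphi)\quad\text{for all } \varphi\in\cs(\rn),
\]
which shows that $m$ represents a continuous linear functional on $\cs(\rn)$, namely $m\in\cs'(\rn)$.

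Finally, since the Fourier transform is a topological isomorphism on $\cs'(\rn)$, the inverse Fourier transform $K=\cf^{-1}(m)$ also belongs to $\cs'(\rn)$, completing the proof. The argument is essentially a routine dyadic decomposition; the only point that requires some care is to use the $L^2$-average hypothesis \eqref{fm1} instead of a pointwise bound, but Cauchy-Schwarz against the rapidly decaying Schwartz function resolves this cleanly, so there is no serious obstacle.
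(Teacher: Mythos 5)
Your proof is correct and follows essentially the same route as the paper: the paper directly estimates $\langle K,\varphi\rangle=\int_\rn m(\xi)\widehat{\vz}(\xi)\,d\xi$ by splitting into $|\xi|<1$ and dyadic annuli, using Cauchy--Schwarz with \eqref{fm2} and \eqref{fm1} together with the rapid decay of $\widehat{\vz}$, whereas you show $m\in\cs'(\rn)$ by the identical decomposition and estimates against $\vz$ itself and then invoke that $\cf^{-1}$ preserves $\cs'(\rn)$ --- a purely cosmetic difference. No gaps.
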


\begin{proof}
Let $\vz\in\cs(\rn)$. Then
\begin{eqnarray*}
\langle K,\vz\rangle
&&=\int_\rn m(\xi)\wh\vz(\xi)\,d\xi
=\int_{|\xi|\geq1}m(\xi)\wh\vz(\xi)\,d\xi +
\int_{|\xi|<1}m(\xi)\wh\vz(\xi)=:{\rm I}_1+{\rm I}_2.
\end{eqnarray*}

Let $M=n-\az+1$. For ${\rm I}_1$, by the H\"{o}lder inequality and \eqref{fm1}, we see that
\begin{eqnarray*}
&&|{\rm I}_1|
\ls \sum_{k=0}^{\infty}\int_{2^k\leq|\xi|<2^{k+1}}|m(\xi)||\wh\vz(\xi)|\,d\xi\\
&&\hs\hs\ls\sum_{k=0}^{\infty}
\frac{\|(1+|x|)^{M}\wh\vz\|_{L^\fz(\rn)}}{(1+2^k)^M}
\int_{2^k\leq|\xi|<2^{k+1}}|m(\xi)|\,d\xi\\
&&\hs\hs\ls\sum_{k=0}^{\infty}
\frac{2^{nk/2}{\|(1+|x|)^{M}\wh\vz\|_{L^\fz(\rn)}}}{(1+2^k)^M}
\lf[\int_{2^k\leq|\xi|<2^{k+1}}|m(\xi)|^2\,d\xi\r]^{1/2}\\
&&\hs\hs\ls\sum_{k=0}^{\infty}
\frac{2^{k(n-\alpha)}{\|(1+|x|)^{M}\wh\vz\|_{L^\fz(\rn)}}}{(1+2^k)^M}
\ls{\|(1+|x|)^{M}\wh\vz\|_{L^\fz(\rn)}}.
\end{eqnarray*}

For ${\rm I}_2$,
by the H\"{o}lder inequality and \eqref{fm2}, we conclude that
\begin{eqnarray*}
|{\rm I}_2|
&&\ls\|\wh\vz\|_{L^\fz(\rn)}\lf[\int_{|\xi|<1}|m(\xi)|^2\,d\xi\r]^{1/2}
\ls\|\wh\vz\|_{L^\fz(\rn)}.
\end{eqnarray*}
This finishes
the proof of Lemma \ref{fm-l3.1}.
\end{proof}

The next lemma concerns
a piece of information
adapted to our new setting.

\begin{lemma}\label{fm-l3.2}
Let $\Psi,\,\psi$ be Schwartz functions on $\rr^n$ satisfying, respectively,
\eqref{1.1} and \eqref{1.2}.
Assume, in addition, that $m$ satisfies \eqref{fm1} and \eqref{fm2}.
If $a\in(0,\fz)$ and $\ell>a+n/2$, then there exists a
positive constant $C$ such that  for all $j\in\zz_+$,
$$\int_{\rr^n}\left(1+{2^j|z|}\right)^a|(K\ast\psi_j)(z)|
\, dz\leq C 2^{-j\alpha},$$
where $\psi_0=\Psi$ and $\psi_j(\cdot)=2^{-jn}\psi(2^j\cdot)$.
\end{lemma}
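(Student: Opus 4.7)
The plan is to estimate the weighted $L^1$-norm by Cauchy--Schwarz followed by Plancherel, thereby reducing the problem to the $L^2$-bounds on derivatives of $m$ supplied by \eqref{fm1} and \eqref{fm2}. First I would note that the hypothesis $\ell>a+n/2$ guarantees
\[
\int_{\rn}(1+2^j|z|)^{2a-2\ell}\,dz
= 2^{-jn}\int_{\rn}(1+|u|)^{2a-2\ell}\,du
\le C\, 2^{-jn}.
\]
Writing $(1+2^j|z|)^{a}=(1+2^j|z|)^{a-\ell}(1+2^j|z|)^{\ell}$ and applying the Cauchy--Schwarz inequality then gives
\[
\int_{\rn}(1+2^j|z|)^{a}|(K*\psi_j)(z)|\,dz
\le C\,2^{-jn/2}
\left(\int_{\rn}(1+2^j|z|)^{2\ell}|(K*\psi_j)(z)|^{2}\,dz\right)^{1/2}.
\]

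The next step is to pass the weight through Plancherel. Since $(1+2^j|z|)^{2\ell}\lesssim\sum_{|\sigma|\le\ell}2^{2j|\sigma|}|z^{\sigma}|^{2}$ (multiplying by $z^{\sigma}$ on the spatial side corresponds to taking $\partial^{\sigma}$ on the Fourier side), Plancherel's theorem reduces the right-hand side to
\[
\sum_{|\sigma|\le\ell}2^{2j|\sigma|}
\int_{\rn}\bigl|\partial^{\sigma}\bigl(m(\xi)\widehat{\psi_j}(\xi)\bigr)\bigr|^{2}\,d\xi,
\]
where $\widehat{\psi_j}(\xi)=\widehat{\psi}(2^{-j}\xi)$ for $j\ge1$ and $\widehat{\psi_0}=\widehat{\Psi}$. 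Expanding by the Leibniz rule and using $\partial^{\sigma-\sigma'}\widehat{\psi_j}(\xi)=2^{-j|\sigma-\sigma'|}(\partial^{\sigma-\sigma'}\widehat{\psi})(2^{-j}\xi)$ (which is uniformly bounded and supported in the frequency annulus of $\widehat{\psi_j}$), each term becomes a product of $2^{-2j|\sigma-\sigma'|}$ with an integral of $|\partial^{\sigma'}m|^2$ over that annulus.

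For $j\ge1$, the support of $\widehat{\psi_j}$ lies in $\{2^{j-1}\le|\xi|\le 2^{j+1}\}$, so \eqref{fm1} applied with $R\sim 2^{j}$ yields
\[
\int_{2^{j-1}\le|\xi|\le 2^{j+1}}|\partial^{\sigma'}m(\xi)|^{2}\,d\xi
\lesssim 2^{j(n-2\alpha-2|\sigma'|)}.
\]
Combining the $2^{-2j|\sigma-\sigma'|}$ from the $\widehat{\psi}$-derivatives with the $2^{-2j|\sigma'|}$ factor from the estimate on $\partial^{\sigma'}m$, the powers of $2$ telescope to give $\sum_{|\sigma|\le\ell}2^{2j|\sigma|}\cdot 2^{j(n-2\alpha-2|\sigma|)}\lesssim 2^{j(n-2\alpha)}$; taking the square root and multiplying by $2^{-jn/2}$ from the Cauchy--Schwarz step produces $C\,2^{-j\alpha}$, as required. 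The case $j=0$ is handled by the same computation, but with the frequency integral split as $\int_{|\xi|<1}+\int_{1\le|\xi|\le 2}$ and controlled by \eqref{fm2} and \eqref{fm1} (letting $R\downarrow 1$), respectively; the output is a uniform constant, matching $2^{-0\cdot\alpha}=1$.

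The main obstacle will be the bookkeeping of the powers of $2^j$ in the Leibniz expansion: the loss $2^{2j|\sigma|}$ from the weight and the loss $2^{-2j|\sigma-\sigma'|}$ from differentiating $\widehat{\psi_j}$ must match precisely the gain $2^{-2j|\sigma'|}$ implicit in \eqref{fm1} so that every term contributes the same power $2^{j(n-2\alpha)}$. The condition $\ell>a+n/2$ enters only in the first step to make the weight integrable; the derivative/frequency-annulus machinery does the rest.
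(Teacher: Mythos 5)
Your argument is correct and is essentially the approach the paper takes: for $j=0$ the paper runs exactly your Cauchy--Schwarz/Plancherel computation (splitting the weight as $(1+|z|)^{-\mu}(1+|z|)^{a+\mu}$ with $\mu>n/2$, $a+\mu\le\ell$, which is your split in disguise), while for $j\ge1$ it simply cites \cite[Lemma 3.2(i)]{yyz12}, whose content is the Leibniz/annulus bookkeeping you carry out explicitly. Your version is therefore self-contained where the paper defers to a reference, and the telescoping of the powers $2^{2j|\sigma|}\cdot2^{-2j|\sigma-\sigma'|}\cdot2^{j(n-2\alpha-2|\sigma'|)}=2^{j(n-2\alpha)}$ checks out; the only shared loose end is the boundary case $R=1$ in \eqref{fm1}, which both you (via $R\downarrow1$) and the paper treat informally.
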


\begin{proof}
The proof for $j\in\nn$ is just \cite[Lemma 3.2(i)]{yyz12} with $t=2^{-j}$.
So we still need to prove the case when $j=0$.
Its proof is simple but for the sake of convenience for readers,
we supply the details.
When $j=0$, choose $\mu$ such that $\mu>n/2$ and $a+\mu\le \ell$.
From the H\"older inequality, the Plancherel theorem and \eqref{fm2}, we deduce that
\begin{eqnarray*}
&&\lf[\int_{\rr^n}\left(1+{|z|}\right)^a|(K\ast\Psi)(z)|\, dz\r]^2\\
&&\hs\ls\int_{\rr^n}\left(1+{|z|}\right)^{-2\mu}\, dz
\int_{\rr^n}\left(1+{|z|}\right)^{2(a+\mu)}|(K\ast\Psi)(z)|^2\, dz\\
&&\hs\ls\int_{\rr^n}\left(1+{|z|}\right)^{2\ell}|(K\ast\Psi)(z)|^2\, dz\\
&&\hs\ls\sum_{|\sigma|\le\ell}\int_{\rr^n}|z^{\sz}(K\ast\Psi)(z)|^2\, dz
\ls\sum_{|\sigma|\le\ell}\int_{|\xi|<2}|\pa_\xi^\sz [m(\xi)]|^2\, dz\ls1,
\end{eqnarray*}
which completes the proof of Lemma \ref{fm-l3.2}.
\end{proof}

Next we show that, via a suitable way, $T_m$ can also be defined
on the whole spaces ${F}^{w,\tau}_{\cl,q,a}(\rn)$
and ${B}^{w,\tau}_{\cl,q,a}(\rn)$.
Let $\Phi,\,\vz$ be Schwartz functions on $\rr^n$ satisfy, respectively,
\eqref{1.1} and \eqref{1.2}. Then there exist $\Phi^\dagger\in\cs(\rn)$,
satisfying \eqref{1.1}, and $\vz^\dagger\in\cs(\rn)$, satisfying \eqref{1.2},
such that
\begin{equation}\label{fm-3.1}
\Phi^\dagger*\Phi+\sum_{i=1}^\infty \vz^\dagger_i*\vz_i=\dz_0
\end{equation}
in $\cs'(\rn)$.
For any $f\in{F}^{w,\tau}_{\cl,q,a}(\rn)$ or ${B}^{w,\tau}_{\cl,q,a}(\rn)$,
we define a linear functional $T_mf$ on $\cs(\rn)$
by setting, for all $\phi\in\cs(\rn)$,
\begin{equation}\label{fm-3.2}
\langle T_mf,\phi\rangle:=
f*\Phi^\dagger*\Phi*\phi*K(0)
+\sum_{i\in\nn}f\ast\vz_i^\dagger\ast\vz_i\ast\phi\ast K(0)
\end{equation}
as long as the right-hand side converges.
In this sense, we say $T_mf\in\cs'(\rn)$.
The following result shows that the right-hand side of \eqref{fm-3.2}
converges and $T_mf$ in \eqref{fm-3.2}
is well defined.
Actually the right-hand side of (\ref{fm-3.2})
converges.

\begin{lemma}\label{fm-l3.3}
Let $\ell\in(n/2,\fz)$, $\az\in\rr$, $a\in (0,\fz)$,
$\alpha_1, \alpha_2, \alpha_3, \tau\in[0,\fz)$, $q\in(0,\,\fz]$,
$w \in \cw_{\alpha_1,\alpha_2}^{\alpha_3}$ and $f\in{F}^{w,\tau}_{\cl,q,a}(\rn)$
or ${B}^{w,\tau}_{\cl,q,a}(\rn)$. Then
the definition of $T_m f$ in \eqref{fm-3.2} is convergent
and
independent of the choices of the pair
$(\Phi^\dagger, \Phi, \vz^\dagger,\vz)$. Moreover, $T_mf\in \cs'(\rn)$.
\end{lemma}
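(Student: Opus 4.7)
The goal is to show, for each $\phi\in\cs(\rn)$, that the series in \eqref{fm-3.2} converges absolutely and that $|\langle T_mf,\phi\rangle|\ls\rho_N(\phi)\|f\|_{A^{w,\tau}_{\cl,q,a}(\rn)}$ for some $N\in\nn$ and the Schwartz seminorm $\rho_N(\phi):=\sup_{x\in\rn}(1+|x|)^N|\phi(x)|$, uniformly in the reproducing system. By Remark~\ref{r3.1}(ii) one may assume $f\in B^{w,\tau}_{\cl,\fz,a}(\rn)$. The plan, for each summand, is to apply the Peetre maximal function bound to $f$ and Lemma~\ref{fm-l3.2} to the multiplier factor, then to remove the constraint on $\az$ by a lifting argument.

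For $i\ge 1$, write $f*\vz_i^\dagger*\vz_i*\phi*K(0)=\int_{\rn}(\vz_i*f)(-y)(\vz_i^\dagger*\phi*K)(y)\,dy$ and bound $|(\vz_i*f)(-y)|\le(\vz_i^*f)_a(0)(1+2^i|y|)^a$ from the definition of the Peetre maximal function. To bound $(\vz_i^*f)_a(0)$, I will test the $B^{w,\tau}_{\cl,\fz,a}$-norm against the dyadic cube $Q_{i0}$ of side length $2^{-i}$ containing the origin, use the pointwise comparabilities $(\vz_i^*f)_a(0)\ls(\vz_i^*f)_a(y)$ and $w_i(0)\ls w_i(y)$ (the latter from (W2)) for $y\in Q_{i0}$, combined with $\|\chi_{Q_{i0}}\|_{\cl(\rn)}\gs 2^{-i\gamma}$ from $(\cl6)$ and $w_i(0)\gs 2^{-i\az_1}$ from (W1), to obtain $(\vz_i^*f)_a(0)\ls 2^{i(\az_1+\gamma-n\tau)}\|f\|_{B^{w,\tau}_{\cl,\fz,a}(\rn)}$.

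For the factor in $y$, set $H_i:=\vz_i^\dagger*K$; Lemma~\ref{fm-l3.2} applied to $\vz^\dagger$ yields $\int_{\rn}(1+2^i|z|)^a|H_i(z)|\,dz\ls 2^{-i\az}$, whence a Peetre convolution estimate based on $(1+2^i|y|)^a\le(1+2^i|y-z|)^a(1+2^i|z|)^a$ produces $\int_{\rn}(1+2^i|y|)^a|(\phi*H_i)(y)|\,dy\ls 2^{i(a-\az)}\rho_{n+a+1}(\phi)$. Combining, $|f*\vz_i^\dagger*\vz_i*\phi*K(0)|\ls 2^{i(a+\az_1+\gamma-n\tau-\az)}\rho_{n+a+1}(\phi)\|f\|$, summable whenever $\az>a+\az_1+\gamma-n\tau$. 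To cover arbitrary $\az\in\rr$, I lift the multiplier: choose $L\in\nn$ with $\az+L>a+\az_1+\gamma-n\tau$, define $\wz m(\xi):=m(\xi)(1+|\xi|^2)^{-L/2}$, and let $\wz K$ be its inverse Fourier transform. A Leibniz computation shows $\wz m$ satisfies \eqref{fm1} and \eqref{fm2} with $\az$ replaced by $\az+L$, while the Fourier identity $\wh\phi\cdot m=\wh{(1-\Delta)^{L/2}\phi}\cdot\wz m$ yields $\phi*K=((1-\Delta)^{L/2}\phi)*\wz K$ as tempered distributions. Re-running the preceding estimate with $\phi$ replaced by $(1-\Delta)^{L/2}\phi\in\cs(\rn)$ and $K$ by $\wz K$ produces a negative exponent, hence absolute convergence; since $\rho_{n+a+1}((1-\Delta)^{L/2}\phi)$ is dominated by a higher-order Schwartz seminorm of $\phi$, the bound $|\langle T_mf,\phi\rangle|\ls\rho_N(\phi)\|f\|$ follows. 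The $i=0$ term $f*\Phi^\dagger*\Phi*\phi*K(0)$ is a single contribution handled identically, so $T_mf\in\cs'(\rn)$.

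For independence from $(\Phi^\dagger,\Phi,\vz^\dagger,\vz)$, given a second reproducing system $(\Psi^\dagger,\Psi,\psi^\dagger,\psi)$, I insert the identity $\delta_0=\Psi^\dagger*\Psi+\sum_k\psi_k^\dagger*\psi_k$ into each summand of the first decomposition. A Fubini-type argument, justified by a double-indexed analogue of the preceding absolute convergence bound (obtained by replacing $\phi$ with $\psi_k^\dagger*\psi_k*\phi$ and noting that the extra factor only costs a fixed-polynomial-in-$k$ constant), allows interchanging the two summations; the telescoping $\Phi^\dagger*\Phi+\sum_i\vz_i^\dagger*\vz_i=\delta_0$ then collapses the resulting double sum to the decomposition associated with the second system, giving equality. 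The main obstacle is the case of small or negative $\az$, which is sidestepped by the multiplier lifting; verifying that $\wz m$ satisfies \eqref{fm1}, \eqref{fm2} and that the distributional identity $\phi*K=((1-\Delta)^{L/2}\phi)*\wz K$ holds are routine Fourier-side calculations.
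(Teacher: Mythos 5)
Your proposal is correct in substance and shares the paper's overall architecture (a term-by-term bound pairing a polynomially growing estimate for the $f$-factor with Lemma \ref{fm-l3.2} for the $K$-factor, followed by insertion of the second reproducing identity to get independence), but it reaches summability over $i$ by a genuinely different mechanism. The paper never needs your restriction $\az>a+\az_1+\gamma-n\tau$: it exploits the fact that $\psi_i^\dagger*\phi$ (the Schwartz test function hit by a band-limited kernel at frequency $\sim 2^i$) decays like $2^{-iM}(1+|\cdot|)^{-M}$ for \emph{every} $M$, because $\widehat{\phi}$ is rapidly decreasing on the annulus $|\xi|\sim 2^i$; this single factor absorbs all the polynomial growth $2^{i(\cdot)}$ coming from $f$, from the weight, and from $2^{-i\az}$ with $\az<0$, so arbitrary $\az\in\rr$ is handled at once. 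You instead bound $\int(1+2^i|u|)^a|\phi(u)|\,du$ crudely by $2^{ia}\rho_{n+a+1}(\phi)$ and then repair the resulting loss by lifting the multiplier, $m\mapsto m(1+|\xi|^2)^{-L/2}$, $\phi\mapsto(1-\Delta)^{L/2}\phi$; your verification that $\wz m$ satisfies \eqref{fm1}--\eqref{fm2} with $\az+L$ and that $\phi*K=((1-\Delta)^{L/2}\phi)*\wz K$ is sound, so this route also works, at the cost of an extra Fourier-side reduction that the paper's direct decay estimate makes unnecessary.

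Two small points to tighten. First, in the independence step, "the extra factor only costs a fixed-polynomial-in-$k$ constant" does not by itself give absolute convergence of the double sum (a bound $2^{i(\mathrm{neg})}P(k)$ is not summable in $k$); you should either invoke the near-orthogonality $\vz_i*\psi_k\equiv 0$ for $|i-k|\ge 2$, which reduces the double sum to finitely many terms per $i$ (this is what the paper does), or observe that $\rho_N(\psi_k^\dagger*\psi_k*\phi)\ls_M 2^{-kM}$ actually decays geometrically. Second, applying Lemma \ref{fm-l3.2} with the weight $(1+2^i|z|)^a$ requires $\ell>a+n/2$, whereas the lemma as stated only assumes $\ell>n/2$; this mismatch is present in the paper's own proof as well (which needs $\ell>\az_3+n/2$) and is resolved by the hypothesis of Theorem \ref{f-m}, so it is not a defect specific to your argument.
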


\begin{proof} Due to similarity, we skip the proof for Besov spaces
$B^{w,\tau}_{\cl,q,a}(\rn)$.
Assume first that $f\in {F}^{w,\tau}_{\cl,q,a}(\rn)$.
Let $(\Psi^\dagger, \Psi, \psi^\dagger,\psi)$ be another pair of functions satisfying
\eqref{fm-3.1}.
Since $\phi\in\cs(\rn)$, by the Calder\'on reproducing formula,
we know that
$$\phi=\Psi^\dagger*\Psi*\phi+\sum_{j\in\nn}\psi_j^\dagger\ast\psi_j\ast\phi$$
in $\cs(\rn)$. Thus,
\begin{eqnarray*}
&&f*\Phi^\dagger*\Phi*\phi*K(0)+\sum_{i\in\nn}f\ast\vz_i^\dagger\ast\vz_i\ast\phi\ast K(0)\\
&&\hs=f*\Phi^\dagger*\Phi*\lf(\Psi^\dagger*\Psi*\phi+\sum_{j\in\nn}\psi_j^\dagger\ast\psi_j\ast\phi\r)*K(0)\\
&&\hs\hs+\sum_{i\in\nn}f\ast\vz_i^\dagger\ast\vz_i\ast
\lf(\Psi^\dagger*\Psi*\phi+\sum_{j\in\nn}\psi_j^\dagger\ast\psi_j\ast\phi\r)\ast K(0)\\
&&\hs=f*\Phi^\dagger*\Phi*\Psi^\dagger*\Psi*\phi*K(0)
+f*\Phi^\dagger*\Phi*\psi_1^\dagger\ast\psi_1\ast\phi*K(0)\\
&&\hs\hs+f\ast\vz_1^\dagger\ast\vz_1\ast\Psi^\dagger*\Psi*\phi\ast K(0)
+\sum_{i\in\nn}\sum_{j=i-1}^{i+1}
f\ast\vz_i^\dagger\ast\vz_i\ast\psi_j^\dagger\ast\psi_j\ast\phi\ast K(0),
\end{eqnarray*}
where the last equality follows from the fact that
$\vz_i\ast\psi_j=0$ if $|i-j|\geq2$.

Notice that
\begin{eqnarray*}
\left|\int_\rn f\ast \vz_i(y-z)\vz_i(-y)\,dy\right|\ls \sum_{k\in\zz^n}
\frac{2^{in}}{(1+|k|)^M}\int_{Q_{ik}}|\vz_i\ast f(y-z)|\,dy,
\end{eqnarray*}
where $M$ can be sufficiently large.
By $w \in \cw_{\alpha_1,\alpha_2}^{\alpha_3}$, we see that
\begin{eqnarray*}
\int_{Q_{ik}}|\vz_i\ast f(y-z)|\,dy\ls 2^{i(n-\az_1)}(1+2^i|z|)^{\az_3}
2^{-in\tau}\|f\|_{A^{w,\tau}_{\cl,q,a}(\rn)}.
\end{eqnarray*}
Thus, by Lemma \ref{fm-l3.2}, we conclude that
\begin{eqnarray*}
&&\sum_{i\in\nn}|f\ast\vz_i\ast\vz_i^\dagger\ast\psi_i\ast
\psi_i^\dagger\ast\phi\ast K(0)|\\
&&\hs=\sum_{i\in\nn}\int_\rn|f\ast\vz_i\ast\vz_i^\dagger(-z)
\psi_i\ast\psi_i^\dagger\ast\phi\ast K(z)|\,dz\\
&&\hs\ls \sum_{i\in\nn} 2^{i(n-\az_1-n\tau)}\|f\|_{A^{w,\tau}_{\cl,q,a}(\rn)}
\int_\rn\sum_{k\in\zz^n}\frac{2^{in}(1+2^i|z|)^{\az_3}}{(1+|k|)^M}|\psi_i\ast \psi_i\ast f(z)|\,dz\\
&&\hs\ls\sum_{i\in\nn} 2^{i(n-\az_1-n\tau)}2^{in}\|f\|_{A^{w,\tau}_{\cl,q,a}(\rn)}
\int_\rn(1+2^i|z|)^{\az_3}\int_\rn \frac{2^{-iM}}{(1+|y-z|)^M}|\psi_i\ast f(y)|\,dy\,dz\\
&&\hs\ls \sum_{i\in\nn} 2^{i(2n-\az_1-n\tau+\az_3-M)}\|f\|_{A^{w,\tau}_{\cl,q,a}(\rn)}
\int_\rn(1+2^i|y|)^{\az_3}|\psi_i\ast f(y)|\,dy\\
&&\hs\ls\sum_{i\in\nn} 2^{i(2n-\az_1-n\tau-M)}\|f\|_{A^{w,\tau}_{\cl,q,a}(\rn)}
\ls \|f\|_{A^{w,\tau}_{\cl,q,a}(\rn)},
\end{eqnarray*}
where $a$ is an arbitrary positive number.

By an argument similar to the above, we conclude that
\begin{eqnarray*}
  &&\left|f*\Phi^\dagger*\Phi*\lf(\Psi^\dagger*\Psi*\phi+
  \sum_{j\in\nn}\psi_j^\dagger\ast\psi_j\ast\phi\r)*K(0)\right|\\
&&\hs\hs+\left|\sum_{i\in\nn}f\ast\vz_i^\dagger\ast\vz_i\ast
\lf(\Psi^\dagger*\Psi*\phi+\sum_{j\in\nn}\psi_j^\dagger\ast\psi_j
\ast\phi\r)\ast K(0)\right|<\fz,
\end{eqnarray*}
which, together with the Calder\'on reproducing formula, further induces that
\begin{eqnarray*}
&&f*\Phi^\dagger*\Phi*\phi*K(0)+\sum_{i\in\nn}f\ast\vz_i^\dagger\ast\vz_i\ast\phi\ast K(0)\\
&&\hs=f*\Phi^\dagger*\Phi*\lf(\Psi^\dagger*\Psi*\phi+\sum_{j\in\nn}\psi_j^\dagger\ast\psi_j\ast\phi\r)*K(0)\\
&&\hs\hs+\sum_{i\in\nn}f\ast\vz_i^\dagger\ast\vz_i\ast
\lf(\Psi^\dagger*\Psi*\phi+\sum_{j\in\nn}\psi_j^\dagger\ast\psi_j\ast\phi\r)\ast K(0)\\
&&\hs=f*\Psi^\dagger*\Psi*\Psi*K(0)+\sum_{i\in\nn}f\ast\vz_i^\dagger\ast\vz_i\ast\Psi\ast K(0).
\end{eqnarray*}
Thus, $T_mf$ in \eqref{fm-3.2} is independent of the choices of the pair $(\Phi^\dagger,\Phi,\vz^\dagger,\vz)$.
Moreover, the previous argument also implies that $T_mf\in \cs'(\rn)$,
which completes the proof of Lemma \ref{fm-l3.3}.
\end{proof}

Then, by Lemma \ref{fm-l3.2}, we immediately have the following conclusion
and we omit the details here.

\begin{lemma}\label{fm-l3.4}
Let $\alpha\in\rr$, $a\in(0,\fz)$, $\ell\in\nn$,
$\Phi,\,\Psi\in\cs(\rn)$ satisfying \eqref{1.1} and $\vz,\psi\in\cs(\rn)$ satisfying \eqref{1.2}.
Assume that $m$
satisfies \eqref{fm1} and \eqref{fm2} and $f\in \cs'(\rn)$ such that $T_mf\in \cs'(\rn)$.
If $\ell>a+n/2$,
then there exists a positive constants $C$ such that,
for all $x,\,y\in\rr^n$ and $j\in\zz_+$,
$$|(T_mf\ast\psi_j)(y)|\leq C 2^{-j\alpha}\left(1+{2^j|x-y|}\right)^a(\vz_j^*f)_a(x).$$
\end{lemma}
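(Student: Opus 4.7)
The strategy is to combine the Calder\'on reproducing formula, the frequency localization guaranteed by (\ref{1.1})-(\ref{1.2}), and Lemma \ref{fm-l3.2}. By Lemma \ref{fm-l3.3}, the distribution $T_mf$ is independent of the auxiliary pair $(\Phi^\dagger,\vz^\dagger)$ in (\ref{fm-3.2}), so I fix them so that the reproducing identity (\ref{fm-3.1}) holds. Pairing $T_mf$ with $\psi_j(y-\cdot)$ via (\ref{fm-3.2}) and commuting the convolution by the kernel $K$ past the Schwartz convolutions, I obtain
\[
(T_mf\ast\psi_j)(y)=(\Phi\ast f)\ast(\Phi^\dagger\ast K\ast\psi_j)(y)+\sum_{i\in\nn}(\vz_i\ast f)\ast(\vz_i^\dagger\ast K\ast\psi_j)(y).
\]
The Fourier supports of the functions involved force all but finitely many summands to vanish: for $j\ge 3$, only indices $i$ with $|i-j|\le 2$ contribute and the $\Phi$-summand vanishes; the finitely many small-$j$ cases will be treated in the same spirit.

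For each surviving summand, the definition (\ref{1.3}) of the Peetre maximal function together with the elementary inequality $(1+2^i|y-z-x|)^a\le(1+2^i|y-x|)^a(1+2^i|z|)^a$ yields
\[
|(\vz_i\ast f)\ast(\vz_i^\dagger\ast K\ast\psi_j)(y)|\le(1+2^i|y-x|)^a(\vz_i^\ast f)_a(x)\int_{\rn}(1+2^i|z|)^a\bigl|K\ast(\vz_i^\dagger\ast\psi_j)(z)\bigr|\,dz.
\]
Since $|i-j|\le 2$ implies $2^i\sim 2^j$, the prefactor $(1+2^i|y-x|)^a$ is equivalent to $(1+2^j|y-x|)^a$. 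The $z$-integral is controlled by a direct replay of Lemma \ref{fm-l3.2}: writing $\vz_i^\dagger\ast\psi_j=(\eta^{(i-j)})_j$ for one of finitely many Schwartz functions $\eta^{(k)}$ (indexed by $k\in\{-2,\ldots,2\}$) whose Fourier transforms have compact annular support bounded away from the origin, the Plancherel / H\"older / Cauchy-Schwarz argument underlying Lemma \ref{fm-l3.2} yields the bound $C\,2^{-j\alpha}$ uniformly in $i$.

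Summing these pointwise estimates produces
\[
|(T_mf\ast\psi_j)(y)|\lesssim 2^{-j\alpha}(1+2^j|y-x|)^a\sum_{|i-j|\le 2}(\vz_i^\ast f)_a(x),
\]
so the conclusion follows after consolidating this finite sum into the single factor $(\vz_j^\ast f)_a(x)$. The main obstacle is precisely this last consolidation, since Peetre maximal functions at different dyadic scales are not pointwise equivalent in general. The reduction is carried out by a second application of the Calder\'on decomposition to each $\vz_i\ast f$ with $i\ne j$, invoking Lemma \ref{l2.4} to bound the kernel $\vz_i\ast\vz_k^\dagger$ pointwise with arbitrarily fast decay; a Peetre-type integration then dominates $(\vz_i^\ast f)_a(x)$ by $(\vz_j^\ast f)_a(x)$ up to a constant absorbed into the final estimate. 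The boundary cases $j\in\{0,1,2\}$ are handled by retaining the $\Phi$-summand and estimating it directly using Lemma \ref{fm-l3.2} applied to $\Psi$.
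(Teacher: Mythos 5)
Your reduction via \eqref{fm-3.2}, the frequency-support localization to $|i-j|\le 2$, the Peetre inequality, and the replay of Lemma \ref{fm-l3.2} for the finitely many kernels $\vz_i^\dagger*\psi_j$ are all sound, and they correctly deliver the estimate
$|(T_mf*\psi_j)(y)|\ls 2^{-j\alpha}(1+2^j|x-y|)^a\sum_{|i-j|\le 2}(\vz_i^*f)_a(x)$.
This is essentially the route the paper intends (its ``proof'' is the single remark that the lemma follows from Lemma \ref{fm-l3.2}), and the multi-scale right-hand side is in fact all that is needed for Theorem \ref{f-m}, since Lemma \ref{l2.3} absorbs a finite sum over neighbouring scales at the level of the $\ell^q(\cl^w_\tau)$ and $\cl^w_\tau(\ell^q)$ norms.

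The genuine gap is your final ``consolidation'' step, and it cannot be repaired in the form you propose. A second Calder\'on decomposition of $\vz_i*f$ only yields $(\vz_i^*f)_a(x)\ls\sum_{|k-i|\le 2}(\vz_k^*f)_a(x)$; iterating spreads the scales further rather than collapsing them onto the single index $j$. Indeed the pointwise domination $(\vz_{j\pm1}^*f)_a(x)\ls(\vz_j^*f)_a(x)$ is false in general: condition \eqref{1.2} only forces $\widehat{\vz}\ne0$ on $\{3/5\le|\xi|\le 5/3\}$, so one may take $\widehat{f}$ supported in $\{1.9\cdot 2^{j}\le|\xi|\le 2\cdot 2^{j}\}$, where $\widehat{\vz_j}$ may vanish identically while $\widehat{\vz_{j+1}}$ does not; then the right-hand side of the claimed domination vanishes and the left-hand side does not. (Taking $m\equiv1$, which satisfies \eqref{fm1} and \eqref{fm2} with $\alpha=0$, shows that the single-scale form of the lemma itself implicitly requires the extra hypothesis that $\widehat{\vz}$ not vanish on $\supp\widehat{\psi}$. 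Under that hypothesis one writes $\widehat{\psi_j}=\widehat{h_j}\,\widehat{\vz_j}$ with $h\in\cs(\rn)$ of annular frequency support, obtains $T_mf*\psi_j=(K*h_j)*(\vz_j*f)$ in one step, and your Peetre/Lemma \ref{fm-l3.2} argument then gives the single-scale bound directly.) So either add this hypothesis and use the one-step factorization, or stop at your multi-scale estimate and route the finite sum through Lemma \ref{l2.3} in the proof of Theorem \ref{f-m}.
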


Now we are ready to prove the following conclusion.

\begin{theorem}\label{f-m}
Let $\az\in\rr$, $a\in (0,\fz)$,
$\alpha_1, \alpha_2, \alpha_3, \tau\in[0,\fz)$, $q\in(0,\,\fz]$,
$w \in \cw_{\alpha_1,\alpha_2}^{\alpha_3}$ and $\wz w(x,2^{-j})=2^{j\az} w(x,2^{-j})$
for all $x\in\rn$ and $j\in\zz_+$.
Suppose that $m$ satisfies \eqref{fm1} and \eqref{fm2} with $\ell\in\mathbb{N}$ and $\ell>a+n/2$,
then there exists a positive constant $C_1$ such that, for all
$ f\in {F}^{w,\tau}_{\cl,q,a}(\rr^n)$,
$\|T_mf\|_{{F}^{\wz w,\tau}_{\cl, q,a}(\rr^n)}\leq
C_1\|f\|_{{F}^{w,\tau}_{\cl, q,a}(\rr^n)}$
and a positive constant $C_2$ such that, for all
$f\in {B}^{\wz w,\tau}_{\cl, q,a}(\rr^n)$,
$\|T_mf\|_{{B}^{\wz w,\tau}_{\cl, q,a}(\rr^n)}\leq
C_2\|f\|_{{B}^{w,\tau}_{\cl, q,a}(\rr^n)}.$
Similar assertions hold true
for $\ce^{w,\tau}_{\cl, q,a}(\rr^n)$
and $\cn^{w,\tau}_{\cl, q,a}(\rr^n)$.
\end{theorem}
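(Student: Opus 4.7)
The plan is to reduce Theorem \ref{f-m} to the pointwise Peetre-maximal estimate contained in Lemma \ref{fm-l3.4}, and then to extract the quasi-norm inequality from the pointwise inequality by invoking only the lattice property $(\cl4)$ of $\cl(\rn)$. Since the arguments for the four function spaces are identical after the pointwise step, I would spell out the case $F^{w,\tau}_{\cl,q,a}(\rn)$ in full and note that the cases of $B^{w,\tau}_{\cl,q,a}(\rn)$, $\cn^{w,\tau}_{\cl,q,a}(\rn)$ and $\ce^{w,\tau}_{\cl,q,a}(\rn)$ follow line-by-line by replacing $\cl^w_\tau(\ell^q(\rn,\zz_+))$ with, respectively, $\ell^q(\cl^w_\tau(\rn,\zz_+))$, $\ell^q(\cn\cl^w_\tau(\rn,\zz_+))$ and $\ce\cl^w_\tau(\ell^q(\rn,\zz_+))$.

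First I would note that, by Lemma \ref{fm-l3.3}, $T_m f$ is a well-defined element of $\cs'(\rn)$, independent of the Calder\'on reproducing pair used in \eqref{fm-3.2}; in particular, its Peetre maximal functions $(\vz_j^* T_m f)_a$ make sense. Then I would apply Lemma \ref{fm-l3.4} with the choice $\Psi:=\Phi$ and $\psi:=\vz$; the hypothesis $\ell>a+n/2$ in that lemma is exactly the hypothesis of the present theorem. This yields, for all $x,z\in\rn$ and $j\in\zz_+$,
\begin{equation*}
\frac{|\vz_j\ast T_m f(x+z)|}{(1+2^j|z|)^a}
\le C\,2^{-j\az}\,(\vz_j^* f)_a(x).
\end{equation*}
Taking the supremum over $z\in\rn$ gives the key Peetre-maximal bound
\begin{equation*}
(\vz_j^* T_m f)_a(x)\le C\,2^{-j\az}\,(\vz_j^* f)_a(x),
\quad x\in\rn,\ j\in\zz_+.
\end{equation*}

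Multiplying by $\wz w_j(x)=2^{j\az}w_j(x)$, the shift $2^{-j\az}$ is exactly absorbed by the weight, producing the pointwise inequality
\begin{equation*}
\wz w_j(x)\,(\vz_j^* T_m f)_a(x)\le C\,w_j(x)\,(\vz_j^* f)_a(x).
\end{equation*}
Since this is a pointwise control between non-negative sequences of measurable functions, applying $(\cl4)$ termwise after first forming the $\ell^q$ aggregation on the right-hand side of Definition \ref{d3.1}(iii) and then taking the supremum over dyadic cubes yields
\begin{equation*}
\|T_m f\|_{F^{\wz w,\tau}_{\cl,q,a}(\rn)}
=\|\{(\vz_j^* T_m f)_a\}_{j\in\zz_+}\|_{\cl^{\wz w}_\tau(\ell^q(\rn,\zz_+))}
\lesssim \|\{(\vz_j^* f)_a\}_{j\in\zz_+}\|_{\cl^w_\tau(\ell^q(\rn,\zz_+))}
=\|f\|_{F^{w,\tau}_{\cl,q,a}(\rn)},
\end{equation*}
which is the desired estimate.

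The only real ``work'' is already contained in Lemma \ref{fm-l3.4}: the crucial $2^{-j\az}$ decay factor there, extracted from the $L^2$-type H\"ormander conditions \eqref{fm1} and \eqref{fm2} through the kernel estimate $\int_\rn(1+2^j|z|)^a|(K\ast\psi_j)(z)|\,dz\lesssim 2^{-j\az}$ of Lemma \ref{fm-l3.2}, is precisely what matches the shift from $w$ to $\wz w$. Once this pointwise decay is in hand, the passage to the quasi-norm is essentially automatic because the Peetre maximal operator respects the lattice structure of $\cl(\rn)$ without any appeal to the boundedness of the Hardy-Littlewood maximal operator or to Fefferman-Stein type inequalities on $\cl(\rn)$; this is the philosophical point of the framework developed in the paper, and the reason why \eqref{fm1} and \eqref{fm2} suffice as the only conditions on $m$.
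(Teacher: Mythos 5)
Your proposal is correct and follows essentially the same route as the paper: the paper's proof consists of exactly the pointwise estimate $2^{j\alpha}(\psi_j^{\ast}(T_mf))_{a}(x)\lesssim(\vz_j^*f)_a(x)$ obtained from Lemma \ref{fm-l3.4}, followed by an appeal to the definitions of the function spaces. Your version merely makes explicit the two steps the paper leaves implicit, namely the well-definedness of $T_mf$ via Lemma \ref{fm-l3.3} and the passage from the pointwise bound to the quasi-norm via the lattice property $(\cl4)$.
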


\begin{proof}
By Lemma \ref{fm-l3.4} we conclude that,
if $\ell>a+n/2$, then for all $x\in\rn$ and $j\in\zz_+$,
$$2^{j\alpha}\left(\psi_j^{\ast}(T_mf)\right)_{a}(x)\lesssim
(\vz_j^*f)_a(x).$$
Then by the definitions of ${F}^{w,\tau}_{\cl,q,a}(\rn)$
and ${B}^{w,\tau}_{\cl,q,a}(\rn)$, we immediately conclude the desired
conclusions, which completes the proof of Theorem \ref{f-m}.
\end{proof}

\subsection{Boundedness of pseudo-differential operators}
\label{s5.2}

We consider the \emph{class} $S^0_{1,\mu}(\rn)$ with $\mu\in [0,1)$.
Recall that a function $a$ is said to belong
to a \emph{class $S^m_{1,\mu}(\rn)$}
of $C^\infty(\R^n_x\times\R^n_\xi)$-functions if
$$\sup_{x,\xi \in \rn}
(1+|\xi|)^{-m-\|{\vec{\alpha}}\|_1-\mu\|{\vec{\beta}}\|_1}
|\partial^{\vec{\beta}}_x\partial^{\vec{\alpha}}_\xi a(x,\xi)|
\lesssim_{\vec{\alpha},\vec{\beta}}
1
$$
for all multiindices ${\vec{\alpha}}$ and ${\vec{\beta}}$.
One defines, for all $x\in\rn$,
$$
a(X,D)(f)(x):= \int_\rn a(x,\xi)\hat{f}(\xi)
e^{ix\cdot\xi}\,d\xi$$
originally on $\cs(\rn)$, and further on $\cs'(\rn)$ via dual.

We aim here to establish the following in this subsection.
\begin{theorem}\label{t5.1}
Let
$w \in {\mathcal W}^{\alpha_3}_{\alpha_1,\alpha_2}$
with
$\alpha_1, \alpha_2, \alpha_3 \in [0,\infty)$
and a quasi-normed function space $\cl(\rn)$
satisfy $(\cl1)$ through $(\cl6)$.
Let $\mu\in[0,1)$,
$\tau\in(0,\fz)$ and $q\in (0,\infty]$.
Assume, in addition, that {\rm \eqref{3.35}} holds true, that is,
$a\in (N_0+\alpha_3,\infty)$,
where $N_0$ is as in $(\cl6)$.
Then the pseudo-differential operators
with symbol $S_{1,\mu}^0(\rn)$
are bounded on $A^{w,\tau}_{\cl,q,a}(\rn)$.
\end{theorem}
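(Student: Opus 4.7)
The plan is to reduce Theorem \ref{t5.1} to the atomic/molecular decomposition established in Theorem \ref{t4.1}, via the principle that a pseudo-differential operator with symbol in $S^0_{1,\mu}(\rn)$ with $\mu \in [0,1)$ maps atoms to (bounded multiples of) molecules. Concretely, given $f \in A^{w,\tau}_{\cl,q,a}(\rn)$, I would first invoke the analysis part of Theorem \ref{t4.1} to obtain
$$
f = \sum_{j=0}^\infty \sum_{k \in \zz^n} \lambda_{jk} {\mathfrak A}_{jk}
\quad \text{in } \cs'(\rn),
$$
where $\{{\mathfrak A}_{jk}\}$ is a family of $(K,L)$-atoms with $K$ and $L$ chosen sufficiently large (in a $\mu$-dependent manner specified below), and where $\|\{\lambda_{jk}\}\|_{a^{w,\tau}_{\cl,q,a}(\rn)} \lesssim \|f\|_{A^{w,\tau}_{\cl,q,a}(\rn)}$. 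I would then verify that each ${\mathfrak M}_{jk} := C^{-1} a(X,D){\mathfrak A}_{jk}$ is a $(K',L')$-molecule associated with $Q_{jk}$ with decay parameter $N'$, for some parameters $(K',L',N')$ meeting \eqref{4.3}, \eqref{4.4} and \eqref{4.5}, and $C$ an absolute constant depending only on the symbol seminorms. Applying the synthesis part of Theorem \ref{t4.1} to $a(X,D)f = C \sum_{j,k} \lambda_{jk} {\mathfrak M}_{jk}$ then yields the desired operator bound.

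The heart of the argument is the molecular estimate. For the pointwise decay, I would use the Schwartz kernel representation $a(X,D){\mathfrak A}_{jk}(x) = \int_\rn K_a(x,y){\mathfrak A}_{jk}(y)\,dy$, where, for all $x \ne y$ and all $M \in \nn$,
$$
|\partial^{\vec{\alpha}}_x \partial^{\vec{\gamma}}_y K_a(x,y)| \lesssim |x-y|^{-n-\|\vec{\alpha}\|_1-\|\vec{\gamma}\|_1-M},
$$
obtained by standard $\xi$-integration by parts from the $S^0_{1,\mu}$-bounds on $\partial_\xi^{\vec{\delta}} a(x,\xi)$. Combined with the support and size of the atom, this yields the required decay of $\partial^{\vec{\alpha}} {\mathfrak M}_{jk}$ on the complement of $3Q_{jk}$, while $L^2$-boundedness of $a(X,D)$ together with a Sobolev-type embedding controls the local part on a fixed enlargement of $Q_{jk}$. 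For the moment condition (needed only when $\ell(Q_{jk}) < 1$), I would expand $a(x,\xi)$ in $\xi$ around $0$ up to order $L$ and pair each monomial term with the corresponding vanishing moment of ${\mathfrak A}_{jk}$; the $x$-regularity afforded by $S^0_{1,\mu}$ costs a factor $2^{j\mu \|\vec{\delta}\|_1}$ per $x$-derivative of the symbol, while the atomic rescaling provides a gain of $2^{-j\|\vec{\delta}\|_1}$, so that the assumption $\mu < 1$ ensures that the loss is strictly subcritical and yields effective moment order proportional to $(1-\mu)L$.

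The main obstacle will be the careful index bookkeeping in this last step: $K$ and $L$ must be chosen large enough that, after the $S^0_{1,\mu}$-loss per derivative, the molecular parameters $(K',L',N')$ still satisfy the thresholds \eqref{4.3}, \eqref{4.4} and \eqref{4.5} of Theorem \ref{t4.1}, whose precise values depend on $\az_1,\az_2,\az_3,\gz,\dz,\tau$ and $a$. In particular $L$ must be taken proportional to $1/(1-\mu)$ times the threshold in \eqref{4.3}, which is harmless since Theorem \ref{t4.1} allows arbitrarily large $L$. The assumption $\tau > 0$ plays a secondary but essential role: it provides the extra power of $2^{-jn\tau}$ in the molecular synthesis (via Lemma \ref{l4.1} and estimate \eqref{4.7}) needed to absorb the off-diagonal tails of $K_a$ and the Taylor remainder in the symbol expansion uniformly over all dyadic scales.
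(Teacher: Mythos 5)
Your overall strategy (atoms $\to$ molecules $\to$ synthesis via Theorem \ref{t4.1}) is the right template, and your kernel estimates for the decay of $a(X,D){\mathfrak A}_{jk}$ are essentially fine for $\mu\in[0,1)$. The genuine gap is the moment condition. Under the hypotheses of Theorem \ref{t5.1}, condition \eqref{4.3} forces $L\ge 0$, so the molecules ${\mathfrak M}_{jk}=a(X,D){\mathfrak A}_{jk}$ must have vanishing moments; but a general operator with symbol in $S^0_{1,\mu}(\rn)$ does \emph{not} preserve vanishing moments. Your proposed remedy --- Taylor-expanding $a(x,\xi)$ in $\xi$ around $\xi=0$ and pairing monomials with the vanishing moments of the atom --- does not work: the Fourier transform of an atom at scale $2^{-j}$ is concentrated at frequencies $|\xi|\sim 2^j$, far from the expansion point, so the Taylor remainder is not small there; and even at the level of the polynomial part, the $x$-dependence of the symbol destroys the pairing, since $\int x^{\vec\beta}a(X,D){\mathfrak A}(x)\,dx$ involves the Fourier transform of $x^{\vec\beta}a(x,\xi)e^{ix\xi}$ in $x$ and is generically nonzero no matter how many moments ${\mathfrak A}$ has. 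The bookkeeping of $2^{-j\|\vec\delta\|_1}$ versus $2^{j\mu\|\vec\delta\|_1}$ is relevant for the size and decay of the molecule, not for producing moments. (The moment-free route, Theorem \ref{t4.2} with $L=-1$, is only available under \eqref{4.21}, which is not assumed here; that case is Theorem \ref{t5.2}.)

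The paper circumvents this by the algebraic splitting of Lemma \ref{l5.1}: $a(X,D)=(1+\Delta^{2M_0})\circ b(X,D)\circ(1+\Delta^{2M_0})^{-1}$, hence $a(X,D)=A(X,D)+B(X,D)$ with $A(X,D)=b(X,D)\circ(1+\Delta^{2M_0})^{-1}\in S^{-2M_0}_{1,\mu}(\rn)$ and $B(X,D)=\Delta^{2M_0}\circ b(X,D)\circ(1+\Delta^{2M_0})^{-1}$. The piece $B(X,D)$ maps atoms to molecules whose moment condition of order $2M_0$ is \emph{automatic} from the leading power of the Laplacian (Lemma \ref{l5.4}); the piece $A(X,D)$ has order so negative that no atomic decomposition is needed at all --- it is handled by a Littlewood--Paley decomposition of the symbol and direct pointwise domination of the Peetre maximal function of $\varphi_j*(A(X,D)f)$ by that of $\varphi_{k+l}*f$ with geometric off-diagonal decay (Lemmas \ref{l5.2} and \ref{l5.3}), followed by Lemma \ref{l2.3}. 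To repair your argument you would need to incorporate this (or an equivalent) decomposition; as written, the synthesis step cannot be invoked because the objects you feed into it are not molecules in the sense of Definition \ref{d4.2}.
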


With the following decomposition,
we have only to consider the boundedness
of $a(\cdot,\cdot) \in S^{-M_0}_{1,\mu}(\rn)$ with an integer $M_0$ sufficiently large.
\begin{lemma}[\cite{st93}]\label{l5.1}
Let $\mu\in[0,1), \, a \in S_{1,\mu}^m(\rn)$ and $N \in {\mathbb N}$.
Then there exists a symbol
$b \in S^m_{1,\mu}(\rn)$ such that
\[
a(X,D)=(1+\Delta^{2N}) \circ b(X,D) \circ (1+\Delta^{2N})^{-1}.
\]
\end{lemma}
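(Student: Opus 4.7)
The plan is to define $b$ directly via the symbolic calculus so that the factorization holds by construction, and then to verify the resulting symbol lies in $S^m_{1,\mu}(\rn)$. First, I would observe that $1+\Delta^{2N}$ and $(1+\Delta^{2N})^{-1}$ are Fourier multipliers with symbols
\[
p(\xi):=1+|\xi|^{4N}\quad\text{and}\quad p(\xi)^{-1}=\frac{1}{1+|\xi|^{4N}},
\]
respectively (using $\widehat{\Delta f}(\xi)=-|\xi|^2\widehat{f}(\xi)$, so that $\Delta^{2N}$ has symbol $|\xi|^{4N}$). A routine application of Leibniz together with the elementary bound $|\partial_\xi^{\vec{\alpha}}p(\xi)^{\pm 1}|\ls(1+|\xi|)^{\pm 4N-\|\vec{\alpha}\|_1}$ shows that $p\in S^{4N}_{1,0}(\rn)\subset S^{4N}_{1,\mu}(\rn)$ and $p^{-1}\in S^{-4N}_{1,0}(\rn)\subset S^{-4N}_{1,\mu}(\rn)$.

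Next, I would define
\[
b(X,D):=(1+\Delta^{2N})^{-1}\circ a(X,D)\circ(1+\Delta^{2N}),
\]
so that, composing on the left with $1+\Delta^{2N}$ and on the right with $(1+\Delta^{2N})^{-1}$ and using that these two operators are inverses of one another (their symbols multiply to $1$), the claimed identity
\[
a(X,D)=(1+\Delta^{2N})\circ b(X,D)\circ(1+\Delta^{2N})^{-1}
\]
is immediate. It remains only to show that $b$, as defined, corresponds to a symbol in the H\"ormander class $S^m_{1,\mu}(\rn)$.

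For this I would invoke the standard composition calculus for pseudo-differential operators of type $(1,\mu)$ with $\mu\in[0,1)$: if $a_1\in S^{m_1}_{1,\mu}(\rn)$ and $a_2\in S^{m_2}_{1,\mu}(\rn)$, then $a_1(X,D)\circ a_2(X,D)=(a_1\#a_2)(X,D)$ with $a_1\#a_2\in S^{m_1+m_2}_{1,\mu}(\rn)$ admitting the asymptotic expansion
\[
(a_1\#a_2)(x,\xi)\sim\sum_{\vec{\alpha}\in\zz_+^n}\frac{(-i)^{\|\vec{\alpha}\|_1}}{\vec{\alpha}!}\,\partial_\xi^{\vec{\alpha}}a_1(x,\xi)\cdot\partial_x^{\vec{\alpha}}a_2(x,\xi),
\]
where the $\vec{\alpha}$-th term lies in $S^{m_1+m_2-(1-\mu)\|\vec{\alpha}\|_1}_{1,\mu}(\rn)$. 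Applying this first to $a\#p\in S^{m+4N}_{1,\mu}(\rn)$ and then to $p^{-1}\#(a\#p)\in S^m_{1,\mu}(\rn)$ yields $b\in S^m_{1,\mu}(\rn)$, as required.

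The main obstacle is the justification of the symbol calculus in the $(1,\mu)$ setting — in particular, that the formal asymptotic series defines an honest symbol modulo a smoothing operator, and that composition respects the class. These facts are classical and require the hypothesis $\mu<1$ precisely so that successive terms in the expansion have strictly decreasing order $(1-\mu)$; I would simply cite \cite{st93} (or an equivalent standard reference) for the proof rather than reproduce it here.
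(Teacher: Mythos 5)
Your proof is correct and is essentially the standard argument behind the citation: the paper itself gives no proof of this lemma, simply referring to \cite{st93}, and your construction $b(X,D):=(1+\Delta^{2N})^{-1}\circ a(X,D)\circ(1+\Delta^{2N})$ followed by the $(1,\mu)$ composition calculus (with the right-hand composition against the multiplier being exact and the left-hand one handled by the asymptotic expansion, which converges in order precisely because $\mu<1$) is exactly the expected reconstruction. Nothing further is needed.
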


Based upon Lemma \ref{l5.1},
we plan to treat
\[
A(X,D) := b(X,D) \circ (1+\Delta^{2N})^{-1} \in S^{-2N}_{1,\mu}(\rn)
\]
and
\[
B(X,D) :=
\Delta^{2N} \circ b(X,D) \circ (1+\Delta^{2N})^{-1}
\in S^0_{1,\mu}(\rn),
\]
respectively.

The following is one of the key observations
in this subsection.
\begin{lemma}\label{l5.2}
Let $\mu\in[0,1)$, $w$, $q$, $\tau$, $a$ and $\cl$ be as in Theorem \ref{t5.1}.
Assume that $a \in S_{1,\mu}^0(\rn)$
satisfies that $a(\cdot,\xi)=0$ if $|\xi| \ge 1/2$.
Then
$a(X,D)$ is bounded on $A^{w,\tau}_{{\mathcal L},q,a}(\rn)$.
\end{lemma}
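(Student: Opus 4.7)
The plan is to establish, for every $L\in\nn$, the pointwise estimate
$$(\vz_j^*(a(X,D)f))_a(x) \lesssim 2^{-jL}(\vz_0^*f)_a(x), \quad j\in\zz_+,\ x\in\rn,$$
where $\vz_0:=\Phi$, and then invoke Lemma \ref{l2.3} to conclude. The first step is a frequency-localization reduction. Since the symbol $a(\cdot,\xi)$ vanishes for $|\xi|\ge 1/2$, the operator $a(X,D)$ annihilates any tempered distribution whose Fourier transform is supported in $\{|\xi|\ge 1/2\}$. Applying a Calder\'on reproducing formula $\Phi^\dagger*\Phi+\sum_{\nu=1}^\infty\vz_\nu^\dagger*\vz_\nu=\delta_0$ in $\cs'(\rn)$ and observing that the piece $\vz_\nu^\dagger*\vz_\nu*f$ for $\nu\ge1$ has Fourier support in $\{|\xi|\ge 1\}$, we find $a(X,D)f=a(X,D)g$ with $g:=\Phi^\dagger*\Phi*f$.

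Next, a direct calculation gives $\vz_j*(a(X,D)g)=c_j(X,D)g$, where
$$c_j(x,\xi) :=\int_{\rn}\vz_j(z)\,a(x-z,\xi)\,e^{-iz\cdot\xi}\,dz,$$
and $c_j$ inherits the support condition $c_j(\cdot,\xi)=0$ for $|\xi|\ge 1/2$. For $j\ge 1$, the plan is to change variables $z=2^{-j}w$, Taylor-expand $a(x-2^{-j}w,\xi)e^{-i2^{-j}w\cdot\xi}$ in $w$ at the origin, and exploit the fact that \eqref{1.2} forces $\int_{\rn}\vz(w)w^\beta\,dw=0$ for every multi-index $\beta$. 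Since $|\partial_x^\alpha\partial_\xi^\gamma a(x,\xi)|\lesssim(1+|\xi|)^{\mu|\alpha|-|\gamma|}\lesssim 1$ on $\{|\xi|<1/2\}$, one obtains, for all $L\in\nn$ and all multi-indices $\alpha,\gamma$,
$$\sup_{x\in\rn,\,|\xi|<1/2}\bigl|\partial_x^\alpha\partial_\xi^\gamma c_j(x,\xi)\bigr|\lesssim 2^{-jL} \quad (j\ge 1),$$
while $c_0$ and all of its derivatives remain uniformly bounded on the same $\xi$-strip. Integration by parts in $\xi$ then shows that the Schwartz kernel $K_j(x,z):=(2\pi)^{-n}\int c_j(x,\xi)e^{iz\cdot\xi}\,d\xi$ of $c_j(X,D)$ satisfies
$$|K_j(x,z)|\lesssim \frac{2^{-jL}}{(1+|z|)^N}\quad(j\ge 1),\qquad |K_0(x,z)|\lesssim\frac{1}{(1+|z|)^N}$$
for arbitrary $L,N>0$, uniformly in $x\in\rn$.

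The final step converts these kernel bounds into the announced pointwise inequality. Since $g=\Phi^\dagger*\Phi*f$, the Peetre-type inequality yields $|g(y)|\lesssim(\Phi^*f)_a(x)(1+|x-y|)^a$ uniformly in $x,y\in\rn$. Inserting this into $\vz_j*(a(X,D)f)(x+u)=\int K_j(x+u,x+u-y)g(y)\,dy$ and applying Peetre's inequality once more gives
$$\bigl|\vz_j*(a(X,D)f)(x+u)\bigr|\lesssim 2^{-jL}(\Phi^*f)_a(x)(1+|u|)^a,$$
provided $N>a+n$. Dividing by $(1+2^j|u|)^a$, taking the supremum in $u$, and using $2^j\ge 1$ produces the announced pointwise estimate. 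Feeding $g_0:=(\vz_0^*f)_a$, $g_\nu:=0$ for $\nu\ge 1$ into Lemma \ref{l2.3} and choosing $L>n\tau+\alpha_2$ then yields $\|a(X,D)f\|_{A^{w,\tau}_{\cl,q,a}(\rn)}\lesssim\|f\|_{A^{w,\tau}_{\cl,q,a}(\rn)}$ uniformly across the four scales.

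The main obstacle is securing the arbitrary $2^{-jL}$-decay of the effective symbol $c_j$. It is crucially the compact $\xi$-support of $a$ that rescues the Taylor argument, since it neutralizes the otherwise harmful factor $(1+|\xi|)^{\mu|\alpha|}$ arising from the $S^0_{1,\mu}$ bounds on $\partial_x^\alpha a$; combined with the infinite-order vanishing moments of $\vz$ forced by \eqref{1.2}, this manufactures arbitrarily strong decay in $j$. Everything else, including the passage from the pointwise estimate to the norm inequality via Lemma \ref{l2.3}, is routine.
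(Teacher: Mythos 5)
Your proof is correct and follows essentially the same route as the paper's: reduce to the low-frequency piece $\Phi*f$ using the support restriction on $a(\cdot,\xi)$, gain an arbitrary factor $2^{-jL}$ from the mismatch between $\supp\widehat{\varphi_j}$ and $\{|\xi|<1/2\}$, obtain a kernel bound with arbitrary polynomial decay, and conclude via Peetre's inequality and Lemma \ref{l2.3}. The only difference is one of implementation: you extract the decay by Taylor expansion against the infinitely many vanishing moments of $\varphi$, whereas the paper writes $\varphi_j=2^{-2jL}\Delta^L\tau_j$ with $\tau_j=(4^{-j}\Delta)^{-L}\varphi_j$ and integrates by parts.
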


\begin{proof}
We fix $\Phi\in\cs(\rn)$ so that
$\hat{\Phi}(\xi)=1$ whenever $|\xi| \le 1$
and that
$\hat{\Phi}(\xi)=0$ whenever $|\xi| \ge 2$.
Then, by the fact that
$a(\cdot,\xi)=0$ if $|\xi| \ge 1/2$, we know that,
for all $f \in A^{w,\tau}_{{\mathcal L},q,a}(\rn)$,
$a(X,D)f=a(X,D)(\Phi\ast f).$ By this and that
the mapping $f \in A^{w,\tau}_{{\mathcal L},q,a}(\rn)
\mapsto \Phi*f \in A^{w,\tau}_{{\mathcal L},q,a}(\rn)$
is continuous, without loss of generality,
we may assume that the frequency support
of $f$ is contained in $\{|\xi| \le 2\}$.
Let $j \in \zz_+$ and $z \in \rn$ be fixed.
Then we have, for all $x\in\rn$,
\begin{eqnarray*}
\varphi_j*[a(X,D)f](x)&&=\int_{\rn}
\varphi_j(x-y)
\left[\int_{\rn}a(y,\xi)\widehat{f}(\xi)e^{i\xi y}\,d\xi
\right]\,dy\\
&&=\int_{\rn}\left[
\int_{\rn}\varphi_j(x-y)a(y,\xi)e^{i\xi y}\,dy
\right]\widehat{f}(\xi)\,d\xi\\
&&=\int_{\rn}\left(
\int_{\rn}\varphi_j(x-y)a(y,\cdot)e^{i\cdot y}\,dy
\right)^\land(z)f(z)\,dz
\end{eqnarray*}
by the Fubini theorem.
Notice that, again by virtue of the Fubini theorem,
we see that
\begin{eqnarray*}
\left(
\int_{\rn}\varphi_j(x-y)a(y,\cdot)e^{i\cdot y}\,dy
\right)^\land(z)
&&=
\int_{\rn}e^{-iz\xi}\left[
\int_{\rn}\varphi_j(x-y)a(y,\xi)e^{i\xi y}\,dy
\right]\,d\xi\\
&&=
\int_{\rn}\varphi_j(x-y)
\left[
\int_{\rn}a(y,\xi)e^{i\xi(y-z)}\,d\xi
\right]\,dy.
\end{eqnarray*}
Let us set
$\tau_j := (4^{-j}\Delta)^{-L}\varphi_j$
with $L \in \N$ large enough, say
\begin{equation*}
L=\lfloor a+n+\alpha_1+\alpha_2+1\rfloor.
\end{equation*}%
Then we have $\tau_j \in \cs(\rn)$ and
$\varphi_j(x)=2^{-2jL}\Delta^L\tau_j(x)$
for all $j\in\zz_+$ and $x\in\rn$.
Consequently, we have
\begin{eqnarray*}
&&\left[
\int_{\rn}\varphi_j(x-y)a(y,\cdot)e^{i\cdot y}\,dy
\right]^\land(z)\\
&&\quad=2^{-2jL}
\int_{\rn}\tau_j(x-y)
\Delta^L_y\left[
\int_{\rn}a(y,\xi)e^{i\xi(y-z)}\,d\xi
\right]\,dy
\end{eqnarray*}
by integration by parts.

Notice that by the integration by parts, we conclude that
\begin{eqnarray*}
&&\Delta^L_y\left(
\int_{\rn}a(y,\xi)e^{i\xi(y-z)}\,d\xi
\right)\noz\\
&&\hs=\sum_{\|\vec{\alpha}_1\|_1+\|\vec{\alpha}_2\|_1=2L}
\int_\rn
\left[\xi^{\vec\az_2}\partial^{\vec{\alpha}_1}_ya(y,\xi)
\right]e^{i\xi(y-z)}
\,d\xi\noz\\
&&\hs=\frac1{(1+|y-z|^2)^L}
\sum_{\|\vec{\alpha}_1\|_1+\|\vec{\alpha}_2\|_1=2L}
\int_\rn
(1-\Delta_\xi)^L\left[\xi^{\vec\az_2}\partial^{\vec{\alpha}_1}_ya(y,\xi)
\right]e^{i\xi(y-z)}
\,d\xi.
\end{eqnarray*}
Then, by the fact that $a\in S_{1,\mu}^0$ and $a(\cdot,\xi)=0$ if
$|\xi|\ge1/2$, we see that, for all $\xi,y\in\rn$,
\begin{gather}
\label{5.3}
\lf|(1-\Delta_\xi)^L\left(\xi^{\vec\az_2}\partial^{\vec{\alpha}_1}_ya(y,\xi)
\right)\r|\ls\chi_{B(0,2)}(\xi),
\end{gather}
and hence, for all $y,z\in\rn$,
\begin{eqnarray*}
&&\lf|\Delta^L_y\left(
\int_{\rn}a(y,\xi)e^{i\xi(y-z)}\,d\xi
\right)\r|\ls
\frac1{(1+|y-z|^2)^L}.
\end{eqnarray*}
Consequently, for all $j\in\zz_+$ and $x,y,z\in\rn$,
we have
\begin{equation}\label{5.5}
\left|
\left[
\int_{\rn}\varphi_j(x-y)a(y,\cdot)e^{i\cdot y}\,dy
\right]^\land(z)\right|
\lesssim 2^{-2jL}
\int_{\rn}\frac{|\tau_j(x-y)|}{(1+|y-z|^2)^L}\,dy
\end{equation}
and hence
\begin{eqnarray*}
\frac{|\varphi_j*(a(X,D)f)(x+z)|}{(1+2^j|z|)^a}
&&\lesssim
\int_{\rn}\int_{\rn}\frac{2^{-2jL}|\tau_j(x+z-y)|}
{(1+2^j|z|)^a(1+|y-w|^2)^L}
|f(w)|\,dy\,dw\\
&&\lesssim
\int_{\rn}\int_{\rn}\frac{2^{-2jL}|\tau_j(x+z-y)|}
{(1+|z|)^a(1+|y-w|^2)^L}
|f(w)|\,dy\,dw\\
&&\lesssim
2^{-2jL}\sup_{w \in \rn}\frac{|f(x+w)|}{(1+|w|)^a}.
\end{eqnarray*}
A similar argument also works for $\Phi*(a(X,D)f)$
(without using integration by parts)
and we obtain
\begin{eqnarray*}
\frac{|\Phi*(a(X,D)f)(x+z)|}{(1+|z|)^a}
\lesssim
\sup_{w \in \rn}\frac{|f(x+w)|}{(1+|w|)^a}.
\end{eqnarray*}
With this pointwise estimate, the condition of $L$
and the assumption that $\mu<1$,
we obtain the desired result,
which completes the proof of Lemma \ref{l5.2}.
\end{proof}

If we reexamine the above calculation,
then we obtain the following:

\begin{lemma}\label{l5.3}
Assume that $\mu\in[0,1)$
and that $a \in S_{1,\mu}^{-2M_0}(\rn)$
satisfies $a(\cdot,\xi)=0$ if $2^{k-2} \le |\xi| \le 2^{k+2}$.
Then
$a(X,D)$ is bounded on $A^{w,\tau}_{{\mathcal L},q,a}(\rn)$.
Moreover, there exist a positive constant $E$ and a positive
constant $C(E)$, depending on $E$, such that
the operator norm satisfies that
\[
\|a(X,D)\|_{A^{w,\tau}_{{\mathcal L},q,a}(\rn) \to A^{w,\tau}_{{\mathcal L},q,a}(\rn)}
\le C(E)2^{-Ek},
\]
provided $M_0\in(1,\fz)$ is large enough.
\end{lemma}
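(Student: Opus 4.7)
The plan is to follow and refine the argument in Lemma \ref{l5.2}, extracting the smallness factor $2^{-Ek}$ from the combination of (i) the $-2M_0$ order of the symbol and (ii) the vanishing of $a(\cdot,\xi)$ on the annulus $\{2^{k-2}\le|\xi|\le 2^{k+2}\}$.  As in the proof of Lemma \ref{l5.2}, I would represent $\varphi_j*[a(X,D)f]$ as a double integral, use Fubini, and write $\varphi_j = 2^{-2jL}\Delta^L \tau_j$ for suitable $\tau_j\in\cs(\rn)$ and $L$ large, obtaining the kernel
\[
K_j(x,z) = 2^{-2jL}\int_\rn \tau_j(x-y)\,\Delta_y^L\!\!\left[\int_\rn a(y,\xi)\,e^{i\xi(y-z)}\,d\xi\right]dy.
\]
Applying $\Delta_y^L$ via Leibniz's rule reduces the inner $\xi$-integral to a sum of integrals of $\xi^{\vec\alpha_2}\partial_y^{\vec\alpha_1}a(y,\xi)\,e^{i\xi(y-z)}$ with $\|\vec\alpha_1\|_1+\|\vec\alpha_2\|_1=2L$, each of which is then treated by $L$-fold integration by parts in $\xi$ (against $(1-\Delta_\xi)^L$, with the weight $(1+|y-z|^2)^{-L}$ coming from the identity $(1+|y-z|^2)^{-1}(1-\Delta_\xi)e^{i\xi(y-z)}=e^{i\xi(y-z)}$), producing the factor $(1+|y-z|^2)^{-L}$.

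The crucial new step is to estimate the remaining $\xi$-integral, whose integrand is supported in $\{|\xi|<2^{k-2}\}\cup\{|\xi|>2^{k+2}\}$.  On the high-frequency part $\{|\xi|>2^{k+2}\}$, the symbol estimate
\[
|\partial_\xi^{\vec\beta}\partial_y^{\vec\alpha_1}a(y,\xi)|\lesssim(1+|\xi|)^{-2M_0+\mu\|\vec\alpha_1\|_1-\|\vec\beta\|_1}
\]
combined with $|\xi^{\vec\alpha_2}|\le |\xi|^{2L}$ gives an integrand of order $|\xi|^{-2M_0+O(L)}$; integration over $|\xi|>2^{k+2}$ yields a factor $\lesssim 2^{-k E_1}$ with $E_1>0$, provided $M_0$ is chosen large enough (this is the role of the assumption ``$M_0$ large''). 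On the low-frequency part $\{|\xi|<2^{k-2}\}$ the symbol bound alone is of order one; here I would retain the $y$-integration against $\tau_j$ and exploit that $\tau_j$ has frequency support in $\{2^{j-1}\le|\xi|\le 2^{j+1}\}$.  Iterated integration by parts in $y$, transferring derivatives from $a(y,\xi)$ onto $\tau_j$ (the loss being controlled by $\mu<1$) or onto $e^{i\xi(y-z)}$ (each such derivative producing an $i\xi$-factor of size $\lesssim 2^k$), combined with the orthogonality $\widehat{\tau_j}(\xi)=0$ for $|\xi|\notin[2^{j-1},2^{j+1}]$, yields the missing $k$-decay. The net outcome is a pointwise kernel bound
\[
|K_j(x,z)|\lesssim 2^{-Ek}\cdot 2^{-2jL}\int_\rn\frac{|\tau_j(x-y)|}{(1+|y-z|^2)^L}\,dy.
\]
With this in hand, the remainder of the proof is exactly that of Lemma \ref{l5.2}: one derives the pointwise estimate $(\varphi_j^*[a(X,D)f])_a(x)\lesssim 2^{-Ek}\sup_{w\in\rn}|f(x+w)|/(1+|w|)^a$ (and the analogous one with $\Phi$ in place of $\varphi_j$ for $j=0$), and then invokes Lemma \ref{l2.3} to close the estimate in each of $B^{w,\tau}_{\cl,q,a}(\rn)$, $F^{w,\tau}_{\cl,q,a}(\rn)$, $\cn^{w,\tau}_{\cl,q,a}(\rn)$ and $\ce^{w,\tau}_{\cl,q,a}(\rn)$, with operator norm bounded by $C(E)2^{-Ek}$.

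The principal obstacle is the treatment of the low-frequency tail $\{|\xi|<2^{k-2}\}$: unlike the high-frequency part, the $-2M_0$ symbol decay is not directly useful there, and one must instead combine the smoothness of $a$ (encoded in the $\mu<1$ class) with the frequency-orthogonality of $\tau_j$ to functions supported at strictly lower frequencies.  Obtaining a bound that is uniform in $j$ and that delivers $2^{-Ek}$ smallness (rather than merely $2^{-E|j-k|}$) requires careful bookkeeping in distributing derivatives during the $y$-integrations by parts, and this is precisely where the two standing hypotheses $\mu<1$ and ``$M_0$ large enough'' enter in an essential way.
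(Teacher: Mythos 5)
There is a genuine gap, and it originates in how you read the support hypothesis. The condition in the statement is a typo for ``$a(\cdot,\xi)=0$ \emph{unless} $2^{k-2}\le|\xi|\le 2^{k+2}$'': the symbol is supported \emph{in} the dyadic annulus, not outside it. This is forced by the way the lemma is used in the proof of Theorem \ref{t5.1} (where $a_j(x,\xi)=a(x,\xi)\widehat{\vz}(2^{-j}\xi)$ is supported in $2^{j-2}\le|\xi|\le 2^{j+2}$) and by the paper's own proof, which begins with $a(X,D)f=a(X,D)\bigl(\sum_{i=k-3}^{k+3}\vz_i*f\bigr)$ and bounds $(1-\Delta_\xi)^L(\xi^{\az}\partial_y^{\bz}a(y,\xi))$ by $2^{2k(2L-M_0)}\chi_{B(0,2^{k+2})\setminus B(0,2^{k-2})}(\xi)$. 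Under your literal reading the claimed bound $\le C(E)2^{-Ek}$ is simply false: a fixed symbol $a\in S^{-2M_0}_{1,\mu}(\rn)$ supported in $\{|\xi|\le 1\}$ satisfies your hypothesis for every $k\ge 3$, yet $a(X,D)$ is a fixed nonzero operator, so its norm cannot decay in $k$. This is exactly why your ``low-frequency tail'' $\{|\xi|<2^{k-2}\}$ resists all bookkeeping: no integration by parts against $\tau_j$ or $e^{i\xi(y-z)}$ can manufacture $2^{-Ek}$ there, because there is nothing in that region that knows about $k$. Your treatment of the high-frequency region $\{|\xi|>2^{k+2}\}$ is, in substance, the same computation the paper performs on the annulus ($|\xi|^{2L}$ from $\xi^{\vec\alpha_2}$ against $(1+|\xi|)^{-2M_0}$, with $M_0$ large absorbing everything), so once the hypothesis is read correctly that part of your argument becomes the whole argument.

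A second, independent issue is your concluding pointwise bound $(\varphi_j^*[a(X,D)f])_a(x)\lesssim 2^{-Ek}\sup_{w\in\rn}|f(x+w)|(1+|w|)^{-a}$. For a general $f\in A^{w,\tau}_{\cl,q,a}(\rn)$ this right-hand side is neither well defined (for a distribution) nor controlled by $\|f\|_{A^{w,\tau}_{\cl,q,a}(\rn)}$. In Lemma \ref{l5.2} that form of estimate was legitimate only because $f$ could be assumed band-limited to $\{|\xi|\le 2\}$, so the supremum was a Peetre maximal function of $\Phi*f$. Here the analogous step is the frequency localization $a(X,D)f=a(X,D)\bigl(\sum_{i=k-3}^{k+3}\vz_i*f\bigr)$, which replaces $|f|$ on the right by $\sum_{l=-3}^{3}(\vz_{k+l}^*f)_a$; this is the form (\ref{5.11}) to which Lemma \ref{l2.3} applies and which yields the factor $2^{k(4L-2M_0+a+n)}=2^{-Ek}$ with $E=2M_0-4L-a-n>0$ for $M_0$ large. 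Without that localization the final step of your proposal does not close.
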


\begin{proof}
Let us suppose that $M_0>2L+n$, where $L \in \N$ is chosen
so that
\begin{equation}\label{5.6}
L=\lfloor a+n+\alpha_1+\alpha_2+n\tau+1\rfloor.
\end{equation}
Notice that in this time
$a(X,D)f=a(X,D)( \sum_{i=k-3}^{k+3}\vz_i\ast f)$ for all
$f\in A^{w,\tau}_{{\mathcal L},q,a}(\rn)$.
If we go through a similar argument as we did for \eqref{5.5}
with the condition on $L$
replaced by \eqref{5.6},
we see that, for all $j\in\zz_+$ and $x,z\in\rn$,
\begin{equation}\label{5.7}
\left|\left[
\int_{\rn}\varphi_j(x-y)a(y,\cdot)e^{i\cdot y}\,dy
\right]^\land(z)\right|
\lesssim 2^{-2jL+k(4L-2M_0+n)}
\int_{\rn}\frac{|\tau_j(x-y)|}{(1+|y-z|^2)^L}\,dy.
\end{equation}
Indeed, we just need to replace \eqref{5.3} in the proof of
\eqref{5.5} by the following estimate,
for all $k\in\zz_+$, $\xi,y\in\rn$ and multi-indices $\az$, $\bz$
such that $\|\az\|_1+\|\bz\|_1=2L$,
\begin{gather*}
\lf|(1-\Delta_\xi)^L\left(\xi^{\az}\partial^{\bz}_ya(y,\xi)
\right)\r|\ls2^{2k(2L-M_0)}\chi_{B(0,2^{k+2})\setminus B(0,2^{k-2})}(\xi).
\end{gather*}
By \eqref{5.7},
we conclude that, for all $j\in\zz_+$ and $x,z\in\rn$,
\begin{eqnarray*}
&&\frac{|\varphi_j*(a(X,D)f)(x+z)|}{(1+2^j|z|)^a}\\
&&\hs\lesssim
\int_{\rn}\int_{\rn}
\frac{2^{-2jL+k(4L-2M_0+n)}|\tau_j(x+z-y)|}
{(1+2^j|z|)^a(1+|y-w|^2)^L}
\sum_{l=-3}^3|\varphi_{k+l}*f(w)|\,dy\,dw\\
&&\hs\lesssim
2^{-2jL+k(4L-2M_0+a+n)}\sum_{l=-3}^3
\sup_{w \in \rn}\frac{|\varphi_{k+l}*f(x+w)|}{(1+2^{k+l}|w|)^a}.
\end{eqnarray*}
Consequently,
we see that
\begin{equation}\label{5.11}
\frac{|\varphi_j(D)(a(X,D)f)(x+z)|}{(1+2^j|z|)^a}
\lesssim
2^{-2jL+k(4L-2M_0+a+n)}
\sup_{\substack{w \in \rn \\ l \in [-3,3] \cap \zz}}\frac{|\varphi_{k+l}(D)f(x+w)|}{(1+2^{k+l}|w|)^a}.
\end{equation}
Combining the estimate \eqref{5.11} and Lemma \ref{l2.3} then
induce the desired result, and hence completes the proof of Lemma \ref{l5.3}.
\end{proof}

In view of the atomic decomposition,
we have the following conclusion.

\begin{lemma}\label{l5.4}
Let $w$ be as in Theorem \ref{t5.1}.
Assume that $a \in S_{1,\mu}^0(\rn)$
can be expressed as
$a(X,D)=\Delta^{2M_0} \circ b(X,D)$
for some $b \in S_{1,\mu}^{-2M_0}(\rn)$.
Then
$a(X,D)$ is bounded on $A^{w,\tau}_{{\mathcal L},q,a}(\rn)$,
as long as $M_0$ is large.
\end{lemma}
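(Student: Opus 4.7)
The plan is to exploit the factorization $a(X,D)=\Delta^{2M_0}\circ b(X,D)$ with $b\in S^{-2M_0}_{1,\mu}$ via the atomic and molecular decompositions of Theorem \ref{t4.1}. Given $f\in A^{w,\tau}_{\cl,q,a}(\rn)$, we first apply the analysis part of Theorem \ref{t4.1} to decompose $f=\sum_{j\in\zz_+,\,k\in\zz^n}\lambda_{jk}\mathfrak{A}_{jk}$, where $\{\mathfrak{A}_{jk}\}$ is a collection of $(K,L)$-atoms (with $K,L,N$ chosen sufficiently large to satisfy \eqref{4.3}, \eqref{4.4} and \eqref{4.5}) and $\|\{\lambda_{jk}\}\|_{a^{w,\tau}_{\cl,q,a}(\rn)}\ls\|f\|_{A^{w,\tau}_{\cl,q,a}(\rn)}$. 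The heart of the argument is to show that, once $M_0$ is chosen large enough, each function $a(X,D)\mathfrak{A}_{jk}$, divided by a uniform multiplicative constant, is a $(K,L)$-molecule associated with $Q_{jk}$. The synthesis part of Theorem \ref{t4.1} will then give
\[
\|a(X,D)f\|_{A^{w,\tau}_{\cl,q,a}(\rn)}
\ls \|\{\lambda_{jk}\}\|_{a^{w,\tau}_{\cl,q,a}(\rn)}
\ls \|f\|_{A^{w,\tau}_{\cl,q,a}(\rn)},
\]
which is the desired conclusion.

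The moment condition of the molecule is free of charge from the factorization. Indeed, for any multi-index $\vec{\beta}$ with $\|\vec{\beta}\|_1\le 4M_0-1$, integration by parts yields
\[
\int_{\rn}y^{\vec{\beta}}\,a(X,D)\mathfrak{A}_{jk}(y)\,dy
=\int_{\rn}\lf[\Delta^{2M_0}(y^{\vec{\beta}})\r]\,b(X,D)\mathfrak{A}_{jk}(y)\,dy
=0,
\]
because $\Delta^{2M_0}$ annihilates every polynomial of degree $<4M_0$; convergence of the integrals is ensured by the off-diagonal decay of the Schwartz kernel of $b(X,D)$, which follows from $b\in S^{-2M_0}_{1,\mu}$. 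Choosing $M_0$ so that $4M_0>L$ therefore absorbs the moment requirement for $(K,L)$-molecules. The size/decay conditions come from the standard kernel analysis for $a\in S^0_{1,\mu}$: writing $a(X,D)\mathfrak{A}_{jk}(x)=\int_{\rn}K_a(x,y)\,\mathfrak{A}_{jk}(y)\,dy$, the Calder\'on-Zygmund type off-diagonal estimates on $\partial_y^{\vec{\gamma}}K_a(x,y)$, combined with the atom's moment conditions of order $L$ and a Taylor expansion of $K_a(x,\cdot)$ about $c_{Q_{jk}}$ to order $L$, produce the required decay $(1+2^j|x-c_{Q_{jk}}|)^{-N}$ for $|x-c_{Q_{jk}}|\gtrsim 2^{-j}$; the rapid decay of $K_a(x,y)$ for $|x-y|$ large handles the far regime, and the smoothness of $a(X,D)\mathfrak{A}_{jk}$ together with its derivatives near $c_{Q_{jk}}$ follows from the boundedness of $S^0_{1,\mu}$-operators on appropriately scaled H\"older classes, yielding the correct $2^{j\|\vec{\alpha}\|_1}$-scaling.

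The main obstacle is the careful tracking of the parameters, in particular ensuring that the molecular admissibility conditions \eqref{4.3}, \eqref{4.4} and \eqref{4.5} are met with the decay and smoothness exponents that the kernel analysis delivers. The $\mu$-loss in $S^0_{1,\mu}$, where each $x$-derivative of the symbol costs a factor $(1+|\xi|)^\mu$ with $\mu<1$, must be absorbed by the $2^{-j(L+1)}$-gain coming from the atom's moments and by the $2M_0$-fold smoothing effect of $b\in S^{-2M_0}_{1,\mu}$ provided through the $\Delta^{2M_0}$-factorization. All these requirements can be met simultaneously by taking $M_0$ sufficiently large relative to $L$, $K$, $N$, $\alpha_1$, $\alpha_2$, $\alpha_3$, $\delta$, $\gamma$, $\tau$, $a$ and $n$, which is precisely the meaning of the qualifier ``as long as $M_0$ is large'' in the statement.
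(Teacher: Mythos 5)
Your proposal is correct and follows essentially the same route as the paper: decompose $f$ into atoms via Theorem \ref{t4.1}, observe that $a(X,D)$ maps atoms to molecules (the decay condition coming from the standard kernel estimates for $S^0_{1,\mu}$-operators, which the paper delegates to an earlier reference, and the moment condition coming for free from the factorization $a(X,D)=\Delta^{2M_0}\circ b(X,D)$ by integration by parts), and then conclude with the molecular synthesis part of Theorem \ref{t4.1}. Your observation that the moments actually vanish up to order $4M_0-1$ is slightly sharper than the paper's stated order $2M_0$, but both suffice once $M_0$ is taken large.
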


\begin{proof}
For any $f \in A^{w,\tau}_{{\mathcal L},q,a}(\rn)$, by Theorem \ref{t4.1},
there exist a collection $\{{\mathfrak A}_{jk}\}_{j \in \zz_+, \, k \in \Z^n}$
of atoms
and a complex sequence
$\{\lambda_{jk}\}_{j \in \zz_+, \, k \in \Z^n}$
such that
$f=\sum_{j=0}^\infty
\sum_{k \in \Z^n}\lambda_{jk}{\mathfrak A}_{jk}$
in ${\mathcal S}'(\rn)$
and that
$\|\{\lambda_{jk}\}_{j \in \zz_+, \, k \in \Z^n}
\|_{a^{w,\tau}_{{\mathcal L},q,a}(\rn)}
\lesssim
\|f\|_{A^{w,\tau}_{{\mathcal L},q,a}(\rn)}.$
In the course of the proof of \cite[Theorem 3.1]{s09},
we have shown that atoms
$\{{\mathfrak A}_{jk}\}_{j \in \zz_+, \, k \in \Z^n}$
are transformed into molecules
$\{a(X,D){\mathfrak A}_{jk}\}_{j \in \zz_+, \, k \in \Z^n}$
satisfying the decay condition.
However, if $a(X,D)=\Delta^{2M_0} \circ b(X,D)$,
then atoms are transformed into molecules
with moment condition of order $2M_0$.
Therefore, via Theorem \ref{t4.1} by letting $L=2M_0$
then completes the proof of Lemma \ref{l5.4}.
\end{proof}

With Lemmas \ref{l5.2} through \ref{l5.4} in mind,
we prove Theorem \ref{t5.1}.

\begin{proof}[Proof of Theorem \ref{t5.1}]
We decompose $a(X,D)$ according to Lemma \ref{l5.1}.
We fix an integer $M_0$ large enough as in Lemmas \ref{l5.3} and \ref{l5.4}.
Let us write $A(X,D):= a(X,D) \circ (1+\Delta^{2M_0})^{-1}$
and $B(X,D):= \Delta^{2M_0} \circ a(X,D) \circ (1+\Delta^{2M_0})^{-1}$.

Let $\Phi$ and $\vz$ be as in \eqref{1.1} and \eqref{1.2}
satisfying that $\widehat\Phi(\xi)+\sum_{j\in\nn}
\widehat\vz(2^{-j}\xi)=1$ for all $\xi\in\rn$. Then by the Calder\'on
reproducing formula, we know that, for all $f\in
A^{w,\tau}_{{\mathcal L},q,a}(\rn)$, $f=\Phi\ast f+\sum_{j\in\nn}\vz_j\ast f$
in $\cs'(\rn)$. Therefore,
we see that
\begin{eqnarray*}
a(X,D)f(x)
&&=\sum_{j=0}^\fz a(X,D)(\vz_j\ast f)(x)\\
&&=\sum_{j=0}^\fz \int_\rn a(x,\xi)\widehat\vz(2^{-j}\xi) \widehat{f}(\xi)
e^{ix\xi}\,d\xi\\
&&=:\sum_{j=0}^\fz \int_\rn a_j(x,\xi)\widehat{f}(\xi)
e^{ix\xi}\,d\xi=:\sum_{j=0}^\fz a_j(X,D)f(x)
\end{eqnarray*}
in $\cs'(\rn)$,
where $a_j(x,\xi):=a(x,\xi)\widehat\vz(2^{-j}\xi)$
for all $x,\xi\in\rn$,
and $a_j(X,D)$ is the related operator of $a_j(x,\xi)$.
It is easy to see that $a_j\in S^0_{1,\mu}(\rn)$ and supports in
the annulus $2^{j-2}\le|\xi|\le 2^{j+2}$.
Then by Lemmas \ref{l5.2} and \ref{l5.3},
we conclude that $A(X,D)$ is bounded on $A^{w,\tau}_{{\mathcal L},q,a}(\rn)$.
Meanwhile, Lemma \ref{l5.4} shows
that $B(X,D)$ is bounded on $A^{w,\tau}_{{\mathcal L},q,a}(\rn)$.
Consequently, it follows that $a(X,D)=A(X,D)+B(X,D)$ is bounded
on $A^{w,\tau}_{{\mathcal L},q,a}(\rn)$,
which completes the proof of Theorem \ref{t5.1}.
\end{proof}

Since molecules are mapped to molecules
by pseudo-differential operators
if we do not consider the moment condition,
we have the following conclusion.
We omit the details.

\begin{theorem}\label{t5.2}
Under the condition of Theorem {\rm \ref{t4.2}},
pseudo-differential operators
with symbol $S^0_{1,1}(\rn)$ are bounded
on $A_{\cl,q,a}^{w,\tau}(\rn)$.
\end{theorem}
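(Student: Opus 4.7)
The plan is to reduce the boundedness of $a(X,D)\in S^0_{1,1}(\rn)$ on $A^{w,\tau}_{\cl,q,a}(\rn)$ to the atomic/molecular characterization provided by Theorem \ref{t4.2}, taking advantage of the fact that in the regular case $L=-1$ no moment condition is imposed on molecules. Specifically, given $f\in A^{w,\tau}_{\cl,q,a}(\rn)$, Theorem \ref{t4.2} provides a decomposition
$$
f=\sum_{j=0}^{\infty}\sum_{k\in\zz^n}\lambda_{jk}{\mathfrak A}_{jk}\quad\text{in }\cs'(\rn),
$$
where each ${\mathfrak A}_{jk}$ is a $(K,-1)$-atom supported near $Q_{jk}$ with $K$ as in \eqref{4.22}, and with $\|\{\lambda_{jk}\}\|_{a^{w,\tau}_{\cl,q,a}(\rn)}\lesssim \|f\|_{A^{w,\tau}_{\cl,q,a}(\rn)}$. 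I would then apply $a(X,D)$ termwise and show that each $a(X,D){\mathfrak A}_{jk}$ is, up to a uniform multiplicative constant, a $(K,-1)$-molecule associated with $Q_{jk}$ in the sense of Definition \ref{d4.2}, with decay parameter $N$ as large as required by \eqref{4.4}.

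The main technical step is the pointwise estimate
$$
\bigl|\partial^{\vec{\alpha}}\bigl(a(X,D){\mathfrak A}_{jk}\bigr)(x)\bigr|
\lesssim \ell(Q_{jk})^{-\|\vec{\alpha}\|_1}\Bigl(1+\frac{|x-c_{Q_{jk}}|}{\ell(Q_{jk})}\Bigr)^{-N}
$$
for all multiindices $\vec{\alpha}$ with $\|\vec{\alpha}\|_1\le K$, uniformly in $j,k$. I would prove this by first normalizing via the dilation $x\mapsto c_{Q_{jk}}+\ell(Q_{jk})x$ to reduce to the case of a unit-scale atom, then expressing $a(X,D){\mathfrak A}_{jk}(x)$ as an oscillatory integral and integrating by parts in $\xi$ against the symbol to produce the required polynomial decay in $|x-c_{Q_{jk}}|/\ell(Q_{jk})$. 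The derivatives of the $S^0_{1,1}(\rn)$ symbol that appear during this procedure are all controlled uniformly because only finitely many $\xi$-derivatives (balanced by the decay of $(1+|\xi|)^{-\|\vec\alpha\|_1+\|\vec\beta\|_1}$) and finitely many $x$-derivatives arise. This is essentially the atom-to-molecule content of \cite[Theorem 3.1]{s09} already invoked in Lemma \ref{l5.4}.

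The hard point in Theorem \ref{t5.1} was that symbols in $S^0_{1,1}(\rn)$ do not preserve the moment condition of atoms, which is precisely what forced the decomposition $a(X,D)=A(X,D)+\Delta^{2M_0}\circ b(X,D)$ together with the heavier framework of Lemmas \ref{l5.2}--\ref{l5.4}. Under the hypotheses of Theorem \ref{t4.2}, however, the synthesis of molecules requires no moment condition at all (the case $L=-1$), so this obstacle simply vanishes. Once the molecular estimate above is established, I would invoke the molecular synthesis part of Theorem \ref{t4.2} to obtain
$$
\|a(X,D)f\|_{A^{w,\tau}_{\cl,q,a}(\rn)}\lesssim \|\{\lambda_{jk}\}\|_{a^{w,\tau}_{\cl,q,a}(\rn)}\lesssim \|f\|_{A^{w,\tau}_{\cl,q,a}(\rn)};
$$
convergence of $\sum_{j,k}\lambda_{jk}\,a(X,D){\mathfrak A}_{jk}$ in $\cs'(\rn)$ follows from the regular-case analogue of Lemma \ref{l4.1} built into the proof of Theorem \ref{t4.2}. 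The only part of the argument that requires genuine care is the uniform molecular decay estimate for $a(X,D){\mathfrak A}_{jk}$; the rest of the proof is formal.
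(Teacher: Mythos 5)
Your proposal is correct and is exactly the argument the paper intends: the authors state only that ``molecules are mapped to molecules by pseudo-differential operators if we do not consider the moment condition'' and omit the details, which you have filled in by combining the analysis part of Theorem \ref{t4.2} with the atom-to-molecule estimate from \cite[Theorem 3.1]{s09} and the moment-free synthesis of the regular case $L=-1$. No discrepancy with the paper's route.
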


\section{Embeddings}
\label{s6}

\subsection{Embedding into $C({\mathbb R}^n)$}

Here we give a sufficient condition
for which the function spaces are embedded
into $C({\mathbb R}^n)$.
In what follows, the \emph{space $C({\mathbb R}^n)$}
denotes the set of all continuous functions on $\rn$.
Notice that here, we do not require that the functions
of $C(\rn)$ are bounded.

\begin{theorem}\label{t6.1}
Let $q\in(0,\fz]$, $a\in(0,\fz)$ and $\tau\in[0,\fz)$.
Let $w \in \star-{\mathcal W}^{\alpha_3}_{\alpha_1,\alpha_2}$
with
$\alpha_1, \alpha_2, \alpha_3 \in [0,\infty)$
and a quasi-normed function space $\cl(\rn)$
satisfy $(\cl1)$ through $(\cl6)$ such that
\begin{equation}\label{6.1}
a+\gamma-\alpha_1-n\tau<0.
\end{equation}
Then
$A^{w,\tau}_{\cl,q,a}(\rn)$
is embedded into
$C({\mathbb R}^n)$.
\end{theorem}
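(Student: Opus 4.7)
The plan is to decompose $f$ via a Calder\'on reproducing formula
$f = \Phi*f + \sum_{j=1}^\infty \vz_j*f$, which holds in $\cs'(\rn)$ by Theorem \ref{t3.3}, and then prove that the right-hand side converges uniformly on every closed ball. Since each $\Phi*f$ and $\vz_j*f$ lies in $C^\infty(\rn)$, the uniform-on-compacts limit is continuous; its agreement with the $\cs'$-limit identifies it with $f$ and yields the embedding $A^{w,\tau}_{\cl,q,a}(\rn) \hookrightarrow C(\rn)$.

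The crux is the pointwise bound
\begin{equation*}
|\vz_j*f(x)|
\lesssim
2^{-j(\alpha_1+n\tau-\gamma-a)}(1+R)^a\,\|f\|_{A^{w,\tau}_{\cl,q,a}(\rn)},
\qquad x \in \overline{B(0,R)},\ j \in \nn.
\end{equation*}
To derive it, I would use the Peetre inequality $(\vz_j^* f)_a(x) \le (1+2^j|x-y|)^a(\vz_j^* f)_a(y)$, which follows from $1+2^j|z| \le (1+2^j|z-u|)(1+2^j|u|)$. Restricting $y$ to the unit-scale cube $Q_{j0}$ (so that $|x-y|\le R+2^{-j}\sqrt{n}$ and hence $(1+2^j|x-y|)^a \lesssim (1+2^jR)^a$) and taking the $\cl$-quasi-norm in $y$, with $w_j(y) \sim w_j(0)$ on $Q_{j0}$ by $(W2)$, yield
\begin{equation*}
w_j(0)\,\|\chi_{Q_{j0}}\|_{\cl(\rn)}\,|\vz_j*f(x)|
\lesssim
(1+2^jR)^a\,\|\chi_{Q_{j0}} w_j (\vz_j^* f)_a\|_{\cl(\rn)}.
\end{equation*}
By Remark \ref{r3.1}(ii), the inclusion $A^{w,\tau}_{\cl,q,a}(\rn) \hookrightarrow B^{w,\tau}_{\cl,\infty,a}(\rn)$ gives $\|\chi_{Q_{j0}} w_j (\vz_j^* f)_a\|_{\cl(\rn)} \le |Q_{j0}|^{\tau}\|f\|_{B^{w,\tau}_{\cl,\infty,a}(\rn)} \lesssim 2^{-jn\tau}\|f\|_{A^{w,\tau}_{\cl,q,a}(\rn)}$. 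Combining with $\|\chi_{Q_{j0}}\|_{\cl(\rn)} \gtrsim 2^{-j\gamma}$ from $(\cl6)$, $w_j(0) \gtrsim 2^{j\alpha_1}$ from the $\star$-class condition applied at $\nu=0$, and $(1+2^jR)^a \lesssim 2^{ja}(1+R)^a$, delivers the claim; an analogous (and simpler) argument for $\Phi*f$ produces $\sup_{x \in \overline{B(0,R)}}|\Phi*f(x)|\lesssim (1+R)^a\|f\|_{A^{w,\tau}_{\cl,q,a}(\rn)}$.

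The hypothesis \eqref{6.1} reads $\varepsilon:=\alpha_1+n\tau-\gamma-a>0$, so $\sum_{j\ge 1}2^{-j\varepsilon}<\infty$ and the series converges uniformly on $\overline{B(0,R)}$ for every $R \in (0,\fz)$. Its continuous limit must equal the $\cs'$-limit $f$, and the bound $\sup_{x \in \overline{B(0,R)}}|f(x)| \lesssim (1+R)^a\|f\|_{A^{w,\tau}_{\cl,q,a}(\rn)}$ provides continuity of the embedding. The principal obstacle is securing the sharp decay exponent in $j$: a direct appeal to Proposition \ref{p3.3} only yields a rate of the form $2^{-j(\alpha_1+\tau-\gamma)}$, too weak when $n \ge 2$ or $\tau$ is large, whereas pairing the Peetre inequality with the $\cl$-quasi-norm on the single unit-scale cube $Q_{j0}$ fully exploits the $|P|^{-\tau}$-localization in the definition of $A^{w,\tau}_{\cl,q,a}(\rn)$ and produces the sharp exponent compatible with \eqref{6.1}.
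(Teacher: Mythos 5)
Your proposal is correct and follows essentially the same route as the paper: reduce to $B^{w,\tau}_{\cl,\infty,a}(\rn)$, use the Peetre inequality to transfer the pointwise value $|\vz_j*f(x)|$ onto a cube of side length $2^{-j}$, take the $\cl(\rn)$-quasi-norm there, and invoke $(\cl6)$, (W$1^\star$) and the $|P|^{-\tau}$-normalization to get the decay rate $2^{j(a+\gamma-\alpha_1-n\tau)}$, which is summable by \eqref{6.1}. The only cosmetic difference is that you localize at the fixed cube $Q_{j0}$ and pay an explicit factor $(1+R)^a$, whereas the paper localizes at $B(x,2^{-j})$ and absorbs the $x$-dependence into the implicit constant.
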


\begin{proof}
By Remark \ref{r3.1}(ii), we see that it suffices to consider
$B^{w,\tau}_{\cl,\infty,a}(\rn)$,
into which $A^{w,\tau}_{\cl,q,a}(\rn)$ is embedded.
Also let us assume (\ref{3.27}).
Let us prove that
$B^{w,\tau}_{\cl,\infty,a}(\rn)$
is embedded into $C(\rn)$.
Let $x \in \R^n$ be fixed.
From the definition of the Peetre maximal operator,
we deduce that,
for all $f\in B^{w,\tau}_{\cl,q,a}(\rn)$, $j\in\zz_+$ and $y \in B(x,1)$,
\[
\sup_{w \in B(x,1)}
|\vz_j*f(w)|
\lesssim
2^{ja}
\sup_{z \in \rn}
\frac{|\vz_j*f(y+z)|}{(1+2^j|z|)^a}.
\]
If we consider the $\cl(\rn)$-quasi-norm of both sides,
then we obtain
\begin{equation*}
\sup_{z \in B(x,1)}|\vz_j*f(z)|
\lesssim_x
\frac{2^{ja}}{\|\chi_{B(x,2^{-j})}\|_{\cl(\rn)}}
\left\|\chi_{B(x,2^{-j})}(\vz_j*f)^*_a\right\|_{\cl(\rn)}.
\end{equation*}%
Notice that $w_j(x)=w(x,2^{-j}) \ge 2^{j\alpha_1}w(x,1)$
for all $j\in\zz_+$ and $x\in\rn$,
and hence from (W2) and \eqref{6.1},
it follows that
$$
\sup_{z \in B(x,1)}|\vz_j*f(z)|
\lesssim_x
2^{j(a+\gamma-\alpha_1-n\tau)}
\|f\|_{B^{w,\tau}_{\cl,\infty,a}(\rn)}\,.
$$
Since this implies that
\[
f=\Phi*f+\sum_{j=1}^\infty \vz_j*f
\]
converges uniformly over any ball with radius $1$,
$f$ is continuous,
which completes the proof of Theorem \ref{t6.1}.
\end{proof}

\subsection{Function spaces $A^{w,\tau}_{\cl,q,a}(\rn)$ for $\tau$ large}

The following theorem generalizes \cite[Theorem 1]{yy} and
explains
what happens if $\tau$ is too large.
\begin{theorem}\label{t6.2}
Let $\oz\in\cw_{\az_1,\az_2}^{\az_3}$
with $\az_1,\az_2,\az_3 \in[0,\fz)$.
Define
a new index $\wz \tau$ by
\begin{equation}\label{6.3}
\wz \tau:=
\limsup_{j \to \infty}
\left(\sup_{P\in\cq_j(\rn)}
\frac{1}{nj}\log_2\frac{1}{\|\chi_P\|_{\cl(\rn)}}
\right)
\end{equation}
and a new weight $\wz \oz$ by
\begin{equation*}
\wz \oz(x,2^{-j}) :=
2^{jn(\tau-\wz \tau)}\oz(x,2^{-j}),
\quad
x \in \rn, \, j \in \Z_+.
\end{equation*}%

Assume that $\tau$ and $\wz \tau$ satisfy
\begin{equation}\label{6.5}
\tau>\wz\tau \ge 0.
\end{equation}
Then

{\rm (i)}
$\wz w\in
\cw_{(\az_1-n(\tau-\wz \tau))_+,(\az_2+n(\tau-\wz \tau))_+}^{\az_3}$;

{\rm (ii)}
for all $q\in(0,\fz)$ and $a>\alpha_3+N_0$,
then $F^{w,\tau}_{\cl,q,a}(\rn)$ and
$B^{w,\tau}_{\cl,q,a}(\rn)$ coincide, respectively, with
$F^{\wz w}_{\fz,\fz,a}(\rn)$ and
$B^{\wz w}_{\fz,\fz,a}(\rn)$
with equivalent norms.
\end{theorem}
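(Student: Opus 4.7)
The plan is to prove (i) by a direct computation from the axioms of $\cw_{\alpha_1,\alpha_2}^{\alpha_3}$, and (ii) by reducing to a single pointwise comparison between $w_j(\varphi_j^*f)_a$ and $\tilde w_j(\varphi_j^*f)_a$ combined with the scaling information packaged in~\eqref{6.3}.

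For part (i), I would simply insert $\tilde w_j(x)=2^{jn(\tau-\tilde\tau)}w_j(x)$ into (W1) and (W2). The ratio $\tilde w_j(x)/\tilde w_\nu(x)=2^{(j-\nu)n(\tau-\tilde\tau)}w_j(x)/w_\nu(x)$ shifts each exponent in (W1) by $\pm n(\tau-\tilde\tau)$, and taking positive parts yields the announced parameters $(\alpha_1-n(\tau-\tilde\tau))_+$ and $(\alpha_2+n(\tau-\tilde\tau))_+$. Axiom (W2) is inherited verbatim because the scale factor $2^{jn(\tau-\tilde\tau)}$ does not depend on $x$, so $\alpha_3$ is unchanged.

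For part (ii), I first observe that the case $p=q=\infty$ collapses the Besov- and Triebel--Lizorkin-type norms on the right-hand side: both $\|f\|_{F^{\tilde w}_{\infty,\infty,a}}$ and $\|f\|_{B^{\tilde w}_{\infty,\infty,a}}$ reduce to $\sup_{j,x}\tilde w_j(x)(\varphi_j^*f)_a(x)$. Thus all four claimed equivalences follow from the single equivalence $\|f\|_{A^{w,\tau}_{\cl,q,a}}\sim\|f\|_{B^{\tilde w}_{\infty,\infty,a}}$ for $A\in\{B,F\}$. For the direction $\gtrsim$, I adapt the Peetre-style argument from Proposition~\ref{p3.3}: for $x\in P:=Q_{jk}$, (W2) and the Peetre trick yield $w_j(x)(\varphi_j^*f)_a(x)\lesssim\inf_{z\in P}w_j(z)(\varphi_j^*f)_a(z)$, so
\[
w_j(x)(\varphi_j^*f)_a(x)\,\|\chi_P\|_\cl\lesssim\|\chi_P w_j(\varphi_j^*f)_a\|_\cl\le|P|^\tau\|f\|_{A^{w,\tau}_{\cl,q,a}}.
\]
Multiplying by $2^{jn(\tau-\tilde\tau)}/\|\chi_P\|_\cl$ and invoking the lower bound $\|\chi_P\|_\cl\gtrsim|P|^{\tilde\tau}$ coming from \eqref{6.3} (valid for $j$ large after absorbing $\epsilon$-slack into $\tau-\tilde\tau$; for small $j$ one falls back on $(\cl6)$), the left-hand side becomes $\tilde w_j(x)(\varphi_j^*f)_a(x)\lesssim\|f\|_{A^{w,\tau}_{\cl,q,a}}$, and taking $\sup_{j,x}$ closes this direction.

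For the direction $\lesssim$, set $C:=\|f\|_{B^{\tilde w}_{\infty,\infty,a}}$, giving the pointwise bound $w_j(x)(\varphi_j^*f)_a(x)\le C\cdot 2^{-jn(\tau-\tilde\tau)}$. Substituting into either the $B^{w,\tau}_{\cl,q,a}$- or $F^{w,\tau}_{\cl,q,a}$-quasi-norm, the $\ell^q$- or $\cl$-evaluation over $j\ge j_P\vee 0$ is dominated by a convergent geometric tail (this is exactly where \eqref{6.5}, $\tau>\tilde\tau$, enters) producing the factor $2^{-(j_P\vee 0)n(\tau-\tilde\tau)}$. Combining with $\|\chi_P\|_\cl/|P|^\tau$ and the matching upper bound on $\|\chi_P\|_\cl$ extracted from \eqref{6.3} (and $(\cl6)$ for the large-cube case $j_P<0$), the resulting expression is uniformly bounded in $P$, so $\|f\|_{A^{w,\tau}_{\cl,q,a}}\lesssim C$.

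The main obstacle is the careful handling of the $\limsup$ definition of $\tilde\tau$: one gets directly only the asymptotic one-sided bound $\|\chi_P\|_\cl\gtrsim|P|^{\tilde\tau+\epsilon}$ for small cubes, with an $\epsilon$-slack that would blow up the estimate pointwise in $j$ if taken naively. The strict inequality $\tau>\tilde\tau$ is exactly what guarantees that this slack fits into the geometric margin $\tau-\tilde\tau>0$, both in the geometric summation of the $\lesssim$-direction and in the multiplication by $2^{jn(\tau-\tilde\tau)}$ of the $\gtrsim$-direction. Cubes with $|P|>1$ require a parallel but easier bookkeeping based on (W1) and the fact that the sum starts at $j=0$.
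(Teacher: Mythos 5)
Your proof is essentially correct but follows a genuinely different route from the paper. The paper first invokes the atomic decomposition (Theorem \ref{t4.1}) to reduce the whole statement to an identity between the sequence spaces $f^{w,\tau}_{\cl,q,a}(\rn)$ and $f^{\wz w}_{\fz,\fz,a}(\rn)$; the ``$\gs$'' direction is then obtained by selecting a near-optimizing quadruple $(j_0,k_0,x_0,y_0)$ for the $f^{\wz w}_{\fz,\fz,a}$-norm and a sub-cube $P_0\subset Q_{j_0k_0}$ of half the sidelength, and the ``$\ls$'' direction by summing the geometric series $\sum_j (w_j/\wz w_j)^q$. You instead stay at the level of the Peetre maximal functions: the collapse of $B^{\wz w}_{\fz,\fz,a}$ and $F^{\wz w}_{\fz,\fz,a}$ to $\sup_{j,x}\wz w_j(x)(\vz_j^*f)_a(x)$ is correct, your ``$\gs$'' direction is exactly the near-constancy argument of Proposition \ref{p3.3} (with the $(\cl6)$-bound replaced by the bound coming from \eqref{6.3}), and your ``$\ls$'' direction is the same geometric summation performed before, rather than after, discretization. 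Your route is shorter and avoids the decomposition machinery entirely; the paper's route has the advantage that the extremizing-cube argument is cleaner for sequences, and it reuses Theorem \ref{t4.1} rather than Proposition \ref{p3.3}.

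One caveat, which applies equally to the paper's own proof but which your closing paragraph slightly misdiagnoses: both directions ultimately require the \emph{two-sided} bound $\|\chi_P\|_{\cl(\rn)}\sim|P|^{\wz\tau}$ (lower bound for ``$\gs$'', upper bound for ``$\ls$''), whereas \eqref{6.3} only yields, for every $\ez>0$ and all sufficiently small dyadic cubes, the one-sided estimate $\inf_{P\in\cq_j(\rn)}\|\chi_P\|_{\cl(\rn)}\ge 2^{-nj(\wz\tau+\ez)}$. The resulting loss is a factor $2^{jn\ez}$ \emph{per scale $j$}, and the margin $\tau-\wz\tau>0$ does not absorb it: in the ``$\gs$'' direction there is no remaining geometric factor at all, and in the ``$\ls$'' direction the loss appears after the geometric sum has already been performed, so it survives as $2^{j_Pn\ez}$. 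The strict inequality $\tau>\wz\tau$ is only needed for the convergence of $\sum_j 2^{-jnq(\tau-\wz\tau)}$, not for taming the $\ez$-slack. So, like the paper, you are implicitly reading \eqref{6.3} as the exact scaling $\|\chi_P\|_{\cl(\rn)}\sim|P|^{\wz\tau}$ (which holds in the motivating examples such as $\cl(\rn)=L^p(\rn)$); this should be stated rather than attributed to the hypothesis $\tau>\wz\tau$.
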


\begin{proof}
We only prove
$F^{w,\tau}_{\cl,q,a}(\rn)$ coincides with $F^{\wz w}_{\fz,\fz,a}(\rn)$.
The assertion (i) can be proved as in Example \ref{e2.1}(iii)
and the proof for the spaces $B^{w,\tau}_{\cl,q,a}(\rn)$
and $B^{\wz w}_{\fz,\fz,a}(\rn)$ is similar. To this end,
by the atomic decomposition of the pairs
$(F^{w,\tau}_{\cl,q,a}(\rn), f^{w,\tau}_{\cl,q,a}(\rn))$ and
$(F^{\wz w}_{\fz,\fz,a}(\rn), f^{\wz w}_{\fz,\fz,a}(\rn))$,
it suffices to show
that $f^{w,\tau}_{\cl,q,a}(\rn)=f^{\wz w}_{\fz,\fz,a}(\rn)$
with norm equivalence.
Recall that, for all $\lz=\{\lz_{jk}\}_{j\in\zz_+,k\in\zz^n}$,
\begin{eqnarray*}
&&\|\lz\|_{f^{w,\tau}_{\cl,q,a}(\rn)}\\
&&\hs=\sup_{P\in\cq(\rn)}\frac1{|P|^\tau}\lf\|\lf[\sum_{j=j_P\vee0}^\fz\lf(
\chi_P w_j
\sup_{y\in\rn}\frac{1}{(1+2^j|y|)^a}
\sum_{k\in\zz^n}|\lz_{jk}|\chi_{Q_{jk}}(\cdot+y)
\r)^q\r]^{1/q}\r\|_{\cl(\rn)}
\end{eqnarray*}
and
\begin{eqnarray}\label{6.6}
\|\lz\|_{f^{\wz w}_{\fz,\fz,a}(\rn)}
&&=
\sup_{x\in\rn, j\in\Z_+}
\wz w_j(x)
\sup_{y\in\rn}\frac{1}{(1+2^j|y|)^a}
\sum_{k\in\zz^n}|\lz_{jk}|\chi_{Q_{jk}}(x+y)\noz\\
&&=\sup_{(x,y)\in\R^{2n}, j\in \Z_+}
\wz w_j(x)\frac{1}{(1+2^j|y|)^a}
\sum_{k\in\zz^n}|\lz_{jk}|\chi_{Q_{jk}}(x+y).
\end{eqnarray}
By \eqref{6.6},
there exist $j_0\in\zz_+$, $k_0\in\zz^n$ and $x_0,y_0\in\rn$ such that
$$x_0+y_0\in Q_{j_0k_0}\quad\mathrm{and}\quad
\|\lz\|_{f^{\wz w}_{\fz,\fz,a}(\rn)}\sim
\wz w_{j_0}(x_0)
\frac{|\lz_{j_0k_0}|}{(1+2^{j_0}|y_0|)^a}.
$$
Then, a geometric observation shows that
there exists $P_0\in\cq(\rn)$
whose sidelength is half
of that of $Q_{j_0k_0}$
and
which satisfies $y_0+P_0\st Q_{j_0k_0}$.
Thus, for all $x\in P_0$, we have $|x-x_0|\ls 2^{-j_0}$
and hence
$$
w_{j_0}(x_0)\le w_{j_0}(x)\lf(1+2^{j_0}|x-x_0|\r)^{\az_3}
\ls w_{j_0}(x),
$$
which, together with the assumption on $\tau$, implies that
\begin{eqnarray*}
&&\|\lz\|_{f^{w,\tau}_{\cl,q,a}(\rn)}\\
&&\hs=\sup_{P\in\cq(\rn)}\frac1{|P|^\tau}\lf\|\lf[\sum_{j=j_P\vee0}^\fz\lf(
\chi_P w_j
\sup_{y\in\rn}\frac{1}{(1+2^j|y|)^a}
\sum_{k\in\zz^n}|\lz_{jk}|\chi_{Q_{jk}}(\cdot+y)
\r)^q\r]^{1/q}\r\|_{\cl(\rn)}\\
&&\hs\gs\frac1{|P_0|^\tau}\lf\|
\chi_{P_0} w_{j_0}
\frac{|\lz_{j_0k_0}|}{(1+2^j|y_0|)^a}\chi_{Q_{jk}}(\cdot+y_0)
\r\|_{\cl(\rn)}\\
&&\hs\gs\|\lz\|_{f^{\wz w}_{\fz,\fz,a}(\rn)}
\frac{2^{-j_0n(\tau-\wz \tau)}\|\chi_{P_0}\|_{\cl(\rn)}}{|P_0|^\tau}.
\end{eqnarray*}
Consequently,
\begin{equation}\label{6.7}
\|\lz\|_{f^{w,\tau}_{\cl,q,a}(\rn)}
\gs
\|\lz\|_{f^{\wz w}_{\fz,\fz,a}(\rn)}.
\end{equation}

To obtain the reverse inclusion, we calculate
\begin{eqnarray*}
\|\lz\|_{f^{w,\tau}_{\cl,q,a}(\rn)}&&=
\sup_{P\in\cq(\rn)}\frac1{|P|^\tau}\lf\|\lf(\sum_{j=j_P\vee0}^\fz\lf[
\chi_P w_j
\sup_{y\in\rn}\frac{\sum_{k\in\zz^n}|\lz_{jk}|\chi_{Q_{jk}}(\cdot+y)}{(1+2^j|y|)^a}
\r]^q\r)^{1/q}\r\|_{\cl(\rn)}\\
&&\le\|\lz\|_{f^{\wz w}_{\fz,\fz,a}(\rn)}
\sup_{P\in\cq(\rn)}\frac1{|P|^\tau}\lf\|\chi_P \lf[\sum_{j=j_P\vee0}^\fz\lf(
\frac {w_j}{\wz w_j}
\r)^q\r]^{1/q}\r\|_{\cl(\rn)}.
\end{eqnarray*}
If we use \eqref{6.5} and \eqref{6.6},
then we have
\begin{eqnarray*}
\|\lz\|_{f^{w,\tau}_{\cl,q,a}(\rn)}
&&\le\|\lz\|_{f^{\wz w}_{\fz,\fz,a}(\rn)}
\sup_{P\in\cq(\rn)}
\frac1{|P|^\tau}\lf\|\chi_P\lf[\sum_{j=j_P\vee0}^\fz
 2^{-jnq(\tau-\wz \tau)}
\r]^{1/q}\r\|_{\cl(\rn)}\\
&&\sim \|\lz\|_{f^{\wz w}_{\fz,\fz,a}(\rn)}
\sup_{P\in\cq(\rn)}
\frac{2^{-(j_P\vee0)n(\tau-\wz \tau)}}{|P|^\tau}\|\chi_P\|_{\cl(\rn)}.
\end{eqnarray*}
Since $\wz\tau \in [0,\infty)$ and \eqref{6.3} holds true,
we see that
\begin{eqnarray*}
\frac{2^{-(j_P\vee0)n(\tau-\wz \tau)}}{|P|^\tau}\|\chi_P\|_{\cl(\rn)}
&&\sim
\frac{2^{-j_Pn(\tau-\wz \tau)}}{|P|^\tau}\|\chi_P\|_{\cl(\rn)}\noz\\
&&=
2^{j_P n \wz\tau}\|\chi_P\|_{\cl(\rn)}
=
|P|^{-\wz\tau}\|\chi_P\|_{\cl(\rn)}
\ls 1.
\end{eqnarray*}%
Hence, we conclude that
\begin{equation}\label{6.9}
\|\lz\|_{f^{w,\tau}_{\cl,q,a}(\rn)}
\le\|\lz\|_{f^{\wz w}_{\fz,\fz,a}(\rn)}.
\end{equation}
Hence from \eqref{6.7} and \eqref{6.9},
we deduce
that $F^{w,\tau}_{\cl,q,a}(\rn)$ and $F^{\wz w}_{\fz,\fz,a}(\rn)$
coincide with equivalent norms, which completes the proof of Theorem \ref{t6.2}.
\end{proof}

\section{Characterizations via differences and oscillations\label{s7}}

In this section we are going to characterize function spaces
by means of differences and oscillations.
To this end, we need some key constructions
from Triebel \cite{t92}.

For any $M\in\nn$,
Triebel \cite[p. 173, Lemma 3.3.1]{t92} proved that there exist
two smooth functions $\vz$ and $\psi$ on $\rr$ with $\supp \vz\st(0,1)$,
$\supp \psi\st(0,1)$, $\int_\rr\vz(\tau)\,d\tau=1$ and
$\vz(t)-\frac12\vz(\frac t2)=\psi^{(M)}(t)$ for $t\in\rr$.
Let $\rho(x):=\prod_{\ell=1}^n\vz(x_\ell)$ for all $x=(x_1,\cdots, x_n)\in\rn$.
For all $j\in\zz_+$ and $x\in\rn$,
let
\[
T_j(x)
:=
\sum_{m'=1}^{M} \sum_{m=1}^{M}
\frac{(-1)^{M+m+m'+1}}{M!}
\begin{pmatrix}
{M}\\
m'
\end{pmatrix}
\begin{pmatrix}
{M}\\
m
\end{pmatrix}
m^{M}
(2^{-j}mm')^{-n}
\rho\left(\frac{x}{2^{-j}mm'}\right),
\]
where $\binom{M}{m}$ for $m\in\{1,\cdots, M\}$
denotes the \emph{binomial coefficient}.
For any $f \in \cs'(\rn)$, let
\begin{equation}\label{7.1}
f^j:= T_j*f\ \mbox{for all}\ j\in\zz_+,\
\mbox{and}\ f^{-1}:= 0.\end{equation}

{}From Theorem \ref{t3.1} and Triebel
\cite[pp.\,174-175, Proposition 3.3.2]{t92},
we immediately deduce the following useful conclusions,
the details of whose proofs are omitted.

\begin{proposition}\label{p7.1}
Let $\alpha_1, \alpha_2, \alpha_3, \tau \in [0,\infty)$ and $q\in(0,\,\fz]$
and let $w\in{\mathcal W}^{\alpha_3}_{\alpha_1,\alpha_2}$.
Choose $a\in(0,\fz)$ and $M \in \N$ such that
\begin{equation}\label{7.2}
M>\alpha_1 \vee (a+n\tau+\alpha_2).
\end{equation}
For $j\in\zz_+$, $f \in \cs'(\rn)$ and $x\in\rn$, let
$F(x,2^{-j}):=f^j(x)-f^{j-1}(x)$, where $\{f^j\}_{j=-1}^\fz$
is as in \eqref{7.1}. Then

{\rm(i)} $f\in B^{w,\tau}_{\cl,q,a}(\rn)$ if and only if
$F\in L_{\cl,q,a}^{w,\tau}(\rr_{\zz_+}^{n+1})$ and
$\|F\|_{L_{\cl,q,a}^{w,\tau}(\rr_{\zz_+}^{n+1})}<\fz$.
Moreover, $\|f\|_{B^{w,\tau}_{\cl,q,a}(\rn)}
\sim\|F\|_{L_{\cl,q,a}^{w,\tau}(\rr_{\zz_+}^{n+1})}$ with
the implicit positive constants
independent of $f$.

{\rm(ii)} $f\in \cn^{w,\tau}_{\cl,q,a}(\rn)$ if and only if
$F\in \cn_{\cl,q,a}^{w,\tau}(\rr_{\zz_+}^{n+1})$ and
$\|F\|_{\cn_{\cl,q,a}^{w,\tau}(\rr_{\zz_+}^{n+1})}<\fz$.
Moreover, $\|f\|_{\cn^{w,\tau}_{\cl,q,a}(\rn)}
\sim\|F\|_{\cn_{\cl,q,a}^{w,\tau}(\rr_{\zz_+}^{n+1})}$ with
the implicit positive constants
independent of $f$.

{\rm(iii)} $f\in F^{w,\tau}_{\cl,q,a}(\rn)$ if and only if
$F\in P_{\cl,q,a}^{w,\tau}(\rr_{\zz_+}^{n+1})$ and
$\|F\|_{P_{\cl,q,a}^{w,\tau}(\rr_{\zz_+}^{n+1})}<\fz$.
Moreover, $\|f\|_{F^{w,\tau}_{\cl,q,a}(\rn)}
\sim\|F\|_{P_{\cl,q,a}^{w,\tau}(\rr_{\zz_+}^{n+1})}$ with
the implicit positive constants
independent of $f$.

{\rm(iv)} $f\in \ce^{w,\tau}_{\cl,q,a}(\rn)$ if and only if
$F\in \ce_{\cl,q,a}^{w,\tau}(\rr_{\zz_+}^{n+1})$ and
$\|F\|_{\ce_{\cl,q,a}^{w,\tau}(\rr_{\zz_+}^{n+1})}<\fz$.
Moreover, $\|f\|_{\ce^{w,\tau}_{\cl,q,a}(\rn)}
\sim
\|F\|_{\ce_{\cl,q,a}^{w,\tau}(\rr_{\zz_+}^{n+1})}$ with
the implicit positive constants
independent of $f$.
\end{proposition}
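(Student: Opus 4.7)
The plan is to reduce Proposition \ref{p7.1} to Theorem \ref{t3.1} by identifying Triebel's kernel $T_j$ with the local-means building blocks appearing in that theorem. Since the four cases (i)--(iv) differ only in which sequence (quasi-)norm is applied to $\{(\cdot)^*_a\}_{j\in\zz_+}$, and Lemma \ref{l2.3} handles all four simultaneously, I will describe the argument for (i) and note that (ii)--(iv) follow identically.

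First I would unfold the definition of $F$: for $j\in\nn$ one has $F(\cdot,2^{-j})=(T_j-T_{j-1})*f$, while $F(\cdot,1)=T_0*f$. The combinatorial coefficients in the definition of $T_j$ together with the identity $\vz(t)-\tfrac12\vz(t/2)=\psi^{(M)}(t)$ are tailored so that, writing $\rho_t(\cdot):=t^{-n}\rho(\cdot/t)$, each one-dimensional difference $\rho_{2^{-j}mm'}-\rho_{2^{-j+1}mm'}$ factors as an $M$-th derivative along each coordinate. Consequently, setting $\Psi:=T_0$ and defining $\psi\in C_{\rm c}^\infty(\rn)$ by $T_j-T_{j-1}=2^{jn}\psi(2^j\cdot)$ (which is licit since every summand in $T_j$ is a rescaling of the fixed compactly supported $\rho$), one obtains a pair $(\Psi,\psi)\in\cs(\rn)\times\cs(\rn)$ with $\widehat{\Psi}(0)\neq 0$ and $\partial^{\vec\az}\widehat\psi(0)=0$ for every $\vec\az$ with $\|\vec\az\|_1\le M-1$; the Tauberian conditions in \eqref{3.7} are also satisfied by Triebel's construction (this is precisely what \cite[p.\,173, Lemma~3.3.1]{t92} delivers).

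Next I would invoke Theorem \ref{t3.1} with $L:=M-1$. The hypothesis \eqref{7.2} is exactly \eqref{3.6} in that theorem, so \eqref{3.8}, \eqref{3.9}, \eqref{3.10} and \eqref{3.11} apply verbatim with the starred objects $(\psi_j^*f)_a$ replaced, respectively, by the Peetre maximal function of $f^j-f^{j-1}$ for $j\ge 1$ and of $f^0$ for $j=0$. This yields
\[
\|f\|_{B^{w,\tau}_{\cl,q,a}(\rn)}\sim\lf\|\lf\{\sup_{y\in\rn}\frac{|f^j(\cdot)-f^{j-1}(\cdot)|\circ(\cdot+y)}{(1+2^j|y|)^a}\r\}_{j\in\zz_+}\r\|_{\ell^q(\cl^w_\tau(\rn,\zz_+))},
\]
and the right-hand side coincides with $\|F\|_{L^{w,\tau}_{\cl,q,a}(\rr^{n+1}_{\zz_+})}$ by the very definition of the latter norm. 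Items (ii)--(iv) are obtained by substituting the sequence (quasi-)norms $\ell^q(\cn\cl^w_\tau)$, $\cl^w_\tau(\ell^q)$ and $\ce\cl^w_\tau(\ell^q)$ in place of $\ell^q(\cl^w_\tau)$, using \eqref{3.9}--\eqref{3.11} in place of \eqref{3.8}.

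The main obstacle I anticipate is the careful verification that the explicit kernel $T_j-T_{j-1}$ really meets both the Tauberian condition on $\wh\psi$ and the moment condition of order $M-1$ simultaneously; should the Tauberian condition resist direct check, the fallback is to bypass Theorem \ref{t3.1} and mimic its proof directly: establish the two pointwise bounds
\[
(T_0*f)^*_a(x)\ls(\Phi*f)^*_a(x)+\sum_{\nu=1}^\infty 2^{-\nu(M-a)}(\vz_\nu^*f)_a(x)
\]
and, for $j\ge 1$,
\[
\sup_z\frac{|(T_j-T_{j-1})*f(x+z)|}{(1+2^j|z|)^a}\ls 2^{-j(M-a)}(\Phi^*f)_a(x)+\sum_{\nu=1}^\infty 2^{-|j-\nu|M+a[(j-\nu)\vee 0]}(\vz_\nu^*f)_a(x),
\]
together with their converses (with $\Phi,\vz_\nu$ and $T_0,T_j-T_{j-1}$ interchanged). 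Both pairs follow from Lemma \ref{l2.4} applied to a Calder\'on-type reproducing formula and use the $M$ vanishing moments of $\psi$; Lemma \ref{l2.3} then converts these pointwise inequalities into the four claimed norm equivalences in one stroke.
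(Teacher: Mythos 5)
Your proposal follows exactly the route the paper takes: the paper derives Proposition \ref{p7.1} by combining Theorem \ref{t3.1} (with $\Psi:=T_0$, $\psi_j:=T_j-T_{j-1}=2^{jn}\psi(2^j\cdot)$ and $L:=M-1$, so that \eqref{3.6} becomes \eqref{7.2}) with Triebel's Proposition 3.3.2, which supplies the decay, moment and Tauberian properties of these kernels; the details are omitted there just as you sketch them. Your identification of the dilation structure of $T_j-T_{j-1}$ and the fallback via the two pointwise estimates plus Lemma \ref{l2.3} are both correct and consistent with the paper's intended argument.
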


\subsection{Characterization by differences}

In this section,
we characterize our function spaces in terms of differences.
For an arbitrary function $f$, we inductively define
$\Delta^M_h f$ with $M \in \N$ and $h \in \rn$ by
$$\Delta_h f := \Delta_h^1 f:= f-f(\cdot-h) \quad
{\rm and}\quad\Delta^{M}_h f:= \Delta_h(\Delta^{M-1}_h f),$$
and ${\rm J}^{(1)}_{a,w,\cl}(f)$ and
${\rm J}^{(2)}_{a,w,\cl}(f)$ with $a\in (0,\fz)$ and $w_0$ as in
\eqref{2.3}, respectively, by
\begin{equation*}
{\rm J}^{(1)}_{a,w,\cl}(f):=\sup_{P\in\cq,\,|P|\ge 1}\frac1{|P|^\tau}
\lf\|\chi_Pw_0\sup_{y\in\rn}\frac{|f(\cdot+y)|}{(1+|y|)^a}\r\|_{\cl(\rn)}
\end{equation*}%
or
\begin{equation*}
{\rm J}^{(2)}_{a,w,\cl}(f):=\sup_{P\in\cq}\frac1{|P|^\tau}
\lf\|\chi_Pw_0\sup_{y\in\rn}\frac{|f(\cdot+y)|}{(1+|y|)^a}\r\|_{\cl(\rn)}.
\end{equation*}%

In what follows, we denote by $\oint_E$
the \emph{average over a measurable set $E$ of $f$}.

\begin{theorem}\label{t7.1}
Let $a, \az_1, \alpha_2, \alpha_3, \tau\in[0,\fz)$, $u\in[1,\fz]$, $q\in(0,\,\fz]$
and $w\in\star-{\mathcal W}^{\alpha_3}_{\alpha_1,\alpha_2}$.
If $M\in\nn$, $\az_1\in (a,M)$ and \eqref{7.2} holds true, then there exists a positive
constant $\wz C:=C(M)$, depending on $M$,
such that, for all $f\in\cs'(\rn)\cap L^1_\loc(\rn)$,
the following hold true:

{\rm(i)}
\begin{eqnarray*}
{\rm I}_1:&&={\rm J}^{(1)}_{a,w,\cl}(f)+\left\|
\left\{\sup_{z \in \rn}\left[\oint_{|h| \le \wz{C}\,2^{-j}}
\frac{|\Delta_h^{M}f(\cdot+z)|^u}{(1+2^j|z|)^{au}}\,dh\right]^{1/u}
\right\}_{j \in \Z_+}\right\|_{\ell^q(\cl^w_\tau(\rn,\zz_+))}\\
&&\sim \|f\|_{B^{w,\tau}_{\cl,q,a}(\rn)}
\end{eqnarray*}
with the implicit positive constants
independent of $f$.

\rm{(ii)}
\begin{eqnarray*}
{\rm I}_2:=&&{\rm J}^{(1)}_{a,w,\cl}(f)
+\left\|\left\{\sup_{z \in \rn}\left[\oint_{|h| \le \wz{C}\,2^{-j}}
\frac{|\Delta_h^{M}f(\cdot+z)|^u}{(1+2^j|z|)^{au}}\,dh\right]^{1/u}
\right\}_{j \in \Z_+}\right\|_{\cl^w_\tau(\ell^q(\rn,\zz_+))}\\
\sim&&\|f\|_{F^{w,\tau}_{\cl,q,a}(\rn)}
\end{eqnarray*}
with the implicit positive constants
independent of $f$.

\rm{(iii)}
\begin{eqnarray*}
{\rm I}_3:=&&{\rm J}^{(2)}_{a,w,\cl}(f)
+\left\|\left\{\sup_{z \in \rn}\left[\oint_{|h| \le \wz{C}\,2^{-j}}
\frac{|\Delta_h^{M}f(\cdot+z)|^u}{(1+2^j|z|)^{au}}\,dh\right]^{1/u}
\right\}_{j \in \Z_+}\right\|_{\ell^q(\cn\cl^w_\tau(\rn,\zz_+))}\\
\sim&&\|f\|_{\cn^{w,\tau}_{\cl,q,a}(\rn)}
\end{eqnarray*}
with the implicit positive constants
independent of $f$.

\rm{(iv)}
\begin{eqnarray*}
{\rm I}_4:=&&{\rm J}^{(2)}_{a,w,\cl}(f)
+\left\|\left\{\sup_{z \in \rn}\left[\oint_{|h| \le \wz{C}\,2^{-j}}
\frac{|\Delta_h^{M}f(\cdot+z)|^u}{(1+2^j|z|)^{au}}\,dh\right]^{1/u}
\right\}_{j \in \Z_+}\right\|_{\ce\cl^w_\tau(\ell^q(\rn,\zz_+))}\\
\sim&&\|f\|_{\ce^{w,\tau}_{\cl,q,a}(\rn)}
\end{eqnarray*}
with the implicit positive constants
independent of $f$.
\end{theorem}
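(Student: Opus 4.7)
The strategy is to use Proposition \ref{p7.1} as a bridge: it already identifies $\|f\|_{A^{w,\tau}_{\cl,q,a}(\rn)}$ with the appropriate norm of the Peetre-type expression built from $F(x,2^{-j}):=f^j(x)-f^{j-1}(x)=(T_j-T_{j-1})*f$. Thus it suffices to compare the four functionals ${\rm I}_1,\ldots,{\rm I}_4$ with the Peetre-type norms of $F$, reducing everything to a pointwise / integral comparison between $F(x+z,2^{-j})/(1+2^j|z|)^a$ and the averaged $M$-th difference $[\oint_{|h|\le \widetilde C 2^{-j}}|\Delta_h^M f(x+z)|^u\,dh]^{1/u}/(1+2^j|z|)^a$, plus a separate argument for the low-frequency piece captured by ${\rm J}^{(i)}_{a,w,\cl}(f)$.

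For the bound ${\rm I}_i \gtrsim \|f\|_{A^{w,\tau}_{\cl,q,a}(\rn)}$, I would exploit the defining property of the building block, namely $\varphi(t)-\tfrac12\varphi(t/2)=\psi^{(M)}(t)$, together with the tensor product structure of $\rho$. Writing $T_j-T_{j-1}$ as a finite linear combination of dilates of $\rho$ and integrating by parts $M$ times coordinatewise yields a representation of the form
\begin{equation*}
F(x+z,2^{-j})=\sum_{m,m'}c_{m,m'}\int_{\rn}\Psi_{j,m,m'}(y)\,\Delta^M_{h(y,j,m,m')}f(x+z-\widetilde y)\,dy,
\end{equation*}
where $\Psi_{j,m,m'}$ is supported in a ball of radius $\sim 2^{-j}$ and $h(y,j,m,m')$ has length $\sim 2^{-j}$. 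Taking the $u$-th power and applying H\"older on this bounded integral translates $F$ pointwise into the averaged $M$-th difference, which, after dividing by $(1+2^j|z|)^a$, gives the desired majorization at the level of the inner Peetre-type function. Applying the $\ell^q(\cl^w_\tau)$, $\cl^w_\tau(\ell^q)$, $\ell^q(\cn\cl^w_\tau)$, $\ce\cl^w_\tau(\ell^q)$ quasi-norms then yields the four ``$\gtrsim$'' estimates. The extra ${\rm J}^{(i)}_{a,w,\cl}(f)$ term is needed only to absorb the $j=0$ contribution $f^0=T_0*f$, which is bounded crudely by a Peetre maximal function of $f$ smoothed at unit scale.

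For the converse bound ${\rm I}_i\lesssim \|f\|_{A^{w,\tau}_{\cl,q,a}(\rn)}$, I would split $f=\Phi*f+\sum_{k=1}^{\infty}\varphi_k*f$ in $\cs'(\rn)$ via a Calder\'on reproducing formula and estimate $\Delta^M_h(\varphi_k*f)$ in two regimes according to whether $k\le j$ or $k>j$. When $k\le j$, so $|h|2^k\le \widetilde C 2^{k-j}\le\widetilde C$, repeated use of the mean value theorem produces the factor $|h|^M\|\nabla^M(\varphi_k*f)\|\lesssim 2^{-jM}2^{kM}(\varphi_k^*f)_a(x+z)$, and one can sum in $k\le j$ using the geometric factor $2^{-(j-k)M}$ with $M>\alpha_2$. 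When $k>j$, one uses the crude bound $|\Delta^M_h(\varphi_k*f)|\lesssim\sum_{\ell=0}^M|\varphi_k*f(\cdot-\ell h)|$ and inserts the Peetre weight $(1+2^k|h|)^a\sim 2^{(k-j)a}$; the needed decay comes from the smoothness of $\varphi_k*f$ (no vanishing moments are required). Invoking Lemma \ref{l2.3} with $D_1=M-\alpha_1>0$ and $D_2=a-\alpha_2-n\tau$ (for the Morrey cases (iii), (iv)) or the analogous analysis respecting the inner $\ell^q$ ordering (for cases (i), (ii)) gives the desired control, and the $\Phi*f$ contribution is absorbed into ${\rm J}^{(i)}_{a,w,\cl}(f)$.

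The main obstacle will be the correct matching of the two different large-scale terms ${\rm J}^{(1)}$ (only cubes with $|P|\ge 1$, used for $B$ and $F$) and ${\rm J}^{(2)}$ (all dyadic cubes, used for $\cn$ and $\ce$): in the Morrey-type cases the inner supremum over $P\in\cq(\rn)$ sits \emph{inside} the $\ell^q$-sum, so the small cubes cannot be recovered from the single $k=0$ slice and genuinely need ${\rm J}^{(2)}$; in contrast, for $B^{w,\tau}_{\cl,q,a}$ and $F^{w,\tau}_{\cl,q,a}$ the supremum is outermost and small cubes are absorbed into the high-frequency sum, so only ${\rm J}^{(1)}$ is needed. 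A further delicate point is the verification that the constant $\widetilde C$ can be chosen depending only on $M$ and $n$, which is dictated by the supports of $\rho$ and the shifts entering the integration by parts; one tracks these supports through the argument in paragraph two. All four cases then follow by plugging the resulting pointwise/averaged estimates into Lemma \ref{l2.3}, using that \eqref{7.2} and $\alpha_1\in(a,M)$ leave enough room in both the $D_1$ and $D_2$ parameters for the sequence-space inequalities to apply uniformly in $P\in\cq(\rn)$.
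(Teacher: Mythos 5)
Your architecture matches the paper's: Proposition \ref{p7.1} is indeed the bridge, the ``$\gtrsim$'' direction rests on representing $f^j-f^{j+1}$ as an average of $M$-th differences at scale $2^{-j}$, and the ``$\lesssim$'' direction rests on a building-block decomposition with a two-regime ($k\le j$ versus $k>j$) analysis in which $\star$-(W1) and $\alpha_1>a$ supply the summability for the high-frequency regime. Your converse direction differs mildly from the paper, which quotes Triebel's estimate (3.5.3/7) to dominate the averaged difference by $\sum_{l\ge1}|f_{j+l}|$ plus a $D^\alpha T_0$-term and then sums by hand with an Aoki--Rolewicz exponent $\wz\theta\in(0,\min\{\theta,q\})$; your direct Littlewood--Paley version is essentially a re-derivation of that estimate and is fine in outline.

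Three concrete problems. First, the mechanism you give for the difference representation is wrong: you cannot ``integrate by parts $M$ times'' to transfer derivatives of $\psi^{(M)}$ onto $f$, since $f$ is only in $\cs'(\rn)\cap L^1_\loc(\rn)$. The identity $\vz-\tfrac12\vz(\cdot/2)=\psi^{(M)}$ is what gives $T_j-T_{j-1}$ its vanishing moments (needed for Proposition \ref{p7.1}); the difference representation instead comes from a purely combinatorial step, namely extending the $m'$-sum in the explicit formula for $f^j$ to $m'=0$ so that $\sum_{m'=0}^{M}(-1)^{m'}\binom{M}{m'}f(x-2^{-j}mm'y)$ is recognized as $\Delta^M_{2^{-j}my}f(x)$, the added $m'=0$ terms cancelling between $f^j$ and $f^{j+1}$. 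Second, your invocation of Lemma \ref{l2.3} with $D_1=M-\alpha_1$ and $D_2=a-\alpha_2-n\tau$ is incoherent: the gains in the two regimes are $2^{-(j-k)M}$ and $2^{-(k-j)(\alpha_1-a)}$ respectively, so the relevant conditions are $M>a+n\tau+\alpha_2$ (this is \eqref{7.2}) and $\alpha_1>a$, and in any case the paper performs these summations directly rather than through Lemma \ref{l2.3}, whose hypotheses your parameters would not satisfy. Third, you never prove ${\rm J}^{(i)}_{a,w,\cl}(f)\lesssim\|f\|_{A^{w,\tau}_{\cl,q,a}(\rn)}$, which is part of the ``$\lesssim$'' direction since ${\rm J}^{(i)}$ sits on the left-hand side; this requires the separate pointwise identity expressing $f$ itself as $f^0$ plus an average of unit-scale $M$-th differences (the paper's \eqref{7.8}), together with the trivial bound for the Peetre maximal function of $f^0=T_0*f$. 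These are all fillable, but as written the plan does not close.
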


\begin{proof} We only prove (i), since the proofs of others are similar.
To this end, for any $f\in\cs'(\rn)\cap L^1_\loc(\rn)$,
since $\rho\in C^\fz_c(\rn)$
(see \cite[pp.\,174-175, Proposition 3.3.2]{t92}),
we conclude that,  for all $j\in\zz_+$ and $x\in\rn$,
\begin{align}\label{eq:120428-1}
f^j(x)
:=\sum_{m'=1}^{M} \sum_{m=1}^{M}
\frac{(-1)^{M+m+m'-1}}{M!}
\begin{pmatrix}
{M}\\
m'
\end{pmatrix}
\begin{pmatrix}
{M}\\
m
\end{pmatrix}
m^{M}
\int_{\rn}\rho(y)f(x-2^{-j}mm'y)\,dy
\end{align}
and hence
\begin{eqnarray*}
&&
f^j(x)-
f^{j+1}(x)\\
&&
=\sum_{m'=1}^{M} \sum_{m=1}^{M}
\frac{(-1)^{M+m+m'-1}}{M!}
\begin{pmatrix}
{M}\\
m'
\end{pmatrix}
\begin{pmatrix}
{M}\\
m
\end{pmatrix}
m^{M}
\int_{\rn}\rho(y)f(x-2^{-j}mm'y)\,dy\\
&&\quad-\sum_{m'=1}^{M} \sum_{m=1}^{M}
\frac{(-1)^{M+m+m'-1}}{M!}
\begin{pmatrix}
{M}\\
m'
\end{pmatrix}
\begin{pmatrix}
{M}\\
m
\end{pmatrix}
m^{M}
\int_{\rn}\rho(y)f(x-2^{-j-1}mm'y)\,dy\\
&&
=\sum_{m'=0}^{M} \sum_{m=1}^{M}
\frac{(-1)^{M+m+m'-1}}{M!}
\begin{pmatrix}
{M}\\
m'
\end{pmatrix}
\begin{pmatrix}
{M}\\
m
\end{pmatrix}
m^{M}
\int_{\rn}\rho(y)f(x-2^{-j}mm'y)\,dy\\
&&\quad-\sum_{m'=0}^{M} \sum_{m=1}^{M}
\frac{(-1)^{M+m+m'-1}}{M!}
\begin{pmatrix}
{M}\\
m'
\end{pmatrix}
\begin{pmatrix}
{M}\\
m
\end{pmatrix}
m^{M}
\int_{\rn}\rho(y)f(x-2^{-j-1}mm'y)\,dy\\
&&
=
\sum_{m=1}^{M}
\frac{(-1)^{M+m-1}}{M!}
\begin{pmatrix}
{M}\\
m
\end{pmatrix}
m^{M}
\int_{\rn}\rho(y)
\lf[\Delta_{2^{-j}my}^{M}f(x)-\Delta_{2^{-j-1}my}^{M}f(x)\r]\,dy.
\end{eqnarray*}

As a consequence,
we see a pointwise estimate that, for all $x\in\rn$ and $u\in[1,\fz]$,
\begin{equation}\label{7.49}
\sup_{z \in \rn}
\frac{|f^j(x+z)-f^{j+1}(x+z)|}{(1+2^j|z|)^a}
\lesssim
\sup_{z \in \rn}\left[
\oint_{|h| \le \wz{C}\,2^{-j}}
\frac{|\Delta_h^{M}f(x+z)|^u}{(1+2^j|z|)^{au}}\,dh
\right]^{1/u}.
\end{equation}

Meanwhile,  by $T_0 \in \cs(\rn)$ and
$(1+|u|)^a \le (1+|u+y|)^a(1+|y|)^a$ for all $u,y\in\rn$,
we see that, for all $x\in\rn$,
\begin{eqnarray}\label{7.5}
\sup_{y\in\rn}\frac{|f^0(x+y)|}{(1+|y|)^a}&&=
\sup_{y\in\rn}\frac 1{(1+|y|)^a}\lf|\int_\rn T_0(u)f(x+y-u)\,du\r|\noz\\
&&\le\sup_{y\in\rn}
\int_\rn\lf|T_0(u+y)\r|\frac{(1+|u|)^a}{(1+|y|)^a}
\frac{|f(x-u)|}{(1+|u|)^a}\,du\noz\\
&&\ls\sup_{u\in\rn}\frac{|f(x+u)|}{(1+|u|)^a}.
\end{eqnarray}

Combining (\ref{7.49}) and (\ref{7.5})
with Proposition \ref{p7.1} (here we need
to use the assumption \eqref{7.2}), we conclude that
\begin{eqnarray*}
{\rm I}_1\gs&&
\sup_{P\in\cq,|P|\ge1}\frac1{|P|^\tau}
\lf\|\chi_Pw_0\sup_{y\in\rn}\frac{|(f^0-f^{-1})(\cdot+y)|}{(1+|y|)^a}\r\|_{\cl(\rn)}\\
&&+\left\|
\left\{\sup_{z \in \rn}
\frac{|(f^j-f^{j-1})(\cdot+z)|}{(1+2^j|z|)^a}
\right\}_{j \in \Z_+}\right\|_{\ell^q(\cl^w_\tau(\rn,\zz_+))}
\sim \|f\|_{B^{w,\tau}_{\cl,q,a}(\rn)},
\end{eqnarray*}
which is desired.

To show the inverse inequality,
for any $f\in B^{w,\tau}_{\cl,q,a}\cap L^1_\loc(\rn)$,
since $\{T_j\}_{j\in\zz_+}$ is an approximation to the identity
(see \cite[pp.\,174-175, Proposition 3.3.2]{t92}),
if we fix $|h| \le \wz{C}\,2^{-j}$ and $z \in \rn$,
then by \cite[p.\,195,\,(3.5.3/7)]{t92},
we see that, for almost every $x\in\rn$,
\begin{eqnarray*}
\left[\oint_{|h|< 2^{-j}}|\Delta_h^{M}f(x+z)|^u\,dy\right]^{1/u}
&&\ls \sum_{l=1}^\fz\left\{|f_{j+l}(x+z)|+
\left[\oint_{B(x+z,C2^{-j})}|f_{j+l}(y)|^u\,dh\right]^{1/u}\right\}\\
&&\quad\hs+
\sup_{w \in B(x+z,C2^{-j})}
\left|\int_{\rn}D^\alpha T_0(y)f(w+2^{-j}y)\,dy\right|,
\end{eqnarray*}
where and in what follows, $f_{j}:=f^{j}-f^{j-1}$ for all $j\in\zz_+$.
Then
\begin{eqnarray}\label{7.6}
&&\left[\oint_{|h|< 2^{-j}}\frac{|\Delta_h^{M}f(x+z)|^u}{(1+2^j|z|)^{au}}\,dh\right]^{1/u}\nonumber\\
&&\quad\lesssim
\sum_{l=1}^\infty
\frac{|f_{j+l}(x+z)|+
\left[\oint_{B(x+z,C2^{-j})}|f_{j+l}(y)|^u\,dy\right]^{1/u}}{(1+2^j|z|)^a}\nonumber\\
&&\quad\quad+
\sup_{w \in B(x+z,C2^{-j})}
\frac{1}{(1+2^j|z|)^a}
\left|\int_{\rn}D^\alpha T_0(y)f(w+2^{-j}y)\,dy\right|.
\end{eqnarray}
For the second term on the right-hand side of \eqref{7.6}, we have
\begin{eqnarray*}
&&\sup_{z\in\rn}
\left[\sup_{w \in B(x+z,C2^{-j})}
\frac{1}{(1+2^j|z|)^a}
\left|\int_{\rn}D^\alpha T_0(y)f(w+2^{-j}y)\,dy\right|\right]\\
&&\hs=\sup_{z\in\rn}
\left[\sup_{w \in B(0,C2^{-j})}
\frac{|(D^\alpha T_0)_j*\widetilde f(x+z+w)|}{(1+2^j|z|)^a}\right]\\
&&\hs\le\sup_{z\in\rn}
\sup_{w \in B(0,C2^{-j})}
\frac{|(D^\alpha T_0)_j*\widetilde f(x+z+w)|}{(1+2^j|z+w|)^a}
\lf[\frac{1+2^j(|z|+|w|)}{1+2^j|z|}\r]^a\\
&&\hs\ls\sup_{z\in\rn}
\frac{|(D^\alpha T_0)_j*\widetilde f(x+z)|}{(1+2^j|z|)^a},
\end{eqnarray*}
where $\widetilde{f}:=f(-\cdot)$. This observation,
together with the fact that
\begin{eqnarray*}
\left\|\left\{\sup_{z\in\rn}\frac{|(D^\alpha T_0)_j*\widetilde
f(x+z)|}{(1+2^j|z|)^a}
\right\}_{j \in \Z_+}\right\|_{\ell^q(\cl^w_\tau(\rn,\zz_+))}
&\ls&
\|f\|_{B^{w,\tau}_{\cl,q,a}(\rn)},
\end{eqnarray*}
implies that
\begin{eqnarray*}
&&\left\|\left\{\sup_{z\in\rn}
\sup_{w \in B(x+z,C2^{-j})}
\frac{1}{(1+2^j|z|)^a}
\left|\int_{\rn}D^\alpha T_0(y)f(w+2^{-j}y)\,dy\right|
\right\}_{j \in \Z_+}\right\|_{\ell^q(\cl^w_\tau(\rn,\zz_+))}\\
&&\quad\ls
\|f\|_{B^{w,\tau}_{\cl,q,a}(\rn)}.
\end{eqnarray*}
For the first term on the right-hand side of \eqref{7.6}, we see that
\begin{eqnarray}\label{7.7}
&&\sup_{z\in\rn}\lf[\oint_{y\in B(x+z,2^{-j})}|f_{j+l}(y)|^u\,dy\r]^{1/u}\frac 1{(1+2^j|z|)^{a}}\nonumber\\
&&\quad\le\sup_{z\in\rn}
\left\{\sup_{y \in B(0,2^{-j})}
\frac{\left|f_{j+l}(x+z+y)\right|}{(1+2^{j+l}|z+y|)^{a}}
\lf[\frac{1+2^{j+l}(|z|+|y|)}{1+2^j|z|}\r]^a\right\}\nonumber\\
&&\quad\ls2^{la}\sup_{z\in\rn}
\frac{\left|f_{j+l}(x+z)\right|}{(1+2^{j+l}|z|)^a}.
\end{eqnarray}
Since $w\in\star-{\mathcal W}^{\alpha_3}_{\alpha_1,\alpha_2}$, we have
$w_j(x)\ls 2^{-l\az_1}w_{j+l}(x)$, which, together with
$\az_1>a$ and \eqref{7.7}, implies that
\begin{eqnarray*}
&&\left\|\left\{\sum_{l=1}^\fz\sup_{z\in\rn}
\lf[\oint_{y\in B(x+z,2^{-j})}|f_{j+l}(y)|^u\,dy\r]^{1/u}\frac1{(1+2^j|z|)^a}
\right\}_{j \in \Z_+}\right\|_{\ell^q(\cl^w_\tau(\rn,\zz_+))}\\
&&\hs\ls
\left\|\left\{\sum_{l=1}^\fz 2^{la}\sup_{z\in\rn}
\frac{\left|f_{j+l}(\cdot+z)\right|}{(1+2^{j+l}|z|)^a}
\right\}_{j \in \Z_+}\right\|_{\ell^q(\cl^w_\tau(\rn,\zz_+))}\\
&&\hs\ls\left\{\sum_{l=1}^\fz 2^{la{\wz\theta}}\sup_{P\in\cq}\frac1{|P|^\tau}
\left[\sum_{j=(0\vee j_P)}^\fz\left\|\chi_P w_j\sup_{z\in\rn}
\frac{\left|f_{j+l}(\cdot+z)\right|}{(1+2^{j+l}|z|)^a}\right\|_{\cl(\rn)}^q
\right]^{{\wz\theta}/q}\right\}^{1/{\wz\theta}}\\
&&\hs\ls\left\{\sum_{l=1}^\fz 2^{-l(\az_1-a){\wz\theta}}\sup_{P\in\cq}\frac1{|P|^\tau}
\left[\sum_{j=(0\vee j_P)}^\fz\left\|\chi_P w_{j+l}\sup_{z\in\rn}
\frac{\left|f_{j+l}(\cdot+z)\right|}{(1+2^{j+l}|z|)^a}\right\|_{\cl(\rn)}^q
\right]^{{\wz\theta}/q}\right\}^{1/{\wz\theta}}\\
&&\hs\ls\sup_{P\in\cq}\frac1{|P|^\tau}
\left\{\sum_{j=(0\vee j_P)}^\fz\left\|\chi_P w_{j}\sup_{z\in\rn}
\frac{\left|f_{j}(\cdot+z)\right|}{(1+2^{j}|z|)^a}\right\|_{\cl(\rn)}^q
\right\}^{1/q}\sim\|f\|_{B^{w,\tau}_{\cl,q,a}(\rn)},
\end{eqnarray*}
where we chose ${\wz\theta}\in(0,\min\{\theta,q\})$
and $\theta$ is as in $(\cl3)$.

Meanwhile, by virtue of (\ref{eq:120428-1}),
we see that, for all $x\in\rn$,
$$
f^0(x)
=\sum_{m'=1}^{M} \sum_{m=1}^{M}
\frac{(-1)^{M+m+m'-1}}{M!}
\begin{pmatrix}
{M}\\
m'
\end{pmatrix}
\begin{pmatrix}
{M}\\
m
\end{pmatrix}
m^{M}
\int_{\rn}\rho(y)f(x-mm'y)\,dy
$$
and
\[
f(x)
=\sum_{m=1}^{M}
\frac{(-1)^{M+m+0-1}}{M!}
\begin{pmatrix}
{M}\\
0
\end{pmatrix}
\begin{pmatrix}
{M}\\
m
\end{pmatrix}
m^{M}
\int_{\rn}\rho(y)f(x)\,dy,
\]
which implies that, for all $x\in\rn$,
\begin{eqnarray*}
|f(x)|&&=
\lf|\sum_{m=1}^{M}
\frac{(-1)^{M+m-1}}{M!}
\binom Mm
m^{M}
\int_{\rn}\rho(y)f(x)\,dy+f^0(x)-f^0(x)\r|\\
&&\ls\lf|\sum_{m'=0}^{M} \sum_{m=1}^{M}
\frac{(-1)^{M+m+m'-1}}{M!}
\binom M{m'}\binom Mm
m^{M}
\int_{\rn}\rho(y)f(x-m'my)\,dy\r|+|f^0(x)|\\
&&\ls
\lf|\sum_{m=1}^{M}
\frac{(-1)^{M+m-1}}{M!}
\binom Mm
m^{M}
\int_{\rn}\rho(y)
\Delta_{my}^{M}f(x)\,dy\r|+|f^0(x)|.
\end{eqnarray*}
From this, we deduce that
\begin{eqnarray}\label{7.8}
\sup_{y\in\rn}\frac{|f(x+y)|}{(1+|y|)^a}&&\ls
\sup_{y\in\rn}\left[\oint_{|h|\ls1}\frac{|\Delta_{h}^{M}f(x+y)|^u}
{(1+|y|)^{au}}\,dh\right]^{1/u}
+\sup_{y\in\rn}\frac{|f^0(x+y)|}{(1+|y|)^a},
\end{eqnarray}
which, together with the trivial inequality
$$
\left\|\sup_{y\in\rn}\frac{|f^0(\cdot+y)|}{(1+|y|)^a}\right\|_{\cl(\rn)}
\lesssim \|f\|_{B^{w,\tau}_{\cl,q,a}(\rn)},
$$
implies that
\begin{eqnarray*}
{\rm J}^{(1)}_{a,w,\cl}(f)&&\ls\left\|
\left\{\sup_{z \in \rn}\left[\oint_{|h| \le \wz{C}\,2^{-j}}
\frac{|\Delta_h^{M}f(\cdot+z)|^u}{(1+2^j|z|)^{au}}\,dh\right]^{1/u}
\right\}_{j \in \Z_+}\right\|_{\ell^q(\cl^w_\tau(\rn,\zz_+))}
+ \|f\|_{B^{w,\tau}_{\cl,q,a}(\rn)}\\
&&\ls\|f\|_{B^{w,\tau}_{\cl,q,a}(\rn)}.
\end{eqnarray*}
This finishes the proof of (i) and hence Theorem \ref{t7.1}.
\end{proof}

If we further assume \eqref{6.1} holds true,
from Theorems \ref{t6.1} and \ref{t7.1},
we immediately deduce the following conclusions.
We omit the details.

\begin{corollary}\label{c7.1}
Let $\alpha_1, \alpha_2, \alpha_3, \tau $, $a$, $q$
and $w$ be as in Theorem \ref{t7.1}.
Assume \eqref{6.1} and \eqref{7.2}. Let $\{{\rm J}_j\}_{j=1}^4$
be as in Theorem \ref{t7.1}.
Then the following hold true:

{\rm(i)}
$f\in B^{w,\tau}_{\cl,q,a}(\rn)$ if and only if
$f \in \cs'(\rn)\cap L^1_\loc(\rn)$
and ${\rm J}_1<\fz$;
moreover, $
{\rm J}_1\sim \|f\|_{B^{w,\tau}_{\cl,q,a}(\rn)}$
with the implicit positive constants
independent of $f$.

{\rm(ii)}
$f\in F^{w,\tau}_{\cl,q,a}(\rn)$ if and only if
$f \in \cs'(\rn)\cap L^1_\loc(\rn)$
and
${\rm J}_2<\fz$;
moreover, $
{\rm J}_2\sim \|f\|_{F^{w,\tau}_{\cl,q,a}(\rn)}$
with the implicit positive constants
independent of $f$.

{\rm(iii)}
$f\in \cn^{w,\tau}_{\cl,q,a}(\rn)$ if and only if
$f \in \cs'(\rn)\cap L^1_\loc(\rn)$
and
${\rm J}_3<\fz$;
moreover, $
{\rm J}_3\sim \|f\|_{\cn^{w,\tau}_{\cl,q,a}(\rn)}$
with the implicit positive constants
independent of $f$.

{\rm(iv)}
$f\in \ce^{w,\tau}_{\cl,q,a}(\rn)$ if and only if
$f \in \cs'(\rn)\cap L^1_\loc(\rn)$
and
${\rm J}_4<\fz$;
moreover, $
{\rm J}_4\sim \|f\|_{\ce^{w,\tau}_{\cl,q,a}(\rn)}$
with the implicit positive constants
independent of $f$.
\end{corollary}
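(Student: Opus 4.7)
\smallskip

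\noindent The plan is to derive Corollary \ref{c7.1} as a direct consequence of combining Theorem \ref{t7.1} with the embedding $A^{w,\tau}_{\cl,q,a}(\rn) \hookrightarrow C(\rn)$ furnished by Theorem \ref{t6.1}. The extra hypothesis \eqref{6.1}, which is absent from Theorem \ref{t7.1} but present here, is introduced precisely so that this embedding becomes available. I will treat the four assertions (i)--(iv) in a unified manner, since the argument is identical modulo changing which sequence space is used.

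For the forward direction, suppose $f \in A^{w,\tau}_{\cl,q,a}(\rn)$. By Theorem \ref{t3.3}, $f$ lies in $\cs'(\rn)$. Under the additional condition \eqref{6.1}, Theorem \ref{t6.1} tells us that $f \in C(\rn)$, and in particular $f \in L^1_\loc(\rn)$. Hence $f \in \cs'(\rn) \cap L^1_\loc(\rn)$, and I may invoke the relevant part of Theorem \ref{t7.1} to obtain the finiteness and the estimate ${\rm J}_i \ls \|f\|_{A^{w,\tau}_{\cl,q,a}(\rn)}$ for the corresponding $i \in \{1,2,3,4\}$.

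For the converse direction, assume $f \in \cs'(\rn) \cap L^1_\loc(\rn)$ and ${\rm J}_i < \infty$. This is exactly the hypothesis under which Theorem \ref{t7.1} is stated, so the norm equivalence given there immediately yields $f \in A^{w,\tau}_{\cl,q,a}(\rn)$ together with $\|f\|_{A^{w,\tau}_{\cl,q,a}(\rn)} \ls {\rm J}_i$. Combining the two inequalities produces the claimed equivalences.

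There is no conceptual obstacle: all the heavy lifting was performed already in establishing Theorem \ref{t7.1}, namely the Triebel decomposition \eqref{eq:120428-1} identifying $f^j - f^{j-1}$ with an average of $M$-th order differences, together with the pointwise bound \eqref{7.6} running in the opposite direction and the assumption $\az_1 > a$ used to sum the geometric tail. The role of \eqref{6.1} is purely to guarantee that every element of $A^{w,\tau}_{\cl,q,a}(\rn)$ is automatically locally integrable, so that the a priori requirement $f \in L^1_\loc(\rn)$ in Theorem \ref{t7.1} ceases to be a genuine extra assumption and can be folded into the characterization itself.
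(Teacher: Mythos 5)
Your argument is exactly the one the paper intends: the corollary is stated there as an immediate consequence of Theorems \ref{t6.1} and \ref{t7.1}, with Theorem \ref{t6.1} (available thanks to the extra hypothesis \eqref{6.1}) supplying $A^{w,\tau}_{\cl,q,a}(\rn)\subset C(\rn)\subset L^1_\loc(\rn)$ so that Theorem \ref{t7.1} applies in both directions. The proposal is correct and matches the paper's (omitted) proof.
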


By the Peetre maximal function
characterizations of the Besov space $B^s_{p,q}(\rn)$
and the Triebel-Lizorkin space $F^s_{p,q}(\rn)$ (see, for example, \cite{u10}),
we know that, if $q\in(0,\fz]$, $\cl(\rn)=L^p(\rn)$
and $w_j\equiv 2^{js}$, then
$B^{w,\tau}_{\cl,q,a}=B^s_{p,q}(\rn)$ for all $p\in(0,\fz]$ and $a\in(n/p,\fz)$,
and $F^{w,\tau}_{\cl,q,a}=F^s_{p,q}(\rn)$ for all $p\in(0,\fz)$
and $a\in(n/\min\{p,q\},\fz)$.
Then, applying Theorem \ref{t7.1} in this case, we have the following corollary.
In what follows,
for all measurable functions $f$, $a\in(0,\fz)$
and $x\in\rn$, we define the \emph{Peetre maximal function} of $f$ as
$$f^*_a(x):=\sup_{z\in\rn}\frac{|f(x+z)|}{(1+|z|)^a}.$$

\begin{corollary}\label{c7.2}
Let $M\in\nn$, $u\in[1,\fz]$ and $q\in(0,\fz]$.

{\rm(i)} Let $p\in(0,\fz)$, $a\in(n/\min\{p,q\}, M/2)$ and
$s\in(a,M-a)$. Then
there exists a positive constant $\wz C:=C(M)$, depending on $M$,
such that
$f\in F^s_{p,q}(\rn)$ if and only if
$f \in \cs'(\rn)\cap L^1_\loc(\rn)$ and
\begin{eqnarray*}
{\rm J}_1:=\|f^*_a\|_{L^p(\rn)}
+\left\| \left\|\left\{ 2^{js}\sup_{z \in \rn}
\left[\oint_{|h| \le \wz{C}\,2^{-j}}
\frac{|\Delta_h^{M}f(\cdot+z)|^u}{(1+2^j|z|)^{au}}\,dh\right]^{1/u}
\right\}_{j \in \Z_+}\right\|_{\ell^q(\zz_+)}\right\|_{L^p(\rn)}
\end{eqnarray*}
is finite.
Moreover, ${\rm J}_1$ is equivalent to $\|f\|_{F^s_{p,q}(\rn)}$
with the equivalent positive constants
independent of $f$.

{\rm (ii)} Let $p\in(0,\fz]$, $a\in(n/p,M/2)$ and $s\in(a,M-a)$. Then
there exists a positive constant $\wz C:=C(M)$, depending on $M$,
such that
$f\in B^s_{p,q}(\rn)$ if and only if
$f \in \cs'(\rn)\cap L^1_\loc(\rn)$ and
\begin{eqnarray*}
{\rm J}_2:=
\|f^*_a\|_{L^p(\rn)}
+\left\|\left\{ \left\|2^{js}\sup_{z \in \rn}
\left[\oint_{|h| \le \wz{C}\,2^{-j}}
\frac{|\Delta_h^{M}f(\cdot+z)|^u}{(1+2^j|z|)^{au}}
\,dh\right]^{1/u}\right\|_{L^p(\rn)}
\right\}_{j \in \Z_+}\right\|_{\ell^q(\zz_+)}
\end{eqnarray*}
is finite.
Moreover, ${\rm J}_2$ is equivalent to $\|f\|_{B^s_{p,q}(\rn)}$
with the equivalent positive constants
independent of $f$.
\end{corollary}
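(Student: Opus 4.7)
The plan is to apply Theorem \ref{t7.1} to the specialization $\cl(\rn)=L^p(\rn)$, $w_j(x)\equiv 2^{js}$, $\tau=0$, and then identify the resulting abstract spaces with the classical $B^s_{p,q}(\rn)$ and $F^s_{p,q}(\rn)$. By Example \ref{e3.3}, the weight $w_j\equiv 2^{js}$ with $s\in(0,\fz)$ belongs to the class $\star-\cw^{0}_{s,s}$, so that $\alpha_1=\alpha_2=s$ and $\alpha_3=0$; moreover $L^p(\rn)$ is well known to satisfy $(\cl1)$ through $(\cl6)$ with $\gamma=\delta=n/p$ and any $N_0>n/p$.

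Next I would check the hypotheses of Theorem \ref{t7.1} in this setting. The requirement $\alpha_1\in(a,M)$ becomes $s>a$ (our assumption) and $s<M$ (a consequence of $s<M-a$). The condition \eqref{7.2}, namely $M>\alpha_1\vee(a+n\tau+\alpha_2)=s\vee(a+s)=a+s$, is exactly $s<M-a$. Hence Theorem \ref{t7.1}(i) and (iii) apply, and yield equivalences with the quantities ${\rm I}_1$ and ${\rm I}_2$ built from the same difference expressions that appear in ${\rm J}_2$ and ${\rm J}_1$ of Corollary \ref{c7.2}.

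The third step is to identify the abstract spaces with the classical ones. By Example \ref{e3.1}, which records the results of \cite{LSUYY1} based on the Plancherel-Polya-Nikolskij inequality (Lemma \ref{l1.1}) and the Fefferman-Stein vector-valued inequality, one has $B^{s,0}_{p,q,a}(\rn)=B^s_{p,q}(\rn)$ as soon as $a>n/p$ and $F^{s,0}_{p,q,a}(\rn)=F^s_{p,q}(\rn)$ as soon as $a>n/\min(p,q)$. These are precisely the ranges of $a$ imposed in parts (ii) and (i) of the corollary, respectively. It then remains to simplify the ${\rm J}^{(1)}_{a,w,\cl}(f)$ piece of Theorem \ref{t7.1}: with $w_0\equiv 1$, $\tau=0$ and $\cl(\rn)=L^p(\rn)$, this reduces to
\[
{\rm J}^{(1)}_{a,w,\cl}(f)=\sup_{P\in\cq,\,|P|\ge 1}\|\chi_P f^*_a\|_{L^p(\rn)},
\]
and by the monotone convergence theorem (equivalently by the Fatou property $(\cl5)$ stated in Proposition \ref{p2.1}), letting $P$ exhaust $\rn$ through nested dyadic cubes shows this supremum equals $\|f^*_a\|_{L^p(\rn)}$, which matches the first term in ${\rm J}_1$ and ${\rm J}_2$ of Corollary \ref{c7.2}.

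Assembling these four steps produces the claimed equivalences ${\rm J}_1\sim\|f\|_{F^s_{p,q}(\rn)}$ and ${\rm J}_2\sim\|f\|_{B^s_{p,q}(\rn)}$, together with the fact that finiteness of ${\rm J}_1$ (resp.\ ${\rm J}_2$) for $f\in\cs'(\rn)\cap L^1_\loc(\rn)$ is equivalent to membership in $F^s_{p,q}(\rn)$ (resp.\ $B^s_{p,q}(\rn)$). The main technical point to be careful about is the identification in the third step, where one must match the range $a>n/p$ or $a>n/\min(p,q)$ coming from the Peetre-maximal characterization of the classical spaces with the range $a\in(n/\min(p,q),M/2)$ or $a\in(n/p,M/2)$ demanded in the hypothesis of the corollary, so that every invocation of Theorem \ref{t7.1} is legitimate; the remaining steps are essentially routine bookkeeping and the collapse of the Morrey-type quantifier over $P$ in ${\rm J}^{(1)}$.
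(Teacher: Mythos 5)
Your proposal is correct and follows essentially the same route as the paper: specialize Theorem \ref{t7.1} to $\cl(\rn)=L^p(\rn)$, $w_j\equiv 2^{js}$, $\tau=0$, verify that $s\in(a,M-a)$ gives exactly $\alpha_1\in(a,M)$ and \eqref{7.2}, and then identify $B^{s,0}_{p,q,a}(\rn)$ and $F^{s,0}_{p,q,a}(\rn)$ with the classical spaces via the Peetre maximal function characterization for $a>n/p$, resp.\ $a>n/\min(p,q)$. The one step you leave implicit, and which constitutes essentially the entire written content of the paper's proof, is the forward direction of the ``if and only if'': to conclude from $f\in A^s_{p,q}(\rn)$ that Theorem \ref{t7.1} is applicable, one must know $A^s_{p,q}(\rn)\subset L^1_\loc(\rn)$; the paper secures this by recalling the Sickel--Triebel criteria, which are satisfied here because $s>a>n/\min(p,q)$ (resp.\ $s>a>n/p$) forces $s>0$ and $s>n(1/p-1)$. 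This is a minor, easily filled omission rather than a flaw in the argument.
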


\begin{proof}
Recall that by \cite[Theorem 3.3.2]{st95} (see also
\cite[pp.\,33-34]{rs96}),
$F^s_{p,q}(\rn)\subset L^1_\loc(\rn)$ if and only if either
$p\in(0,1)$, $s\in[n(1/p-1),\fz)$ and $q\in(0,\fz]$,
or $p\in[1,\fz)$, $s\in(0,\fz)$ and $q\in(0,\fz]$, or
$p\in[1,\fz)$, $s=0$ and $q\in(0,2]$,
and $B^s_{p,q}(\rn)
\subset L^1_\loc(\rn)$ if and only if either
$p\in(0,\fz]$, $s\in(n\max(0,1/p-1),\fz)$ and $q\in(0,\fz]$,
or $p\in(0,1]$, $s=n(1/p-1)$ and $q\in(0,1]$, or
$p\in(1,\fz]$, $s=0$ and $q\in(0,\min(p,2)]$.
From this, the aforementioned Peetre maximal function
characterizations of Besov spaces $B^s_{p,q}(\rn)$
and Triebel-Lizorkin spaces $F^s_{p,q}(\rn)$,
and Theorem \ref{t7.1}, we immediately
deduce the conclusions of (i) and (ii),  which
completes the proof of Corollary \ref{c7.2}.
\end{proof}

We remark that the difference characterizations obtained in Corollary
\ref{c7.2} is a little different from the classical difference characterizations
of Besov and Triebel-Lizorkin spaces in \cite[Section 3.5.3]{t92}.
Indeed, Corollary \ref{c7.2} can be seen as the Peetre maximal function version
of \cite[Theorem 3.5.3]{t92} in the case that $u=\fz$. We also
remark that the condition
$a\in(n/p,M)$ and $s\in(a,\fz)$ is somehow necessary, since in the classical
case, the condition $s\in(n/p,\fz)$ is necessary; see, for example, \cite{cs06}.

\subsection{Characterization by oscillations}\label{s7.2}

In this section,
we characterize our function spaces in terms of oscillations.

Let ${\mathbb P}_M$ be the \emph{set of all polynomials with degree less than $M$}.
By convention ${\mathbb P}_{-1}$ stands for the \emph{space} $\{0\}$.
We define, for all $(x,t) \in \R^{n+1}_+$, that
\[
{\rm osc}_u^M f(x,t):=
\inf_{P \in {\mathbb P}_M}
\left[\frac{1}{|B(x,t)|}\int_{B(x,t)}|f(y)-P(y)|^u\,dy\right]^{1/u}.
\]
We invoke the following estimates
from \cite{t92}.

\begin{lemma}
For any $f\in\cs'(\rn)$, let $\{f^j\}_{j=-1}^\fz$
be as in \eqref{7.1}.
Then there exists a positive constant $C$ such that the following estimates hold true:

{\rm (i)} for all $j\in\nn$ and $x\in\rn$,
\begin{equation}\label{7.9}
|f^j(x)-f^{j-1}(x)|\le C {\rm osc}^{M}_u f(x,\,2^{-j});
\end{equation}

{\rm (ii)} for all $j\in\zz_+$, $x\in\rn$ and $y\in B(x,2^{-j})$,
\begin{equation}\label{7.10}
\left|f^j(x)-\sum_{\|\alpha\|_1 \le M-1}
\frac{1}{\alpha!}D^\alpha f^j(x)(y-x)^\alpha\right|
\le C 2^{-jM}\sup_{z \in B(x,\,2^{-j})}
\sum_{\|\alpha\|_1=M}|D^\alpha f^j(z)|.
\end{equation}
\end{lemma}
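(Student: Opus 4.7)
My plan is to derive (7.9) and (7.10) from two structural properties of the Triebel kernel $T_j$, both established in \cite[Lemma~3.3.1]{t92}: (a) $\supp T_j\subset[0,M^2 2^{-j}]^n$, so $\supp(T_j-T_{j-1})$ is contained in a cube of side length $\wz C 2^{-j}$ for some $\wz C=C(M)$; and (b) $T_j$ reproduces every polynomial of degree at most $M-1$, that is, $T_j*P=P$ for all $P\in\mathbb{P}_M$, or equivalently, $\int_{\rn}T_j(z)\,dz=1$ and $\int_{\rn}z^{\alpha}T_j(z)\,dz=0$ for all $\alpha$ with $1\le\|\alpha\|_1\le M-1$. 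Property (b) is forced by the specific combinatorial coefficients $\frac{(-1)^{M+m+m'+1}}{M!}\binom{M}{m'}\binom{M}{m}m^M$ together with the defining relation $\vz-\tfrac12\vz(\cdot/2)=\psi^{(M)}$; I will simply invoke (a) and (b) rather than verify them from scratch.

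To prove (7.9), I fix $j\in\nn$ and $x\in\rn$. Since $(T_j-T_{j-1})*P\equiv 0$ for every $P\in\mathbb{P}_M$ by (b), I may write
\[
f^j(x)-f^{j-1}(x)=(T_j-T_{j-1})*(f-P)(x).
\]
By property (a) the convolution sees only values of $f-P$ on $B(x,\wz C 2^{-j})$, and the H\"older inequality combined with the scaling estimate $\|T_j-T_{j-1}\|_{L^{u'}(\rn)}\lesssim 2^{jn/u}$ (which follows from $\|T_j\|_{L^\infty}\lesssim 2^{jn}$ and the support bound) yields
\[
|f^j(x)-f^{j-1}(x)|\lesssim\lf[\frac{1}{|B(x,\wz C 2^{-j})|}\int_{B(x,\wz C 2^{-j})}|f(y)-P(y)|^u\,dy\r]^{1/u}.
\]
Taking the infimum over $P\in\mathbb{P}_M$ then yields (7.9), with the factor $\wz C$ inside the radius absorbed by the multiplicative constant $C$. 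For (7.10), which I read with $f^j(y)$ on the left-hand side so as to make it a nontrivial Taylor remainder statement, I use that $T_j\in\cs(\rn)$ makes $f^j=T_j*f$ smooth, so Taylor's theorem with integral remainder at $x$ gives, for all $y\in\rn$,
\[
f^j(y)-\sum_{\|\alpha\|_1\le M-1}\frac{D^{\alpha}f^j(x)}{\alpha!}(y-x)^{\alpha}=M\sum_{\|\alpha\|_1=M}\frac{(y-x)^{\alpha}}{\alpha!}\int_0^1(1-s)^{M-1}D^{\alpha}f^j(x+s(y-x))\,ds.
\]
Restricting to $y\in B(x,2^{-j})$ and bounding $|(y-x)^{\alpha}|\le 2^{-jM}$ and $|D^{\alpha}f^j(x+s(y-x))|\le\sup_{z\in B(x,2^{-j})}|D^{\alpha}f^j(z)|$ for all $s\in[0,1]$ will then give (7.10) after integrating in $s$.

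The main obstacle is property (b). The zeroth moment identity $\int T_j=1$ follows from the classical combinatorial identity $\sum_{m=0}^{M}(-1)^{M-m}\binom{M}{m}m^M=M!$, which is a direct consequence of the $M$-th order finite difference of $x^M$ at $0$. The higher-order vanishing moments (up to $\|\alpha\|_1=M-1$) rely on analogous combinatorial identities propagated through the tensor structure $\rho=\otimes_{\ell=1}^{n}\vz$ via the coordinate-wise relation $\vz-\tfrac12\vz(\cdot/2)=\psi^{(M)}$. This is a concrete but somewhat delicate verification carried out in \cite[Lemma~3.3.1]{t92}, and my plan is to cite it rather than reproduce the combinatorics here.
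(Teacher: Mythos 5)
Your overall strategy is sound and, unlike the paper (whose ``proof'' of this lemma consists solely of a citation to Triebel's book, pp.~182 and 188), it actually reconstructs the argument: part (i) from the polynomial--annihilation, support and size properties of $T_j-T_{j-1}$ via H\"older's inequality, and part (ii) from Taylor's formula with integral remainder applied to the smooth function $f^j=T_j*f$. You are also right that the left-hand side of \eqref{7.10} must be read with $f^j(y)$; as printed, the $\alpha=0$ term cancels $f^j(x)$ and the inequality is not a Taylor remainder estimate. Part (ii) of your argument is correct as written, and your decision to cite Triebel's Lemma~3.3.1 for the moment properties of $T_j$ is consistent with the paper's own level of detail (the representation of $f^j-f^{j+1}$ via $\Delta^M_h f$ in the proof of Theorem \ref{t7.1} confirms that $T_j-T_{j-1}$ annihilates $\mathbb{P}_M$).

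The gap is in the last step of part (i). Your H\"older argument correctly yields
$|f^j(x)-f^{j-1}(x)|\ls\lf[\oint_{B(x,\wz C2^{-j})}|f-P|^u\r]^{1/u}$
for every $P\in\mathbb{P}_M$, hence a bound by ${\rm osc}^M_u f(x,\wz C 2^{-j})$ with $\wz C\sim\sqrt{n}\,M^2$ coming from $\supp T_j\subset[0,M^2 2^{-j}]^n$. But the claim that the factor $\wz C$ inside the radius is ``absorbed by the multiplicative constant'' is false: the map $t\mapsto{\rm osc}^M_u f(x,t)$ is essentially increasing (for $s\le t$ one has ${\rm osc}^M_u f(x,s)\le(t/s)^{n/u}\,{\rm osc}^M_u f(x,t)$, not the reverse), and the infimum over polynomials defeats any covering argument. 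Concretely, for $n=1$ and $M\ge 2$, take $f=\chi_{(-\infty,\,x-2\cdot2^{-j}]}$: then $f$ vanishes on $B(x,2^{-j})$, so ${\rm osc}^M_u f(x,2^{-j})=0$, while $(T_j-T_{j-1})*f(x)=\int_{2\cdot2^{-j}}^{M^2 2^{-j}}(T_j-T_{j-1})(y)\,dy$ is generically nonzero. So what your argument (and Triebel's) actually proves is \eqref{7.9} with ${\rm osc}^M_u f(x,\wz C 2^{-j})$ on the right-hand side --- exactly parallel to the explicit $\wz C$ appearing in Theorem \ref{t7.1} --- and the dilation has to be absorbed later, at the level of the quasi-norms in Theorem \ref{t7.2}, by writing $\wz C 2^{-j}\le 2^{-(j-j_0)}$ and shifting the index $j$ (harmless thanks to (W1) and the structure of $\ell^q(\cl^w_\tau)$). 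You should either restate (i) with the dilated radius or add this index-shift remark; as a pointwise inequality with radius exactly $2^{-j}$ it cannot be proved by your method, because it is not true.
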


\begin{proof}
Estimates \eqref{7.9} and \eqref{7.10}
appear, respectively, in \cite[p.\,188]{t92} and  \cite[p.\,182]{t92}.
\end{proof}

\begin{theorem}\label{t7.2}
Let $a, \alpha_1, \alpha_2, \alpha_3, \tau \in [0,\fz)$,
$u\in[1,\fz]$, $q\in(0,\,\fz]$
and $w\in\star-{\mathcal W}^{\alpha_3}_{\alpha_1,\alpha_2}$.
If $M\in\nn$, $\az_1\in (a,M)$ and \eqref{7.2} holds true, then,
for all $f \in \cs'(\rn)\cap L^1_\loc(\rn)$,
the following hold true:

{\rm(i)}
\begin{eqnarray*}
{\rm H}_1:={\rm J}^{(1)}_{a,w,\cl}(f)+\left\|\left\{\sup_{z \in \rn}
\frac{{\rm osc}_u^{M}f(\cdot+z,2^{-j})}{(1+2^j|z|)^a}
\right\}_{j \in \Z_+}\right\|_{\ell^q(\cl^w_\tau(\rn,\zz_+))}
\sim \|f\|_{B^{w,\tau}_{\cl,q,a}(\rn)}
\end{eqnarray*}
with the implicit positive constants
independent of $f$.

{\rm(ii)}
\begin{eqnarray*}
{\rm H}_2:={\rm J}^{(1)}_{a,w,\cl}(f)+\left\|\left\{\sup_{z \in \rn}
\frac{{\rm osc}_u^{M}f(\cdot+z,2^{-j})}{(1+2^j|z|)^a}
\right\}_{j \in \Z_+}\right\|_{\cl^w_\tau(\ell^q(\rn,\zz_+))}
\sim \|f\|_{F^{w,\tau}_{\cl,q,a}(\rn)}
\end{eqnarray*}
with the implicit positive constants
independent of $f$.

{\rm(iii)}
\begin{eqnarray*}
{\rm H}_3:={\rm J}^{(2)}_{a,w,\cl}(f)+\left\|\left\{\sup_{z \in \rn}
\frac{{\rm osc}_u^{M}f(\cdot+z,2^{-j})}{(1+2^j|z|)^a}
\right\}_{j \in \Z_+}\right\|_{\ell^q(\cn\cl^w_\tau(\rn,\zz_+))}
\sim \|f\|_{\cn^{w,\tau}_{\cl,q,a}(\rn)}
\end{eqnarray*}
with the implicit positive constants
independent of $f$.

{\rm(iv)}
\begin{eqnarray*}
{\rm H}_4:={\rm J}^{(2)}_{a,w,\cl}(f)+\left\|\left\{\sup_{z \in \rn}
\frac{{\rm osc}_u^{M}f(\cdot+z,2^{-j})}{(1+2^j|z|)^a}
\right\}_{j \in \Z_+}\right\|_{\ce\cl^w_\tau(\ell^q(\rn,\zz_+))}
\sim \|f\|_{\ce^{w,\tau}_{\cl,q,a}(\rn)}
\end{eqnarray*}
with the implicit positive constants
independent of $f$.
\end{theorem}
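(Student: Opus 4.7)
The strategy is to mimic the proof of Theorem~\ref{t7.1}, substituting the oscillation inequalities \eqref{7.9} and \eqref{7.10} for the difference identities used there, and then invoking Proposition~\ref{p7.1} which already characterizes all four spaces in terms of $f^j - f^{j-1}$. Because the four parts (i)--(iv) only differ in which quasi-norm we apply (by Proposition~\ref{p7.1} and Lemma~\ref{l2.3}), I describe only part (i); (ii)--(iv) follow verbatim with $\ell^q(\cl^w_\tau)$ replaced by the appropriate space.

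\emph{Direction} ${\rm H}_1\gtrsim\|f\|_{B^{w,\tau}_{\cl,q,a}(\rn)}$: For each $j\in\N$, inequality \eqref{7.9} yields the pointwise bound
\[
\sup_{z\in\rn}\frac{|f^j(x+z)-f^{j-1}(x+z)|}{(1+2^j|z|)^a}\le C\sup_{z\in\rn}\frac{{\rm osc}_u^{M}f(x+z,2^{-j})}{(1+2^j|z|)^a}.
\]
For $j=0$, the analog of \eqref{7.5} shows $\sup_z|f^0(x+z)|/(1+|z|)^a \lesssim {\rm J}^{(1)}_{a,w,\cl}(f)$-type expression because $T_0\in\cs(\rn)$. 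Taking the $\ell^q(\cl^w_\tau)$-quasi-norm and applying Proposition~\ref{p7.1} gives the desired inequality.

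\emph{Direction} ${\rm H}_1\lesssim\|f\|_{B^{w,\tau}_{\cl,q,a}(\rn)}$: Fix $x\in\rn$ and $j\in\zz_+$, and let $P_{x,j}\in\mathbb{P}_M$ be the Taylor polynomial of $f^j$ at $x$ of degree $M-1$. Writing $f-P_{x,j}=(f-f^j)+(f^j-P_{x,j})$, recalling $f-f^j=\sum_{l>j}f_l$ with $f_l := f^l-f^{l-1}$, and using \eqref{7.10}, the Minkowski inequality yields
\[
{\rm osc}_u^{M}f(x,2^{-j})\le\sum_{l=j+1}^{\infty}\left(\oint_{B(x,2^{-j})}|f_l(y)|^u\,dy\right)^{1/u}+C\,2^{-jM}\!\!\!\sup_{y\in B(x,2^{-j})}\sum_{\|\alpha\|_1=M}|D^\alpha f^j(y)|.
\]
For the first summand, the pointwise Peetre bound $|f_l(x+w)|\le(1+2^l|w|)^a (f_l^*)_a(x)$ restricted to $|w|\le 2^{-j}$ produces $2^{(l-j)a}(f_l^*)_a(x)$, exactly as in the derivation of \eqref{7.7}. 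For the second summand, the Calder\'on reproducing identity $f=\Phi\ast f+\sum_{l\ge 1}\varphi_l\ast f$ combined with Lemma~\ref{l2.4} (applied to the kernel $D^\alpha T_j\ast\varphi_l$, using the moment condition on $\varphi_l$ when $l\le j$ and the smoothness of $D^\alpha T_j$ when $l>j$) gives the Bernstein-type estimate
\[
|D^\alpha f^j(y)|\lesssim\sum_{l=0}^{\infty}2^{M(l\wedge j)-K_0|l-j|}(\varphi_l^\ast f)_a(y),\qquad\|\alpha\|_1=M,
\]
for any prescribed $K_0\in(0,\infty)$. Multiplying by $2^{-jM}$ turns this into $\sum_l 2^{-|l-j|(M+K_0)}(\varphi_l^\ast f)_a(y)$ for $l>j$ and $\sum_l 2^{-(j-l)K_0}(\varphi_l^\ast f)_a(y)$ for $l\le j$. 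Transferring these to the Peetre-modified versions at scale $2^j$ (following the passage \eqref{7.6}$\Rightarrow$\eqref{7.7}) and applying Lemma~\ref{l2.3} with $D_1\in(\alpha_1,\infty)$, $D_2\in(\alpha_2+n\tau,\infty)$ (achievable because $\alpha_1>a$ and $K_0$ is at our disposal) bounds the oscillation term by $\|f\|_{B^{w,\tau}_{\cl,q,a}(\rn)}$. The ${\rm J}^{(1)}_{a,w,\cl}(f)$ piece is handled by the obvious oscillation analog of \eqref{7.8}: $|f(x)|$ is recovered from $f^0(x)$ plus a polynomial error controlled by ${\rm osc}_u^M f(\cdot,1)$, and each piece is $\lesssim\|f\|_{B^{w,\tau}_{\cl,q,a}(\rn)}$.

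\emph{Main obstacle.} The crucial technical point is the Bernstein-type estimate for $D^\alpha f^j$. Unlike a Littlewood--Paley piece, $T_j$ is only a smooth, compactly supported approximation to the identity built from $\rho$ as in \eqref{7.1}, so $D^\alpha f^j$ does not localize in frequency. One circumvents this by inserting the reproducing identity for $f$ and estimating each convolution $D^\alpha T_j\ast\varphi_l$ with Lemma~\ref{l2.4}, where the moment condition on $\varphi_l$ (for $l\le j$) and the polynomial decay of $D^\alpha T_j$ (for $l>j$) give decay $2^{-K_0|l-j|}$ with $K_0$ as large as needed. Once this single estimate is secured, the rest of the proof reduces cleanly to the summation Lemma~\ref{l2.3}, exactly in parallel with Theorem~\ref{t7.1}.
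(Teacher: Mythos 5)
Your proof is correct and follows essentially the same route as the paper's: the lower bound comes from \eqref{7.9} together with \eqref{7.5} and Proposition \ref{p7.1}, and the upper bound from splitting the oscillation into fine-scale tails of $f_l$ plus an $M$-th order remainder, followed by the $\star$-weight estimate $w_j\lesssim 2^{-l\alpha_1}w_{j+l}$ with $\alpha_1>a$ and the summation lemma. The one place you diverge is the $M$-th order term: the paper simply quotes the oscillation inequality from Triebel's book, which already packages that term as the single local mean $\sup_{w}|\int_{\rn} D^{\vec{\alpha}}T_0(y)f(w+2^{-j}y)\,dy|$ and disposes of it by the same local-means argument used in the proof of Theorem \ref{t7.1} (the kernel has $M$ vanishing moments and \eqref{7.2} holds), whereas you re-derive the decomposition from \eqref{7.10} and then estimate $D^{\vec{\alpha}}f^j$ directly by inserting the reproducing formula and applying Lemma \ref{l2.4}. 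Both work; yours is more self-contained, the paper's is shorter. One bookkeeping correction: in your Bernstein-type estimate the two branches are interchanged (multiplying by $2^{-jM}$ gives $2^{-(l-j)K_0}$ for $l>j$ and $2^{-(j-l)(M+K_0)}$ for $l\le j$ in your notation), and for $l\le j$ the attainable decay is in fact capped at $2^{-(j-l)M}$ by the number of vanishing moments of $D^{\vec{\alpha}}T_j$ (namely $M$), not by an arbitrary $K_0$. This is harmless: \eqref{7.2} gives $M>\alpha_2+n\tau$, which is exactly the $D_2$-condition that Lemma \ref{l2.3} requires, while the $l>j$ branch supplies arbitrarily large decay to absorb the Peetre-transfer cost $2^{(l-j)a}$ and meet the $D_1$-condition.
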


\begin{proof}We only prove (ii) since the proofs of others are similar.

By virtue of \eqref{7.5} and \eqref{7.9}, we have
\begin{eqnarray*}
{\rm H}_2\gs&&
\sup_{P\in\cq,|P|\ge1}\frac1{|P|^\tau}
\lf\|\chi_Pw_0\sup_{y\in\rn}\frac{|(f^0-f^{-1})(\cdot+y)|}{(1+|y|)^a}\r\|_{\cl(\rn)}\\
&&+\left\|
\left\{\sup_{z \in \rn}
\frac{|(f^j-f^{j-1})(\cdot+z)|}{(1+2^j|z|)^a}
\right\}_{j \in \Z_+}\right\|_{\cl^w_\tau(\ell^q(\rn,\zz_+))}
\sim \|f\|_{F^{w,\tau}_{\cl,q,a}(\rn)}
\end{eqnarray*}

For the reverse inequality, by \eqref{7.8} and Theorem \ref{t7.1}(ii),
we conclude that
\begin{eqnarray*}
&&\sup_{P\in\cq,|P|\ge1}\frac1{|P|^\tau}
\lf\|\chi_Pw_0\sup_{y\in\rn}\frac{|f(\cdot+y)|}{(1+|y|)^a}\r\|_{\cl(\rn)}
\ls\|f\|_{F^{w,\tau}_{\cl,q,a}(\rn)}.
\end{eqnarray*}
Therefore, we only need to prove that
\begin{eqnarray*}
\left\|\left\{\sup_{z \in \rn}
\frac{{\rm osc}_u^{M}f(\cdot+z,2^{-j})}{(1+2^j|z|)^a}
\right\}_{j \in \Z_+}\right\|_{\cl^w_\tau(\ell^q(\rn,\zz_+))}
&\ls&
\|f\|_{F^{w,\tau}_{\cl,q,a}(\rn)}.
\end{eqnarray*}
We use the estimate \cite[p.\,188,\,(11)]{t92}
with $k_0$ replaced by $T_0$.

Recall that the following estimate can be found
in \cite[p.\,188,\,(11)]{t92}:
\begin{eqnarray*}
&&{\rm osc}_u^{M}f(x+z,2^{-j})\\
&&\quad\lesssim
\sum_{l=1}^\infty\oint_{y\in B(x+z,2^{-j})}|f_{j+l}(y)|\,dy
+
\sup_{w \in B(x+z,C2^{-j})}
\left|\int_{\rn}D^\alpha T_0(y)f(w+2^{-j}y)\,dy\right|,
\end{eqnarray*}
where $C$ is a positive constant.
Consequently,
\begin{eqnarray}\label{7.11}
&&\frac{{\rm osc}_u^{M}f(x+z,2^{-j})}{(1+2^j|z|)^a}\nonumber\\
&&\quad\lesssim
\sum_{l=1}^\infty
\frac{\sup_{y\in B(x+z,2^{-j})}|f_{j+l}(y)|}{(1+2^j|z|)^a}\nonumber\\
&&\quad\quad
+\sup_{w \in B(x+z,C2^{-j})}
\frac{1}{(1+2^j|z|)^a}
\left|\int_{\rn}D^\alpha T_0(y)f(w+2^{-j}y)\,dy\right|.
\end{eqnarray}
Then by an argument similar to that used in the proof of Theorem \ref{t7.1},
for the second term on the right-hand side of \eqref{7.11}, we see that
\begin{eqnarray*}
&&\left\|\left\{\sup_{z\in\rn}
\sup_{w \in B(x+z,C2^{-j})}
\frac{1}{(1+2^j|z|)^a}
\left|\int_{\rn}D^\alpha T_0(y)f(w+2^{-j}y)
\,dy\right|\right\}_{j \in \Z_+}\right\|_{\cl^w_\tau(\ell^q(\rn,\zz_+))}\\
&&\quad\ls
\|f\|_{F^{w,\tau}_{\cl,q,a}(\rn)},
\end{eqnarray*}
We only need to consider the first term on the right-hand side of \eqref{7.11}.
Indeed, by $w\in\star-{\mathcal W}^{\alpha_3}_{\alpha_1,\alpha_2}$,
we have
$w_j(x)\ls 2^{-l\az_1}w_{j+l}(x)$, which, together with
$\az_1>a$ and \eqref{7.7}, implies that
\begin{eqnarray*}
&&\left\|\left\{\sum_{l=1}^\fz\sup_{z\in\rn}
\frac{\sup_{y\in B(x+z,2^{-j})}|f_{j+l}(y)|}{(1+2^j|z|)^a}
\right\}_{j \in \Z_+}\right\|_{\cl^w_\tau(\ell^q(\rn,\zz_+))}\\
&&\hs\ls
\left\|\left\{\sum_{l=1}^\fz2^{la}\sup_{z\in\rn}
\frac{\left|f_{j+l}(x+z)\right|}{(1+2^{j+l}|z|)^a}
\right\}_{j \in \Z_+}\right\|_{\cl^w_\tau(\ell^q(\rn,\zz_+))}\\
&&\hs\ls\left\{\sum_{l=1}^\fz 2^{la{\wz\theta}}\sup_{P\in\cq}\frac1{|P|^\tau}
\left\|\left(\sum_{j=(0\vee j_P)}^\fz\chi_P \lf[w_j\sup_{z\in\rn}
\frac{\left|f_{j+l}(x+z)\right|}{(1+2^{j+l}|z|)^a}\r]^q
\right)^{1/q}\right\|_{\cl(\rn)}^{\wz\theta}\right\}^{1/{\wz\theta}}\\
&&\hs\ls\left\{\sum_{l=1}^\fz 2^{-l(\az_1-a){\wz\theta}}\sup_{P\in\cq}\frac1{|P|^\tau}
\left\|\left(\sum_{j=(0\vee j_P)}^\fz\chi_P \lf[w_{j+l}\sup_{z\in\rn}
\frac{\left|f_{j+l}(x+z)\right|}{(1+2^{j+l}|z|)^a}\r]^q
\right)^{1/q}\right\|_{\cl(\rn)}^{\wz\theta}\right\}^{1/{\wz\theta}}\\
&&\hs\ls\sup_{P\in\cq}\frac1{|P|^\tau}
\left\|\left\{\sum_{j=(0\vee j_P)}^\fz\chi_P \lf[w_{j}\sup_{z\in\rn}
\frac{\left|f_{j}(x+z)\right|}{(1+2^{j}|z|)^a}\r]^q
\right\}^{1/q}\right\|_{\cl(\rn)}\sim\|f\|_{F^{w,\tau}_{\cl,q,a}(\rn)},
\end{eqnarray*}
where we chose $\wz\tz\in(0,\min\{\tz,q\})$.
This finishes the proof of Theorem \ref{t7.2}.
\end{proof}

If we further assume that \eqref{6.1} holds true,
then from Theorems \ref{t6.1} and \ref{t7.2},
we immediately deduce the following conclusions.
We omit the details.

\begin{corollary}\label{c7.3}
Let $\alpha_1, \alpha_2, \alpha_3, \tau $, $a$, $q$
and $w$ be as in Theorem \ref{t7.2}.
Assume that \eqref{6.1} and \eqref{7.2} hold true.
Let $\{{\rm H}_j\}_{j=1}^4$
be as in Theorem \ref{t7.2}.
Then the following hold true:

{\rm(i)}
$f\in B^{w,\tau}_{\cl,q,a}(\rn)$ if and only if
$f \in \cs'(\rn)\cap L^1_\loc(\rn)$
and ${\rm H}_1<\fz$;
moreover, $
{\rm H}_1\sim \|f\|_{B^{w,\tau}_{\cl,q,a}(\rn)}$
with the implicit positive constants
independent of $f$.

{\rm(ii)}
$f\in F^{w,\tau}_{\cl,q,a}(\rn)$ if and only if
$f \in \cs'(\rn)\cap L^1_\loc(\rn)$
and
${\rm H}_2<\fz$;
moreover, $
{\rm H}_2\sim \|f\|_{F^{w,\tau}_{\cl,q,a}(\rn)}$
with the implicit positive constants
independent of $f$.

{\rm(iii)}
$f\in \cn^{w,\tau}_{\cl,q,a}(\rn)$ if and only if
$f \in \cs'(\rn)\cap L^1_\loc(\rn)$
and
${\rm H}_3<\fz$;
moreover, $
{\rm H}_3\sim \|f\|_{\cn^{w,\tau}_{\cl,q,a}(\rn)}$
with the implicit positive constants
independent of $f$.

{\rm(iv)}
$f\in \ce^{w,\tau}_{\cl,q,a}(\rn)$ if and only if
$f \in \cs'(\rn)\cap L^1_\loc(\rn)$
and
${\rm H}_4<\fz$;
moreover, $
{\rm H}_4\sim \|f\|_{\ce^{w,\tau}_{\cl,q,a}(\rn)}$
with the implicit positive constants
independent of $f$.
\end{corollary}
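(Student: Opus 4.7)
The plan is to deduce Corollary \ref{c7.3} as a packaging of Theorem \ref{t7.2} with the embedding result Theorem \ref{t6.1} (and, for the distributional part, Theorem \ref{t3.3}). Theorem \ref{t7.2} already establishes the quantitative norm equivalence $\|f\|_{A^{w,\tau}_{\cl,q,a}(\rn)} \sim {\rm H}_i(f)$ for every $f \in \cs'(\rn) \cap L^1_\loc(\rn)$, with the understanding that both sides may a priori be infinite. What remains in the corollary is to show that, under the extra hypothesis \eqref{6.1}, membership in $A^{w,\tau}_{\cl,q,a}(\rn)$ automatically entails $f \in \cs'(\rn) \cap L^1_\loc(\rn)$, so that the stated biconditional is a genuine intrinsic characterization of the space by the oscillation quantity.

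For the ``only if'' direction I would start with $f \in A^{w,\tau}_{\cl,q,a}(\rn)$. The membership $f \in \cs'(\rn)$ is immediate from Theorem \ref{t3.3}, whose assumptions $w \in \star-\cw_{\az_1,\az_2}^{\az_3}$ and $(\cl1)$ through $(\cl6)$ are part of the running hypotheses of Theorem \ref{t7.2} inherited by the corollary. To secure $f \in L^1_\loc(\rn)$, I would apply Theorem \ref{t6.1}: its hypothesis \eqref{6.1} is one of the two additional conditions explicitly imposed in the corollary, and its conclusion yields $f \in C(\rn)$, hence $f \in L^1_\loc(\rn)$. Once $f \in \cs'(\rn) \cap L^1_\loc(\rn)$ is in hand, Theorem \ref{t7.2} applies and gives ${\rm H}_i(f) \sim \|f\|_{A^{w,\tau}_{\cl,q,a}(\rn)} < \infty$.

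For the ``if'' direction the situation is more direct: given $f \in \cs'(\rn) \cap L^1_\loc(\rn)$ with ${\rm H}_i(f) < \infty$, Theorem \ref{t7.2} immediately provides $\|f\|_{A^{w,\tau}_{\cl,q,a}(\rn)} \lesssim {\rm H}_i(f) < \infty$, so $f \in A^{w,\tau}_{\cl,q,a}(\rn)$. The four parts (i)--(iv) are handled identically, with $A$ specialized in turn to $B, F, \cn, \ce$ and ${\rm H}_i$ to ${\rm H}_1,{\rm H}_2,{\rm H}_3,{\rm H}_4$, since Theorem \ref{t7.2} treats these four cases uniformly.

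I do not anticipate any serious obstacle, as the corollary is essentially a repackaging of earlier results. The only point requiring a brief consistency check is the compatibility of the parameter choices: Theorem \ref{t7.2} needs $M \in \nn$ with $\alpha_1 \in (a,M)$ together with \eqref{7.2}, while Theorem \ref{t6.1} needs \eqref{6.1}, namely $\alpha_1 > a+\gamma-n\tau$. Since both conditions push $\alpha_1$ strictly above $a$, a single $M \in \nn$ can always be chosen large enough to meet \eqref{7.2}, and the two hypotheses coexist without tension; this is precisely why the characterization via oscillations can be stated unconditionally for $f$ belonging to (or being tested against) $A^{w,\tau}_{\cl,q,a}(\rn)$.
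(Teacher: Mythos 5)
Your proposal is correct and follows exactly the route the paper intends: the paper states that Corollary \ref{c7.3} is "immediately deduced from Theorems \ref{t6.1} and \ref{t7.2}" and omits the details, which you have supplied accurately (Theorem \ref{t6.1} under \eqref{6.1} gives $f\in C(\rn)\subset L^1_\loc(\rn)$ for the "only if" direction, and Theorem \ref{t7.2} gives the two-sided norm equivalence). The parameter-compatibility check at the end is a sensible addition but raises no real issue.
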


Again, applying the Peetre maximal function
characterizations of
the spaces $B^s_{p,q}(\rn)$ and $F^s_{p,q}(\rn)$
(see, for example, \cite{u10}),
and Theorem \ref{t7.2},
we have the following corollary. Its proof is similar to that of
Corollary \ref{c7.2}. We omit the details.

\begin{corollary}\label{c7.4}
Let $M\in\nn$, $u\in[1,\fz]$ and $q\in(0,\fz]$.

{\rm(i)} Let $p\in(0,\fz)$, $a\in(n/\min\{p,q\},M)$
and $s\in(a,M-a)$. Then
$f\in F^s_{p,q}(\rn)$ if and only if
$f \in \cs'(\rn)\cap L^1_\loc(\rn)$ and
\begin{eqnarray*}
{\rm K}_1:=\|f^*_a\|_{L^p(\rn)}+
\left\|\left\|\left\{2^{js}\sup_{z \in \rn}
\frac{{\rm osc}_u^{M}f(\cdot+z,2^{-j})}{(1+2^j|z|)^a}
\right\}_{j \in \Z_+}\right\|_{\ell^q(\zz_+)}\right\|_{L^p(\rn)}<\fz.
\end{eqnarray*}
Moreover, ${\rm K}_1$ is equivalent to $\|f\|_{F^s_{p,q}(\rn)}$
with the equivalent positive constants
independent of $f$.

{\rm (ii)} Let $p\in(0,\fz]$, $a\in(n/p,M)$ and $s\in(a,M-a)$. Then
$f\in B^s_{p,q}(\rn)$ if and only if
$f \in\cs'(\rn)\cap L^1_\loc(\rn)$ and
\begin{eqnarray*}
{\rm K}_2:=\|f^*_a\|_{L^p(\rn)}+
\left\|\left\{\left\|2^{js}\sup_{z \in \rn}
\frac{{\rm osc}_u^{M}f(\cdot+z,2^{-j})}{(1+2^j|z|)^a}
\right\|_{L^p(\rn)}\right\}_{j \in \Z_+}\right\|_{\ell^q(\zz_+)}<\fz.
\end{eqnarray*}
Moreover, ${\rm K}_2$ is equivalent to $\|f\|_{B^s_{p,q}(\rn)}$
with the equivalent positive constants
independent of $f$.
\end{corollary}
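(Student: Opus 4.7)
The plan is to derive Corollary \ref{c7.4} as a direct specialization of Theorem \ref{t7.2}, mimicking the argument used for Corollary \ref{c7.2} but with the oscillation characterization in place of the difference one. The proof is obtained by plugging the classical parameters into the abstract framework and simplifying the ${\rm J}^{(j)}_{a,w,\cl}$ terms.

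First I would set $\cl(\rn) := L^p(\rn)$, $\tau := 0$, and $w_j(x) := 2^{js}$ for all $x\in\rn$ and $j\in\zz_+$. Then $L^p(\rn)$ satisfies $(\cl1)$ through $(\cl6)$ (with $\gamma=n/p$ and $\delta=0$), and by Example \ref{e3.3} we have $w\in\star-\cw^0_{s,s}$, so in the notation of Theorem \ref{t7.2} we are in the case $\alpha_1=\alpha_2=s$ and $\alpha_3=0$. The coincidence $B^{s,0}_{p,q,a}(\rn)=B^s_{p,q}(\rn)$ (for $a>n/p$) and $F^{s,0}_{p,q,a}(\rn)=F^s_{p,q}(\rn)$ (for $a>n/\min(p,q)$) is recorded in Example \ref{e3.1} (and Remark 1 cites \cite{LSUYY1}), so the left-hand quasi-norms in (i) and (ii) coincide with $\|f\|_{F^{w,0}_{\cl,q,a}(\rn)}$ and $\|f\|_{B^{w,0}_{\cl,q,a}(\rn)}$, respectively.

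Next I would check that the numerical hypotheses of Theorem \ref{t7.2} are implied by those of Corollary \ref{c7.4}. The requirement $\alpha_1\in(a,M)$ becomes $s\in(a,M)$, which is weaker than the assumed $s\in(a,M-a)$ since $a>0$. Condition \eqref{7.2}, namely $M>\alpha_1\vee(a+n\tau+\alpha_2)$, becomes $M>s\vee(a+s)=a+s$, equivalent to $s<M-a$; this is precisely the upper constraint on $s$. Therefore Theorem \ref{t7.2}(ii) applies to $F^s_{p,q}(\rn)$ and Theorem \ref{t7.2}(i) applies to $B^s_{p,q}(\rn)$, giving the desired equivalences once we identify the auxiliary term ${\rm J}^{(j)}_{a,w,\cl}(f)$ with $\|f^*_a\|_{L^p(\rn)}$.

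The identification of ${\rm J}^{(j)}_{a,w,\cl}(f)$ is the one nontrivial computation. With $\tau=0$ and $w_0\equiv 1$, the definitions reduce to
\[
{\rm J}^{(2)}_{a,w,L^p}(f)=\sup_{P\in\cq(\rn)}\|\chi_P f^*_a\|_{L^p(\rn)},\qquad
{\rm J}^{(1)}_{a,w,L^p}(f)=\sup_{\substack{P\in\cq(\rn)\\|P|\ge 1}}\|\chi_P f^*_a\|_{L^p(\rn)}.
\]
Choosing an increasing sequence of dyadic cubes exhausting $\rn$ and applying the monotone convergence theorem shows that both suprema equal $\|f^*_a\|_{L^p(\rn)}$. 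This produces the first summand in ${\rm K}_1$ and ${\rm K}_2$ and completes the translation of Theorem \ref{t7.2} into Corollary \ref{c7.4}. The only remaining ingredient is the $L^1_{\rm loc}$ hypothesis in Theorem \ref{t7.2}; this is furnished automatically by the embedding results recalled in the proof of Corollary \ref{c7.2} (from \cite{st95,rs96}), because the hypotheses $s>a>n/\min(p,q)$ for the Triebel-Lizorkin case and $s>a>n/p$ for the Besov case entail the continuous embedding of $F^s_{p,q}(\rn)$, respectively $B^s_{p,q}(\rn)$, into $L^1_{\rm loc}(\rn)$.

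The main obstacle is essentially bookkeeping: tracking the parameters through the identifications and verifying that the $L^1_{\rm loc}$ assumption is implicit in the parameter range. No additional estimate beyond Theorem \ref{t7.2} and the known Peetre maximal function characterization of $B^s_{p,q}(\rn)$ and $F^s_{p,q}(\rn)$ is needed, so the proof is entirely parallel to that of Corollary \ref{c7.2} with the only change being the use of the oscillation term ${\rm osc}^M_u f(\cdot,2^{-j})$ instead of the averaged $M$-th difference.
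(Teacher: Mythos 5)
Your proposal is correct and follows exactly the route the paper intends: the paper omits the proof, saying only that it is "similar to that of Corollary \ref{c7.2}," i.e., specialize Theorem \ref{t7.2} to $\cl(\rn)=L^p(\rn)$, $\tau=0$, $w_j\equiv 2^{js}$, use the Peetre maximal characterizations of $B^s_{p,q}(\rn)$ and $F^s_{p,q}(\rn)$, and invoke the $L^1_{\rm loc}$-embedding criteria from \cite{st95,rs96} — all of which you do, with the parameter bookkeeping verified correctly. The only cosmetic slip is that no increasing sequence of dyadic cubes exhausts $\rn$ (each nested chain fills out only one orthant), so ${\rm J}^{(2)}_{a,w,L^p}(f)$ is merely \emph{equivalent} to $\|f^*_a\|_{L^p(\rn)}$ (up to a factor $2^{n/p}$ obtained by summing over the $2^n$ orthants), not equal to it, which is all that is needed.
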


Again, Corollary \ref{c7.4} can be seen as the Peetre maximal function version
of \cite[Theorem 3.5.1]{t92} in the case that $u\in[1,\fz]$.

\section{Isomorphisms between spaces}
\label{s8}

In this section, under some additional assumptions on $\cl(\rn)$,
we establish some isomorphisms between the considered spaces $\ala$.
First, in subsection \ref{s8.1},
we prove that if the parameter $a$ ia sufficiently large,
then the space $\ala$ coincides with the space $\alw$,
which is independent of the parameter $a$.
In subsection \ref{s8.30},
we give some further assumptions on $\cl(\rn)$ which ensure that
$\cl(\rn)$ coincides with $\ce_{\cl, 2,a}^{0,0}(\rn)$.
Finally, in subsection \ref{s8.2},
under some additional assumptions on $\cl(\rn)$,
we prove that the spaces $\ela$ and $\fla$ coincide.

\subsection{The role of the new parameter $a$}
\label{s8.1}

The new parameter $a$, which we added,
seems not to play any significant role.
We now consider some conditions
to remove the parameter $a$ from the definition of $\ala$.

Here we consider the following conditions.
\begin{assumption}
Let $\eta_{j,R}(x):= 2^{jn}(1+2^j|x|)^{-R}$
for $j \in \Z_+, \, R \gg 1$ and $x \in \rn$.

$(\cl7)$
There exist $R \gg 1$,
$r\in(0,\fz)$ and a positive constant $C(R,r)$,
depending on $R$ and $r$, such that,
for all $f\in\cl(\rn)$ and $j\in\zz_+$,
\[
\|w_j(\eta_{j,R}*|f|^r)^{1/r}\|_{\cl(\rn)}
\le C(R,r)
\|w_jf\|_{\cl(\rn)}.
\]

$(\cl7^\star)$
There exist $r\in(0,\fz)$ and a positive constant $C(r)$,
depending on $r$, such that,
for all $f\in\cl(\rn)$ and $j\in\zz_+$,
\[
\|w_j M(|f|^r)^{1/r}\|_{\cl(\rn)}
\le C(r)
\|w_j f\|_{\cl(\rn)}.
\]

$(\cl8)$ Let $q\in(0,\fz]$.
There exist $R \gg 1$, $r\in(0,\fz)$
and a positive constant $C(R,r,q)$,
depending on $R,r$ and $q$, such that,
for all $\{f_j\}_{j \in \N} \subset \cl(\rn)$,
\[
\|\{w_j(\eta_{j,R}*|f_j|^r)^{1/r}\}_{j \in \Z_+}\|_{\cl(\ell^q(\rn,\zz_+))}
\le C(R,r,q)
\|\{w_jf_j\}_{j \in \Z_+}\|_{\cl(\ell^q(\rn,\zz_+))}.
\]

$(\cl8^\star)$
Let $q\in(0,\fz]$.
There exist $r\in(0,\fz)$
and a positive constant $C(r,q)$,
depending on $r$ and $q$, such that,
for all $\{f_j\}_{j \in \N} \subset \cl(\rn)$,
\[
\|\{w_jM[|f_j|^r]^{1/r}\}_{j \in \Z_+}\|_{\cl(\ell^q(\rn,\zz_+))}
\le C(r,q)
\|\{w_jf_j\}_{j \in \Z_+}\|_{\cl(\ell^q(\rn,\zz_+))}.
\]
\end{assumption}

We now claim that in most cases
the parameter $a$ is auxiliary
by proving the following theorem.

\begin{theorem}\label{t8.1}
Let $\alpha_1, \alpha_2, \alpha_3, \tau \in [0,\infty)$,
$a\in (N_0+\alpha_3,\infty)$
 and $q\in(0,\,\fz]$,
where $N_0$ is as in $(\cl6)$.
Let $w\in{\mathcal W}^{\alpha_3}_{\alpha_1,\alpha_2}$,
$\tau\in[0,\fz)$ and $q\in(0,\,\fz]$.
Assume that $\Phi,\,\vz\in\cs(\rn)$ satisfy, respectively,
\eqref{1.1} and \eqref{1.2}.

{\rm (i)}
Assume that $(\cl7)$ holds true and, in addition, $a \gg 1$.
Then,
\begin{eqnarray*}
\|f\|_{B^{w,\tau}_{\cl,q,a}(\rn)}
&&\sim
\left\|\left\{\left[\int_{\rn}
\frac{2^{jn}|\vz_j*f(y)|^r}{(1+2^j|\cdot-y|)^{ar}}
\,dy\right]^{1/r}
\right\}_{j \in \Z_+}\right\|_{\ell^q(\cl^w_\tau(\rn,\Z_+))}\\
&&\sim\|\{\vz_j*f\}_{j \in \Z_+}\|_{\ell^q(\cl^w_\tau(\rn,\Z_+))}
\end{eqnarray*}
and
\begin{eqnarray*}
\|f\|_{\cn^{w,\tau}_{\cl,q,a}(\rn)}
&&\sim
\left\|
\left\{\left[\int_{\rn}
\frac{2^{jn}|\vz_j*f(y)|^r}{(1+2^j|\cdot-y|)^{ar}}
\,dy\right]^{1/r}
\right\}_{j \in \Z_+}\right\|_{\ell^q(\cn\cl^w_\tau(\rn,\Z_+))}\\
&&\sim
\|\{\vz_j*f\}_{j \in \Z_+}\|_{\ell^q(\cn\cl^w_\tau(\rn,\Z_+))}
\end{eqnarray*}
with the implicit positive constants independent of $f$.
In particular, if $(\cl7^\star)$ holds true,
then the above equivalences hold true.

{\rm (ii)}
Assume that $(\cl8)$ holds true and, in addition, $a \gg 1$.
Then
\begin{eqnarray}\label{9.x3}
\|f\|_{F^{w,\tau}_{\cl,q,a}(\rn)}
&&\sim
\left\|
\left\{\left[\int_{\rn}
\frac{2^{jn}|\vz_j*f(y)|^r}{(1+2^j|\cdot-y|)^{ar}}
\,dy\right]^{1/r}
\right\}_{j \in \Z_+}\right\|_{\cl^w_\tau(\ell^q(\rn,\Z_+))}\noz\\
&&\sim
\|\{\vz_j*f\}_{j \in \Z_+}\|_{\cl^w_\tau(\ell^q(\rn,\Z_+))}
\end{eqnarray}
and
\begin{eqnarray*}
\|f\|_{\ce^{w,\tau}_{\cl,q,a}(\rn)}
&&\sim
\left\|
\left\{\left[\int_{\rn}
\frac{2^{jn}|\vz_j*f(y)|^r}{(1+2^j|\cdot-y|)^{ar}}
\,dy\right]^{1/r}
\right\}_{j \in \Z_+}\right\|_{\ce\cl^w_\tau(\ell^q(\rn,\Z_+))}\\
&&\sim
\|\{\vz_j*f\}_{j \in \Z_+}\|_{\ce\cl^w_\tau(\ell^q(\rn,\Z_+))}
\end{eqnarray*}
with the implicit positive constants independent of $f$.
In particular, if $(\cl8^\star)$ holds true,
then the above equivalences hold true.
\end{theorem}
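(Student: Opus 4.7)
The plan is to bracket the Peetre maximal function between two quantities that the assumptions can directly control. For fixed $r\in(0,\infty)$ and $j\in\zz_+$, write
\begin{eqnarray*}
A_j(x):=(\vz_j^*f)_a(x),\quad B_j(x):=\big(\eta_{j,ar}*|\vz_j*f|^r\big)^{1/r}(x),\quad C_j(x):=|\vz_j*f(x)|.
\end{eqnarray*}
The cycle I aim to close in each of the four ambient norms $\ell^q(\cl^w_\tau)$, $\ell^q(\cn\cl^w_\tau)$, $\cl^w_\tau(\ell^q)$, $\ce\cl^w_\tau(\ell^q)$ is
\begin{eqnarray*}
\|\{A_j\}\|\;\lesssim\;\|\{B_j\}\|\;\lesssim\;\|\{C_j\}\|\;\le\;\|\{A_j\}\|.
\end{eqnarray*}
The last inequality is trivial since $|\vz_j*f(x)|\le(\vz_j^*f)_a(x)$ at $y=0$.

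The first step is the pointwise band-limited estimate $A_j(x)^r\lesssim B_j(x)^r$, valid for $ar$ sufficiently large. Since $\widehat{\vz_j*f}$ is supported in a ball of radius $\sim 2^{j+1}$, the same sub-mean-value argument that underlies the Plancherel-Polya-Nikolskij inequality (Lemma \ref{l1.1}) yields this convolution form with the smooth kernel $\eta_{j,ar}$ in place of the Hardy-Littlewood average, provided $ar>n$; this is the origin of the hypothesis $a\gg 1$ in the statement. Combined with $(\cl4)$ this immediately gives $\|w_jA_j\|_{\cl(\rn)}\lesssim\|w_jB_j\|_{\cl(\rn)}$ pointwise in $P$, and assumption $(\cl7)$ applied to the function $\vz_j*f$ yields $\|w_jB_j\|_{\cl(\rn)}\lesssim\|w_jC_j\|_{\cl(\rn)}$. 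This closes the cycle in the unweighted case $\tau=0$ for part (i); for part (ii) one repeats the same steps inside the $\ell^q$-norm and invokes $(\cl8)$ instead of $(\cl7)$. The implications $(\cl7^\star)\Rightarrow(\cl7)$ and $(\cl8^\star)\Rightarrow(\cl8)$ are immediate from the majorization $\eta_{j,R}*|f|(x)\lesssim Mf(x)$ valid for $R>n$ together with $(\cl4)$, so the boundedness of $M$ does suffice.

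The main obstacle, and the technical heart of the argument, is propagating the middle inequality $\|\chi_Pw_jB_j\|_{\cl(\rn)}\lesssim\|\chi_Pw_jC_j\|_{\cl(\rn)}$ from $\tau=0$ to the $\tau$-weighted norms, since $(\cl7)$ is a global statement that does not interact directly with the cut-off $\chi_P$. I will handle this by decomposing, for each dyadic cube $P$ with $\ell(P)=2^{-k}$ and each $j\ge k\vee 0$,
\begin{eqnarray*}
\big(\eta_{j,R}*|\vz_j*f|^r\big)(x)=\sum_{\ell=0}^\infty\eta_{j,R}*\big(\chi_{A_{P,\ell}}|\vz_j*f|^r\big)(x),\quad x\in P,
\end{eqnarray*}
where $A_{P,0}:=2P$ and $A_{P,\ell}:=2^{\ell+1}P\setminus 2^\ell P$ for $\ell\ge 1$. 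For $x\in P$ and $y\in A_{P,\ell}$ with $\ell\ge 1$ one has $2^j|x-y|\gtrsim 2^{j-k+\ell}$, which pulls an extra factor $2^{-\ell R'}$ out of $\eta_{j,R}(x-y)$ for any $R'<R$; treating the $\ell=0$ contribution via $(\cl7)$ on $2P$ and applying $(\cl4)$, $(\cl7)$, and the blow-up identity $|2^{\ell+1}P|^\tau/|P|^\tau=2^{\ell n\tau}$ cube by cube, the $\ell$-th summand contributes at most $2^{-\ell(R'-n\tau)}$ times the analogous quantity on $2^{\ell+1}P$. Choosing $a$ (hence $R=ar$) large enough that $R'>n\tau$ ensures $\ell$-summability, and taking the supremum over $P$ restores the original $\tau$-norm. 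The analogous argument inside the $\ell^q$-norm, with $(\cl8)$ substituted for $(\cl7)$, handles the Triebel-Lizorkin-type equivalences \eqref{9.x3} and the $\ce$-version; the $\cn$-version is identical to the Besov case since the outer sup-over-$P$ commutes with the $\ell^q$-sum in an inessential way.
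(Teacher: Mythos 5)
Your proposal is correct and follows essentially the same route as the paper: the cycle $\|\{(\vz_j^*f)_a\}\|\lesssim\|\{(\eta_{j,ar}*|\vz_j*f|^r)^{1/r}\}\|\lesssim\|\{\vz_j*f\}\|\le\|\{(\vz_j^*f)_a\}\|$ is exactly the paper's chain (8.17)--(8.19), with Lemma \ref{l8.1} supplying the first inequality and Lemma \ref{l8.2} the second. The only cosmetic differences are that the paper obtains the pointwise bound $(\vz_j^*f)_a{}^r\lesssim\eta_{j,ar}*|\vz_j*f|^r$ from the multi-scale estimate \cite[(2.29)]{u10} combined with Lemma \ref{l2.3}, rather than from the single-scale sub-mean-value property of the band-limited function $\vz_j*f$, and that the localization of $(\cl7)$/$(\cl8)$ to the $\tau$-weighted norms is carried out by decomposing over the lattice translates $\ell(P)m+P$, $m\in\Z^n$ (with the sum over $m$ controlled via an Aoki--Rolewicz $\wz\theta$-triangle inequality), rather than over dyadic annuli around $P$.
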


Motivated by Theorem \ref{t8.1},
let us define
\begin{eqnarray*}
&&\|f\|_{B^{w,\tau}_{\cl,q}(\rn)}
:=
\|\{\vz_j*f\}_{j \in \Z_+}\|_{\ell^q(\cl^w_\tau(\rn,\Z_+))}\\
&&\|f\|_{\cn^{w,\tau}_{\cl,q}(\rn)}
:=
\|\{\vz_j*f\}_{j \in \Z_+}\|_{\ell^q(\cn\cl^w_\tau(\rn,\Z_+))}\\
&&\|f\|_{F^{w,\tau}_{\cl,q}(\rn)}
:=
\|\{\vz_j*f\}_{j \in \Z_+}\|_{\cl^w_\tau(\ell^q(\rn,\Z_+))}\\
&&\|f\|_{\ce^{w,\tau}_{\cl,q}(\rn)}
:=
\|\{\vz_j*f\}_{j \in \Z_+}\|_{\ce\cl^w_\tau(\ell^q(\rn,\Z_+))}
\end{eqnarray*}
for all $f \in \cs'(\rn)$
as long as the assumptions of Theorem \ref{t8.1} are fulfilled.

\begin{lemma}\label{l8.1}
Let $\alpha_1, \alpha_2, \alpha_3, \tau \in [0,\infty)$, $a\in (N_0+\alpha_3,\infty)$,
$q\in(0,\,\fz]$ and $\varepsilon\in(0,\fz)$.
Assume that $\Phi,\,\vz\in\cs(\rn)$ satisfy, respectively,
\eqref{1.1} and \eqref{1.2}.

{\rm (i)} Then,
for all $f\in\cs'(\rn)$
\begin{eqnarray}
\label{8.1}
&&\|f\|_{B^{w,\tau}_{\cl,q,a}(\rn)}
\gtrsim
\left\|
\left\{\left[\int_{\rn}
\frac{2^{jn}|\vz_j*f(y)|^r}{(1+2^j|\cdot-y|)^{ar+n+\varepsilon}}\,dy
\right]^{1/r}
\right\}_{j \in \Z_+}\right\|_{\ell^q(\cl^w_\tau(\rn,\Z_+))},\\
\label{8.2}
&&\|f\|_{B^{w,\tau}_{\cl,q,a}(\rn)}
\lesssim
\left\|
\left\{\left[\int_{\rn}
\frac{2^{jn}|\vz_j*f(y)|^r}{(1+2^j|\cdot-y|)^{ar}}\,dy
\right]^{1/r}
\right\}_{j \in \Z_+}\right\|_{\ell^q(\cl^w_\tau(\rn,\Z_+))},\\
\label{8.3}
&&\|f\|_{\cn^{w,\tau}_{\cl,q,a}(\rn)}
\gtrsim
\left\|
\left\{\left[\int_{\rn}
\frac{2^{jn}|\vz_j*f(y)|^r}{(1+2^j|\cdot-y|)^{ar+n+\varepsilon}}\,dy
\right]^{1/r}
\right\}_{j \in \Z_+}\right\|_{\ell^q(\cn\cl^w_\tau(\rn,\Z_+))}
\end{eqnarray}
and
\begin{eqnarray}
\label{8.4}
&&\|f\|_{\cn^{w,\tau}_{\cl,q,a}(\rn)}
\lesssim
\left\|
\left\{\left[\int_{\rn}
\frac{2^{jn}|\vz_j*f(y)|^r}{(1+2^j|\cdot-y|)^{ar}}\,dy
\right]^{1/r}
\right\}_{j \in \Z_+}\right\|_{\ell^q(\cn\cl^w_\tau(\rn,\Z_+))},
\end{eqnarray}
where $\vz_0$ is understood as $\Phi$
and the implicit positive constants independent of $f$.

{\rm (ii)} Then,
for all $f\in\cs'(\rn)$
\begin{eqnarray}
\label{8.5}
&&\|f\|_{F^{w,\tau}_{\cl,q,a}(\rn)}
\gtrsim
\left\|
\left\{\left[\int_{\rn}
\frac{2^{jn}|\vz_j*f(y)|^r}{(1+2^j|\cdot-y|)^{ar+n+\varepsilon}}\,dy
\right]^{1/r}
\right\}_{j \in \Z_+}\right\|_{\cl^w_\tau(\ell^q(\rn,\Z_+))},\\
\label{8.6}
&&\|f\|_{F^{w,\tau}_{\cl,q,a}(\rn)}
\lesssim
\left\|
\left\{\left[\int_{\rn}
\frac{2^{jn}|\vz_j*f(y)|^r}{(1+2^j|\cdot-y|)^{ar}}\,dy
\right]^{1/r}
\right\}_{j \in \Z_+}\right\|_{\cl^w_\tau(\ell^q(\rn,\Z_+))},\\
\label{8.7}
&&\|f\|_{\ce^{w,\tau}_{\cl,q,a}(\rn)}
\gtrsim
\left\|
\left\{\left[\int_{\rn}
\frac{2^{jn}|\vz_j*f(y)|^r}{(1+2^j|\cdot-y|)^{ar+n+\varepsilon}}\,dy
\right]^{1/r}
\right\}_{j \in \Z_+}\right\|_{\ce\cl^w_\tau(\ell^q(\rn,\Z_+))}
\end{eqnarray}
and
\begin{eqnarray}
\label{8.8}
&&\|f\|_{\ce^{w,\tau}_{\cl,q,a}(\rn)}
\lesssim
\left\|
\left\{\left[\int_{\rn}
\frac{2^{jn}|\vz_j*f(y)|^r}{(1+2^j|\cdot-y|)^{ar}}\,dy
\right]^{1/r}
\right\}_{j \in \Z_+}\right\|_{\ce\cl^w_\tau(\ell^q(\rn,\Z_+))},
\end{eqnarray}
where $\vz_0$ is understood as $\Phi$
and the implicit positive constants independent of $f$.
\end{lemma}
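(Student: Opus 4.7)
The plan is to reduce all eight inequalities to two pointwise estimates and then apply the relevant quasi-norm. It suffices to show, for every $f\in\cs'(\rn)$, $x\in\rn$ and $j\in\zz_+$, the bounds
\begin{equation*}
(\mathrm{A}_j): \qquad \lf[\int_{\rn}\frac{2^{jn}|\vz_j\ast f(y)|^r}{(1+2^j|x-y|)^{ar+n+\varepsilon}}\,dy\r]^{1/r} \ls (\vz_j^\ast f)_a(x)
\end{equation*}
and
\begin{equation*}
(\mathrm{B}_j): \qquad (\vz_j^\ast f)_a(x) \ls \lf[\int_{\rn}\frac{2^{jn}|\vz_j\ast f(y)|^r}{(1+2^j|x-y|)^{ar}}\,dy\r]^{1/r},
\end{equation*}
with the convention $\vz_0=\Phi$ when $j=0$. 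Once these hold, the lattice property $(\cl4)$ together with the monotonicity of the $\ell^q$-quasi-norm gives \eqref{8.1}, \eqref{8.3}, \eqref{8.5}, \eqref{8.7} from $(\mathrm{A}_j)$ and \eqref{8.2}, \eqref{8.4}, \eqref{8.6}, \eqref{8.8} from $(\mathrm{B}_j)$ upon taking the $\ell^q(\cl_\tau^w)$, $\ell^q(\cn\cl_\tau^w)$, $\cl_\tau^w(\ell^q)$, and $\ce\cl_\tau^w(\ell^q)$ quasi-norms respectively.

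Estimate $(\mathrm{A}_j)$ is immediate from the definition of the Peetre maximal function. The substitution $z=y-x$ in the supremum defining $(\vz_j^\ast f)_a(x)$ yields $|\vz_j\ast f(y)|\le(1+2^j|x-y|)^a(\vz_j^\ast f)_a(x)$. Inserting this into the integrand and using the change of variables $u=2^j(y-x)$ gives
\begin{equation*}
\int_{\rn}\frac{2^{jn}|\vz_j\ast f(y)|^r}{(1+2^j|x-y|)^{ar+n+\varepsilon}}\,dy \le (\vz_j^\ast f)_a(x)^r \int_{\rn}\frac{du}{(1+|u|)^{n+\varepsilon}} \ls (\vz_j^\ast f)_a(x)^r,
\end{equation*}
the last integral being finite and $j$-independent because $\varepsilon > 0$.

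For $(\mathrm{B}_j)$ I will invoke a Plancherel-Polya-Nikol'skij-type pointwise inequality for band-limited tempered distributions: since $\wh{\vz_j\ast f}$ is supported in $\{|\xi|\le 2^{j+1}\}$ (and in $\{|\xi|\le 2\}$ when $j=0$), for every $r>0$ and $N>n$ one has
\begin{equation*}
|\vz_j\ast f(u)|^r \ls_{r,N} \int_{\rn}\frac{2^{jn}|\vz_j\ast f(z)|^r}{(1+2^j|u-z|)^N}\,dz, \qquad u\in\rn,
\end{equation*}
with implicit constant independent of $j$ and $f$. Taking $u=x+y$, invoking the elementary bound $(1+2^j|x-z|)\le(1+2^j|y|)(1+2^j|x+y-z|)$ (from $|x-z|\le|y|+|x+y-z|$), raising it to the power $ar$, and choosing $N>ar+n$, we obtain
\begin{equation*}
\frac{|\vz_j\ast f(x+y)|^r}{(1+2^j|y|)^{ar}} \ls \int_{\rn}\frac{2^{jn}|\vz_j\ast f(z)|^r}{(1+2^j|x-z|)^{ar}(1+2^j|x+y-z|)^{N-ar}}\,dz \ls \int_{\rn}\frac{2^{jn}|\vz_j\ast f(z)|^r}{(1+2^j|x-z|)^{ar}}\,dz,
\end{equation*}
where in the last step the extra factor $(1+2^j|x+y-z|)^{-(N-ar)}\le 1$ is simply dropped. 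Taking the supremum in $y\in\rn$ and the $r$-th root yields $(\mathrm{B}_j)$.

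The main obstacle is securing the Plancherel-Polya-Nikol'skij bound in its maximal-function-free form. This is precisely the step where one avoids the Hardy-Littlewood maximal operator, which need not be bounded on $\cl(\rn)$ under the sole assumptions $(\cl1)$-$(\cl6)$; the freedom to take $N$ arbitrarily large (at the cost of constants depending on $r$ and $N$, but not on $j$ or $f$) is exactly what makes the argument compatible with our general framework.
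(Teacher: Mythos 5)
Your reduction to the two pointwise estimates $(\mathrm{A}_j)$ and $(\mathrm{B}_j)$, followed by an application of $(\cl4)$ and the monotonicity of the outer quasi-norms, is sound, and $(\mathrm{A}_j)$ is exactly how the paper disposes of \eqref{8.1}, \eqref{8.3}, \eqref{8.5} and \eqref{8.7} (the paper calls these immediate from the definition). The substance of the lemma is therefore entirely in $(\mathrm{B}_j)$, and there your argument has a gap.

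The single-scale inequality $|\vz_j*f(u)|^r\ls\int_{\rn}2^{jn}|\vz_j*f(z)|^r(1+2^j|u-z|)^{-N}\,dz$, with a constant independent of $f\in\cs'(\rn)$ and of $j$, is elementary only for $r\ge1$: there one writes $\vz_j*f=(\vz_j*f)*\psi_j$ for a reproducing kernel $\psi_j$ adapted to the spectral support and applies Jensen's inequality. For $0<r<1$ the classical Peetre derivation factors $|g|=|g|^{1-r}|g|^{r}$, bounds $|g|^{1-r}$ by a power of a Peetre maximal function, and absorbs that factor into the left-hand side; the absorption requires the a priori finiteness of that maximal function, which for an arbitrary $f\in\cs'(\rn)$ holds only when the Peetre exponent exceeds an $f$-dependent order of polynomial growth of $\vz_j*f$. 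So the ``maximal-function-free'' bound you invoke is precisely the hard point rather than a black box available in the stated generality (``every $r>0$ and $N>n$''); a version of it is true, but proving it uniformly in $f$ requires the very machinery (a damped maximal function in the style of Rychkov, or a multi-scale sum) that your write-up bypasses, and your closing remark names the obstacle without resolving it. The paper's proof instead quotes the multi-scale inequality \cite[(2.29)]{u10}, recorded as \eqref{8.10}, which bounds $[(\vz_j^*f)_a(x)]^r$ unconditionally by $\sum_{k\ge0}2^{-kNr}$ times integrals of $|\vz_{j+k}*f|^r$ over all finer scales --- the extra scales being exactly the price paid for dispensing with a priori finiteness --- and then resums over $k$ via Lemma \ref{l2.3}, which is legitimate because $N$ may be taken arbitrarily large there. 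If you replace your unproven single-scale claim by that citable multi-scale estimate and insert the Lemma \ref{l2.3} step, the rest of your argument goes through unchanged.
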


\begin{proof}
Estimates \eqref{8.1}, \eqref{8.3}, \eqref{8.5} and \eqref{8.7}
are immediately deduced from the definition,
while \eqref{8.2}, \eqref{8.4}, \eqref{8.6} and \eqref{8.8}
depend on the following estimate:
By \cite[(2.29)]{u10},
we see that, for all $t\in[1,2]$, $N \gg 1$, $r\in(0,\fz)$, $\ell\in\nn$ and $x\in\rn$
\begin{equation*}
\lf[(\phi_{2^{-\ell}t}^*f)_a(x)\r]^r
\lesssim
\sum_{k=0}^\infty 2^{-kNr}2^{(k+\ell)n}
\int_\rn\frac{|((\phi_{k+\ell})_t*f)(y)|^r}{(1+2^\ell|x-y|)^{ar}}\,dy.
\end{equation*}%
In particular, when $l=0$, for all $x\in\rn$, we have
\begin{equation}\label{8.10}
(\phi_t^*f)_a(x)
\ls
\lf[\sum_{k=0}^\fz2^{-kNr}2^{kn}
\int_\rn\frac{|((\phi_{k})_t*f)(y)|^r}
{(1+|x-y|)^{ar}}\,dy\r]^{1/r}.
\end{equation}
If we combine Lemma \ref{l2.3} and \eqref{8.10},
then we obtain the desired result,
which completes the proof of Lemma \ref{l8.1}.
\end{proof}

The heart of the matter of the proof of Theorem \ref{t8.1}
is to prove the following dilation estimate.
The next lemma translates
the assumptions $(\cl7)$ and $(\cl8)$ into the one
of our function spaces.

\begin{lemma}\label{l8.2}
Let $\{F_j\}_{j\in\zz_+}$ be a
sequence of positive measurable functions on $\rn$.

{\rm (i)}
If $(\cl7)$ holds true, then
\begin{eqnarray*}
&&
\|\{\left(\eta_{j,2R}*[F_j{}^r]\right)^{1/r}\}_{j \in \Z_+}
\|_{\ell^q(\cl^w_\tau(\rn,\Z_+))}
\lesssim
\|\{F_j\}_{j \in\Z_+}\|_{\ell^q(\cl^w_\tau(\rn,\Z_+))}
\end{eqnarray*}%
and
\begin{eqnarray*}
&&
\|\{\left(\eta_{j,2R}*[F_j{}^r]\right)^{1/r}\}_{j \in \Z_+}
\|_{\ell^q(\cn\cl^w_\tau(\rn,\Z_+))}
\lesssim
\|\{F_j\}_{j \in\Z_+}\|_{\ell^q(\cn\cl^w_\tau(\rn,\Z_+))}
\end{eqnarray*}%
with the implicit positive constants independent of $\{F_j\}_{j\in\zz_+}$.

{\rm (ii)} If $(\cl8)$ holds true, then
\begin{eqnarray}
\label{8.13}
\|\{\left(\eta_{j,2R}*[F_j{}^r]\right)^{1/r}\}_{j \in \Z_+}
\|_{\cl^w_\tau(\ell^q(\rn,\Z_+))}
\lesssim
\|\{F_j\}_{j \in\Z_+}\|_{\cl^w_\tau(\ell^q(\rn,\Z_+))}\end{eqnarray}
and
\begin{eqnarray*}
\|\{\left(\eta_{j,2R}*[F_j{}^r]\right)^{1/r}\}_{j \in \Z_+}
\|_{\ce\cl^w_\tau(\ell^q(\rn,\Z_+))}
\lesssim
\|\{F_j\}_{j \in\Z_+}\|_{\ce\cl^w_\tau(\ell^q(\rn,\Z_+))}
\end{eqnarray*}
with the implicit positive constants independent of $\{F_j\}_{j\in\zz_+}$.
\end{lemma}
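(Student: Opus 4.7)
The plan is to prove the $\cl^w_\tau(\ell^q)$ estimate (\ref{8.13}) in detail; the other three estimates follow by the same strategy with either $(\cl7)$ replacing $(\cl8)$ and the $\ell^q$ sitting outside $\cl$ (cases (i) and (iii)), or with the supremum-over-cubes moved inside the $\ell^q$ sum (case (iv)). First I would fix a dyadic cube $P \in \cq(\rn)$ and split, for each $j \ge j_P \vee 0$ and $x \in P$,
\[
\eta_{j,2R}*F_j^r(x)
=
\eta_{j,2R}*[F_j^r\chi_{2P}](x)+\sum_{k=1}^\infty \eta_{j,2R}*[F_j^r\chi_{A_k}](x),
\]
where $A_k := 2^{k+1}P\setminus 2^k P$. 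Using the pointwise inequality $(a+b)^{1/r}\le C_r(a^{1/r}+b^{1/r})$ and the $\theta'$-triangle inequality on $\cl(\ell^q)$ (with $\theta':=\min(\theta,q,1)$), the norm $\|\chi_P\{w_j(\eta_{j,2R}*F_j^r)^{1/r}\}_{j\ge j_P\vee 0}\|_{\cl(\ell^q)}$ splits cleanly into a local and a far contribution.

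For the local contribution, the trivial bound $\eta_{j,2R}\le \eta_{j,R}$ together with $(\cl8)$ applied to $f_j:=F_j\chi_{2P}$ (with the sequence extended by zero to $j<j_P$) gives
\[
\|\chi_P\{w_j(\eta_{j,R}*(F_j\chi_{2P})^r)^{1/r}\}_{j\ge j_P\vee 0}\|_{\cl(\ell^q)}
\lesssim
\|\{w_jF_j\chi_{2P}\}_{j\ge j_P\vee 0}\|_{\cl(\ell^q)}
\le
|2P|^\tau\,\|\{F_j\}\|_{\cl^w_\tau(\ell^q)},
\]
where the final step uses $j_{2P}\vee 0\le j_P\vee 0$, so that the restricted range of $j$ fits inside the defining supremum of $\cl^w_\tau(\ell^q)$. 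For the far part I would exploit that, for $x\in P$, $y\in A_k$ and $j\ge j_P$, one has $2^j|x-y|\gtrsim 2^{j-j_P+k}$, so $\eta_{j,2R}(x-y)\le C\, 2^{-(j-j_P+k)R}\,\eta_{j,R}(x-y)$. This yields the pointwise bound
\[
\eta_{j,2R}*[F_j^r\chi_{\rn\setminus 2P}](x)
\le
C\sum_{k=1}^\infty 2^{-(j-j_P+k)R}\,\eta_{j,R}*[(F_j\chi_{2^{k+1}P})^r](x).
\]
Taking the $1/r$-th power (subadditivity of $t\mapsto t^{1/r}$ when $r\ge 1$; a Jensen-type bound $(\sum_k 2^{-k\beta}a_k)^{1/r}\lesssim \sum_k 2^{-k\beta^*}a_k^{1/r}$ derived via the convexity of $t\mapsto t^{1/r}$ when $0<r<1$) produces an analogous bound with decay $2^{-(j-j_P+k)R^*}$, where $R^*>0$ can be made arbitrarily large by choosing $R$ large.

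Multiplying by $w_j$, restricting to $\chi_P$, and applying the $\theta'$-triangle inequality, I would derive
\[
\|\chi_P\{w_j G_j^{\rm far}\}_{j\ge j_P\vee 0}\|_{\cl(\ell^q)}^{\theta'}
\le
C\sum_{k=1}^\infty 2^{-kR^*\theta'}\,\|\{w_j[\eta_{j,R}*(F_j\chi_{2^{k+1}P})^r]^{1/r}\}_{j\ge j_P\vee 0}\|_{\cl(\ell^q)}^{\theta'}.
\]
A term-by-term application of $(\cl8)$ and the inclusion $\{j\ge j_P\vee 0\}\subset\{j\ge j_{2^{k+1}P}\vee 0\}$ then bound each factor by $|2^{k+1}P|^\tau\|\{F_j\}\|_{\cl^w_\tau(\ell^q)}$, producing a convergent geometric series provided $R^*>n\tau$ — which is precisely the content of the assumption $R\gg 1$ in $(\cl8)$. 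Combining the local and far estimates, dividing by $|P|^\tau$, and taking the supremum over all dyadic $P$ yields (\ref{8.13}). The main obstacle, apart from the somewhat delicate $r<1$ case of the $1/r$-th power inequality, is bookkeeping the several summation-range inclusions so that $(\cl8)$ can be legitimately applied after zero-extension; the passage from (ii) to (iv) is then essentially cosmetic (the only change is that the supremum over $P$ inside the $\ce\cl^w_\tau$-norm now encompasses all $j\in\zz_+$ rather than $j\ge j_P\vee 0$), while (i) and (iii) proceed analogously with $(\cl7)$ in place of $(\cl8)$ applied separately at each level $j$ before taking the outer $\ell^q$-sum.
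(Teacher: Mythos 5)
Your proposal is correct in substance and follows the same overall strategy as the paper — localize, use the surplus decay of $\eta_{j,2R}$ over $\eta_{j,R}$ to control the far-away contributions, apply $(\cl7)$/$(\cl8)$ piece by piece, and resum via an Aoki--Rolewicz-type $\theta'$-triangle inequality — but the spatial decomposition is genuinely different. The paper splits $F_k=\sum_{m\in\Z^n}\chi_{\ell(P)m+P}F_k$ over the lattice of \emph{congruent translates} of $P$; each piece $\ell(P)m+P$ is itself a dyadic cube of the same sidelength as $P$, so it contributes exactly $|P|^\tau\|\{F_j\}\|_{\cl^w_\tau(\ell^q(\rn,\Z_+))}$ after $(\cl8)$, the gained factor is $|m|^{-R}$, and the series converges as soon as $R\min(\theta,q,r)>n$, with no interaction with $\tau$. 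Your dyadic annuli $2^{k+1}P\setminus 2^kP$ instead produce pieces supported on the dilates $2^{k+1}P$, which are \emph{not} dyadic cubes: before you may invoke the supremum defining $\|\cdot\|_{\cl^w_\tau(\ell^q(\rn,\Z_+))}$ you must cover $2^{k+1}P$ (and likewise $2P$ in the local part) by at most $2^n$ dyadic cubes $R$ with $j_R\le j_P-k$ and use $(\cl4)$ plus the $\theta$-triangle inequality; this is routine but should be said, and it is the reason your convergence condition becomes $R^*>n\tau$ (the $2^{kn\tau}$ growth of $|2^{k+1}P|^\tau$ must be beaten) rather than the paper's $\tau$-free condition. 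Both versions are legitimate since $R\gg1$, and your handling of the $r<1$ case via H\"older/Jensen at the cost of halving the decay exponent is the standard fix; the paper sidesteps that issue by keeping the $r$-th powers inside until the very end and absorbing $r$ into the exponent $\min(\theta,q,r)$.
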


\begin{proof}
Due to similarity,
we only prove \eqref{8.13}.

For all sequences $F=\{F_j\}_{j\in\zz_+}$ of positive measurable functions on $\rn$,
define
$$\|F\|:=\|\{F_j\}_{j\in\zz_+}\|_{\cl_\tau^w(\ell^q(\rn,\zz_+))}.$$
Then, $\|\cdot\|$ is still a quasi-norm.
By the Aoki-Rolewicz theorem (see \cite{a42,r57}),
we know that there exists a quasi-norm $\qn\cdot\qn$ and $\wz\tz\in(0,1]$
such that, for all sequences $F$ and $G$, $\|F\|\sim\qn F\qn$ and
$$\qn F+G\qn^{\wz\tz}\le\qn F\qn^{\wz\tz}+\qn G\qn^{\wz\tz}. $$
Therefore, we see that
\begin{eqnarray}\label{8.15}
&&\hspace{-0.2cm}\left\|\lf\{
\left[\sum_{l=0}^\infty\eta_{k,2R}*\left( G_{k,l}\right)^{r}\right]^{1/r}\r\}_{k\in\zz_+}
\right\|_{\cl^w_\tau(\ell^q(\rn,\Z_+))}^{\wz\theta}\noz\\
&&\sim\left|\!\lf|\!\lf|\lf\{
\left[\sum_{l=0}^\infty\eta_{k,2R}*\left( G_{k,l}\right)^{r}\right]^{1/r}\r\}_{k\in\zz_+}
\right|\!\r|\!\r|^{\wz\theta}\noz\\
&&\lesssim
\sum_{l=0}^\infty
\left|\!\lf|\!\lf|\lf\{\left[\eta_{k,2R}*G_{k,l}{}^{r}\right]^{1/r}\r\}_{k\in\zz_+}
\right|\!\r|\!\r|^{\wz\theta}
\sim
\sum_{l=0}^\infty
\lf\|\lf\{\left[\eta_{k,2R}*G_{k,l}{}^{r}\right]^{1/r}\r\}_{k\in\zz_+}
\r\|_{\cl^w_\tau(\ell^q(\rn,\Z_+))}^{\wz\theta}\quad\quad
\end{eqnarray}
for all $\{G_{k,l}\}_{k,l \in \Z_+}$ of positive measurable functions.

We fix a dyadic cube $P$.
Our goal is to prove
\begin{equation}\label{8.16}
{\rm I}:=
\left\|\lf(\sum_{k=j_P\vee0}^\fz
\chi_Pw_k{}^q\left[\eta_{k,2R}*(F_k{}^r)\right]^{q/r}
\r)^{1/q}
\right\|_{\cl(\rn)}
\lesssim
|P|^\tau\|\{F_j\}_{j \in\Z_+}\|_{\cl^w_\tau(\ell^q(\rn,\Z_+))}
\end{equation}
with the implicit positive constant independent of
$\{F_j\}_{j \in\Z_+}$ and $P$.

By using \eqref{8.15},
we conclude that
\begin{eqnarray*}
{\rm I}
\ls
\left\{
\sum_{m \in \Z^n}
\left[
\left\|\lf(\sum_{k=j_P\vee0}^\fz
\chi_Pw_k{}^q[
\eta_{k,2R}*\left(\chi_{\ell(P)m+P}F_k\right)^r]^{q/r}
\r)^{1/q}
\right\|_{\cl(\rn)}\right]^{\min(\theta,q,r)}
\right\}^{\frac{1}{\min(\theta,q,r)}}.
\end{eqnarray*}
A geometric observation shows that
\[
\frac12|m|\ell(P) \le |x-y| \le 2n|m|\ell(P),
\]
whenever $x \in P$ and $y \in \ell(P)m+P$
with $|m| \ge 2$.
Consequently,
for all $m \in \Z^n$ and $x\in\rn$,
we have
\begin{eqnarray*}
\eta_{k,2R}\ast(\chi_{\ell(P)m+P}F_k)^r(x)
&&=\int_{\ell(P)m+P}2^{kn}(1+2^k|x-y|)^{-R}(1+2^k|x-y|)^{-R}F_k(y)^r\,dy\\
&&\lesssim
\frac{1}{|m|^R}
\int_{\ell(P)m+P}2^{kn}[1+2^k|m|\ell(P)]^{-R}F_k(y)^r\,dy\\
&&\lesssim
\frac{1}{|m|^{R}}
\eta_{j_P,R}*[\chi_{\ell(P)m+P}F_k{}^r](x).
\end{eqnarray*}
By this and $(\cl8)$,
we further conclude that
\begin{eqnarray*}
{\rm I}&&\lesssim
\left\{
\sum_{m \in \Z^n}
\left[
\left\|\lf(\sum_{k=j_P\vee0}^\fz
\left[\eta_{k,2R}*(\chi_{\ell(P)m+P}F_k)^r\right]^{q/r}
\r)^{1/q}
\right\|_{\cl(\rn)}\right]^{\min(\theta,q,r)}
\right\}^{\frac{1}{\min(\theta,q,r)}}\\
&&\lesssim
|P|^\tau\|\{F_j\}_{j \in\Z_+}\|_{\cl^w_\tau(\ell^q(\rn,\Z_+))}.
\end{eqnarray*}
Thus, \eqref{8.16} holds true,
which completes the proof of Lemma \ref{l8.2}.
\end{proof}

\begin{proof}[Proof of Theorem \ref{t8.1}]
Due to similarity,
we only prove the estimates for $F^{w,\tau}_{\cl,q,a}(\rn)$.

By Lemma \ref{l8.1},
we have
\begin{equation}\label{8.17}
\|f\|_{F^{w,\tau}_{\cl,q,a}(\rn)}
\lesssim
\left\|
\left\{\left[\int_{\rn}
\frac{2^{jn}|\vz_j*f(y)|^r}{(1+2^j|\cdot-y|)^{ar}}
\,dy\right]^{1/r}
\right\}_{j \in \Z_+}\right\|_{\cl^w_\tau(\ell^q(\rn,\Z_+))}.
\end{equation}
Observe that the right-hand side of \eqref{8.17}
is just
$$
\left\|
\left\{\left(\eta_{j,ar}*[|\vz_j*f(\cdot)|^r]\right)^{1/r}
\right\}_{j \in \Z_+}\right\|_{\cl^w_\tau(\ell^q(\rn,\Z_+))}.$$
By Lemma \ref{l8.2},
we see that
\begin{equation}\label{8.18}
\left\|
\left\{\left[\int_{\rn}
\frac{2^{jn}|\vz_j*f(y)|^r\,dy}{(1+2^j|\cdot-y|)^{ar}}
\right]^{1/r}
\right\}_{j \in \Z_+}\right\|_{\cl^w_\tau(\ell^q(\rn,\Z_+))}
\lesssim
\|f\|_{F^{w,\tau}_{\cl,q}(\rn)}.
\end{equation}
Also, it follows trivially, from the definition
of $F^{w,\tau}_{\cl,q,a}(\rn)$, that
\begin{equation}\label{8.19}
\|f\|_{F^{w,\tau}_{\cl,q}(\rn)}
\le
\|f\|_{F^{w,\tau}_{\cl,q,a}(\rn)}.
\end{equation}
Combining \eqref{8.17}, \eqref{8.18} and \eqref{8.19},
we obtain \eqref{9.x3},
which completes
the proof of Theorem \ref{t8.1}.
\end{proof}

\begin{proposition}\label{p8.1}
Let $q\in[1,\fz]$.
Assume that
$\theta=1$ in the assumption $(\cl3)$
and, additionally,
there exist some $M\in(0,\fz)$ and a positive constant $C(M)$,
depending on $M$, such that,
for all $f\in\cl(\rn)$ and $x\in\rn$,
\begin{equation}\label{8.20}
\|f(\cdot-x)\|_{\cl(\rn)} \le C(M) (1+|x|)^M\|f\|_{\cl(\rn)}.
\end{equation}
Then,
whenever $a \gg 1$,
\begin{eqnarray*}
\|f\|_{B^{w,\tau}_{\cl,q,a}(\rn)}
\sim
\left\|
\left\{\left[\int_{\rn}
\frac{2^{jn}|\vz_j*f(y)|^r}{(1+2^j|\cdot-y|)^{ar}}\,dy
\right]^{1/r}
\right\}_{j \in \Z_+}\right\|_{\ell^q(\cl^w_\tau(\rn,\Z_+))}
\sim
\|f\|_{B^{w,\tau}_{\cl,q}(\rn)}
\end{eqnarray*}
and
\begin{eqnarray*}
\|f\|_{\cn^{w,\tau}_{\cl,q,a}(\rn)}
\sim
\left\|
\left\{\left[\int_{\rn}
\frac{2^{jn}|\vz_j*f(y)|^r}{(1+2^j|\cdot-y|)^{ar}}\,dy
\right]^{1/r}
\right\}_{j \in \Z_+}\right\|_{\ell^q(\cn\cl^w_\tau(\rn,\Z_+))}
\sim
\|f\|_{\cn^{w,\tau}_{\cl,q}(\rn)}
\end{eqnarray*}
with the implicit constants independent of $f$.
\end{proposition}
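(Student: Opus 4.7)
The plan is to reduce Proposition~\ref{p8.1} to Theorem~\ref{t8.1}(i) by verifying that, under the hypotheses of this proposition, the assumption $(\cl7)$ holds with $r=1$ for every sufficiently large $R$. Once $(\cl7)$ is established, both three-way equivalences claimed in the proposition follow immediately from Theorem~\ref{t8.1}(i) applied to the $B$- and $\cn$-scales. Observe also that $(\cl7)$ is the only extra ingredient needed, because the scalar version suffices for the $\ell^q(\cl^w_\tau)$ and $\ell^q(\cn\cl^w_\tau)$ norms used on the Besov side.

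To verify $(\cl7)$, I will start from a test function $g\in\cl(\rn)$, set $f:=g/w_j$, and control the convolution pointwise by exploiting the weight hypothesis (W2). Specifically, (W2) gives $w_j(x)/w_j(y)\lesssim(1+2^j|x-y|)^{\az_3}$, so that
$$
w_j(x)(\eta_{j,R}*|f|)(x)\lesssim \int_{\rn}2^{jn}(1+2^j|x-y|)^{-R+\az_3}|g(y)|\,dy=(\eta_{j,R-\az_3}*|g|)(x),
$$
after which the lattice property $(\cl4)$ reduces the problem to bounding $\|\eta_{j,R-\az_3}*|g|\|_{\cl(\rn)}$ by a constant multiple of $\|g\|_{\cl(\rn)}$. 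For this bound I would apply Minkowski's integral inequality to pull the $\cl(\rn)$-norm inside the convolution integral, giving
$$
\lf\|\eta_{j,R-\az_3}*|g|\r\|_{\cl(\rn)}\le\int_{\rn}\eta_{j,R-\az_3}(y)\|g(\cdot-y)\|_{\cl(\rn)}\,dy,
$$
and then use the hypothesis \eqref{8.20} to dominate $\|g(\cdot-y)\|_{\cl(\rn)}$ by $C(M)(1+|y|)^M\|g\|_{\cl(\rn)}$. Since $j\in\zz_+$ forces $(1+|y|)^M\le(1+2^j|y|)^M$, the remaining $y$-integral is finite as soon as $R>\az_3+M+n$, which yields $(\cl7)$ with $r=1$.

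The main obstacle, and the reason for the extra hypotheses in this proposition, is justifying the Minkowski step: it requires a genuine triangle inequality for $\|\cdot\|_{\cl(\rn)}$, which is exactly the assumption $\theta=1$ in $(\cl3)$. I would rigorously derive this integral form of Minkowski by approximating the convolution $\eta_{j,R-\az_3}*|g|$ by finite Riemann sums, applying the ordinary triangle inequality to each, and passing to the limit through the Fatou property $(\cl5)$; the polynomial translation bound \eqref{8.20} supplies the integrability needed to ensure that those Riemann sums converge to the convolution in $\cl(\rn)$. A minor technical point is that the argument above is stated for $j\in\nn$, whereas $j=0$ corresponds to $\Phi$ in place of $\vz_j$; this case is identical and in fact easier, since $(1+|y|)\le(1+2^j|y|)$ becomes an equality. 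With $(\cl7)$ in hand, Theorem~\ref{t8.1}(i) furnishes the stated equivalences for both $B^{w,\tau}_{\cl,q,a}(\rn)$ and $\cn^{w,\tau}_{\cl,q,a}(\rn)$, completing the proof.
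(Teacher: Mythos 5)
Your proof is correct and follows essentially the same route as the paper: the heart of both arguments is the integral Minkowski inequality for $\|\cdot\|_{\cl(\rn)}$ (legitimate because $\theta=1$), combined with the translation bound \eqref{8.20} and the choice $R>\az_3+M+n$, applied to the convolution bound of Lemma \ref{l8.1}. The only difference is organizational — you package the estimate as a verification of $(\cl7)$ with $r=1$ and then cite Theorem \ref{t8.1}(i), whereas the paper carries out the same computation directly on the quasi-norm — and that is immaterial.
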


It is not clear whether the counterpart
of Proposition \ref{p8.1}
for $\ce^{w,\tau}_{\cl,q,a}(\rn)$ and
$F^{w,\tau}_{\cl,q,a}(\rn)$ is available or not.

\begin{proof}[Proof of Proposition \ref{p8.1}]
We concentrate on the $B$-scale,
the proof for the $\cn$-scale being similar.
By Theorem \ref{t8.1}, we see that
\begin{eqnarray*}
&&\|f\|_{B^{w,\tau}_{\cl,q,a}(\rn)}\\
&&\hs\ls
\sup_{P\in\mathcal{Q}(\rn)}\frac1{|P|^\tau}
\lf\{\sum_{k=j_P\vee0}^\fz
\left\|\chi_P\left[w_k\left(\int_{\rn}
\frac{2^{kn}|\vz_k*f(y)|^r}{(1+2^k|\cdot-y|)^{ar+n+\varepsilon}}\,dy
\right)^{1/r}\right]\right\|_{\cl(\rn)}^q
\r\}^{1/q}.
\end{eqnarray*}
Now that $\theta=1$,
we are in the position of using the triangle inequality
to have
\begin{eqnarray*}
\|f\|_{B^{w,\tau}_{\cl,q,a}(\rn)}
\ls
\sup_{P\in\mathcal{Q}(\rn)}\frac1{|P|^\tau}
\lf\{\sum_{k=j_P\vee0}^\fz
\|\chi_Pw_k[\vz_k*f]\|_{\cl(\rn)}^q
\r\}^{1/q}
\end{eqnarray*}
whenever $a \gg 1$.
The reverse inequality being trivial,
we obtain the desired estimates,
which completes the proof of Proposition \ref{p8.1}.
\end{proof}

To conclude this section,
with Theorems \ref{t4.3} and \ref{t8.1} proved,
we have already obtained the biorthogonal wavelet decompositions
of Morrey spaces;
see also Section \ref{s10.1} below.

\subsection{Identification of the space $\cl(\rn)$}
\label{s8.30}

The following lemma is a natural extension
with $|\cdot|$ in the definition of $\|f\|_{\cl(\rn)}$
replaced by $\ell^2(\Z)$.
In this subsection,
we \emph{always assume} that
$\theta=1$ in $(\cl3)$
and that, for some finite positive constant $C(E)$,
depending on $E$, but not on $f$, such that,
for any $f \in \cl(\rn)$ and any set $E$ of finite measure,
\begin{equation}\label{9.x2}
\int_E|f(x)|\,dx \le C(E)\|f\|_{\cl(\rn)}.
\end{equation}
In this case $\cl(\rn)$ is a Banach space of functions
and the \emph{dual space} $\cl'(\rn)$ can be defined.

\begin{theorem}\label{t8.30}
Let $\cl$ be as above, $\psi,\vz\in\cs(\rn)$ satisfy,
respectively, \eqref{1.1} and \eqref{1.2},
and $N\in\nn$.
Suppose that $a\in(N,\fz)$
and that
\begin{equation}\label{eq:8.30}
(1+|x|)^{-N} \in \cl(\rn) \cap \cl'(\rn).
\end{equation}
Assume, in addition, that
there exists a positive constant $C$ such that,
for any finite sequence $\{\varepsilon_k\}_{k=1}^{k_0}\st\{-1,1\}$,
$f \in \cl(\rn)$ and $g \in \cl'(\rn)$,
\begin{equation}\label{eq:8.31}
\left\{\begin{array}{l}
\left\|\psi*f+\dis\sum_{k=1}^{k_0} \varepsilon_k\varphi_k*f\right\|_{\cl(\rn)}
\le C\|f\|_{\cl(\rn)}, \\
\left\|\psi*g+\dis\sum_{k=1}^{k_0} \varepsilon_k\varphi_k*g\right\|_{\cl'(\rn)}
\le C\|g\|_{\cl'(\rn)}.
\end{array}\r.
\end{equation}
Then,
$\cl(\rn)$ and $\cl'(\rn)$
are embedded into $\cs'(\rn)$,
and $\cl(\rn)$ and $\ce^{0,0}_{\cl,2,a}(\rn)$ coincide.
\end{theorem}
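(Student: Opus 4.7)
The plan is in three parts: first, deduce the continuous embeddings $\cl(\rn),\cl'(\rn) \hookrightarrow \cs'(\rn)$; second, establish the square-function inequality $\|(\sum_j|\vz_j*f|^2)^{1/2}\|_{\cl(\rn)} \lesssim \|f\|_{\cl(\rn)}$ and its analogue on $\cl'(\rn)$; third, upgrade this to the asserted equivalence $\|f\|_{\cl(\rn)} \sim \|f\|_{\ce^{0,0}_{\cl,2,a}(\rn)}$. For Step~1 I would combine the Schwartz decay $|\phi(x)|\le p_N(\phi)(1+|x|)^{-N}$, with seminorm $p_N(\phi):=\sup_{x\in\rn}(1+|x|)^N|\phi(x)|$, with the lattice property $(\cl4)$ and \eqref{eq:8.30} to obtain $\|\phi\|_{\cl'(\rn)}\lesssim p_N(\phi)$; since $\theta=1$ makes $\cl(\rn)$ a Banach function space with well-defined K\"othe dual $\cl'(\rn)$, the associate-space H\"older inequality $|\int f\phi|\le \|f\|_{\cl(\rn)}\|\phi\|_{\cl'(\rn)}$ holds, giving the continuous embedding of $\cl(\rn)$ into $\cs'(\rn)$; the embedding of $\cl'(\rn)$ is symmetric.

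For Step~2 the key tool is a pointwise Khintchine inequality: for Rademacher signs $\varepsilon=(\varepsilon_k)_{k=1}^{k_0}\in\{-1,1\}^{k_0}$ and each $x\in\rn$,
$$\left(\sum_{k=1}^{k_0}|\vz_k*f(x)|^2\right)^{1/2}\lesssim \mathbb{E}_\varepsilon\left|\sum_{k=1}^{k_0}\varepsilon_k\vz_k*f(x)\right|.$$
Taking $\cl(\rn)$-norm, using the genuine triangle inequality (available since $\theta=1$) to pull the finite expectation outside, and applying the first line of \eqref{eq:8.31} uniformly in $\varepsilon$, yields $\|(\sum_{k=1}^{k_0}|\vz_k*f|^2)^{1/2}\|_{\cl(\rn)}\lesssim\|f\|_{\cl(\rn)}$ with constants independent of $k_0$. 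Fatou's property $(\cl5)$ then lets me send $k_0\to\infty$, and the $\psi$-term corresponding to $j=0$ is absorbed on applying \eqref{eq:8.31} with $k_0=0$; the identical argument with the second line of \eqref{eq:8.31} produces the $\cl'(\rn)$-version.

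For Step~3, the direction $\|f\|_{\cl(\rn)}\lesssim \|f\|_{\ce^{0,0}_{\cl,2,a}(\rn)}$ I would obtain by duality: choosing Schwartz duals $\tilde\psi,\tilde\vz$ with $\delta_0=\tilde\psi*\psi+\sum_{j\ge1}\tilde\vz_j*\vz_j$ in $\cs'(\rn)$ and applying the Calder\'on reproducing formula to the pairing $\langle f,g\rangle$ with $g\in\cl'(\rn)$, pointwise Cauchy--Schwarz followed by the associate-space H\"older inequality gives
$$|\langle f,g\rangle|\lesssim\Big\|\Big(\sum_j|\vz_j*f|^2\Big)^{1/2}\Big\|_{\cl(\rn)}\Big\|\Big(\sum_j|\tilde\vz_j*g|^2\Big)^{1/2}\Big\|_{\cl'(\rn)}\lesssim \|f\|_{\ce^{0,0}_{\cl,2,a}(\rn)}\|g\|_{\cl'(\rn)},$$
where I use the trivial bound $|\vz_j*f|\le(\vz_j^*f)_a$ and the $\cl'$-square-function bound from Step~2. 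The converse $\|f\|_{\ce^{0,0}_{\cl,2,a}(\rn)}\lesssim\|f\|_{\cl(\rn)}$ is the hard half: starting from the Ullrich-type pointwise estimate (cf.\ \cite[(2.29)]{u10} and the proof of Lemma~\ref{l8.1}) with $r=2$ and a large auxiliary parameter $N'$,
$$(\vz_j^*f)_a(x)^2\lesssim\sum_{k\ge0}2^{-k(2N'-n)}\big(\eta_{j,2a}*|\vz_{j+k}*f|^2\big)(x),$$
summing in $j$ and re-indexing reduces the task to a vector-valued convolution estimate of the form $\|(\sum_l\eta_{l,2a}*|g_l|^2)^{1/2}\|_{\cl(\rn)}\lesssim\|(\sum_l|g_l|^2)^{1/2}\|_{\cl(\rn)}$ applied with $g_l=\vz_l*f$, after which Step~2 closes the argument.

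The main obstacle is precisely this last vector-valued convolution inequality: it is a Fefferman--Stein-type statement normally derived from the boundedness of the Hardy--Littlewood maximal operator on $\cl(\rn)$, which we do not have at our disposal. My plan to overcome it is to exploit \eqref{eq:8.31} a second time in a Fourier-multiplier guise: write the convolution with $\eta_{l,2a}$ as a frequency-localised multiplier compatible with the band $|\xi|\sim 2^l$ of $\vz_l*f$, transport this multiplier through the Rademacher randomisation furnished by \eqref{eq:8.31}, and reassemble the square function via the Banach-valued Kahane--Khintchine inequalities (available because $\theta=1$). This multiplier-through-randomisation step, rather than any of the earlier reductions, is the genuine technical core of the theorem.
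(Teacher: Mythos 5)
Your first three paragraphs follow essentially the same route as the paper: the embeddings into $\cs'(\rn)$ from \eqref{eq:8.30} and the H\"older inequality for the associate space; the square-function bound $\|(\sum_j|\vz_j*f|^2)^{1/2}\|_{\cl(\rn)}\lesssim\|f\|_{\cl(\rn)}$ obtained by combining the pointwise Khintchine inequality with the genuine triangle inequality ($\theta=1$), the uniform bound \eqref{eq:8.31}, and the Fatou property; and the reverse inequality $\|f\|_{\cl(\rn)}\lesssim\|f\|_{\ce^{0,0}_{\cl,2,a}(\rn)}$ by duality through a Calder\'on reproducing formula, Cauchy--Schwarz, and the $\cl'$-square-function bound. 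All of that is sound and matches the paper's proof.

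The genuine gap is in your last paragraph, i.e.\ in upgrading the plain square-function estimate to $\|f\|_{\ce^{0,0}_{\cl,2,a}(\rn)}\lesssim\|f\|_{\cl(\rn)}$, which requires controlling the Peetre maximal functions $(\vz_j^*f)_a$ rather than $\vz_j*f$. You correctly reduce this to the vector-valued inequality $\|(\sum_l\eta_{l,2a}*|g_l|^2)^{1/2}\|_{\cl(\rn)}\lesssim\|(\sum_l|g_l|^2)^{1/2}\|_{\cl(\rn)}$, which is exactly condition $(\cl8)$ with $w\equiv1$ and $q=r=2$; but your plan to derive it from \eqref{eq:8.31} by ``writing the convolution with $\eta_{l,2a}$ as a frequency-localised multiplier compatible with the band $|\xi|\sim2^l$ and transporting it through the Rademacher randomisation'' does not work. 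The operation $g\mapsto\eta_{l,2a}*|g|^2$ acts on the \emph{modulus squared}, so it is not a Fourier multiplier applied to $\vz_l*f$ at all; $\widehat{\eta_{l,2a}}$ lives near the origin at scale $2^l$, not on an annulus; and \eqref{eq:8.31} only controls signed linear combinations $\sum_k\varepsilon_k\vz_k*f$, which gives no information about local averages of $|\vz_k*f|^2$ (indeed, the Fefferman--Stein inequality can fail on spaces where randomised Littlewood--Paley bounds hold). In the paper this step is not extracted from \eqref{eq:8.31}: it is delegated to Theorem \ref{t8.1}, whose hypothesis is precisely $(\cl8)$ (proved via Lemma \ref{l8.2}), so the extra maximal-type assumption enters at exactly the point you identified. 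Without either adding $(\cl8)$ to the hypotheses or supplying a genuinely different argument for the Peetre-maximal upgrade, your proof of the inclusion $\cl(\rn)\hookrightarrow\ce^{0,0}_{\cl,2,a}(\rn)$ is incomplete.
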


\begin{proof}
The fact that
$\cl(\rn)$ and $\cl'(\rn)$
are embedded into $\cs'(\rn)$
is a simple consequence
of (\ref{9.x2}) and \eqref{eq:8.30}.
By using the Rademacher sequence
$\{r_j\}_{j=1}^\infty$,
we obtain
\begin{eqnarray*}
\left\|\left(
\sum_{j=1}^\infty |\varphi_j*f|^2\right)^{1/2}\right\|_{\cl(\rn)}
&&=
\lim_{k_0 \to \infty}
\left\|\left(
\sum_{j=1}^{k_0} |\varphi_j*f|^2\right)^{1/2}\right\|_{\cl(\rn)}\\
&&\lesssim
\lim_{k_0 \to \infty}
\left\|\sum_{j=1}^{k_0}\int_0^1 |r_j(t)\varphi_j*f|\,dt\right\|_{\cl(\rn)},
\end{eqnarray*}
which, together with the assumption $a>N$, Theorem \ref{t8.1} and \eqref{eq:8.31},
implies that
\[
\|f\|_{\ce^{0,0}_{\cl,2,a}(\rn)}\sim
\left\|\left(|\psi*f|^2+
\sum_{j=1}^\infty |\varphi_j*f|^2\right)^{1/2}\right\|_{\cl(\rn)}
\lesssim \|f\|_{\cl(\rn)}.
\]

If we fix $g \in C^\infty_{\rm c}(\rn)$,
we then see that
\[
\int_{\rn}f(x)g(x)\,dx
=
\int_{\rn}\psi*f(x)\psi*g(x)\,dx
+
\sum_{j=1}^\infty
\int_{\rn}\varphi_j*f(x)\varphi_j*g(x)\,dx.
\]
From Theorem \ref{t8.1},
the H\"{o}lder inequality and the duality,
we deduce that
\[
\|f\|_{\cl(\rn)}
\lesssim
\sup\left\{
\|f\|_{\ce^{0,0}_{\cl,2,a}(\rn)}
\|g\|_{\ce^{0,0}_{\cl',2,a}(\rn)}
\,:\ g \in C^\infty_{\rm c}(\rn), \ \|g\|_{\cl'(\rn)}=1\right\}.
\]
Since we have proved that $\cl'(\rn)$
is embedded into $\ce^{0,0}_{\cl',2,a}(\rn)$,
by the second estimate of (\ref{eq:8.31}),
we conclude that
\[
\|f\|_{\cl(\rn)}
\lesssim
\|f\|_{\ce^{0,0}_{\cl,2,a}(\rn)}.
\]
Thus, the reverse inequality was proved,
which completes the proof of Theorem \ref{t8.30}.
\end{proof}

Let $\cl(\rn)$ be a Banach space of functions
and define
\[
\cl^{p}(\rn):=\{f:{\mathbb R}^n \to {\mathbb C}\,:\,
f\mbox{ is measurable and }|f|^p \in \cl(\rn) \}
\]
for $p\in(0,\infty)$,
and
$\|f\|_{\cl^p(\rn)}:=\|\,|f|^p\,\|_{\cl(\rn)}^{1/p}$
for all $f \in \cl^p(\rn)$.
A criterion for (\ref{eq:8.31})
is given in the book \cite{crmape11}.
Here we invoke the following fact.

\begin{proposition}\label{p9.31}
Let $\cl(\rn)$ be a Banach space of functions
such that $\cl^{p}(\rn)$
is a Banach space of functions
and that
the maximal operator $M$ is bounded on $(\cl^{p}(\rn))'$
for some $p\in(1,\fz)$.

Assume, in addition, that
${\mathcal Z}$ is a set of pairs
of positive measurable functions $(f,g)$
such that,
for all $p_0 \in (1,\infty)$ and $w \in A_{p_0}(\rn)$,
\begin{equation}\label{eq:8.32}
\int_{\rn}[f(x)]^{p_0}w(x)\,dx
\lesssim_{A_{p_0}(w)}
\int_{\rn}[g(x)]^{p_0}w(x)\,dx
\end{equation}
with the implicit positive constant depending on the weight constant
$A_{p_0}(w)$ of the weight $w$, but not on $(f,g)$.
Then
$\|f\|_{\cl(\rn)} \lesssim \|g\|_{\cl(\rn)}$
holds true for all $(f,g) \in {\mathcal Z}$
with the implicit positive constant independent on $(f,g)$.

\end{proposition}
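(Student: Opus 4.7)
The plan is to invoke the Rubio de Francia iteration/extrapolation algorithm, which is tailor-made for transferring weighted $L^{p_0}$ estimates into the abstract Banach function space $\cl(\rn)$ once $M$ is bounded on the associate space of $\cl^p(\rn)$. The starting point is the identity $\|f\|_{\cl(\rn)}=\|f^p\|_{\cl^p(\rn)}^{1/p}$ together with the standard duality/norming for Banach function spaces:
\[
\|f^p\|_{\cl^p(\rn)}\sim\sup\left\{\int_{\rn}f(x)^p h(x)\,dx\,:\ 0\le h,\ \|h\|_{(\cl^p(\rn))'}\le 1\right\}.
\]
So it suffices to bound $\int_\rn f^p h\,dx$ by $\|g\|_{\cl(\rn)}^p$ uniformly in such $h$.

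Next, for each admissible $h$ I would apply the Rubio de Francia iteration: letting $\|M\|$ denote the operator norm of $M$ on $(\cl^p(\rn))'$, define
\[
\mathcal{R}h:=\sum_{k=0}^\infty\frac{M^k h}{(2\|M\|)^k}.
\]
The geometric-series argument gives three properties simultaneously: pointwise domination $h\le\mathcal{R}h$; the norm bound $\|\mathcal{R}h\|_{(\cl^p(\rn))'}\le 2\|h\|_{(\cl^p(\rn))'}$; and the Muckenhoupt condition $M(\mathcal{R}h)\le 2\|M\|\,\mathcal{R}h$, so that $\mathcal{R}h\in A_1(\rn)\subset A_p(\rn)$ with $A_p$-constant controlled only by $\|M\|$ (and thus by the data of $\cl(\rn)$, not by $h$ or by the pair $(f,g)$).

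With this weight in hand, the hypothesis \eqref{eq:8.32} applied at $p_0=p$ and with $w=\mathcal{R}h$ yields
\[
\int_\rn f(x)^p h(x)\,dx\le\int_\rn f(x)^p\mathcal{R}h(x)\,dx\lesssim\int_\rn g(x)^p\mathcal{R}h(x)\,dx,
\]
where the implicit constant depends only on $A_p(\mathcal{R}h)\lesssim\|M\|$, hence is uniform over $(f,g)\in\mathcal{Z}$ and over $h$. Finally, the H\"older inequality for $\cl^p(\rn)$ and its associate space gives
\[
\int_\rn g(x)^p\mathcal{R}h(x)\,dx\le\|g^p\|_{\cl^p(\rn)}\,\|\mathcal{R}h\|_{(\cl^p(\rn))'}\lesssim\|g\|_{\cl(\rn)}^p,
\]
and taking the supremum over $h$ followed by the $p$-th root closes the argument.

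The one genuine technical point, which I expect to be the main obstacle if anything, is to ensure that the associate-space duality for $\cl^p(\rn)$ used at the outset really is legitimate in this abstract setting: the axioms $(\cl1)$ through $(\cl6)$ plus $\theta=1$ and the local-integrability condition \eqref{9.x2} do make $\cl(\rn)$ a Banach function space in the sense of \cite{bs}, so its $p$-convexification $\cl^p(\rn)$ (when also a Banach function space, as assumed) admits the Lorentz--Luxemburg norming via positive functions in $(\cl^p(\rn))'$, which justifies the opening display. Once this is in place, every remaining step is a line-by-line reproduction of the classical Rubio de Francia scheme, and no further structural input from $\cl(\rn)$ is needed.
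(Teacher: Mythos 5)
Your overall strategy is exactly the Rubio de Francia extrapolation scheme that the paper itself relies on here (the proposition is invoked as a known fact from \cite{crmape11} and not proved in the text), and the core machinery is in order: the iteration $\mathcal{R}h=\sum_{k}M^kh/(2\|M\|)^k$, the three properties $h\le\mathcal{R}h$, $\|\mathcal{R}h\|\le 2\|h\|$ and $M(\mathcal{R}h)\le 2\|M\|\mathcal{R}h$ (hence an $A_1$, and so $A_{p_0}$, constant controlled only by $\|M\|$), the application of \eqref{eq:8.32} with $w=\mathcal{R}h$, and the closing H\"older/associate-space estimate are all correct and uniform in $(f,g)$ and in the norming functional $h$.

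The genuine problem is your opening identity and the resulting mismatch of spaces. With the definition given just before the proposition, $\|F\|_{\cl^p(\rn)}=\|\,|F|^p\,\|_{\cl(\rn)}^{1/p}$, one computes $\|f^p\|_{\cl^p(\rn)}^{1/p}=\|f^{p^2}\|_{\cl(\rn)}^{1/p^2}=\|f\|_{\cl^{p^2}(\rn)}$, which is not $\|f\|_{\cl(\rn)}$. The identity your argument actually needs is $\|f\|_{\cl(\rn)}^{p_0}=\|f^{p_0}\|_{\cl^{1/p_0}(\rn)}$, so the functional $h$ that produces the pairing $\int_{\rn} f^{p_0}h$ lives in the associate space of the concavification $\cl^{1/p_0}(\rn)=\{F:|F|^{1/p_0}\in\cl(\rn)\}$, and it is on $(\cl^{1/p_0}(\rn))'$ --- not on $(\cl^{p}(\rn))'$ with $p>1$ in the paper's sense --- that the boundedness of $M$ must be used for the iteration and for the final H\"older step. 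This is not merely notational: for $\cl(\rn)=L^1(\rn)$ the literal hypothesis holds ($\cl^p(\rn)=L^p(\rn)$ and $M$ is bounded on $(L^p(\rn))'=L^{p'}(\rn)$), yet the conclusion fails for the admissible family of pairs $(Mg,g)$ with $g\ge0$, for which \eqref{eq:8.32} holds by Muckenhoupt's theorem while $\|Mg\|_{L^1(\rn)}\ls\|g\|_{L^1(\rn)}$ is false. So you should either run the argument with $\cl^{1/p_0}$ (as in \cite{crmape11}, and as the applications in Section \ref{s10} require), or state explicitly that you are reading $\cl^p$ as $\{F:|F|^{1/p}\in\cl(\rn)\}$; as written, your first and last displays do not follow from the paper's definitions, and the hypothesis you actually use is not the one stated.
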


A direct consequence of this proposition
is a criterion of $(\ref{eq:8.31})$.

\begin{theorem}\label{t8.32}
Let $\cl(\rn)$ be a Banach space of functions
such that $\cl^{p}(\rn)$ and $(\cl')^{p}(\rn)$
are Banach spaces of functions
and that
the maximal operator $M$ is bounded
on $(\cl^{p}(\rn))'$ and $((\cl')^{p}(\rn))'$
for some $p \in (1,\infty)$.
Then $(\ref{eq:8.31})$ holds true.
In particular,
if $a>N$
and $(1+|x|)^{-N} \in \cl(\rn) \cap \cl'(\rn)$,
then
$\cl(\rn)$ and $\cl'(\rn)$
are embedded into $\cs'(\rn)$,
and $\cl(\rn)$ and $\ce^{0,0}_{\cl,2,a}(\rn)$
coincide.
\end{theorem}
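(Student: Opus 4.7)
The plan is to verify the hypotheses of Theorem \ref{t8.30}; since $\cl(\rn)$ is already a Banach space of functions satisfying \eqref{9.x2} and $(1+|\cdot|)^{-N}\in \cl(\rn)\cap\cl'(\rn)$ is already assumed, the only nontrivial point left is to establish \eqref{eq:8.31}. The strategy is to reduce \eqref{eq:8.31} to a weighted $L^{p_0}$ estimate via the extrapolation statement in Proposition \ref{p9.31}, and then to handle the weighted estimate by exhibiting the relevant operator as a Fourier multiplier of H\"ormander--Mikhlin type with bounds uniform in the Rademacher signs.

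Fix $\{\varepsilon_k\}_{k=1}^{k_0}\subset\{-1,1\}$ and write, for $f$ in a suitable dense subclass,
\[
T_\varepsilon f:=\psi*f+\sum_{k=1}^{k_0}\varepsilon_k\varphi_k*f.
\]
The associated Fourier multiplier is $m_\varepsilon(\xi):=\widehat{\psi}(\xi)+\sum_{k=1}^{k_0}\varepsilon_k\widehat{\varphi}(2^{-k}\xi)$. By \eqref{1.1} and \eqref{1.2}, at most finitely many terms of the sum are nonzero at any fixed $\xi$, and a direct differentiation argument, together with the Schwartz property of $\psi$ and $\varphi$, yields
\[
\sup_{\xi\ne0}|\xi|^{\|\vec\alpha\|_1}|\partial^{\vec\alpha}m_\varepsilon(\xi)|\lesssim_{\vec\alpha}1
\]
uniformly in $k_0$ and $\{\varepsilon_k\}$. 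Therefore, by the weighted H\"ormander--Mikhlin multiplier theorem (see, for instance, the Kurtz--Wheeden version), for every $p_0\in(1,\infty)$ and every $w\in A_{p_0}(\rn)$,
\[
\int_{\rn}|T_\varepsilon f(x)|^{p_0}w(x)\,dx\lesssim_{A_{p_0}(w)}\int_{\rn}|f(x)|^{p_0}w(x)\,dx,
\]
with the implicit constant depending only on the $A_{p_0}$-constant of $w$ and not on $k_0$ or $\{\varepsilon_k\}$. Taking the set
\[
\mathcal{Z}:=\{(|T_\varepsilon f|,|f|)\,:\,f\in L^\infty_{\rm c}(\rn),\ k_0\in\N,\ \varepsilon\in\{-1,1\}^{k_0}\},
\]
the hypothesis \eqref{eq:8.32} of Proposition \ref{p9.31} is satisfied.

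Applying Proposition \ref{p9.31} to $\mathcal{L}(\rn)$, whose validity is guaranteed by the assumed boundedness of $M$ on $(\mathcal{L}^p(\rn))'$, we obtain the first inequality of \eqref{eq:8.31}; the argument for the second inequality is identical, with $\mathcal{L}(\rn)$ replaced by $\mathcal{L}'(\rn)$, using the boundedness of $M$ on $((\mathcal{L}')^p(\rn))'$. Hence \eqref{eq:8.31} holds true. Having verified all hypotheses of Theorem \ref{t8.30}, we conclude that $\cl(\rn)$ and $\cl'(\rn)$ are embedded into $\cs'(\rn)$ and that $\cl(\rn)=\ce^{0,0}_{\cl,2,a}(\rn)$.

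The main obstacle is the passage from the abstract pair class $\mathcal{Z}$ to genuine quantitative control on $T_\varepsilon$ in $\cl(\rn)$ without reintroducing any implicit dependence on $k_0$ or $\{\varepsilon_k\}$; this forces one to be careful that the constant in the weighted H\"ormander--Mikhlin bound depends only on $A_{p_0}(w)$ and on finitely many Schwartz seminorms of $\psi,\varphi$, a point that is classical but must be checked in light of the fact that the multiplier symbol $m_\varepsilon$ itself is sign-dependent and $k_0$-dependent. Once this uniformity is in place, the extrapolation machinery of Proposition \ref{p9.31} transports the estimate to $\cl(\rn)$ and $\cl'(\rn)$, and Theorem \ref{t8.30} closes the argument.
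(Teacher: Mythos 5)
Your proposal is correct and follows essentially the same route as the paper: the paper also defines the pair class ${\mathcal Z}$ of $(\psi*f+\sum_k\varepsilon_k\varphi_k*f,\,f)$, observes that the uniform weighted $L^{p_0}(w)$ bound \eqref{eq:8.32} follows from standard Calder\'on--Zygmund theory, and concludes via the extrapolation result Proposition \ref{p9.31}. The only difference is that you spell out the H\"ormander--Mikhlin verification of the multiplier symbol (which the paper leaves to a citation), so no further comment is needed.
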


\begin{proof}
We have only to check (\ref{eq:8.31}).
Let
\[
{\mathcal Z}=
\left\{
\left(\psi*f+\sum_{k=1}^N \varepsilon_k\varphi_k*f,f\right)\,:\,
f \in \cl(\rn), \,
N \in \N, \,
\{\varepsilon_k\}_{k \in {\mathbb N}} \subset \{-1,1\}
\right\}.
\]
Then (\ref{eq:8.32}) holds true
according to the well-known Calder\'{o}n-Zygmund theory
(see \cite[Chapter 7]{du}, for example).
Thus, (\ref{eq:8.31}) holds true,
which completes the proof of Theorem \ref{t8.32}.
\end{proof}

\subsection{$F$-spaces and $\ce$-spaces}
\label{s8.2}

As we have seen in \cite{syy},
when $\cl(\rn)$ is the Morrey space
$\cm^p_q(\rn)$,
we have
$\ce_{\cl,q,a}^{s,\tau}(\rn)=F_{\cl,q,a}^{s,\tau}(\rn)$
with norm equivalence.
The same thing happens
under some mild assumptions
$(\ref{8.21})$ and $(\ref{8.22})$ below.
\begin{theorem}\label{t8.4}
Let $a\in (N_0+\alpha_3,\infty)$, $q\in(0,\fz]$ and $s\in\rr$.
Assume that $\cl(\rn)$ satisfies the assumption $(\cl8)$
and that there exist
positive constants $C$ and $\tau_0$ such that, for all $P\in\cq(\rn)$,
\begin{equation}\label{8.21}
C^{-1}\|\chi_P\|_{\cl(\rn)}\le|P|^{\tau_0}\le C\|\chi_P\|_{\cl(\rn)}.
\end{equation}
Then for all $\tau\in [0,\tau_0)$,
$\ce_{\cl,q,a}^{s,\tau}(\rn)=F_{\cl,q,a}^{s,\tau}(\rn)$
with equivalent norms.
\end{theorem}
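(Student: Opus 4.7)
The plan is to prove the non-trivial direction $\|f\|_{\ce^{s,\tau}_{\cl,q,a}(\rn)} \lesssim \|f\|_{F^{s,\tau}_{\cl,q,a}(\rn)}$, since the reverse inequality is immediate from Definition \ref{d2.2}(iii)(iv), where the $F$-sum starts at $j_P \vee 0$ and the $\ce$-sum starts at $0$. Fixing $P \in \cq(\rn)$ with $j_P > 0$ (the case $j_P \le 0$ being trivial), I will split
\[
\chi_P\left[\sum_{j=0}^\infty (w_j(\vz_j^*f)_a)^q\right]^{1/q}
\]
into $j \ge j_P$ and $j<j_P$ using the $\theta$-triangle inequality. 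The first piece is bounded by $|P|^\tau\|f\|_{F^{s,\tau}_{\cl,q,a}(\rn)}$ directly. The second piece is the heart of the argument, and a naive application of $(\cl3)$ together with $q < 1$ would produce an extra factor $j_P^{(1/q-1)_+}$; the key trick is to avoid this by exploiting the pointwise quasi-constancy of the Peetre maximal function at a scale coarser than $P$.

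Concretely, for $j \le j_P$ and any $x,y \in P$, the estimate $|x-y| \le \sqrt{n}\,2^{-j_P} \le \sqrt{n}\,2^{-j}$ combined with the defining Peetre inequality $(\vz_j^*f)_a(x) \le (1+2^j|x-y|)^a (\vz_j^*f)_a(y)$ and the condition (W2) on $w_j$ will give $M_P(j) := \sup_{y \in P}w_j(y)(\vz_j^*f)_a(y) \sim \inf_{y \in P}w_j(y)(\vz_j^*f)_a(y)$. Therefore, after monotonicity $(\cl4)$,
\[
\left\|\chi_P\left[\sum_{j=0}^{j_P-1}(w_j(\vz_j^*f)_a)^q\right]^{1/q}\right\|_{\cl(\rn)} \lesssim \|\chi_P\|_{\cl(\rn)}\left[\sum_{j=0}^{j_P-1}M_P(j)^q\right]^{1/q},
\]
which crucially moves the $\ell^q$ sum outside the $\cl$-norm with no loss.

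Next I will exploit \eqref{8.21}. Letting $P_j \supset P$ denote the unique dyadic cube with $\ell(P_j) = 2^{-j}$, the same quasi-constancy holds on $P_j$, so $\|\chi_{P_j}(w_j(\vz_j^*f)_a)\|_{\cl(\rn)} \sim M_P(j)\|\chi_{P_j}\|_{\cl(\rn)}$. Since $j = j_{P_j}$, the full $F$-norm at $P_j$ majorises the single $j$-th term and yields $M_P(j)\|\chi_{P_j}\|_{\cl(\rn)} \le |P_j|^\tau \|f\|_{F^{s,\tau}_{\cl,q,a}(\rn)}$. Using \eqref{8.21} twice (for both $P$ and $P_j$), this translates to $M_P(j) \lesssim |P_j|^{\tau-\tau_0}\|f\|_{F^{s,\tau}_{\cl,q,a}(\rn)} = 2^{jn(\tau_0-\tau)}\|f\|_{F^{s,\tau}_{\cl,q,a}(\rn)}$, and the hypothesis $\tau < \tau_0$ makes the sum $\sum_{j=0}^{j_P-1}M_P(j)^q$ a geometric series with ratio $2^{nq(\tau_0-\tau)} > 1$, dominated by its last term $\lesssim 2^{j_P nq(\tau_0-\tau)}\|f\|_{F^{s,\tau}_{\cl,q,a}(\rn)}^q$; the case $q=\infty$ uses $\sup$ in place of $\ell^q$ with the same bound.

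Collecting terms and again invoking \eqref{8.21} in the form $\|\chi_P\|_{\cl(\rn)} \sim |P|^{\tau_0} = 2^{-j_P n\tau_0}$, I obtain
\[
\left\|\chi_P\left[\sum_{j=0}^{j_P-1}(w_j(\vz_j^*f)_a)^q\right]^{1/q}\right\|_{\cl(\rn)} \lesssim 2^{-j_P n\tau_0}\cdot 2^{j_P n(\tau_0-\tau)}\|f\|_{F^{s,\tau}_{\cl,q,a}(\rn)} = |P|^\tau \|f\|_{F^{s,\tau}_{\cl,q,a}(\rn)},
\]
and dividing by $|P|^\tau$ and taking the supremum over $P$ completes the proof. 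The main obstacle is handling the case $q<1$ cleanly: a direct use of the $\theta$-triangle inequality forces the spurious factor $j_P^{(1/q-1)_+}$, and the quasi-constancy argument is essential to evade this. The role of \eqref{8.21} is precisely to allow converting from the $\cl$-norm at $P$ to the one at $P_j$ via the predictable power $|P_j|^{\tau_0}/|P|^{\tau_0}$, so that the gap between $\tau$ and $\tau_0$ produces the decay needed for the geometric summability.
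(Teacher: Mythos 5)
Your proof is correct, and it takes a genuinely different route from the paper's. The paper first invokes the atomic decomposition (Theorem \ref{t4.1}) to reduce the embedding $F^{s,\tau}_{\cl,q,a}(\rn)\hookrightarrow\ce^{s,\tau}_{\cl,q,a}(\rn)$ to the sequence-space embedding $f^{s,\tau}_{\cl,q,a}(\rn)\hookrightarrow e^{s,\tau}_{\cl,q,a}(\rn)$, and uses the hypothesis $(\cl8)$ to strip the Peetre supremum from the sequence norms; at that level the quantity $2^{js}\sum_k|\lambda_{jk}|\chi_{Q_{jk}}$ is literally constant on $P$ for $j<j_P$ (since $P$ sits inside a single $Q_{jk}$), after which \eqref{8.21} and the gap $\tau<\tau_0$ give the same geometric series you exploit. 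You instead work directly at the function level, replacing the exact constancy of $\chi_{Q_j}$ on $P$ by the quasi-constancy of $w_j(\vz_j^*f)_a$ on cubes of side length $\ge 2^{-j}$, which follows from the Peetre inequality $(\vz_j^*f)_a(x)\le(1+2^j|x-y|)^a(\vz_j^*f)_a(y)$ and (W2). The combinatorial core — pull the coarse-scale $\ell^q$ sum outside the $\cl$-norm at no cost, bound each $M_P(j)$ by testing the $F$-norm on the ancestor cube $P_j$, and sum the geometric series $2^{jnq(\tau_0-\tau)}$ — is the same in both arguments. What your route buys is the elimination of two pieces of machinery: you never use the atomic decomposition, and you never use $(\cl8)$ (only $(\cl2)$--$(\cl4)$ and \eqref{8.21}), so your argument actually proves the theorem under weaker hypotheses; the paper's route, on the other hand, is the one that generalizes verbatim to Theorem \ref{t8.5}, where \eqref{8.21} is replaced by the weaker condition \eqref{8.22} and the sequence-space formulation is more convenient.
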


\begin{proof}
By the definition of the norms
$\|\cdot\|_{\ce_{\cl,q,a}^{s,\tau}(\rn)}$
and
$\|\cdot\|_{F_{\cl,q,a}^{s,\tau}(\rn)}$, we need only to show that
\begin{equation}\label{8.222}
F^{s,\tau}_{\cl,q,a}(\rn)\hookrightarrow\ce^{s,\tau}_{\cl,q,a}(\rn).
\end{equation}
In view of the atomic decomposition theorem
(see Theorem \ref{t4.1}),
instead of proving (\ref{8.222}) directly,
we can reduce the matters
to the level of sequence spaces.
So we have only to prove
\begin{equation*}
f^{s,\tau}_{\cl,q,a}(\rn)\hookrightarrow e^{s,\tau}_{\cl,q,a}(\rn).
\end{equation*}%

To this end, by $(\cl8)$,
\begin{eqnarray*}
\|\lambda\|_{e^{s,\tau}_{\cl,q,a}(\rn)}
&&:=
\sup_{P\in\cq(\rn)}\frac1{|P|^\tau}\lf\|\lf[\sum_{j=0}^\fz\lf(
\chi_P 2^{js}
\sup_{y\in\rn}\frac{\sum_{k \in \Z^n}|\lambda_{jk}
|\chi_{Q_{jk}}(\cdot+y)}{(1+2^j|y|)^a}
\r)^q\r]^{1/q}\r\|_{\cl(\rn)}\\
&&\sim
\sup_{P\in\cq(\rn)}\frac1{|P|^\tau}\lf\|\lf[\sum_{j=0}^\fz\lf(
\chi_P 2^{js}
\sum_{k \in \Z^n}|\lambda_{jk}|\chi_{Q_{jk}}
\r)^q\r]^{1/q}\r\|_{\cl(\rn)}.
\end{eqnarray*}
Similarly, by $(\cl8)$, we also conclude that
\begin{eqnarray*}
\|\lambda\|_{f^{s,\tau}_{\cl,q,a}(\rn)}
&&:=
\sup_{P\in\cq(\rn)}\frac1{|P|^\tau}\lf\|\lf[\sum_{j=j_P\vee0}^\fz\lf(
\chi_P 2^{js}
\sup_{y\in\rn}\frac{\sum_{k \in \Z^n}|\lambda_{jk}
|\chi_{Q_{jk}}(\cdot+y)}{(1+2^j|y|)^a}
\r)^q\r]^{1/q}\r\|_{\cl(\rn)}\\
&&\sim
\sup_{P\in\cq(\rn)}\frac1{|P|^\tau}\lf\|\lf[\sum_{j=j_P\vee0}^\fz\lf(
\chi_P 2^{js}
\sum_{k \in \Z^n}|\lambda_{jk}|\chi_{Q_{jk}}
\r)^q\r]^{1/q}\r\|_{\cl(\rn)}.
\end{eqnarray*}
Then, it suffices to show that, for all dyadic cubes $P$
with $j_P \ge 1$,
$$\mi:=\frac1{|P|^\tau}\lf\|\lf[\sum_{j=0}^{j_P-1}\lf(
\chi_P 2^{js}
\sum_{k \in \Z^n}|\lambda_{jk}|\chi_{Q_{jk}}
\r)^q\r]^{1/q}\r\|_{\cl(\rn)}\ls\|\lz\|_{f^{s,\tau}_{\cl,q,a}(\rn)}.$$
For all $j\in\{0,\cdots,j_P-1\}$, there exists a unique $k\in\zz^n$ such that
$P\cap Q_{jk}\neq\emptyset$. Set $\lz_j:=\lz_{jk}$ and $Q_j:= Q_{jk}$,
then for all $j\in\{0,\cdots,j_P-1\}$,
\begin{eqnarray*}
\frac{2^{js}|\lz_j|}{|Q_j|^{\tau-\tau_0}}
&&\sim
\frac{\lf\|2^{js}|\lambda_j|\chi_{Q_j}\r\|_{\cl(\rn)}}{|Q_j|^\tau}\\
&&\ls
\frac1{|Q_j|^\tau}\lf\|\lf[\sum_{i=j}^{\fz}\lf(
\chi_{Q_j} 2^{is}
\sum_{k \in \Z^n}|\lambda_{ik}|\chi_{Q_{ik}}
\r)^q\r]^{1/q}\r\|_{\cl(\rn)}
\ls\|\lz\|_{f^{s,\tau}_{\cl,q,a}(\rn)},
\end{eqnarray*}
which implies that
\begin{eqnarray*}
\mi&&=\frac1{|P|^\tau}\lf\|\lf[\sum_{j=0}^{j_P-1}\lf(
\chi_P 2^{js}
\sum_{k \in \Z^n}|\lambda_{jk}|\chi_{Q_{jk}}
\r)^q\r]^{1/q}\r\|_{\cl(\rn)}
\ls\frac{\|\chi_P\|_{\cl(\rn)}}{|P|^\tau}\lf(\sum_{j=0}^{j_P-1}
2^{jsq}|\lambda_{j}|^q\r)^{1/q}
\\&&\hs\ls
\|\lz\|_{f^{s,\tau}_{\cl,q,a}(\rn)}|P|^{\tau_0-\tau}
\lf[\sum_{j=0}^{j_P-1}|Q_j|^{q(\tau-\tau_0)}\r]^{1/q}
\ls\|\lz\|_{f^{s,\tau}_{\cl,q,a}(\rn)}.
\end{eqnarray*}
This finishes the proof of Theorem \ref{t8.4}.
\end{proof}

The following is a variant of Theorem \ref{t8.4}.
\begin{theorem}\label{t8.5}
Let $\tau\in [0,\infty)$ and $q\in(0,\fz]$.
Assume that there exist
a positive constant $A$
and a positive constant $C(A)$, depending on $A$,
such that, for all $P\in\cq(\rn)$ and $k\in\zz_+$,
\begin{equation}\label{8.22}
\|\chi_P w_{j_P-k}\|_{\cl(\rn)} \le C(A) 2^{-Ak}\|\chi_{2^kP}w_{j_P-k}\|_{\cl(\rn)}
\end{equation}
and that $(\cl8)$ holds true.
Then
$\ce_{\cl,q,a}^{w,\tau}(\rn)=F_{\cl,q,a}^{w,\tau}(\rn)$
with equivalent norms for all $\tau\in[0,A)$.
\end{theorem}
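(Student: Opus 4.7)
The plan is to mirror the proof of Theorem~\ref{t8.4}, adapting each step to the weighted setting where the local comparison~\eqref{8.22} replaces the scale homogeneity~\eqref{8.21}. The inclusion $\ce^{w,\tau}_{\cl,q,a}(\rn)\hookrightarrow F^{w,\tau}_{\cl,q,a}(\rn)$ is immediate from the definitions since, for each dyadic $P$, the summation $\sum_{j\ge j_P\vee 0}$ used in the $F$-norm is a subset of the full summation $\sum_{j\ge 0}$ used in the $\ce$-norm. Hence only the reverse embedding $F^{w,\tau}_{\cl,q,a}(\rn)\hookrightarrow \ce^{w,\tau}_{\cl,q,a}(\rn)$ requires proof. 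Using the atomic decomposition (Theorem~\ref{t4.1}) the matter passes to the sequence level, and by hypothesis $(\cl8)$ the Peetre envelopes in the sequence norms are replaced by the simpler pointwise expressions $\sum_k|\lambda_{jk}|\chi_{Q_{jk}}$ exactly as in the proof of Theorem~\ref{t8.4}, reducing everything to proving $f^{w,\tau}_{\cl,q,a}(\rn)\hookrightarrow e^{w,\tau}_{\cl,q,a}(\rn)$.

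After this reduction, the whole proof hinges on controlling, for every dyadic cube $P$ with $j_P\ge 1$, the quantity
\[
\mathrm{I}:=\frac{1}{|P|^{\tau}}\left\|\chi_{P}\left[\sum_{j=0}^{j_P-1}\left(w_{j}\sum_{k\in\zz^{n}}|\lambda_{jk}|\chi_{Q_{jk}}\right)^{q}\right]^{1/q}\right\|_{\cl(\rn)}
\]
by $\|\lambda\|_{f^{w,\tau}_{\cl,q,a}(\rn)}$. For each $j\in\{0,\dots,j_{P}-1\}$ there is a unique $k_j\in\zz^n$ with $P\subset Q_{jk_j}=:Q_j$; set $\lambda_j:=\lambda_{jk_j}$. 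On $P$ the inner sum collapses to the constant $|\lambda_j|$, and, by (W2), the weight $w_j$ is comparable to a constant on $P$ (because $2^{j}\mathrm{diam}(P)\lesssim 1$ for $j\le j_P$), so for any fixed $y_P\in P$ one may factor
\[
\mathrm{I}\sim \frac{\|\chi_{P}\|_{\cl(\rn)}}{|P|^{\tau}}\left(\sum_{j=0}^{j_{P}-1}\bigl(w_{j}(y_{P})|\lambda_{j}|\bigr)^{q}\right)^{1/q}.
\]

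To bound each summand, put $k:=j_{P}-j\ge 1$ and apply~\eqref{8.22} to obtain $\|\chi_{P}w_{j}\|_{\cl(\rn)}\le C(A)2^{-Ak}\|\chi_{2^{k}P}w_{j}\|_{\cl(\rn)}$. Since $\ell(2^{k}P)=2^{k}\ell(P)=\ell(Q_{j})$ and $P\subset Q_{j}$, the enlarged cube $2^{k}P$ is covered by at most $3^{n}$ dyadic cubes of side $2^{-j}$ adjacent to $Q_{j}$; $(\cl3)$ allows one to bound the norm of the union by a $\theta$-sum over the pieces and (W2) forces $w_{j}$ to be comparable on any two such adjacent cubes, yielding $\|\chi_{2^{k}P}w_{j}\|_{\cl(\rn)}\lesssim \|\chi_{Q_{j}}w_{j}\|_{\cl(\rn)}$. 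Extracting the single term $(j,k_{j})$ from the definition of $\|\lambda\|_{f^{w,\tau}_{\cl,q,a}(\rn)}$ evaluated at the cube $Q_{j}$ gives $|\lambda_{j}|\|\chi_{Q_{j}}w_{j}\|_{\cl(\rn)}\le |Q_{j}|^{\tau}\|\lambda\|_{f^{w,\tau}_{\cl,q,a}(\rn)}$. Combining these inequalities with the comparison $w_{j}(y_{P})\|\chi_{P}\|_{\cl(\rn)}\sim \|\chi_{P}w_{j}\|_{\cl(\rn)}$ and $|Q_{j}|=2^{kn}|P|$ one arrives at
\[
\mathrm{I}\lesssim \|\lambda\|_{f^{w,\tau}_{\cl,q,a}(\rn)}\left(\sum_{k=1}^{j_{P}}2^{kq(n\tau-A)}\right)^{1/q}.
\]

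The main obstacle, and the step that determines the admissible range of $\tau$, is the convergence of this geometric series: it requires the exponent $n\tau-A$ to be strictly negative, which (up to the normalization of $A$ in~\eqref{8.22}) is equivalent to the hypothesis $\tau\in[0,A)$. Once this is secured the sum is controlled by a constant independent of $P$ and $\lambda$, yielding $\mathrm{I}\lesssim \|\lambda\|_{f^{w,\tau}_{\cl,q,a}(\rn)}$ uniformly in $P$ and completing the proof. A secondary technical point worth treating with care is the transfer $\|\chi_{2^{k}P}w_{j}\|_{\cl}\sim\|\chi_{Q_{j}}w_{j}\|_{\cl}$, since any factor in $k$ accrued there would shrink the admissible range of $\tau$; fortunately (W2) together with the doubling-like regularity of dyadic covers handles this uniformly in $k$.
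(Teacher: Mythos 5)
Your proof is correct and follows the same overall strategy as the paper: reduce to sequence spaces via Theorem \ref{t4.1} and $(\cl8)$, extract the single-term bound $|\lambda_j|\,\|\chi_{Q_j}w_j\|_{\cl(\rn)}\le|Q_j|^\tau\|\lambda\|_{f^{w,\tau}_{\cl,q,a}(\rn)}$ from the $f$-norm evaluated at $Q_j$, and control the low-frequency block $\sum_{j<j_P}$ by a geometric series generated by \eqref{8.22}. Where you diverge is in handling the $\ell^q$-quasi-norm: the paper first uses $q\ge1$ to pass from $\ell^q$ to $\ell^1$ and then the $\theta$-triangle inequality of $(\cl3)$ (invoking Aoki--Rolewicz on $\cl^{1/q}(\rn)$ separately for $q<1$), whereas you observe via (W2) that each $w_j$ is comparable to a constant on $P$ for $j\le j_P$, so the inner expression is essentially constant on $P$ and the norm factors as $\|\chi_P\|_{\cl(\rn)}$ times a scalar $\ell^q$-sum. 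This treats all $q\in(0,\infty]$ uniformly and avoids the case split and Aoki--Rolewicz entirely, which is a genuine (if small) simplification. You also correctly identify that convergence forces $n\tau<A$, the same normalization discrepancy with the stated range $\tau\in[0,A)$ that is implicit in the paper's own computation.

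One justification should be repaired: the transfer $\|\chi_{2^kP}w_j\|_{\cl(\rn)}\lesssim\|\chi_{Q_j}w_j\|_{\cl(\rn)}$ does not follow from $(\cl3)$ and (W2) alone. (W2) makes $w_j$ comparable on adjacent cubes of generation $j$, but for a general lattice quasi-norm satisfying only $(\cl1)$--$(\cl6)$ the quantities $\|\chi_R w_j\|_{\cl(\rn)}$ and $\|\chi_{Q_j}w_j\|_{\cl(\rn)}$ for two adjacent congruent cubes $R$ and $Q_j$ need not be comparable. The fix is to use $(\cl8)$, which is among your hypotheses: applying it to the single function $f_j=\chi_{Q}$ and noting $\eta_{j,R}*\chi_Q\gtrsim\chi_{3Q}$ yields $\|w_j\chi_{3Q}\|_{\cl(\rn)}\lesssim\|w_j\chi_Q\|_{\cl(\rn)}$, which gives the needed doubling-type comparison uniformly in $k$. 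The paper's own proof glosses over the identical point (it silently replaces $\chi_{2^kP}$ by $\chi_{Q_j}$), so this is not a defect relative to the published argument, but your attribution of the step to (W2) is the one place the reasoning as written is incomplete.
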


\begin{proof}
By the definition, we have only to show that
$$F^{w,\tau}_{\cl,q,a}(\rn)\hookrightarrow\ce^{w,\tau}_{\cl,q,a}(\rn).$$
To this end, by $(\cl8)$,
\begin{eqnarray*}
\|\lambda\|_{e^{w,\tau}_{\cl,q,a}(\rn)}
&&:=
\sup_{P\in\cq(\rn)}\frac1{|P|^\tau}\lf\|\lf[\sum_{j=0}^\fz\lf(
\chi_P w_j
\sup_{y\in\rn}\frac{\sum_{k \in \Z^n}|\lambda_{jk}|
\chi_{Q_{jk}}(\cdot+y)}{(1+2^j|y|)^a}
\r)^q\r]^{1/q}\r\|_{\cl(\rn)}\\
&&\sim\sup_{P\in\cq(\rn)}\frac1{|P|^\tau}\lf\|\lf[\sum_{j=0}^\fz\lf(
\chi_P w_j
\sum_{k \in \Z^n}|\lambda_{jk}|\chi_{Q_{jk}}
\r)^q\r]^{1/q}\r\|_{\cl(\rn)}.
\end{eqnarray*}
Similarly, by $(\cl8)$, we also conclude that
\begin{eqnarray*}
\|\lambda\|_{F^{w,\tau}_{\cl,q,a}(\rn)}
&&:=
\sup_{P\in\cq(\rn)}\frac1{|P|^\tau}\lf\|\lf[\sum_{j=j_P\vee0}^\fz\lf(
\chi_P w_j
\sup_{y\in\rn}\frac{\sum_{k \in \Z^n}|\lambda_{jk}|
\chi_{Q_{jk}}(\cdot+y)}{(1+2^j|y|)^a}
\r)^q\r]^{1/q}\r\|_{\cl(\rn)}\\
&&\sim\sup_{P\in\cq(\rn)}\frac1{|P|^\tau}\lf\|\lf[\sum_{j=j_P\vee0}^\fz\lf(
\chi_P w_j
\sum_{k \in \Z^n}|\lambda_{jk}|\chi_{Q_{jk}}
\r)^q\r]^{1/q}\r\|_{\cl(\rn)}.
\end{eqnarray*}
Then, it suffices to show that, for all dyadic cubes $P$
with $j_P \ge 1$,
$$\mi:=\frac1{|P|^\tau}\lf\|\lf[\sum_{j=0}^{j_P-1}\lf(
\chi_P w_j
\sum_{k \in \Z^n}|\lambda_{jk}|\chi_{Q_{jk}}
\r)^q\r]^{1/q}\r\|_{\cl(\rn)}\ls\|\lz\|_{f^{w,\tau}_{\cl,q,a}(\rn)}.$$
For all $j\in\{0,\cdots,j_P-1\}$, there exists a unique $k\in\zz^n$ such that
$P\cap Q_{jk}\neq\emptyset$. Set $\lz_j:=\lz_{jk}$ and $Q_j:= Q_{jk}$,
then for all $j\in\{0,\cdots,j_P-1\}$,
\begin{eqnarray*}
&&\frac1{|Q_j|^\tau}\lf\|
w_j
\lambda_j\chi_{Q_j}
\r\|_{\cl(\rn)}\noz\\
&&\hs\le
\frac1{|Q_j|^\tau}\lf\|\lf[\sum_{i=j}^{\fz}\lf(
\chi_{Q_j} w_i
\sum_{k \in \Z^n}|\lambda_{ik}|\chi_{Q_{ik}}
\r)^q\r]^{1/q}\r\|_{\cl(\rn)}\le\|\lz\|_{f^{w,\tau}_{\cl,q,a}(\rn)}.
\end{eqnarray*}

Assume $q \in[ 1,\fz]$ for the moment.
Then by the assumption $q\in[1,\fz]$ and the
triangle inequality of $\|\cdot\|_{\cl(\rn)}^\tz$, we see that
\begin{eqnarray*}
\mi&&=\frac1{|P|^\tau}\lf\|\lf[\sum_{j=0}^{j_P-1}\lf(
\chi_P w_j
\sum_{k \in \Z^n}|\lambda_{jk}|\chi_{Q_{jk}}
\r)^q\r]^{1/q}\r\|_{\cl(\rn)}\\
&&
\hs\le
\frac1{|P|^\tau}\lf\|\sum_{j=0}^{j_P-1}
\chi_P w_j
|\lambda_{j}|\chi_{Q_{j}}
\r\|_{\cl(\rn)}\le
\frac1{|P|^\tau}\lf[\sum_{j=0}^{j_P-1}\lf\|
\chi_P w_j\lambda_{j}\chi_{Q_{j}}
\r\|_{\cl(\rn)}^\tz\r]^{1/\tz}.
\end{eqnarray*}
If we use the assumption \eqref{8.22},
then we see that
\begin{eqnarray*}
\mi\le
\|\lz\|_{f^{w,\tau}_{\cl,q,a}(\rn)}\frac1{|P|^\tau}
\lf[\sum_{j=0}^{j_P-1}2^{-jA\theta}|Q_j|^{\tau\tz}\r]^{1/\tz}
\ls\|\lz\|_{f^{w,\tau}_{\cl,q,a}(\rn)}.
\end{eqnarray*}
If $q \in (0,1)$,
since $\cl^{1/q}(\rn)$ is still a quasi-normed space of functions,
by the Aoki-Rolewicz theorem (see \cite{a42,r57}),
there exist an equivalent quasi-norm $\qn\cdot\qn$ and $\wz\tz\in(0,1]$ such that,
for all $f,g\in\cl^{1/q}(\rn)$,
\[\left\{
\begin{array}{l}
\|f\|_{\cl^{1/q}(\rn)}\sim\qn f\qn\\
\qn f+g\qn^{\wz\tz}\le\qn f\qn^{\wz\tz}+\qn g\qn^{\wz\tz}.
\end{array}\r.\]
Form this, it follows that
\begin{eqnarray*}
\mi^{\wz\tz}
&&\ls\frac1{|P|^{\tau\wz\tz}}\sum_{j=0}^{j_P-1}\lf|\!\lf|\!\lf|
\chi_P w_j\lambda_{j}\chi_{Q_{j}}
\r|\!\r|\!\r|^{\wz\tz}\sim\frac1{|P|^{\tau\wz\tz}}\sum_{j=0}^{j_P-1}\lf\|
\chi_P w_j\lambda_{j}\chi_{Q_{j}}
\r\|_{\cl(\rn)}^{\wz\tz}\\
&&\ls\frac1{|P|^{\tau\wz\tz}}\sum_{j=0}^{j_P-1}2^{-jA\wz\tz}\lf\|
w_j\lambda_{j}\chi_{Q_{j}}
\r\|_{\cl(\rn)}^{\wz\tz}\\
&&\ls\|\lz\|_{f^{w,\tau}_{\cl,q,a}(\rn)}^{\wz\tz}
\frac1{|P|^{\tau\wz\tz}}\sum_{j=0}^{j_P-1}2^{-jA\wz\tz}
\lf|Q_{j}\r|^{\tau\wz\tz}\ls\|\lz\|_{f^{w,\tau}_{\cl,q,a}(\rn)}^{\wz\tz},\\
\end{eqnarray*}
which completes the proof of Theorem \ref{t8.5}.
\end{proof}

\begin{remark}
In many examples (see Section \ref{s10}),
it is not so hard to show that $(\ref{8.21})$ holds true.
\end{remark}

The following theorem generalizes \cite[Theorem 1.1]{syy}.
Recall that $\cl(\rn)$ carries the parameter $N_0$
from $(\cl6)$.

\begin{theorem}\label{t8.6}
Let $\oz\in\cw_{\az_1,\az_2}^{\az_3}$ with $\az_1, \az_2, \az_3 \in [0,\infty)$.

{\rm (i)} Assume $\tau\in(0,\fz)$, $q\in(0,\fz)$ and that $(\cl7)$ holds true.
If $a\gg1$, then $\cn_{\cl,q,a}^{w,\tau}(\rn)$
is a proper subspace of $B_{\cl,q,a}^{w,\tau}(\rn)$.

{\rm (ii)} If $a\in(0,\fz)$ and $\tau\in[0,\fz)$, then $\cn_{\cl,\fz,a}^{w,\tau}(\rn)
=B_{\cl,\fz,a}^{w,\tau}(\rn)$
with equivalent norms.
\end{theorem}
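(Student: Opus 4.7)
The easy inclusion $\cn^{w,\tau}_{\cl,q,a}(\rn)\hookrightarrow B^{w,\tau}_{\cl,q,a}(\rn)$, valid for every admissible $q$ and $\tau$, will follow from the elementary estimate
\begin{align*}
\|f\|_{B^{w,\tau}_{\cl,q,a}(\rn)}^q
&=\sup_{P}|P|^{-q\tau}\sum_{j\geq j_P\vee 0}\|\chi_P w_j(\vz_j^*f)_a\|_{\cl(\rn)}^q\\
&\leq\sum_{j\geq 0}\sup_{P}\frac{\|\chi_P w_j(\vz_j^*f)_a\|_{\cl(\rn)}^q}{|P|^{q\tau}}=\|f\|_{\cn^{w,\tau}_{\cl,q,a}(\rn)}^q,
\end{align*}
with the usual modification when $q=\infty$. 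The content of the theorem is therefore to determine when this inclusion is strict (part (i)) and when it is an equality (part (ii)).

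For part (ii), I would prove the converse $\|f\|_{\cn^{w,\tau}_{\cl,\infty,a}(\rn)}\lesssim \|f\|_{B^{w,\tau}_{\cl,\infty,a}(\rn)}$ by analyzing the pairs $(P,j)$ with $j<j_P$, which are the only ones missed by the $B$-supremum. For such a pair, let $P'\in\cq_j(\rn)$ denote the unique dyadic cube of sidelength $2^{-j}$ containing $P$. The Peetre inequality $(\vz_j^*f)_a(x)\leq 2^a(\vz_j^*f)_a(y)$ whenever $|x-y|\leq 2^{-j}$, combined with $(W2)$ applied to $w_j$, shows that $w_j(\vz_j^*f)_a$ is comparable throughout $P'$ to a single scalar $c$. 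The lattice property $(\cl4)$ then yields both $\|\chi_P w_j(\vz_j^*f)_a\|_{\cl(\rn)}\lesssim c\,\|\chi_P\|_{\cl(\rn)}$ and $c\,\|\chi_{P'}\|_{\cl(\rn)}\lesssim \|\chi_{P'}w_j(\vz_j^*f)_a\|_{\cl(\rn)}$. Since $(P',j)$ satisfies $j=j_{P'}$ and thus appears in the $B$-supremum, dividing by $|P|^\tau$ reduces the matter to controlling the ratio $(\|\chi_P\|_{\cl(\rn)}/|P|^\tau)/(\|\chi_{P'}\|_{\cl(\rn)}/|P'|^\tau)$ uniformly over nested dyadic $P\subset P'$. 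The hard part will be verifying that this ratio stays uniformly bounded; this is essentially a structural compatibility condition between $\cl(\rn)$ and the parameter $\tau$ (an upper Ahlfors-type estimate for $\|\chi_P\|_{\cl(\rn)}$ under dyadic refinement).

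For part (i), having already established $\cn\hookrightarrow B$, I plan to exhibit $f\in B^{w,\tau}_{\cl,q,a}(\rn)\setminus\cn^{w,\tau}_{\cl,q,a}(\rn)$ via an atomic construction. Under $(\cl7)$ and $a\gg 1$, Theorem \ref{t8.1} lets me work with the convolutions $\vz_j\ast f$ in place of the Peetre maximal functions, and Theorem \ref{t4.1} reduces the problem to exhibiting a sequence $\lambda\in b^{w,\tau}_{\cl,q,a}(\rn)\setminus n^{w,\tau}_{\cl,q,a}(\rn)$. I would set $\lambda_{jk}:=\lambda_j\delta_{k,0}$, supported on the nested column $\{Q_{j,0}\}_{j\in\zz_+}$, and normalize by $\lambda_j\|\chi_{Q_{j,0}}w_j\|_{\cl(\rn)}=|Q_{j,0}|^\tau$. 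Testing the $n^{w,\tau}$-norm against $P=Q_{j,0}$ at level $j$ then produces a unit contribution at every scale, so $\|\lambda\|_{n^{w,\tau}_{\cl,q,a}(\rn)}^q\geq\sum_{j\geq 0}1=\infty$ (using $q<\infty$). For $\|\lambda\|_{b^{w,\tau}_{\cl,q,a}(\rn)}$, testing against $P=Q_{l,0}$ yields a geometric sum
\[|Q_{l,0}|^{-q\tau}\sum_{j\geq l}\|\chi_{Q_{l,0}}w_j\lambda_j\chi_{Q_{j,0}}\|_{\cl(\rn)}^q\sim \sum_{k\geq 0}2^{-nq\tau k}<\infty\]
uniformly in $l$, thanks to $\tau>0$. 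The main obstacle here is the book-keeping over dyadic cubes $P$ off-origin or only partially overlapping the nested column: I expect the Peetre tail decay of the individual atoms to dominate those contributions by the central estimate, but carrying out the uniform control over all admissible $P$ is where the technical work lies.
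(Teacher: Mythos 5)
Your part (i) is essentially the paper's proof. The paper likewise uses $(\cl7)$ together with Theorems \ref{t4.1} and \ref{t8.1} to strip the Peetre-type supremum from the sequence norms and then exhibits a one-cube-per-scale sequence normalized by $\lambda_Q\|w_j\chi_Q\|_{\cl(\rn)}=|Q|^{\tau}$, exactly your normalization. The only difference is the choice of cubes: the paper places them on the pairwise \emph{disjoint} family $R_j=[2^{-j},2^{1-j})^n$ rather than on your nested column $\{Q_{j,0}\}_j$. With disjoint cubes, for each test cube $P$ at most the single term $j=j_P$ survives in the $b$-norm, so no geometric series is needed; with your nested column the dyadic trichotomy (a cube $P$ either equals some $Q_{l,0}$, in which case you get $\sum_{j\ge l}2^{-jn\tau q}\sim|Q_{l,0}|^{\tau q}$, or meets no $Q_{j,0}$ with $j\ge j_P$) settles the ``book-keeping over general $P$'' you were worried about in two lines, because after the reduction via $(\cl7)$ the summands are literal characteristic functions with no tails. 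Both constructions need $\tau>0$ for the $b$-norm and $q<\infty$ for the divergence of the $n$-norm, so either works.

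Part (ii) is where your proposal has a genuine gap, and you have correctly located it. Your reduction of the pairs $(P,j)$ with $j<j_P$ to the ancestor $P'\in\cq_j(\rn)$ via the almost-constancy of $w_j(\vz_j^*f)_a$ at scale $2^{-j}$ is sound, but it leaves you needing
\[
\frac{\|\chi_P\|_{\cl(\rn)}}{|P|^{\tau}}\lesssim\frac{\|\chi_{P'}\|_{\cl(\rn)}}{|P'|^{\tau}}
\qquad\text{uniformly for dyadic }P\subset P',
\]
and this does \emph{not} follow from $(\cl1)$ through $(\cl6)$: for $\cl(\rn)=L^p(\rn)$ the ratio is $(|P|/|P'|)^{1/p-\tau}$, which is unbounded as soon as $\tau>1/p$. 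So your argument, as written, proves (ii) only under an additional Ahlfors-type lower regularity of $\|\chi_P\|_{\cl(\rn)}$ relative to $\tau$ (which does hold in the motivating Morrey-type examples, where $\tau\le 1/p$). For what it is worth, the paper offers no help here: it declares (ii) to be ``immediately deduced from the definition'' and gives no argument, so it never confronts the pairs $j<j_P$ at all. You should either add the missing hypothesis explicitly or find an argument that avoids comparing $P$ with its ancestor; as it stands, (ii) is not established.
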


\begin{proof}
Since {\rm (ii)} is immediately deduced from the definition, we only prove (i).
By $(\cl7)$ and Theorems \ref{t4.1} and \ref{t8.1}, we see that
\begin{eqnarray*}
&&\|\lz\|_{b^{w,\tau}_{\cl,q,a}(\rn)}\\
&&\hs=
\sup_{P\in\cq(\rn)}\frac1{|P|^\tau}\lf\{\sum_{j=j_P\vee0}^\fz\lf\|
\chi_P w_j
\sup_{y\in\rn}\frac{1}{(1+2^j|y|)^a}
\sum_{k\in\zz^n}|\lz_{jk}|\chi_{Q_{jk}}(\cdot+y)
\r\|_{\cl(\rn)}^q\r\}^{1/q}\\
&&\hs\sim
\sup_{P\in\cq(\rn)}\frac1{|P|^\tau}\lf\{\sum_{j=j_P\vee0}^\fz\lf\|
\chi_P w_j
\sum_{k\in\zz^n}|\lz_{jk}|\chi_{Q_{jk}}
\r\|_{\cl(\rn)}^q\r\}^{1/q}
\end{eqnarray*}
and
\begin{eqnarray*}
&&\|\lz\|_{n^{w,\tau}_{\cl,q,a}(\rn)}\\
&&\hs=
\lf\{\sum_{j=j_P\vee0}^\fz\sup_{P\in\cq(\rn)}\frac1{|P|^{\tau q}}\lf\|
\chi_P w_j
\sup_{y\in\rn}\frac{1}{(1+2^j|y|)^a}
\sum_{k\in\zz^n}|\lz_{jk}|\chi_{Q_{jk}}(\cdot+y)
\r\|_{\cl(\rn)}^q\r\}^{1/q}\\
&&\hs\sim
\lf\{\sum_{j=j_P\vee0}^\fz\sup_{P\in\cq(\rn)}\frac1{|P|^{\tau q}}\lf\|
\chi_P w_j
\sum_{k\in\zz^n}|\lz_{jk}|\chi_{Q_{jk}}
\r\|_{\cl(\rn)}^q\r\}^{1/q}.
\end{eqnarray*}

We abbreviate
$$
Q_{j(1,1,1,\cdots,1)}:=
{\overbrace{[2^{-j},2^{1-j})\times\cdots\times[2^{-j},2^{1-j})}^{n\ \mathrm{times}}}
$$
to $R_j$
for all $j\in\zz$ and set
\begin{equation*}
\lz_Q:=\lf\{
\begin{array}{ll}
\|w_j\chi_{R_j}\|_{\cl(\rn)}^{-1}|R_j|^\tau,\
&\ Q=R_j\ \mathrm{for\ some}\ j\in\zz;\\
0,
&\ Q\neq R_j\ \mathrm{for\ any}\ j\in\zz.
\end{array}
\r.
\end{equation*}
Then we have
\begin{eqnarray*}
\|\lz\|_{b^{w,\tau}_{\cl,q,a}(\rn)}\sim
\sup_{P\in\cq(\rn)}\frac1{|P|^\tau}\lf\{\sum_{j=j_P\vee0}^\fz\lf\|
\chi_{P\cap R_j} w_j\lz_{R_j}
\r\|_{\cl(\rn)}^q\r\}^{1/q}.
\end{eqnarray*}
In order that the inner summand is not zero, there are there possibilities:
(a) P contains $\{R_k, R_{k+1},\cdots\}$;
(b) $P$ agrees with $R_k$ for some $k\in\zz$;
(c) $P$ is a proper subset of $R_k$ for some $k\in\zz$. The last possibility (c) dose
not yield the supremum, while the first case (a) can be covered by the second
case (b). Hence it follows that
\begin{eqnarray}\label{8.24}
\|\lz\|_{b^{w,\tau}_{\cl,q,a}(\rn)}
&&\sim
\sup_{k\in\zz}\frac1{|R_k|^\tau}\lf\{\sum_{j=k\vee0}^\fz\lf\|
\chi_{R_k\cap R_j} w_j\lz_{R_j}
\r\|_{\cl(\rn)}^q\r\}^{1/q}\noz\\
&&\sim\sup_{k\in\zz}\frac1{|R_k|^\tau}\lf\|
\chi_{R_k} w_k\lz_{R_k}
\r\|_{\cl(\rn)}\sim1.
\end{eqnarray}
Meanwhile, keeping in mind
that $q$ is finite,
we have
\begin{equation}\label{8.25}
\|\lz\|_{n^{w,\tau}_{\cl,q,a}(\rn)}
\ge
\lf\{\sum_{j=0}^\fz\sup_{k\in\zz}\frac1{|R_k|^{\tau q}}\lf\|
\chi_{R_k\cap R_j} w_j\lz_{R_j}
\r\|_{\cl(\rn)}^q\r\}^{1/q}=\fz.
\end{equation}
This,
together with Theorem 4.1,
the atomic decomposition of
$(B^{w,\tau}_{\cl,q,a}(\rn), b^{w,\tau}_{\cl,q,a}(\rn))$ and
$(\cn^{w,\tau}_{\cl,q,a}(\rn), n^{w,\tau}_{\cl,q,a}(\rn))$,
\eqref{8.24} and \eqref{8.25},
then completes the proof of Theorem \ref{t8.6}.
\end{proof}

\section{Homogeneous spaces}
\label{s9}

What we have been doing so far
can be extended to the homogeneous cases.
Here we give definitions
and state theorems but the proofs are omitted.

Following Triebel \cite{t83}, we let
$$\cs_\infty(\rn):=\left\{\varphi\in\cs(\rn):\ \int_\rn
\varphi(x)x^\gamma\,dx=0\ \mbox{for all multi-indices}\ \gamma\in
\Z_+^n\r\}$$ and consider $\cs_\fz(\rn)$ as a subspace
of $\cs(\rn)$, including the topology. Write $\cs'_\infty(\rn)$ to
denote the {\it topological dual of} $\cs_\infty(\rn)$, namely, the set of
all continuous linear functionals on $\cs_\fz(\rn)$. We also endow
$\cs'_\infty(\rn)$ with the weak-$\ast$ topology. Let
$\mathcal{P}(\rn)$ be the {\it set of all polynomials on} $\rn$. It is
well known that $\cs'_\fz(\rn)=\cs'(\rn)/\mathcal{P}(\rn)$ as
topological spaces (see, for example, \cite[Proposition 8.1]{ysy-textbook}).

To develop a theory of homogeneous spaces,
we need to modify the class of weights. Let
$\R^{n+1}_{\Z}:=\{(x,t) \in \R^{n+1}_+:\ \log_2 t \in \Z\}$.

\begin{definition}\label{d9.1}
Let $\alpha_1, \alpha_2, \alpha_3 \in [0,\infty)$.
Then define the \emph{class}
$\dot{\mathcal W}_{\alpha_1,\alpha_2}^{\alpha_3}$
of weights
as the set of all measurable functions
$w:\R^{n+1}_{\Z}\to (0,\infty)$
satisfying the following conditions:

{\rm (i)}
Condition (H-W1){\rm:} There exists a positive constant $C$ such that,
for all $x \in \rn$ and $j, \nu \in \Z$
with $j \ge \nu$,
$$
C^{-1}
2^{-(j-\nu)\alpha_1}
w(x,2^{-\nu})
\le
w(x,2^{-j})
\le C
2^{-(\nu-j)\alpha_2}
w(x,2^{-\nu}).
$$

{\rm (ii)}
Condition (H-W2){\rm:} There exists a positive constant $C$ such that,
for all $x,y \in \rn$ and $j \in \Z$,
$$w_j(x)
\le
C
w(y,2^{-j})
\left(1+2^j|x-y|\right)^{\alpha_3}.
$$

The \emph{class}
$\star-\dot{\mathcal W}_{\alpha_1,\alpha_2}^{\alpha_3}$
is defined by making similar modifications to Definition \ref{d3.3}.
\end{definition}

As we did for the inhomogeneous case,
we write $w_j(x):= w(x,2^{-j})$
for $x \in \rn$ and $j \in\Z$.

\begin{definition}\label{d9.2}
Let $q\in(0,\fz]$ and $\tau\in[0,\fz)$.
Suppose, in addition, that
$w \in \dot{\mathcal W}^{\alpha_3}_{\alpha_1,\alpha_2}$
with $\alpha_1, \alpha_2, \alpha_3 \in [0,\infty)$.

{\rm (i)}
The {\it space}
$\ell^q(\cl^w_\tau(\rn,\zz))$
is defined to be the {space of all
sequences $G:=\{g_j\}_{j\in\zz}$ of measurable
functions on $\rn$ such that}
\begin{equation}\label{9.1}
\|G\|_{\ell^q(\cl^w_\tau(\rn,\zz))}
:= \sup_{P\in\mathcal{Q}(\rn)}\frac1{|P|^\tau}
\|\{\chi_P w_jg_j\}_{j=j_P}^\infty\|_{\cl^w(\ell^q(\rn,\zz))}
<\fz.
\end{equation}

{\rm (ii)}
The {\it space}
$\ell^q(\cn\cl^w_\tau(\rn,\zz))$
is defined to be the {space of all
sequences $G:=\{g_j\}_{j\in\zz}$ of measurable
functions on $\rn$ such that}
\begin{equation}\label{9.2}
\|G\|_{\ell^q(\cn\cl^w_\tau(\rn,\zz))}
:=
\lf\{\sum_{j=0}^\fz
\sup_{P\in\mathcal{Q}(\rn)}
\left(\frac{\|\chi_P w_jg_j\|_{\cl(\rn)}}{|P|^{\tau}}\right)^q\r\}^{1/q}<\fz.
\end{equation}

{\rm (iii)}
The {\it space}
$\cl^w_\tau(\ell^q(\rn,\zz))$ is defined to be the
{space of all sequences $G:=\{g_j\}_{j\in\zz}$ of
measurable functions on $\rn$ such that}
\begin{equation}\label{9.3}
\|G\|_{\cl^w_\tau(\ell^q(\rn,\zz))}
:= \sup_{P\in\mathcal{Q}(\rn)}\frac1{|P|^\tau}
\|\{\chi_P w_jg_j\}_{j=j_P}^\infty\|_{\ell^q(\cl^w(\rn,\zz))}<\fz.
\end{equation}

{\rm (iv)}
The {\it space}
$\ce\cl^w_\tau(\ell^q(\rn,\zz))$ is defined to be the
{space of all sequences $G:=\{g_j\}_{j\in\zz}$ of
measurable functions on $\rn$ such that}
\begin{equation}\label{9.30}
\|G\|_{\ce\cl^w_\tau(\ell^q(\rn,\zz))}
:= \sup_{P\in\mathcal{Q}(\rn)}\frac1{|P|^\tau}
\|\{\chi_P w_jg_j\}_{j=-\infty}^\infty\|_{\ell^q(\cl^w(\rn,\zz))}<\fz.
\end{equation}

When $q=\infty$, a natural modification is made
in \eqref{9.1} through \eqref{9.30}
and $\tau$ is omitted in the notation when $\tau=0$.
\end{definition}

\subsection{Homogeneous Besov-type
and Triebel-Lizorkin-type spaces}
\label{s9.1}

Based upon the inhomogeneous case,
we present the following definitions.

\begin{definition}\label{d9.4}
Let $a\in (0,\fz)$,
$\alpha_1, \alpha_2, \alpha_3, \tau\in[0,\fz)$, $q\in(0,\,\fz]$
and $w \in \dot{\cw}_{\alpha_1,\alpha_2}^{\alpha_3}$.
Assume also that $\cl(\rn)$ is a quasi-normed
space satisfying $(\cl1)$ through $(\cl4)$ and
that $\vz\in\cs_\fz(\rn)$ satisfies \eqref{1.2}.
For all $f\in\cs'_\fz(\rn)$, $x\in\rn$ and $j \in {\mathbb Z}$, let
\begin{equation}\label{9.x1}
(\vz_j^*f)_a(x):=
\sup_{y\in\rn}\frac{|\vz_j\ast f(x+y)|}{(1+2^j|y|)^a}.
\end{equation}

{\rm (i)} The {\it homogeneous generalized Besov-type space
$\dot{B}^{w,\tau}_{\cl,q,a}(\rn)$}
is defined to be the space of
all $f\in\cs'_\fz(\rn)$ such that
\begin{equation*}
\|f\|_{\dot{B}^{w,\tau}_{\cl,q,a}(\rn)}:=
\lf\|\lf\{(\vz_j^\ast f)_a\r\}_{j\in\zz}\r\|_{\ell^q(\cl_\tau^w(\rn,\zz))}<\fz.
\end{equation*}%

{\rm (ii)} The {\it homogeneous generalized Besov-Morrey space
$\dot{\cn}^{w,\tau}_{\cl,q,a}(\rn)$}
is defined to be the space of
all $f\in\cs'_\fz(\rn)$ such that
\begin{equation*}
\|f\|_{\dot{\cn}^{w,\tau}_{\cl,q,a}(\rn)}:=
\lf\|\lf\{(\vz_j^\ast f)_a\r\}_{j\in\zz}\r\|_{\ell^q(\cn\cl_\tau^w(\rn,\zz))}<\fz.
\end{equation*}%

{\rm (iii)} The {\it homogeneous generalized Triebel-Lizorkin-type space
$\dot{F}^{w,\tau}_{\cl,q,a}(\rn)$}
is defined to be the space of all $f\in\cs'_\fz(\rn)$ such that
\begin{equation*}
\|f\|_{\dot{F}^{w,\tau}_{\cl,q,a}(\rn)}:=
\lf\|\lf\{(\vz_j^\ast f)_a\r\}_{j\in\zz}\r\|_{\cl_\tau^w(\ell^q(\rn,\zz))}<\fz.
\end{equation*}%

{\rm (iv)} The {\it homogeneous generalized Triebel-Lizorkin-Morrey space
$\dot{\ce}^{w,\tau}_{\cl,q,a}(\rn)$}
is defined to be the space of all $f\in\cs'_\fz(\rn)$ such that
\begin{equation*}
\|f\|_{\dot{\ce}^{w,\tau}_{\cl,q,a}(\rn)}:=
\lf\|\lf\{(\vz_j^\ast f)_a\r\}_{j\in\zz}\r\|_{\ce\cl_\tau^w(\ell^q(\rn,\zz))}<\fz.
\end{equation*}%

{\rm (v)}
Denote by $\dot{\ca}^{w,\tau}_{\cl,q,a}(\rn)$
one of the above spaces.
\end{definition}

\begin{example}
One of the advantages of introducing the class
$\dot{\mathcal W}^{\alpha_3}_{\alpha_1,\alpha_2}$
is that the intersection space of these function spaces still falls under this scope.
Indeed, let $\alpha_1, \alpha_2, \alpha_3,
\beta_1, \beta_2,$ $\beta_3, \tau \in [0,\infty)$,
$q, q_1,q_2\in(0,\,\fz]$,
$w\in\dot{\mathcal W}^{\alpha_3}_{\alpha_1,\alpha_2}$
and
$w'\in\dot{\mathcal W}^{\beta_3}_{\beta_1,\beta_2}$.
Then it is easy to see
$
\dot{\mathcal A}^{w,\tau}_{\cl,q_1,a}(\rn)
\cap
\dot{\mathcal A}^{w',\tau}_{\cl,q_1,a}(\rn)
=
\dot{\mathcal A}^{w+w',\tau}_{\cl,q_1,a}(\rn).
$
\end{example}
The following lemma is immediately deduced from the definitions
(c.\,f. Lemma \ref{l3.1}).
\begin{lemma}\label{l9.1}
Let $\alpha_1, \alpha_2, \alpha_3, \tau \in [0,\infty)$,
$q, q_1,q_2\in(0,\,\fz]$
and $w\in\dot{\mathcal W}^{\alpha_3}_{\alpha_1,\alpha_2}$.
Then
\begin{eqnarray*}
&&\dot{B}^{w,\tau}_{\cl,q_1,a}(\rn)
\hookrightarrow
\dot{B}^{w,\tau}_{\cl,q_2,a}(\rn), \\
&&\dot{\cn}^{w,\tau}_{\cl,q_1,a}(\rn)
\hookrightarrow
\dot{\cn}^{w,\tau}_{\cl,q_2,a}(\rn),\\
&&\dot{F}^{w,\tau}_{\cl,q_1,a}(\rn)
\hookrightarrow
\dot{F}^{w,\tau}_{\cl,q_2,a}(\rn),\\
&&\dot{\ce}^{w,\tau}_{\cl,q_1,a}(\rn)
\hookrightarrow
\dot{\ce}^{w,\tau}_{\cl,q_2,a}(\rn)
\end{eqnarray*}
and
\begin{eqnarray*}
\dot{F}^{w,\tau}_{\cl,q,a}(\rn)
\hookrightarrow
\dot{\cn}^{w,\tau}_{\cl,\infty,a}(\rn)
\end{eqnarray*}
in the sense of continuous embeddings.
\end{lemma}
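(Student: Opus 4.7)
The plan is to prove Lemma 9.1 as the homogeneous counterpart of Lemma 3.1, reducing everything to monotonicity of the sequence spaces of Definition 9.2.

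First I would establish the homogeneous analog of Lemma 2.1: for $0 < q_1 \le q_2 \le \infty$ and $w \in \dot{\cw}^{\alpha_3}_{\alpha_1,\alpha_2}$,
\begin{gather*}
\ell^{q_1}(\cl^w_\tau(\rn,\zz)) \hookrightarrow \ell^{q_2}(\cl^w_\tau(\rn,\zz)), \quad
\ell^{q_1}(\cn\cl^w_\tau(\rn,\zz)) \hookrightarrow \ell^{q_2}(\cn\cl^w_\tau(\rn,\zz)),\\
\cl^w_\tau(\ell^{q_1}(\rn,\zz)) \hookrightarrow \cl^w_\tau(\ell^{q_2}(\rn,\zz)), \quad
\ce\cl^w_\tau(\ell^{q_1}(\rn,\zz)) \hookrightarrow \ce\cl^w_\tau(\ell^{q_2}(\rn,\zz))
\end{gather*}
in the sense of continuous embedding. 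Each reduces to the scalar fact $\|\{a_j\}\|_{\ell^{q_2}} \le \|\{a_j\}\|_{\ell^{q_1}}$ applied either pointwise in $x$ under the $\cl(\rn)$-quasi-norm via the lattice property $(\cl4)$ or after taking $\cl(\rn)$-norms; the supremum over dyadic cubes $P$ and the normalization $|P|^{-\tau}$ commute with these bounds. Plugging in the sequence $\{(\vz_j^\ast f)_a\}_{j\in\zz}$ from Definition 9.4 immediately yields the four diagonal embeddings for the $\dot B$, $\dot{\cn}$, $\dot F$, and $\dot{\ce}$ scales.

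For the remaining embedding $\dot F^{w,\tau}_{\cl,q,a}(\rn) \hookrightarrow \dot{\cn}^{w,\tau}_{\cl,\infty,a}(\rn)$, I would first reduce to $q = \infty$ via the monotonicity already proven, so that it remains to compare $\cl^w_\tau(\ell^\infty(\rn,\zz))$ with $\ell^\infty(\cn\cl^w_\tau(\rn,\zz))$. These differ only in the order of $\sup_P$ and $\sup_j$, plus the restriction $j \ge j_P$ in the former. For a pair $(P_0,j_0)$ contributing to the $\dot{\cn}$-norm with $j_0 \ge j_{P_0}$, the bound is immediate by taking $P = P_0$ in the $\dot F$-expression. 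For $j_0 < j_{P_0}$, the plan is to pass to the dyadic ancestor $P \supset P_0$ of side $2^{-j_0}$ and invoke the Peetre maximal function property $(\vz_{j_0}^\ast f)_a(x) \lesssim (1 + 2^{j_0}|x-y|)^a (\vz_{j_0}^\ast f)_a(y)$ together with (W2) to show that $w_{j_0}(\vz_{j_0}^\ast f)_a$ is essentially constant on $P$.

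The main obstacle will be this last sub-case: the naive dominance $\chi_{P_0} \le \chi_P$ introduces a factor $(|P|/|P_0|)^\tau = 2^{n\tau(j_{P_0}-j_0)}$ that is a priori unbounded as $P_0$ shrinks. Absorbing it requires combining the near-constancy of $w_{j_0}(\vz_{j_0}^\ast f)_a$ on $P$ with a comparison between $\|\chi_{P_0}\|_{\cl(\rn)}/|P_0|^\tau$ and $\|\chi_P\|_{\cl(\rn)}/|P|^\tau$ along the dyadic parent chain, which stays bounded under the parameter regime implicit in the setup through $(\cl6)$. When $\tau = 0$ this difficulty disappears and the argument collapses to the routine inequality of Lemma 2.2, in agreement with Theorem 8.6.
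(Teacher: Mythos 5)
Your four diagonal embeddings are correct and follow exactly the paper's (omitted) route: monotonicity of $\ell^{q}$, applied either pointwise in $x$ under the $\cl(\rn)$-quasi-norm via $(\cl4)$ for the $\dot F$/$\dot\ce$ scales, or to the sequence of $\cl(\rn)$-quasi-norms for the $\dot B$/$\dot\cn$ scales. The problem lies entirely in your treatment of $\dot{F}^{w,\tau}_{\cl,q,a}(\rn)\hookrightarrow\dot{\cn}^{w,\tau}_{\cl,\infty,a}(\rn)$, in the sub-case $j_0<j_{P_0}$ that you yourself flag as the main obstacle. Your proposed resolution needs $\|\chi_{P_0}\|_{\cl(\rn)}|P_0|^{-\tau}\ls\|\chi_{P}\|_{\cl(\rn)}|P|^{-\tau}$ along the dyadic parent chain, asserted to hold ``under the parameter regime implicit in the setup through $(\cl6)$''. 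This does not follow: $(\cl6)$ is only a one-sided lower bound on $\|\chi_{Q_{jk}}\|_{\cl(\rn)}$ and gives no monotonicity of $P\mapsto\|\chi_P\|_{\cl(\rn)}|P|^{-\tau}$. Already for $\cl(\rn)=L^p(\rn)$ one has $\|\chi_P\|_{L^p(\rn)}|P|^{-\tau}=|P|^{1/p-\tau}$, which blows up as $P$ shrinks whenever $\tau>1/p$, and the near-constancy of $w_{j_0}(\vz_{j_0}^{*}f)_a$ at scale $2^{-j_0}$ cannot compensate, since it only shows the integrand is comparable to a fixed nonzero constant on $P_0$. So this sub-case cannot be closed by your argument.

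The reason the paper treats the lemma as immediate is that, in the convention it actually uses (see the computation in Remark 3.1(ii) and the assertion $\cn^{w,\tau}_{\cl,\fz,a}(\rn)=B^{w,\tau}_{\cl,\fz,a}(\rn)$ of Theorem 8.6(ii), declared to follow ``immediately from the definition''), the $q=\infty$ Besov--Morrey quasi-norm ranges over the same pairs $(j,P)$ with $j\ge j_P$ as the Besov-type quasi-norm, so the pairs $j_0<j_{P_0}$ simply do not occur. With that reading the fifth embedding is the two-line chain
$\dot F^{w,\tau}_{\cl,q,a}(\rn)\hookrightarrow\dot F^{w,\tau}_{\cl,\fz,a}(\rn)\hookrightarrow\dot B^{w,\tau}_{\cl,\fz,a}(\rn)=\dot\cn^{w,\tau}_{\cl,\fz,a}(\rn)$,
the middle step being the interchange $\sup_{j\ge j_P}\|\chi_P w_j g_j\|_{\cl(\rn)}\le\|\chi_P\sup_{j\ge j_P}w_j g_j\|_{\cl(\rn)}$ from $(\cl4)$, exactly as in Remark 3.1(ii). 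You should either adopt that convention explicitly or add a hypothesis guaranteeing that $\|\chi_P\|_{\cl(\rn)}|P|^{-\tau}$ does not degenerate on small cubes (in the spirit of \eqref{8.21}); as written, your patch for $j_0<j_{P_0}$ is a genuine gap.
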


The next theorem is a homogeneous counterpart
of Theorem \ref{t3.3}.
\begin{theorem}\label{t9.1}
Let $\alpha_1, \alpha_2, \alpha_3, \tau \in [0,\infty)$, $q\in(0,\,\fz]$
and $w\in\dot{\mathcal W}^{\alpha_3}_{\alpha_1,\alpha_2}$.
Then the spaces
$\dot{B}^{w,\tau}_{\cl,q,a}(\rn)$
and
$\dot{F}^{w,\tau}_{\cl,q,a}(\rn)$
are continuously embedded into $\cs_\fz'(\rn)$.
\end{theorem}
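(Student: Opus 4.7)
The plan is to adapt the proof of the inhomogeneous Theorem~\ref{t3.3} to the homogeneous setting. The decisive new issue is the appearance of arbitrarily negative frequency scales, $j \to -\infty$, in the Calder\'on decomposition; this is absent from the inhomogeneous case and is exactly what forces the test functions to come from the smaller space $\cs_\fz(\rn)$ rather than $\cs(\rn)$. By Lemma~\ref{l9.1} together with the trivial inequality $\|\cdot\|_{\cl^w_\tau(\ell^\fz)} \ge \|\cdot\|_{\ell^\fz(\cl^w_\tau)}$, each of $\dot{B}^{w,\tau}_{\cl,q,a}(\rn)$ and $\dot{F}^{w,\tau}_{\cl,q,a}(\rn)$ embeds continuously into $\dot{B}^{w,\tau}_{\cl,\infty,a}(\rn)$, so it is enough to treat the latter. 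Fix $\vz \in \cs(\rn)$ satisfying~\eqref{1.2} and choose a companion $\vz^\dagger \in \cs(\rn)$, also frequency-supported in $\{1/2 \le |\xi| \le 2\}$, such that $\sum_{j \in \Z}\widehat{\vz^\dagger_j}\,\widehat{\vz_j} \equiv 1$ on $\rn\setminus\{0\}$. The identity $\zeta = \sum_{j \in \Z}\vz^\dagger_j*\vz_j*\zeta$ then converges in $\cs_\fz(\rn)$ for every $\zeta \in \cs_\fz(\rn)$, reducing the embedding to the bound
\begin{equation*}
\sum_{j \in \Z} \left|\int_{\rn}(\vz_j*f)(x)\,\eta_j(x)\,dx\right|
\lesssim p(\zeta)\,\|f\|_{\dot{B}^{w,\tau}_{\cl,\infty,a}(\rn)}
\end{equation*}
for some continuous seminorm $p$ on $\cs_\fz(\rn)$, where $\eta_j := \widetilde{\vz^\dagger_j}*\zeta$ and $\widetilde g(x) := g(-x)$.

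For the high-frequency tail $j \ge 0$ I would repeat almost verbatim the calculation from Theorem~\ref{t3.3} with the role of the test function there played by $\eta_j$. Splitting $\rn$ along the partition $\{Q_{jk}\}_{k\in \Z^n}$, using the trivial bound $|\eta_j(x)| \lesssim p(\zeta)(1+|x|)^{-N-\az_3}$ for an arbitrarily large $N$, and combining $(\cl6)$ with (W1)--(W2) as in \eqref{3.31}, one arrives at a term bounded by a constant multiple of $2^{-j(\az_1 - N + n - \gz + n\tau)}p(\zeta)\|f\|$, which is summable in $j \ge 0$ once $N$ is chosen large enough. If the weight $w$ does not directly satisfy the size condition on $\az_1$ that makes the exponent positive, one reduces to that case by the standard lifting argument through $(-\Delta)^{s/2}$ for some $s < 0$; unlike the inhomogeneous setting, this lifting is available for every $w \in \dot{\cw}^{\az_3}_{\az_1,\az_2}$ with no $\star$-condition, since $j$ ranges over $\Z$ and $2^{-js}w_j$ stays in $\dot{\cw}^{\az_3}_{\az_1-s,\az_2+s}$.

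The genuinely new work lies in the low-frequency tail $j < 0$. The key observation is that, since $\zeta \in \cs_\fz(\rn)$, its Fourier transform $\widehat{\zeta}$ vanishes to infinite order at the origin, so for every $L \in \N$ and every multiindex $\vec\bz$ one has $|\partial^{\vec\bz}\widehat{\zeta}(\xi)| \lesssim_{L,\vec\bz} |\xi|^{L-\|\vec\bz\|_1}$ uniformly on $|\xi| \le 1$, with implicit constants controlled by a Schwartz seminorm of $\zeta$. Since $\supp\widehat{\vz^\dagger_j} \subset \{|\xi| \le 2^{j+1}\}$ for $j \le 0$, writing $x^{\vec\az}\eta_j(x) = \cf^{-1}\bigl(\partial^{\vec\az}[\widehat{\vz^\dagger_j}\widehat{\zeta}]\bigr)(-x)$ and integrating by parts $M$ times gives, for every $L,M \in \N$,
\begin{equation*}
|\eta_j(x)| \lesssim_{L,M}\, p_{L,M}(\zeta)\,\frac{2^{j(L+n)}}{(1+|x|)^M}, \qquad j \le 0, \ x \in \rn,
\end{equation*}
with a continuous seminorm $p_{L,M}$ on $\cs_\fz(\rn)$. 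On the other hand, a homogeneous analogue of Remark~\ref{r3.2}, obtained by exactly the same proof but with (W1) now used to control $w_j^{-1}$ in the \emph{negative} direction of $j$, bounds $\int_{\kappa Q_{jk}}|\vz_j*f|$ by a polynomial expression in $\kappa$, $(1+|k|)$ and $2^{|j|}$ times $\|f\|_{\dot{B}^{w,\tau}_{\cl,\infty,a}(\rn)}$; choosing $L$ large enough allows $2^{jL}$ to absorb this polynomial growth and produces absolute convergence of the $j<0$ tail.

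The main obstacle I expect is the bookkeeping for the $j < 0$ case. Before the two tails can be added, one must make explicit the exact polynomial rate at which $\int_{\kappa Q_{jk}}|\vz_j*f|$ is allowed to grow as $j \to -\infty$, accounting simultaneously for the blow-up of $w_j^{-1}$ in the negative direction that (W1) permits and for the $|P|^{-\tau}$ supremum in the definition of the homogeneous space, and then match those exponents with the $2^{jL}$ decay gained from the $\cs_\fz$-vanishing of $\zeta$. Once this exponent comparison is in place, combining the two tails yields $|\langle f,\zeta\rangle| \lesssim p(\zeta)\,\|f\|_{\dot{B}^{w,\tau}_{\cl,\infty,a}(\rn)}$ for a continuous seminorm $p$ on $\cs_\fz(\rn)$, which is exactly the continuous embedding into $\cs_\fz'(\rn)$.
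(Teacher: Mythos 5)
Your overall strategy coincides with the paper's: reduce to $\dot{B}^{w,\tau}_{\cl,\infty,a}(\rn)$, treat the frequencies $j\ge 0$ by rerunning the inhomogeneous argument of Theorem \ref{t3.3}, and extract a geometric gain $2^{jL}$ as $j\to-\infty$ from the vanishing moments of the test function. The paper splits $f=L_1(f)+L_2(f)$ with $L_1(f)=f-\Phi*f$ handled by Theorem \ref{t3.3} directly, and for $L_2(f)=\sum_{j\le0}\vz_j*f$ it implements the moment gain on the \emph{other} side of the pairing: writing the test function as a sum of high-order derivatives and using $|\partial^{\vec\az}(\vz_j*f)(x)|\ls 2^{j\|\vec\az\|_1}(\vz_j^*f)_a(x)$, it then bounds $\int|\kappa|(\vz_j^*f)_a$ directly from $(\cl6)$ and (H-W1)--(H-W2). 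Your Fourier-side version (smallness of $\eta_j=\widetilde{\vz_j^\dagger}*\zeta$ because $\widehat{\zeta}$ vanishes to infinite order where $\widehat{\vz_j^\dagger}$ lives) is an equivalent device; the paper's choice avoids the homogeneous analogue of Remark \ref{r3.2} that you invoke, since the Peetre maximal function already controls $\vz_j*f$ pointwise and one never needs to estimate $\int_{\kappa Q_{jk}}|\vz_j*f|$ separately.

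One concrete slip: your claimed bound $|\eta_j(x)|\ls 2^{j(L+n)}(1+|x|)^{-M}$ for $j\le 0$ is false. The convolution $\widetilde{\vz_j^\dagger}*\zeta$ can only decay at the coarser scale $2^{-j}$, so the correct estimate is $|\eta_j(x)|\ls p_{L,M}(\zeta)\,2^{j(L+n)}(1+2^j|x|)^{-M}$ (test it at $|x|\sim 2^{-j}$, where the left-hand side is genuinely of size $2^{j(L+n)}$). This is not fatal --- integrating against $|\vz_j*f(x)|\le(1+2^j|x|)^a(\vz_j^*f)_a(0)$ over $\rn$ costs an extra factor $2^{-jn}$, which is absorbed by enlarging $L$ --- but it changes the bookkeeping you defer: the natural decomposition for $j<0$ is into cubes of sidelength $2^{-j}$, and the lower bound of $(\cl6)$ must then be extended to cubes with $j<0$ (which follows from the lattice property $(\cl4)$, since such a cube contains a unit cube). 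With that correction and the exponent matching carried out, your argument closes and gives the same conclusion as the paper's.
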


\begin{proof}
In view of Lemma \ref{l9.1},
we have only to prove that
\[
\dot{B}^{w,\tau}_{\cl,\infty,a}(\rn) \hookrightarrow \cs_\infty'(\rn).
\]
Suppose that $\Phi$ satisfies \eqref{1.1} and
that $\widehat{\Phi}$ equals to $1$ near a neighborhood of the origin.
We write
$\vz(\cdot):=\Phi(\cdot)-2^{-n}\Phi(2^{-1}\cdot)$
and define $L_1(f):= f-\Phi*f$ for all $f \in \cs'_\infty(\rn)$.
Then by Theorem \ref{t3.3},
we have
$L_1(\dot{B}^{w,\tau}_{\cl,q,a}(\rn))
\hookrightarrow \cs'(\rn)
\hookrightarrow \cs'_\infty(\rn)$.
Therefore,
we need to prove that
\[
L_2(f):=\sum_{j=-\infty}^{0}\vz_j*f
\]
converges in $\cs'_\infty(\rn)$
and that $L_2$ is a continuous operator
from $\dot{B}^{w,\tau}_{\cl,\infty,a}(\rn)$
to $\cs'_\infty(\rn)$.

Notice that, for all $j\in\zz$ and $x\in\rn$,
\[
|\partial^\alpha(\vz_j*f)(x)|
\lesssim
2^{j|\alpha|}(\vz_j^*f)_a(x).
\]
Consequently, for any $\kappa \in \cs_\fz(\rn)$,
we have
\begin{eqnarray*}
\left|\int_{\rn}\kappa(x)\partial^\alpha(\vz_j*f)(x)\,dx\right|
\le
\int_{\rn}|\kappa(x)\partial^\alpha(\vz_j*f)(x)|\,dx
\le
2^{j|\alpha|}
\int_{\rn}|\kappa(x)|(\vz_j^*f)_a(x)\,dx.
\end{eqnarray*}
Now we use the condition (H-W2) to conclude that
\begin{eqnarray*}
\left|\int_{\rn}\kappa(x)\partial^\alpha(\vz_j*f)(x)\,dx\right|
&&\le
2^{j(|\alpha|-\alpha_1)}
\int_{\rn}\frac{|\kappa(x)|}{w(x,1)}
w_j(x)(\vz_j^*f)_a(x)\,dx\\
&&\le
2^{j(|\alpha|-\alpha_1)}
\int_{\rn}
\frac{1}{(1+|x|)^M}
w_j(x)(\vz_j^*f)_a(x)\,dx\\
&&=
2^{j(|\alpha|-\alpha_1)}
\sum_{k \in \Z^n}
\int_{Q_{jk}}
\frac{1}{(1+|x|)^M}
w_j(x)(\vz_j^*f)_a(x)\,dx\\
&&\ls
2^{j(|\alpha|-\alpha_1-M)}
\sum_{k \in \Z^n}
(|k|+1)^{-M}
\int_{Q_{jk}}
w_j(x)(\vz_j^*f)_a(x)\,dx.
\end{eqnarray*}
By $(\cl6)$ and (H-W2), we further see that
\begin{eqnarray*}
\left|\int_{\rn}\kappa(x)\partial^\alpha(\vz_j*f)(x)\,dx\right|
&&\ls
2^{j(|\alpha|-\alpha_1-\delta_0)}
\sum_{k \in \Z^n}
(|k|+1)^{-M+\delta_0}
\|w_j(\vz_j^*f)_a\|_{\cl(\rn)}\\
&&\ls
2^{j(|\alpha|-\alpha_1-\delta_0)}
\|f\|_{\dot{B}^{w,\tau}_{\cl,\infty,a}(\rn)}.
\end{eqnarray*}
Therefore, the summation defining $L_2(f)$ converges
in $\cs'_\infty(\rn)$,
which completes the proof of Theorem \ref{t9.1}.
\end{proof}

We remark that these homogeneous spaces have many similar properties to those
in Sections 4 through 9 of their inhomogeneous counterparts,
which will be formulated below. However, similar to the classical
homogeneous Besov spaces and Triebel-Lizorkin spaces, (see \cite[p.\,238]{t83}),
some of the most striking features of the spaces $B^{w,\tau}_{\cl,q,a}(\rn)$,
$F^{w,\tau}_{\cl,q,a}(\rn)$, $\cn^{w,\tau}_{\cl,q,a}(\rn)$ and $\ce^{w,\tau}_{\cl,q,a}(\rn)$
have no counterparts, such as the boundedness of  pointwise multipliers in Section 5.
Thus, we cannot expect to find counterparts
of the results in Section 5.

\subsection{Characterizations}

We have the following counterparts of Theorem \ref{t3.1}.

\begin{theorem}\label{t9.2}
Let $a, \alpha_1, \alpha_2, \alpha_3, \tau, q$, $w$
and $\cl(\rn)$ be as in Definition \ref{d9.4}.
Assume that $\psi \in {\mathcal S}_\fz(\rn)$
satisfies that
\begin{equation*}
\widehat{\psi}(\xi)\ne 0 \mbox{ if }\
\frac{\varepsilon}{2}<|\xi|<2\varepsilon.
\end{equation*}
for some $\varepsilon\in(0,\fz)$.
Let $\psi_j(\cdot):= 2^{jn}\psi(2^j\cdot)$
for all $j \in {\zz}$ and $\{(\psi_j^\ast f)_a\}_{j\in\zz}$ be as
in \eqref{9.x1} with $\varphi$ replaced by $\psi$.
Then
\begin{eqnarray*}
&&\|f\|_{\dot{B}^{w,\tau}_{\cl,q,a}(\rn)}
\sim
\lf\|\lf\{(\psi_j^\ast f)_a\r\}_{j\in\zz}\r\|_{\ell^q(\cl_\tau^w(\rn,\zz))},\\
&&\|f\|_{\dot{\cn}^{w,\tau}_{\cl,q,a}(\rn)}
\sim
\lf\|\lf\{(\psi_j^\ast f)_a\r\}_{j\in\zz}\r\|_{\ell^q(\cn\cl_\tau^w(\rn,\zz))},\\
&&\|f\|_{\dot{F}^{w,\tau}_{\cl,q,a}(\rn)}
\sim
\lf\|\lf\{(\psi_j^\ast f)_a\r\}_{j\in\zz}\r\|_{\cl_\tau^w(\ell^q(\rn,\zz))}
\end{eqnarray*}
and
\begin{eqnarray*}
&&\|f\|_{\dot{\ce}^{w,\tau}_{\cl,q,a}(\rn)}
\sim
\lf\|\lf\{(\psi_j^\ast f)_a\r\}_{j\in\zz}\r\|_{\ce\cl_\tau^w(\ell^q(\rn,\zz))}
\end{eqnarray*}
with implicit equivalent positive constants independent of $f$.
\end{theorem}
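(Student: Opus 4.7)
The plan is to mirror the proof of Theorem \ref{t3.1}, but with one major simplification and one technical obstacle to overcome. The simplification is that since we work modulo polynomials (i.e., on $\cs'_\fz(\rn)$), every auxiliary kernel used lives in $\cs_\fz(\rn)$ and hence enjoys vanishing moments of arbitrarily high order; this means the inhomogeneous dichotomy between a ``low-frequency'' piece handled by $\Phi$ (estimate \eqref{3.12}) and a ``high-frequency'' piece handled by $\vz_j$ (estimate \eqref{3.13}) collapses into a single uniform estimate indexed by $j\in\zz$.

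First, I would establish a homogeneous reproducing formula in $\cs'_\fz(\rn)$: there exists $\vz^\dagger\in\cs_\fz(\rn)$ with $\supp\wh{\vz^\dagger}$ contained in an annulus, such that $\sum_{\nu\in\zz}\vz_\nu^\dagger*\vz_\nu=\delta_0$ in $\cs'_\fz(\rn)$ (the homogeneous analogue of \cite[Theorem 1.6]{r01}). Writing $\psi_j=\sum_{\nu\in\zz}\psi_j*\vz_\nu^\dagger*\vz_\nu$ and applying Lemma \ref{l2.4} symmetrically in $j,\nu$ (both $\psi$ and $\vz^\dagger$ have moments of all orders, so either may play the role of the factor with moments), I would obtain, for every $L,M\in\nn$ to be chosen later and all $j,\nu\in\zz$, $y\in\rn$,
\[
|\psi_j*\vz_\nu^\dagger(y)|
\ls
\frac{2^{n(j\wedge\nu)-|j-\nu|L}}{(1+2^{j\wedge\nu}|y|)^{M}}.
\]
Combining this with the Peetre-type manipulation in the proof of Theorem \ref{t3.1}, together with $1+2^\nu|z-y|\le(1+2^\nu|z|)(1+2^\nu|y|)$ and $(1+2^\nu|z|)^a\le 2^{a[(\nu-j)\vee 0]}(1+2^j|z|)^a$, would yield the pointwise bound
\[
(\psi_j^*f)_a(x)
\ls
\sum_{\nu\in\zz}2^{-|j-\nu|L+a[(\nu-j)\vee 0]}(\vz_\nu^*f)_a(x)
\qquad (x\in\rn,\ j\in\zz).
\]
The reverse control is symmetric, so proving one direction suffices.

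The main obstacle, and the place where the homogeneous structure really enters, is that passing from this pointwise estimate to the four norm equivalences requires a homogeneous counterpart of Lemma \ref{l2.3} in which the summation indices $j,\nu$ range over all of $\zz$, the test cubes $P$ in the supremum are of arbitrary dyadic scale $j_P\in\zz$, and condition (W1) is replaced by (H-W1). Inspecting the proof of Lemma \ref{l2.3}, its argument transfers verbatim provided one chooses
\[
L>\alpha_1\vee(a+n\tau+\alpha_2),
\]
since (H-W1) furnishes the two-sided weight estimate
\[
w_j(x)\ls
\begin{cases}
2^{-\alpha_1(j-\nu)}w_\nu(x), & \nu\ge j,\\
2^{\alpha_2(j-\nu)}w_\nu(x), & \nu\le j,
\end{cases}
\]
which is formally identical to \eqref{2.20}; the non-degeneracy supplied by $(\cl6)$ together with the $\theta$-triangle inequality handles the residual partial sum over $\nu<j_P$ exactly as in \eqref{eq:120223-1} with the endpoint ``$j_P\vee 0$'' replaced everywhere by ``$j_P$'' (and with the sum running over all $\nu<j_P$ in $\zz$). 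The same modification simultaneously yields the homogeneous analogues of \eqref{2.14}--\eqref{2.17} for the four sequence spaces of Definition \ref{d9.2}, where the $\ce$-scale corresponds to $j$ ranging over all of $\zz$ rather than being restricted below by $j_P$; because the extra tail $\nu\le j_P$ in that case is also geometric through the same choice of $L$, no further adjustment is needed.

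Finally, feeding the pointwise domination into this homogeneous Lemma \ref{l2.3} and taking the supremum over $P\in\cq(\rn)$ produces the four equivalences
\[
\|f\|_{\dot{A}^{w,\tau}_{\cl,q,a}(\rn)}\sim\lf\|\lf\{(\psi_j^*f)_a\r\}_{j\in\zz}\r\|_{\cy}
\]
for $\cy$ running through $\ell^q(\cl_\tau^w(\rn,\zz))$, $\ell^q(\cn\cl_\tau^w(\rn,\zz))$, $\cl_\tau^w(\ell^q(\rn,\zz))$ and $\ce\cl_\tau^w(\ell^q(\rn,\zz))$, with implicit constants independent of $f\in\cs'_\fz(\rn)$. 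I expect the technical crux to be precisely the verification of the homogeneous version of Lemma \ref{l2.3}: while every individual estimate in its proof transfers, one must carefully check that the absence of the $\nu=0$ endpoint (previously absorbed through the ``$\vee 0$'' convention) does not spoil the H\"older/dyadic expansion argument in \eqref{eq:120223-1}, and this is where the full strength of the two-sided bound in (H-W1), as opposed to (W1), becomes indispensable.
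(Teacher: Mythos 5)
Your proposal is correct and coincides with the proof the paper intends (and omits, per the preamble of Section \ref{s9}): Theorem \ref{t9.2} is obtained by rerunning the argument of Theorem \ref{t3.1} with a homogeneous Calder\'on reproducing formula in $\cs'_\fz(\rn)$, the cross-convolution estimate from Lemma \ref{l2.4}, and a $\zz$-indexed analogue of Lemma \ref{l2.3} in which $j_P\vee 0$ becomes $j_P$ and the tail $\nu<j_P$ is summed geometrically exactly as in \eqref{eq:120223-1}. The only cosmetic discrepancy is the direction of the $a[(\cdot)\vee 0]$ loss in your pointwise domination (the paper's \eqref{3.13} puts it on $(j-\nu)\vee 0$), which is immaterial here since $\psi,\vz^\dagger\in\cs_\fz(\rn)$ allow $L$ to be taken arbitrarily large.
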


We also characterize
these function spaces in terms of local means
(see Corollary \ref{c3.1}).
\begin{corollary}\label{c9.1}
Under the notation of Theorem {\rm \ref{t9.2}}, let
\[
{\mathfrak M}f(x,2^{-j})
:=
\sup_\psi|\psi_{j}*f(x)|
\]
for all $(x,2^{-j}) \in \R^{n+1}_\Z$ and $f \in \cs'_\fz(\rn)$,
where the supremum is taken over all $\psi$
in $\cs_\fz(\rn)$
satisfying that
\[
\sum_{|\alpha| \le N}
\sup_{x \in \rn}
(1+|x|)^N |\partial^\alpha \psi(x)|
\le 1
\]
and that, for some $\varepsilon\in(0,\fz)$,
\[
\int_{\rn}\xi^{\alpha}\widehat{\psi}(\xi)\,d\xi=0, \quad
\widehat{\psi}(\xi)\ne 0 \mbox{ if }\
\frac{\varepsilon}{2}<|\xi|<2\varepsilon.
\]
Then, if $N$ is large enough, then
for all $f \in \cs'_\fz(\rn)$
\begin{eqnarray*}
&&\|f\|_{\dot{B}^{w,\tau}_{\cl,q,a}(\rn)}
\sim
\lf\|\lf\{{\mathfrak M} f(\cdot, 2^{-j})\r\}_{j\in\zz}\r\|_{\ell^q(\cl_\tau^w(\rn,\zz))},\\
&&\|f\|_{\dot{\cn}^{w,\tau}_{\cl,q,a}(\rn)}
\sim
\lf\|\lf\{{\mathfrak M} f(\cdot, 2^{-j})\r\}_{j\in\zz}\r\|_{\ell^q(\cn\cl_\tau^w(\rn,\zz))},\\
&&\|f\|_{\dot{F}^{w,\tau}_{\cl,q,a}(\rn)}
\sim
\lf\|\lf\{{\mathfrak M} f(\cdot, 2^{-j})\r\}_{j\in\zz}\r\|_{\cl_\tau^w(\ell^q(\rn,\zz))}
\end{eqnarray*}
and
\begin{eqnarray*}
&&\|f\|_{\dot{\ce}^{w,\tau}_{\cl,q,a}(\rn)}
\sim
\lf\|\lf\{{\mathfrak M} f(\cdot, 2^{-j})\r\}_{j\in\zz}\r\|_{\ce\cl_\tau^w(\ell^q(\rn,\zz))}
\end{eqnarray*}
with implicit equivalent positive constants independent of $f$.
\end{corollary}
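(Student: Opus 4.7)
The plan is to adapt the proof of the inhomogeneous counterpart (Corollary \ref{c3.1}) to the homogeneous setting, establishing the two-sided equivalence separately. The same argument treats all four scales (B, N, F, E) in parallel, with only the relevant part of Lemma \ref{l2.3} varying between them.

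For the upper bound, I would fix an admissible $\psi$ and establish a pointwise estimate of the form
\begin{equation}\label{eq:cor9-pw}
(\psi_j^* f)_a(x) \lesssim \sum_{k \in \zz} 2^{-|j-k|(L+1)+a[(j-k)\vee 0]} (\vz_k^* f)_a(x),
\end{equation}
whose implicit constant depends only on a seminorm of $\psi$ of some order $L_{\max}=L_{\max}(L,a,n,\alpha_1,\alpha_2,\alpha_3)$. This is obtained exactly as in the proof of Theorem \ref{t9.2}, via a Calder\'on-type reproducing formula in $\cs'_\fz(\rn)$ (valid because $\psi$ has all moments vanishing) together with Lemma \ref{l2.4}. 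Choosing $N \ge L_{\max}$ makes the constant in \eqref{eq:cor9-pw} uniform over the admissible class by way of the hypothesis $\sum_{|\alpha|\le N}\sup_x (1+|x|)^N |\partial^\alpha \psi(x)| \le 1$. Since $\mathfrak{M} f(x, 2^{-j}) \le \sup_\psi (\psi_j^* f)_a(x)$ pointwise and the right-hand side of \eqref{eq:cor9-pw} is independent of $\psi$, the estimate transfers to $\mathfrak{M}$. The homogeneous analog of Lemma \ref{l2.3}, combined with Theorem \ref{t9.2} applied to $\vz$, then yields
\[
\|\{\mathfrak{M} f(\cdot, 2^{-j})\}_{j\in\zz}\| \lesssim \|\{(\vz_k^* f)_a\}_{k\in\zz}\| \sim \|f\|_{\dot{\ca}^{w,\tau}_{\cl,q,a}(\rn)}
\]
in each of the four sequence-space norms.

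For the lower bound I would employ a translation trick. Fix any $\vz \in \cs_\fz(\rn)$ with $\widehat{\vz}$ non-vanishing on some annulus, and for each $y \in \rn$ set
\[
\psi^{(y)}(x) := c_0 (1+|y|)^{-a} \vz(x-y),
\]
with $c_0 > 0$ to be chosen. The inequality $(1+|z+y|)^N \le (1+|z|)^N (1+|y|)^N$, combined with the change of variables $z=x-y$, gives
\[
\sum_{|\alpha| \le N}\sup_x (1+|x|)^N |\partial^\alpha \psi^{(y)}(x)|
\le c_0 (1+|y|)^{N-a} \sum_{|\alpha| \le N}\sup_z (1+|z|)^N |\partial^\alpha \vz(z)|,
\]
which is uniformly bounded in $y$ provided $N \le a$, and can be made $\le 1$ by taking $c_0$ sufficiently small (depending on $\vz$ and $N$). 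The moment conditions on $\psi^{(y)}$ are inherited from $\vz \in \cs_\fz(\rn)$, and $\widehat{\psi^{(y)}}(\xi) = c_0(1+|y|)^{-a} e^{-iy\cdot \xi}\widehat{\vz}(\xi)$ is non-vanishing on the prescribed annulus. Thus $\psi^{(y)}$ is admissible, and after the substitution $u = -2^{-j}y$,
\[
\mathfrak{M} f(x, 2^{-j})
\ge \sup_y |\psi^{(y)}_j * f(x)|
= c_0 \sup_u (1+2^j|u|)^{-a}|\vz_j*f(x+u)|
= c_0 (\vz_j^* f)_a(x).
\]
Theorem \ref{t9.2} now gives $\|f\|_{\dot{\ca}^{w,\tau}_{\cl,q,a}(\rn)} \sim \|\{(\vz_j^* f)_a\}_{j\in\zz}\| \le c_0^{-1} \|\{\mathfrak{M} f(\cdot, 2^{-j})\}_{j\in\zz}\|$, closing the loop.

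The main obstacle is to make the seminorm bookkeeping precise in the upper-bound step: one must track how the implicit constants in Lemma \ref{l2.4} and in the homogeneous Calder\'on reproducing formula depend on the seminorms of $\psi$, and then choose $N$ large enough to dominate these. A secondary point is the interpretation of the phrase ``if $N$ is large enough'': the translation trick of the lower bound requires $N \le a$, so the full argument uses $N$ in the range $[L_{\max}, a]$, which is non-empty provided $a$ is sufficiently large, a condition compatible with the standing assumption $a > N_0 + \alpha_3$.
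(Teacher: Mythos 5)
Your overall architecture is the natural one for this corollary (the paper gives no proof in the homogeneous case and, for the inhomogeneous analogue, only points to the calculation in the proof of Theorem \ref{t3.1}): a version of the pointwise estimate \eqref{3.13} with constants uniform over the normalized class yields the bound $\lesssim\|f\|$, and some form of the translation trick is indeed required for the reverse bound, since in this framework the plain convolutions $|\vz_j*f|$ do not control the Peetre maximal functions without $(\cl7)$ or $(\cl8)$.

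The gap is in your final parameter count: the window $[L_{\max},a]$ is empty for \emph{every} $a$. To run the upper bound you must use the proof of Theorems \ref{t3.1}/\ref{t9.2} with an $L$ satisfying the analogue of \eqref{3.6}, namely $L+1>\alpha_1\vee(a+n\tau+\alpha_2)$, so already $L_{\max}\ge L+1>a$; moreover Lemma \ref{l2.4} forces the decay order of $\psi$ to exceed $M+L+n$ with $M>n+a$, so the normalization exponent $N$ must in fact exceed roughly $2a+2n+n\tau+\alpha_2$. Since $L_{\max}$ grows at least linearly in $a$, enlarging $a$ never produces $L_{\max}\le a$. Consequently, for every $N$ for which your upper bound is valid, the translates $\psi^{(y)}=c_0(1+|y|)^{-a}\vz(\cdot-y)$ are not admissible uniformly in $y$ (their normalization grows like $(1+|y|)^{N-a}$), and your lower bound does not follow. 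The always-admissible substitute $c_0(1+|y|)^{-N}\vz(\cdot-y)$ only gives $\mathfrak{M}f(x,2^{-j})\gtrsim(\vz_j^*f)_N(x)$ with Peetre parameter $N>a$, which is pointwise \emph{smaller} than $(\vz_j^*f)_a(x)$; passing from parameter $N$ back to parameter $a$ is exactly the content of Theorem \ref{t8.1} and needs $(\cl7)$/$(\cl8)$, which are not hypotheses here (and by Example \ref{e3.2} the spaces genuinely depend on the Peetre parameter). So either the admissibility class must be renormalized so that the decay exponent governing translates is at most $a$ while the smoothness and vanishing-moment requirements are imposed separately, or the reverse inequality needs an argument avoiding translates; as written, the two halves of your proof are never simultaneously valid.
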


\subsection{Atomic decompositions}

Now we place ourselves once again
in the setting of a quasi-normed space $\cl(\rn)$
satisfying only $(\cl1)$ through $(\cl6)$.
Now we are going to consider the atomic decompositions
of these spaces in Definition \ref{d9.4}.

\begin{definition}[c.\,f. Definition {\rm \ref{d4.1}}]\label{d9.5}
Let $K \in \Z_+$ and $L \in \Z_+ \cup\{-1\}$.

{\rm (i)}
Let $Q \in \cq(\rn)$.
A \emph{$(K,L)$-atom {\rm(}for $\dot{A}^{s,\tau}_{\cl,q,a}(\rn)${\rm)}}
supported near a cube $Q$ is a $C^K(\rn)$-function $a$ satisfying
\begin{eqnarray*}
\mbox{\rm (the support condition)}&\quad&\supp (a) \subset 3Q,\\
\mbox{\rm (the size condition)}&&
\|\partial^\alpha a\|_{L^\infty(\rn)} \le |Q|^{-\|\alpha\|_1/n},\\
\mbox{\rm (the moment condition)}&&
\int_{\rn}x^\beta a(x)\,dx=0
\end{eqnarray*}
for all multiindices $\alpha$ and $\beta$
satisfying
$\|\alpha\|_1 \le K$ and $\|\beta\|_1 \le L$.
Here the moment condition with $L=-1$
is understood as vacant condition.

{\rm (ii)}
A set $\{a_{jk}\}_{j \in \zz, \, k \in \Z^n}$
of $C^K(\rn)$-functions is called a \emph{collection of
$(K,L)$-atoms {\rm(}for $\dot{A}^{s,\tau}_{\cl,q,a}(\rn)${\rm)}}, if
each $a_{jk}$ is a $(K,L)$-atom
supported near $Q_{jk}$.
\end{definition}

\begin{definition}[c.\,f. Definition {\rm \ref{d4.2}}]\label{d9.6}
Let $K \in \zz_+$, $L \in \Z_+ \cup\{-1\}$ and $N \gg 1$.

{\rm (i)}
Let $Q \in \cq(\rn)$.
A \emph{$(K,L)$-molecule {\rm(}for $\dot{A}^{s,\tau}_{\cl,q,a}(\rn)${\rm)}}
supported near a cube $Q$
is a $C^K(\rn)$-function ${\mathfrak M}$ satisfying
\begin{eqnarray*}
\mbox{\rm (the decay condition)}&\quad &
|\partial^\alpha {\mathfrak M}(x)| \le
\left(1+\frac{|x-c_Q|}{\ell(Q)}\right)^{-N}\ {\rm for\ all}\ x\in\rn,\\
\mbox{\rm (the moment condition)}&&
\int_{\rn}x^\beta {\mathfrak M}(x)\,dx=0
\end{eqnarray*}
for all multiindices $\alpha$ and $\beta$
satisfying
$\|\alpha\|_1 \le K$ and $\|\beta\|_1 \le L$.
Here $c_Q$ and $\ell(Q)$ denote, respectively,
the center and the side length of $Q$, and
the moment condition with $L=-1$
is understood as vacant condition.

{\rm (ii)}
A collection $\{{\mathfrak M}_{jk}\}_{j \in \zz, \, k \in \Z^n}$
of $C^K(\rn)$-functions is called a \emph{collection of
$(K,L)$-molecules {\rm(}for $\dot{A}^{s,\tau}_{\cl,q,a}(\rn)${\rm)}}, if
each ${\mathfrak M}_{jk}$ is a $(K,L)$-molecule
supported near $Q_{jk}$.
\end{definition}

In what follows, for a function $F$ on
$\rr_{\zz}^{n+1}$,
we define
$$
\|F\|_{L_{\cl,q,a}^{w,\tau}(\rr_{\zz}^{n+1})}:
=\lf\|\lf\{\sup_{y\in\rn} \frac{|F(y,2^{-j})|}{(1+2^j|\cdot-y|)^a}
\r\}_{j\in\zz}\r\|_{\ell^q(\cl_\tau^w(\rn,\zz))},
$$
$$
\|F\|_{\cn_{\cl,q,a}^{w,\tau}(\rr_{\zz}^{n+1})}:
=\lf\|\lf\{\sup_{y\in\rn} \frac{|F(y,2^{-j})|}{(1+2^j|\cdot-y|)^a}
\r\}_{j\in\zz}\r\|_{\ell^q(\cn\cl_\tau^w(\rn,\zz))}
$$
$$
\|F\|_{F_{\cl,q,a}^{w,\tau}(\rr_{\zz}^{n+1})}:
=\lf\|\lf\{\sup_{y\in\rn} \frac{|F(y,2^{-j})|}{(1+2^j|\cdot-y|)^a}
\r\}_{j\in\zz}\r\|_{\cl_\tau^w(\ell^q(\rn,\zz))}
$$
and
$$
\|F\|_{\ce_{\cl,q,a}^{w,\tau}(\rr_{\zz}^{n+1})}:
=\lf\|\lf\{\sup_{y\in\rn} \frac{|F(y,2^{-j})|}{(1+2^j|\cdot-y|)^a}
\r\}_{j\in\zz}\r\|_{\ce\cl_\tau^w(\ell^q(\rn,\zz))}.
$$

\begin{definition}[c.\,f. Definition {\rm \ref{d4.3}}]\label{d9.7}
Let $\alpha_1, \alpha_2, \alpha_3, \tau \in [0,\infty)$ and $q\in(0,\,\fz]$.
Suppose that $w\in\dot{\mathcal W}^{\alpha_3}_{\alpha_1,\alpha_2}$.
Assume that $\Phi,\,\vz\in\cs(\rn)$ satisfy, respectively,
\eqref{1.1} and \eqref{1.2}.
Define $\Lambda:\R^{n+1}_\Z \to \C$
by setting, for all $(x,2^{-j}) \in \R^{n+1}_\Z$,
$$
\Lambda(x,2^{-j})
:=
\sum_{m \in \Z^n}\lambda_{j m}\chi_{Q_{j m}}(x),
$$
when
$\lambda:=\{\lambda_{jm}\}_{j \in \zz, \, m \in \Z^n}$,
a doubly-indexed complex sequence, is given.

{\rm (i)} The {\it  homogeneous sequence space
$\dot{b}^{w,\tau}_{\cl,q,a}(\rn)$}
is defined to be the space of
all $\lambda$ such that
$
\|\lambda\|_{\dot{b}^{w,\tau}_{\cl,q,a}(\rn)}:=
\|\Lambda\|_{\dot{L}_{\cl,q,a}^{w,\tau}(\rr_{\zz}^{n+1})}<\fz.
$

{\rm (ii)} The {\it  homogeneous sequence space
$\dot{n}^{w,\tau}_{\cl,q,a}(\rn)$}
is defined to be the space of
all $\lambda$ such that
$
\|\lambda\|_{\dot{n}^{w,\tau}_{\cl,q,a}(\rn)}:=
\|\Lambda\|_{\dot{\cn}_{\cl,q,a}^{w,\tau}(\rr_{\zz}^{n+1})}<\fz.
$

{\rm (iii)} The {\it  homogeneous sequence space
$\dot{f}^{w,\tau}_{\cl,q,a}(\rn)$}
is defined to be the space of all $\lambda$ such that
$
\|\lambda\|_{\dot{f}^{w,\tau}_{\cl,q,a}(\rn)}:=
\|\Lambda\|_{\dot{F}_{\cl,q,a}^{w,\tau}(\rr_{\zz}^{n+1})}<\fz.
$

{\rm (iv)} The {\it  homogeneous sequence space
$\dot{e}^{w,\tau}_{\cl,q,a}(\rn)$}
is defined to be the space of all $\lambda$ such that
$
\|\lambda\|_{\dot{e}^{w,\tau}_{\cl,q,a}(\rn)}:=
\|\Lambda\|_{\dot{\ce}_{\cl,q,a}^{w,\tau}(\rr_{\zz}^{n+1})}<\fz.
$
\end{definition}

As we did for inhomogeneous spaces,
we present the following definition.

\begin{definition}[c.\,f. Definition \ref{d4.2}]\label{d9.8}
Let $X$ be a \emph{function space} embedded continuously into $\cs_\fz'(\rn)$
and ${\mathcal X}$ a \emph{quasi-normed space} of sequences.
The pair $(X,{\mathcal X})$ is called to admit the \emph{atomic decomposition}
if it satisfies the following two conditions:

{\rm (i)}
For any $f \in X$,
there exist a collection of atoms,
$\{a_{jk}\}_{j \in \Z, \, k \in \Z^n}$,
and a sequence
$\{\lambda_{jk}\}_{j \in \Z, \, k \in \Z^n}$
such that
$f=\sum_{j=-\infty}^\infty
\sum_{k \in \Z^n}\lambda_{jk}a_{jk}
$
holds true in ${\mathcal S}'_\fz(\rn)$
and that
$$
\|\{\lambda_{jk}\}_{j \in \Z, \, k \in \Z^n}\|_{{\mathcal X}}
\lesssim
\|f\|_X.
$$

{\rm (ii)}
Suppose that a collection of atoms,
$\{a_{jk}\}_{j \in \Z, \, k \in \Z^n}$,
and a sequence
$\{\lambda_{jk}\}_{j \in \Z, \, k \in \Z^n}$
such that
$
\|\{\lambda_{jk}\}_{j \in \Z, \, k \in \Z^n}\|_{{\mathcal X}}<\infty.
$
Then
the series
$
f := \sum_{j=-\infty}^\infty
\sum_{k \in \Z^n}\lambda_{jk}a_{jk}
$
converges in $\cs_\fz'(\rn)$
and satisfies that
$
\|f\|_X
\lesssim
\|\{\lambda_{jk}\}_{j \in \zz, \, k \in \Z^n}\|_{{\mathcal X}}.
$

In analogy
one says that a pair $(X,{\mathcal X})$ admits the \emph{molecular decomposition}.
\end{definition}

The following result follows from a way
similar to the inhomogeneous case
(see the proof of Theorem \ref{t4.1}).

\begin{theorem}\label{t9.4}
Let $\alpha_1, \alpha_2, \alpha_3, \tau \in [0,\infty)$ and $q\in(0,\,\fz]$.
Suppose that $w\in\dot{\mathcal W}^{\alpha_3}_{\alpha_1,\alpha_2}$
and that \eqref{3.35}, \eqref{4.3}, \eqref{4.4} and \eqref{4.5} hold true.
Then the pair
$(\dot{A}^{w,\tau}_{\cl,q,a}(\rn),\dot{a}^{w,\tau}_{\cl,q,a}(\rn))$
admits the atomic decomposition.
\end{theorem}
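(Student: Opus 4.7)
The plan is to transpose the three-part structure of the proof of Theorem \ref{t4.1}—convergence (Lemma \ref{l4.1}), synthesis (Lemma \ref{l4.2}), and analysis (Calder\'on reproducing formula)—to the homogeneous setting, replacing $\zz_+$ by $\zz$ throughout and using $\cs'_\fz(\rn)$ in place of $\cs'(\rn)$. The machinery is already in place: Lemma \ref{l2.4} is insensitive to the sign of the indices $j,\nu$, the homogeneous Peetre maximal characterization (Theorem \ref{t9.2}) is available, and the atomic/molecular sequence spaces $\dot a_{\cl,q,a}^{w,\tau}(\rn)$ are defined just so that Definition \ref{d9.8} becomes verifiable by the same pointwise estimates as in the inhomogeneous case.

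First I would establish a homogeneous analogue of Lemma \ref{l2.3}: if
\[
G_j(x):=\sum_{\nu\le j}2^{-(j-\nu)D_2}g_\nu(x)+\sum_{\nu>j}2^{-(\nu-j)D_1}g_\nu(x)
\]
with $D_1>\az_1$ and $D_2>n\tau+\az_2$, then $\|\{G_j\}_{j\in\zz}\|_{\cl^w_\tau(\ell^q(\rn,\zz))}\ls \|\{g_\nu\}_{\nu\in\zz}\|_{\cl^w_\tau(\ell^q(\rn,\zz))}$ and its three siblings hold true. The proof is a verbatim rerun of Lemma \ref{l2.3}; the only new point is that the auxiliary dyadic cube $P$ now forces the dichotomy $\nu\ge j_P$ versus $\nu<j_P$ and the tail into $\nu<j_P$ is controlled with the same $(\cl3)$-telescoping and dilation trick, noting that $j_P$ can be any integer. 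Next, for the convergence step I would fix $\vz\in\cs_\fz(\rn)$ and split $\sum_{j\in\zz}\sum_{k\in\zz^n}\lz_{jk}\mathfrak{M}_{jk}(\vz)$ into $j\ge 0$ and $j<0$. The $j\ge 0$ half is exactly Lemma \ref{l4.1}. For $j<0$, the role of the molecular moment condition is now played by the vanishing moments of $\vz\in\cs_\fz(\rn)$: by Lemma \ref{l2.4} with $\vz_j$ (which has moments of all orders after rescaling, since $\vz\in\cs_\fz$) tested against $\mathfrak{M}_{jk}$ (which decays with parameter $N$), one obtains
\[
|\langle \vz,\mathfrak{M}_{jk}\rangle|\ls 2^{jM}(1+2^{-j}|k|)^{-\widetilde N}
\]
for any desired $M$, where the gain $2^{jM}$ with $j\to -\fz$ absorbs the growth $2^{-j\gamma-j\az_1-jn\tau}(1+|k|)^{\dz+\az_3}$ of $|\lz_{jk}|$ coming from the embedding $\dot b_{\cl,\fz,a}^{w,\tau}(\rn)\hookrightarrow$ (something with controlled cube norms, via the homogeneous analogue of the argument in Theorem \ref{t9.1}), so the double series converges absolutely.

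For the synthesis step, given $\lz\in \dot a_{\cl,q,a}^{w,\tau}(\rn)$ and molecules $\mathfrak{M}_{jk}$, setting $f_l:=\sum_{k}\lz_{lk}\mathfrak{M}_{lk}$ and applying Lemma \ref{l2.4} to $\vz_j*f_l$ yields, for all $j,l\in\zz$, exactly the pointwise estimate (\ref{eq:120223-2}) but without any restriction $j,l\ge 0$; summing in $l$ and invoking the homogeneous Lemma \ref{l2.3} established above produces the norm bound $\|f\|_{\dot A^{w,\tau}_{\cl,q,a}(\rn)}\ls \|\lz\|_{\dot a^{w,\tau}_{\cl,q,a}(\rn)}$. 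For the analysis part, I would pick $\psi\in\cs_\fz(\rn)$ with $\hat\psi$ supported in an annulus, with moments up to order $L$ vanishing automatically, and satisfying $\sum_{j\in\zz}\psi_j*\psi_j=\delta_0$ in $\cs'_\fz(\rn)$. Setting $\lz_{jk}:=2^{jn}\int_{Q_{jk}}|\psi_j*f(y)|\,dy$ and $\mathfrak{A}_{jk}(x):=\lz_{jk}^{-1}\int_{Q_{jk}}\psi_j(x-y)\psi_j*f(y)\,dy$, one verifies as in \eqref{4.17} that $\sup_{z\in\rn}(1+2^j|z|)^{-a}|\sum_k\lz_{jk}\chi_{Q_{jk}}(x+z)|\ls (\psi_j^*f)_a(x)$, yielding $\|\lz\|_{\dot a^{w,\tau}_{\cl,q,a}(\rn)}\ls \|f\|_{\dot A^{w,\tau}_{\cl,q,a}(\rn)}$.

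The genuinely new obstacle, which has no inhomogeneous precedent, is controlling the negative-frequency tail $\sum_{j<0}\lz_{jk}\mathfrak{M}_{jk}$ in $\cs'_\fz(\rn)$ in the convergence step: there is no ``low-frequency atom of scale $1$'' to absorb it. The crucial observation—and the one that makes the argument go through—is that test functions now lie in $\cs_\fz(\rn)$, so we may exploit their infinitely many vanishing moments against the smooth but non-oscillatory molecules, trading a decay factor $2^{jM}$ for any $M>\gamma+\az_1+n\tau$. Once this gain in $j$ is secured, the spatial summation in $k\in\zz^n$ is handled by the same $(1+|k|)^{-M+\az_3+\dz}$ decay as in \eqref{eq:120223-4}, and the homogeneous Lemma \ref{l2.3} completes the argument exactly as in Theorem \ref{t4.1}.
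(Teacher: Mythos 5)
Your proposal follows essentially the same route as the paper: the paper proves Theorem \ref{t9.4} by declaring the synthesis and analysis parts analogous to Theorem \ref{t4.1} and isolating exactly the point you flag as the new obstacle, namely the convergence of the large-scale tail $\sum_{j\le 0}\sum_{k}\lambda_{jk}{\mathfrak M}_{jk}$, which its Lemma \ref{l9.2} handles precisely as you do—by pairing the molecules against the infinitely many vanishing moments of test functions in $\cs_\fz(\rn)$ via Lemma \ref{l2.4} to gain the factor $2^{jM}$ as $j\to-\fz$. Your treatment is, if anything, slightly more explicit than the paper's about the homogeneous analogue of Lemma \ref{l2.3} and the Calder\'on reproducing formula over $\zz$, but the substance is the same.
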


In principle,
the proof of Theorem \ref{t9.4}
is analogous to that of Theorem \ref{t4.1};
We just need to modify the related proofs.
Among them, an attention is necessary to
prove the following counterpart of Lemma \ref{l4.2}.

\begin{lemma}\label{l9.2}
Let $\alpha_1, \alpha_2, \alpha_3 \in [0,\infty)$
and $w \in \dot{\mathcal W}^{\alpha_3}_{\alpha_1,\alpha_2}$.
Assume that
$K \in \Z_+$ and $L \in \Z_+$
satisfy {\rm \eqref{4.3}}, \eqref{4.4} and \eqref{4.5}.
Let
$\lambda:=\{\lambda_{jk}\}_{j\in\zz, \, k \in {\mathbb Z}^n}
\in \dot{b}^{w,\tau}_{\cl,\infty,a}(\rn)$
and $\{{\mathfrak M}_{jk}\}_{j\in\zz, \, k \in {\mathbb Z}^n}$
be a family of molecules. Then
$
f=
\sum_{j=-\infty}^\infty
\sum_{k \in {\mathbb Z}^n}\lambda_{jk}{\mathfrak M}_{jk}
$
converges in ${\mathcal S}_\fz'(\rn)$.
\end{lemma}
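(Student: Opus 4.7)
The plan is to mirror the proof of Lemma \ref{l4.1}, now summing over $j \in \Z$ rather than $j \in \zz_+$, by exploiting that test functions in $\cs_\infty(\rn)$ have vanishing moments of all orders. Fix $\varphi \in \cs_\infty(\rn)$ and split
\[
\langle f,\varphi \rangle = \sum_{j \in \zz_+}\sum_{k \in \zz^n}\lambda_{jk}\langle \mathfrak{M}_{jk},\varphi\rangle + \sum_{j<0}\sum_{k \in \zz^n}\lambda_{jk}\langle \mathfrak{M}_{jk},\varphi\rangle.
\]
The task is to prove absolute convergence of each piece together with the continuous dependence on $\varphi$ in the topology of $\cs_\infty(\rn)$.

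For $j\in\zz_+$, the argument of Lemma \ref{l4.1} transfers essentially verbatim: Lemma \ref{l2.4} applied with $\mathfrak{M}_{jk}$ playing the role of the finer-scale function carrying moments of order $L$ yields
\[
\bigl|\langle \mathfrak{M}_{jk},\varphi\rangle\bigr| \lesssim 2^{-j(L+1)}(1+2^{-j}|k|)^{-M}
\]
for arbitrary large $M$. Combined with $(\cl6)$, the consequence \eqref{eq:120223-3} of (H-W1) and (H-W2), and the pointwise bound
\[
|\lambda_{jk}|\,\|\chi_{Q_{jk}}w_j\|_{\cl(\rn)} \le |Q_{jk}|^{\tau}\|\lambda\|_{\dot b^{w,\tau}_{\cl,\infty,a}(\rn)}
\]
extracted from the norm, this reproduces the summable bound of Lemma \ref{l4.1} under \eqref{4.3}, \eqref{4.4} and \eqref{4.5}.

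For $j<0$, the vanishing of every moment $\int_{\rn}x^{\vec\beta}\varphi(x)\,dx=0$ becomes the key tool. Taylor-expanding $\mathfrak{M}_{jk}$ around $0$ to order $K'\le K-1$, the polynomial part is annihilated on integration against $\varphi$, leaving the remainder integral
\[
\langle \mathfrak{M}_{jk},\varphi\rangle = \int_{\rn}R_{K',jk}(x)\varphi(x)\,dx,
\]
whose pointwise estimate combines the decay condition $|\partial^{\vec\alpha}\mathfrak{M}_{jk}(x)|\le(1+2^j|x-x_{jk}|)^{-N}$ valid for $\|\vec\alpha\|_1\le K$ with the rapid decay of $\varphi$. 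A careful bookkeeping separating the regions $|x|\le |x_{jk}|/2$ (where $|\partial^{\vec\alpha}\mathfrak{M}_{jk}(x)|\lesssim(1+|k|)^{-N}$) and $|x|>|x_{jk}|/2$ (where the rapid decay of $\varphi$ dominates) produces a bound of the form
\[
\bigl|\langle\mathfrak{M}_{jk},\varphi\rangle\bigr|\lesssim 2^{j(K'+1)}(1+|k|)^{-M}p(\varphi)
\]
for a continuous seminorm $p$ on $\cs_\infty(\rn)$. Paired with the lower estimate $\|\chi_{Q_{jk}}w_j\|_{\cl(\rn)}\gtrsim 2^{j\alpha_2}(1+2^{-j}|k|)^{-\alpha_3-\delta}$, obtained from (H-W1), (H-W2), and a homogeneous analogue of $(\cl6)$ derived by decomposing the large cube $Q_{jk}$ into unit dyadic cubes and invoking the lattice property $(\cl4)$, the resulting series is summable as $j\to-\infty$ under \eqref{4.5}, which supplies the strict inequality $K+1>\alpha_2+n\tau$.

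The main obstacle is precisely the negative-$j$ regime: unlike the inhomogeneous setting, where the molecule's moment cancellation uniformly beats the test-function smoothness, in the homogeneous case the molecule's decay condition does not sharpen with $|\vec\alpha|$, so Lemma \ref{l2.4} cannot be directly applied with the roles reversed. The Taylor expansion argument circumvents this, but extracting the correct power $2^{j(K'+1)}$ requires a delicate separation of the support of $\varphi$ from that of $\mathfrak{M}_{jk}$ and careful exploitation of the fact that the molecule is merely assumed to be $C^K$ with derivatives of all orders bounded by the same decay profile. Once the two halves are combined, one obtains $|\langle f,\varphi\rangle|\lesssim p(\varphi)\|\lambda\|_{\dot b^{w,\tau}_{\cl,\infty,a}(\rn)}$ for a continuous seminorm $p$ on $\cs_\infty(\rn)$, yielding convergence of $f$ in $\cs'_\infty(\rn)$.
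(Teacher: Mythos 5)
Your proposal follows essentially the same route as the paper: split the sum at $j=0$, dispose of the nonnegative scales by the inhomogeneous argument of Lemma \ref{l4.1}, and for $j<0$ exploit that $\varphi\in\cs_\infty(\rn)$ has vanishing moments of every order to extract geometric decay as $j\to-\infty$, then sum using $(\cl6)$ and the conditions (H-W1), (H-W2). The only substantive difference lies in the key estimate for $j\le 0$: the paper invokes Lemma \ref{l2.4} and claims $\bigl|\int\mathfrak{M}_{jk}\varphi\bigr|\lesssim 2^{j(M+1)}(1+2^{-j}|k|)^{-N}$ with $M$ arbitrarily large, so that summability over $j\le 0$ is automatic, whereas your Taylor-expansion bookkeeping caps the gain at $2^{j(K'+1)}$ with $K'+1\le K$ (the molecule's finite smoothness) and correspondingly leans on \eqref{4.5} to close the negative-$j$ tail. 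Since the roles in Lemma \ref{l2.4} are reversed here (the molecule is the coarse, finitely smooth function and $\varphi$ the fine one supplying the moments), the exponent genuinely cannot exceed the molecule's smoothness, so your more conservative accounting is the defensible one; just be aware that producing even the factor $2^{j(K'+1)}$ from the Taylor remainder requires the derivative bounds of the molecule to scale like $\ell(Q)^{-\|\vec\alpha\|_1}$, a normalization the paper uses implicitly throughout (e.g.\ in \eqref{eq:120223-2}) though it is not written into Definition \ref{d9.6}.
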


\begin{proof}
Let $\varphi \in \cs_\fz(\rn)$ be a test function.
Lemma \ref{l4.2} shows
$
f_+:=
\sum_{j=1}^\infty
\sum_{k \in {\mathbb Z}^n}\lambda_{jk}{\mathfrak M}_{jk}
$
converges in ${\mathcal S}_\fz'(\rn)$.
So we need to prove
$
f_-:=
\sum_{j=-\infty}^0
\sum_{k \in {\mathbb Z}^n}\lambda_{jk}{\mathfrak M}_{jk}
$
converges in ${\mathcal S}_\fz'(\rn)$.

Let $M \gg 1$ be arbitrary.
From Lemma \ref{l2.4},
the definition of the molecules
and the fact that $\vz \in \cs_\infty(\rn)$,
it follows
that for all $j \le 0$ and $k\in\zz^n$,
\begin{align*}
\left|\int_{\rn}
{\mathfrak M}_{jk}(x)\varphi(x)\,dx\right|
\lesssim
2^{j(M+1)}(1+2^{-j}|k|)^{-N}.
\end{align*}
By $(\cl6)$, we conclude that
\begin{align*}
\left|\int_{\rn}
{\mathfrak M}_{jk}(x)\varphi(x)\,dx\right|
\lesssim
2^{j(M+1-\gamma)}(1+2^{-j}|k|)^{-N}(1+|k|)^{+\delta_0}
\|\chi_{Q_{jk}}\|_{\cl(\rn)}.
\end{align*}
Consequently, we see that
\begin{align*}
\left|\lambda_{jk}\int_{\rn}
{\mathfrak M}_{jk}(x)\varphi(x)\,dx\right|
\lesssim
2^{j(M+1-\gamma-\alpha_1)}
(1+|k|)^{-N+\alpha_3+\delta_0}
\|\lambda\|_{b^{w,\tau}_{\cl,\infty,a}(\rn)}.
\end{align*}
So by the assumption,
this inequality is summable over $j\le0$
and $k \in \Z^n$,
which completes the proof of Lemma \ref{l9.2}.
\end{proof}

The homogeneous version of Theorem \ref{t4.2},
which is the regular case of decompositions,
reads as below, whose proof is similar to that of Theorem \ref{t4.2}.
We omit the details.

\begin{theorem}\label{t14.2}
Let $K \in \zz_+$, $L=-1$,
$\alpha_1, \alpha_2, \alpha_3, \tau \in [0,\infty)$ and $q\in(0,\,\fz]$.
Suppose that $w\in\star-\dot{\mathcal W}^{\alpha_3}_{\alpha_1,\alpha_2}$.
Assume, in addition, that \eqref{3.35}, $\eqref{4.4}$,
$\eqref{4.21}$ and $\eqref{4.22}$ hold true, namely, $a\in (N_0+\alpha_3,\infty)$.
Then the pair
$(\dot{A}^{w,\tau}_{\cl,q,a}(\rn),\dot{a}^{w,\tau}_{\cl,q,a}(\rn))$
admits the atomic / molecular decompositions.
\end{theorem}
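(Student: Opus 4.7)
The plan is to follow the strategy used for Theorem \ref{t4.2} while working in the homogeneous setting, leveraging the proof of Theorem \ref{t9.4} as the template (just as Theorem \ref{t4.2} was a refinement of Theorem \ref{t4.1}). Concretely, the proof splits into three components: convergence of the molecular synthesis in $\cs'_\infty(\rn)$, the synthesis norm estimate, and the analysis/atom extraction. The passage from Theorem \ref{t9.4} to Theorem \ref{t14.2} trades the moment condition $L \ge 0$ on the molecules for the stronger weight hypothesis $w \in \star\text{-}\dot{\mathcal W}_{\alpha_1,\alpha_2}^{\alpha_3}$, which now yields $w_j(x) \lesssim 2^{\alpha_1(j-\nu)} w_\nu(x)$ for $\nu \ge j$ instead of the mere upper bound $2^{-\alpha_1(j-\nu)} w_\nu(x)$; this reversal is precisely what is encoded in Lemma \ref{l4.3}, whose statement and proof go through unchanged when indices range over $\Z$ rather than $\Z_+$.

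First I would establish an analogue of Lemma \ref{l9.2} valid for $(K,-1)$-molecules, showing that for any $\lambda \in \dot b^{w,\tau}_{\cl,\infty,a}(\rn)$ the partial sum $\sum_{j \ge 1}\sum_{k\in\Z^n} \lambda_{jk}\mathfrak{M}_{jk}$ converges in $\cs'(\rn)$ (via the $j \ge 1$ argument from the proof of Theorem \ref{t4.2}, using the condition \eqref{4.21}) while $\sum_{j \le 0}\sum_{k\in\Z^n} \lambda_{jk}\mathfrak{M}_{jk}$ converges in $\cs'_\infty(\rn)$. For the low-frequency part, test functions $\varphi\in\cs_\infty(\rn)$ have all moments vanishing, so Lemma \ref{l2.4} (with the role of the moment-free and moment-carrying factor swapped, that is, $\phi_\nu = \varphi$ and $\phi_j = \mathfrak{M}_{jk}$, and moment order taken arbitrarily large) yields $|\langle \mathfrak{M}_{jk},\varphi\rangle| \lesssim 2^{j(M+1)}(1+2^{-j}|k|)^{-N}$ for any $M$; combining this with the $\star$-weight estimate $(\ref{eq:120223-3})$ and $(\cl6)$ produces a bound of the form $2^{j(M+1+\alpha_1-\gamma-M_1+n\tau)}(1+|k|)^{-M_1+\alpha_3+\delta_0} \|\lambda\|_{\dot b^{w,\tau}_{\cl,\infty,a}(\rn)}$ for the $jk$-term, which is summable over $j \le 0$ and $k\in\Z^n$ once $M$ is chosen large (using $\alpha_1 > 0$ from \eqref{4.22}) and $M_1$ is chosen larger than $\alpha_3+\delta_0+n$.

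Second, the synthesis estimate is a straightforward adaptation of Lemma \ref{l4.2} with $L=-1$: the pointwise inequality $(\ref{eq:120223-2})$ continues to hold when the double sum over $\nu,l\in\Z_+$ is replaced by $\nu,l\in\Z$; one then applies Lemma \ref{l4.3} (rather than Lemma \ref{l2.3}) with $D_1 = 0$ and $D_2 = K+1$, whose hypotheses $D_1 + \alpha_1 > 0$ and $D_2 - \alpha_2 > n\tau$ reduce to exactly the conditions \eqref{4.22}. Third, the analysis step is identical to that in the proof of Theorem \ref{t4.1}: choose $\Psi, \psi \in C_{\rm c}^\infty(\rn)$ with $\supp \Psi, \supp \psi \subset \{\max|x_i|\le 1\}$ and $\sum_{j\in\Z}\psi_j * \psi_j = \delta_0$ in $\cs'_\infty(\rn)$, define $\lambda_{jk}$ and $\mathfrak{A}_{jk}$ analogously to \eqref{4.13} and \eqref{4.14} for $j\in\Z$ (omitting the separate $j=0$ definition since we are now in the homogeneous setting), and observe that the atoms $\mathfrak{A}_{jk}$ automatically satisfy the support and size conditions of Definition \ref{d9.5} with $L=-1$; moreover the Peetre-type estimate \eqref{4.17} carries over verbatim and delivers the bound $\|\lambda\|_{\dot a^{w,\tau}_{\cl,q,a}(\rn)} \lesssim \|f\|_{\dot A^{w,\tau}_{\cl,q,a}(\rn)}$.

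The main obstacle will be Step 1, i.e., verifying convergence in $\cs'_\infty(\rn)$ uniformly in the regime $j \to -\infty$ without any moment cancellation on the molecules. The delicate point is that the exponent in the summability over $j \le 0$ depends on the interplay of $\alpha_1, \gamma$, $n\tau$ and $\delta$, and condition \eqref{4.21} was calibrated for the $j \ge 1$ regime of Theorem \ref{t4.2}; one must check that with the $\star$-weight condition (yielding decay $2^{j\alpha_1}$ of $w_j(x)$ as $j\to-\infty$ rather than growth) the same conditions \eqref{4.21} and \eqref{4.22}, together with the freedom of choosing $M$ arbitrarily large in the Lemma \ref{l2.4} estimate (thanks to $\varphi\in\cs_\infty(\rn)$), suffice to ensure absolute convergence also for $j \to -\infty$. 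Once this bookkeeping is done cleanly, all remaining arguments are direct translations of the inhomogeneous proof.
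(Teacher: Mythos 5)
Your proposal is correct and follows exactly the route the paper intends: Theorem \ref{t14.2} is stated with its proof omitted as ``similar to that of Theorem \ref{t4.2}'', and your reconstruction combines the three modifications from Theorem \ref{t4.2} (the reversed weight estimate, Lemma \ref{l4.3} with $D_1=0$, $D_2=K+1$, and the unchanged analysis step) with the homogeneous convergence argument of Lemma \ref{l9.2}, which handles $j\le 0$ by exploiting the vanishing moments of $\varphi\in\cs_\fz(\rn)$. The one point you flag as delicate --- the rate of decay as $j\to-\fz$ obtained from Lemma \ref{l2.4} when the molecule supplies only $C^K$ smoothness --- is treated in precisely the same way in the paper's own Lemma \ref{l9.2}, so your write-up is consistent with the paper's argument.
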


\subsection{Boundedness of operators}

We first focus on the counterpart of Theorem \ref{f-m}. To this end,
for $\ell\in\mathbb{N}$ and $\alpha\in\rr$, let
$m\in C^{\ell}(\rr^n\backslash\{0\})$  satisfy that,
for all $|\sigma|\leq \ell$,
\begin{equation}\label{fm1n}
\sup_{R\in(0,\infty)}\left[R^{-n+2\alpha+2|\sigma|}\int_{R\leq|\xi|<2R}|
\partial_{\xi}^{\sigma}m(\xi)|^2\,d\xi\right]\leq A_{\sigma}<\infty.
\end{equation}
The \emph{Fourier multiplier} $T_m$ is defined by setting,
for all $f\in\mathcal{S}_\fz(\rr^n)$,
$\widehat{(T_mf)}:=m\,\widehat{f}$.

We remark that when $\alpha=0$, the condition \eqref{fm1n} is
just the classical H\"ormander condition
(see, for example, \cite[p.\,263]{st93}).
One typical example satisfying \eqref{fm1n} with $\alpha=0$ is the
kernels of Riesz transforms $R_j$ given by $\widehat{(R_jf)}(\xi):=
-i\frac{\xi_i}{|\xi|}\widehat{f}(\xi)$ for all $\xi\in\rn\setminus\{0\}$ and $j\in\{1,\cdots,n\}$.
When $\alpha\neq 0$, a typical example satisfying \eqref{fm1n} for
any $\ell\in\mathbb{N}$ is given by $m(\xi):=|\xi|^{-\alpha}$ for $\xi\in\rn\setminus\{0\}$;
another example is the symbol of a differential operator $\partial^{\sigma}$
of order $\alpha:=\sigma_1+\cdots+\sigma_n$ with $\sigma:=
(\sigma_1,\cdots,\sigma_n)\in\mathbb{Z}^n_+$.

It was proved in \cite{yyz12} that the Fourier multiplier $T_m$ is
bounded on some Besov-type and Triebel-Lizorkin-type spaces for suitable indices.

Let $m$ be as in \eqref{fm1n} and $K$ its inverse
Fourier transform.
To obtain the boundedness of $T_m$ on the spaces $\dot B^{w,\tau}_{\cl,q,a}(\rn)$ and
$\dot F^{w,\tau}_{\cl,q,a}(\rn)$, we need the following conclusion, which is
\cite[Lemma 3.1]{yyz12}.

\begin{lemma}\label{fm-l3.1n}
$K\in\cs'_\fz(\rn).$
\end{lemma}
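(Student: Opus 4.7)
The plan is to mimic the proof of Lemma \ref{fm-l3.1}, but replace the low-frequency tail (which in the inhomogeneous setting was controlled by the extra condition \eqref{fm2}) by an argument that exploits the vanishing moments of $\varphi\in\cs_\fz(\rn)$. Concretely, for $\varphi\in\cs_\fz(\rn)$ I would define, tentatively,
\[
\langle K,\varphi\rangle:=\int_{\rn} m(\xi)\wh\varphi(\xi)\,d\xi,
\]
and decompose the integral dyadically as $\int_\rn=\sum_{k\in\zz}\int_{A_k}$, where $A_k:=\{\xi\in\rn:2^k\le|\xi|<2^{k+1}\}$. Via the Cauchy--Schwarz inequality and \eqref{fm1n} with $\sigma=0$, each annular piece satisfies
\[
\int_{A_k}|m(\xi)|\,d\xi\le|A_k|^{1/2}\lf[\int_{A_k}|m(\xi)|^2\,d\xi\r]^{1/2}\lesssim 2^{kn/2}\cdot 2^{k(n/2-\alpha)}=2^{k(n-\alpha)}.
\]

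For the high-frequency half of the sum, $k\ge 0$, I would use the Schwartz decay of $\wh\varphi$: for any $M\in(0,\fz)$ one has $|\wh\varphi(\xi)|\lesssim (1+|\xi|)^{-M}\|(1+|\cdot|)^M\wh\varphi\|_{L^\fz(\rn)}$, and hence
\[
\sum_{k\ge 0}\int_{A_k}|m(\xi)\wh\varphi(\xi)|\,d\xi\lesssim \|(1+|\cdot|)^M\wh\varphi\|_{L^\fz(\rn)}\sum_{k\ge0}2^{k(n-\alpha-M)},
\]
which converges once $M>n-\alpha$. This is essentially the argument already used for ${\rm I}_1$ in the proof of Lemma \ref{fm-l3.1}. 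For the low-frequency half, $k<0$, the condition $\varphi\in\cs_\fz(\rn)$ (all moments of $\varphi$ vanish) is equivalent to $\partial^{\vec\alpha}\wh\varphi(0)=0$ for all multi-indices $\vec\alpha$. Taylor's theorem applied to $\wh\varphi\in\cs(\rn)$ around the origin then yields, for every $N\in\nn$,
\[
|\wh\varphi(\xi)|\lesssim_{N,\varphi} |\xi|^{N}\quad\text{whenever }|\xi|\le 1,
\]
with the implicit constant controlled by a finite seminorm of $\varphi$ in $\cs(\rn)$. Consequently,
\[
\sum_{k<0}\int_{A_k}|m(\xi)\wh\varphi(\xi)|\,d\xi\lesssim_{N,\varphi}\sum_{k<0}2^{kN}\cdot 2^{k(n-\alpha)}=\sum_{k<0}2^{k(N+n-\alpha)},
\]
which converges provided $N>\alpha-n$; it suffices to fix any such $N$, since the vanishing moment condition gives the estimate for all $N$ simultaneously.

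Combining the two halves, the defining integral converges absolutely and is bounded by a finite sum of seminorms of $\varphi$ in $\cs(\rn)$. Linearity is immediate, and continuity on $\cs_\fz(\rn)$ (with the topology inherited from $\cs(\rn)$) follows from the seminorm bound, so $K\in\cs'_\fz(\rn)$. The main obstacle, and the only place where the argument genuinely departs from the inhomogeneous Lemma \ref{fm-l3.1}, is the low-frequency summation $k\to-\infty$: without \eqref{fm2} at our disposal, convergence there is secured only by the infinite-order flatness of $\wh\varphi$ at the origin, which is precisely why we must work in $\cs_\fz(\rn)/\cp(\rn)$ rather than in $\cs'(\rn)$ for the homogeneous theory.
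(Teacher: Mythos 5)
Your proof is correct. The paper itself gives no argument for this lemma (it simply cites \cite[Lemma 3.1]{yyz12}), but your dyadic decomposition with Cauchy--Schwarz on annuli, Schwartz decay of $\wh\varphi$ for the high frequencies, and the infinite-order vanishing of $\wh\varphi$ at the origin for the low frequencies is exactly the natural homogeneous analogue of the paper's own proof of Lemma \ref{fm-l3.1}, and every step (including the seminorm control needed for continuity on $\cs_\fz(\rn)$ with the topology inherited from $\cs(\rn)$) checks out.
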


The next lemma comes from \cite[Lemma 4.1]{c}; see also \cite[Lemma 3.2]{yyz12}.

\begin{lemma}\label{fm-l3.2n}
Let $\psi$ be a Schwartz function on $\rr^n$ satisfy \eqref{1.2}.
Assume, in addition, that $m$ satisfies \eqref{fm1n}.
If $a\in(0,\fz)$ and $\ell>a+n/2$, then there exists a
positive constant $C$ such that, for all $j\in\zz$,
$$\int_{\rr^n}\left(1+{2^j|z|}\right)^a|(K\ast\psi_j)(z)|
\, dz\leq C 2^{-j\alpha}.$$
\end{lemma}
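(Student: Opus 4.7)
The plan is to reduce the statement to a fixed-scale estimate via dilation, then apply Plancherel together with the annular hypothesis \eqref{fm1n}. Concretely, since $\widehat{(K\ast\psi_j)}(\xi) = m(\xi)\widehat{\psi}(2^{-j}\xi)$, the change of variable $\xi=2^j\eta$ in the inverse Fourier transform yields
\[
(K\ast\psi_j)(x) = 2^{jn} g_j(2^j x), \quad\text{where}\quad g_j(y) := \bigl(m(2^j\cdot)\widehat{\psi}(\cdot)\bigr)^{\vee}(y).
\]
Setting $m_j(\xi):=m(2^j\xi)\widehat{\psi}(\xi)$, one has $\supp m_j\subset\{1/2\le|\xi|\le 2\}$, and substituting $y=2^j x$ reduces the claimed estimate to
\[
\int_{\rn}(1+|y|)^{a}|g_j(y)|\,dy\lesssim 2^{-j\alpha}.
\]

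The next step is to apply the Cauchy--Schwarz inequality with an auxiliary parameter $\mu\in(n/2,\infty)$ satisfying $a+\mu\le \ell$, which is possible precisely because of the hypothesis $\ell>a+n/2$:
\[
\int_{\rn}(1+|y|)^{a}|g_j(y)|\,dy
\le \Bigl[\int_{\rn}(1+|y|)^{-2\mu}\,dy\Bigr]^{1/2}
\Bigl[\int_{\rn}(1+|y|)^{2(a+\mu)}|g_j(y)|^{2}\,dy\Bigr]^{1/2}.
\]
The first factor is a harmless constant; for the second, using $(1+|y|)^{2(a+\mu)}\lesssim \sum_{|\sigma|\le \ell}|y^{\sigma}|^{2}$ and Plancherel converts it to $\sum_{|\sigma|\le\ell}\|\partial^{\sigma} m_j\|_{L^{2}(\rn)}^{2}$.

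The remaining task, which is the main point, is to show that $\|\partial^{\sigma} m_j\|_{L^2(\rn)}^2 \lesssim 2^{-2j\alpha}$ uniformly in $j\in\zz$. The Leibniz rule gives
\[
\partial^{\sigma}m_j(\xi)=\sum_{\sigma'+\sigma''=\sigma}\binom{\sigma}{\sigma'}\,2^{j\|\sigma'\|_{1}}(\partial^{\sigma'}m)(2^j\xi)\,\partial^{\sigma''}\widehat{\psi}(\xi);
\]
since $\partial^{\sigma''}\widehat{\psi}$ is bounded and supported in $\{1/2\le|\xi|\le 2\}$, the change of variable $\eta=2^j\xi$ yields
\[
\|\partial^{\sigma}m_j\|_{L^2}^{2}
\lesssim \sum_{\sigma'\le\sigma}2^{2j\|\sigma'\|_{1}}2^{-jn}\!\!\int_{2^{j-1}\le|\eta|\le 2^{j+1}}\!\!|\partial^{\sigma'}m(\eta)|^{2}\,d\eta,
\]
and the hypothesis \eqref{fm1n} applied with $R\sim 2^{j-1}$ (valid for every $j\in\zz$, both positive and negative, this being the crucial point distinguishing the homogeneous case from Lemma \ref{fm-l3.2}, where \eqref{fm2} had to handle the low frequencies separately) bounds the integral by a constant multiple of $2^{j(n-2\alpha-2\|\sigma'\|_{1})}$. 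Summing telescopes the $2^{j}$ factors and leaves the single factor $2^{-2j\alpha}$, as desired.

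The main obstacle, conceptually, is ensuring that the annular condition \eqref{fm1n} can indeed be invoked for arbitrarily small $R$, which is exactly what distinguishes the homogeneous setting; the inhomogeneous Lemma \ref{fm-l3.2} required a separate argument at $j=0$ based on \eqref{fm2}, whereas here the homogeneous annular bound plus the fact that $\widehat{\psi}$ vanishes in a neighborhood of the origin allow a single uniform argument.
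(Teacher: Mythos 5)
Your proof is correct and is essentially the argument the paper relies on: the paper gives no proof of Lemma \ref{fm-l3.2n} itself, deferring to \cite[Lemma 4.1]{c} and \cite[Lemma 3.2]{yyz12}, but your rescaling to $g_j=(m(2^j\cdot)\widehat{\psi})^{\vee}$ followed by Cauchy--Schwarz, Plancherel and the Leibniz rule is exactly the standard computation behind those citations, and it matches the method the paper carries out explicitly for the $j=0$ case of the inhomogeneous Lemma \ref{fm-l3.2}. In particular, invoking \eqref{fm1n} with $R\sim 2^{j-1}$ for every $j\in\zz$ is precisely the right way to exploit the homogeneous hypothesis, and your bookkeeping of the powers of $2^{j}$ telescopes correctly to $2^{-j\alpha}$.
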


Next we show that, via a suitable way, $T_m$ can also be defined
on the whole spaces ${\dot F}^{w,\tau}_{\cl,q,a}(\rn)$
and ${\dot B}^{w,\tau}_{\cl,q,a}(\rn)$.
Let $\vz$ be a Schwartz function on $\rr^n$ satisfy \eqref{1.2}.
Then there exists $\vz^\dagger\in\cs(\rn)$ satisfying \eqref{1.2}
such that
\begin{equation}\label{fm-3.1n}
\sum_{i\in\zz} \vz^\dagger_i*\vz_i=\dz_0
\end{equation}
in $\cs'_\fz(\rn)$.
For any $f\in{\dot F}^{w,\tau}_{\cl,q,a}(\rn)$ or ${\dot B}^{w,\tau}_{\cl,q,a}(\rn)$,
we define a linear functional $T_mf$ on $\cs_\fz(\rn)$
by setting, for all $\phi\in\cs_\fz(\rn)$,
\begin{equation}\label{fm-3.2n}
\langle T_mf,\phi\rangle:=\sum_{i\in\zz}f\ast\vz_i^\dagger\ast\vz_i\ast\phi\ast K(0)
\end{equation}
as long as the right-hand side converges.
In this sense, we say $T_mf\in\cs'_\fz(\rn)$.
The following result shows that $T_mf$ in \eqref{fm-3.2n}
is well defined.

\begin{lemma}\label{fm-l3.3n}
Let $\ell\in(n/2,\fz)$, $\az\in\rr$, $a\in (0,\fz)$,
$\alpha_1, \alpha_2, \alpha_3, \tau\in[0,\fz)$, $q\in(0,\,\fz]$,
$w \in \dot\cw_{\alpha_1,\alpha_2}^{\alpha_3}$ and $f\in{\dot F}^{w,\tau}_{\cl,q,a}(\rn)$
or ${\dot B}^{w,\tau}_{\cl,q,a}(\rn)$. Then
the series in \eqref{fm-3.2n} is convergent
and the sum in the right-hand side of \eqref{fm-3.2n} is independent of the choices of the pair
$(\vz^\dagger,\vz)$. Moreover, $T_mf\in \cs'_\fz(\rn)$.
\end{lemma}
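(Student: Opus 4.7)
The strategy is to transplant the proof of Lemma \ref{fm-l3.3} to the homogeneous setting, with the essential new difficulty being that the summation index $i$ now runs over all of $\zz$, not just $\zz_+$. For each $\phi \in \cs_\fz(\rn)$, I rewrite
\[
f \ast \vz_i^\dagger \ast \vz_i \ast \phi \ast K(0)
=\int_\rn \bigl(f \ast \vz_i^\dagger\bigr)(-z)\,\bigl((\vz_i \ast K)\ast \phi\bigr)(z)\,dz,
\]
and the plan is to show that the series $\sum_{i\in\zz}$ of such terms converges absolutely, with a bound of the form $\rho(\phi)\,\|f\|_{\dot A^{w,\tau}_{\cl,q,a}(\rn)}$ for some continuous seminorm $\rho$ on $\cs_\fz(\rn)$. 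Once absolute convergence is established, the three conclusions (well-definedness, independence of the pair $(\vz^\dagger,\vz)$, membership in $\cs'_\fz(\rn)$) all follow by standard manipulations.

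For the high-frequency tail $i \ge 0$, I proceed exactly as in Lemma \ref{fm-l3.3}: decompose $\rn$ into the dyadic cubes $\{Q_{ik}\}_{k \in \zz^n}$, use the pointwise estimate
\[
\int_{Q_{ik}}|\vz_i^\dagger \ast f(y-z)|\,dy
\lesssim 2^{i(n-\alpha_1)}(1+2^i|z|)^{\alpha_3}\,2^{-in\tau}\|f\|_{\dot A^{w,\tau}_{\cl,q,a}(\rn)}
\]
which follows from (H-W1), (H-W2) and the definition of the Peetre maximal function; and then absorb $(\vz_i\ast K)$ via Lemma \ref{fm-l3.2n} with weight $(1+2^i|z|)^a$, which contributes a factor $2^{-i\alpha}$. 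Choosing the large parameter $M$ (from the polynomial decay of $\phi$) sufficiently large, one obtains geometric decay $2^{-i\varepsilon}$ for some $\varepsilon>0$, exactly as in the inhomogeneous case.

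The new piece is the low-frequency tail $i<0$. Here the bound from Lemma \ref{fm-l3.2n} still holds, but the factor $2^{i(n-\alpha_1-n\tau)}$ above now grows as $i\to-\infty$. The remedy is to exploit the moment conditions of $\phi\in\cs_\fz(\rn)$: since $\hat\phi$ vanishes to infinite order at the origin, for any $M\in\nn$ and any multi-index $\vec\beta$ with $\|\vec\beta\|_1\le M-1$ one has $\int_\rn y^{\vec\beta}\phi(y)\,dy=0$. Applying Lemma \ref{l2.4} to the pair $\vz_i\ast K$ (treated as the ``low-frequency'' object, smooth and rapidly decaying by Lemma \ref{fm-l3.2n}) and $\phi$ (the ``high-frequency'' object, supplying $M$ vanishing moments), one gets
\[
\bigl|((\vz_i\ast K)\ast\phi)(z)\bigr|
\lesssim 2^{iM}\,2^{-i\alpha}\,(1+|z|)^{-R}
\]
for arbitrary $R$, with implicit constant depending on a Schwartz seminorm of $\phi$. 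Choosing $M$ larger than $|{-n+\alpha_1+n\tau-\alpha|+1}$ yields geometric decay in $|i|$ as $i\to-\infty$, and combining this with the high-frequency estimate establishes summability over $i\in\zz$.

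The independence of $(\vz^\dagger,\vz)$ is handled exactly as in the inhomogeneous proof: given another admissible pair $(\psi^\dagger,\psi)$, expand $\phi$ via the Calder\'on reproducing formula $\phi=\sum_{j\in\zz}\psi_j^\dagger\ast\psi_j\ast\phi$ in $\cs_\fz(\rn)$, substitute into $\sum_{i\in\zz} f\ast\vz_i^\dagger\ast\vz_i\ast\phi\ast K(0)$, and interchange the order of summation (justified by the absolute convergence just proved together with the fact that $\vz_i\ast\psi_j=0$ whenever $|i-j|\ge 2$). The main obstacle I anticipate is precisely the low-frequency analysis: making the application of Lemma \ref{l2.4} fully rigorous requires checking that $\vz_i\ast K$ satisfies the required pointwise size bound uniformly in $i<0$ with the correct $2^{-i\alpha}$ gain, which in turn rests on a careful inspection of the proof of Lemma \ref{fm-l3.2n} to see that the estimate there is two-sided and not merely for $j\in\nn$.
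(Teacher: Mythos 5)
Your overall strategy coincides with the paper's: split the sum over $i\in\zz$ into $i\ge 0$ (handled exactly as in the inhomogeneous Lemma \ref{fm-l3.3}) and $i<0$, where the divergent factor $2^{i(n-\az_1-n\tau)}$ is beaten by a gain $2^{iM}$, $M$ arbitrary, extracted from the infinitely many vanishing moments of $\phi\in\cs_\fz(\rn)$; the independence of the pair then follows from the Calder\'on reproducing formula and the support condition $\vz_i\ast\psi_j=0$ for $|i-j|\ge 2$. That is the right architecture.

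There is, however, a concrete gap in your low-frequency step, and you have slightly misdiagnosed it. You propose to apply Lemma \ref{l2.4} to the pair $(\vz_i\ast K,\ \phi)$, with $\vz_i\ast K$ playing the role of the smooth, pointwise-decaying function $\phi_j$ in that lemma. But Lemma \ref{l2.4} requires pointwise bounds on $\vz_i\ast K$ and its derivatives of the form $|\partial^{\vec\alpha}(\vz_i\ast K)(x)|\ls 2^{i(n+L)}(1+2^i|x|)^{-M}$, whereas the only control available on $K$ under the H\"ormander-type hypothesis \eqref{fm1n} is the \emph{weighted $L^1$} bound of Lemma \ref{fm-l3.2n}, namely $\int_{\rn}(1+2^i|z|)^a|(K\ast\vz_i)(z)|\,dz\ls 2^{-i\az}$. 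No pointwise estimate on $\vz_i\ast K$ follows from \eqref{fm1n}, so the issue is not (as you suggest at the end) whether the estimate of Lemma \ref{fm-l3.2n} extends to $i<0$ — it does, that is exactly the homogeneous statement — but that it is of the wrong \emph{type} to feed into Lemma \ref{l2.4}. The repair is the one the paper uses: keep $K$ under an integral sign throughout, and extract the moment gain from a convolution of two genuinely Schwartz objects. After substituting the Calder\'on decomposition $\phi=\sum_j\psi_j^\dagger\ast\psi_j\ast\phi$, each surviving term contains both $\vz_i^\dagger$ and $\psi_j^\dagger$ with $|i-j|\le1$; one pairs $\vz_i$ with $f$ (giving, via (H-W1), (H-W2) and the Peetre maximal function, the bound $\ls 2^{i(n-\az_1-n\tau)}(1+2^i|z|)^{\az_3}\|f\|$), applies the pointwise estimate $|\psi_i\ast\psi_i^\dagger\ast\phi(y)|\ls 2^{iM}2^{in}(1+2^i|y|)^{-n-M}$ — here both factors are Schwartz and $\phi$ supplies the moments, so Lemma \ref{l2.4} legitimately applies — and finally integrates $|\psi_i^\dagger\ast K|$ against the weight using Lemma \ref{fm-l3.2n}. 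Choosing $M$ large relative to $\az_1$, $n\tau$, $\az_3$ and $\az$ then gives geometric decay in $|i|$. With this regrouping your argument goes through; as written, the key pointwise bound you invoke is not justified.
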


\begin{proof}
By similarity, we only consider $f\in {\dot F}^{w,\tau}_{\cl,q,a}(\rn)$.
Let $(\psi^\dagger,\psi)$ be another pair of functions satisfying
\eqref{fm-3.1n}.
Since $\phi\in\cs_\fz(\rn)$, by the Calder\'on reproducing formula,
we know that
$$\phi=\sum_{j\in\zz}\psi_j^\dagger\ast\psi_j\ast\phi$$
in $\cs_\fz(\rn)$. Thus,
\begin{eqnarray*}
\sum_{i\in\zz}f\ast\vz_i^\dagger\ast\vz_i\ast\phi\ast K(0)
&&=\sum_{i\in\zz}f\ast\vz_i^\dagger\ast\vz_i\ast
\lf(\sum_{j\in\zz}\psi_j^\dagger\ast\psi_j\ast\phi\r)\ast K(0)\\
&&=\sum_{i\in\zz}\sum_{j=i-1}^{i+1}
f\ast\vz_i^\dagger\ast\vz_i\ast\psi_j^\dagger\ast\psi_j\ast\phi\ast K(0),
\end{eqnarray*}
where the last equality follows from the fact that
$\vz_i\ast\psi_j=0$ if $|i-j|\geq2$.

Similar to the argument in Lemma \ref{fm-l3.3},
we see that
\begin{eqnarray*}
&&\sum_{i=0}^\fz|f\ast\vz_i\ast\vz_i^\dagger\ast\psi_i\ast
\psi_i^\dagger\ast\phi\ast K(0)|
\ls\|f\|_{\dot F_{\cl,q,a}^{w,\tau}(\rn)},
\end{eqnarray*}
where $a$ is an arbitrary positive number.
When $i<0$, notice that
\begin{eqnarray*}
&&\int_\rn |\vz_i\ast f(y-z)||\vz_i(-y)|\,dy \\
&&\hs\ls \sum_{k\in\zz^n}
\frac{2^{in}}{(1+2^i|2^{-i}k|)^a}\int_{Q_{ik}} |\vz_i\ast f(y-z)|\,dy\\
&&\hs\ls \sum_{k\in\zz^n}
\frac{2^{in-i\az_1}(1+2^i|z|)^{\az_3}}{(1+2^i|2^{-i}k|)^{a-\az_3}}
\inf_{y\in Q_{ik}} \omega(y-z,2^{-i})\int_{Q_{ik}} |\vz_i\ast f(y-z)|\,dy\\
&&\hs\ls \sum_{k\in\zz^n}
\frac{2^{-i\az_1}(1+2^i|z|)^{\az_3}}{(1+2^i|2^{-i}k|)^{a-\az_3}}
\inf_{y\in Q_{ik}} \{\omega(y-z,2^{-i})|\vz_i^*f(y-z)|\}\\
&&\hs\ls 2^{in-i\az_1}(1+2^i|z|)^{\az_3}
2^{-in\tau}\|f\|_{\dot A^{w,\tau}_{\cl,q,a}(\rn)},
\end{eqnarray*}
which, together with the fact that, for $M$ sufficiently large,
$$|\psi_i\ast \phi(y-z)|\ls 2^{iM} \frac{2^{in}}{(1+2^i|y-z|)^{n+M}}$$
and Lemma \ref{fm-l3.2n}, further implies that
\begin{eqnarray*}
&&\sum_{i<0}|f\ast\vz_i\ast\vz_i^\dagger\ast\psi_i\ast
\psi_i^\dagger\ast\phi\ast K(0)|\\
&&\hs=\sum_{i<0}\int_\rn|f\ast\vz_i\ast\vz_i^\dagger(-z)\psi_i\ast\psi_i^\dagger\ast\phi\ast K(z)|\,dz\\
&&\hs\ls \sum_{i<0} 2^{in-i\az_1}
2^{-in\tau}\|f\|_{\dot F^{w,\tau}_{\cl,q,a}(\rn)}\int_\rn(1+2^i|z|)^{\az_3}
|\psi_i\ast\psi_i^\dagger\ast\phi\ast K(z)|\,dz\\
&&\hs\ls \sum_{i<0} 2^{in-i\az_1}2^{iM}
2^{-in\tau}\|f\|_{\dot F^{w,\tau}_{\cl,q,a}(\rn)}\int_{\rn}\int_\rn
\frac{2^{in}(1+2^i|z|)^{\az_3}}{(1+2^i|y-z|)^{n+M}}
|\psi_i^\dagger\ast K(y)|\,dy\,dz\\
&&\hs\ls \sum_{i<0} 2^{2in+iM-i\az_1}
\|f\|_{\dot F_{\cl,q,a}^{w,\tau}(\rn)}
\ls\|f\|_{\dot F_{\cl,q,a}^{w,\tau}(\rn)},
\end{eqnarray*}
where we chose $M>\az_1-2n$.

Similar to the previous arguments, we see that
$$\lf|\sum_{i\in\zz}\sum_{j=i-1}^{i+1}
f\ast\vz_i^\dagger\ast\vz_i\ast\psi_j^\dagger\ast\psi_j\ast\phi\ast K(0)\r|
\ls\|f\|_{\dot F_{\cl,q,a}^{w,\tau}(\rn)}.$$
Thus, $T_mf$ in \eqref{fm-3.2n} is independent of the choices of the pair $(\vz^\dagger,\vz)$.
Moreover, the previous argument also implies that $T_mf\in \cs'_\fz(\rn)$,
which completes the proof of Lemma \ref{fm-l3.3n}.
\end{proof}

Then, by Lemma \ref{fm-l3.2n},
we immediately have the following lemma and we omit the details here.

\begin{lemma}\label{fm-l3.4n}
Let $\alpha\in\rr$, $a\in(0,\fz)$, $\ell\in\nn$ and
$\vz,\,\psi\in\cs_\fz(\rn)$ satisfy \eqref{1.2}.
Assume that $m$
satisfies \eqref{fm1n} and $f\in \cs'_\fz(\rn)$ such that $T_mf\in \cs'_\fz(\rn)$.
If $\ell>a+n/2$,
then there exists a positive constants $C$ such that,
for all $x,\,y\in\rr^n$ and $j\in\zz$,
$$|(T_mf\ast\psi_j)(y)|\leq C 2^{-j\alpha}\left(1+{2^j|x-y|}\right)^a(\vz_j^*f)_a(x).$$
\end{lemma}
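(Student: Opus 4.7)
The plan is to parallel the proof of Lemma \ref{fm-l3.4} in the homogeneous setting, replacing the inhomogeneous Calder\'on reproducing formula \eqref{fm-3.1} by its homogeneous counterpart \eqref{fm-3.1n} and invoking Lemma \ref{fm-l3.2n} throughout (which holds uniformly for all $j\in\zz$, no longer distinguishing $j=0$ as Lemma \ref{fm-l3.2} does). First, I would set $\widetilde\psi_j:=K\ast\psi_j$ with $K$ the kernel of $T_m$, so that $T_mf\ast\psi_j=f\ast\widetilde\psi_j$, and apply Lemma \ref{fm-l3.2n} to secure the weighted kernel bound
$$\int_{\rn}(1+2^j|z|)^a\,|\widetilde\psi_j(z)|\,dz\le C\,2^{-j\az},\qquad j\in\zz.$$

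To make rigorous sense of the coupling $f\ast\widetilde\psi_j$ for $f\in\cs'_\fz(\rn)$, I would insert $\sum_{i\in\zz}\vz_i^\dagger\ast\vz_i=\delta$ from \eqref{fm-3.1n} to obtain
$$T_mf\ast\psi_j(y)=\sum_{i\in\zz}(\widetilde\psi_j\ast\vz_i^\dagger)\ast(\vz_i\ast f)(y).$$
The frequency-support condition in \eqref{1.2}, inherited by $\vz^\dagger$ from its reproducing construction, forces $\widetilde\psi_j\ast\vz_i^\dagger\equiv 0$ unless $|i-j|\le 2$, collapsing the sum to finitely many terms. For each surviving index $i$, I would control $|\vz_i\ast f(y-z)|$ by the Peetre-type inequality
$$|\vz_i\ast f(y-z)|\le C\,(1+2^j|x-y|)^a(1+2^j|z|)^a(\vz_i^*f)_a(x),$$
using $(1+|u+v|)^a\le(1+|u|)^a(1+|v|)^a$ together with $2^i\sim 2^j$, and then integrate against $|\widetilde\psi_j\ast\vz_i^\dagger(z)|$. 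Young's inequality combined with the Schwartz decay of $\vz^\dagger$ transfers the weighted $L^1$ control from $\widetilde\psi_j$ to $\widetilde\psi_j\ast\vz_i^\dagger$, contributing a factor $\lesssim 2^{-j\az}$. Finally, the equivalence $(\vz_i^*f)_a(x)\sim(\vz_j^*f)_a(x)$ for $|i-j|\le 2$, a consequence of the same reproducing argument that underlies \eqref{3.13} in the proof of Theorem \ref{t3.1}, converts the estimate into the announced form.

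The main obstacle is the passage from $(\vz_i^*f)_a$ to $(\vz_j^*f)_a$ across nearby dyadic scales: although classical, this step rests on Lemma \ref{l2.4} (applied to the pair $\vz_i$, $\vz_j$ with the appropriate moment and frequency configuration) and cannot be skipped, since it is precisely what produces the bound in terms of the single Peetre maximal function at scale $j$ demanded by the statement. A secondary point of care is verifying the uniform weighted $L^1$ control on $\widetilde\psi_j\ast\vz_i^\dagger$ across the finite range $|i-j|\le 2$, but this follows routinely from Young's inequality, the rapid decay of $\vz^\dagger\in\cs_\fz(\rn)$, and the polynomial growth of the weight $(1+2^j|\cdot|)^a$, so the bulk of the estimate is reduced to the already established Lemma \ref{fm-l3.2n}.
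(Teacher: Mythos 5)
The paper gives no argument here at all (it declares the lemma ``immediate'' from Lemma \ref{fm-l3.2n}), so your decomposition
$T_mf\ast\psi_j=\sum_{i}(K\ast\psi_j\ast\vz_i^\dagger)\ast(\vz_i\ast f)$, the support reduction to $|i-j|\le 2$, the Peetre inequality for $\vz_i\ast f$, and the transfer of the weighted $L^1$ bound to $K\ast\psi_j\ast\vz_i^\dagger$ via Young's inequality are all sound and are surely what the authors intended. Up to that point you have correctly established
$$|T_mf\ast\psi_j(y)|\ls 2^{-j\az}\,(1+2^j|x-y|)^a\sum_{|i-j|\le 2}(\vz_i^*f)_a(x).$$

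The gap is your final step. The claimed pointwise equivalence $(\vz_i^*f)_a(x)\sim(\vz_j^*f)_a(x)$ for $|i-j|\le 2$ is false, and it is not a consequence of the argument behind \eqref{3.13}: that argument only yields domination of $(\psi_j^*f)_a$ by a \emph{geometrically weighted sum over all scales} $\sum_\nu 2^{-|\nu-j|(L+1)+\cdots}(\vz_\nu^*f)_a$, never by the single neighbouring scale. Indeed no such two-sided comparison can hold, because $\vz_i\ast f$ and $\vz_j\ast f$ see different frequency bands of $f$: condition \eqref{1.2} only forces $\widehat\vz$ to be nonzero on $\{3/5\le|\xi|\le 5/3\}$, so one may take $\widehat f$ supported where $\widehat{\vz_j}(\cdot)=\widehat{\vz}(2^{-j}\cdot)$ vanishes but $\widehat{\vz_{j+1}}$ does not; then $(\vz_j^*f)_a\equiv 0$ while $(\vz_{j+1}^*f)_a\not\equiv 0$. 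The same example (with $m\equiv 1$, which satisfies \eqref{fm1n} with $\az=0$, and $\widehat\psi$ nonvanishing on all of $\{1/2<|\xi|<2\}$) shows that the single-scale inequality printed in the lemma cannot be proved for two unrelated $\psi,\vz$ satisfying only \eqref{1.2}; the usable and provable statement is the one you actually reach, with the finite sum $\sum_{|i-j|\le 2}(\vz_i^*f)_a(x)$ on the right-hand side, and that version suffices for the application in Theorem \ref{f-mn} since the extra neighbouring scales are absorbed by Lemma \ref{l2.3}. So you should stop at the finite-sum estimate (and note why it suffices downstream) rather than assert the adjacent-scale equivalence.
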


\begin{theorem}\label{f-mn}
Let $\az\in\rr$, $a\in (0,\fz)$,
$\alpha_1, \alpha_2, \alpha_3, \tau\in[0,\fz)$, $q\in(0,\,\fz]$,
$w \in \dot\cw_{\alpha_1,\alpha_2}^{\alpha_3}$ and $\wz w(x,2^{-j})=2^{j\az} w(x,2^{-j})$
for all $x\in\rn$ and $j\in\zz$.
Suppose that $m$ satisfies \eqref{fm1n} with $\ell\in\mathbb{N}$ and $\ell>a+n/2$,
then there exists a positive constant $C_1$ such that, for all
$ f\in {\dot F}^{w,\tau}_{\cl,q,a}(\rr^n)$,
$\|T_mf\|_{{\dot F}^{\wz w,\tau}_{\cl, q,a}(\rr^n)}\leq
C_1\|f\|_{{\dot F}^{w,\tau}_{\cl, q,a}(\rr^n)}$
and a positive constant $C_2$ such that, for all
$f\in {\dot B}^{\wz w,\tau}_{\cl, q,a}(\rr^n)$,
$\|T_mf\|_{{\dot B}^{\wz w,\tau}_{\cl, q,a}(\rr^n)}\leq
C_2\|f\|_{{\dot B}^{w,\tau}_{\cl, q,a}(\rr^n)}.$
Similar assertions hold true
for $\dot\ce^{w,\tau}_{\cl, q,a}(\rr^n)$
and $\dot\cn^{w,\tau}_{\cl, q,a}(\rr^n)$.
\end{theorem}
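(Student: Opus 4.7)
The plan is to mirror the strategy used for the inhomogeneous analogue Theorem~\ref{f-m}, replacing every invocation of the inhomogeneous lemmas by their homogeneous counterparts already stated in this subsection. First I would verify that $T_m f$ makes sense as an element of $\cs'_\fz(\rn)$: by Lemmas \ref{fm-l3.1n} and \ref{fm-l3.3n}, whenever $f$ belongs to $\dot F^{w,\tau}_{\cl,q,a}(\rn)$ or $\dot B^{w,\tau}_{\cl,q,a}(\rn)$, the pairing in \eqref{fm-3.2n} converges and defines a distribution modulo polynomials independently of the chosen Calder\'on reproducing pair $(\vz^\dagger,\vz)$. Since the argument in Lemma \ref{fm-l3.3n} only uses the size of $(w_i)_{i\in\zz}$ and the $\dot B$- and $\dot F$-quasi-norms, the same reasoning applies verbatim to $\dot \cn^{w,\tau}_{\cl,q,a}(\rn)$ and $\dot \ce^{w,\tau}_{\cl,q,a}(\rn)$ via the embedding $\dot A^{w,\tau}_{\cl,q,a}(\rn)\hookrightarrow \dot \cn^{w,\tau}_{\cl,\fz,a}(\rn)$ in Lemma \ref{l9.1}.

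Next I would apply Lemma \ref{fm-l3.4n} directly. For any $x\in\rn$, $j\in\zz$, the inequality
\[
|(T_m f\ast\psi_j)(x+z)|\le C\,2^{-j\az}(1+2^j|z|)^a(\vz_j^*f)_a(x)
\]
holds for all $z\in\rn$; dividing by $(1+2^j|z|)^a$ and taking the supremum in $z$ yields the pointwise key estimate
\[
2^{j\az}(\psi_j^*(T_m f))_a(x)\le C\,(\vz_j^*f)_a(x)\qquad(x\in\rn,\ j\in\zz).
\]
Multiplying by $w_j(x)$ and recalling $\wz w_j=2^{j\az}w_j$ turns this into
\[
\wz w_j(x)(\psi_j^*(T_m f))_a(x)\le C\,w_j(x)(\vz_j^*f)_a(x).
\]

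With this pointwise domination at hand, the boundedness on each of the four scales follows from $(\cl4)$ together with Theorem \ref{t9.2} applied to the analysing function $\psi$ (which satisfies \eqref{1.2} and hence the hypothesis of Theorem \ref{t9.2}). For instance, in the Triebel--Lizorkin case one computes
\begin{align*}
\|T_m f\|_{\dot F^{\wz w,\tau}_{\cl,q,a}(\rn)}
&\sim \bigl\|\{(\psi_j^*(T_m f))_a\}_{j\in\zz}\bigr\|_{\cl^{\wz w}_\tau(\ell^q(\rn,\zz))}\\
&\lesssim \bigl\|\{(\vz_j^*f)_a\}_{j\in\zz}\bigr\|_{\cl^{w}_\tau(\ell^q(\rn,\zz))}
=\|f\|_{\dot F^{w,\tau}_{\cl,q,a}(\rn)},
\end{align*}
and analogously for $\dot B^{w,\tau}_{\cl,q,a}$, $\dot \cn^{w,\tau}_{\cl,q,a}$, and $\dot \ce^{w,\tau}_{\cl,q,a}$, using the corresponding lines of Theorem \ref{t9.2}.

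The only genuine subtlety, and what I would view as the main obstacle, is the well-definedness issue in the first paragraph: unlike the inhomogeneous situation where the low-frequency piece $\Phi\ast f$ provides an easy control, here one must sum Littlewood--Paley pieces indexed by $j\in\zz$, so the low-frequency tail $j\to -\fz$ has to be handled using the decay coming from $w\in\dot\cw_{\az_1,\az_2}^{\az_3}$ and the moment properties of Schwartz functions on $\cs_\fz(\rn)$. This is exactly what Lemma \ref{fm-l3.3n} accomplishes, so once one has carefully checked that its hypotheses (and the choice of $M$ large in the estimate there) are met uniformly for $f$ running through each of the four scales, the rest of the proof reduces to the short computation displayed above.
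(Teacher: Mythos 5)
Your proposal is correct and follows essentially the same route as the paper: the core of the argument is the pointwise estimate $2^{j\az}(\psi_j^*(T_mf))_a\lesssim(\vz_j^*f)_a$ from Lemma \ref{fm-l3.4n}, combined with the well-definedness of $T_mf$ from Lemma \ref{fm-l3.3n}, after which the boundedness on each scale follows from the lattice property of the quasi-norms. Your extra care about the characterization via $\psi$ (Theorem \ref{t9.2}) and about the low-frequency tail is sound but not a departure from the paper's argument.
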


\begin{proof}
By Lemma \ref{fm-l3.4n}, we see that,
if $\ell>a+n/2$, then for all $j\in\zz$ and $x\in\rn$,
$$2^{j\alpha}\left(\psi_j^{\ast}(T_mf)\right)_{a}(x)\lesssim
(\vz_j^*f)_a(x).$$
Then by the definitions of ${\dot F}^{w,\tau}_{\cl,q,a}(\rn)$
and ${\dot B}^{w,\tau}_{\cl,q,a}(\rn)$, we immediately conclude the desired
conclusions, which completes the proof of Theorem \ref{f-mn}.
\end{proof}

The following is an analogy to Theorem \ref{t3.2},
which can be proven similarly.
We omit the details.

\begin{theorem}\label{t9.5}
Let $s \in [0,\infty)$, $a>\alpha_3+N_0$,
$\az_1, \az_2, \az_3, \tau\in[0,\fz)$, $q\in(0,\fz]$ and
$w \in \dot{\mathcal W}^{\alpha_3}_{\alpha_1,\alpha_2}$.
Set $w^*(x,2^{-j}):= 2^{js}w_j(x)$
for all $x \in \rn$ and $j \in\zz$.
Then the lift operator $(-\Delta)^{s/2}$
is bounded from $\dot{A}^{w,\tau}_{\cl,q,a}(\rn)$
to $\dot{A}^{w^*,\tau}_{\cl,q,a}(\rn)$.
\end{theorem}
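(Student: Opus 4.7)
The plan is to mimic the strategy of Theorem \ref{t3.2} and its supporting Lemma \ref{l3.2}, adapted to the homogeneous setting where there is no need to single out a low-frequency piece. The key is to establish a single pointwise estimate
\begin{equation*}
(\varphi_j^*((-\Delta)^{s/2}f))_a(x) \lesssim 2^{js}(\varphi_j^*f)_a(x), \quad j \in \zz, \ x \in \rn,
\end{equation*}
valid uniformly in $j\in\zz$, and then invoke Theorem \ref{t9.2} together with Definition \ref{d9.4} to conclude boundedness for each of the four scales simultaneously.

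First I would choose $\varphi\in\cs_\infty(\rn)$ satisfying \eqref{1.2} and exploit the frequency-support property that $\widehat{\varphi_{j-1}}+\widehat{\varphi_j}+\widehat{\varphi_{j+1}}\equiv 1$ on $\supp\widehat{\varphi_j}$. This gives, for every $f\in\cs'_\infty(\rn)$ and $j\in\zz$,
\begin{equation*}
\varphi_j*((-\Delta)^{s/2}f) = \bigl[(-\Delta)^{s/2}(\varphi_{j-1}+\varphi_j+\varphi_{j+1})\bigr]*\varphi_j*f.
\end{equation*}
Second, I would verify the kernel estimate
\begin{equation*}
\bigl|(-\Delta)^{s/2}(\varphi_{j-1}+\varphi_j+\varphi_{j+1})(y)\bigr|
\lesssim
\frac{2^{j(s+n)}}{(1+2^j|y|)^{a+n+1}}, \quad y\in\rn, \ j\in\zz,
\end{equation*}
which follows by writing the left-hand side as the inverse Fourier transform of $|\xi|^s[\widehat\varphi(2^{-j+1}\xi)+\widehat\varphi(2^{-j}\xi)+\widehat\varphi(2^{-j-1}\xi)]$, observing that this symbol is smooth, supported in the annulus $\{|\xi|\sim 2^j\}$, and satisfies $|\partial^{\vec\alpha}\cdot|\lesssim 2^{(s-\|\vec\alpha\|_1)j}(1+2^{-j}|\xi|)^{-n-1}$ for all multiindices $\vec\alpha$. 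Integrating against $\varphi_j*f$ and using the Peetre inequality $(1+2^j|y+z|)^a\le(1+2^j|y|)^a(1+2^j|z|)^a$ yields the desired pointwise bound.

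Once the pointwise estimate is in hand, the conclusion follows routinely: multiplying by $2^{js}$ and applying $w_j$ turns it into a bound on the relevant mixed-norm functional underlying $\dot A^{w^*,\tau}_{\cl,q,a}(\rn)$ in terms of the one underlying $\dot A^{w,\tau}_{\cl,q,a}(\rn)$, using only $(\cl4)$ and the monotonicity of the $\ell^q$ and $\cl^w_\tau$ (respectively $\cn\cl^w_\tau,\,\ce\cl^w_\tau$) quasi-norms. Because the reproducing identity is exact (no remainder term as in Lemma \ref{l3.2}), no Calder\'on–type decomposition is needed and there are no excluded scales $j=0,1$.

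The main technical obstacle is making the kernel bound entirely scale-invariant so that it holds uniformly over all $j\in\zz$, including negative $j$ where $(-\Delta)^{s/2}$ can behave badly on functions with low-frequency content. This is handled precisely by the fact that $\varphi$ (and hence each $\varphi_{j+\ell}$, $\ell\in\{-1,0,1\}$) is in $\cs_\infty(\rn)$, so the symbol $|\xi|^s$ is smooth on $\supp(\widehat\varphi_{j-1}+\widehat\varphi_j+\widehat\varphi_{j+1})$ for every $j\in\zz$, and the scaling of this symbol gives the factor $2^{js}$ cleanly. This is also the reason the homogeneous statement does not require the restriction $s \in [0,\infty)$ in an essential way, though we adopt it to keep $w^*$ in an admissible weight class (cf.\ Example \ref{e2.1}(iii)).
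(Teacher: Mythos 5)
Your proposal is correct in substance and is exactly the route the paper intends: the paper gives no proof of Theorem \ref{t9.5}, saying only that it ``can be proven similarly'' to Theorem \ref{t3.2}, and your argument is precisely the homogeneous transplant of Lemma \ref{l3.2} --- the three-term reproducing identity on $\supp\widehat{\vz_j}$, the scale-invariant kernel bound for $(-\Delta)^{s/2}(\vz_{j-1}+\vz_j+\vz_{j+1})$ (which is now uniform over all $j\in\zz$ because $|\xi|^s$ is smooth on the annulus), and the Peetre inequality. One point you should not gloss over: your pointwise estimate $(\vz_j^*((-\Delta)^{s/2}f))_a\ls 2^{js}(\vz_j^*f)_a$ proves boundedness into the space with weight $2^{-js}w_j$ (as in Theorem \ref{t3.2}, where $w^{(s)}_j=2^{-js}w_j$), whereas Theorem \ref{t9.5} literally defines $w^*_j=2^{js}w_j$; with that normalization your final step produces an uncontrolled factor $2^{2js}$. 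The statement's sign appears to be a typo relative to the inhomogeneous model, and you should say explicitly that you are proving the version with $w^*_j=2^{-js}w_j$, which is what your argument actually yields.
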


We consider the class $\dot{S}^0_{1,\mu}(\rn)$ with $\mu\in [0,1)$.
Recall that a function $a$ is said to belong
to a \emph{class $\dot{S}^m_{1,\mu}(\rn)$}
of $C^\infty(\R^n_x\times\R^n_\xi)$-functions if
$$\sup_{x,\xi \in \rn}
|\xi|^{-m-\|{\vec{\alpha}}\|_1-\mu\|{\vec{\beta}}\|_1}
|\partial^{\vec{\beta}}_x\partial^{\vec{\alpha}}_\xi a(x,\xi)|
\lesssim_{\vec{\alpha},\vec{\beta}}1$$
for all multiindices ${\vec{\alpha}}$ and ${\vec{\beta}}$.
One defines
$$
a(X,D)(f)(x):= \int_\rn a(x,\xi)\hat{f}(\xi)
e^{ix\cdot\xi}\,d\xi
$$
for all $f \in \cs_\fz(\rn)$ and $x\in\rn$.
Theorem \ref{t5.1} has a following counterpart, whose proof
is similar and omitted.

\begin{theorem}\label{t15.1}
Let a weight $w \in \dot{\mathcal W}^{\alpha_3}_{\alpha_1,\alpha_2}$
with $\alpha_1, \alpha_2, \alpha_3 \in [0,\infty)$
and a quasi-normed function space $\cl(\rn)$
satisfy $(\cl1)$ through $(\cl6)$.
Let $\mu\in[0,1)$,
$\tau\in(0,\fz)$ and $q\in (0,\infty]$.
Assume, in addition, that {\rm \eqref{3.35}} holds true, that is,
$a\in (N_0+\alpha_3,\infty)$,
where $N_0$ is as in $(\cl6)$.
Then the pseudo-differential operators
with symbol $\dot{S}_{1,\mu}^0(\rn)$
are bounded on $\dot{A}^{w,\tau}_{\cl,q,a}(\rn)$.
\end{theorem}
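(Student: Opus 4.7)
My plan is to mirror the inhomogeneous argument for Theorem~\ref{t5.1}, substituting each ingredient by its $\dot A$-counterpart prepared in Section~\ref{s9}. First, for $f\in\dot A^{w,\tau}_{\cl,q,a}(\rn)$, the homogeneous Calder\'on reproducing formula $f=\sum_{j\in\zz}\vz_j\ast f$ in $\cs'_\fz(\rn)$ yields, at least formally,
\[
a(X,D)f=\sum_{j\in\zz}a_j(X,D)f,\qquad a_j(x,\xi):=a(x,\xi)\wh\vz(2^{-j}\xi).
\]
Each $a_j$ is smooth on all of $\rn$ (since $\wh\vz$ vanishes near the origin), lies in the inhomogeneous class $S^0_{1,\mu}(\rn)$, and is supported in the dyadic annulus $\{2^{j-2}\le|\xi|\le 2^{j+2}\}$; this side-steps the singularity of $a\in\dot S^0_{1,\mu}(\rn)$ at the origin entirely. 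Convergence of this series in $\cs'_\fz(\rn)$, independent of the choice of Calder\'on resolution, would be justified along the lines of Lemma~\ref{fm-l3.3n}.

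The crux is then a homogeneous analogue of Lemma~\ref{l5.3}: for every $L\in\N$ there exists a positive constant $C(L)$ such that, for all $j,k\in\zz$, $x\in\rn$, and $f\in\dot A^{w,\tau}_{\cl,q,a}(\rn)$,
\[
(\vz_k\ast[a_j(X,D)f])^*_a(x)\le C(L)\,2^{-|k-j|L}\sum_{|\ell-j|\le 3}(\vz_\ell^* f)_a(x).
\]
Its proof repeats the integration-by-parts argument leading to \eqref{5.5} and \eqref{5.11}, with a scale-adapted auxiliary function $\tau_k:=(4^{-k}\Delta)^{-L}\vz_k$; because $a_j$ is supported in a dyadic shell of scale $2^j$ (and not merely $2^j\ge 1$), the argument is indifferent to the sign of $j$. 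Summing in $j\in\zz$ is then carried out via a homogeneous version of Lemma~\ref{l2.3}, obtained from the inhomogeneous statement by extending the summation range to $\zz$ and replacing $\cw^{\alpha_3}_{\alpha_1,\alpha_2}$ by $\dot\cw^{\alpha_3}_{\alpha_1,\alpha_2}$; the assumption $\tau>0$ ensures the tails $\sum_{j<j_P}2^{-|j-k|L}$ stay summable uniformly in $P\in\cq(\rn)$ once $L>a+\alpha_1+\alpha_2+n\tau$.

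Finally, mimicking the splitting of Theorem~\ref{t5.1}, I would write $a(X,D)=A(X,D)+B(X,D)$ where $A(X,D)\in\dot S^{-2M_0}_{1,\mu}(\rn)$, to be handled directly by the sharpened single-shell bound analogous to \eqref{5.11} with an extra factor of $2^{-2kM_0}$, and $B(X,D)=(-\Delta)^{M_0}\circ b(X,D)$ with $b(X,D)\in\dot S^{-2M_0}_{1,\mu}(\rn)$. For $B(X,D)$, the homogeneous atomic decomposition, Theorem~\ref{t9.4}, plays the role of Theorem~\ref{t4.1}: atoms for $\dot A^{w,\tau}_{\cl,q,a}(\rn)$ are mapped by $b(X,D)$ into functions satisfying the decay of $(K,-1)$-molecules in the sense of Definition~\ref{d9.6}, and $(-\Delta)^{M_0}$ promotes them to $(K,2M_0-1)$-molecules. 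A second application of Theorem~\ref{t9.4} with $L=2M_0-1$ sufficiently large closes the estimate. The main obstacle is the interplay of the homogeneous framework with the splitting lemma analogous to Lemma~\ref{l5.1}: in the homogeneous setting the operator $((-\Delta)^{M_0})^{-1}$ is not literally a pseudo-differential operator on $\cs'(\rn)$, but it makes sense on $\cs'_\fz(\rn)$ and preserves the class $\dot S^{-2M_0}_{1,\mu}(\rn)$, which is precisely what the argument requires. A secondary subtlety is that the homogeneous analogue of Lemma~\ref{l2.3} needs $\tau>0$ in order to absorb the tail from the scales $j<j_P$; hence the hypothesis $\tau\in(0,\fz)$ in the statement, exactly as in Theorem~\ref{t5.1}.
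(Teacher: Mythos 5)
Your proposal is correct and takes essentially the same route the paper intends: the paper omits the proof of Theorem \ref{t15.1} entirely, stating only that it is ``similar'' to that of Theorem \ref{t5.1}, and your sketch is precisely that adaptation (dyadic-shell symbols $a_j$ supported away from the origin, a homogeneous analogue of Lemma \ref{l5.3}, summation via the homogeneous version of Lemma \ref{l2.3}, and the splitting handled through the homogeneous atomic decomposition, Theorem \ref{t9.4}). Your remarks on the two genuine subtleties --- the meaning of the inverse of the Laplacian power on $\cs'_\fz(\rn)$ and the role of $\tau>0$ in absorbing the scales $j<j_P$ --- correctly identify where the transplantation requires care.
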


\subsection{Function spaces $\dot{A}^{w,\tau}_{\cl,q,a}(\rn)$ for $\tau$ large}

Now we have the following counterpart
for Theorem \ref{t6.2}
\begin{theorem}\label{t16.2}
Let $\oz\in\dot{\cw}_{\az_1,\az_2}^{\az_3}$
with $\az_1,\az_2,\az_3 \ge 0$.
Define
a new index $\wz \tau$ by
\begin{equation*}
\wz \tau:=
\limsup_{j \to \infty}
\left(\sup_{P\in\cq_j(\rn)}
\frac{1}{nj}\log_2\frac{1}{\|\chi_P\|_{\cl(\rn)}}
\right)
\end{equation*}
and a new weight $\wz \oz$ by
\begin{equation*}
\wz \oz(x,2^{-j}) :=
2^{jn(\tau-\wz \tau)}\oz(x,2^{-j}),
\quad
x \in \rn, \, j \in \Z.
\end{equation*}

Assume that $\tau$ and $\wz \tau$ satisfy
\begin{equation*}
\tau>\wz\tau \ge 0.
\end{equation*}
Then

{\rm (i)}
$\wz w\in
\dot{\cw}_{(\az_1-n(\tau-\wz \tau))_+,
(\az_2+n(\tau-\wz \tau))_+}^{\az_3}$;

{\rm (ii)}
for all $q\in(0,\fz)$ and $a>\alpha_3+N_0$,
$\dot{F}^{w,\tau}_{\cl,q,a}(\rn)$ and $\dot{B}^{w,\tau}_{\cl,q,a}(\rn)$
coincide, respectively, with $\dot{F}^{\wz w}_{\fz,\fz,a}(\rn)$ and
$\dot{B}^{\wz w}_{\fz,\fz,a}(\rn)$
with equivalent norms.
\end{theorem}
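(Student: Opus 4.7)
The plan is to follow the inhomogeneous argument of Theorem \ref{t6.2} mutatis mutandis, adapted to the homogeneous setting via the homogeneous atomic/molecular decomposition in Theorem \ref{t9.4}. Part (i) is immediate from the definitions: writing out condition $(\mathrm{H}$-$\mathrm{W}1)$ for $\wz w_j(x) = 2^{jn(\tau-\wz\tau)}w_j(x)$ and combining with the corresponding inequalities for $w$ yields membership of $\wz w$ in $\dot\cw^{\az_3}_{(\az_1 - n(\tau-\wz\tau))_+,(\az_2 + n(\tau-\wz\tau))_+}$, just as in Example \ref{e2.1}(iii).

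For (ii), I treat the $\dot F$-scale; the $\dot B$-case is analogous. Invoking Theorem \ref{t9.4} reduces the claimed identity $\dot F^{w,\tau}_{\cl,q,a}(\rn)=\dot F^{\wz w}_{\infty,\infty,a}(\rn)$ to the level of sequences, namely $\dot f^{w,\tau}_{\cl,q,a}(\rn)=\dot f^{\wz w}_{\infty,\infty,a}(\rn)$. For the ``$\gtrsim$'' direction, given $\lambda=\{\lambda_{jk}\}$, pick $(j_0,k_0,x_0,y_0)$ realizing (up to a constant) the supremum in
$\|\lambda\|_{\dot f^{\wz w}_{\infty,\infty,a}(\rn)}$, i.e.\ $x_0+y_0\in Q_{j_0k_0}$ and $\wz w_{j_0}(x_0)\,|\lambda_{j_0k_0}|/(1+2^{j_0}|y_0|)^a$ approximates the norm. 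Choose a dyadic cube $P_0$ of sidelength $2^{-j_0-1}$ with $y_0+P_0\subset Q_{j_0k_0}$; then (W2) gives $w_{j_0}(x_0)\lesssim w_{j_0}(x)$ for all $x\in P_0$, and testing the $\dot f^{w,\tau}_{\cl,q,a}$-norm at $P_0$ yields
$\|\lambda\|_{\dot f^{w,\tau}_{\cl,q,a}(\rn)}\gtrsim\|\lambda\|_{\dot f^{\wz w}_{\infty,\infty,a}(\rn)}\cdot 2^{-j_0n(\tau-\wz\tau)}\|\chi_{P_0}\|_{\cl(\rn)}/|P_0|^\tau\sim\|\lambda\|_{\dot f^{\wz w}_{\infty,\infty,a}(\rn)}$.

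For the reverse ``$\lesssim$'' direction, bound each inner term by the $\dot f^{\wz w}_{\infty,\infty,a}$-norm divided by $\wz w_j$, so that the ratio $w_j/\wz w_j=2^{-jn(\tau-\wz\tau)}$ appears. Summing the resulting geometric series (convergent because $\tau>\wz\tau$) gives
\begin{equation*}
\|\lambda\|_{\dot f^{w,\tau}_{\cl,q,a}(\rn)}\lesssim\|\lambda\|_{\dot f^{\wz w}_{\infty,\infty,a}(\rn)}\cdot\sup_{P\in\cq(\rn)}\frac{2^{-j_Pn(\tau-\wz\tau)}\|\chi_P\|_{\cl(\rn)}}{|P|^\tau}=\|\lambda\|_{\dot f^{\wz w}_{\infty,\infty,a}(\rn)}\cdot\sup_{P\in\cq(\rn)}|P|^{-\wz\tau}\|\chi_P\|_{\cl(\rn)},
\end{equation*}
and the last supremum is controlled by the defining limsup property of $\wz\tau$.

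The main obstacle lies in the last step: unlike in Theorem \ref{t6.2}, the supremum here runs over \emph{all} of $\cq(\rn)$ including cubes with arbitrarily small (negative) $j_P$, whereas the limsup formula for $\wz\tau$ only encodes the asymptotics of $\|\chi_P\|_{\cl(\rn)}$ as $j_P\to\infty$. In the inhomogeneous case this was circumvented by the cutoff $j_P\vee 0$, but in the homogeneous case one must either (a) invoke the scale-covariance intrinsic to the homogeneous framework to transfer the small-$P$ bound to all $P$ (any dyadic cube differs from a small one only through the polynomial weight behavior governed by (W1)), or (b) add an auxiliary hypothesis $\sup_{P\in\cq(\rn)}|P|^{-\wz\tau}\|\chi_P\|_{\cl(\rn)}<\infty$, which holds automatically in every concrete example we consider (Lebesgue, Morrey, Orlicz spaces, etc.) where $\|\chi_P\|_{\cl(\rn)}$ scales as $|P|^{\wz\tau}$. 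I expect route (a), using the interplay between the scaling relation and the structure of $\cl(\rn)$, to yield the cleanest argument and complete the proof.
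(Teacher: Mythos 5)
Your overall strategy is the intended one: the paper omits the proof of Theorem \ref{t16.2} precisely because it is meant to be the transplant of Theorem \ref{t6.2}, with the reduction to sequence spaces now running through the homogeneous decomposition (Theorem \ref{t9.4}); part (i) and the two sequence-space estimates are set up correctly. However, the obstacle you flag at the end is genuine and your proposal does not actually close it, so the proof is incomplete as written. In the homogeneous setting the final supremum $\sup_{P\in\cq(\rn)}|P|^{-\wz\tau}\|\chi_P\|_{\cl(\rn)}$ in the ``$\lesssim$'' direction (and, symmetrically, the lower bound $\|\chi_{P_0}\|_{\cl(\rn)}\gtrsim|P_0|^{\wz\tau}$ needed in the ``$\gtrsim$'' direction) must hold for cubes of \emph{every} generation $j_P\in\Z$, whereas $\wz\tau$ only records a $\limsup$ as $j\to+\infty$. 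Your route (a) cannot work as stated: the axioms $(\cl1)$ through $(\cl6)$ impose no dilation covariance on $\cl(\rn)$, and (H-W1)/(H-W2) constrain only the weight $w$, not the lattice norm $\|\chi_P\|_{\cl(\rn)}$, so there is nothing in the abstract framework that transfers information about small cubes to large ones.

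Two further points. First, even for a fixed generation $j$, the quantity inside the $\limsup$ is $\sup_{P\in\cq_j(\rn)}\log_2\|\chi_P\|_{\cl(\rn)}^{-1}$, which controls $\inf_{P\in\cq_j(\rn)}\|\chi_P\|_{\cl(\rn)}$ from \emph{below}; the bound $|P|^{-\wz\tau}\|\chi_P\|_{\cl(\rn)}\lesssim 1$ requires an \emph{upper} bound on $\|\chi_P\|_{\cl(\rn)}$ and is therefore not a formal consequence of the definition of $\wz\tau$ -- this tacit use of a two-sided normalization is already present in the model proof of Theorem \ref{t6.2} that you are adapting. Second, the honest completion is essentially your route (b): one should work under the two-sided condition $\|\chi_P\|_{\cl(\rn)}\sim|P|^{\wz\tau}$ uniformly over all $P\in\cq(\rn)$ (i.e.\ \eqref{8.21} with $\tau_0=\wz\tau$, which holds in every concrete example of Section \ref{s10}); with that in hand both directions of your sequence-space computation go through for every $j_P\in\Z$ and the argument is complete.
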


\subsection{Characterizations via differences and oscillations}

We can extend Theorems \ref{t7.1} and \ref{t7.2}
to homogeneous spaces as follows, whose proofs are omitted by similarity.

\begin{theorem}\label{t17.1}
Let $a,\, \az_1,\, \alpha_2,\, \alpha_3,\, \tau\in[0,\fz)$, $u\in[1,\fz]$, $q\in(0,\,\fz]$
and $w\in\star-{\mathcal W}^{\alpha_3}_{\alpha_1,\alpha_2}$.
If $M\in\nn$, $\az_1\in (a,M)$ and \eqref{7.2} holds true, then there exists a positive
constant $\wz C:=C(M)$, depending on $M$,
such that, for all $f\in\cs_\fz'(\rn)\cap L^1_\loc(\rn)$,
the following hold true:

{\rm(i)}
\begin{eqnarray*}
\left\|
\left\{\sup_{z \in \rn}\left[\oint_{|h| \le \wz{C}\,2^{-j}}
\frac{|\Delta_h^{M}f(\cdot+z)|^u}{(1+2^j|z|)^{au}}\,dh\right]^{1/u}
\right\}_{j \in \Z}\right\|_{\ell^q(\cl^w_\tau(\rn,\zz))}
\sim \|f\|_{\dot{B}^{w,\tau}_{\cl,q,a}(\rn)}
\end{eqnarray*}
with the implicit positive constants
independent of $f$.

\rm{(ii)}
\begin{eqnarray*}
\left\|\left\{\sup_{z \in \rn}\left[\oint_{|h| \le \wz{C}\,2^{-j}}
\frac{|\Delta_h^{M}f(\cdot+z)|^u}{(1+2^j|z|)^{au}}\,dh\right]^{1/u}
\right\}_{j \in \Z}\right\|_{\cl^w_\tau(\ell^q(\rn,\zz))}
\sim
\|f\|_{\dot{F}^{w,\tau}_{\cl,q,a}(\rn)}
\end{eqnarray*}
with the implicit positive constants
independent of $f$.

\rm{(iii)}
\begin{eqnarray*}
\left\|\left\{\sup_{z \in \rn}\left[\oint_{|h| \le \wz{C}\,2^{-j}}
\frac{|\Delta_h^{M}f(\cdot+z)|^u}{(1+2^j|z|)^{au}}\,dh\right]^{1/u}
\right\}_{j \in \Z}\right\|_{\ell^q(\cn\cl^w_\tau(\rn,\zz))}
\sim
\|f\|_{\dot{\cn}^{w,\tau}_{\cl,q,a}(\rn)}
\end{eqnarray*}
with the implicit positive constants
independent of $f$.

\rm{(iv)}
\begin{eqnarray*}
\left\|\left\{\sup_{z \in \rn}\left[\oint_{|h| \le \wz{C}\,2^{-j}}
\frac{|\Delta_h^{M}f(\cdot+z)|^u}{(1+2^j|z|)^{au}}\,dh\right]^{1/u}
\right\}_{j \in \Z}\right\|_{\ce\cl^w_\tau(\ell^q(\rn,\zz))}
\sim
\|f\|_{\dot{\ce}^{w,\tau}_{\cl,q,a}(\rn)}
\end{eqnarray*}
with the implicit positive constants
independent of $f$.
\end{theorem}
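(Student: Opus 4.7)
The plan is to mirror the proof of Theorem \ref{t7.1} with two simplifications permitted by the homogeneous setting: we work in $\cs'_\fz(\rn)=\cs'(\rn)/\cp(\rn)$ so no low-frequency remainder terms ${\rm J}^{(1)}_{a,w,\cl}(f)$ or ${\rm J}^{(2)}_{a,w,\cl}(f)$ survive modulo polynomials, and the underlying index set is $\Z$ rather than $\Z_+$, so the star-class condition on $w$ must be invoked symmetrically in both tails. The first step is to establish a homogeneous analog of Proposition \ref{p7.1}: for the Triebel approximations $\{f^j\}_{j\in\Z}$ defined as in \eqref{7.1} but now for every $j\in\Z$, and $F(x,2^{-j}):=f^j(x)-f^{j-1}(x)$, one has the four norm equivalences $\|f\|_{\dot B^{w,\tau}_{\cl,q,a}(\rn)}\sim\|F\|_{L^{w,\tau}_{\cl,q,a}(\rr^{n+1}_\Z)}$ and analogously for the $\dot F$-, $\dot\cn$-, and $\dot\ce$-scales. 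This follows from Theorem \ref{t9.2} with $\psi(x):=T_0(x)-2^{-n}T_0(x/2)$ in the role of a local-means generator, provided the parameter $M$ satisfies \eqref{7.2} so that $\psi$ possesses sufficient smoothness, decay, and vanishing moments.

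For the ``$\lesssim$''-direction of each equivalence, the telescoping identity
\begin{equation*}
f^j-f^{j-1}=\sum_{m=1}^M\frac{(-1)^{M+m-1}}{M!}\binom{M}{m}m^M\int_{\rn}\rho(y)\bigl[\Delta_{2^{-j}my}^M f-\Delta_{2^{-j-1}my}^M f\bigr]\,dy
\end{equation*}
derived in the proof of Theorem \ref{t7.1} transfers verbatim and yields the pointwise bound
\begin{equation*}
\sup_{z\in\rn}\frac{|(f^j-f^{j-1})(x+z)|}{(1+2^j|z|)^a}\lesssim\sup_{z\in\rn}\biggl[\oint_{|h|\le\wz{C}\,2^{-j}}\frac{|\Delta_h^M f(x+z)|^u}{(1+2^j|z|)^{au}}\,dh\biggr]^{1/u},
\end{equation*}
which, combined with the homogeneous Proposition \ref{p7.1}-analog above, delivers one direction of all four equivalences. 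For the reverse inequality one applies \cite[p.\,195,\,(3.5.3/7)]{t92} to bound the averaged $M$-th difference by
\begin{equation*}
\sum_{l=1}^\fz\Bigl\{|f_{j+l}(x+z)|+\Bigl[\oint_{B(x+z,C2^{-j})}|f_{j+l}(y)|^u\,dy\Bigr]^{1/u}\Bigr\}+\sup_{w\in B(x+z,C2^{-j})}\Bigl|\int_{\rn}D^\alpha T_0(y)f(w+2^{-j}y)\,dy\Bigr|,
\end{equation*}
and then uses $w_j(x)\lesssim 2^{-l\alpha_1}w_{j+l}(x)$ from the star-class together with $\alpha_1>a$ to make the geometric series $\sum_l 2^{-l(\alpha_1-a)\wz\theta}$ summable, exactly as in the proof of Theorem \ref{t7.1}.

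The main obstacle is the treatment of the boundary term $\sup_w|(D^\alpha T_0)_j*\wz f|$: in the inhomogeneous proof it was absorbed into the low-frequency remainder ${\rm J}^{(1)}_{a,w,\cl}$ or bounded against $\Phi*f$, but in the homogeneous setting there is no such low-frequency anchor. Instead one recognizes $(D^\alpha T_0)_j*\wz f$ as a member of the family of valid local means (the necessary vanishing-moment and decay conditions being inherited from $T_0$ via differentiation) and reinjects it into $\|f\|_{\dot A^{w,\tau}_{\cl,q,a}(\rn)}$ via Corollary \ref{c9.1}. A subsidiary technicality is that since $j$ ranges over $\Z$, the supremum over dyadic $P\in\cq(\rn)$ must be controlled uniformly for arbitrarily large $|j_P|$; once the telescoping sum is written in its bilateral form, however, this falls out of the same Aoki-Rolewicz-type triangle estimate used in the proof of Theorem \ref{t7.1}, now applied to the indexing set $\Z$.
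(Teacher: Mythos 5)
Your proposal is correct and follows essentially the route the paper intends: the paper omits this proof, stating it is "similar" to that of Theorem \ref{t7.1}, and your adaptation — the homogeneous analogue of Proposition \ref{p7.1} via Theorem \ref{t9.2} and Triebel's construction, the telescoping identity for one direction, and the star-class estimate $w_j\lesssim 2^{-l\alpha_1}w_{j+l}$ with $\alpha_1>a$ for the other — is exactly that adaptation. The only small remark is that your "main obstacle" concerning $(D^\alpha T_0)_j*\wz f$ is not really a point of divergence: even in the inhomogeneous proof this term is controlled by the local-means characterization (not by absorption into ${\rm J}^{(1)}$), so your resolution via Corollary \ref{c9.1} is the same mechanism already at work there.
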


\begin{theorem}\label{t17.2}
Let $a,\, \alpha_1,\, \alpha_2,\, \alpha_3,\, \tau \in [0,\fz)$,
$u\in[1,\fz]$, $q\in(0,\,\fz]$
and $w\in\star-\dot{\mathcal W}^{\alpha_3}_{\alpha_1,\alpha_2}$.
If $M\in\nn$, $\az_1\in (a,M)$ and \eqref{7.2} holds true, then,
for all $f \in \cs'_\fz(\rn)\cap L^1_\loc(\rn)$,
the following hold true:

{\rm(i)}
\begin{eqnarray*}
\left\|\left\{\sup_{z \in \rn}
\frac{{\rm osc}_u^{M}f(\cdot+z,2^{-j})}{(1+2^j|z|)^a}
\right\}_{j \in \Z}\right\|_{\ell^q(\cl^w_\tau(\rn,\zz))}
\sim \|f\|_{\dot{B}^{w,\tau}_{\cl,q,a}(\rn)}
\end{eqnarray*}
with the implicit positive constants
independent of $f$.

{\rm(ii)}
\begin{eqnarray*}
\left\|\left\{\sup_{z \in \rn}
\frac{{\rm osc}_u^{M}f(\cdot+z,2^{-j})}{(1+2^j|z|)^a}
\right\}_{j \in \Z}\right\|_{\cl^w_\tau(\ell^q(\rn,\zz))}
\sim \|f\|_{\dot{F}^{w,\tau}_{\cl,q,a}(\rn)}
\end{eqnarray*}
with the implicit positive constants
independent of $f$.

{\rm(iii)}
\begin{eqnarray*}
\left\|\left\{\sup_{z \in \rn}
\frac{{\rm osc}_u^{M}f(\cdot+z,2^{-j})}{(1+2^j|z|)^a}
\right\}_{j \in \Z}\right\|_{\ell^q(\cn\cl^w_\tau(\rn,\zz))}
\sim \|f\|_{\dot{\cn}^{w,\tau}_{\cl,q,a}(\rn)}
\end{eqnarray*}
with the implicit positive constants
independent of $f$.

{\rm(iv)}
\begin{eqnarray*}
\left\|\left\{\sup_{z \in \rn}
\frac{{\rm osc}_u^{M}f(\cdot+z,2^{-j})}{(1+2^j|z|)^a}
\right\}_{j \in \Z}\right\|_{\ce\cl^w_\tau(\ell^q(\rn,\zz))}
\sim \|f\|_{\dot{\ce}^{w,\tau}_{\cl,q,a}(\rn)}
\end{eqnarray*}
with the implicit positive constants
independent of $f$.
\end{theorem}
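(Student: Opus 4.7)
The plan is to mirror the proof of Theorem \ref{t7.2}, but with the inhomogeneous Triebel construction replaced by its homogeneous analog. Concretely, using the function $\rho$ and the kernels $T_j$ from \cite[pp.\,174-175]{t92}, I define $f^j := T_j \ast f$ for all $j \in \zz$ (rather than only $j \in \zz_+$), and exploit the fact that the telescoping sum $\sum_{j \in \zz}(f^j - f^{j-1})$ converges to $f$ modulo polynomials, so that everything is well-defined in $\cs'_\fz(\rn)$. As a first step, I would establish a homogeneous counterpart of Proposition \ref{p7.1}, namely
\begin{equation*}
\|f\|_{\dot{B}^{w,\tau}_{\cl,q,a}(\rn)}
\sim
\lf\|\lf\{\sup_{z \in \rn}\frac{|(f^j - f^{j-1})(\cdot+z)|}{(1+2^j|z|)^a}\r\}_{j \in \zz}\r\|_{\ell^q(\cl^w_\tau(\rn, \zz))},
\end{equation*}
and the analogous equivalences for the $\dot{F}$, $\dot{\cn}$ and $\dot{\ce}$ scales; these follow from Theorem \ref{t9.2} applied to the Schwartz function $\psi$ obtained from $T - T_{-1}$, since by design of $\rho$ (and the identity $\vz(t) - \tfrac12\vz(t/2) = \psi^{(M)}(t)$) the Fourier transform $\widehat{T - T_{-1}}$ is supported in a fixed annulus and non-vanishing on a sub-annulus, fulfilling the hypothesis of Theorem \ref{t9.2}.

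For the $\gs$ direction of (ii), the pointwise inequality \eqref{7.9}, being purely local, remains valid for every $j \in \zz$, so
\begin{equation*}
\sup_{z \in \rn} \frac{|(f^j - f^{j-1})(\cdot + z)|}{(1+2^j|z|)^a}
\ls
\sup_{z \in \rn} \frac{\mathrm{osc}^M_u f(\cdot + z, 2^{-j})}{(1+2^j|z|)^a}.
\end{equation*}
Passing to the $\cl^w_\tau(\ell^q(\rn, \zz))$ quasi-norm and invoking the homogeneous analog of Proposition \ref{p7.1} yields $\|f\|_{\dot{F}^{w,\tau}_{\cl,q,a}(\rn)} \ls$ (right-hand side of (ii)). Cases (i), (iii) and (iv) are handled identically with the corresponding sequence-space quasi-norm.

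For the reverse direction, I would use the homogeneous analog of \eqref{7.11}, namely the pointwise decomposition (cf.\ \cite[p.\,188,\,(11)]{t92})
\begin{equation*}
\frac{\mathrm{osc}^M_u f(x+z, 2^{-j})}{(1+2^j|z|)^a}
\ls
\sum_{l=1}^\infty \frac{\sup_{y \in B(x+z, 2^{-j})} |f_{j+l}(y)|}{(1+2^j|z|)^a}
+ \sup_{w \in B(x+z, C 2^{-j})} \frac{|(D^\alpha T_0)_j \ast \widetilde{f}(w)|}{(1+2^j|z|)^a},
\end{equation*}
where $f_k := f^k - f^{k-1}$ and $\widetilde{f}(x):=f(-x)$. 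The first term is dominated via the Peetre argument $\sup_{y \in B(x+z, 2^{-j})} |f_{j+l}(y)| \ls 2^{la}(f_{j+l})^*_a(x)(1+2^j|z|)^a$; the bound $w_j \ls 2^{-l \alpha_1} w_{j+l}$ from $w \in \star-\dot{\cw}^{\alpha_3}_{\alpha_1,\alpha_2}$, together with $\alpha_1 > a$ and the convexification trick from the $q \in (0,1)$ case of Lemma \ref{l2.3} (or Lemma \ref{l4.3}), then permits the geometric series in $l$ to be summed after shifting $j \mapsto j+l$ inside the supremum over $P$. The second term is a local-means quantity at scale $2^{-j}$, controlled by the Peetre maximal function $(\vz_j^\ast f)_a$ via Corollary \ref{c9.1}.

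The main obstacle will be executing the $j \to -\fz$ portion of the $\ell^q(\zz)$ summation in the reverse direction: in the inhomogeneous proof of Theorem \ref{t7.2}, the shift-and-reindex step used $j \in \zz_+$ tacitly via the truncation $j \ge j_P \vee 0$, whereas in the homogeneous setting the supremum over dyadic cubes $P$ runs through arbitrarily large $P$ (hence arbitrarily negative $j_P$). One must therefore verify that the nested supremum/summation still collapses under the weight decay $w_j \ls 2^{-l\alpha_1} w_{j+l}$; this is where the hypothesis $w \in \star-\dot{\cw}^{\alpha_3}_{\alpha_1,\alpha_2}$ becomes essential, since its version of (H-W1) guarantees the two-sided weight comparison for all integer indices and not merely for non-negative ones. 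Once this index issue is addressed, the remaining computations carry over essentially unchanged from the inhomogeneous proof.
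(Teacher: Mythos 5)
Your proposal is correct and is essentially the proof the paper intends: the paper omits the argument for this theorem, stating only that it follows ``by similarity'' from Theorem \ref{t7.2}, and your write-up is exactly that transplantation --- the same Triebel kernels $T_j$ (now indexed over $\zz$), the same pointwise bounds \eqref{7.9} and the homogeneous analog of \eqref{7.11}, and the same use of the $\star$-class condition (W$1^\star$) together with $\alpha_1>a$ to sum the geometric series in $l$ after the shift $j\mapsto j+l$. Your observation that the reindexing step survives in the homogeneous setting (the shift moves indices upward, so $[j_P+l,\infty)\subset[j_P,\infty)$ for every integer $j_P$) correctly disposes of the only point where the two settings genuinely differ.
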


Next, we transplant Theorems \ref{t8.30} and \ref{t8.32} to the homogeneous
case. Again, since their proofs are similar, respectively, to the
inhomogeneous cases, we omit the details.

\begin{theorem}\label{t18.30}
Suppose that $a>N$
and that $(\ref{eq:8.30})$ is satisfied:
\[
(1+|x|)^{-N} \in \cl(\rn) \cap \cl'(\rn).
\]
Assume, in addition, that
there exists a positive constant $C$ such that,
for any finite sequence $\{\varepsilon_k\}_{k=-k_0}^{k_0}$
taking values $\{-1,1\}$,
\begin{equation}\label{eq:18.31}
\left\|\sum_{k=-k_0}^{k_0} \varepsilon_k\varphi_k*f\right\|_{\cl(\rn)}
\le C\|f\|_{\cl(\rn)}, \ \
\left\|\sum_{k=-k_0}^{k_0} \varepsilon_k\varphi_k*g\right\|_{\cl'(\rn)}
\le C\|g\|_{\cl'(\rn)}
\end{equation}
for all $f \in \cl(\rn)$ and $g \in \cl'(\rn)$. Then,
$\cl(\rn)$ and $\cl'(\rn)$
are embedded into $\cs_\fz'(\rn)$,
$\cl(\rn)$ and $\cl'(\rn)$
are embedded into $\cs_\fz'(\rn)$,
and $\cl(\rn)$ and $\dot{\ce}^{0,0}_{\cl,2,a}(\rn)$
coincide.
\end{theorem}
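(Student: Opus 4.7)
The plan is to mirror the proof of Theorem \ref{t8.30} step by step, making the natural adjustments required by the homogeneous framework: replacing the dyadic resolution $\Phi\ast\Phi+\sum_{j=1}^\infty\varphi_j\ast\varphi_j = \delta$ in $\cs'(\rn)$ by a Calder\'on reproducing formula $\sum_{j\in\zz}\varphi_j\ast\varphi_j = \delta$ in $\cs_\fz'(\rn)$, and replacing the embedding $A^{w,\tau}_{\cl,q,a}(\rn)\hookrightarrow \cs'(\rn)$ by its homogeneous counterpart. Because elements of $\dot{\ce}^{0,0}_{\cl,2,a}(\rn)$ live in $\cs_\fz'(\rn)=\cs'(\rn)/\cp(\rn)$, the identifications will be understood modulo polynomials (and the pairing $\langle f,g\rangle$ for $g\in\cs_\fz(\rn)$ is well defined on this quotient).

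First I would prove $\cl(\rn),\cl'(\rn)\hookrightarrow\cs_\fz'(\rn)$. Given $\varphi\in\cs_\fz(\rn)$, set $p_N(\varphi):=\sup_{x\in\rn}(1+|x|)^N|\varphi(x)|$. Then for $f\in\cl(\rn)$, H\"older's inequality (in the form $|\int fg|\le\|f\|_{\cl(\rn)}\|g\|_{\cl'(\rn)}$, which follows from (\ref{9.x2})) gives
\[
|\langle f,\varphi\rangle|\le p_N(\varphi)\int_{\rn}|f(x)|(1+|x|)^{-N}\,dx\lesssim p_N(\varphi)\,\|(1+|\cdot|)^{-N}\|_{\cl'(\rn)}\|f\|_{\cl(\rn)},
\]
using (\ref{eq:8.30}); the same argument, with the r\^oles of $\cl$ and $\cl'$ swapped, gives the embedding for $\cl'(\rn)$.

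Next I would establish $\|f\|_{\dot{\ce}^{0,0}_{\cl,2,a}(\rn)}\lesssim\|f\|_{\cl(\rn)}$. Fix the Rademacher sequence $\{r_j\}_{j\in\zz}$ on $[0,1]$ and apply Khintchine's inequality pointwise, obtaining for any $k_0\in\N$ and every $x\in\rn$
\[
\Bigl(\sum_{|j|\le k_0}|\varphi_j\ast f(x)|^2\Bigr)^{1/2}\sim\int_0^1\Bigl|\sum_{|j|\le k_0}r_j(t)\,\varphi_j\ast f(x)\Bigr|\,dt.
\]
Since $\theta=1$ in $(\cl3)$ we may integrate in $t$ inside the $\cl$-norm (Minkowski); applying (\ref{eq:18.31}) uniformly in $t$ and $\{\varepsilon_k\}$ yields $\|(\sum_{|j|\le k_0}|\varphi_j\ast f|^2)^{1/2}\|_{\cl(\rn)}\lesssim\|f\|_{\cl(\rn)}$, and the Fatou property $(\cl5)$ lets us pass to $k_0\to\infty$. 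Combined with the homogeneous counterpart of Theorem \ref{t8.1} (i.e., the square-function characterization $\|f\|_{\dot{\ce}^{0,0}_{\cl,2,a}(\rn)}\sim\|(\sum_{j\in\zz}|\varphi_j\ast f|^2)^{1/2}\|_{\cl(\rn)}$, which holds for $a\gg 1$ once $\cl(\rn)$ satisfies the relevant vector-valued assumption, and a fortiori from the definition up to a one-sided inequality), this gives the bound.

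For the reverse inequality $\|f\|_{\cl(\rn)}\lesssim\|f\|_{\dot{\ce}^{0,0}_{\cl,2,a}(\rn)}$, I would use duality. Choose $\varphi\in\cs_\fz(\rn)$ with $\sum_{j\in\zz}|\widehat{\varphi}(2^{-j}\xi)|^2=1$ for $\xi\ne 0$, so that $\sum_{j\in\zz}\varphi_j\ast\varphi_j=\delta$ in $\cs_\fz'(\rn)$. For $g\in C^\infty_{\rm c}(\rn)\cap\cs_\fz(\rn)$ the reproducing formula gives
\[
\int_{\rn}f(x)g(x)\,dx=\sum_{j\in\zz}\int_{\rn}\varphi_j\ast f(x)\,\varphi_j\ast g(x)\,dx,
\]
and a pointwise Cauchy--Schwarz followed by H\"older applied with the pair $(\cl(\rn),\cl'(\rn))$ yields
\[
\Bigl|\int_{\rn}fg\Bigr|\le\Bigl\|\Bigl(\sum_j|\varphi_j\ast f|^2\Bigr)^{1/2}\Bigr\|_{\cl(\rn)}\Bigl\|\Bigl(\sum_j|\varphi_j\ast g|^2\Bigr)^{1/2}\Bigr\|_{\cl'(\rn)}\lesssim\|f\|_{\dot{\ce}^{0,0}_{\cl,2,a}(\rn)}\|g\|_{\cl'(\rn)},
\]
where the last step applies the previous paragraph with $\cl$ replaced by $\cl'$ (the second half of (\ref{eq:18.31})). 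Taking the supremum over $g$ with $\|g\|_{\cl'(\rn)}\le 1$, and invoking the duality $\|f\|_{\cl(\rn)}\lesssim\sup_g|\int fg|$ that follows from (\ref{9.x2}) and the Fatou property $(\cl5)$, yields the desired inequality.

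The main obstacle I anticipate is the last duality recovery step: one must check that testing against $g\in C^\infty_{\rm c}(\rn)\cap\cs_\fz(\rn)$ alone is enough to compute $\|f\|_{\cl(\rn)}$ for $f\in\cl(\rn)\subset M_0(\rn)$ (equivalently, that such $g$ are weak-$\ast$ dense in the unit ball of $\cl'(\rn)$ in the appropriate sense). The standard trick is to approximate an arbitrary unit vector of $\cl'(\rn)$ by compactly supported mean-zero mollifications, which is exactly what (\ref{eq:18.31}) (applied to $\cl'$) is designed to accommodate, since a truncation $\sum_{|k|\le k_0}\varphi_k\ast g$ of any $g\in\cl'(\rn)$ lies in $\cs_\fz(\rn)$ after a further cutoff and a further use of $(\cl6)$/(\ref{eq:8.30}); verifying that this approximation is norm-controlled (so that the supremum is in fact attained, up to constants, on the smaller class) is the delicate bookkeeping point of the argument.
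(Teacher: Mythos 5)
Your proposal is correct and follows essentially the same route as the paper: the paper omits the proof of Theorem \ref{t18.30}, stating only that it is obtained by transplanting the proof of Theorem \ref{t8.30}, and your argument is exactly that transplant (the Rademacher/Khintchine step combined with \eqref{eq:18.31} and the square-function characterization for one direction, and the Calder\'on reproducing formula together with duality against $\cl'(\rn)$ for the other). The density/duality point you flag at the end is also glossed over in the paper's inhomogeneous proof, so your write-up is at the same level of rigor as the original.
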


\begin{theorem}\label{t18.32}
Let $\cl(\rn)$ be a Banach space of functions
such that the spaces $\cl^{p}(\rn)$ and $(\cl')^{p}(\rn)$
are Banach spaces of functions
and that
the maximal operator $M$ is bounded
on $(\cl^{p}(\rn))'$ and $((\cl')^{p}(\rn))'$
for some $p \in (1,\infty)$.
Then $(\ref{eq:18.31})$ holds true.
In particular,
if $a>N$
and $(1+|x|)^{-N} \in \cl(\rn) \cap \cl'(\rn)$,
then
$\cl(\rn)$ and $\cl'(\rn)$
are embedded into $\cs_\fz'(\rn)$,
and $\cl(\rn)$ and $\dot{\ce}^{0,0}_{\cl,2,a}(\rn)$
coincide.
\end{theorem}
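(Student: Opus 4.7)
The plan is to prove this theorem in complete parallel to Theorem \ref{t8.32}, reducing everything to a homogeneous Littlewood--Paley Calder\'on--Zygmund estimate and then invoking Theorem \ref{t18.30} for the concluding identifications. More precisely, once $(\ref{eq:18.31})$ is verified, the additional hypotheses $a>N$ and $(1+|x|)^{-N}\in\cl(\rn)\cap\cl'(\rn)$ place us directly in the scope of Theorem \ref{t18.30}, which yields the embeddings $\cl(\rn),\cl'(\rn)\hookrightarrow\cs_\fz'(\rn)$ and the identification $\cl(\rn)=\dot{\ce}^{0,0}_{\cl,2,a}(\rn)$. So the main task is to establish the two uniform quasi-norm estimates in $(\ref{eq:18.31})$.

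To do this, I would set
\[
{\mathcal Z}:=\left\{\left(\sum_{k=-k_0}^{k_0}\varepsilon_k\varphi_k*f,\,f\right):\ f\in\cl(\rn),\ k_0\in\nn,\ \{\varepsilon_k\}_{|k|\le k_0}\subset\{-1,1\}\right\}
\]
and verify that every pair $(F,f)\in{\mathcal Z}$ satisfies the weighted estimate $(\ref{eq:8.32})$ with the implicit constant depending only on $p_0$ and $A_{p_0}(w)$, but uniformly in $k_0$ and the signs $\{\varepsilon_k\}$. For this, I would consider the convolution operator $T_{\{\varepsilon_k\}}f:=\sum_{|k|\le k_0}\varepsilon_k\varphi_k*f$ with kernel $K_{\{\varepsilon_k\}}:=\sum_{|k|\le k_0}\varepsilon_k\varphi_k$. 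Because $\widehat{\varphi}$ is supported in the annulus $\{1/2\le|\xi|\le 2\}$, the Fourier multiplier $m_{\{\varepsilon_k\}}:=\sum_{|k|\le k_0}\varepsilon_k\widehat{\varphi}(2^{-k}\cdot)$ is uniformly bounded and satisfies H\"ormander's differential conditions with bounds independent of $k_0$ and $\{\varepsilon_k\}$. Standard weighted Calder\'on--Zygmund theory then yields boundedness on $L^{p_0}(w)$ with constant depending only on $A_{p_0}(w)$ and $p_0$; compare \cite[Chapter 7]{du}. Consequently $(\ref{eq:8.32})$ holds true on ${\mathcal Z}$, and Proposition \ref{p9.31}, applied once to $\cl(\rn)$ and once to $\cl'(\rn)$ (each of which is assumed to admit a $p$-concavification on which $M$ acts boundedly in the dual), delivers both inequalities in $(\ref{eq:18.31})$. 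Theorem \ref{t18.30} then finishes the proof.

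The main obstacle is the uniform Calder\'on--Zygmund character of $K_{\{\varepsilon_k\}}$: unlike in the inhomogeneous Theorem \ref{t8.32}, the sum here runs over \emph{all} integers $k$ with $|k|\le k_0$, so the low-frequency tail must be handled carefully since $\sum_{k\le 0}\varphi_k$ does not converge absolutely in $L^1$. The remedy is to exploit the cancellation $\int_\rn \varphi_k(x)\,dx=0$ coming from $(\ref{1.2})$: pairing this with the smoothness of each $\varphi_k$ gives pointwise kernel bounds of the form $|K_{\{\varepsilon_k\}}(x)|\lesssim |x|^{-n}$ and $|\nabla K_{\{\varepsilon_k\}}(x)|\lesssim |x|^{-n-1}$, uniformly in $k_0$ and $\{\varepsilon_k\}$, which are exactly the standard Calder\'on--Zygmund hypotheses required to feed into the Muckenhoupt weighted theory. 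Once this uniform control is secured, the extrapolation through Proposition \ref{p9.31} is automatic and the rest of the proof is formal.
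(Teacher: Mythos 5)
Your proposal is correct and follows essentially the same route as the paper, which proves the inhomogeneous Theorem \ref{t8.32} by feeding the family of sign-randomized Littlewood--Paley sums into Proposition \ref{p9.31} via the classical weighted Calder\'on--Zygmund theory and then simply declares the homogeneous case analogous, deferring the conclusion to Theorem \ref{t18.30}. Your additional care with the uniform kernel bounds for the bi-infinite sum (the low-frequency tail) is exactly the detail the paper leaves implicit, and it is handled correctly.
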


As a corollary $\cl(\rn)$ enjoys the following characterization.
\begin{corollary}
Let $\cl(\rn)$ be a Banach space of functions
such that $\cl^{p}(\rn)$ and $(\cl')^{p}(\rn)$
are Banach spaces of functions
and that
the maximal operator $M$ is bounded
on $(\cl^{p}(\rn))'$ and $((\cl')^{p}(\rn))'$
for some $p \in (1,\infty)$.
If $a>N$
and $(1+|x|)^{-N} \in \cl(\rn) \cap \cl'(\rn)$,
then
\begin{eqnarray*}
\|f\|_{\cl(\rn)}&&\sim
\left\|\left\{\sup_{z \in \rn}\left[\oint_{|h| \le \wz{C}\,2^{-j}}
\frac{|\Delta_h^{M}f(\cdot+z)|^u}{(1+2^j|z|)^{au}}\,dh\right]^{1/u}
\right\}_{j \in \Z}\right\|_{\ce\cl^1_0(\ell^2(\rn,\zz))}\\
&&\sim\left\|\left\{\sup_{z \in \rn}
\frac{{\rm osc}_u^{M}f(\cdot+z,2^{-j})}{(1+2^j|z|)^a}
\right\}_{j \in \Z}\right\|_{\ce\cl^1_0(\ell^2(\rn,\zz))}
\end{eqnarray*}
with the implicit positive constants
independent of $f \in \cl(\rn)$.
\end{corollary}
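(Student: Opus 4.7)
The plan is to prove the corollary by combining the two ingredients highlighted in its statement: the identification of $\cl(\rn)$ with a concrete homogeneous function space, and the intrinsic difference/oscillation characterizations of that space. The hypotheses of the corollary (namely, that $\cl(\rn)$, $\cl^p(\rn)$, $(\cl')^p(\rn)$ are Banach spaces of functions with $M$ bounded on $(\cl^p(\rn))'$ and $((\cl')^p(\rn))'$, together with $a>N$ and $(1+|x|)^{-N}\in\cl(\rn)\cap\cl'(\rn)$) are precisely those required by Theorem~\ref{t18.32}, which yields
\[
\|f\|_{\cl(\rn)}\sim\|f\|_{\dot{\ce}^{0,0}_{\cl,2,a}(\rn)}
\]
and shows in passing that $\cl(\rn)$ embeds continuously into $\cs_\fz'(\rn)$, so that the homogeneous framework of Section~\ref{s9} is applicable.

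Next I would apply Theorems~\ref{t17.1} and \ref{t17.2} with the trivial weight $w\equiv 1$, parameter choices $\tau=0$, $q=2$, and $M\in\nn$ sufficiently large. With this specialization, the ${\ell^2(\cn\cl^1_0(\rn,\zz))}$-norm on the right-hand side of Theorem~\ref{t17.1} collapses to the norm $\|\cdot\|_{\ce\cl^1_0(\ell^2(\rn,\zz))}$ that appears in the corollary, because for $\tau=0$ the supremum over dyadic cubes $P$ in the definition of the $\cn\cl^w_\tau$- and $\cl^w_\tau$-norms produces the same envelope; identifying these two normalizations is the first concrete computation to carry out. The oscillation equivalence then follows by the same specialization of Theorem~\ref{t17.2}, after which chaining with the equivalence obtained from Theorem~\ref{t18.32} delivers the full chain of the corollary.

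The main obstacle is that the sufficient conditions of Theorems~\ref{t17.1} and \ref{t17.2} require $w\in\star\!-\!\dot{\cw}^{\az_3}_{\az_1,\az_2}$ with $\az_1\in(a,M)$, which strictly fails for $w\equiv 1$ (where necessarily $\az_1=0$). I expect to bypass this by a lifting trick: introduce $w^{(\varepsilon)}_j(x):=2^{j\varepsilon}$ for a small $\varepsilon>0$, so that $w^{(\varepsilon)}\in\star\!-\!\dot{\cw}^{0}_{\varepsilon,\varepsilon}$ and Theorems~\ref{t17.1}, \ref{t17.2} apply directly to $\dot{\ce}^{w^{(\varepsilon)},0}_{\cl,2,a}(\rn)$; then use Theorem~\ref{t9.5} (boundedness of $(-\Delta)^{\pm\varepsilon/2}$) to transfer the difference/oscillation characterization from $\dot{\ce}^{w^{(\varepsilon)},0}_{\cl,2,a}$ back to $\dot{\ce}^{0,0}_{\cl,2,a}$, letting $\varepsilon\to 0^+$ in the implicit constants (which remain uniform since the underlying convolution kernels and Lemma~\ref{l2.3}-type estimates depend on $\varepsilon$ only through harmless factors for $M$ large). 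Alternatively, one could revisit the proof of Theorem~\ref{t17.1} and observe that, when $q<\infty$ and the $\cn\cl^w_\tau$-type norm is in play, the telescoping argument at (\ref{7.7})--(\ref{7.8}) still converges without the $\az_1>a$ hypothesis because the shift $w_j\lesssim 2^{-l\az_1}w_{j+l}$ is used only to counterbalance a factor $2^{la}$ which is absent when one works in $\ell^2(\cn\cl^1_0)$ after passing through Theorem~\ref{t18.32}.

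Finally, I would verify that the indicated implicit constants are independent of $f\in\cl(\rn)$ by tracing through the constants in Theorems~\ref{t18.32}, \ref{t17.1}, and \ref{t17.2}; this is routine once the previous two steps are in place. The only quantity that needs to be chosen upfront is $M$, which must be taken large enough for the Triebel-type construction underlying $\wz{C}=\wz C(M)$ to produce valid kernels, and for the differences $\Delta_h^M$ to control the oscillation $\operatorname{osc}^M_u$ via the already-established inequalities \eqref{7.9}-\eqref{7.10}.
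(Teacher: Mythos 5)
Your overall route is the one the paper intends: the corollary is stated immediately after Theorem \ref{t18.32} with no proof, and is meant to follow by chaining $\|f\|_{\cl(\rn)}\sim\|f\|_{\dot{\ce}^{0,0}_{\cl,2,a}(\rn)}$ (Theorem \ref{t18.32}) with parts (iv) of Theorems \ref{t17.1} and \ref{t17.2} for $w\equiv1$, $\tau=0$, $q=2$; for $\tau=0$ the $\ce\cl^w_0(\ell^q)$-norm is identified with $\cl^w(\ell^q)$ by (the homogeneous analogue of) Lemma \ref{l2.2}(ii). You also correctly isolate the real obstruction: $w\equiv1$ lies in $\star$-$\dot{\cw}^{0}_{\az_1,\az_2}$ only with $\az_1=0$, so the hypothesis $\az_1\in(a,M)$ of Theorems \ref{t17.1}--\ref{t17.2} fails (indeed the paper itself does not address this). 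One incidental confusion: the corollary uses the $\ce$-scale, so you want part (iv) of each theorem directly; the $\ell^2(\cn\cl^1_0)$-norm does not ``collapse'' to $\ce\cl^1_0(\ell^2)$ — for $\tau=0$ these reduce to $\ell^2(\cl^1)$ and $\cl^1(\ell^2)$ respectively, which are genuinely different scales.

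The gap is that neither of your two proposed repairs closes this obstruction. The lifting trick with $w^{(\varepsilon)}_j=2^{j\varepsilon}$ requires $\az_1=\varepsilon>a$ to invoke Theorem \ref{t17.1}, so $\varepsilon$ cannot be taken small; and even for large $\varepsilon$, Theorem \ref{t9.5} transfers membership of $f$ in $\dot{\ce}^{0,0}_{\cl,2,a}$ to membership of $(-\Delta)^{\varepsilon/2}f$ in $\dot{\ce}^{w^{(\varepsilon)},0}_{\cl,2,a}$ — the difference/oscillation characterization you then obtain is for $(-\Delta)^{\varepsilon/2}f$ in terms of \emph{its} differences, which is not the quantity appearing in the corollary; there is no passage back to the differences of $f$ itself. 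Your second suggestion is also not viable: the factor $2^{la}$ in \eqref{7.7} arises from the pointwise comparison $(1+2^{j}|z|)^{-a}$ versus $(1+2^{j+l}|z|)^{-a}$ in the Peetre-type estimate, before any outer quasi-norm is applied, so it is present identically in the $\ce\cl^w_\tau(\ell^q)$-scale; the hypothesis $\az_1>a$ is used precisely to produce the summable factor $2^{-l(\az_1-a)}$ against it, and with $\az_1=0$ the geometric series in $l$ diverges. A correct proof would have to either restrict to weights with $\az_1>a$ (which excludes the corollary as stated) or supply a genuinely different argument for the smoothness-zero case; as written, the ``$\gtrsim$'' direction of the difference characterization cannot be obtained from Theorem \ref{t17.1}, and in fact for $w\equiv1$ the right-hand side is typically infinite (already for $\cl=L^2$, the quantity $\sum_{j\le0}\left[\oint_{|h|\le\wz{C}2^{-j}}|\Delta^M_hf(x)|^u\,dh\right]^{2/u}$ diverges for generic $f\in L^2(\rn)$).
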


\section{Applications and examples}
\label{s10}

Now we present some examples for ${\mathcal L}(\rn)$
and survey what has been obtained recently.

\subsection{Morrey spaces}
\label{s10.1}

\paragraph{Morrey spaces}

Now, to begin with,
we consider the case
when ${\mathcal L}(\R^n):={\mathcal M}^p_u(\R^n)$,
the Morrey space.
Recall that definition was given in Example \ref{e5.11}.
Besov-Morrey spaces and Triebel-Lizorkin-Morrey spaces
are function spaces whose norms are obtained
by replacing the $L^p$-norms
with the Morrey norms.
More precisely,
the \emph{Besov-Morrey norm} $\|\cdot\|_{{\mathcal N}_{pqr}^s(\rn)}$ is given by
\begin{equation*}
\rVert f \lVert_{\mathcal{N}_{pqr}^s(\rn)}
:=
\rVert \Phi*f \lVert_{\mathcal{M}^p_q(\rn)}
+
\left(
\sum_{j=1}^\infty
2^{jsr}\rVert \varphi_j*f \lVert_{\mathcal{M}^p_q(\rn)}^r
\right)^{1/r}
\end{equation*}%
and the \emph{Triebel-Lizorkin-Morrey norm}
$\|\cdot\|_{{\mathcal E}_{pqr}^s(\rn)}$ is given by
\begin{equation*}
\rVert f \lVert_{\mathcal{E}_{pqr}^s(\rn)}
:=
\rVert\Phi*f \lVert_{\mathcal{M}^p_q(\rn)}
+
\left\| \left(
\sum_{j=1}^\infty
2^{jsr} |\varphi_j*f|^r
\right)^{1/r}\right\|_{\mathcal{M}^p_q(\rn)}
\end{equation*}%
for $0<q \le p<\infty, \, r\in(0,\infty]$ and $s \in {\mathbb R}$,
where $\Phi$ and $\vz$ are, respectively, as in \eqref{1.1} and \eqref{1.2},
and $\vz_j(\cdot)=2^{jn}\vz(2^j\cdot)$ for all $j\in\nn$.
The \emph{spaces} ${\mathcal N}_{pqr}^s(\rn)$ and ${\mathcal E}_{pqr}^s(\rn)$
denote the set of all $f \in {\mathcal S}'(\rn)$
such that the norms
$\rVert f \lVert_{\mathcal{N}_{pqr}^s(\rn)}$
and
$\rVert f \lVert_{\mathcal{E}_{pqr}^s(\rn)}$
are finite, respectively.
Let $\ca_{pqr}^s(\R^n)$ denote
either one of ${\mathcal N}_{pqr}^s(\rn)$
and ${\mathcal E}_{pqr}^s(\rn)$.
Write
\begin{gather*}
B^{w,\tau}_{p,u,q,a}(\rn)
:=
B^{w,\tau}_{{\mathcal M}^p_u,q,a}(\rn)\hs
\mbox{and}\hs
F^{w,\tau}_{p,u,q,a}(\rn)
:=
F^{w,\tau}_{{\mathcal M}^p_u,q,a}(\rn).
\end{gather*}
Then, if we let
$
w_j(x):=2^{js}
 \ (x \in \rn, \, j \in \Z_+)
$
with $s \in \R$, then
it is easy to show that
$\cn^s_{p,u,q,a}(\rn):=\cn^{s,0}_{p,u,q,a}(\rn)$
coincides with
${\mathcal N}^{s}_{p,u,q}(\rn)$
when $a>\frac{n}{\min(1,u)}$
and that
$F^s_{p,u,q,a}(\rn):=F^{s,0}_{p,u,q,a}(\rn)$
coincides with ${\mathcal E}^{s}_{p,u,q}(\rn)$
when $a>\frac{n}{\min(1,u,q)}$.
Indeed, this is just a matter of applying
the Plancherel-Polya-Nikolskij inequality
(Lemma \ref{l1.1})
and the maximal inequalities
obtained in \cite{st07,tx}.
These function spaces are dealt in \cite{st07,tx}.

Observe that $(\cl1)$ through $(\cl6)$ hold true in this case.

There exists another point of view
of these function spaces.
Recall that the function space
$A^{s,\tau}_{p,q}(\rn)$, defined by \eqref{3.5},
originated from \cite{yy1,yy2,yy3}.
The following is known,
which Theorem \ref{t8.6} in the present paper extends.
\begin{proposition}[{\rm \cite[Theorem 1.1]{ysy}}]\label{p10.1}
Let $s \in {\mathbb R}$.

{\rm (i)}
If $0<p<u<\infty$ and $q\in(0,\infty)$,
then ${\mathcal N}_{upq}^s(\rn)$ is a proper
subset of $B_{p,q}^{s,\frac{1}{p}-\frac{1}{u}}(\rn)$.

{\rm (ii)}
If $0<p<u<\infty$ and $q=\infty$, then
${\mathcal N}_{upq}^s(\rn)
=B_{p,q}^{s,\frac{1}{p}-\frac{1}{u}}(\rn)$ with equivalent norms.

{\rm (iii)}
If $0<p\le u<\infty$ and $q\in(0,\infty]$, then
${\mathcal E}_{upq}^s(\rn)
=F_{p,q}^{s,\frac{1}{p}-\frac{1}{u}}(\rn)$ with equivalent norms.

An analogy for homogeneous spaces
is also true.
\end{proposition}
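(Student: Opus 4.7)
\textbf{Proof plan for Proposition \ref{p10.1}.} The plan is to identify the Besov-Morrey and Triebel-Lizorkin-Morrey spaces with the abstract scales introduced in Definition \ref{d3.1}, and then deduce (i)--(iii) from the general theory developed in the paper. Set $\mathcal{L}(\rn):=\mathcal{M}^u_p(\rn)$, $w_j\equiv 2^{js}$, and $\tau:=\tfrac{1}{p}-\tfrac{1}{u}\in[0,\infty)$. Then $\mathcal{L}$ satisfies $(\mathcal{L}1)$--$(\mathcal{L}8)$ (this is in the spirit of Example \ref{e5.11}, where for a dyadic cube $P$ one has $\|\chi_P\|_{\mathcal{M}^u_p(\rn)}\sim |P|^{1/u}$, so the non-degeneracy $(\mathcal{L}6)$ holds with $\tau_0=1/u$). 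Unfolding the definitions, one sees that for all admissible $a$,
\[
\mathcal{N}^{s,0}_{\mathcal{L},q,a}(\rn)=\mathcal{N}_{upq}^s(\rn),\quad
\mathcal{E}^{s,0}_{\mathcal{L},q,a}(\rn)=\mathcal{E}_{upq}^s(\rn),
\]
because the absence of a supremum over $P$ in Definitions \ref{d2.2}(ii) and (iv) matches exactly the Morrey norm applied to the (sequence-valued) Littlewood-Paley data, and classical Peetre-type arguments remove the dependence on $a$ in this setting. Hence it suffices to compare $\mathcal{N}^{s,0}_{\mathcal{L},q,a}(\rn)$ with $B^{s,\tau}_{p,q,a}(\rn)$ and $\mathcal{E}^{s,0}_{\mathcal{L},q,a}(\rn)$ with $F^{s,\tau}_{p,q,a}(\rn)$.

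For assertion (iii), I would pass to the level of sequence spaces via the atomic/molecular decomposition of Theorem \ref{t4.1}, so that it suffices to prove $e^{s,0}_{\mathcal{L},q,a}(\rn)=f^{s,\tau}_{p,q,a}(\rn)$ with equivalent norms. The inclusion $e^{s,0}_{\mathcal{L},q,a}\hookrightarrow f^{s,\tau}_{p,q,a}$ is immediate since the sum over $j\geq j_P\vee 0$ in the $f$-norm is dominated by the full sum appearing in the $e$-norm. For the converse, the decisive observation is that the term indexed by a dyadic cube $P$ in the $e$-norm can be split into a high-frequency part ($j\geq j_P\vee 0$), which is controlled by the $f$-norm of the same cube $P$, and a low-frequency part ($0\le j<j_P$), where for each such $j$ there is a unique dyadic ancestor $Q\supset P$ with $j_Q=j$; since $|P|^{-\tau}\|\chi_P\|_{\mathcal{L}}\le |Q|^{-\tau}\|\chi_Q\|_{\mathcal{L}}$ under the Morrey scaling (as a consequence of $\|\chi_P\|_{\mathcal{M}^u_p}\sim|P|^{1/u}$ with $\tau=1/p-1/u$), this contribution is bounded by the $f$-norm at those ancestors. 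Summing in $\ell^q$ (using $q\in(0,\infty]$, with a $\theta$-power in the quasi-normed range) closes the argument. This is precisely the mechanism that underlies Theorems \ref{t8.4} and \ref{t8.5}, adapted to the Morrey scaling.

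For assertion (ii) ($q=\infty$), I would argue directly without passing through sequence spaces. Since the $\ell^\infty$ supremum commutes with the supremum over $P$ in Definition \ref{d2.2}, one checks that
\[
\|f\|_{\mathcal{N}^{s,0}_{\mathcal{L},\infty,a}(\rn)}
=\sup_{j\in\mathbb{Z}_+}\sup_{P\in\mathcal{Q}(\rn)}\frac{\|\chi_P w_j(\varphi_j^*f)_a\|_{\mathcal{L}(\rn)}}{|P|^{0}}
\]
coincides (after an exchange of suprema and a restriction to $j\geq j_P\vee 0$, which is automatic since for $j<j_P$ a larger cube dominates) with $\|f\|_{B^{s,\tau}_{p,q,a}(\rn)}$. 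This is the $\ell^\infty$-special case of Theorem \ref{t8.6}(ii).

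For assertion (i) ($q\in(0,\infty)$), the strict inclusion $\mathcal{N}_{upq}^s(\rn)\subsetneq B^{s,\tau}_{p,q}(\rn)$ follows from a direct inspection of the two sequence norms $n^{s,\tau}_{\cl,q,a}$ and $b^{s,\tau}_{\cl,q,a}$. I would construct a sequence $\lambda$ supported on a single diagonal family of cubes $R_j$ (one dyadic cube per scale $j$) with $\lambda_{R_j}:=|R_j|^{\tau}\|w_j\chi_{R_j}\|_{\mathcal{L}}^{-1}$, exactly as in the proof of Theorem \ref{t8.6}(i); this sequence has finite $b^{s,\tau}_{\cl,q,a}$-norm but its $n^{s,\tau}_{\cl,q,a}$-norm is infinite precisely because summing in $j$ outside of the supremum over $P$ diverges once $q<\infty$. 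Transferring back through the atomic decomposition yields a distribution in $B^{s,\tau}_{p,q}(\rn)\setminus\mathcal{N}_{upq}^s(\rn)$.

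The main obstacle is the low-frequency analysis of assertion (iii) in the regime where $u\gg p$ (so that $\tau=1/p-1/u$ is not smaller than $\tau_0=1/u$ and Theorem \ref{t8.4} cannot be applied verbatim); handling this requires the atomic comparison of nested dyadic ancestors described above, together with a careful $\theta$-triangle inequality argument in the quasi-normed range. The homogeneous analogue is proved in the same way, with $\mathbb{Z}_+$ replaced by $\mathbb{Z}$ throughout and Theorem \ref{t9.4} used in place of Theorem \ref{t4.1}.
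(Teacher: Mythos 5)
First, a point of reference: the paper does not prove Proposition \ref{p10.1} at all --- it is quoted from \cite[Theorem 1.1]{ysy} --- so the only in-paper material to compare against is the identification of $\mathcal{N}^s_{upq}(\rn)$, $\mathcal{E}^s_{upq}(\rn)$ with $\cn^{s,0}_{\mathcal{M}^u_p,q,a}(\rn)$, $\ce^{s,0}_{\mathcal{M}^u_p,q,a}(\rn)$ in Section \ref{s10.1} and the mechanism of Theorems \ref{t8.4}--\ref{t8.6}. Your overall strategy (pass to sequence spaces, split into $j\ge j_P\vee0$ and $j<j_P$, control the low frequencies through the unique dyadic ancestors, and use the diagonal sequence from the proof of Theorem \ref{t8.6}(i) for the properness in (i)) is the right one and is essentially how the cited reference argues.

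There is, however, a concrete gap in your low-frequency estimate for (iii). The inequality you invoke, $|P|^{-\tau}\|\chi_P\|_{\cl}\le|Q|^{-\tau}\|\chi_Q\|_{\cl}$ for $P\subset Q$ with $\cl=\mathcal{M}^u_p(\rn)$, reads $|P|^{2/u-1/p}\le|Q|^{2/u-1/p}$ because $\|\chi_P\|_{\mathcal{M}^u_p(\rn)}\sim|P|^{1/u}$ and $\tau=\frac1p-\frac1u$; the exponent $\frac2u-\frac1p$ is negative once $u>2p$, so the inequality then fails. You correctly flag the regime $u\gg p$ as the ``main obstacle,'' but your proposed remedy is the same ancestor comparison justified by the same inequality, so the argument is circular exactly where it needs repair. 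The fix is to change the normalization \emph{before} comparing: rewrite $\|g\|_{\mathcal{M}^u_p(\rn)}\sim\sup_{P\in\cq(\rn)}|P|^{-\tau}\|\chi_P g\|_{L^p(\rn)}$, so that both sequence norms become suprema over dyadic cubes of normalized $L^p$-quantities. The relevant weight is then $|P|^{-\tau}\|\chi_P\|_{L^p(\rn)}=|P|^{1/u}$, which is increasing in $|P|$ for every $u<\infty$, and the ratio $(|P|/|Q_j|)^{1/u}=2^{-(j_P-j)n/u}$ supplies precisely the geometric decay needed to sum the low frequencies in $\ell^q$ (or to take the supremum when $q=\infty$). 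This is the proof of Theorem \ref{t8.4} run with $\cl=L^p(\rn)$ and $\tau_0=\frac1p$, and since $\tau=\frac1p-\frac1u<\frac1p$ always, no restriction on $u/p$ arises. The same correction is needed in (ii), where the appeal to Theorem \ref{t8.6}(ii) is not accurate as stated (that theorem compares $\cn$ and $B$ built on the \emph{same} $\cl$ and the \emph{same} $\tau$, whereas here $\mathcal{M}^u_p$ with $\tau=0$ is compared with $L^p$ with $\tau=\frac1p-\frac1u$); there the ancestor step additionally uses that $(\vz_j^*f)_a$ is essentially constant on cubes of side length at most $2^{-j}$, together with the monotonicity of $|P|^{1/u}$. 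With these repairs the rest of your plan goes through.
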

Other related spaces are inhomogeneous \emph{Hardy-Morrey spaces}
$h\cm^p_q(\rn)$,
whose \emph{norm} is given by
\begin{equation*}
\|f\|_{h{\mathcal M}^p_q(\rn)}
:=
\left\|
\sup_{0<t \le 1}|t^{-n}\Phi(t^{-1}\cdot)*f|
\right\|_{{\mathcal M}^p_q(\rn)}
\end{equation*}%
for all $f \in \cs'(\rn)$ and $0<q \le p<\infty$,
where $\Phi$ is as in \eqref{1.1}.

Now in this example
\eqref{2.10} actually reads
\begin{equation*}
\cl(\rn):={\mathcal M}^p_q(\rn), \quad
\theta:=\min\{1,q\}, \quad
N_0:=\frac{n}{p}+1, \quad
\gamma:=\frac{n}{p}, \quad
\delta:=0
\end{equation*}%
and $\cl(\rn)$ satisfies $(\cl1)$ through $(\cl6)$ and $(\cl8)$ (see \cite{st05,tx}).
Meanwhile \eqref{2.11} actually reads as
\begin{equation*}
w_j(x) := 1\mbox{ for all }x\in\rn \mbox{ and }j\in\zz_+, \quad
\alpha_1=\alpha_2=\alpha_3=0.
\end{equation*}%
Hence, \eqref{2.12} is replaced by
\begin{equation*}
\tau \in [0,\infty), \, q\in (0,\infty], \, a>\frac{n}{p}+1.
\end{equation*}%

We refer to \cite{jw09,jw10,ky,s08-2,s09,st07,sw}
for more details and applications
of Hardy-Morrey spaces,
Besov-Morrey spaces and Triebel-Lizorkin-Morrey spaces.
Indeed, in \cite{ky,s08-2,st07},
Besov-Morrey spaces and its applications are investigated;
Triebel-Lizorkin-Morrey spaces
are dealt in \cite{s08-2,s09,st07};
Hardy-Morrey spaces are
defined and considered in
\cite{jw09,jw10,s09,sw}
and Hardy-Morrey spaces are applied to PDE
in \cite{jw10}.
We also refer to \cite{ist}
for more related results about Besov-Morrey spaces
and Triebel-Lizorkin-Morrey spaces,
where the authors covered weighted settings.

\paragraph{Generalized Morrey spaces}

We can also consider generalized Morrey spaces.
Let $p\in(0,\infty)$ and
$\phi:\,(0,\infty)\to(0,\infty)$ be a suitable function.
For a function $f$ locally in $L^p(\rn)$, we set
\begin{equation*}
\|f\|_{{\cal M}_{\phi,p}(\rn)}
:=
\sup_{Q \in {\cal D}(\rn)}
\phi(\ell(Q))\left[\frac1{|Q|}\int_{Q}|f(x)|^p\,dx\right]^{\frac{1}{p}},
\end{equation*}%
where $\ell(Q)$ denotes the \emph{side-length} of the cube $Q$.
The \emph{generalized Morrey space} ${\cal M}_{\phi,p}(\rn)$
is defined to be the space of all functions $f$ locally in $L^p(\rn)$
such that $\|f\|_{{\cal M}_{\phi,p}(\rn)}<\fz$.
Let $\cl(\rn):={{\cal M}_{\phi,p}(\rn)}$.
Observe that $(\cl1)$ through $(\cl6)$ are true under a suitable condition
on $\phi$. At least $(\cl1)$ through $(\cl5)$ hold true
without assuming any condition on $\phi$.
Morrey-Campanato spaces with growth function $\phi$ were first
introduced by Spanne \cite{sp65,sp66} and Peetre \cite{p66},
which treat singular integrals of convolution type.
In 1991, Mizuhara \cite{m91} studied the boundedness of
the Hardy-Littlewood maximal operator
on Morrey spaces with growth function $\phi$.
Later in 1994, Nakai \cite{n94} considered the boundedness of
singular integral (with non-convolution kernel),
and fractional integral operators
on Morrey spaces with growth function $\phi$.
In \cite{n04}, Nakai started to define the space ${\cal M}_{\phi,p}(\rn)$.
Later,
this type of function spaces was used in \cite{KNS,n94,sst10}.
We refer to \cite{n08} for more details
of this type of function spaces.
In \cite[p.\, 445]{n00},
Nakai has proven the following (see \cite[(10.6)]{sst11} as well).

\begin{proposition}
Let $p\in(0,\infty)$
and
$\phi:(0,\infty) \to (0,\infty)$
be an arbitrary function.
Then
there exists a function $\phi^*:(0,\infty) \to (0,\infty)$
such that
\begin{equation}\label{10.8}
\mbox{$\phi^*(t)$ is nondecreasing and $[\phi^*(t)]^pt^{-n}$ is nonincreasing,}
\end{equation}
and that ${\cal M}_{\phi,p}(\rn)$ and ${\cal M}_{\phi^*,p}(\rn)$
coincide.
\end{proposition}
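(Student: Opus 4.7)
My plan is to construct $\phi^*$ as the smallest majorant of $\phi$ that satisfies the two monotonicity conditions in \eqref{10.8}. Concretely, I would set
$$
\phi^*(t) := \sup_{s > 0} \phi(s)\min\bigl\{1,\,(t/s)^{n/p}\bigr\}, \quad t \in (0,\fz),
$$
so that $\phi^*(t) \ge \phi(t)$ by choosing $s=t$. Verifying \eqref{10.8} is then a short computation: monotonicity of $\phi^*$ is immediate since $\min\{1,(t/s)^{n/p}\}$ is nondecreasing in $t$ for each fixed $s$, and the rewriting
$$
\frac{\phi^*(t)}{t^{n/p}} = \sup_{s>0}\phi(s)\min\bigl\{s^{-n/p},\,t^{-n/p}\bigr\}
$$
shows that $\phi^*(t)\,t^{-n/p}$ is nonincreasing, as a supremum of functions each of which is nonincreasing in $t$.

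The core of the argument is the identification $\cm_{\phi,p}(\rn)=\cm_{\phi^*,p}(\rn)$ with equivalent norms. One direction, $\|f\|_{\cm_{\phi,p}(\rn)}\le\|f\|_{\cm_{\phi^*,p}(\rn)}$, follows instantly from $\phi\le\phi^*$. For the reverse, I would fix $f \in \cm_{\phi,p}(\rn)$ and a cube $Q$ with $\ell(Q)=t$, and show
$$
\phi(s)\min\bigl\{1,(t/s)^{n/p}\bigr\}\lf[\frac{1}{|Q|}\int_Q |f(x)|^p\,dx\r]^{1/p}
\ls \|f\|_{\cm_{\phi,p}(\rn)}
$$
uniformly in $s>0$, splitting into two cases. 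When $s\le t$, I partition $Q$ into $\sim(t/s)^n$ subcubes of side length $s$ and sum the Morrey bound $\int_{Q_i}|f|^p\le|Q_i|\,\phi(s)^{-p}\|f\|_{\cm_{\phi,p}(\rn)}^p$ over the pieces, extracting the factor $\phi(s)^{-1}$ after dividing by $|Q|$. When $s>t$, I enclose $Q$ in a cube $Q'$ of side length $s$ and use the Morrey bound directly on $Q'$, the gain $(t/s)^{n/p}$ arising from the change of averaging scale from $|Q'|$ to $|Q|$. Taking the supremum in $s$ on the left-hand side then recovers $\phi^*(t)$ and completes the embedding.

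The only real subtlety, and the place where I expect to need the most care, is the finiteness of $\phi^*$ for arbitrary $\phi$: the supremum defining $\phi^*(t)$ could in principle diverge for wildly behaved $\phi$. However, the very same two-case estimate above shows that if $\phi^*(t_0)=\fz$ for some $t_0$, then every $f \in \cm_{\phi,p}(\rn)$ must vanish on every cube containing a point of scale $t_0$, so $\cm_{\phi,p}(\rn)=\{0\}$; in that degenerate situation any nondecreasing function whose $(n/p)$-normalisation is nonincreasing may serve as $\phi^*$. Once this edge case is isolated, the construction above delivers a canonical $\phi^*:(0,\fz)\to(0,\fz)$ with all the stated properties.
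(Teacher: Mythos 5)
The paper itself offers no proof of this proposition --- it is quoted from Nakai \cite{n00} --- so the comparison is with the standard argument there. Your construction of $\phi^*$ as the least admissible majorant $\sup_{s>0}\phi(s)\min\{1,(t/s)^{n/p}\}$, the verification of \eqref{10.8}, and the two-case covering/enclosing estimate for the reverse embedding are exactly that standard argument, and this part of your proof is correct. (A cosmetic caveat: if ${\mathcal D}(\rn)$ in Definition \ref{d10.4} is meant to be the dyadic cubes, the supremum defining $\phi^*$ should run over the corresponding discrete set of scales, since only those values of $\phi$ enter the norm.)

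The genuine gap is your resolution of the degenerate case. You correctly observe that $\phi^*(t_0)=\infty$ forces $\cm_{\phi,p}(\rn)=\{0\}$, but the claim that in this situation \emph{any} nondecreasing $\phi^*$ with $[\phi^*(t)]^pt^{-n}$ nonincreasing "may serve" is false. Any such $\phi^*$ satisfies $\phi^*(t)\le\phi^*(1)$ for $t\le 1$ and $\phi^*(t)\le\phi^*(1)\,t^{n/p}$ for $t\ge 1$, and hence $\|\chi_{[0,1]^n}\|_{\cm_{\phi^*,p}(\rn)}\le\phi^*(1)<\infty$; thus $\cm_{\phi^*,p}(\rn)$ always contains nonzero elements and can never coincide with $\{0\}$. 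So the degenerate case cannot be repaired by a cleverer choice of $\phi^*$: the proposition has to be read under the implicit assumption that $\cm_{\phi,p}(\rn)\ne\{0\}$, which by your own two-case estimate is equivalent to finiteness of $\phi^*$ at one (hence, by the two monotonicity properties, at every) point. You should state this nontriviality hypothesis explicitly and restrict the construction to that case, rather than assert a fix that provably fails.
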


We rephrase $(\cl8)$ by using \eqref{10.8} as follows.

\begin{proposition}[{\rm \cite[Theorem 2.5]{s08}}]
\label{p10.3}
Suppose that $\phi:(0,\infty) \to (0,\infty)$
is an increasing function.
Assume that $\phi:(0,\infty) \to (0,\infty)$ satisfies
\begin{equation}\label{10.9}
\int_r^\infty
\phi(t)\,\frac{dt}{t}
\sim
\phi(r)
\end{equation}
for all $r\in(0,\fz)$.
Then, for all $u\in(1,\infty]$ and sequences of measurable functions $\{f_j\}_{j=1}^\fz$,
\begin{equation*}
\lf\|
\lf(\sum_{j=1}^\fz [Mf_j]{}^u\r)^\frac{1}{u}
\r\|_{\cm_{\phi,p}(\rn)}
\sim
\lf\|
\lf(\sum_{j=1}^\fz |f_j|^u\r)^\frac{1}{u}
\r\|_{\cm_{\phi,p}(\rn)}
\end{equation*}%
with the implicit equivalent positive constants independent of $\{f_j\}_{j=1}^\fz$.
\end{proposition}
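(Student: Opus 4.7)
The lower estimate $\gtrsim$ is immediate from the pointwise bound $|f_j(x)|\le Mf_j(x)$ valid for almost every $x\in\rn$, combined with the lattice property of the norm $\|\cdot\|_{\cm_{\phi,p}(\rn)}$. Thus the core of the proof is the reverse inequality $\lesssim$, and the plan is to establish it by a standard near/far decomposition, scale by scale, where the control of the far part is furnished by the assumption on $\phi$.

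Fix an arbitrary cube $Q\in\cq(\rn)$ and, for each $j$, split $f_j=g_j+h_j$ with $g_j:=f_j\chi_{2Q}$ and $h_j:=f_j\chi_{\rn\setminus 2Q}$. Then $Mf_j\le Mg_j+Mh_j$, so that
\[
\phi(\ell(Q))\lf[\frac1{|Q|}\int_Q\Bigl(\sum_j[Mf_j]^u\Bigr)^{p/u}dx\r]^{1/p}
\le
{\rm I}(Q)+{\rm II}(Q),
\]
where ${\rm I}(Q)$ and ${\rm II}(Q)$ are the analogous expressions built from $\{Mg_j\}$ and $\{Mh_j\}$ respectively. For ${\rm I}(Q)$ I would invoke the classical Fefferman-Stein vector-valued maximal inequality in $L^p(\rn)$; this reduces ${\rm I}(Q)$ to $\phi(\ell(Q))|Q|^{-1/p}\|(\sum_j|f_j|^u)^{1/u}\|_{L^p(2Q)}$, which is controlled by $\|(\sum_j|f_j|^u)^{1/u}\|_{\cm_{\phi,p}(\rn)}$ up to the doubling factor $\phi(\ell(Q))/\phi(2\ell(Q))\lesssim 1$.

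For ${\rm II}(Q)$ I would use the standard off-diagonal estimate: for every $x\in Q$,
\[
Mh_j(x)\lesssim
\sum_{k=1}^\infty \frac{1}{|2^kQ|}\int_{2^kQ}|f_j(y)|\,dy.
\]
Since $u\in(1,\fz]$, Minkowski's inequality in $\ell^u$ then gives
\[
\Bigl(\sum_j[Mh_j(x)]^u\Bigr)^{1/u}
\lesssim
\sum_{k=1}^\infty \frac{1}{|2^kQ|}\int_{2^kQ}\Bigl(\sum_j|f_j(y)|^u\Bigr)^{1/u}dy,
\]
and after applying Hölder in the inner integral, each $k$-term is dominated by $[\phi(2^k\ell(Q))]^{-1}\|(\sum_j|f_j|^u)^{1/u}\|_{\cm_{\phi,p}(\rn)}$. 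Multiplying by $\phi(\ell(Q))$ and invoking the integral condition in the form of its dyadic discretization (so that $\sum_k \phi(\ell(Q))/\phi(2^k\ell(Q))$ stays bounded, equivalently $\phi(\ell(Q))\int_{\ell(Q)}^\infty \phi(t)^{-1}\,dt/t\sim 1$) closes the estimate uniformly in $Q$, giving the desired Morrey bound.

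The main obstacle, and the place where care is needed, is the near-part estimate ${\rm I}(Q)$ when $p\le 1$, because the scalar Fefferman-Stein inequality is not directly available on $L^p$ in that regime. Here I would reduce to the large-$p$ case by the homogeneity trick: choose $p_0\in(0,\min\{1,p,u\})$, replace each $f_j$ by $|f_j|^{p_0}$, pass to the exponents $p/p_0,u/p_0\in(1,\fz)$, and exploit the pointwise relation $M(|f_j|^{p_0})\gtrsim (Mf_j)^{p_0}$ together with the fact that the rewritten weight $\phi(\cdot)^{p/p_0}$ still satisfies an analogous Dini condition. This reduction, combined with Fefferman-Stein applied on $L^{p/p_0}$ and the above far-part estimate, will complete the proof.
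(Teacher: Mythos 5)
First, a point of reference: the paper does not prove this proposition at all --- it is quoted verbatim from \cite[Theorem 2.5]{s08} --- so there is no internal proof to compare against, and your argument has to stand on its own. Your overall strategy (near/far splitting $f_j=f_j\chi_{2Q}+f_j\chi_{\rn\setminus 2Q}$, the classical Fefferman--Stein inequality on $L^p$ for the near part, and the pointwise tail bound $Mh_j(x)\lesssim\sum_k|2^kQ|^{-1}\int_{2^kQ}|f_j|$ followed by Minkowski in $\ell^u$ and H\"older for the far part) is indeed the standard route to such results and is sound for $p\in(1,\infty)$, $u\in(1,\infty]$. But there are two genuine problems. The first is in the far part: what your argument needs is $\sum_{k\ge 1}\phi(\ell(Q))/\phi(2^k\ell(Q))\lesssim 1$, i.e.\ the Dini-type condition $\int_r^\infty\frac{dt}{t\,\phi(t)}\lesssim\frac1{\phi(r)}$ on the \emph{reciprocal} of $\phi$. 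You assert this is "the integral condition" \eqref{10.9}, but \eqref{10.9} reads $\int_r^\infty\phi(t)\,\frac{dt}{t}\sim\phi(r)$, which is a different statement; the two are not equivalent. (In fact \eqref{10.9} as printed cannot hold for any increasing positive $\phi$, since $\int_r^\infty\phi(t)\,\frac{dt}{t}\ge\phi(r)\int_r^\infty\frac{dt}{t}=\infty$; there is a normalization slip in the paper's statement. You have silently substituted the condition that makes the argument work, which happens to be the right one after correcting the normalization, but a proof must either derive the summability you use from the stated hypothesis or flag explicitly that the hypothesis must be read in the corrected form.)

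The second problem is the proposed treatment of $p\le 1$. The inequality $M(|f|^{p_0})\gtrsim (Mf)^{p_0}$ for $p_0<1$ is false --- Jensen/H\"older gives exactly the opposite, $\frac1{|B|}\int_B|f|^{p_0}\le\bigl(\frac1{|B|}\int_B|f|\bigr)^{p_0}$, hence $M(|f|^{p_0})\le(Mf)^{p_0}$, and the reverse fails already for $f=\chi_{[0,1]^n}$. So the "homogeneity trick" does not run. Moreover no repair is possible: the conclusion itself is false for $p\le1$ even in the scalar classical case ($u=\infty$, one function, $\phi(t)=t^{n/p_1}$ gives $\cm^{p_1}_1(\rn)$, on which $M$ is not bounded). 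The correct move is to observe that the proposition carries the implicit restriction $p\in(1,\infty)$ (which is what \cite{s08} assumes), state it, and drop the last paragraph of your argument entirely. With that restriction in place and the hypothesis on $\phi$ read correctly, your first two steps do constitute a complete proof, modulo the routine remark that $2Q$ and $2^kQ$ must be covered by boundedly many dyadic cubes of comparable side length before comparing with the supremum defining $\|\cdot\|_{\cm_{\phi,p}(\rn)}$.
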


\begin{remark}\label{r10.1}
In {\rm \cite{s08}},
it was actually assumed that
\begin{equation}\label{10.11}
\int_r^\infty
\phi(t)\,\frac{dt}{t}
\ls
\phi(r) \mbox{ for all }r\in(0,\fz).
\end{equation}
However, under the assumption {\rm \eqref{10.8}},
the conditions {\rm \eqref{10.9}} and {\rm \eqref{10.11}}
are mutually equivalent.
\end{remark}

Now in this example
\eqref{2.10} actually reads as
\begin{equation*}
\cl(\rn):={\cal M}_{\phi,p}(\rn), \quad
\theta:=1, \quad
N_0:=\frac{n}{p}+1, \quad
\gamma:=\frac{n}{p}, \quad
\delta:=0
\end{equation*}%
and $\cl(\rn)$ satisfies $(\cl8)$ by Proposition \ref{p10.3}
and also $(\cl1)$ through $(\cl6)$.
While \eqref{2.11} actually reads
\begin{equation*}
w_j(x) := 1\mbox{ for all }x\in\rn \mbox{ and }j\in\zz_+, \quad
\alpha_1=\alpha_2=\alpha_3=0.
\end{equation*}%
Hence, \eqref{2.12} is replaced by
\begin{equation*}
\tau \in [0,\infty), \, q\in (0,\infty], \, a>\frac{n}{p}+1.
\end{equation*}%

\subsection{Orlicz spaces}\label{s10.2}

Now let us recall the definition
of Orlicz spaces which were given in Example \ref{e5.11}.

The proof of the following estimate can be found in \cite{cgmmp06}.
\begin{lemma}\label{l10.1}
If a Young function $\Phi$ satisfies
$$
{\rm (Doubling \ condition)}\,
\sup_{t>0}\frac{\Phi(2t)}{\Phi(t)}<\infty, \quad
\mbox{\rm ($\nabla_2$-condition)}\,
\inf_{t>0}\frac{\Phi(2t)}{\Phi(t)}>2,
$$
then for all $u\in(1,\fz]$ and sequences
of measurable functions $\{f_j\}_{j=1}^\infty$,
\begin{equation}\label{10.15}
\left\| \left(\sum_{j=1}^\fz
[Mf_j]{}^u \right)^{\frac{1}{u}}\right\|_{L^\Phi(\rn)}
\sim
\left\|\left(\sum_{j=1}^\fz|f_j|^u \right)^{\frac{1}{u}}\right\|_{L^\Phi(\rn)}
\end{equation}
with the implicit equivalent positive constants independent of $\{f_j\}_{j=1}^\fz$.
\end{lemma}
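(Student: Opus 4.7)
The direction $\gtrsim$ is immediate from the Lebesgue differentiation theorem combined with the lattice property $(\cl4)$: we have $|f_j(x)| \le Mf_j(x)$ for a.e. $x$, so $\left(\sum_j|f_j|^u\right)^{1/u} \le \left(\sum_j[Mf_j]^u\right)^{1/u}$ pointwise, and the Orlicz norm is monotone. Thus the entire content of the lemma lies in the inequality $\lesssim$.

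For the upper bound, my approach is to invoke the extrapolation machinery (Proposition \ref{p9.31}). Taking
\[
\mathcal{Z} := \left\{\left(\left(\sum_{j=1}^N [Mf_j]^u\right)^{1/u},\ \left(\sum_{j=1}^N |f_j|^u\right)^{1/u}\right) : N\in\nn,\ \{f_j\}_{j=1}^N \subset L^1_{\loc}(\rn)\right\},
\]
the classical weighted Fefferman--Stein vector-valued maximal inequality furnishes, for every $p_0 \in (1,\infty)$ and $w \in A_{p_0}(\rn)$, an estimate
\[
\int_{\rn}\left(\sum_{j=1}^N [Mf_j]^u\right)^{p_0/u}w\,dx \lesssim_{A_{p_0}(w)} \int_{\rn}\left(\sum_{j=1}^N|f_j|^u\right)^{p_0/u}w\,dx,
\]
with constant depending only on $p_0, u$, and the Muckenhoupt constant of $w$. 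This is precisely condition (\ref{eq:8.32}) of Proposition \ref{p9.31}. To invoke that proposition with $\cl(\rn) := L^{\Phi}(\rn)$, I would then verify that there exists some $p \in (1,\infty)$ for which $\cl^{p}(\rn) = L^{\Phi}(\rn)^{p}$ is a Banach function space and $M$ is bounded on its associate space. The $\Delta_{2}$ assumption (the doubling condition) ensures that $L^{\Phi}(\rn)$ has finite upper Boyd index, while the $\nabla_{2}$ assumption is precisely the condition that the Young conjugate $\widetilde{\Phi}$ satisfies $\Delta_{2}$, equivalently that the lower Boyd index of $L^{\Phi}$ exceeds $1$; together they pin $L^{\Phi}$ in the interpolation scale strictly between two non-trivial $L^{p}$-spaces. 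For $p > 1$ sufficiently close to $1$, the power space $L^{\Phi}(\rn)^{p}$ is still an Orlicz space whose generating function inherits both $\Delta_{2}$ and $\nabla_{2}$, so its associate is again an Orlicz space and carries a bounded maximal operator by the classical result of Kita (or, equivalently, by the $A_{p}$-weight characterization of boundedness of $M$ on $L^{\widetilde{\Phi}}$). Once these structural properties are in place, Proposition \ref{p9.31} delivers the desired inequality.

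The main difficulty is the functional-analytic verification that $(L^{\Phi})^{p}$ and its associate fit the hypothesis of Proposition \ref{p9.31}; in particular, identifying an admissible exponent $p>1$ requires a quantitative translation of the $\nabla_{2}$-inequality $\inf_{t}\Phi(2t)/\Phi(t)>2$ into a lower Boyd index strictly greater than one. An alternative, self-contained strategy would bypass extrapolation entirely and prove the inequality by a Calder\'on--Zygmund / good-$\lambda$ argument applied to the modular $\rho(f):=\int_{\rn}\Phi(|f(x)|)\,dx$: the $\Delta_{2}$-condition is what allows one to work with the modular in place of the norm, while the $\nabla_{2}$-condition provides the crucial absorption step in the good-$\lambda$ inequality. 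Either route yields (\ref{10.15}), and the equivalence follows.
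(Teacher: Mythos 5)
Your overall route --- the pointwise bound $|f_j|\le Mf_j$ a.e.\ for the easy direction, and Rubio de Francia extrapolation fed by the weighted Fefferman--Stein vector-valued inequality for the hard one --- is exactly the mechanism behind the paper's proof, which consists of a citation to \cite{cgmmp06}, an extrapolation paper. So the strategy is the intended one, and the lower bound is handled correctly.

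There is, however, a concrete gap in your verification of the hypotheses of Proposition \ref{p9.31}. With the paper's definition $\cl^p(\rn)=\{f:|f|^p\in\cl(\rn)\}$, the space $(L^\Phi(\rn))^p$ is the Orlicz space generated by $t\mapsto\Phi(t^p)$, and boundedness of $M$ on its associate space is equivalent to $\Phi(\cdot^p)\in\Delta_2$, which follows from the doubling condition alone. As you have set things up, the $\nabla_2$ hypothesis therefore never does any work: the identical verification goes through for $\Phi(t)=t$, i.e.\ it would ``prove'' the vector-valued Fefferman--Stein inequality on $L^1(\rn)$, which is false. The missing ingredient is that the extrapolation theorem actually needed (the form used in \cite{cgmmp06}, or \cite[Theorem 4.6]{crmape11}) requires $M$ to be bounded on $L^\Phi(\rn)$ itself \emph{and} on its associate space $L^{\widetilde\Phi}(\rn)$; when the starting exponent $p_0$ exceeds $1$, two separate Rubio de Francia iterations are needed to build the $A_{p_0}$ weight, one on each side of the duality. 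It is precisely $\Phi\in\nabla_2$ (equivalently $\widetilde\Phi\in\Delta_2$) that gives boundedness of $M$ on $L^\Phi(\rn)$, while the doubling condition $\Phi\in\Delta_2$ gives boundedness of $M$ on $L^{\widetilde\Phi}(\rn)$. If you insist on the one-sided $p$-th power formulation, the power must go the other way: you need $L^\Phi(\rn)$ to be the $p$-convexification of a Banach function space for some $p>1$, i.e.\ $t\mapsto\Phi(t^{1/p})$ must still be (equivalent to) a Young function, and that is exactly where $\nabla_2$ enters. Your closing remark about translating $\nabla_2$ into a lower Boyd index strictly greater than one points in the right direction, but the step as written does not survive the $L^1$ sanity check and needs to be redone along these lines.
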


Thus, by Lemma \ref{l10.1}, $L^\Phi(\rn)$ satisfies $(\cl8)$.
Now in this example $\cl(\rn):=L^\Phi(\rn)$ also satisfy
$(\cl1)$ through $(\cl6)$ with the parameters
\eqref{2.10} and \eqref{2.11}
actually read as
\begin{equation*}
\cl(\rn):=L^\Phi(\rn), \quad \theta:=1, \quad N_0:=n+1, \quad \gamma:=n, \quad \delta:=0.
\end{equation*}
Indeed, since $\Phi$ is a Young function,
we have
\[
\int_{\rn}\Phi(2^{jn}\chi_{Q_{j0}}(x))\,dx
=
2^{-jn}\Phi(2^{jn})
\ge 1.
\]
Consequently
$\|\chi_{Q_{j0}}\|_{L^\Phi(\rn)} \ge 2^{-jn}$.
Meanwhile as before,
\begin{equation*}
w_j(x) := 1\mbox{ for all }x\in\rn \mbox{ and }j\in\zz_+, \quad
\alpha_1=\alpha_2=\alpha_3=0.
\end{equation*}
Hence \eqref{2.12} now stands for
\begin{equation*}
\tau \in [0,\infty), \, q\in (0,\infty], \, a>n+1.
\end{equation*}

This example can be generalized somehow.
Given a Young function $\Phi$,
define the \emph{mean Luxemburg norm }of $f$ on a cube $Q\in\cq(\rn)$ by
\begin{equation*}
\|f\|_{\Phi,\,Q}
:=
\inf\left\{\lambda>0:\,
\frac{1}{|Q|}\int_{Q}\Phi\left(\frac{|f(x)|}{\lambda}\right)\,dx\le 1\r\}.
\end{equation*}%
When
$\Phi(t):= t^p$ for all $t\in(0,\fz)$ with
$p\in[1,\infty)$,
\begin{equation*}
\|f\|_{\Phi,\,Q}
=
\left(\frac{1}{|Q|}\int_{Q}|f(x)|^p\,dx\r)^{1/p},
\end{equation*}%
that is, the mean Luxemburg norm coincides with the (normalized) $L^p$ norm.
The \emph{Orlicz-Morrey space}
$\cl^{\Phi,\,\phi}(\R^n)$
consists of all locally integrable functions $f$ on $\R^n$
for which the \emph{norm}
\begin{equation*}
\|f\|_{\cl^{\Phi,\,\phi}(\rn)}
:=
\sup_{Q\in\cq(\rn)}\phi(\ell(Q))\|f\|_{\Phi,\,Q}
\end{equation*}%
is finite.
As is written in \cite[Section 1]{SaSuTa4},
we can assume without loss of generality
that $t \mapsto \phi(t)$ and $t \mapsto t^n\phi(t)^{-1}$
are both increasing.

Using \cite[Proposition 2.17]{SaSuTa4},
we extend \cite{Ki1,Ki2}
and \cite[Proposition 2.17]{SaSuTa4} to the vector-valued
version.
In the next proposition,
we shall establish that $(\cl8)$ holds true
provided that
\[
\int_{1}^{t}
\Phi\left(\frac{t}{s}\right)\,ds
\le
\Phi(Ct) \quad(t\in (0,\fz))
\]
for some positive constant $C$ and for all $t\in(1,\fz)$.

\begin{proposition}\label{p10.4}
Let $q\in(0,\infty]$.
Let $\Phi$ be a normalized Young function.
Then the following are equivalent:

{\rm (i)}
The maximal operator $M$
is locally bounded in the norm determined by $\Phi$,
that is, there exists a positive constant $C$ such that,
for all cubes $Q\in\cq(\rn)$,
$$
\|M(g\chi_Q)\|_{\Phi,\,Q}
\le C
\|g\|_{\Phi,\,Q}.
$$

{\rm (ii)}
The function space $\cl(\rn):=\cl^{\Phi,\,\phi}(\rn)$ satisfies $(\cl8)$
with some $0<r<q$ and $w \equiv 1$.
Namely,
there exist $R \gg 1$ and $r\in(0,\fz)$ such that
\[
\|\{(\eta_{j,R}*|f_j|^r)^{1/r}
\}_{j \in \Z_+}\|_{\cl^{\Phi,\,\phi}(\ell^q(\rn,\zz_+))}
\lesssim
\|\{f_j\}_{j \in \Z_+}
\|_{\cl^{\Phi,\,\phi}(\ell^q(\rn,\zz_+))}
\]
holds true
for all $\{f_j\}_{j \in \N} \subset \cl^{\Phi,\,\phi}(\rn)$,
where the implicit positive constant is independent of
$\{f_j\}_{j \in \N}$.

{\rm (iii)}
The function $\Phi$ satisfies that, for some positive constant $C$ and all $t\in(1,\fz)$,
$$
\int_{1}^{t}
\frac{t}{s}\Phi'(s)\,ds
\le
\Phi(Ct).
$$

{\rm (iv)}
The function $\Phi$ satisfies that, for some positive constant $C$ and all $t\in(1,\fz)$,
$$
\int_{1}^{t}
\Phi\left(\frac{t}{s}\right)\,ds
\le
\Phi(Ct).
$$
\end{proposition}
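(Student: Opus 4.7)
\noindent\textbf{Proof proposal for Proposition \ref{p10.4}.}
The plan is to establish the cycle $(\mathrm{iii}) \Longleftrightarrow (\mathrm{iv})$, $(\mathrm{iii}) \Longleftrightarrow (\mathrm{i})$, $(\mathrm{i}) \Longrightarrow (\mathrm{ii})$, and $(\mathrm{ii}) \Longrightarrow (\mathrm{i})$, which together give the full equivalence. The scalar equivalences $(\mathrm{i}) \Longleftrightarrow (\mathrm{iii}) \Longleftrightarrow (\mathrm{iv})$ should be essentially a consequence of the cited \cite[Proposition 2.17]{SaSuTa4}, and the real content of the present proposition lies in showing that $(\cl8)$ is already captured by the scalar local maximal condition.

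First, I would dispose of $(\mathrm{iii}) \Longleftrightarrow (\mathrm{iv})$: writing $u = t/s$ gives
\[
\int_1^t \Phi(t/s)\,ds = \int_1^t \Phi(u)\,\frac{t}{u^2}\,du,
\]
while an integration by parts yields
\[
\int_1^t \frac{t}{s}\Phi'(s)\,ds = \Phi(t) - t\Phi(1) + \int_1^t \Phi(s)\,\frac{t}{s^2}\,ds.
\]
Since $\Phi$ is a normalized Young function, the boundary term $\Phi(t)-t\Phi(1)$ is controlled by $\Phi(Ct)$, so the two integrals are comparable up to a change in the constant $C$. The equivalence $(\mathrm{i}) \Longleftrightarrow (\mathrm{iii})$ is then exactly the content of \cite[Proposition 2.17]{SaSuTa4}, so no new argument is needed.

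Next I turn to $(\mathrm{i}) \Longrightarrow (\mathrm{ii})$, which is the main technical step. The idea is to extend the scalar local boundedness of $M$ in the Luxemburg norm to a vector-valued Fefferman-Stein inequality with parameter $r\in(0,q)$. Fix $r\in(0,q)$; the pointwise estimate
\[
\bigl(\eta_{j,R}*|f_j|^r\bigr)^{1/r}(x) \lesssim \bigl[M(|f_j|^r)(x)\bigr]^{1/r}
\]
reduces the task to proving
\[
\Bigl\|\bigl\{[M(|f_j|^r)]^{1/r}\bigr\}_{j\in\zz_+}\Bigr\|_{\cl^{\Phi,\phi}(\ell^q(\rn,\zz_+))}
\lesssim
\|\{f_j\}_{j\in\zz_+}\|_{\cl^{\Phi,\phi}(\ell^q(\rn,\zz_+))}.
\]
Since $r<q$, the ratio $q/r$ exceeds $1$, so the $\ell^{q/r}$-valued Fefferman-Stein inequality for $M$ on $\cl^{\Phi,\phi}(\rn)$ is what one needs. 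To obtain it, I would run the localization scheme of \cite{SaSuTa4}: given a cube $Q$, split $|f_j|^r = |f_j|^r\chi_{3Q}+|f_j|^r\chi_{\rn\setminus 3Q}$; the local part is handled by iterating the scalar local bound in $(\mathrm{i})$ together with the fact that $t\mapsto t^{q/r}$ is a convex nondecreasing function composable with $\Phi$, while the tail part is handled by the standard pointwise estimate on the maximal operator away from the support and the monotonicity assumption $\phi(t)\uparrow$, $t^n/\phi(t)^p\downarrow$.

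The implication $(\mathrm{ii}) \Longrightarrow (\mathrm{i})$ is lighter: given $g\in L^\infty(Q)$, take $f_j := g\chi_Q$ for $j=0$ and $f_j\equiv 0$ for $j\ge 1$. For a fixed $R$ large, the pointwise inequality
\[
\bigl(\eta_{0,R}*|g\chi_Q|^r\bigr)^{1/r}(x) \gtrsim \frac{1}{\ell(Q)^{n/r}}\Bigl(\int_Q |g|^r\,dy\Bigr)^{1/r}\chi_Q(x)
\gtrsim \bigl[M(g\chi_Q)(x)\bigr]\chi_Q(x)
\]
does not hold in general, so I would instead exploit the full family: choose $j$ so that $2^{-j}\sim\ell(Q)$ together with dyadic translates to recover, via a covering argument, the pointwise bound $M(g\chi_Q)(x)\lesssim \sup_j(\eta_{j,R}*|g\chi_Q|^r)^{1/r}(x)$ for $x\in Q$. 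Combining this with the embedding $\ell^q\hookrightarrow\ell^\infty$ (applied inside $\cl^{\Phi,\phi}$) and the fact that $\cl^{\Phi,\phi}$-norms localize at $Q$ convert $(\mathrm{ii})$ into the scalar local bound $\|M(g\chi_Q)\|_{\Phi,Q}\lesssim \|g\|_{\Phi,Q}$.

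The hard part will be the Fefferman-Stein direction $(\mathrm{i}) \Longrightarrow (\mathrm{ii})$: upgrading the scalar local boundedness of $M$ on the Orlicz-Morrey space to the vector-valued maximal inequality in the $\cl^{\Phi,\phi}(\ell^q)$-norm without a Muckenhoupt-type hypothesis on $\phi$. I expect to model the argument on the decomposition developed in \cite{SaSuTa4} and to use the flexibility gained from $r<q$ (so that one works with $q/r>1$), but the careful interaction between $\Phi$, the growth function $\phi$, and the dyadic structure of $\cq(\rn)$ will require a delicate stopping-time argument rather than a black-box application.
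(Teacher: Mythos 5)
Your overall architecture matches the paper's: the scalar equivalences (i)$\Leftrightarrow$(iii)$\Leftrightarrow$(iv) are delegated to \cite[Proposition 2.17]{SaSuTa4}, and your localization of the vector-valued inequality from $L^\Phi(\rn)$ to the Orlicz--Morrey scale (splitting at $3Q$ and estimating the tail) is exactly the transfer step the paper in turn delegates to the argument of \cite[Theorem 2.4]{st05}. The genuine gap is in the central step, namely the $\ell^{q/r}$-valued maximal inequality on $L^\Phi(\rn)$ itself. You propose to obtain it by ``iterating the scalar local bound in (i)'' together with composing $\Phi$ with $t\mapsto t^{q/r}$, and you concede that this will require ``a delicate stopping-time argument.'' But scalar boundedness of $M$ on a Banach function space does not by itself yield the Fefferman--Stein vector-valued inequality (that is precisely why one normally needs extrapolation or a structure-specific argument), so no amount of iterating (i) can serve as the engine here; the step is missing its key idea.

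The paper's mechanism is concrete and starts from the integral condition (iv) rather than from (i), because (iv) is the form of the hypothesis that actually enters the estimate. After invoking \cite[Claim 5.1]{SaSuTa4} to modify $\Phi$ near the origin so that $\int_0^{t}\frac{t}{s}\Phi'(s)\,ds\le\Phi(Ct)$ holds for all $t>0$, one computes the modular by the layer-cake formula,
\[
\int_{\rn}\Phi\lf(\Big\{\sum_{j}\big[M(|f_j|^r)(x)\big]^{q/r}\Big\}^{1/q}\r)dx
=\int_0^\infty\Phi'(t)\,\Big|\Big\{x\in\rn:\Big(\sum_j\big[M(|f_j|^r)(x)\big]^{q/r}\Big)^{1/q}>t\Big\}\Big|\,dt,
\]
bounds the distribution set by the Fefferman--Stein weak type $(1,1)$ inequality for $\ell^{q/r}$ (this is where $q/r>1$ is used) combined with the standard truncation of $\{f_j\}$ at height $t/2$, and then applies Fubini and the normalized condition (iii) in the form $\int_0^{2F}\frac{\Phi'(t)}{t}\,dt\ls F^{-1}\Phi(C_0F)$ to arrive at the modular bound $\ls\int_{\rn}\Phi(C_0(\sum_j|f_j(x)|^q)^{1/q})\,dx$; the norm inequality then follows from the definition of the Luxemburg norm. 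This distributional computation is the idea your proposal lacks. A smaller point: in (ii)$\Rightarrow$(i) your claimed pointwise bound $M(g\chi_Q)\ls\sup_j(\eta_{j,R}*|g\chi_Q|^r)^{1/r}$ fails when $r<1$, since the $r$-mean is then dominated by the $1$-mean; the paper treats this direction as immediate, and in any case it is not where the difficulty lies.
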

Therefore, a result similar to Besov-Morrey spaces
and Triebel-Lizorkin-Morrey spaces can be obtained
as before.

\begin{proof}[Proof of Proposition \ref{p10.4}]
The proof is based upon a minor modification
of the known results.
However, the proof not being found in the literatures,
we outline the proof here.
In \cite[Proposition 2.17]{SaSuTa4} we have shown
that (i), (iii) and (iv) are mutually equivalent.
It is clear that (ii) implies (i).
Therefore, we need to prove that (iv) implies (ii).
In \cite[Claim 5.1]{SaSuTa4} we also have shown
that the space $\cl^{\Phi,\,\phi}(\rn)$ remains
the same if we change the value $\Phi(t)$ with $t \le 1$.
Therefore, we can and do assume
\[
\int_0^{t}
\frac{t}{s}\Phi'(s)\,ds
\le
\Phi(Ct)
\]
for all $t\in(0,\fz)$.
Consequently,
\begin{eqnarray*}
&&\int_{\R^n}
\Phi\left(
\left\{\sum_{j=1}^\infty\lf[ M(|f_j|^r)(x)\r]^{q/r}\right\}^{1/q}
\right)\,dx\\
&&\quad=
\int_{0}^\infty
\Phi'(t)\left|\left\{x \in \rn\,:\,
\left(\sum_{j=1}^\infty\lf[ M(|f_j|^r)(x)\r]^{q/r}\right)^{1/q}>t
\right\}\right|\,dt\\
&&\quad\ls
\int_{\R^n}
\int_0^\infty
\frac{\Phi'(t)}{t}\chi_{\left\{x \in \rn\,:\,
\left[\sum_{j=1}^\infty |f_j(x)|^{q}\right]^{1/q}>\frac{t}{2}
\right\}}(x)
\left[\sum_{j=1}^\infty |f_j(x)|^{q}\right]^{1/q}\,dt\\
&&\quad\ls
\int_{\R^n}
\Phi\left(
C_0\left[\sum_{j=1}^\infty |f_j(x)|^{q}\right]^{1/q}
\right)\,dx
\end{eqnarray*}
for some positive constant $C_0$.
This implies that,
whenever
$$
\left\|\left[\sum_{j=1}^\infty |f_j|^{q}\right]^{1/q}\right\|_{L^{\Phi,\phi}(\rn)}
\le \frac{1}{C_0},
$$
we have
\[
\int_{\R^n}
\Phi\left(
\left\{\sum_{j=1}^\infty \lf[M(|f_j|^r)(x)\r]^{q/r}\right\}^{1/q}
\right)\,dx \le 1.
\]
From the definition of the Orlicz-norm $\|\cdot\|_{L^{\Phi,\phi}(\rn)}$, we have
\eqref{10.15}. Once we obtain \eqref{10.15},
we can go through the same argument as
\cite[Theorem 2.4]{st05}.
We omit the details, which completes the proof of Proposition \ref{p10.4}.
\end{proof}

Now in this example,
if we assume the conditions of Proposition \ref{p10.4}, then
$(\cl1)$ through $(\cl6)$ hold true with
 the conditions on the parameters
\eqref{2.10} and \eqref{2.11}
actually read as
\begin{equation*}
\cl(\rn):=L^{\Phi,\phi}(\rn), \quad \theta:=1, \quad N_0:=n+1, \quad \gamma:=n,
\quad \delta:=0.
\end{equation*}%
Indeed, since $\Phi$ is a Young function,
again we have
\[
2^{jn}\int_{\rn}\Phi(\chi_{Q_{j0}}(x)/\lambda)\,dx
=
\Phi(\lambda^{-1})
\]
for $\lambda>0$.
Consequently
$\|\chi_{Q_{j0}}\|_{\Phi,Q_{j0}}=1/\Phi^{-1}(1)$
and hence
\[
\phi(2^{-j})\|\chi_{Q_{j0}}\|_{\Phi,Q_{j0}}
=
\phi(2^{-j})
=
\phi(2^{-j})2^{jn}2^{-jn}
\ge
\phi(1)2^{-jn}.
\]
Here we invoked an assumption
that $\phi(t)t^{-n}$ is a decreasing function.

Since ${\mathcal L}^{\Phi,\phi}(\rn)$ satisfies $({\mathcal L}8)$,
we obtain
$M\chi_{[-1,1]^n} \in {\mathcal L}^{\Phi,\phi}(\rn)$,
showing that $N_0:=n$ will do in this setting.

Meanwhile as before,
\begin{equation*}
w_j(x) := 1\mbox{ for all }x\in\rn \mbox{ and }j\in\zz_+, \quad
\alpha_1=\alpha_2=\alpha_3=0.
\end{equation*}%
Hence \eqref{2.12} now stands for
\begin{equation*}
\tau \in [0,\infty), \, q\in (0,\infty], \, a>n+1.
\end{equation*}

Finally, we remark that Orlicz spaces
are examples to which the results
in Subsection 9.2 apply.

\subsection{Herz spaces}
\label{s10.3}

Let $p,q \in(0, \infty]$ and $\alpha \in \R$.
We let $Q_0:= [-1,1]^n$ and $C_j:= [-2^j,2^j]^n \setminus [-2^{j-1},2^{j-1}]^n$
for all $j \in {\mathbb N}$.
Define the \emph{inhomogeneous Herz space $K_{p,q}^\alpha(\rn)$} to be
the set of all measurable functions $f$ for which the \emph{norm}
\[
\| f \|_{K_{p,q}^\az(\rn)}
:=
\| \chi_{Q_0} \cdot f \|_{L^p(\rn)}
+
\left(
\sum_{j=1}^\infty
2^{jq\alpha}
\| \chi_{C_j}  f \|_{L^p(\rn)}^q
\right)^\frac{1}{q}
\]
is finite,
where we modify naturally
the definition above when $p=\fz$ or $q=\infty$.

 The following is shown by Izuki
\cite{Iz}, which is $(\cl8)$ of this case. A complete
theory of Herz-type spaces was given in \cite{lyh}.

\begin{proposition}\label{p10.5}
Let $p\in(1,\infty)$,
$q,u \in(0, \infty]$
and $\alpha \in (-1/p,1/p')$.
Then, for all sequences of measurable functions $\{f_j\}_{j=1}^\infty$,
\[
\left\|
\left(\sum_{j=1}^\infty [Mf_j]{}^u\right)^{\frac{1}{u}}
\right\|_{K_{p,q}^\alpha(\rn)}
\sim
\left\|
\left(\sum_{j=1}^\infty |f_j|^u\right)^{\frac{1}{u}}
\right\|_{K_{p,q}^\alpha(\rn)}
\]
with the implicit equivalent positive constants independent of $\{f_j\}_{j=1}^\fz$.
\end{proposition}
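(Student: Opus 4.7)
The ``$\gs$'' inequality is trivial from the pointwise bound $|f_j(x)|\le Mf_j(x)$ at every Lebesgue point, so the task is the reverse inequality. My strategy is the standard annular decomposition of the Herz norm: I write $f_j=\sum_{k\ge 0}f_{j,k}$ with $f_{j,0}:=f_j\chi_{Q_0}$ and $f_{j,k}:=f_j\chi_{C_k}$ for $k\ge 1$, and use subadditivity of $M$ to reduce to estimating $\chi_{C_l}M f_{j,k}$ for each pair $(l,k)\in\Z_+^2$. The estimates split into three regimes according to the relative position of $l$ and $k$.

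For the near-diagonal part $|l-k|\le 1$, I invoke the classical Fefferman-Stein vector-valued maximal inequality on $L^p(\rn)$, valid for $p\in(1,\infty)$ and $u\in(1,\infty]$, to bound $\|\chi_{C_l}(\sum_j[Mf_{j,k}]^u)^{1/u}\|_{L^p(\rn)}$ by $\|(\sum_j|f_{j,k}|^u)^{1/u}\|_{L^p(\rn)}$. For the off-diagonal parts, a simple geometric observation gives: if $l\ge k+2$ and $x\in C_l$, any ball $B(x,r)$ meeting $C_k$ has $r\gs 2^l$, so $|B(x,r)|\gs 2^{ln}$; and if $k\ge l+2$ the analogous lower bound is $|B(x,r)|\gs 2^{kn}$. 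Hence $Mf_{j,k}(x)\ls 2^{-\max(k,l)n}\|f_{j,k}\|_{L^1(\rn)}$ in the off-diagonal regime. Combining this with H\"older's inequality $\|f_{j,k}\|_{L^1(\rn)}\ls 2^{kn/p'}\|f_{j,k}\|_{L^p(\rn)}$ and computing the $L^p$-norm over $C_l$ produces the key off-diagonal estimates
\[
\|\chi_{C_l}Mf_{j,k}\|_{L^p(\rn)}
\ls
\begin{cases}
2^{-(l-k)n/p'}\,\|f_{j,k}\|_{L^p(\rn)},&l\ge k+2,\\
2^{-(k-l)n/p}\,\|f_{j,k}\|_{L^p(\rn)},&k\ge l+2,
\end{cases}
\]
so that, after inserting the Herz weights $2^{l\alpha}$, the factors become $2^{-(l-k)(n/p'+\alpha)}$ and $2^{-(k-l)(n/p-\alpha)}$, respectively, both geometrically summable precisely under the stated range on $\alpha$.

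The remaining step is to combine the three regimes, for each fixed $j$, with the outer $\ell^u$- and $\ell^q$-summations. Writing $b_k^{(j)}:=2^{k\alpha}\|f_{j,k}\|_{L^p(\rn)}$, the near-diagonal and off-diagonal estimates together yield $2^{l\alpha}\|\chi_{C_l}Mf_j\|_{L^p(\rn)}\ls\sum_k\omega_{l-k}\,b_k^{(j)}$ with $\{\omega_m\}_{m\in\Z}\in\ell^1(\Z)$ (thanks to the range on $\alpha$), and a discrete Young-type convolution inequality on $\ell^q(\Z_+)$ then bounds the outer $\ell^q$-norm; for the vector-valued part I interchange the $\ell^u$- and $L^p$-norms by the near-diagonal Fefferman-Stein inequality and absorb the off-diagonal pieces by the same convolution estimate via the Minkowski (or $\min(1,q,u)$-triangle) inequality. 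The principal obstacle is \emph{not} the bookkeeping but the case $u\le 1$, where the scalar Fefferman-Stein inequality on $L^p(\rn)$ is unavailable in its usual form; I would circumvent this by choosing $r\in(0,\min(1,u))$ small enough so that $p/r>1$, bounding $(\sum_j[Mf_j]^u)^{1/u}\ls[M((\sum_j|f_j|^u)^{r/u})]^{1/r}$ pointwise via the subadditivity-type inequality of Fefferman-Stein reduced through Peetre's maximal argument, and then invoking the boundedness of $M$ on $L^{p/r}(\rn)$ together with an $r$-power reformulation of the Herz norm. A parallel device handles $q<1$ by replacing the triangle inequality with its $q$-quasi-triangle analogue.
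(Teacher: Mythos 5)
The paper itself offers no proof of this proposition: it is simply quoted from Izuki \cite{Iz} (see also \cite{lyh}), so your annular-decomposition argument is supplying the missing proof rather than competing with one. For $u\in(1,\infty]$ your plan is the standard and correct one: near-diagonal blocks via the Fefferman--Stein inequality on $L^p(\rn)$, off-diagonal blocks via the size estimate $Mf_{j,k}(x)\ls 2^{-\max(k,l)n}\|f_{j,k}\|_{L^1(\rn)}$ for $x\in C_l$ together with the H\"older and (for $u\ge1$) Minkowski inequalities, and a discrete Young-type convolution bound (in its $q$-power form when $q<1$) for the outer sum. One correction of bookkeeping: after inserting the weights, the off-diagonal factors are $2^{-(l-k)(n/p'-\alpha)}$ for $l\ge k+2$ and $2^{-(k-l)(n/p+\alpha)}$ for $k\ge l+2$, not $2^{-(l-k)(n/p'+\alpha)}$ and $2^{-(k-l)(n/p-\alpha)}$; geometric summability therefore requires $-n/p<\alpha<n/p'$, which is the classical condition and is implied by the stated hypothesis on $\alpha$.

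The genuine gap is your treatment of $u\le1$. The pointwise bound
$\bigl(\sum_j[Mf_j]^u\bigr)^{1/u}\ls\bigl[M\bigl((\sum_j|f_j|^u)^{r/u}\bigr)\bigr]^{1/r}$
with $r<1$ is false already for a single function: by Jensen's inequality $[M(|f|^r)]^{1/r}\le Mf$, i.e.\ the power-mean inequality runs in the opposite direction, and the Plancherel--Polya--Nikolskij/Peetre device (Lemma \ref{l1.1}) is available only for band-limited functions, which the $f_j$ here are not. Moreover no repair is possible, because the statement itself fails for $u\le1$: taking $\alpha=0$ and $q=p$ gives $K^{0}_{p,p}(\rn)=L^p(\rn)$ with equivalent norms, so the proposition would then be exactly the Fefferman--Stein inequality on $L^p(\rn)$ with inner exponent $u\le1$, which is known to be false (for instance $f_j:=\chi_{[j,j+1]}$, $1\le j\le N$, on $\rr$ produces a $\log N$ loss at $u=1$, and the case $u<1$ is worse since $\ell^u\hookrightarrow\ell^1$). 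The hypothesis ``$q,u\in(0,\infty]$'' must be read as $u\in(1,\infty]$, $q\in(0,\infty]$, in line with Propositions \ref{p10.3}, \ref{p10.6} and \ref{p10.9}; once you restrict your argument to that range and fix the exponents, it is sound.
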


Now in this example $(\cl1)$ through $(\cl6)$ hold true with
the parameters in
\eqref{2.10}, \eqref{2.11} and \eqref{2.12}
actually satisfying that
\begin{equation*}
\cl(\rn):=K_{p,q}^\alpha(\rn), \quad
\theta:=\min(1,p,q), \quad
N_0:=\frac{n}{q}+1+\max(\alpha,0), \quad
\gamma:=\frac{n}{p}+\alpha, \quad
\delta:=0,
\end{equation*}%
\begin{equation*}
w_j(x) := 1\mbox{ for all }x\in\rn \mbox{ and }j\in\zz_+, \quad
\alpha_1=\alpha_2=\alpha_3=0,
\end{equation*}%
\begin{equation*}
\tau \in [0,\infty), \, q\in (0,\infty], \, a\in({n}/{q}+1,\fz)
\end{equation*}%
respectively.
By virtue of Proposition \ref{p10.5},
we know that $(\cl8)$ holds true as well.

Therefore, again a result similar to Besov-Morrey spaces
and Triebel-Lizorkin spaces can be obtained
for $K_{p,q}^\az(\rn)$ with $p,q \in (0,\infty]$ and $\az\in\rr$
as before.
Homogeneous counterpart
of the above is available.
Define the \emph{homogeneous Herz space $\dot{K}^\az_{p,q}(\rn)$} to be
the set of all measurable functions $f$ for which the \emph{norm}
\[
\| f \|_{\dot K^\az_{p,q}(\rn)}
:=
\left[
\sum_{j=-\infty}^\infty
\| 2^{jq\az}\chi_{C_j}  f \|_{L^p(\rn)}^q
\right]^\frac{1}{q}
\]
is finite,
where we modify naturally
the definition above when $p=\fz$ or $q=\infty$.

An analogous result is available
but we do not go into the detail.

\subsection{Variable Lebesgue spaces}
\label{s10.4}

Starting from the  recent work by Diening \cite{d1},
there exist a series of results
of the theory of variable function spaces.
Let $p(\cdot):\rn \to (0,\infty)$ be a measurable function
such that
$
0<\inf_{x \in \rn}p(x)\le\sup_{x \in \rn}p(x)<\infty.
$
The \emph{space} $L^{p(\cdot)}(\rn)$,
the Lebesgue space with variable exponent $p(\cdot)$,
is defined
as the set of all measurable functions $f$
for which the quantity
$
\int_{\rn}|\varepsilon f(x)|^{p(x)}\,dx
$
is finite for some $\varepsilon\in(0,\fz)$.
We let
$$
\|f\|_{L^{p(\cdot)}(\rn)}
:=
\inf\left\{
\lambda>0 \,:\, \int_{\rn}\left[\frac{|f(x)|}{\lambda}\right]^{p(x)}\,dx
\le 1
\right\}
$$
for such a function $f$.
As a special case of the theory of Nakano and Luxemberg
\cite{l55,n50,n51},
we see $(L^{p(\cdot)}(\rn),\|\cdot\|_{L^{p(\cdot)}(\rn)})$
is a quasi-normed space.
It is customary to let
$
p_+:=\sup_{x \in \rn}p(x)
$
and
$
p_-:=\inf_{x \in \rn}p(x).
$

The following was shown in \cite{cfmp06}
and hence we have $(\cl8)$ for $L^{p(\cdot)}(\rn)$.
\begin{proposition}\label{p10.6}
Suppose that $p(\cdot):\rn \to (0,\infty)$ is a function
satisfying
\begin{gather}\label{10.25}
1<p_-:=\inf_{x \in \rn}p(x)
\le p_+:=\sup_{x \in \rn}p(x)<\infty,\\
\mbox{{\rm (the log-H\"{o}lder continuity)}}\quad
  |p(x)-p(y)|\ls\frac{1}{\log(1/|x-y|)}\nonumber\\
  \hspace{5cm}\quad\text{for\ all}\quad
  |x-y|\le\frac12, \label{10.26}
  \\
\mbox{{\rm (the decay condition)}}\quad
  |p(x)-p(y)|\ls\frac{1}{\log(e+|x|)}
  \quad\text{for\ all}\quad
  |y|\ge|x|.\label{10.27}
\end{gather}
Let $u \in (1,\infty]$.
Then, for all sequences of measurable functions $\{f_j\}_{j=1}^\infty$,
\[
\left\|
\left(\sum_{j=1}^\infty[ Mf_j]{}^{u}\right)^{\frac{1}{u}}
\right\|_{L^{p(\cdot)}(\rn)}
\sim
\left\|
\left(\sum_{j=1}^\infty |f_j|^{u}\right)^{\frac{1}{u}}
\right\|_{L^{p(\cdot)}(\rn)}
\]
with the implicit equivalent positive constants independent of $\{f_j\}_{j=1}^\fz$.
\end{proposition}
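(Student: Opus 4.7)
The direction ``$\gs$'' is immediate from the pointwise bound $|f_j(x)| \le M f_j(x)$ together with $(\cl4)$, so the real content lies in the reverse inequality. My plan is to reduce this vector-valued variable-exponent estimate to a known scalar weighted estimate via a Rubio de Francia type extrapolation, which is the strategy of \cite{cfmp06}.

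First I would invoke Diening's scalar boundedness theorem: the conditions \eqref{10.25}, \eqref{10.26} and \eqref{10.27} jointly imply that $M$ is bounded on $L^{p(\cdot)}(\rn)$, and moreover (by applying the same result to an auxiliary exponent) on the associate space $L^{(p(\cdot)/p_0)'}(\rn)$ for any fixed $p_0 \in (1,p_-)$. These two scalar estimates are the only ``hard analytic'' inputs needed, and both are now classical.

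Second I would apply the extrapolation principle of Cruz-Uribe--Fiorenza--Martell--P\'erez: if a pair $(F,G)$ of nonnegative measurable functions satisfies
\[
\int_\rn F(x)^{p_0}\,w(x)\,dx
\le
C([w]_{A_{p_0}}) \int_\rn G(x)^{p_0}\,w(x)\,dx
\]
for every $w \in A_{p_0}(\rn)$, with the constant depending only on $[w]_{A_{p_0}}$, then
\[
\|F\|_{L^{p(\cdot)}(\rn)}
\ls
\|G\|_{L^{p(\cdot)}(\rn)}.
\]
The proof of this principle proceeds by constructing, via a Rubio de Francia iteration of $M$ and its associate analogue, a weight $W \in A_{p_0}$ together with a function $h \in L^{(p(\cdot)/p_0)'}(\rn)$ of unit norm such that $\|F\|_{L^{p(\cdot)}}^{p_0}$ is comparable to $\int F^{p_0}W\,dx$; the boundedness of $M$ on $L^{(p(\cdot)/p_0)'}(\rn)$ furnished by the first step is exactly what makes the iteration converge.

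Third I would feed this extrapolation with $F := (\sum_j [Mf_j]^u)^{1/u}$ and $G := (\sum_j |f_j|^u)^{1/u}$. The required weighted input is the classical vector-valued Fefferman-Stein inequality of Andersen--John: for any $p_0 \in (1,\infty)$, $u \in (1,\infty]$ and $w \in A_{p_0}$,
\[
\int_\rn \left(\sum_j [Mf_j(x)]^u\right)^{p_0/u} w(x)\,dx
\ls
\int_\rn \left(\sum_j |f_j(x)|^u\right)^{p_0/u} w(x)\,dx,
\]
with implicit constant depending only on $[w]_{A_{p_0}}$. Combining the three steps yields the desired conclusion. The main obstacle is the extrapolation principle itself in the second step, whose technical heart is the Rubio de Francia iteration argument; once one has Diening's boundedness of $M$ on both $L^{p(\cdot)}(\rn)$ and $L^{(p(\cdot)/p_0)'}(\rn)$ (which is exactly where the log-H\"older conditions \eqref{10.26} and \eqref{10.27} are consumed), the rest of the argument is largely soft.
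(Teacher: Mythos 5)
Your proposal is correct and follows exactly the route the paper itself relies on: Proposition \ref{p10.6} is not proved in the text but quoted from \cite{cfmp06}, whose argument is precisely the Rubio de Francia extrapolation you describe, fed by Diening's boundedness of $M$ on $L^{p(\cdot)}(\rn)$ and on the associate space together with the weighted vector-valued Fefferman--Stein inequality. The only point worth flagging is the endpoint $u=\infty$, where one should bypass Andersen--John and simply use $\sup_j Mf_j\le M(\sup_j|f_j|)$ pointwise plus the scalar bound, but this does not affect the validity of the argument.
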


Now in this example $(\cl1)$ through $(\cl6)$ hold true with the parameters in
\eqref{2.10}, \eqref{2.11} and \eqref{2.12}
actually satisfies
\begin{equation*}
\cl(\rn):=L^{p(\cdot)}(\rn), \quad
\theta:=\min(1,p_-), \quad
N_0:=\frac{n}{p_-}+1, \quad
\gamma:=\frac{n}{p_-}, \quad
\delta:=0,
\end{equation*}%
\begin{equation*}
w_j(x) := 1\mbox{ for all }x\in\rn \mbox{ and }j\in\zz_+, \quad
\alpha_1=\alpha_2=\alpha_3=0,
\end{equation*}%
\begin{equation*}
\tau \in [0,\infty), \, q\in (0,\infty], \, a>\frac{n}{p_-}+1,
\end{equation*}%
respectively. Also, by virtue of Proposition \ref{p10.6},
we have $(\cl8)$ as well.
For the sake of simplicity,
let us write
$A_{p(\cdot),q}^{s,\tau}(\rn)$
instead of
$A_{L^{p(\cdot)}(\rn),q,a}^{s,\tau}(\rn)$.

The function space $A_{p(\cdot),q}^{s}(\rn)$
is well investigated and we have the following proposition,
for example.
\begin{proposition}[{\rm \cite{ns10}}]\label{p10.7}
Let $f \in {\mathcal S}'(\rn)$
and $p(\cdot)$ satisfy {\rm \eqref{10.25}},
{\rm \eqref{10.26}} and {\rm \eqref{10.27}}.
Then, the following are equivalent:

{\rm (i)}
$f$ belongs to the local Hardy space
$h^{p(\cdot)}(\rn)$
with variable exponent $p(\cdot)$,
that is,
$$
\|f\|_{h^{p(\cdot)}(\rn)}
:=
\left\|
\sup_{0<t \le 1}|t^{-n}\Phi(t^{-1}\cdot)*f|
\right\|_{L^{p(\cdot)}(\rn)}<\infty;
$$

{\rm (ii)}
$f$ satisfies
\[
\|f\|_{F^0_{p(\cdot),2}(\rn)}
:=
\|\Phi*f\|_{L^{p(\cdot)}(\rn)}
+
\left\|
\left(\sum_{j=1}^\infty |\vz_j*f|^2\right)^{1/2}
\right\|_{L^{p(\cdot)}(\rn)}<\infty.
\]
\end{proposition}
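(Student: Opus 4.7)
The plan is to prove Proposition \ref{p10.7} by identifying both (i) and (ii) with membership in $L^{p(\cdot)}(\rn)$, using the abstract framework already set up in this paper. Specifically, since \eqref{10.25} gives $p_->1$, the space $L^{p(\cdot)}(\rn)$ should turn out to be the common underlying object: $h^{p(\cdot)}(\rn) = L^{p(\cdot)}(\rn) = F^0_{p(\cdot),2}(\rn)$ with equivalent (quasi-)norms.

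First, I would verify that $\cl(\rn):=L^{p(\cdot)}(\rn)$ meets the hypotheses of Theorem \ref{t8.32}. The conditions $(\cl1)$ through $(\cl6)$ were already noted in Subsection \ref{s10.4}, and condition \eqref{9.x2} holds since $L^{p(\cdot)}(\rn)$ is a Banach function space when $p_->1$. For the boundedness of the maximal operator on $(\cl^{p_0}(\rn))'$ and $((\cl'(\rn))^{p_0})'$ for some $p_0 \in (1,\infty)$, I would pick $p_0$ with $1<p_0<\min(p_-,(p')_-)$. The standard duality $(L^{p(\cdot)}(\rn))'=L^{p'(\cdot)}(\rn)$ together with the identity $(L^{p(\cdot)}(\rn))^{p_0} = L^{p(\cdot)/p_0}(\rn)$ reduces the matter to proving that $M$ is bounded on the variable Lebesgue spaces $L^{(p(\cdot)/p_0)'}(\rn)$ and $L^{(p'(\cdot)/p_0)'}(\rn)$. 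Since the log-H\"older conditions \eqref{10.26} and \eqref{10.27} are preserved under these algebraic operations on the exponent (they depend only on bounds $0<\wz{p}_- \le \wz{p}_+<\infty$ and the log-H\"older regularity of $p(\cdot)$), Proposition \ref{p10.6} gives the required boundedness. Applying Theorem \ref{t8.32} (with $a$ sufficiently large relative to the parameter $N_0$ recorded in Subsection \ref{s10.4}) then yields $L^{p(\cdot)}(\rn) = \ce^{0,0}_{L^{p(\cdot)},2,a}(\rn)$ with equivalent norms. Via Lemma \ref{l2.2}(ii), the latter space coincides with $F^{0,0}_{L^{p(\cdot)},2,a}(\rn) = F^0_{p(\cdot),2}(\rn)$, establishing (ii) $\iff$ $f \in L^{p(\cdot)}(\rn)$.

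Next, I would show that (i) $\iff$ $f \in L^{p(\cdot)}(\rn)$. For the upper bound, since $\Phi \in \cs(\rn)$ has rapid decay, the pointwise estimate
\[
\sup_{0<t \le 1}|t^{-n}\Phi(t^{-1}\cdot)*f(x)|
\lesssim Mf(x)
\]
holds for a.e.\ $x\in\rn$ and any $f\in\cs'(\rn) \cap L^1_{\mathrm{loc}}(\rn)$; Proposition \ref{p10.6} (with the constant-weight vector of length one) yields $\|f\|_{h^{p(\cdot)}(\rn)} \lesssim \|f\|_{L^{p(\cdot)}(\rn)}$. For the reverse direction, since $\widehat{\Phi}(0) \ne 0$ by \eqref{1.1}, the family $\{t^{-n}\Phi(t^{-1}\cdot)\}_{t>0}$ is (a constant multiple of) an approximate identity, whence $|f(x)| \lesssim \sup_{0<t\le 1}|t^{-n}\Phi(t^{-1}\cdot)*f(x)|$ a.e., giving $\|f\|_{L^{p(\cdot)}(\rn)} \lesssim \|f\|_{h^{p(\cdot)}(\rn)}$. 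Combining this with the outcome of the first paragraph completes the proof.

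The main obstacle in executing this plan is the careful bookkeeping in the first step: the $p$-convexifications and dualizations of $L^{p(\cdot)}(\rn)$ must be shown to remain variable Lebesgue spaces whose exponents continue to satisfy the log-H\"older conditions and have positive lower bound strictly greater than $1$, so that Proposition \ref{p10.6} applies and feeds back into Theorem \ref{t8.32}. Once that verification is in place, Lemma \ref{l2.2} and the approximate identity argument dispose of the remaining identifications quickly.
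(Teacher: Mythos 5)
The paper does not actually prove Proposition \ref{p10.7}: it is imported verbatim from \cite{ns10}, where it is established by the machinery of variable-exponent Hardy spaces (grand maximal functions, atomic decompositions) in the generality $0<p_-\le p_+<\infty$. Your argument is therefore necessarily a different route. Under the standing hypothesis \eqref{10.25}, which forces $p_->1$, it is a legitimate and essentially self-contained one: identifying both (i) and (ii) with membership in $L^{p(\cdot)}(\rn)$ is the variable-exponent analogue of $h^p=L^p=F^0_{p,2}$ for $p>1$. Your verification that $L^{p(\cdot)}(\rn)$ satisfies the hypotheses of Theorem \ref{t8.32} is sound, since the convexifications and associate spaces remain variable Lebesgue spaces whose exponents stay log-H\"older continuous and bounded away from $1$ and $\infty$, so Proposition \ref{p10.6} applies; Lemma \ref{l2.2}(ii) together with Theorem \ref{t8.1}(ii) (which you should cite explicitly, because the norm in (ii) uses plain convolutions $\vz_j*f$ rather than the Peetre maximal functions appearing in $\ce^{0,0}_{\cl,2,a}(\rn)$) then gives (ii) $\iff f\in L^{p(\cdot)}(\rn)$. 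What your approach buys is a proof internal to the paper's framework; what it loses is any hope of extending to $p_-\le 1$, where $h^{p(\cdot)}(\rn)\ne L^{p(\cdot)}(\rn)$ and the original proof in \cite{ns10} is genuinely needed. Note also that you invert the paper's logic: the paper uses Proposition \ref{p10.7} as an input to deduce Proposition \ref{p10.8}, whereas you in effect prove Proposition \ref{p10.8} first.

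Two small gaps remain in the second half, neither structural. The a.e.\ inequality $|f(x)|\lesssim\sup_{0<t\le1}|t^{-n}\Phi(t^{-1}\cdot)*f(x)|$ presupposes that the distribution $f$ is represented by a locally integrable function; you must first extract one, for instance by noting that $\{\Phi_t*f\}_{0<t\le1}$ is bounded in $L^{p(\cdot)}(\rn)$ (being dominated by the maximal function in (i)), passing to a weak limit point $g$ as $t\to0$, and matching it with the $\cs'(\rn)$-limit $\widehat{\Phi}(0)f$, where $\widehat{\Phi}(0)\ne0$ by \eqref{1.1}. Conversely, the pointwise domination $\sup_{0<t\le1}|t^{-n}\Phi(t^{-1}\cdot)*f|\lesssim Mf$ requires $f\in L^1_{\loc}(\rn)$, which in the direction you use it follows from $f\in L^{p(\cdot)}(\rn)$ with $p_->1$. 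With these two points made explicit, the proof is complete.
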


By virtue of Lemma \ref{l1.1}, Theorem \ref{t8.1},
Propositions \ref{p10.6} and \ref{p10.7},
we have the following.
\begin{proposition}\label{p10.8}
The function space $h^{p(\cdot)}(\rn)$ coincides with
$F_{p(\cdot),2,a}^{0,0}(\rn)$,
whenever $a \gg 1$.
\end{proposition}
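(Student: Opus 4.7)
The plan is to reduce the statement, via Theorem \ref{t8.1}(ii), to the characterization of $h^{p(\cdot)}(\rn)$ given in Proposition \ref{p10.7}. First I would record that $\cl(\rn):=L^{p(\cdot)}(\rn)$ satisfies $(\cl1)$ through $(\cl6)$ with the parameters listed in Subsection \ref{s10.4}. Under the log-H\"older hypotheses \eqref{10.25}--\eqref{10.27}, Proposition \ref{p10.6} furnishes the Fefferman--Stein vector-valued inequality for the Hardy--Littlewood maximal operator on $L^{p(\cdot)}(\rn)$ for every $u\in(1,\infty]$. Combining this with Lemma \ref{l1.1} applied at the appropriate Lebesgue exponent $r<p_-$ close to $1$, one obtains the pointwise control of $\eta_{j,R}*|f_j|^r$ by $M(|f_j|^r)$ (for $R$ large), and hence deduces the assumption $(\cl8)$ (indeed $(\cl8^\star)$) for $L^{p(\cdot)}(\rn)$ with weight $w\equiv 1$ and exponent $q=2$.

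Once $(\cl8)$ is in hand, and since by hypothesis $a$ is sufficiently large (certainly $a\in(N_0+\alpha_3,\infty)$, with $N_0=n/p_-+1$ and $\alpha_1=\alpha_2=\alpha_3=0$, $\tau=0$ in this setting), Theorem \ref{t8.1}(ii) yields
\[
\|f\|_{F^{0,0}_{p(\cdot),2,a}(\rn)}
\sim
\left\|\left(\sum_{j=0}^\infty |\vz_j*f|^2\right)^{1/2}\right\|_{L^{p(\cdot)}(\rn)},
\]
with $\vz_0:=\Phi$.

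The right-hand side is in turn equivalent to
\[
\|\Phi*f\|_{L^{p(\cdot)}(\rn)}
+
\left\|\left(\sum_{j=1}^\infty |\vz_j*f|^2\right)^{1/2}\right\|_{L^{p(\cdot)}(\rn)},
\]
by the pointwise identity
\[
\left(\sum_{j=0}^\infty |\vz_j*f|^2\right)^{1/2}
\le |\Phi*f|+\left(\sum_{j=1}^\infty |\vz_j*f|^2\right)^{1/2}
\le \sqrt{2}\left(\sum_{j=0}^\infty |\vz_j*f|^2\right)^{1/2}
\]
together with the quasi-triangle inequality in $L^{p(\cdot)}(\rn)$. The last expression is exactly $\|f\|_{F^0_{p(\cdot),2}(\rn)}$ as introduced in Proposition \ref{p10.7}(ii), and that proposition identifies it with $\|f\|_{h^{p(\cdot)}(\rn)}$. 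Chaining the equivalences yields $F^{0,0}_{p(\cdot),2,a}(\rn)=h^{p(\cdot)}(\rn)$ with equivalent quasi-norms.

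The only potentially delicate step is the verification of $(\cl8)$; it is not immediate from Proposition \ref{p10.6} alone, because the object in $(\cl8)$ involves the convolution with $\eta_{j,R}$ rather than $M$. I would handle this by noting that $\eta_{j,R}*|f|^r(x)\lesssim M(|f|^r)(x)$ for $R>n$, which together with the Fefferman--Stein inequality of Proposition \ref{p10.6} applied to $\{|f_j|^r\}_{j}$ with exponent $p(\cdot)/r$ (which remains log-H\"older and still satisfies \eqref{10.25} for $r$ sufficiently close to $1$) gives exactly the estimate required by $(\cl8^\star)$. The rest of the argument is routine once Theorem \ref{t8.1}(ii) and Proposition \ref{p10.7} are invoked.
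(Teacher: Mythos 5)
Your proposal is correct and follows essentially the same route the paper indicates for Proposition \ref{p10.8}: verify $(\cl8^\star)$ for $L^{p(\cdot)}(\rn)$ by rescaling the exponent to $p(\cdot)/r$ and invoking Proposition \ref{p10.6}, then chain Theorem \ref{t8.1}(ii) with the Littlewood--Paley characterization of $h^{p(\cdot)}(\rn)$ in Proposition \ref{p10.7}. One small inaccuracy: the pointwise bound $\eta_{j,R}\ast|f|^r\lesssim M(|f|^r)$ for $R>n$ is the elementary domination of convolution with a radially decreasing integrable kernel by the maximal operator and does not come from Lemma \ref{l1.1} (which would instead be used if one argued directly through the Peetre maximal function of the band-limited pieces $\vz_j\ast f$); this misattribution does not affect the validity of your argument.
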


Recall that
Besov/Triebel-Lizorkin spaces with
variable exponent
date back to the works
by Almeida and H\"ast\"o
\cite{AH}
and
Diening, H\"ast\"o and Roudenko
\cite{DHR}.
Xu investigated fundamental properties
of $A^{s}_{p(\cdot),q}(\rn)$
\cite{x1,x2}.
Among others
Xu obtained the atomic decomposition results.
As for $A^{s}_{p(\cdot),q}(\rn)$,
in \cite{NS}, Noi and Sawano
have investigated the complex interpolation
of $F_{p_0(\cdot),q_0}^{s_0}(\rn)$
and
$F_{p_1(\cdot),q_1}^{s_1}(\rn)$.

Finally, as we have announced
in Section \ref{s1},
we show the unboundedness of the Hardy-Littlewood maximal operator
and the maximal operator $M_{r,\lz}$.
\begin{lemma}\label{l10.2}
The maximal operator $M_{r,\lz}$
is not bounded on $L^{1+\chi_{{\mathbb R}^n_+}}(\rn)$
for all $r\in(0,\fz)$ and $\lz\in(0,\fz)$.
In particular,
the Hardy-Littlewood maximal operator $M$
is not bounded on $L^{1+\chi_{{\mathbb R}^n_+}}(\rn)$.
\end{lemma}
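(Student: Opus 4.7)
The strategy is to exhibit a function $f$ that sits comfortably in the $L^1$-component of $L^{1+\chi_{{\mathbb R}^n_+}}(\rn)$ but whose maximal function $M_{r,\lambda}f$ develops a nonintegrable $L^2$-singularity that spreads across the boundary $\{x_n=0\}$ into ${\mathbb R}^n_+$. Concretely, I would fix $\beta\in[n/2,n)$ (for instance $\beta:=3n/4$) and set
\[
f(x):=|x|^{-\beta}\chi_{B(0,1)}(x)\chi_{\rn\setminus{\mathbb R}^n_+}(x), \quad x\in\rn.
\]
Since $\beta<n$, the function $f$ is integrable on $\rn\setminus{\mathbb R}^n_+$; because $f$ vanishes on ${\mathbb R}^n_+$, the defining inequality of $\|f\|_{L^{1+\chi_{{\mathbb R}^n_+}}}$ reduces to $\mu^{-1}\int_{\rn\setminus{\mathbb R}^n_+}|f|\,dx\le 1$, which is satisfied by $\mu$ sufficiently large, so $\|f\|_{L^{1+\chi_{{\mathbb R}^n_+}}}<\infty$.

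The heart of the argument is the pointwise lower bound $M_{r,\lambda}f(x)\gtrsim|x|^{-\beta}$ for every $x\in{\mathbb R}^n_+$ with $|x|<1/4$. Setting $\epsilon:=|x|$ and using the admissible radius $R:=2\epsilon$ in the supremum defining $M_{r,\lambda}f(x)$, the weight $(1+|y|)^{-r\lambda}$ is bounded below by a positive constant on $\{|y|<R\}$, so after a change of variable one obtains
\[
[M_{r,\lambda}f(x)]^r \gtrsim \epsilon^{-n}\int_{B(x,2\epsilon)\cap(\rn\setminus{\mathbb R}^n_+)}|z|^{-r\beta}\,dz \gtrsim \epsilon^{-n}\int_{B(0,\epsilon)\cap(\rn\setminus{\mathbb R}^n_+)}|z|^{-r\beta}\,dz,
\]
using that $B(x,2\epsilon)\supset B(0,\epsilon)$ because $|x|=\epsilon$. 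If $r\beta<n$, the last integral is comparable to $\epsilon^{n-r\beta}$, yielding $M_{r,\lambda}f(x)\gtrsim\epsilon^{-\beta}=|x|^{-\beta}$; if instead $r\beta\ge n$ the integral already diverges at the origin, forcing $M_{r,\lambda}f\equiv+\infty$ identically and finishing the proof at once.

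To conclude in the remaining case, $2\beta\ge n$ forces $\int_{B(0,1/4)\cap{\mathbb R}^n_+}|x|^{-2\beta}\,dx=+\infty$, and hence for every $\mu>0$ one has $\int_{{\mathbb R}^n_+}[M_{r,\lambda}f(x)/\mu]^{2}\,dx=+\infty$. Therefore no positive $\mu$ satisfies the defining inequality of $\|M_{r,\lambda}f\|_{L^{1+\chi_{{\mathbb R}^n_+}}}$, so that quantity equals $+\infty$; combined with $\|f\|_{L^{1+\chi_{{\mathbb R}^n_+}}}<\infty$, this rules out any boundedness inequality $\|M_{r,\lambda}f\|\le C\|f\|$ for the pair, and the identical construction handles the classical Hardy-Littlewood operator $M$ (which is essentially $M_{1,0}$). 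The main technical hurdle is the lower bound of the second paragraph, where one must carefully verify that $B(x,2\epsilon)$ intersects the lower half $\rn\setminus{\mathbb R}^n_+$ with enough measure near the origin to capture the full singular profile; the case split on $r\beta<n$ versus $r\beta\ge n$ is what allows this single construction to cover all parameter regimes $r,\lambda>0$ uniformly.
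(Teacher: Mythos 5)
Your proof is correct, but it takes a genuinely different route from the paper's. The paper works with a \emph{family} of functions $f_r:=\chi_{[-r,0]}(x_n)\chi_{[-1,1]^{n-1}}(x_1,\dots,x_{n-1})$ (thin slabs hugging the boundary from below), observes that $M_{r,\lambda}f_r\sim Mf_r$ spreads the slab symmetrically to $\{|x_n|\le r\}$, and then computes that $\|f_r\|_{L^{1+\chi_{{\mathbb R}^n_+}}}\sim r$ while $\|Mf_r\|_{L^{1+\chi_{{\mathbb R}^n_+}}}\gtrsim r^{1/2}$, so the ratio blows up as $r\to 0$ and no uniform bound can hold. You instead exhibit a \emph{single} function $f(x)=|x|^{-3n/4}\chi_{B(0,1)\setminus{\mathbb R}^n_+}(x)$ lying in the space (its $L^1$ mass on the lower half-space is finite since $3n/4<n$) whose maximal function inherits the singularity $|x|^{-3n/4}$ on the upper half-ball, where the $L^2$ component of the norm diverges because $3n/2\ge n$; your pointwise lower bound via $R=2|x|$ and $B(x,2|x|)\supset B(0,|x|)$ is sound, and the case split on $r\beta\ge n$ (where $|f|^r$ is not even locally integrable, so $M_{r,\lambda}f\equiv\infty$) correctly covers all parameter regimes. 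Both arguments exploit the same phenomenon --- the maximal operator transports $L^1$-type concentration from below the hyperplane into the region where square-integrability is demanded --- but your version yields the slightly stronger conclusion that there is a fixed $f\in\cl(\rn)$ with $M_{r,\lambda}f\notin\cl(\rn)$, whereas the paper's is more elementary (only characteristic functions and direct measure counts) and avoids singular integrands; as a side remark, your bookkeeping of the exponents is actually cleaner than the paper's, whose displayed norms $\|f_r\|\sim r^{-1}$ and $\|Mf_r\|\gtrsim r^{-1/2}$ appear to carry sign errors in the exponents even though the intended ratio $r^{-1/2}\to\infty$ is right.
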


\begin{proof}
Consider
$f_r(x):=\chi_{[-r,0]}(x_n)\chi_{[-1,1]^{n-1}}
(x_1,x_2,\cdots,x_{n-1})$
for all $x=(x_1,\cdots,x_n)\in\rn$.
Then, for all $x$ in the support of $f_r$, we have
\[
M_{r,\lz}f_r(x)\sim Mf_r(x)
\sim\chi_{[-r,r]}(x_n)\chi_{[-1,1]^{n-1}}
(x_1,x_2,\cdots,x_{n-1}).
\]
Hence $\|M_{r,\lz}f \|_{L^{1+\chi_{{\mathbb R}^n_+}}}\gtrsim r^{-1/2}$,
while $\|f\|_{L^{1+\chi_{{\mathbb R}^n_+}}} \sim r^{-1}$,
showing the unboundedness,
which completes the proof of Lemma \ref{l10.2}.
\end{proof}

Lebesgue spaces with variable exponent
date back first to the works by Orlicz and Nakano
\cite{or,n50,n51},
where the case $p_+<\infty$ is considered.
When $p_+ \le \infty$,
Sharapudinov considered $L^{p(\cdot)}([0,1])$
\cite{sh}
and then
Kov\'a\v cik
and
R\'akosn\'{i}k
extended the theory to domains
\cite{kr}.

\subsection{Amalgam spaces}
\label{s10.5}

Let $ p,\,q \in(0, \infty]$ and $s \in \R$.
Recall that $Q_{0z}:= z+[0,1]^n$ for $z \in \Z^n$,
the \emph{translation of the unit cube}.
For a Lebesgue locally integrable function $f$
we define
\[
\| f \|_{(L^p(\rn),l^q(\langle z \rangle^s))}
:=
\| \{(1+|z|)^s \cdot \| \chi_{Q_{0z}}f\|_{L^p(\rn)} \}_{z \in \Z^n}
\|_{l^q}.
\]
Now in this example $(\cl1)$ through $(\cl6)$ hold true with the parameters
\eqref{2.10}, \eqref{2.11} and \eqref{2.12}
actually reading as, respectively,
\begin{equation*}
\cl(\rn):=(L^p(\rn),l^q(\langle z \rangle^s)), \
\theta:=\min(1,p,q), \
N_0:=n+1+s, \quad
\gamma:=\frac{n}{p}, \
\delta:=\max(-s,0).
\end{equation*}%
\begin{equation*}
w_j(x) := 1\mbox{ for all }x\in\rn \mbox{ and }j\in\zz_+, \quad
\alpha_1=\alpha_2=\alpha_3=0.
\end{equation*}%
\begin{equation*}
\tau \in [0,\infty), \, q\in (0,\infty], \, a>n+1+s.
\end{equation*}%
The following is shown essentially in \cite{kntyy}.
Actually, in \cite{kntyy}
the boundedness of singular integral operators
is established.
Using the technique employed in \cite[p.\,498]{gr85},
we have the following.

\begin{proposition}\label{p10.9}
Let $q,u \in(1, \infty]$, $p\in(1,\fz)$ and $s \in \R$.
Then, for all sequences of measurable functions $\{f_j\}_{j=1}^\infty$,
\[
\left\|
\left(\sum_{j=1}^\infty [Mf_j]{}^u\right)^{\frac{1}{u}}
\right\|_{(L^p(\rn),l^q(\langle z \rangle^s))}
\sim
\left\|
\left(\sum_{j=1}^\infty |f_j|^u\right)^{\frac{1}{u}}
\right\|_{(L^p(\rn),l^q(\langle z \rangle^s))}
\]
with the implicit equivalent positive constants independent of $\{f_j\}_{j=1}^\fz$.
\end{proposition}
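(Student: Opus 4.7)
The $\gtrsim$ direction is immediate since $|f_j|\le Mf_j$ pointwise combined with the lattice property of $(L^p(\rn),l^q(\langle z \rangle^s))$. The work lies in the reverse direction, and the plan is to decompose $Mf_j$ into a near and a far piece relative to each unit cube $Q_{0z}$, handle the former by the classical scalar Fefferman-Stein inequality, and reduce the latter to a discrete weighted version of the same.

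Fix $z\in\Z^n$ and let $R_z:=\bigcup_{\|y-z\|_\infty\le 1}Q_{0y}$, the ``thickened'' unit cube around $Q_{0z}$, which is the union of at most $3^n$ adjacent unit cubes. Split $f_j=f_j\chi_{R_z}+f_j\chi_{R_z^c}$, so that $Mf_j(x)\le M(f_j\chi_{R_z})(x)+M(f_j\chi_{R_z^c})(x)$ for $x\in Q_{0z}$. For the near part I invoke the classical vector-valued Fefferman--Stein inequality on $L^p(\rn)$, which is exactly \cite[p.\,498]{gr85}, to obtain
\[
\|\{(Mf_j\chi_{R_z})\}_j\|_{L^p(\ell^u)}
\lesssim
\|\chi_{R_z}\cdot\{f_j\}_j\|_{L^p(\ell^u)}.
\]
Taking the weighted $l^q$-norm over $z\in\Z^n$ and exploiting $(1+|y|)^s\sim(1+|z|)^s$ whenever $\|y-z\|_\infty\le 1$, together with the fact that each $Q_{0y}$ belongs to at most $3^n$ thickenings $R_z$, bounds the near part by the right-hand side of Proposition \ref{p10.9}.

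For the far part, if $x\in Q_{0z}$ then only radii $r\gtrsim 1$ contribute to $M(f_j\chi_{R_z^c})(x)$. Using H\"older's inequality on unit cubes one has $\int_{Q_{0w}}|f_j|\le F_j(w):=\|f_j\chi_{Q_{0w}}\|_{L^p(\rn)}$, and therefore
\[
M(f_j\chi_{R_z^c})(x)
\lesssim
\sup_{r\ge 1/2}r^{-n}\sum_{\|w-z\|_\infty\le r+n}F_j(w)
\sim
(\mathcal{M}_{\Z^n}F_j)(z),
\]
where $\mathcal{M}_{\Z^n}$ denotes the discrete Hardy--Littlewood maximal operator on $\Z^n$. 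Since $(\mathcal{M}_{\Z^n}F_j)(z)$ is constant in $x\in Q_{0z}$, its $L^p$-norm on $Q_{0z}$ equals itself, so the far contribution to the left-hand side of the claim equals
\[
\Bigl\|\bigl\{(1+|z|)^s\bigl[\sum_j(\mathcal{M}_{\Z^n}F_j(z))^u\bigr]^{1/u}\bigr\}_z\Bigr\|_{l^q(\Z^n)}.
\]

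It remains to establish a discrete weighted vector-valued Fefferman--Stein inequality for $\mathcal{M}_{\Z^n}$ on $l^q(\langle z\rangle^{sq})$ with exponent $u$; this is the heart of the argument and the one genuinely technical point. I plan to identify $\mathcal{M}_{\Z^n}$ acting on $\{F_j(w)\}$ with (a constant multiple of) the continuous Hardy--Littlewood maximal operator $M$ acting on the step function $G_j:=\sum_wF_j(w)\chi_{Q_{0w}}$, so that the discrete $l^q(\langle z\rangle^{sq})$-norm matches the continuous $L^q(\rn)$-norm with power weight $(1+|x|)^{sq}$. The desired inequality then follows from the weighted vector-valued Fefferman--Stein inequality of Andersen--John, obtained via Rubio de Francia extrapolation exactly as in \cite[p.\,498]{gr85} and in the way carried out in \cite{kntyy}; since $\|(1+|\cdot|)^{sq}\cdot G_j\|_{L^q}^{1/q}\sim\|(1+|z|)^s F_j(z)\|_{l^q(\Z^n)}$ the two sides match up correctly. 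The main obstacle is checking that the Muckenhoupt-type conditions on the power weight needed for extrapolation are compatible with the parameters $p,q,u,s$ available, which is precisely the content of the technique of \cite{gr85,kntyy}.
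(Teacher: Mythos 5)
The paper never actually proves Proposition \ref{p10.9}: it only remarks that the result is ``shown essentially in \cite{kntyy}'' via the technique of \cite[p.\,498]{gr85}. Your near/far splitting relative to the unit cubes --- classical vector-valued Fefferman--Stein for the near part, a discrete weighted maximal inequality for the far part --- is exactly the argument those references intend, and the near part as you wrote it is fine. Two points in the far part, however, are genuine gaps.

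First, you set $F_j(w):=\|f_j\chi_{Q_{0w}}\|_{L^p(\rn)}$ and, after the discrete maximal inequality, need $(\sum_j F_j(z)^u)^{1/u}\lesssim\|\chi_{Q_{0z}}(\sum_j|f_j|^u)^{1/u}\|_{L^p(\rn)}$, i.e.\ $\ell^u(L^p)\hookrightarrow L^p(\ell^u)$ on a unit cube. By Minkowski this holds for $u\ge p$ but fails for $u<p$ (take $f_j=\chi_{E_j}$ with $E_1,\dots,E_N\subset Q_{0z}$ disjoint of measure $1/N$: the left side is $N^{1/u-1/p}$, the right side is $1$). The fix is to keep the $L^1$ average: with $F_j(w):=\int_{Q_{0w}}|f_j(y)|\,dy$, H\"older still gives the pointwise bound on the far part, and Minkowski now runs in the right direction, $(\sum_j F_j(w)^u)^{1/u}\le\int_{Q_{0w}}(\sum_j|f_j|^u)^{1/u}\le\|\chi_{Q_{0w}}(\sum_j|f_j|^u)^{1/u}\|_{L^p(\rn)}$. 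Second, the ``main obstacle'' you defer is not a formality. For $x\in Q_{0z}$ and $f\ge0$ one has $Mf(x)\gtrsim\sup_{r\ge 2n}r^{-n}\int_{B(x,r)}|f|\gtrsim\mathcal{M}_{\Z^n}F(z)$ with $F(w)=\int_{Q_{0w}}|f|$, so the far-part estimate is equivalent to the boundedness of $\mathcal{M}_{\Z^n}$ on $\ell^q((1+|z|)^{sq})$, which holds if and only if $(1+|z|)^{sq}$ is a discrete $A_q$ weight, i.e.\ $-n<sq<n(q-1)$, equivalently $-n/q<s<n/q'$. Outside this range the proposition itself is false: for $q<\infty$ and $s\ge n/q'$ the single function $f_1=\chi_{[0,1)^n}$ has amalgam norm $1$ while $Mf_1(x)\gtrsim(1+|x|)^{-n}$ forces $\|Mf_1\|_{(L^p(\rn),l^q(\langle z\rangle^s))}^q\gtrsim\sum_{z\in\Z^n}(1+|z|)^{(s-n)q}=\infty$, and $f_1=\chi_{Q_{0k}}$ with $|k|\to\infty$ breaks the claim for $s<-n$. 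So your argument, once the first point is repaired, proves the statement precisely for $s$ in the discrete $A_q$ range; no argument can remove that restriction, and the blanket hypothesis $s\in\R$ in the proposition should be corrected accordingly.
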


Therefore, $(\cl6)$ is available and
the results above can be applicable to amalgam spaces.
Remark that amalgam spaces can be used
to describe the range of the Fourier transform;
see \cite{sy06} for details.
\subsection{Multiplier spaces}
\label{s10.6}

There is another variant of Morrey spaces.
\begin{definition}\label{d10.1}
For $r\in[0,\frac{n}{2})$, the \emph{space} $\overset{.}{X}_{r}(\rn)$ is defined
as the space of all functions
$f \in L_{\rm loc}^{2}\left( \mathbb{R}^{n}\right) $
that satisfy the following inequality\textrm{:}
\begin{equation*}
\left\Vert f\right\Vert _{\overset{.}{X}_{r}(\rn)}
:=
\sup\left\{\left\Vert f g \right\Vert_{L^{2}(\rn)}<\infty
\,:\ \left\Vert g\right\Vert_{\overset{.}{H^r}(\rn)}\leq 1
\right\}<\fz,
\end{equation*}
where $\overset{.}{H^r}\left( \mathbb{R}^{n}\right)$ stands for the
completion of the space $\mathcal{D}\left( \mathbb{R}^{n}\right) $ with
respect to the \emph{norm} $\left\Vert u\right\Vert _{\overset{.}{H^r}(\rn)}
:=\Vert( -\Delta) ^{\frac{r}{2}}u\Vert _{L^{2}(\rn)}.$
\end{definition}

We refer to \cite{M} for the reference of this field which contains a vast
amount of researches of the multiplier spaces. Here and below we place
ourselves in the setting of ${\mathbb{R}}^{n}$ with $n\geq 3$.

We characterize this norm in terms
of the $\overset{.}{H^r}(\rn)$-capacity and wavelets.
Here we present the definition of capacity (see \cite{MS,M}).
Denote by $\mathcal{K}$ the \emph{set of all compact sets
in $\R^n$}.

\begin{definition}[{\rm \cite{M}}]\label{d10.2}
Let $r\in[0,\frac{n}{2})$ and $e\in \mathcal{K}$.
The \emph{quantity $\mathrm{cap}(e,\overset{.}{H^r}(\rn)) $} stands
for the $\overset{.}{H^r}$-capacity,
which is defined by
\begin{equation*}
{\rm cap }\left( e,\overset{.}{H^r}(\rn)\right)
:=
\inf \left\{ \left\Vert
u\right\Vert _{\overset{.}{H^r}\left( \mathbb{R}^{n}\right) }^{2}:u\in
\mathcal{D}\left( \mathbb{R}^{n}\right) ,\text{ \ }u\geq 1\text{ \ on \ }%
e\right\} .
\end{equation*}
\end{definition}

Let us set $\frac1u:=\frac12-\frac{r}{n}$,
that is,
$u=\frac{2n}{n-2r}$.
Notice that by the Sobolev embedding theorem,
we have
\[
|e|^\frac1u=\|\chi_e\|_{L^u(\rn)} \le \|u\|_{L^u(\rn)} \ls \|u\|_{\overset{.}{H^r}(\rn)}
\]
for all $u \in \mathcal{D}\left( \mathbb{R}^{n}\right)$.
Consequently, we have
\begin{equation}\label{10.34}
{\rm cap }\left( e,\overset{.}{H^r}(\rn)\right)
\ge |e|^{\frac{n-2r}{n}}.
\end{equation}

Having clarified the definition of capacity,
let us now formulate our main result.
In what follows, we choose a system
$\{\psi_{\varepsilon,jk}
\}_{\varepsilon=1,2,\cdots,2^n-1, \, j \in {\mathbb Z}, \,
k \in {\mathbb Z}^n}$
so that it forms a complete orthonormal basis of $L^2(\rn)$ and that
\[
\psi_{\varepsilon,jk}(x)
=
\psi_{\varepsilon}(2^jx-k)
\]
for all $j \in \Z$, $k \in \Z^n$ and  $x \in \rn$.

\begin{proposition}[{\rm \cite{gs10,M}}]\label{p10.10}
Let $r\in[0,\frac{n}{2})$ and let
$f \in L^2_{\rm loc}(\rn) \cap \cs'(\rn)$.
Then the following
are equivalent\textrm{:}

{\rm (i)}
$f\in \overset{.}{X}_{r}(\rn).$

{\rm(ii)}
The function $f$ can be expanded as follows\textrm{:}
In the topology of $\cs'(\rn)$,
\[
f=\sum\limits_{\varepsilon=1}^{2^{n}-1} \sum\limits_{(j,k)
\in \mathbb{Z\times Z}^{n}}
 \lambda _{\varepsilon ,jk}\psi_{\varepsilon,jk}\qquad in \quad \cs'(\rn),
\]
where $\{\lambda _{\varepsilon ,jk}
\}_{\varepsilon=1,2,\cdots,2^n-1, \, (j,k)
\in \mathbb{Z\times Z}^{n}}$
satisfies that
\begin{equation*}
\sum\limits_{\varepsilon=1}^{2^{n}-1} \sum\limits_{(j,k)
\in \mathbb{Z\times Z}^{n}}\left\vert \lambda _{\varepsilon ,jk
}\right\vert ^{2}\int_{e}\left\vert \psi _{\varepsilon ,jk
}(x)\right\vert ^{2}M\chi_e(x)^{4/5}\,dx\leq
(C_1)^2\,{\rm cap}\left( e,\overset{.}{H^r}(\rn)\right)
\end{equation*}
for $e\in \mathcal{K}$.

{\rm(iii)}
Assume in addition $n \ge 3$ here.
The function $f$ can be expanded as follows\textrm{:}
In the topology of $\cs'(\rn)$,
\[
f=\sum\limits_{\varepsilon=1}^{2^{n}-1} \sum\limits_{(j,k)
\in \mathbb{Z\times Z}^{n}}
 \lambda_{\varepsilon,jk}\psi_{\varepsilon,jk}\qquad in \quad \cs'(\rn),
\]
where $\{\lambda _{\varepsilon ,jk}
\}_{\varepsilon=1,2,\cdots,2^n-1, \, \left(j,k \right)
\in \mathbb{Z\times Z}^{n}}$
satisfies that
\begin{equation*}
\sum\limits_{\varepsilon=1}^{2^{n}-1} \sum\limits_{(j,k)
\in \mathbb{Z\times Z}^{n}}\left\vert \lambda _{\varepsilon ,jk
}\right\vert^{2}\int_{\R^n}\left\vert \psi_{\varepsilon,jk}(x)
\right\vert^{2}\,dx\leq
(C_2)^2\,{\rm cap}\left( e,\overset{.}{H^r}(\rn)\right)
\end{equation*}
for $e\in \mathcal{K}$.

Furthermore,
the smallest values of $C_1$ and $C_2$ are both
equivalent to
$\|f\|_{\dot{X}_r(\rn)}$.
\end{proposition}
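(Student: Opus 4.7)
The starting point is Maz'ya's capacitary characterization of the multiplier norm (see \cite[Section 1.2]{M}):
\begin{equation*}
  \|f\|_{\dot X_{r}(\R^{n})}^{\,2}
  \sim
  \sup_{e\in\mathcal K}
  \frac{1}{{\rm cap}(e,\dot H^{r}(\R^{n}))}
  \int_{e}|f(x)|^{2}\,dx,
\end{equation*}
which reduces the task to comparing $\int_{e}|f|^{2}$ with wavelet-side quantities for an arbitrary compact set $e$. The implication (iii)$\Rightarrow$(ii) is then immediate, since $0\le\chi_{e}\le 1$ forces $M\chi_{e}\le 1$ pointwise, so $\int_{e}|\psi_{\varepsilon,jk}|^{2}\,[M\chi_{e}]^{4/5}\le\int_{e}|\psi_{\varepsilon,jk}|^{2}$ term by term and hence $C_{1}\le C_{2}$ automatically.

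For the equivalence (i)$\Leftrightarrow$(ii), I would expand $f=\sum_{\varepsilon,j,k}\lambda_{\varepsilon,jk}\psi_{\varepsilon,jk}$ in $\cs'(\R^{n})$ and split the pointwise expression for $|f|^{2}$ into diagonal and off-diagonal parts. The diagonal part contributes exactly $\sum_{\varepsilon,jk}|\lambda_{\varepsilon,jk}|^{2}|\psi_{\varepsilon,jk}|^{2}$, which is the quantity occurring in (ii). The off-diagonal cross terms are controlled by a Schur-type estimate, taking advantage of the fact that the chosen (Daubechies) wavelets at scale $2^{-j}$ are concentrated on cubes of side $2^{-j}$; after integration against $\chi_{e}$, the gain of localization translates into a factor of $[M\chi_{e}]^{4/5}$ (any power strictly less than $1$ would serve; $4/5$ is chosen so that Schur's test remains applicable). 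Plugging this two-sided bound into Maz'ya's formula then yields $\|f\|_{\dot X_{r}(\R^{n})}\sim C_{1}$.

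The harder direction is (i)$\Rightarrow$(iii), where the maximal-function weight must be removed; the additional hypothesis $n\ge 3$ enters at exactly this point. My plan is to exploit the Sobolev embedding $\dot H^{r}(\R^{n})\hookrightarrow L^{u}(\R^{n})$ with $u=2n/(n-2r)>2$, equivalently the lower capacity bound ${\rm cap}(e,\dot H^{r})\gtrsim|e|^{(n-2r)/n}$ recorded in \eqref{10.34}, and to combine it with a discrete capacitary strong-type inequality to absorb the off-diagonal losses into a passage from $e$ to an enlarged compact set $\widetilde e\supset e$ satisfying ${\rm cap}(\widetilde e,\dot H^{r})\lesssim{\rm cap}(e,\dot H^{r})$. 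In this way one pays for deleting the $[M\chi_{e}]^{4/5}$ factor by enlarging the test set without changing its capacity by more than a multiplicative constant, which is enough to conclude the reverse comparison of $C_{2}$ with $\|f\|_{\dot X_{r}(\R^{n})}$.

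The main obstacle is precisely this last step: dispensing with the maximal-function weight forces a genuinely multi-scale capacity argument tying every dyadic scale of wavelets to the geometry of $e$, and it is there that both the Sobolev embedding (hence the restriction $n\ge 3$) and the strong subadditivity of the $\dot H^{r}$-capacity play an essential role. Once this step is settled, the equivalence of the smallest admissible constants $C_{1}$ and $C_{2}$ with $\|f\|_{\dot X_{r}(\R^{n})}$ follows by combining the estimates above.
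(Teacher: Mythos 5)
First, a point of comparison: the paper does not prove Proposition \ref{p10.10} at all. It is imported verbatim from \cite{gs10} and \cite{M}, and the only ingredient reproduced in the paper is Lemma \ref{l10.3}, the capacity bound $\mathrm{cap}(\overline{E_\kappa},\dot{H^r}(\rn))\lesssim \kappa^{-2}\,\mathrm{cap}(e,\dot{H^r}(\rn))$ for the level sets $E_\kappa=\{M\chi_e>\kappa\}$. So there is no in-paper argument to compare yours against; I can only assess your plan on its own terms, and on those terms it has genuine gaps at exactly the two steps that carry the proof.

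Your starting point (Maz'ya's capacitary description of $\|\cdot\|_{\dot{X}_r(\rn)}$) and the observation that (iii) trivially implies (ii) are fine. But the assertion that a Schur-type treatment of the off-diagonal wavelet cross terms ``translates into a factor of $[M\chi_e]^{4/5}$,'' with any exponent below $1$ serving, is not an argument, and as literally described it cannot be the mechanism: by the Lebesgue density theorem $M\chi_e=1$ a.e.\ on $e$, so on the domain of integration written in (ii) the weight is invisible, and your claimed bound collapses to the unweighted diagonal sum. The weight does its work \emph{off} $e$, through the level sets $E_\kappa$, which is precisely what Lemma \ref{l10.3} is for. (Relatedly, the left-hand side of (iii) as printed does not depend on $e$, so taken literally it forces all coefficients to vanish since $\inf_{e\in\mathcal{K}}\mathrm{cap}(e,\dot{H^r}(\rn))=0$; the intended condition restricts the sum to wavelets whose supports meet $e$, and any proof has to engage with that corrected form.) Second, your plan to remove the weight in (i)$\Rightarrow$(iii) by passing to a single enlargement $\widetilde e\supset e$ with $\mathrm{cap}(\widetilde e,\dot{H^r}(\rn))\lesssim\mathrm{cap}(e,\dot{H^r}(\rn))$ cannot work: no such capacity-preserving fattening exists in general, and the available substitute is the family $\overline{E_{2^{-l}}}$, whose capacities are only controlled by $4^l\,\mathrm{cap}(e,\dot{H^r}(\rn))$. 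One must sum over $l$ and check that the decay supplied by the wavelet tails and the exponent $4/5$ beats this geometric growth; that convergence check --- not the Sobolev embedding, which holds for every $n\ge1$ with $0\le r<n/2$ --- is where the exponents and the restriction $n\ge3$ actually enter. Until these two quantitative steps are carried out, the proposal is an outline of the statement rather than a proof of it.
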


To show that this function space
falls under the scope of our theory,
let us set
\begin{eqnarray*}
&&\|F\|_{\dot{X}_r(\rn)}^{(1)}
:=
\sup_{e\in \mathcal{K}}
\left\{
\frac{1}{{\rm cap}\left(e,\overset{.}{H^r}(\rn)\right)}
\int_{\R^n}\left\vert F(x) \right\vert^{2}\,dx
\right\}^{1/2}
\end{eqnarray*}
and
\begin{eqnarray*}
&&\|F\|_{\dot{X}_r(\rn)}^{(2)}
:=
\sup_{e\in \mathcal{K}}
\left\{
\frac{1}{{\rm cap}\left(e,\overset{.}{H^r}(\rn)\right)}
\int_{e}\left\vert F(x) \right\vert^{2}M\chi_e(x)^{4/5}\,dx
\right\}^{1/2}.
\end{eqnarray*}
The \emph{space} $\dot{X}_{r}^{(i)}(\rn), \, i\in\{1,2\}$, denotes
the set of all measurable function $F:\rn \to \C$
for which $\|F\|_{\dot{X}_{r}(\rn)}^{(i)}<\infty$.

The following lemma,
which can be used to checking $(\cl6)$, is known.

\begin{lemma}[{\rm \cite[Lemma 2.1]{gs10}}]
\label{l10.3} Let $e$ be a compact set and $\kappa\in(0,\fz)$.
Define $E_{\kappa }=\{x\in {%
\mathbb{R}}^{n}\,:\,M\chi _{e}(x)>\kappa \}$. Then
\begin{equation*}
\text{\rm cap}\left( \overline{E_{\kappa }},\overset{.}{H^r}(\rn)\right)
\ls\kappa^{-2}\text{\rm cap}\left( e,\overset{.}{H^r}(\rn)\right)
\end{equation*}
\end{lemma}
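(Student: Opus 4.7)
The plan is to prove the claim via the Riesz-potential characterization of capacity: for $0\le r<n/2$,
\[
\mathrm{cap}(K,\overset{.}{H^r}(\rn))\sim\inf\lf\{\|f\|_{L^2(\rn)}^2:\,f\ge 0,\ I_rf\ge 1\text{ on }K\r\},
\]
where $I_rf(x):=c_{n,r}\int_{\rn}|x-y|^{r-n}f(y)\,dy$ is the Riesz potential of order $r$, the equivalence being classical (see, e.g., Adams--Hedberg) since $\overset{.}{H^r}(\rn)=I_r(L^2(\rn))$ up to equivalent norms. The strategy is then to transfer a near-optimal potential function from $e$ to $\overline{E_\kappa}$ by means of the Hardy--Littlewood maximal operator.

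First I would select $f\ge 0$ with $I_rf\ge 1$ on $e$ and $\|f\|_{L^2}^2\le 2\,\mathrm{cap}(e,\overset{.}{H^r}(\rn))$. Since $I_rf\ge\chi_e$, a direct averaging gives
\[
M(I_rf)(x)\ge\sup_{s>0}\frac{1}{|B(x,s)|}\int_{B(x,s)}\chi_e(y)\,dy=M\chi_e(x),
\]
so $M(I_rf)>\kappa$ on $E_\kappa$. The key pointwise inequality to establish is
\[
M(I_rf)(x)\lesssim I_r(Mf)(x),\quad x\in\rn,
\]
which follows from the Fubini-swapped representation $M(I_rf)(x)=\sup_{s>0}\int f(z)A_s(x,z)\,dz$ with $A_s(x,z):=|B(x,s)|^{-1}\int_{B(x,s)}|y-z|^{r-n}\,dy$: on the far regime $|x-z|\ge 2s$ one has $A_s(x,z)\sim|x-z|^{r-n}$, and combining with $f\le Mf$ a.e. bounds the contribution by $I_r(Mf)(x)$; on the near regime $|x-z|<2s$, $A_s(x,z)\lesssim s^{r-n}$, while for every $z\in B(x,2s)$ one has $Mf(z)\gtrsim s^{-n}\int_{B(x,2s)}f\,dy$ (since $B(x,2s)\subset B(z,4s)$), so $s^{r-n}\int_{B(x,2s)}f(z)\,dz\lesssim I_r(Mf)(x)$ after integrating the Riesz kernel over $B(x,2s)$.

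Combining these two facts, $I_r(C\kappa^{-1}Mf)\ge 1$ on $E_\kappa$ for a dimensional constant $C$, and by lower semicontinuity of the Riesz potential of a non-negative function the inequality extends to $\overline{E_\kappa}$. The $L^2$-boundedness of $M$ (just Hardy--Littlewood, since $2>1$) then yields
\[
\|C\kappa^{-1}Mf\|_{L^2}^2\lesssim\kappa^{-2}\|f\|_{L^2}^2\lesssim\kappa^{-2}\,\mathrm{cap}(e,\overset{.}{H^r}(\rn)),
\]
and inserting this admissible function into the Riesz-capacity formula for $\overline{E_\kappa}$ finishes the proof.

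The hardest part will be the pointwise estimate $M(I_rf)\lesssim I_r(Mf)$: although the kernel-level computations are elementary, one must track the implicit constants uniformly in $s$ and verify the inner bound $Mf(z)\gtrsim s^{-n}\int_{B(x,2s)}f\,dy$ for every $z\in B(x,2s)$, not merely for $z$ near the center. A secondary point is the identification of $\mathrm{cap}$ defined via smooth test functions with the Riesz-potential capacity; this is standard but would need to be invoked from potential theory, together with the fact that $I_rg\ge 1$ on a compact set in the quasi-everywhere sense suffices to produce an admissible competitor for the test-function definition after a suitable mollification and truncation.
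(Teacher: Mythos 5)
The paper does not prove this lemma; it is imported verbatim from \cite[Lemma 2.1]{gs10}, so there is no internal proof to compare yours against. On its merits, your argument is essentially sound: the reduction to the nonlinear Riesz capacity, the observation that $I_rf\ge\chi_e$ forces $M(I_rf)\ge M\chi_e>\kappa$ on $E_\kappa$, the pointwise domination $M(I_rf)\lesssim I_r(Mf)$ (your two-regime kernel computation is correct, including the inner bound $Mf(z)\gtrsim s^{-n}\int_{B(x,2s)}f$ for \emph{all} $z\in B(x,2s)$, which follows from $B(x,2s)\subset B(z,4s)$), and the $L^2$-boundedness of $M$ producing the factor $\kappa^{-2}$, together yield an admissible Riesz-capacity competitor. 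The comparison between the test-function capacity of Definition \ref{d10.2} and the Riesz capacity, which you correctly flag, is standard for $0<r<n/2$ but is doing real work and must be cited; note also that your argument degenerates at $r=0$ (where $I_r$ is not defined), though that endpoint is elementary since the capacity then reduces to Lebesgue measure and the weak-$(1,1)$ inequality for $M$ applies.

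The one genuine flaw is the passage from $E_\kappa$ to $\overline{E_{\kappa}}$. Lower semicontinuity of $I_r(Mf)$ gives $I_r(Mf)(x)\le\liminf_{y\to x}I_r(Mf)(y)$; this controls the value at a limit point from \emph{above} by nearby values, so it cannot propagate the lower bound $I_r(Mf)\ge C^{-1}\kappa$ from $E_\kappa$ to its closure --- that would require upper semicontinuity, which Riesz potentials of nonnegative functions do not enjoy. The gap is easily repaired: if $x\in\overline{E_{\kappa}}$ and $x_j\to x$ with radii $r_j$ satisfying $r_j^{-n}|B(x_j,r_j)\cap e|>\kappa$, then either $|x-x_j|\le r_j$ for infinitely many $j$, in which case $B(x_j,r_j)\subset B(x,2r_j)$ gives $M\chi_e(x)\ge 2^{-n}\kappa$, or else $r_j\to 0$, which forces $x\in e$ (each $B(x_j,r_j)$ meets $e$ and $e$ is closed), where $I_rf(x)\ge 1$ holds directly. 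Hence $\overline{E_{\kappa}}\subset\{M\chi_e\ge 2^{-n}\kappa\}\cup e$, and since one may assume $\kappa\lesssim 1$ (otherwise $E_\kappa=\emptyset$ because $M\chi_e$ is bounded), the competitor $C\kappa^{-1}Mf$ works on all of $\overline{E_{\kappa}}$. With this substitution in place of the semicontinuity claim, your proof is complete.
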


By \eqref{10.34} and Lemma \ref{l10.3},
$(\cl1)$ through $(\cl6)$ hold true with
the condition \eqref{2.10}
actually reading as
\begin{equation*}
\cl(\rn):=\dot{X}_{r}^{(i)}(\rn) \mbox{ for } \, i\in\{1,2\}, \quad
\theta:=1, \quad
N_0:=n+1, \quad
\gamma:=2, \quad
\delta:=0
\end{equation*}%
In this case the condition \eqref{2.11} on $w$ is trivial:
\begin{equation*}
w_j(x):= 1 {\rm\ for\ all\ } j\in\zz_+ \mbox{ and } x\in\rn,
\alpha_1=\alpha_2=\alpha_3=0.
\end{equation*}%
Consequently, \eqref{2.12} reads as
\begin{equation*}
\tau \in [0,\infty), \, q\in (0,\infty], \, a>n+1.
\end{equation*}%

In view of Proposition \ref{p10.10}
we give the following proposition.

\begin{definition}\label{d10.3}
For any given sequence
$\lambda:=\{\lambda_{jk}\}_{j \in \Z_+, k \in {\mathbb Z}^n}$, let
\begin{eqnarray*}
\|\lambda\|_{\dot{X}_r(\rn)}^{(1)}
:=\|\lambda\|_{\dot{b}_{\dot{X}_{r}^{(1)}(\rn),2}^{0,0}}, \quad
\|\lambda\|_{\dot{X}_r(\rn)}^{(2)}
:=\|\lambda\|_{\dot{b}_{\dot{X}_{r}^{(2)}(\rn),2}^{0,0}}.
\end{eqnarray*}
The \emph{space $\dot{X}_{r}^{(i)}(\rn)$} for $i\in\{1,2\}$ is the set of all
sequences $\lambda:=\{\lambda_{jk}\}_{j \in \zz_+, \, k \in \Z^n}$
for which $\|\lambda\|_{\dot{X}_r(\rn)}^{(i)}$ is finite.
\end{definition}

In \cite{gs10},
essentially, we have shown the following conclusions.

\begin{proposition}\label{p10.11}
Let $r\in(0,\frac n2)$.

{\rm (i)}
If $n \ge 3$,
then
$(\dot{X}_r(\rn),\dot{X}_{r}^{(1)}(\rn))$
admits the atomic / molecular decompositions.

{\rm (ii)}
If $n \ge 1$,
then
$(\dot{X}_r(\rn),\dot{X}_{r}^{(2)}(\rn))$
admits the atomic / molecular decompositions.
\end{proposition}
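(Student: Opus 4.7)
\textbf{Proposal for the proof of Proposition \ref{p10.11}.} The plan is to recognize that $\dot{X}_r(\rn)$ is isomorphic to a generalized Besov-type space $\dot{B}^{0,0}_{\dot{X}_r^{(i)}(\rn),2,a}(\rn)$ of the framework developed in this paper, and then to invoke the atomic/molecular decomposition result Theorem \ref{t9.4} (and its regular-case variant Theorem \ref{t14.2}). Concretely, I would carry out the argument in three steps.

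First, I would verify that the \emph{function} spaces $\dot{X}_r^{(i)}(\rn)$, $i\in\{1,2\}$, entering the definition of the sequence norms in Definition \ref{d10.3} satisfy $(\cl1)$–$(\cl6)$. The axioms $(\cl1)$, $(\cl2)$, $(\cl4)$, $(\cl5)$ are immediate from the sup-over-capacity formula. The triangle inequality $(\cl3)$ holds true with $\theta=1$ because both $\|\cdot\|^{(i)}_{\dot{X}_r(\rn)}$ are suprema of normalized weighted $L^2$-quantities. For $(\cl6)$ one combines the Sobolev-type lower bound \eqref{10.34} with the capacity level-set estimate Lemma \ref{l10.3}: testing on $e=Q_{jk}$ yields $\|\chi_{Q_{jk}}\|_{\dot X_r^{(i)}(\rn)}\gtrsim 2^{-j\gamma}(1+|k|)^{-\delta}$ for some admissible $\gamma,\delta$, which is exactly what $(\cl6)$ demands.

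Second, with $(\cl1)$–$(\cl6)$ in hand, Theorem \ref{t9.4} (or Theorem \ref{t14.2} in case (ii), where we need to work in the regular regime to avoid any nontrivial moment requirement beyond what the wavelets already have) furnishes atomic and molecular decompositions for the pair
\[
\bigl(\dot{B}^{0,0}_{\dot{X}_r^{(i)}(\rn),2,a}(\rn),\ \dot{b}^{0,0}_{\dot{X}_r^{(i)}(\rn),2,a}(\rn)\bigr)
\]
whenever $a$ is large enough relative to the parameters recorded in Step 1; this is guaranteed by choosing $a>n+1$, which also places us in the range where the parameter $a$ plays no essential role (Theorem \ref{t8.1}).

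Third, I would identify
\[
\dot{X}_r(\rn)\;=\;\dot{B}^{0,0}_{\dot{X}_r^{(i)}(\rn),2,a}(\rn)\qquad\text{with equivalent quasi-norms}
\]
through the wavelet characterization of Proposition \ref{p10.10}. On one hand, Proposition \ref{p10.10}(ii) or (iii) shows that membership in $\dot{X}_r(\rn)$ is equivalent to a quantitative wavelet expansion $f=\sum_{\varepsilon,j,k}\lambda_{\varepsilon,jk}\psi_{\varepsilon,jk}$ whose coefficients satisfy a $\dot X_r^{(i)}$-type capacity estimate; on the other hand, the Daubechies wavelets $\psi_{\varepsilon,jk}$ are, up to a fixed multiplicative constant, smooth molecules of arbitrary regularity and with arbitrarily many vanishing moments, and thus admissible for the molecular synthesis part of Theorem \ref{t9.4}. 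Synthesis therefore embeds $\dot{X}_r(\rn)\hookrightarrow\dot{B}^{0,0}_{\dot{X}_r^{(i)}(\rn),2,a}(\rn)$. For the reverse embedding, starting from $f\in\dot{B}^{0,0}_{\dot{X}_r^{(i)}(\rn),2,a}(\rn)$, I would apply the homogeneous biorthogonal wavelet analysis (the direct analogue of Theorem \ref{t4.3}) to extract coefficients $\lambda_{\varepsilon,jk}(f)$ whose $\dot{b}^{0,0}_{\dot{X}_r^{(i)}(\rn),2,a}$-norm is controlled by $\|f\|_{\dot B}$, and then invoke Proposition \ref{p10.10} in the reverse direction to recover $f\in\dot{X}_r(\rn)$. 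Combining these two identifications with the atomic/molecular decomposition from Step 2 completes the proof.

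The main obstacle will be the matching between the two sequence space norms in Step 3: the norm $\|\lambda\|_{\dot b^{0,0}_{\dot X_r^{(i)},2,a}}$ involves the Peetre-type maximal adjustment of the indicator sums $\sum_k\lambda_{jk}\chi_{Q_{jk}}$, whereas the norm appearing in Proposition \ref{p10.10} is expressed directly in the raw wavelet coefficients weighted against $\int_e|\psi_{\varepsilon,jk}|^2$ or $\int_e|\psi_{\varepsilon,jk}|^2\,M\chi_e^{4/5}$. Passing between these two formulations requires a capacity-adapted maximal inequality: for case (ii) the factor $M\chi_e^{4/5}$ is the delicate point, and one must exploit Lemma \ref{l10.3} (together with the fact that $\operatorname{cap}(\overline{E_\kappa},\dot H^r(\rn))\lesssim\kappa^{-2}\operatorname{cap}(e,\dot H^r(\rn))$) to absorb the Peetre tail $(1+2^j|y|)^{-a}$ against the capacity, while for case (i) the cleaner inequality \eqref{10.34} is enough but it is precisely the reason the argument is restricted to $n\ge 3$, since $1/u=1/2-r/n$ must satisfy $u>0$ with $r<n/2$ and the Sobolev embedding into $L^u$ used in \eqref{10.34} must be nontrivial.
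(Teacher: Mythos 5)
The paper does not actually prove this proposition internally: it is imported from \cite{gs10} (``In \cite{gs10}, essentially, we have shown the following conclusions''), so there is no in-paper argument to measure yours against. That said, your overall strategy is the one the surrounding text clearly has in mind: the paper records the parameters for $(\cl1)$ through $(\cl6)$ (via \eqref{10.34} and Lemma \ref{l10.3}) immediately before Definition \ref{d10.3}, and Definition \ref{d10.3} is introduced ``in view of Proposition \ref{p10.10}''. Your Steps 1 and 2 (verify the axioms, then invoke Theorem \ref{t9.4}, resp.\ Theorem \ref{t14.2}) are sound and consistent with the framework.

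The genuine gap is Step 3, which is where the entire content of the proposition lives and which you only gesture at. Theorem \ref{t9.4} delivers a decomposition of the pair $(\dot{A}^{0,0}_{\dot{X}^{(i)}_r,2,a}(\rn),\dot{a}^{0,0}_{\dot{X}^{(i)}_r,2,a}(\rn))$; to conclude for $(\dot{X}_r(\rn),\dot{X}^{(i)}_r(\rn))$ you must actually prove the norm equivalence $\|f\|_{\dot{X}_r(\rn)}\sim\|f\|_{\dot{A}^{0,0}_{\dot{X}^{(i)}_r,2,a}(\rn)}$ and, separately, that the capacitary coefficient conditions of Proposition \ref{p10.10} are equivalent to the sequence norms of Definition \ref{d10.3}. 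Both directions hinge on the ``capacity-adapted maximal inequality'' you name as the main obstacle, i.e.\ absorbing the Peetre tails $(1+2^j|\cdot|)^{-a}$ and the factor $M\chi_e{}^{4/5}$ via Lemma \ref{l10.3}; that is precisely the substance of \cite{gs10}, and flagging it does not discharge it. Two further points. First, your explanation of the restriction $n\ge3$ in (i) via the Sobolev embedding behind \eqref{10.34} is not correct: $\dot{H}^r(\rn)\hookrightarrow L^{2n/(n-2r)}(\rn)$ is valid for every $n\ge1$ when $r\in(0,n/2)$, so the restriction must be inherited from Proposition \ref{p10.10}(iii) rather than from \eqref{10.34}; indeed the paper removes it immediately afterwards (Proposition \ref{p10.12}) using Proposition \ref{p8.1}. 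Second, be careful which scale you identify $\dot{X}_r(\rn)$ with: in the wavelet conditions of Proposition \ref{p10.10} the supremum over compact sets $e$ sits outside the double sum over $(j,k)$, which is the $\ce$-type (equivalently $F$-type, since $\tau=0$) coupling of the $\ell^2$ sum with $\cl(\rn)$, not the $B$-type coupling written in your displayed identification.
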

However, due to Proposition \ref{p8.1},
this can be improved as follows.

\begin{proposition}\label{p10.12}
Let $r\in(0,\frac n2)$ and $n \ge 1$.
Then
$(\dot{X}_r(\rn),\dot{X}_{r}^{(1)}(\rn))$
admits the atomic / molecular decompositions.
\end{proposition}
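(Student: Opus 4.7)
The plan is to deduce Proposition \ref{p10.12} from Proposition \ref{p10.11}(ii) by invoking (the homogeneous analogue of) Proposition \ref{p8.1} to eliminate the auxiliary parameter $a$, which in turn allows the regular-case atomic/molecular decomposition (Theorem \ref{t14.2}) to be applied without the dimensional restriction $n\ge3$ that appeared in Proposition \ref{p10.11}(i). More concretely, we set $\cl(\rn):=\dot X_r^{(1)}(\rn)$ and identify $\dot X_r(\rn)$, via the wavelet characterization in Proposition \ref{p10.10}(iii), with the Besov-type space $\dot B^{0,0}_{\dot X_r^{(1)},2,a}(\rn)$ for a suitably large parameter $a$. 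Atomic/molecular decompositions for the latter will then transfer to the former.

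First, I would verify that $\cl(\rn)=\dot X_r^{(1)}(\rn)$ satisfies $(\cl1)$ through $(\cl6)$ with $\theta=1$ in $(\cl3)$. The homogeneity, positivity and lattice property are immediate from the definition; the triangle inequality with $\theta=1$ follows from the Minkowski inequality in $L^2$ applied inside the supremum over compact sets $e$; the Fatou property is inherited from that of $L^2$ after interchanging the supremum over $e$ with the monotone limit. For $(\cl6)$, the required lower bound $\|\chi_{Q_{jk}}\|_{\cl(\rn)}\gtrsim 2^{-j\gamma}(1+|k|)^{-\delta}$ follows from the capacity estimate \eqref{10.34} together with Lemma \ref{l10.3}, after taking $e=\overline{Q_{jk}}$ and using that the capacity is translation invariant.

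Second, I would check the translation-type estimate \eqref{8.20}. Since the $\dot H^r$-capacity is invariant under translations and the $L^2$-norm is translation invariant, we actually have
\begin{equation*}
\|F(\cdot -x)\|_{\dot X_r^{(1)}(\rn)}=\|F\|_{\dot X_r^{(1)}(\rn)}
\end{equation*}
for all $x\in\rn$, so \eqref{8.20} trivially holds with $M=0$. Combined with $\theta=1$, the hypotheses of Proposition \ref{p8.1} (in its obvious homogeneous counterpart derived via the paradigm of Section \ref{s9}) are in force; this yields, for all $a$ large enough,
\begin{equation*}
\|f\|_{\dot B^{0,0}_{\dot X_r^{(1)},2,a}(\rn)}
\sim
\|\{\vz_j*f\}_{j\in\zz}\|_{\ell^2(\dot X_r^{(1)}(\rn,\zz))},
\end{equation*}
so the quasi-norm on the left is independent of $a$ (in the sense of equivalence). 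By the wavelet characterization in Proposition \ref{p10.10}(iii) and Definition \ref{d10.3}, this common space is isomorphic to $\dot X_r(\rn)$.

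Third, I would select $a$ large enough that also the conditions \eqref{3.35}, \eqref{4.4}, \eqref{4.21} and \eqref{4.22} of Theorem \ref{t14.2} hold (these are satisfiable since the weight $w\equiv1$ is in $\star-\dot\cw^{0}_{\az_1,\az_2}$ with $\az_1$ arbitrarily large while $\az_2=0$), and then apply Theorem \ref{t14.2} to conclude that the pair $(\dot B^{0,0}_{\dot X_r^{(1)},2,a}(\rn),\dot b^{0,0}_{\dot X_r^{(1)}(\rn),2,a}(\rn))$ admits atomic and molecular decompositions in the regular case $L=-1$. Transferring through the two identifications above gives the assertion of Proposition \ref{p10.12}. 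The main obstacle I expect is carefully writing down the homogeneous analogue of Proposition \ref{p8.1} (it is not stated explicitly in Section \ref{s9}, but follows from the same argument once Theorem \ref{t9.2} is in hand); a secondary technical point is to confirm that the condition $(\cl6)$ for the non-trivial exponents $\gamma,\delta$ arising from Lemma \ref{l10.3} is compatible with the choice of large $a$, which is automatic because one may take $a$ as large as needed without affecting the identification of the underlying function space.
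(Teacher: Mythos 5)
Your overall strategy --- realize $\dot X_r(\rn)$ as a Besov-type space over $\cl(\rn)=\dot X_{r}^{(1)}(\rn)$, use Proposition \ref{p8.1} to remove the parameter $a$, and then quote the abstract decomposition machinery --- is the route the paper intends, but two of your steps do not go through as written. First, Theorem \ref{t14.2} is not applicable here. Its hypotheses include \eqref{4.21} and \eqref{4.22}, which for the space $\dot B^{0,0}_{\dot X_r^{(1)},2,a}(\rn)$ (so $\tau=0$ and $w\equiv1$) read $0>\alpha_3+\delta+n+\gamma-\alpha_1$ and $\alpha_1>0$. Your claim that $w\equiv 1$ belongs to $\star-\dot\cw^{0}_{\az_1,0}$ with $\az_1$ arbitrarily large is false: the left inequality in (W$1^\star$) of Definition \ref{d3.3} forces $\az_1=0$ for a constant weight (compare Example \ref{e3.3}, where large $\az_1$ requires the genuinely decaying weight $2^{js}$ with $s=\az_1$). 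With $\az_1=0$ and $\tau=0$, condition \eqref{4.21} becomes $0>n+\gamma+\delta$, which is impossible, and \eqref{4.22} fails as well. The regular case $L=-1$ is neither available nor needed: the correct tool is Theorem \ref{t9.4}, whose hypotheses \eqref{4.3}--\eqref{4.5} are satisfiable with $w\equiv1$, $\tau=0$ by taking the moment order $L$ large, and whose atoms (with vanishing moments) are the ones implicit in Propositions \ref{p10.10} and \ref{p10.11}.

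Second, and more seriously, your identification of $\dot X_r(\rn)$ with $\dot B^{0,0}_{\dot X_r^{(1)},2,a}(\rn)$ rests on Proposition \ref{p10.10}(iii), which is explicitly restricted to $n\ge3$; that restriction is exactly what Proposition \ref{p10.12} is meant to remove, so for $n=1,2$ your argument yields nothing beyond Proposition \ref{p10.11}(i). For general $n\ge1$ only the characterization in Proposition \ref{p10.10}(ii) is available, i.e.\ the $\dot X_r^{(2)}$-type condition carrying the weight $M\chi_e(x)^{4/5}$ and the integration over $e$ rather than over $\rn$. The actual content of the improvement is the comparison between the $\dot X_r^{(1)}$- and $\dot X_r^{(2)}$-quasi-norms of the Peetre-maximal quantities appearing in $\dot b^{0,0}_{\cl,2,a}(\rn)$, which is where Proposition \ref{p8.1} (exploiting $\theta=1$ and the exact translation invariance you correctly noted) and Lemma \ref{l10.3} must be brought to bear. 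Your proposal never addresses this equivalence, so the passage from the $n\ge1$ statement for $\dot X_r^{(2)}$ to the one for $\dot X_r^{(1)}$ remains a gap. You also misattribute the source of the restriction $n\ge3$: it comes from the capacity-theoretic wavelet characterization of \cite{gs10,M}, not from the distinction between the regular and non-regular atomic decompositions.
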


\subsection{$\dot{B}_\sigma(\rn)$ spaces}
\label{s10.7}

The next example also falls under the scope
of our generalized Triebel-Lizorkin type spaces.

\begin{definition}\label{d10.4}
Let $\sigma\in[0,\fz)$, $p\in[1,\fz]$ and $\lambda\in[-\frac{n}{p},0]$.
The \emph{space}
$\dot{B}_\sigma(L_{p,\lambda})(\rn)$ is defined
as the space of all $f\in L^p_{\rm loc}(\rn)$
for which the \emph{norm}
\begin{equation*}
\|f\|_{\dot{B}_\sigma(L_{p,\lambda})(\rn)}
:=
\sup\left\{
\frac{1}{r^\sigma|Q|^{\frac{\lambda}{n}+\frac{1}{p}}}
\|f\|_{L^p(Q)}\,:\,r\in(0,\fz), \,
Q \in {\mathcal D}(Q(0,r))
\right\}
\end{equation*}%
is finite.
\end{definition}

Now in this example $(\cl1)$ through $(\cl6)$ hold true with the parameters
in \eqref{2.10} and \eqref{2.11}
actually reading as
\begin{equation*}
\cl(\rn):=\dot{B}_\sigma(L_{p,\lambda})(\rn), \quad
\theta:=1, \quad
N_0:=-\lambda+1, \quad
\gamma:=-\lambda, \quad
\delta:=0
\end{equation*}%
and
\begin{equation*}
w_j(x) := 1 {\rm \ for\ all \ }j\in\zz_+ \mbox{ and } x\in\rn, \quad
\alpha_1=\alpha_2=\alpha_3=0,
\end{equation*}%
respectively. Hence \eqref{2.12} now stands for
\begin{equation*}
\tau \in [0,\infty), \, q\in (0,\infty], \, a>-\lambda+1.
\end{equation*}%
We remark that
$\dot{B}^{\sigma}(\rn)$-spaces have been introduced recently
to unify $\lambda$-central Morrey spaces,
$\lambda$-central mean oscillation spaces
and usual Morrey-Campanato spaces \cite{mn:1}.
Recall that in Lemma \ref{l1.1} we have defined
$Q(0,r)$.
We refer \cite{kmns} for further generalizations
of this field.
\begin{definition}[{\rm \cite{mns}}]\label{d10.5}
Let
$  p\in(1,\fz), \, \sigma\in(0,\fz), \,
\lz\in[-\frac{n}{p},-\sigma)$
and $\varphi$ satisfy {\rm\eqref{1.1}} and {\rm\eqref{1.2}}.
Given $f \in {\mathcal S}'(\rn)$,
set
$$
\|f\|_{\dot{B}_\sigma(L_{p,\lambda}^D)(\rn)}
:=
\sup_{\substack{r\in(0,\fz)\\ Q \in \cq(\rn),\ Q \subset Q(0,r)}}
\frac{1}{r^\sigma|Q|^{\frac{\lambda}{n}+\frac{1}{p}}}
\left\|
\left(\sum_{j=-\log_2\ell(Q)}^\infty|\vz_j*f|^2\right)^{\frac{1}{2}}
\right\|_{L^p(Q)}.
$$
The \emph{space} $\dot{B}_\sigma(L_{p,\lambda}^D)(\rn)$
denotes the space of all $f \in {\mathcal S}'(\rn)$ for which
$\|f\|_{\dot{B}_\sigma(L_{p,\lambda}^D)(\rn)}$ is finite.
\end{definition}

\begin{lemma}[\cite{mns}]\label{l10.20}
Let $p\in(1,\fz), \, u\in(1,\infty], \, \sigma\in [0,\infty)$ and $\lambda\in(-\fz,0)$.
Assume, in addition, that $\sigma+\lambda<0$.
Then
\begin{equation*}
\left\|\left(
\sum_{j=1}^\infty [Mf_j]{}^u\right)^{\frac1u}
\right\|_{\dot{B}_\sigma(L_{p,\lambda})(\rn)}
\sim
\left\|\left(
\sum_{j=1}^\infty |f_j|^u\right)^{\frac1u}
\right\|_{\dot{B}_\sigma(L_{p,\lambda})(\rn)}
\end{equation*}%
with the implicit equivalent positive constants independent of
$\{f_j\}_{j=1}^\infty \subset \dot{B}_\sigma(L_{p,\lambda})(\rn)$.
\end{lemma}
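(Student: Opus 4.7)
The ``$\gtrsim$'' direction is immediate from $|f_j(x)| \le Mf_j(x)$ almost everywhere, so the heart of the matter is the ``$\lesssim$'' direction, which is a Fefferman--Stein type inequality on $\dot{B}_\sigma(L_{p,\lambda})(\rn)$. My plan is to fix an arbitrary $r\in(0,\fz)$ and a dyadic cube $Q\subset Q(0,r)$, and split each $f_j$ into a local piece and annular pieces via $f_j = f_j\chi_{2Q} + \sum_{k=1}^\infty f_j\chi_{2^{k+1}Q\setminus 2^k Q}$. Since $Mf_j \le M(f_j\chi_{2Q}) + \sum_{k=1}^\infty M(f_j\chi_{2^{k+1}Q\setminus 2^k Q})$, applying the triangle inequality in $\ell^u$ reduces the proof to estimating the local and global contributions separately in $L^p(Q)$.

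For the local piece I would apply the classical Fefferman--Stein vector-valued maximal inequality in $L^p(\rn)$ (valid since $p,u\in(1,\infty]$) to obtain
\[
\left\|\Bigl(\sum_j [M(f_j\chi_{2Q})]^u\Bigr)^{1/u}\right\|_{L^p(Q)}
\lesssim
\left\|\Bigl(\sum_j |f_j|^u\Bigr)^{1/u}\right\|_{L^p(2Q)}
\lesssim r^\sigma |Q|^{\lambda/n+1/p}
\left\|\Bigl(\sum_j |f_j|^u\Bigr)^{1/u}\right\|_{\dot{B}_\sigma(L_{p,\lambda})(\rn)},
\]
where I have used $2Q\subset Q(0,2r)$ and the very definition of the $\dot{B}_\sigma(L_{p,\lambda})$-norm.

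For the global pieces, the key geometric observation is that if $x\in Q$ and $y\in 2^{k+1}Q\setminus 2^k Q$ with $k\ge 1$, then $|x-y|\sim 2^k\ell(Q)$, so
\[
M(f_j\chi_{2^{k+1}Q\setminus 2^k Q})(x)\lesssim \frac{1}{|2^k Q|}\int_{2^{k+1}Q}|f_j(y)|\,dy.
\]
Applying Minkowski's inequality in $\ell^u$ to move the sum inside the integral, then H\"older in $y$ with exponent $p$, and finally invoking the $\dot{B}_\sigma(L_{p,\lambda})$-norm on $2^{k+1}Q\subset Q(0,2^{k+2}r)$, I would obtain a bound of the form
\[
\left(\sum_j [M(f_j\chi_{2^{k+1}Q\setminus 2^k Q})(x)]^u\right)^{1/u}
\lesssim 2^{k(\sigma+\lambda)}\, r^\sigma |Q|^{\lambda/n}
\left\|\Bigl(\sum_j|f_j|^u\Bigr)^{1/u}\right\|_{\dot{B}_\sigma(L_{p,\lambda})(\rn)}.
\]
Here the exponent $\sigma+\lambda$ arises from carefully combining $(2^{k+2}r)^\sigma$, $|2^{k+1}Q|^{1+\lambda/n}$ and $|2^k Q|^{-1}$; the hypothesis $\sigma+\lambda<0$ then ensures the geometric series over $k\ge 1$ converges. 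Taking $L^p(Q)$-norms and summing in $k$ yields the same estimate as for the local piece.

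Combining both contributions, dividing by $r^\sigma|Q|^{\lambda/n+1/p}$, and taking the supremum over all admissible $(r,Q)$ gives the desired ``$\lesssim$'' inequality. The main technical obstacle is the bookkeeping in the global estimate: one must track the exponents of $2^k$ and $|Q|$ precisely enough to identify $\sigma+\lambda$ as the decisive exponent; the role of the hypothesis $\sigma+\lambda<0$ is exactly to make this summation convergent, and without it the global contribution genuinely fails to be controlled by the norm on the right-hand side.
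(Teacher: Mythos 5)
The paper itself contains no proof of this lemma: it is imported verbatim from the reference \cite{mns} (listed as ``in preparation'') and used as a black box, so there is no in-paper argument to compare yours against. On its own merits, your proof is correct, and it is the standard localization argument for Fefferman--Stein inequalities on Morrey-type scales (the same scheme used for ${\mathcal M}^p_u(\rn)$ in \cite{st05,tx}): the trivial ``$\gtrsim$'' from $|f_j|\le Mf_j$ a.e., the splitting $f_j=f_j\chi_{2Q}+\sum_{k\ge1}f_j\chi_{2^{k+1}Q\setminus 2^kQ}$, the classical vector-valued maximal inequality for the local part (which is exactly where $p\in(1,\infty)$ and $u\in(1,\infty]$ are needed), and the pointwise bound $M(f_j\chi_{2^{k+1}Q\setminus 2^kQ})(x)\lesssim |2^kQ|^{-1}\int_{2^{k+1}Q}|f_j|$ for $x\in Q$. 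Your exponent bookkeeping is right: $|2^kQ|^{-1/p}\,|2^{k+1}Q|^{\lambda/n+1/p}\,(2^kr)^{\sigma}\sim 2^{k(\sigma+\lambda)}r^{\sigma}|Q|^{\lambda/n}$, and $\sigma+\lambda<0$ is precisely what makes the geometric series over $k$ converge. The only point worth writing out explicitly is that the $\dot{B}_\sigma(L_{p,\lambda})$-norm is a supremum over \emph{dyadic} cubes $Q\subset Q(0,r)$, while $2Q$ and $2^{k+1}Q$ are neither dyadic nor contained in $Q(0,r)$; one covers each by boundedly many dyadic cubes of comparable side length contained in $Q(0,C2^{k}r)$, which costs only dimensional constants and does not affect the exponent count.
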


\begin{proposition}[{\rm \cite{mns}}]\label{p10.13}
Let $p\in(1,\fz)$, $\sigma\in(0,\fz)$ and
$\lambda\in[-\frac{n}{p},-\sigma)$.
Then
$$\dot{B}_\sigma(L_{p,\lambda}^D)(\rn)\ and\
\dot{B}_\sigma(L_{p,\lambda})(\rn)$$
coincide.
More precisely, the following hold true{\rm\,:}

{\rm (i)}
$\dot{B}_\sigma(L_{p,\lambda})(\rn) \hookrightarrow {\mathcal S}'(\rn)$
in the sense of continuous embedding.

{\rm (ii)}
$\dot{B}_\sigma(L_{p,\lambda}^D)(\rn) \hookrightarrow
{\mathcal S}'(\rn) \cap L^p_{\rm loc}(\rn)$
in the sense of continuous embedding.

{\rm (iii)}
$f \in \dot{B}_\sigma(L_{p,\lambda})(\rn)$
if and only if
$f\in \dot{B}_\sigma(L_{p,\lambda}^D)(\rn)$
and the norms are mutually equivalent.

{\rm (iv)}
Different choices of $\varphi$
 yield
the equivalent norms
in the definition of $\|\cdot\|_{\dot{B}_\sigma(L_{p,\lambda}^D)(\rn)}$.
\end{proposition}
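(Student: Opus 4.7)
My plan is to recognise $\dot{B}_\sigma(L_{p,\lambda})(\rn)$ as a fundamental space $\cl(\rn)$ fitting the framework of Section \ref{s2}, and to identify $\dot{B}_\sigma(L_{p,\lambda}^D)(\rn)$ with its Littlewood--Paley characterization $\dot{\ce}^{0,0}_{\cl,2,a}(\rn)\equiv\dot{F}^{0,0}_{\cl,2,a}(\rn)$ for sufficiently large $a$. All four assertions (i)--(iv) then follow from this identification together with Theorems \ref{t9.1} and \ref{t9.2} and the homogeneous counterpart of Theorem \ref{t8.32} (that is, Theorem \ref{t18.32}).

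First I would verify that $\cl(\rn):=\dot{B}_\sigma(L_{p,\lambda})(\rn)$ satisfies the axioms $(\cl1)$--$(\cl6)$: items $(\cl1)$--$(\cl5)$ are immediate from the definition as a weighted supremum of $L^p$-norms over dyadic subcubes of balls, while for $(\cl6)$ one takes $Q=Q_{jk}$ and $r\sim 2^{-j}(1+|k|)$ with $Q_{jk}\subset Q(0,r)$ to obtain
$$
\|\chi_{Q_{jk}}\|_{\cl(\rn)}
\gtrsim
\frac{|Q_{jk}|^{-\lambda/n}}{r^\sigma}
\sim
2^{j(\lambda+\sigma)}(1+|k|)^{-\sigma},
$$
giving $\gamma=-\lambda-\sigma>0$ and $\delta=\sigma$. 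Lemma \ref{l10.20} applied with $u=2$ delivers the vector-valued maximal inequality corresponding to $(\cl8)$, while the hypothesis $\lambda\in[-n/p,-\sigma)$ together with standard Calder\'on--Zygmund arguments ensures that the remaining structural hypotheses of Theorem \ref{t18.32} are satisfied. Theorem \ref{t18.32} then yields $\cl(\rn)=\dot{\ce}^{0,0}_{\cl,2,a}(\rn)=\dot{F}^{0,0}_{\cl,2,a}(\rn)$ for $a$ sufficiently large, where the last equality follows from the homogeneous analog of Lemma \ref{l2.2} valid at $\tau=0$.

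It then remains to match this Littlewood--Paley description with the norm defining $\dot{B}_\sigma(L_{p,\lambda}^D)(\rn)$. The easy direction $\dot{F}^{0,0}_{\cl,2,a}(\rn)\hookrightarrow\dot{B}_\sigma(L_{p,\lambda}^D)(\rn)$ follows by unfolding the $\cl$-norm and discarding the indices $j<j_Q$ in the square function. For the reverse inclusion I would pass from $\vz_j*f$ to the Peetre maximal functions $(\vz_j^*f)_a$ via Theorem \ref{t9.2} and Lemma \ref{l2.3} in their homogeneous formulation, and then telescope the low-frequency tail $\sum_{j<j_Q}|\vz_j*f|^2$ over nested dyadic ancestors $P\supset Q$, absorbing contributions at scale $\ell(P)\sim r$ into the factor $r^{-\sigma}$ of the outer supremum. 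Once (iii) is established, (i) follows from Theorem \ref{t9.1}, (ii) from the built-in embedding $\cl(\rn)\subset L^p_{\rm loc}(\rn)$, and (iv) is a direct consequence of Theorem \ref{t9.2} applied to $\dot{F}^{0,0}_{\cl,2,a}(\rn)$. The main obstacle I anticipate is precisely this telescoping control of the low-frequency tail: the range $j<j_Q$ produces contributions at arbitrarily coarse scales, and absorbing them into the weighted factor $r^{-\sigma}|Q|^{-\lambda/n-1/p}$ requires careful bookkeeping of dyadic ancestors, with the sharp interplay between $\sigma>0$ and $\lambda+\sigma<0$ that the restriction $\lambda\in[-n/p,-\sigma)$ provides.
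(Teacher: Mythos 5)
First, a point of reference: the paper does not prove Proposition \ref{p10.13} at all --- it is quoted from \cite{mns} (a paper listed as ``in preparation''), and Section \ref{s10.7} only records the parameters $(\theta,N_0,\gamma,\delta)$ and Lemma \ref{l10.20}. So there is no in-paper proof to compare against; your proposal has to stand on its own. Its architecture --- treat $\dot{B}_\sigma(L_{p,\lambda})(\rn)$ as the fundamental space $\cl(\rn)$, identify $\dot{B}_\sigma(L_{p,\lambda}^D)(\rn)$ with the Littlewood--Paley characterization $\dot{\ce}^{0,0}_{\cl,2,a}(\rn)$, and deduce (i)--(iv) from Theorems \ref{t9.1}, \ref{t9.2} and \ref{t18.32} --- is exactly the route the surrounding text of Section \ref{s10.7} is setting up, and your verification of $(\cl6)$ and your diagnosis of the low-frequency tail $\sum_{j<j_Q}$ as the crux are both correct. (Your parameters $\gamma=-\lambda-\sigma$, $\delta=\sigma$ are in fact safer than the paper's stated $\gamma=-\lambda$, $\delta=0$, which is dubious for $|k|\gg 2^j$.) The telescoping you sketch does close: for $\ell(Q)<2^{-j}\le r$ the terms $r^\sigma 2^{-j\lambda}$ sum geometrically to $r^\sigma\ell(Q)^\lambda$ because $\lambda<0$, and for $2^{-j}>r$ the terms $2^{-j(\sigma+\lambda)}$ sum to $r^{\sigma+\lambda}\le r^\sigma\ell(Q)^\lambda$ because $\sigma+\lambda<0$ and $\ell(Q)\lesssim r$; this is precisely where the hypothesis $\lambda<-\sigma$ enters.

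The genuine gap is your invocation of Theorem \ref{t18.32}. Its hypotheses are not ``structural'' conditions that standard Calder\'on--Zygmund theory hands you: you must identify the K\"othe dual $\cl'(\rn)$ of $\dot{B}_\sigma(L_{p,\lambda})(\rn)$ (a block-space-type object for Morrey-flavoured norms), verify that $\cl^{p_0}(\rn)$ and $(\cl')^{p_0}(\rn)$ are Banach function spaces for some $p_0\in(1,\infty)$, and prove the Hardy--Littlewood maximal operator is bounded on $(\cl^{p_0}(\rn))'$ \emph{and} on $((\cl')^{p_0}(\rn))'$. None of this is supplied by Lemma \ref{l10.20}, which gives the vector-valued maximal inequality on $\cl$ itself (i.e.\ $(\cl8)$) and only yields the $a$-independence of Theorem \ref{t8.1}, not the two-sided identification $\cl=\dot{\ce}^{0,0}_{\cl,2,a}$; the lower bound $\|f\|_{\cl}\lesssim\|f\|_{\dot{\ce}^{0,0}_{\cl,2,a}}$ in Theorem \ref{t18.30} genuinely runs through the dual space. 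Until the dual is identified and the maximal bounds on it are established, assertion (iii) is not proved. A second, smaller gap: in (ii) the embedding into $\cs'(\rn)\cap L^p_{\rm loc}(\rn)$ (rather than $\cs'_\fz(\rn)$) is a realization statement --- an element of $\dot{B}_\sigma(L_{p,\lambda}^D)(\rn)$ is a priori only determined modulo polynomials, and one must show that a suitable representative (compare Proposition \ref{p10.14}(i)(a)) lies in $L^p_{\rm loc}(\rn)$; ``the built-in embedding $\cl(\rn)\subset L^p_{\rm loc}(\rn)$'' does not address this, since it applies to $\cl$ and not to the Littlewood--Paley-defined space.
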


The atomic decomposition
of $\dot{B}_\sigma(\rn)$ is as follows:
First we introduce the sequence space.

\begin{definition}\label{d10.6}
Let $\sigma \in [0,\infty), \, p\in[1,\infty]$
and $\lambda\in[-\frac{n}{p},0]$.
The \emph{sequence space} $\dot{b}_\sigma(L^D_{p,\lambda})(\rn)$
is defined to be the space of all $\lambda:=\{\lambda_{jk}\}_{j \in \Z_+, \, k \in \Z^n}$
such that
\[
\|\lambda\|_{\dot{b}_\sigma(L^D_{p,\lambda})(\rn)}
:=
\sup_{\substack{r\in(0,\fz)\\ Q \in \cq(\rn),\ Q \subset Q(0,r)}}
\frac{1}{r^\sigma|Q|^{\frac{\lambda}{n}+\frac{1}{p}}}
\left\|
\sum_{j=-\log_2\ell(Q)}^\infty \lambda_{jk}\chi_{Q_{jk}}
\right\|_{L^p(Q)}<\fz.
\]
\end{definition}

In view of Theorem \ref{t5.1},
we have the following,
which is a direct corollary of Theorem \ref{t4.1}.
\begin{theorem}\label{t10.1}
The pair $(\dot{B}_\sigma(L^D_{p,\lambda})(\rn),
\dot{b}_\sigma(L^D_{p,\lambda})(\rn))$
admits the atomic / molecular decompositions.
\end{theorem}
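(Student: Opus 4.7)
The plan is to reduce Theorem \ref{t10.1} directly to Theorem \ref{t4.1}, using the groundwork already laid in Subsection \ref{s10.7}. First I would identify $\dot{B}_\sigma(L^D_{p,\lambda})(\rn)$ as a generalized Triebel-Lizorkin-type space of the form $F^{0,0}_{\cl,2,a}(\rn)$, with ground space $\cl(\rn):=\dot{B}_\sigma(L_{p,\lambda})(\rn)$, trivial weight $w_j\equiv 1$, $\tau=0$, $q=2$, and $a$ chosen large enough. Subsection \ref{s10.7} already records that this $\cl(\rn)$ satisfies $(\cl1)$ through $(\cl6)$ with $\theta=1$, $N_0=-\lambda+1$, $\gamma=-\lambda$, $\delta=0$, while Lemma \ref{l10.20} supplies $(\cl8)$. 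Proposition \ref{p10.13} then gives the decisive identification $\dot{B}_\sigma(L^D_{p,\lambda})(\rn)=\dot{B}_\sigma(L_{p,\lambda})(\rn)$ at the level of tempered distributions, and an application of Theorem \ref{t8.1}(ii) shows that the abstract Triebel-Lizorkin norm $\|\cdot\|_{F^{0,0}_{\cl,2,a}(\rn)}$ coincides with $\|\cdot\|_{\dot{B}_\sigma(L^D_{p,\lambda})(\rn)}$ up to equivalent constants.

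Next I would verify that the parameter constraints \eqref{4.3}, \eqref{4.4}, \eqref{4.5} and the requirement $a\in(N_0+\alpha_3,\infty)$ in Theorem \ref{t4.1} can all be met under the present setting. With $\alpha_1=\alpha_2=\alpha_3=0$, $\gamma=-\lambda$, $\delta=0$, $\tau=0$, condition \eqref{4.3} reduces to $L>n-1-\lambda$, \eqref{4.4} to $N>L+2n$, and \eqref{4.5} to $K,L\ge 0$; these are all satisfiable by choosing $L$ large and $N$ correspondingly large, and we simply fix $a>-\lambda+1=N_0$. Theorem \ref{t4.1} then yields at once that the pair $(F^{0,0}_{\cl,2,a}(\rn),f^{0,0}_{\cl,2,a}(\rn))$ admits both the atomic and the molecular decompositions in the sense of Definition \ref{d4.4}.

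Finally I would identify the abstract sequence space $f^{0,0}_{\cl,2,a}(\rn)$ with the concrete sequence space $\dot{b}_\sigma(L^D_{p,\lambda})(\rn)$ of Definition \ref{d10.6}. By the Fefferman--Stein-type vector-valued inequality encoded in $(\cl8)$ (via Lemma \ref{l10.20}), and the argument used in the proof of Theorem \ref{t8.1}, the Peetre-type suprema $\sup_{y\in\rn}(1+2^j|y|)^{-a}$ in the definition of $f^{0,0}_{\cl,2,a}(\rn)$ may be replaced (up to equivalent constants) by the characteristic functions $\chi_{Q_{jk}}$, after which the $\cl$-norm unpacks into exactly the two-parameter supremum appearing in Definition \ref{d10.6}. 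Combining this identification with the previous step delivers Theorem \ref{t10.1}.

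The main obstacle, should one want a fully self-contained proof rather than a citation-driven one, is precisely the outer supremum structure of $\dot{B}_\sigma(L_{p,\lambda})(\rn)$: its norm involves a supremum over $r\in(0,\fz)$ together with dyadic subcubes $Q\subset Q(0,r)$, rather than a single supremum over all dyadic cubes as in the Morrey or multiplier examples of Subsections \ref{s10.1}--\ref{s10.6}. Showing that the localisation $\chi_P$ in the generic $F^{w,\tau}_{\cl,q,a}$-norm is compatible with this two-parameter supremum is the content of Proposition \ref{p10.13}, and once that compatibility is taken for granted, Theorem \ref{t4.1} gives the conclusion with no further substantial work.
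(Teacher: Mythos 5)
Your proposal is correct and follows essentially the same route as the paper, which states Theorem \ref{t10.1} as a direct corollary of Theorem \ref{t4.1} after the identifications prepared in Subsection \ref{s10.7} (the verification of $(\cl1)$--$(\cl6)$ with $\theta=1$, $N_0=-\lambda+1$, $\gamma=-\lambda$, $\delta=0$, the vector-valued inequality of Lemma \ref{l10.20}, and the coincidence of $\dot{B}_\sigma(L^D_{p,\lambda})(\rn)$ with $\dot{B}_\sigma(L_{p,\lambda})(\rn)$ from Proposition \ref{p10.13}). Your explicit parameter checks for \eqref{4.3}--\eqref{4.5} and the sequence-space identification via Theorem \ref{t8.1} merely fill in details the paper omits.
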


\subsection{Generalized Campanato spaces}
\label{s10.8}

Returning to the variable exponent setting
described in Section \ref{s10.4},
we define $d_{p(\cdot)}$ to be
$$
  d_{p(\cdot)}:=
  \min\left\{d\in\Z_+:p_-(n+d+1)>n\right\}.
$$
Let the \emph{space} $L^{q}_{\rm comp}(\rn)$ be the set of all $L^q$-functions with compact support.
For a nonnegative integer $d$, let
$$
     L^{q,d}_{\rm comp}(\rn)
     := \left\{ f\in L^{q}_{\rm comp}(\rn):\
              \int_{\rn} f(x)x^{\alpha}\,dx=0,\ |\alpha|\le d \right\}.
$$
Likewise if $Q$ is a cube,
then we write
$$
     L^{q,d}(Q)
     := \left\{ f\in L^{q}(Q):\
              \int_Q f(x)x^{\alpha}\,dx=0,\ |\alpha|\le d \right\},
$$
where the \emph{space} $L^q(Q)$ is a closed subspace of functions in $L^q(\rn)$
having support in $Q$.

Recall that ${\mathcal P}_d(\rn)$ is the \emph{set of all polynomials
having degree at most $d$}.
For a locally integrable function $f$,
a cube $Q$ and a nonnegative integer $d$,
there exists a unique polynomial $P\in{\mathcal P}_d(\rn)$
such that, for all $q\in{\mathcal P}_d(\rn)$,
$$\int_Q(f(x)-P(x))q(x)\,dx=0.$$
Denote this \emph{unique polynomial} $P$ by $P^d_Qf$.
It follows immediately from the definition
that $P^d_Q g=g$ if $g \in {\mathcal P}_d(\rn)$.

We postulate on $\phi:\R^{n+1}_+ \to (0,\infty)$
the following conditions:

{\rm (A1)}
There exist positive constants $M_1$ and $M_2$ such that
$$
M_1 \le \frac{\phi(x,2r)}{\phi(x,r)} \le M_2
\quad (x \in \rn, \, r\in(0,\fz))
$$
holds true. (Doubling condition)

{\rm (A2)}
There exist positive constants $M_3$ and $M_4$ such that
$$
M_3 \le \frac{\phi(x,r)}{\phi(y,r)} \le M_4 \quad
(x,y \in \rn, \, r\in(0,\fz), \, |x-y| \le r)
$$
holds true. (Compatibility condition)

{\rm (A3)}
There exists a positive constant $M_5$ such that
$$
\int_0^r \frac{\phi(x,t)}{t}\,dt \le M_5\phi(x,r)
\quad (x \in \rn, \, r\in(0,\fz))
$$
holds true. ($\nabla_2$-condition)

{\rm (A4)}
There exists a positive constant $M_6$ such that
$
\int_r^\infty \frac{\phi(x,t)}{t^{d+2}}\,dt
\le M_6
\frac{\phi(x,r)}{r^{d+1}}
$
for some integer $d \in [0,\infty)$. ($\Delta_2$-condition)

{\rm (A5)}
$
\sup_{x \in \rn}\phi(x,1)<\infty.
$
(Uniform condition)

Here the constants $M_1,M_2,\cdots,M_6$
need to be specified for later considerations.

Notice that the Morrey-Campanato space with variable growth function $\phi(x,r)$
was first introduced by Nakai \cite{n93,n06} by using an idea originally from
\cite{ny85}. In \cite{n94}, Nakai established the boundedness of
the Hardy-Littlewood maximal operator, singular integral (of Calder\'on-Zygmund type),
and fractional integral operators on Morrey spaces
with variable growth function $\phi(x,r)$.

Recently, Nakai and Sawano considered
a more generalized version in \cite{ns10}.

Let us say that $\phi:\cq(\rn) \to (0,\infty)$
is a \emph{nice function}, if there exists $b \in (0,1)$
such that, for all cubes $Q \in \cq(\rn)$,
\[
\frac{1}{\phi(Q)}
\left[\frac{1}{|Q|}\int_Q|f(x)-P^d_Qf(x)|^q\,dx\right]^{\frac1q}
>b
\]
for some $f \in \cl_{q,\phi,d}(\R^n)$ with norm $1$.
In \cite[Lemma 6.1]{ns10}, we showed that $\phi$ can be assumed to be nice.
Actually,
there exists a nice function
$\phi^\dagger$ such that
$\cl_{q,\phi,d}(\R^n)$
and
$\cl_{q,\phi^\dagger,d}(\R^n)$
coincide as a set
and the norms are mutually equivalent
\cite[Lemma 6.1]{ns10}.

\begin{definition}[{\rm \cite{ns10}}]\label{d10.7}
Let $\phi:\R^{n+1}_+ \to (0,\infty)$ be a function,
which is not necessarily nice,
and $f \in L^q_{{\rm loc}}(\rn)$.
Define, when $q\in(1,\fz)$,
\begin{align*}
     \|f\|_{{\mathcal L}_{q,\phi,d}(\rn)}
       & := \sup_{(x,t) \in \R^{n+1}_+}
\frac 1{\phi(x,t)}\left\{ \frac 1{|Q(x,t)|}
          \int_{Q(x,t)} |f(y)-P^d_{Q(x,t)} f(y)|^q \,dy
\right\}^{1/q},\
\end{align*}
and, when $q=\infty$,
\begin{align*}
     \|f\|_{{\mathcal L}_{q,\phi,d}(\rn)}
       & := \sup_{(x,t) \in \R^{n+1}_+} \frac 1{\phi(x,t)}
          \|f-P^d_{Q(x,t)} f\|_{L^\infty(Q(x,t))}.
\end{align*}
Then the \emph{Campanato space} ${\mathcal L}_{q,\phi,d}(\rn)$
is defined to be the set of all $f$ such that
$\|f\|_{{\mathcal L}_{q,\phi,d}(\rn)}$ is finite.
\end{definition}

\begin{definition}[{\rm \cite{ns10}}]\label{d10.8}
Let $q \in[1, \infty]$, $\vz$ satisfies \eqref{1.2} and $\phi:\R^{n+1}_+ \to (0,\infty)$
a function.
A distribution $f \in \cs'(\rn)$ is said
to belong to the \emph{space} $\mathcal{L}^D_{q,\phi}(\rn)$,
if
\[
\|f\|_{\mathcal{L}^D_{q,\phi}(\rn)}
:=
\sup_{(x,t) \in \R^{n+1}_{\Z}}
\frac{1}{\phi(x,t)}
\left\{\frac{1}{|Q(x,t)|}
\int_{Q(x,t)}|\varphi_{(\log_2t^{-1})}*f(y)|^q\,dy
\right\}^{\frac{1}{q}}<\infty.
\]
\end{definition}

\begin{proposition}[\cite{ns10}]\label{p10.14}
Assume {\rm (A1)} through {\rm (A5)}.
Then

{\rm (i)}
The spaces
$\mathcal{L}^D_{q,{\phi}}(\rn)$ and $\mathcal{L}_{q,{\phi},d}(\rn)$
coincide.
More precisely, the following hold true{\rm:}

{\rm (a)}
Let $f \in \mathcal{L}^D_{q,\phi}(\rn)$.
Then there exists $P \in {\mathcal P}(\rn)$
such that
$f-P \in \mathcal{L}_{q,\phi,d}(\rn)$.
In this case,
$\|f-P\|_{{\mathcal L}_{q,\phi,d}(\rn)} \ls \|f\|_{{\mathcal L}^D_{q,\phi}(\rn)}$
with the implicit positive constant independent of $f$.

{\rm (b)}
If $f \in \mathcal{L}_{q,\phi,d}(\rn)$,
then $f \in \mathcal{L}^D_{q,\phi}(\rn)$
and
$\|f\|_{{\mathcal L}^D_{q,\phi}(\rn)} \ls \|f\|_{{\mathcal L}_{q,\phi,d}(\rn)}$
with the implicit positive constant independent of $f$.

In particular,
the definition of the function space
$\mathcal{L}^D_{q,{\phi}}(\rn)$
is independent of the admissible choices of $\vz$ {\rm:}
Any $\vz \in \cs(\rn)$ does the job
in the definition of $\mathcal{L}^D_{q,{\phi}}(\rn)$
as long as
$\chi_{Q(0,1)} \le \widehat\vz \le \chi_{Q(0,2)}$.

{\rm (ii)}
The function space $\mathcal{L}^D_{q,{\phi}}(\rn)$
is independent of $q$.
\end{proposition}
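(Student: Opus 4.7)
The plan is to prove the two inclusions in part (i) separately, then derive the $\varphi$-independence as an immediate consequence, and finally obtain part (ii) by a Plancherel--Polya--Nikolskij argument exploiting the restricted Fourier support of the Littlewood--Paley pieces.

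For the inclusion $\mathcal{L}_{q,\phi,d}(\rn)\hookrightarrow\mathcal{L}^D_{q,\phi}(\rn)$, I would exploit the fact that $\widehat{\varphi}$ vanishes near the origin, which forces $\int_{\rn}y^{\vec\beta}\varphi_j(y)\,dy=0$ for every multi-index $\vec\beta$. Hence, for $j\in\nn$ and any $y\in Q=Q(x,2^{-j})$,
\[
\varphi_j*f(y)=\int_{\rn}\varphi_j(y-z)\bigl[f(z)-P^d_Q f(z)\bigr]\,dz.
\]
Splitting $\rn$ into dyadic annuli $2^k Q\setminus 2^{k-1}Q$, combining the rapid decay of $\varphi_j$ with the doubling condition (A1), the compatibility (A2), and the integrated control (A4) to sum the $k$-series, one recovers the desired estimate by $\phi(x,2^{-j})\|f\|_{\mathcal{L}_{q,\phi,d}(\rn)}$. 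The $j=0$ term is handled separately with $\Phi$ in place of $\varphi_j$: here moment cancellation is unavailable, but (A5) gives $\phi(x,1)\gs 1$, so a direct $L^q$ bound from the definition of $\mathcal{L}_{q,\phi,d}(\rn)$ over $Q(x,1)$ and its translates suffices.

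For the reverse direction $\mathcal{L}^D_{q,\phi}(\rn)\hookrightarrow\mathcal{L}_{q,\phi,d}(\rn)\pmod{\mathcal{P}(\rn)}$, I would use the Calder\'on reproducing formula $\delta_0=\Phi^\dagger*\Phi+\sum_{j=1}^\infty\varphi_j^\dagger*\varphi_j$ to expand
\[
f=\Phi^\dagger*\Phi*f+\sum_{j=1}^\infty \varphi_j^\dagger*\varphi_j*f.
\]
Fix $Q=Q(x,t)$ and set $j_Q:=-\log_2 t$. One splits the sum into the \emph{high-frequency} block $\sum_{j\ge j_Q}$ and the \emph{low-frequency} remainder consisting of $\Phi^\dagger*\Phi*f$ together with $\sum_{1\le j<j_Q}\varphi_j^\dagger*\varphi_j*f$. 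The high-frequency block contributes directly to the $L^q(Q)$-mean via Lemma \ref{l2.4} and the definition of $\|\cdot\|_{\mathcal{L}^D_{q,\phi}(\rn)}$, with the dyadic sum controlled by (A3). The low-frequency remainder, being a $C^\infty$ function on an enlargement of $Q$ with derivative bounds governed by the discrete norm, is approximated by its degree-$d$ Taylor polynomial at $x$; the Taylor remainder of order $d+1$ is bounded by $t^{d+1}$ times a weighted sum of derivatives $\partial^{\vec\alpha}(\varphi_j^\dagger*\varphi_j*f)$ with $\|\vec\alpha\|_1=d+1$, which converges uniformly in $Q$ by $(\mathrm{A4})$. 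The Taylor polynomial itself supplies the polynomial $P$ appearing in the statement. The \emph{main obstacle} will be the low-frequency Taylor remainder: one needs the interplay between (A1) and (A4) to sum $2^{j(d+1)}\phi(x,2^{-j})/t^{d+1}$ uniformly for $j\in\{0,\ldots,j_Q-1\}$, and to verify that the polynomial correction is well-defined as an element of $\mathcal{P}(\rn)$ independently of $Q$.

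The $\varphi$-independence claimed in (i) is then immediate from (a) and (b): the intermediate space $\mathcal{L}_{q,\phi,d}(\rn)$ is defined without any reference to $\varphi$, so two admissible choices of $\varphi$ yield equivalent norms on $\mathcal{L}^D_{q,\phi}(\rn)$. For part (ii), I would invoke the Plancherel--Polya--Nikolskij inequality (Lemma \ref{l1.1}) applied to $\varphi_j*f$, whose Fourier transform is supported in an annulus of radius $\sim 2^j$: for such a function, the $L^{q_1}$-average and the $L^{q_2}$-average over any cube of sidelength $\sim 2^{-j}$ are mutually equivalent for all $q_1,q_2\in[1,\infty]$. The $j=0$ term is treated identically using $\Phi*f$ together with (A5) to absorb the resulting constant into $\phi(x,1)$, and this yields the equivalence $\|\cdot\|_{\mathcal{L}^D_{q_1,\phi}(\rn)}\sim\|\cdot\|_{\mathcal{L}^D_{q_2,\phi}(\rn)}$.
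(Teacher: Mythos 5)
This survey does not actually prove Proposition \ref{p10.14}: it is quoted from \cite{ns10} and used as a black box in Subsection \ref{s10.8}, so there is no in-paper argument to compare against. Judged on its own terms, your outline follows the standard route (and, to the best of my knowledge, essentially the route of the cited source): vanishing moments of $\varphi_j$ plus an annular decomposition governed by (A1), (A2), (A4) for $\mathcal{L}_{q,\phi,d}(\rn)\hookrightarrow\mathcal{L}^D_{q,\phi}(\rn)$; the Calder\'on reproducing formula with a degree-$d$ Taylor renormalization of the low-frequency block, (A3) summing the high frequencies and (A4) the low-frequency remainder, for the converse modulo $\mathcal{P}(\rn)$; and a Plancherel--Polya--Nikolskij argument for the $q$-independence. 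The $\varphi$-independence is indeed formal once both embeddings are established, since $\mathcal{L}_{q,\phi,d}(\rn)$ does not refer to $\varphi$.

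Two concrete corrections are needed. First, Definition \ref{d10.8} takes the supremum over $(x,t)\in\R^{n+1}_{\Z}$, i.e.\ over \emph{all} integer scales, and uses only the annulus-supported $\varphi_{(\log_2 t^{-1})}$; there is no non-cancellative $\Phi$-term at level $j=0$. Your separate treatment of such a term addresses something that is not in the definition, and the treatment itself is wrong on two counts: (A5) asserts $\sup_{x}\phi(x,1)<\infty$, i.e.\ $\phi(x,1)\ls 1$, not $\phi(x,1)\gs 1$; and the Campanato norm controls only oscillations, not $\|f\|_{L^q(Q)}$ itself --- a nonzero constant has vanishing $\mathcal{L}_{q,\phi,d}(\rn)$-norm but $\Phi*f\ne 0$, and under (A3) one has $\inf_t\phi(x,t)=0$, so no non-cancellative convolution can be dominated by $\|f\|_{\mathcal{L}_{q,\phi,d}(\rn)}$. (This also shows that the admissibility condition $\chi_{Q(0,1)}\le\widehat{\vz}\le\chi_{Q(0,2)}$ quoted at the end of (i) cannot be read literally for part (b); the admissible $\vz$ must carry vanishing moments at least up to order $d$.) Second, in part (ii) the assertion that the $L^{q_1}$- and $L^{q_2}$-means of a band-limited function over a single cube of dual sidelength are mutually equivalent is too strong: the reverse-H\"older half requires an enlarged cube (or a Peetre maximal function, as in Proposition \ref{p10.15} and the machinery of Section \ref{s8}), after which (A1) and (A2) absorb the enlargement into the constant. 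With these repairs the outline is sound.
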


In view of Definition \ref{d10.8},
if we assume that $\phi$ satisfies (A1) through (A5),
then we have the following proposition.

\begin{proposition}\label{p10.15}
Let $\vz$ satisfies \eqref{1.2}.
If  $\phi:\cq(\rn) \to (0,\infty)$ satisfies {\rm (A1)} through {\rm(A5)}, then
$$
     \|f\|_{\mathcal{L}^D_{\infty,\phi}(\rn)}
       \sim
\sup_{(x,t) \in \R^{n+1}_{\Z}} \frac 1{\phi(Q(x,t))}
    \sup_{y \in Q(x,t)}
\left\{\sup_{z \in \R^n}
\frac{|\varphi_{(\log_2t^{-1})}*f(y+z)|}{(1+t^{-1}|z|)^a}\right\},
$$
whenever $a \gg 1$,
with the implicit equivalent positive constants independent of $f$.
\end{proposition}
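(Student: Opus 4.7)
The inequality $\|f\|_{\mathcal{L}^D_{\infty,\phi}(\rn)} \le \text{RHS}$ is immediate: with $j:=\log_2 t^{-1}$, choosing $z=0$ in the supremum defining $(\vz_j^*f)_a$ yields the trivial pointwise bound $|\vz_j*f(y)| \le (\vz_j^*f)_a(y)$ for every $y\in\rn$, and therefore $\|\vz_j*f\|_{L^\infty(Q(x,t))} \le \sup_{y \in Q(x,t)}(\vz_j^*f)_a(y)$; dividing by $\phi(Q(x,t))=\phi(x,t)$ and taking the supremum over $(x,t)\in\R^{n+1}_\Z$ gives the assertion.

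For the reverse inequality, fix $(x,t)\in\R^{n+1}_\Z$, set $j:=\log_2 t^{-1}$, and pick $y\in Q(x,t)$ and $z\in\rn$. Since $\vz_j*f$ is smooth and $y+z\in Q(y+z,t)$, the definition of $\|f\|_{\mathcal{L}^D_{\infty,\phi}(\rn)}$ yields
\[
|\vz_j*f(y+z)| \le \|\vz_j*f\|_{L^\infty(Q(y+z,t))} \le \phi(y+z,t)\,\|f\|_{\mathcal{L}^D_{\infty,\phi}(\rn)}.
\]
The crucial step is to replace $\phi(y+z,t)$ by $\phi(y,t)$ with at most polynomial loss in $1+t^{-1}|z|$. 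Choosing $k\in\zz_+$ with $2^{k-1} \le 1+t^{-1}|z| < 2^k$, so that $|z|<2^k t$, iteration of (A1) yields $\phi(y,2^k t)\le M_2^k\phi(y,t)$ and $\phi(y+z,t)\le M_1^{-k}\phi(y+z,2^k t)$, while (A2) applied at scale $2^k t$ gives $\phi(y+z,2^k t)\le M_4\phi(y,2^k t)$. Chaining these three estimates,
\[
\phi(y+z,t)\ls (M_2/M_1)^k\phi(y,t)\ls (1+t^{-1}|z|)^\beta\phi(y,t), \quad \beta:=\log_2(M_2/M_1)\ge 0.
\]

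Combining the two displays, for any $a>\beta$,
\[
\frac{|\vz_j*f(y+z)|}{(1+t^{-1}|z|)^a}\ls (1+t^{-1}|z|)^{\beta-a}\phi(y,t)\|f\|_{\mathcal{L}^D_{\infty,\phi}(\rn)}\le \phi(y,t)\|f\|_{\mathcal{L}^D_{\infty,\phi}(\rn)}.
\]
Taking the supremum over $z\in\rn$ and then using (A2) (with $y\in Q(x,t)$, hence $|x-y|\le t\sqrt n/2$) to replace $\phi(y,t)$ by $\phi(Q(x,t))$, one obtains
\[
\sup_{y\in Q(x,t)}(\vz_j^*f)_a(y)\ls \phi(Q(x,t))\|f\|_{\mathcal{L}^D_{\infty,\phi}(\rn)}.
\]
Dividing by $\phi(Q(x,t))$ and taking the supremum over $(x,t)\in\R^{n+1}_\Z$ completes the proof. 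The principal obstacle is the polynomial-growth bound $\phi(y+z,t)\ls (1+t^{-1}|z|)^\beta\phi(y,t)$; once this dyadic chaining of (A1) and (A2) is verified, the rest is routine, and this step also pins down the quantitative threshold $a>\log_2(M_2/M_1)$ implicit in the phrase ``$a\gg 1$'' of the statement.
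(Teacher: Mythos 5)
Your proof is correct, and it is genuinely more self-contained than what the paper does. The paper disposes of Proposition \ref{p10.15} in one line by reducing it to the general machinery of Section \ref{s8}: one checks the translation bound \eqref{8.20} (trivial for $\cl(\rn)=L^\infty(\rn)$, with the weight handled by (A1) and (A2)) and then invokes Proposition \ref{p8.1}, whose proof in turn rests on the reproducing-type inequality \eqref{8.10} from Lemma \ref{l8.1} together with the triangle inequality in the Banach case $\theta=1$. You instead exploit the specific structure of $\mathcal{L}^D_{\infty,\phi}(\rn)$: because the norm is already a weighted \emph{local sup} of $|\vz_j*f|$ over all cubes of scale $t$, the Peetre maximal function at $y$ can be controlled by sampling the plain norm at the translated cube $Q(y+z,t)$ and paying only the weight-comparison factor $\phi(y+z,t)/\phi(y,t)\ls(1+t^{-1}|z|)^{\beta}$, which your dyadic chaining of (A1) and (A2) establishes cleanly (note $\beta=\log_2(M_2/M_1)\ge 0$ since (A1) forces $M_1\le M_2$, and your threshold $a\ge\beta$ matches the paper's choice $\alpha_3=\log_2(M_2/M_1)$ in this example). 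This direct argument avoids Lemma \ref{l8.1} entirely, at the price of being special to the $L^\infty$-type setting — it would not generalize to the other fundamental spaces $\cl(\rn)$ treated by Proposition \ref{p8.1}, which is precisely what the paper's abstract route buys. Two cosmetic points: the smoothness of $\vz_j*f$ is not needed, only its continuity, to dominate the pointwise value by the sup over $Q(y+z,t)$; and whether $Q(x,t)$ is read as a ball or a cube only changes the constant in your final application of (A2).
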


To proof Proposition \ref{p10.15}, we just need to check \eqref{8.20} by using $(A1)$ and $(A2)$.
We omit the details.

\begin{definition}\label{d10.9}
Define
\begin{eqnarray*}
&&     \|\lambda\|_{l^D_{\infty,\phi}(\rn)}\\
  &&\hs     :=
\sup_{(x,t) \in \R^{n+1}_{\Z}} \frac{1}{\phi(Q(x,t))}
    \sup_{y \in Q(x,t)}
\left\{\sup_{z \in \R^n}\frac{1}{(1+t^{-1}|z|)^a}
\sum_{k \in \Z^n}|\lambda_{(\log_2t^{-1})k}|\chi_{Q_{(\log_2t^{-1})k}}\right\}.
\end{eqnarray*}
\end{definition}

Now in this example $(\cl1)$ through $(\cl6)$ hold true with the parameters in
\eqref{2.10} and \eqref{2.11}
satisfying the following conditions:
\begin{equation*}
\cl(\rn):=L^\infty(\rn), \quad
\theta:=1, \quad
N_0:=0, \quad
\gamma:=0, \quad
\delta:=0
\end{equation*}%
and
$w(x,t) := \frac 1{\phi(Q(x,t))}$
{\rm\ for\ all\ } $x\in\rn$ and $t\in(0,\fz),$
$\alpha_1=\log_2 M_1{}^{-1},
\alpha_2=\log_2 M_2,
\alpha_3=\log_2 \frac{M_2}{M_1},$
respectively.
Furthermore, unlike the proceeding examples,
we choose
\begin{equation*}
\tau=0, \, q=\infty, \, a>N_0+\log_2 \frac{M_2}{M_1}.
\end{equation*}%
Therefore, ${\mathcal L}_{q,\phi,d}(\rn)$
and $\mathcal{L}^D_{q,\phi}(\rn)$
fall under the scope of our theory.

\begin{theorem}\label{t10.2}
Under the conditions $(A1)$ through $(A5)$,
the pair $(\mathcal{L}^D_{\infty,\phi}(\rn),l^D_{\infty,\phi}(\rn))$
admits the atomic / molecular decompositions.
\end{theorem}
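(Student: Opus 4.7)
The plan is to recognize that Theorem \ref{t10.2} is a direct application of Theorem \ref{t4.1} once the Campanato space $\mathcal{L}^D_{\infty,\phi}(\rn)$ and its companion sequence space $l^D_{\infty,\phi}(\rn)$ are identified with the spaces $B^{w,0}_{L^\infty,\infty,a}(\rn)$ and $b^{w,0}_{L^\infty,\infty,a}(\rn)$ in our framework, with the weight $w(x,t):=1/\phi(Q(x,t))$. Thus the proof consists of verifying that the hypotheses of Theorem \ref{t4.1} are met for this choice of parameters and checking the identification of norms.

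First, I would check the weight assumption: under (A1), $\phi(x,2^{-\nu})/\phi(x,2^{-j})\in[M_1^{j-\nu},M_2^{j-\nu}]$ for $j\ge\nu$, which immediately gives (W1) with $\alpha_1:=\max(0,\log_2 M_1^{-1})$ and $\alpha_2:=\log_2 M_2$. For (W2), I would use (A2) to move centers by one ``scale'' at a time: if $|x-y|\le 2^{k-j}$ for the minimal such $k$, iterating (A2) along a chain of cubes together with the doubling (A1) yields $\phi(x,2^{-j})\ls (M_2/M_1)^k\phi(y,2^{-j})$, giving (W2) with $\alpha_3:=\log_2(M_2/M_1)$. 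Next I would verify $(\cl1)$–$(\cl6)$ for $\cl(\rn)=L^\infty(\rn)$: these are standard with $\theta=1$, and $(\cl6)$ holds with $N_0=0$, $\gamma=\delta=0$ because $\|\chi_{Q_{jk}}\|_{L^\infty(\rn)}=1$ and $(1+|\cdot|)^{-N_0}\in L^\infty(\rn)$ for any $N_0\ge 0$.

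Next, I would establish the norm identifications. From Proposition \ref{p10.15}, modulo the argument of Proposition \ref{p10.14}(ii) that allows passage from $q=\infty$ to general $q$, one has
\[
\|f\|_{\mathcal{L}^D_{\infty,\phi}(\rn)}
\sim
\sup_{j\in\zz_+,\,x\in\rn}\frac{1}{\phi(Q(x,2^{-j}))}\sup_{y\in Q(x,2^{-j})}(\vz_j^* f)_a(y).
\]
Because $y\in Q(x,2^{-j})$ implies $|x-y|\le 2^{-j}$, condition (A2) gives $\phi(Q(x,2^{-j}))\sim\phi(Q(y,2^{-j}))$, so the double supremum collapses to $\sup_{j,y}w_j(y)(\vz_j^*f)_a(y)=\|f\|_{B^{w,0}_{L^\infty,\infty,a}(\rn)}$. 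A parallel argument applied to the definition of $\|\lambda\|_{l^D_{\infty,\phi}(\rn)}$ in Definition \ref{d10.9} identifies it with $\|\lambda\|_{b^{w,0}_{L^\infty,\infty,a}(\rn)}$.

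Finally, with $\tau=0$, $q=\infty$, $\gamma=\delta=0$, and the values of $\alpha_1,\alpha_2,\alpha_3$ computed above, the conditions $a\in(N_0+\alpha_3,\infty)$ from \eqref{3.35}, and \eqref{4.3}, \eqref{4.4}, \eqref{4.5} on $K,L,N$, reduce to
\[
L>\alpha_1+\alpha_3+n-1,\quad N>L+\alpha_3+2n,\quad K+1>\alpha_2,\quad L+1>\alpha_1,
\]
all of which can be met by choosing $K,L,N\in\zz_+$ sufficiently large (depending only on $M_1,M_2$ via $\phi$). Theorem \ref{t4.1} then yields the atomic / molecular decomposition for the pair $(B^{w,0}_{L^\infty,\infty,a}(\rn),b^{w,0}_{L^\infty,\infty,a}(\rn))$, which translates through the identifications into the required decomposition for $(\mathcal{L}^D_{\infty,\phi}(\rn),l^D_{\infty,\phi}(\rn))$. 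The main obstacle is the norm identification: while (A1) and (A2) deliver the required equivalences on the level of Peetre maximal functions, some care is needed because $\mathcal{L}^D_{\infty,\phi}$ is only determined modulo polynomials, so one must argue via the equivalent formulation in Proposition \ref{p10.15} (and Proposition \ref{p10.14}) to pass cleanly to the $B^{w,0}_{L^\infty,\infty,a}$-picture, where atomic decompositions are available off-the-shelf.
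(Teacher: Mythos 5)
Your proposal is correct and follows exactly the route the paper intends: the paper's own proof is the one-line remark that Theorem \ref{t10.2} is a consequence of Theorem \ref{t4.1}, relying on the identification (set up in the surrounding text) of $\mathcal{L}^D_{\infty,\phi}(\rn)$ with $B^{w,0}_{L^\infty,\infty,a}(\rn)$ for $w(x,t)=1/\phi(Q(x,t))$, $\cl(\rn)=L^\infty(\rn)$, $\tau=0$, $q=\infty$, and the weight parameters $\alpha_1,\alpha_2,\alpha_3$ you compute from (A1)--(A2). Your verification of (W1)--(W2), of $(\cl1)$--$(\cl6)$ for $L^\infty(\rn)$, of the collapse of the double supremum via (A2), and of the admissibility conditions on $K,L,N$ supplies precisely the details the paper omits.
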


Theorem \ref{t10.2} is just a consequence of Theorem \ref{t4.1}.
We omit the details.

\section*{Acknowledgement}

Yoshihiro Sawano is grateful
to Professor Mitsuo Izuki for his careful reading of the present paper.
Yoshihiro Sawano, Tino Ullrich and Dachun Yang would like to thank
Professor Winfried Sickel for his helpful comments and discussions
on the present paper.
Yoshihiro Sawano and Dachun Yang are thankful
to Professor Eiichi Nakai
for his several remarks on Section \ref{s10}.

\bigskip

Yiyu Liang, Dachun Yang (Corresponding author) and Wen Yuan

\medskip

School of Mathematical Sciences, Beijing Normal University,
Laboratory of Mathematics and Complex Systems, Ministry of
Education, Beijing 100875, People's Republic of China

\smallskip

{\it E-mails}: \texttt{yyliang@mail.bnu.edu.cn} (Y. Liang)

\hspace{1.55cm}\texttt{dcyang@bnu.edu.cn} (D. Yang)

\hspace{1.55cm}\texttt{wenyuan@bnu.edu.cn} (W. Yuan)

\bigskip

Yoshihiro Sawano

\medskip

Department of Mathematics, Kyoto University, Kyoto 606-8502, Japan

\smallskip

{\bf The present address}:
Department of Mathematics and Information Sciences,
Tokyo Metropolitan University,
Minami-Ohsawa 1-1, Hachioji-shi,
Tokyo 192-0397, Japan.

\smallskip

{\it E-mail}: \texttt{ysawano@tmu.ac.jp}

\bigskip

Tino Ullrich

\medskip

Hausdorff Center for Mathematics \& Institute for Numerical Simulation,
Endenicher Allee 60, 53115 Bonn, Germany

\smallskip

{\it E-mail}: \texttt{tino.ullrich@hcm.uni-bonn.de}

\end{document}